\newcommand{\Nak}{\cM_{\theta,\zeta}(\bv,\bw)} 
\newcommand{\Mrn}{\cM(r,n)}
\newcommand{\Mr}[1]{\cM(r,#1)}
\newcommand{\cMs}{\widetilde{\cM}}
\newcommand{\tmu}{\widetilde{\mu}}
\newcommand{\Vs}{\widetilde{V}}
\newcommand{\Pp}{\mathbb{P}}
\newcommand{\cF}{\mathcal{F}}
\newcommand{\cG}{\mathcal{G}}
\newcommand{\cO}{\mathcal{O}}
\newcommand{\C}{\mathbb{C}}
\newcommand{\Z}{\mathbb{Z}}
\newcommand{\Q}{\mathbb{Q}}
\newcommand{\R}{\mathbb{R}}
\newcommand{\N}{\mathbb{N}}
\newcommand{\bbH}{\mathbb{H}}
\newcommand{\se}{\mathsf{e}}
\newcommand{\bL}{\mathsf{L}}
\newcommand{\bd}{\mathsf{d}}
\newcommand{\bC}{\mathsf{C}}
\newcommand{\sP}{\mathsf{P}}
\newcommand{\bG}{\mathsf{G}}
\newcommand{\bGt}{\mathsf{G}^{\sim}}
\newcommand{\bQ}{\mathsf{Q}}
\newcommand{\bF}{\mathsf{F}}
\newcommand{\bS}{\mathsf{S}}
\newcommand{\bc}{\mathsf{c}}
\newcommand{\bE}{\mathsf{E}}
\newcommand{\bY}{\mathsf{Y}}
\newcommand{\bT}{\mathsf{T}}
\newcommand{\bA}{\mathsf{A}}
\newcommand{\cM}{\mathcal{M}}
\newcommand{\cU}{\mathcal{U}}
\newcommand{\cL}{\mathcal{L}}
\newcommand{\cV}{\mathcal{V}}
\newcommand{\cN}{\mathcal{N}}
\newcommand{\cD}{\mathcal{D}}
\newcommand{\cW}{\mathcal{W}}
\newcommand{\vac}{\mathsf{vac}}
\newcommand{\bv}{\mathsf{v}} 
\newcommand{\bw}{\mathsf{w}} 
\newcommand{\ev}{\mathsf{ev}} 
\newcommand{\tX}{\widetilde{X}} 
\newcommand{\Xt}{X^{\sim}} 
\newcommand{\ddD}{\frac{\partial}{\partial D}}
\newcommand{\fa}{\mathfrak{a}} 
\newcommand{\fg}{\mathfrak{g}} 
\newcommand{\fz}{\mathfrak{z}} 
\newcommand{\fC}{\mathfrak{C}} 
\newcommand{\fH}{\mathfrak{H}} 
\newcommand{\fZ}{\mathfrak{Z}} 
\newcommand{\reg}{\textup{reg}} 
\newcommand{\red}{\textup{red}} 
\newcommand{\vir}{\textup{vir}} 
\newcommand{\edge}{\textup{edge}} 
\newcommand{\fhb}{\overline{\mathfrak{h}}} 
\newcommand{\fG}{\mathfrak{G}} 
\newcommand{\bal}{\boldsymbol\alpha} 
\newcommand{\ttau}{\boldsymbol\varsigma} 
\newcommand{\bphi}{\boldsymbol\phi} 
\newcommand{\bkap}{\boldsymbol\kappa} 
\newcommand{\bPhi}{\boldsymbol\Phi} 
\newcommand{\bOm}{\boldsymbol\Omega} 
\newcommand{\bbeta}{\boldsymbol\beta} 
\newcommand{\bmu}{\boldsymbol\mu}
\newcommand{\fHeis}{\mathfrak{Heis}} 
\newcommand{\alsn}{\alpha_{\textup{log}}} 
\newcommand{\fD}{\mathfrak{D}} 
\newcommand{\Qb}{{\overline{Q}}} 
\newcommand{\Cb}{{\overline{\bC}}} 
\newcommand{\Ib}{\overline{I}}
\newcommand{\lang}{\left\langle}
\newcommand{\rang}{\right\rangle}
\newcommand{\vacv}[1]{\left|#1\rang}
\newcommand{\vacd}[1]{\lang#1\right|}
\newcommand{\fVir}{\mathfrak{Vir}}
\newcommand{\fh}{\mathfrak{h}}
\newcommand{\fB}{\mathfrak{B}}
\newcommand{\fb}{\mathfrak{b}}
\newcommand{\bsi}{\varepsilon}
\newcommand{\bV}{\mathsf{V}}
\newcommand{\Th}{T^{\mathbf{1/2}}}
\newtheorem{Theorem}{Theorem}[section]
\newtheorem{Lemma}[Theorem]{Lemma}
\newtheorem{Proposition}[Theorem]{Proposition}
\newtheorem{Corollary}[Theorem]{Corollary}
\newtheorem{Conjecture}{Conjecture}
\newtheorem{Question}{Question}
\newcommand{\pt}{\mathsf{pt}}
\newcommand{\Fix}{\mathsf{Fix}}
\newcommand{\fF}{\mathfrak{F}} 
\newcommand{\ft}{\mathfrak{t}} 
\newcommand{\bR}{\mathbf{R}} 
\newcommand{\bRh}{\widehat{\bR}} 
\newcommand{\hzeta}{\widehat{\zeta}} 
\newcommand{\br}{\mathbf{r}} 
\newcommand{\bbr}{\overline{\br}}
\newcommand{\nui}{\rm{\textsf{(i)}}}
\newcommand{\nuii}{\rm{\textsf{(ii)}}}
\newcommand{\nuiii}{\rm{\textsf{(iii)}}}
\newcommand{\hd}{{\displaystyle\boldsymbol{\cdot}}}
\newcommand{\nord}{\!:\!}
\newcommand{\vT}{\boldsymbol{T}}
\newcommand{\vL}{\bL}
\newcommand{\bQh}{\widehat{\bQ}_{\textup{cl}}}
\newcommand{\fl}{\textup{flop}}
\newcommand{\bk}{\Bbbk}
\newcommand{\bbY}{\mathbb{Y}}
\newcommand{\bP}{\mathsf{P}}
\newcommand{\ga}{{\bG_\bA}}
\newcommand{\gat}{{\bG_{\bA_2}}}
\newcommand{\gal}{\fg_\bA}
\newcommand{\bK}{\mathbb{K}}
\newcommand{\ct}{\mathsf{c_2}}
\newcommand{\cb}{\mathbf{c}}
\newcommand{\db}{\mathbf{d}}
\newcommand{\bI}{\mathbf{I}} 
\newcommand{\bdel}{\boldsymbol\delta} 
\newcommand{\cVt}{\widehat{\mathcal{V}}}
\newcommand{\tN}{\widehat{N}}
\newcommand{\tR}{\widehat{R}}
\newcommand{\tT}{\widetilde{\bT}}
\newcommand{\sing}{\textup{sing}}
\newcommand{\bOmh}{\widehat{\boldsymbol\Omega}}
\newcommand{\tLie}{\textup{Lie}}
\newcommand{\vHeis}{\mathfrak{V}} 
\newcommand{\vHeisr}{\mathfrak{V}_{r}} 
\newcommand{\Mb}{\overline{M}}
\newcommand{\fS}{\mathfrak{S}}
\newcommand{\cT}{\mathcal{T}}
\newcommand{\gl}{\mathfrak{gl}}
\newcommand{\fsl}{\mathfrak{sl}}
\newcommand{\Grl}{\mathsf{Gr}}
\newcommand{\bZ}{\mathsf{Z}}
\newcommand{\bchi}{\boldsymbol\chi}
\newcommand{\cDh}{\widehat{\mathcal{D}}}
\newcommand{\bJ}{\mathsf{J}}
\newcommand{\sI}{\mathsf{I}}
\newcommand{\rsc}{\textup{\small{\textfrak{R}}}_\textup{sc}} 
\newcommand{\gsc}{\textup{\large{\textfrak{g}}}_\textup{sc}} 
\newcommand{\ve}{\varepsilon}
\newcommand{\ffZ}{\textup{\textfrak{Z}}}
\DeclareMathOperator{\Mat}{Mat}
\DeclareMathOperator{\Path}{Path}
\DeclareMathOperator{\Euler}{Euler}
\DeclareMathOperator{\qdet}{qdet}
\DeclareMathOperator{\Center}{Center}
\DeclareMathOperator{\crn}{corners}
\newcommand{\glh}{\widehat{\mathfrak{gl}(1)}}
\theoremstyle{definition}
\newtheorem{Definition}[Theorem]{Definition}
\newtheorem{Example}[Theorem]{Example}
\newtheorem{Remark}[Theorem]{Remark}
\DeclareMathOperator{\Hilb}{Hilb}
\DeclareMathOperator{\Eff}{Eff}
\DeclareMathOperator{\supp}{supp}
\DeclareMathOperator{\Stab}{Stab}
\DeclareMathOperator{\Ext}{Ext}
\DeclareMathOperator{\Lie}{Lie}
\DeclareMathOperator{\Aut}{Aut}
\DeclareMathOperator{\Pic}{Pic}
\DeclareMathOperator{\rk}{rk}
\DeclareMathOperator{\tr}{tr}
\DeclareMathOperator{\virdim}{vir\,\,dim}
\DeclareMathOperator{\diag}{diag}
\DeclareMathOperator{\Spec}{Spec}
\DeclareMathOperator{\Proj}{Proj}
\DeclareMathOperator{\codim}{codim}
\DeclareMathOperator{\Span}{Span}
\DeclareMathOperator{\sgn}{sgn}
\DeclareMathOperator{\End}{End}
\DeclareMathOperator{\Cochar}{Cochar}
\DeclareMathOperator{\LAttr}{\Attr}
\DeclareMathOperator{\Ker}{Ker}
\DeclareMathOperator{\Attr}{Attr}
\newcommand{\Bran}{\Attr^f}
\newcommand{\Branch}{\Bran_\fC} 
\DeclareMathOperator{\gr}{gr}
\DeclareMathOperator{\Res}{Res}
\DeclareMathOperator{\ch}{ch}
\DeclareMathOperator{\Hom}{Hom}
\DeclareMathOperator{\ad}{ad}
\DeclareMathOperator{\Taut}{Taut}
\DeclareMathOperator{\Rep}{Rep}
\DeclareMathOperator{\Img}{Im}
\DeclareMathOperator{\Gr}{Gr}
\DeclareMathOperator{\Coh}{Coh}
\begin{document}
\title{Quantum Groups and Quantum Cohomology}
\author{Davesh Maulik and Andrei Okounkov}
\date{}
\maketitle

\setcounter{tocdepth}{1}

\chapter*{\centering \begin{normalsize} \sc Abstract\end{normalsize}}
\vspace{-1cm}
\begin{quotation}
\noindent 
In this paper, we study the classical and quantum equivariant cohomology of Nakajima
quiver varieties for a general quiver $Q$.  Using a geometric $R$-matrix formalism, we
construct a Hopf algebra $\bY_Q$, the Yangian of $Q$, acting on the cohomology of these
varieties, and show several results about their basic structure theory.  We prove a
formula for quantum multiplication by divisors in terms of this Yangian action.  The
quantum connection can be identified with the trigonometric Casimir connection for $\bY_Q$;
equivalently, the divisor operators correspond to certain elements of Baxter
subalgebras of $\bY_Q$.  A key role is played by geometric shift operators which 
can be identified with the quantum KZ difference connection.

In the second part, we give an extended example of the general theory for moduli spaces
of sheaves on $\C^2$, framed at infinity.  Here, the Yangian action is analyzed
explicitly in terms of a free field realization; the corresponding $R$-matrix is closely
related to the reflection operator in Liouville field theory.  We show that divisor
operators generate the quantum ring, which is identified with the full Baxter
subalgebras.  As a corollary of our construction, we obtain an action of the W-algebra $\cW\big(\mathfrak{gl}(r)\big)$
on the equivariant cohomology of rank $r$ moduli spaces, which implies certain conjectures of Alday, Gaiotto, and
Tachikawa.
\end{quotation}

\tableofcontents

\chapter{Introduction}

\lhead[\thechapter\quad \leftmark]{\thepage}
\rhead[\thepage]{\thesection\quad \rightmark}
\pagestyle{fancy}

\section{Fundamental structures and  conjectures}

\subsection{} 

This paper is about the equivariant quantum cohomology of Nakajima 
quiver varieties \cite{Nak94,Nak98}. We see it as part of 
a larger project \cite{FRG} which connects equivariant 
quantum cohomology of symplectic resolutions with their 
quantizations and derived autoequivalences. These connections, 
however, will not be discussed here. 

Here we develop a general structural theory for quantum 
cohomology of Nakajima quiver varieties associated to an 
arbitrary quiver $Q$. 
We formulate our answer in terms of a certain Hopf algebra $\bY_Q$,
called the Yangian of $Q$,
which acts on the cohomology of Nakajima quiver varieties.  

The construction of $Y_Q$ and an analysis of its basic structure theory 
is another objective of this paper and occupies the bulk of its first half.
In the case when $Q$ has no loops, this construction is related to work of
Varagnolo \cite{Var} and Nakajima \cite{Nak01}, who construct
a certain subalgebra of $\bY_Q$ via generators and relations.
In this paper, we give an alternative approach which we will describe shortly.

In the second half of the paper, we work 
out explicitly what our theory means for the quiver with 
one vertex and one loop. In other words, we work out explicitly the
quantum cohomology of the moduli 
spaces $\cM(r,n)$ of framed rank $r$ torsion free sheaves on $\C^2$, 
generalizing the previous work \cite{OPhilb,MO} on the Hilbert schemes
of points. 

\subsection{} 

Let $X$ be a smooth quasi-projective variety with an action 
of a reductive group $\bG$. 
Quantum cohomology is a commutative associative deformation of 
ordinary multiplication in equivariant cohomology $H^\hd_\bG(X)$
defined by 
\begin{equation}
(\gamma_1 * \gamma_2, \gamma_3) = \sum_{\beta>0} q^\beta \lang 
\gamma_1,\gamma_2,\gamma_3 \rang_\beta 
\label{def*}
\end{equation}
where 
$(\gamma_1,\gamma_2)=\int_X \gamma_1 \cup \gamma_2$ is the standard bilinear 
form on $H^\hd_\bG(X)$, $\beta$ ranges over the cone 
of effective classes
in $H_2(X,\Z)$, $q^\beta$ denotes the corresponding element 
of the semigroup algebra of the effective cone,  
and 
$$
\lang \gamma_1,\gamma_2,\gamma_3 \rang_\beta\in H^\hd_\bG(\pt,\Q) 
$$
is 
the virtual count of rational curves of degree $\beta$ meeting 
cycles Poincar\'e dual to $\gamma_1,\gamma_2,\gamma_3$. See 
e.g.\ \cite{CoxKatz, Mirror} for an introduction. 

As defined by \eqref{def*}, 
the structure constants of quantum multiplication are 
formal power series in $q^\beta$. However, one conjectures
that for all \emph{equivariant symplectic resolutions}, 
and Nakajima quiver varieties in particular, the series
in \eqref{def*} represents a rational function of $q^\beta$. 
We will prove a slightly weaker statement below. Thus 
we get a family of commutative associative 
multiplications on $H^\hd_\bG(X)$. 

Note that working in equivariant cohomology is crucial as 
all nonequivariant counts $\lang \gamma_1,\gamma_2,\gamma_3 \rang_\beta$
vanish for trivial reasons for $\beta\ne 0$. 
\subsection{}
A basic property of quantum multiplication is that 
\begin{equation}
1 * \gamma  = \gamma \,, \quad \forall \gamma \in H^\hd_\bG(X) \,. 
\label{1*}
\end{equation}
For any structure of a 
commutative associative algebra with unit on a vector space $H$, 
the operators of multiplication form a maximal commutative
subalgebra of $\End(H)$. 

In particular, the operators
of quantum multiplication, for different values of the quantum 
parameters $q$, form a $b_2(X)$-dimensional family 
of maximal commutative subalgebras in the algebra that they 
all generate. For brevity, we call these subalgebras the 
algebras of quantum multiplication. For $q=0$, they specialize
to the algebra of classical multiplication in $H^\hd_\bG(X)$. 

Not much is known or conjectured
about this algebraic structure for general $X$. 
For Nakajima quiver varieties, by contrast, one 
expects the following very strong link 
with much-studied structures in representation theory 
and mathematical physics. 

\subsection{}

The Nakajima quiver varieties $\Nak$ with 
parameters 
$$
\bv,\bw \in \N^I\,, \quad \theta\in \R^I\,, \quad \zeta\in \C^I 
$$
are associated to a quiver 
$Q$ with the vertex set $I$. The quiver $Q$ may have 
loops and multiple edges. Nakajima varieties have 
large groups $\bG$ of automorphism that preserve (or scale, for 
$\zeta=0$) their natural symplectic 
form\footnote{Note the quantum product is trivial unless
$\zeta=0$ because all curve contributions are proportional 
to the weight $\hbar$ of the symplectic form.}. By construction, the space 
$$
H(\bw) = \bigoplus_\bv H^\hd_\bG\left(\Nak\right)
$$
will be a module over the Yangian $\bY_Q$.
 By construction, operators
of cup product by characteristic classes of universal 
bundles form a commutative subalgebra in $\bY_Q$. 

\subsection{}

The algebras $\bY_Q$ generalize Yangians of simple finite-dimensional 
Lie algebras, as defined by Drinfeld \cite{Drin}. 
Their origins lie in the theory of quantum integrable systems, see 
e.g.\  \cite{Fad,JM,Kor,Slav} for an introduction. 

A powerful correspondence between quantum integrable systems and moduli of 
vacua in supersymmetric gauge theories (of which Nakajima varieties
are examples) was proposed in the work of Nekrasov and Shatashvili
\cite{NS1,NS2,NS3}. In particular, 
quantum group actions on their cohomology or $K$-theory  
constructed by Varagnolo and Nakajima fit into this framework. 

For us, the main prediction of Nekrasov and Shatashvili is 
a conjectural identification of algebras of quantum multiplication 
with \emph{Baxter subalgebras}\footnote{Also known as 
\emph{Bethe subalgebras}.}
in the Yangian $\bY_Q$.

\subsection{}

Independently, Bezrukavnikov conjectured a relation between 
the monodromy of the quantum differential equation, see  \eqref{QDE} 
below, and autoequivalences of
$D^b \Coh_\bG X$
 for symplectic 
resolutions $X$, see Section \ref{QDEmonod}. This was inspired, in part, 
by the work of T.~Bridgeland \cite{Bridg1,Bridg2}, see also \cite{ABM}. 

Towards the end of the special 2007/08 year at IAS, it was realized 
this conjecture is naturally a composition of two more basic ones. 
The first, which is proven in this paper for Nakajima 
varieties, identifies the quantum 
differential equation with 
the \emph{trigonometric Casimir connection} for a certain 
Lie algebra $\fg_Q$.  A related conjecture about quantum 
cohomology of Laumon spaces was made in \cite{FFFR}. 

For finite-dimensional Lie algebras, trigonometric Casimir 
connections were defined and studied by Toledano Laredo 
in \cite{Valerio}. 
As explained there, they are very closely related to the Yangians
of the same Lie algebras. This links the conjectural frameworks
of Nekrasov-Shatashvili and Bezrukavnikov. 
The trigonometric Casimir connection generalizes the rational 
Casimir connection studied in \cite{FMTV,MilTL,Valerio1}
and also by C.~De Concini (unpublished). 

After this, the second step of Bezrukavnikov's conjecture becomes 
a geometric description of the monodromy of trigonometric 
Casimir connections. This could be viewed as a natural extension 
of the monodromy conjecture made in \cite{Valerio}.

\subsection{}

It appears the ideas of both Nekrasov-Shatashvili and Bezrukavnikov
may apply more generally than just for symplectic resolutions. 
For example, Laumon spaces discussed in \cite{FFFR} 
have a natural Poisson structure which is not symplectic. 

Similarly, the most general moduli of vacua considered by Nekrasov
and Shatashvili fail all key property of Nakajima varieties: 
they may not be smooth,  not symplectic, and not resolutions of singularities. 

In this  paper, we use the existence of a symplectic form and of a 
proper map to an affine variety in an essential way. It is would 
be very interesting to make our constructions work in greater 
generality.

\section{Baxter subalgebras and quantum multiplication}

\subsection{}

The construction of $\bY_Q$ and the notion of a Baxter subalgebra are best explained in 
the original language of quantum inverse scattering method. 
The main ingredient there is 
an \emph{$R$-matrix}, that is, a collection of 
vector spaces $F_i$ and  operator-valued
functions
\begin{equation}
R_{F_i,F_j}(u) \in \End(F_i \otimes F_j)
\label{Rmatrdef}
\end{equation}
which satisfy the \emph{Yang-Baxter equation}
\begin{equation}
R_{12}(u) \, R_{13}(u+v) \, R_{23}(v ) = 
R_{23}(v ) \, R_{13}(u+v) \, R_{12}(u) \,, 
\label{YBe}
\end{equation}
as operators in $F_i\otimes F_j\otimes F_k$.  Here 
$$
R_{12} = R_{F_i,F_j} \otimes 1_{F_k} \in \End(F_i\otimes F_j\otimes F_k) \,,
$$
et cetera. In principle, the argument $u$ could be taken from an arbitrary 
abelian group; the case $u\in \C$ corresponds
to Yangians.

For $m\in \End(F)$ and all $W\in \{F_i\}$, consider the operators 
$$
T_F(m,u) = \tr_F \,(m\otimes 1) \, R_{F,W}(u) \in \End(W) \,,
$$
where the trace is taken over the first tensor factor. 
In the formalism of  Faddeev, Reshetikhin, and Takhtajan 
\cite{FRT}, these operators generate the Yangian $\bY$ associated to $R$.

\subsection{}\label{s_Baxter_sub}

Let $\fG\subset \prod GL(F_i)$ be the centralizer of all $R$-matrices
and take $g\in \fG$. 
It follows at once from the Yang-Baxter equation and 
invertibility of $R$ that 
\begin{equation}
\left[T_{F_1}(g,u_1),T_{F_2}(g,u_2)\right] = 0\,. 
\label{e_Baxter}
\end{equation}
A pictorial proof of this is given in Figure \ref{f_Baxter}. 
This means the operators 
$T_{F}(g,u)$, for fixed $g\in\fG$ and all $F\in \{F_i\}$, $u\in \C$ 
generate a commutative 
subalgebra of the Yangian. This is what is 
called a Baxter (or Bethe) subalgebra. 
\begin{figure}[!hbtp]
  \centering
   \includegraphics[scale=0.44]{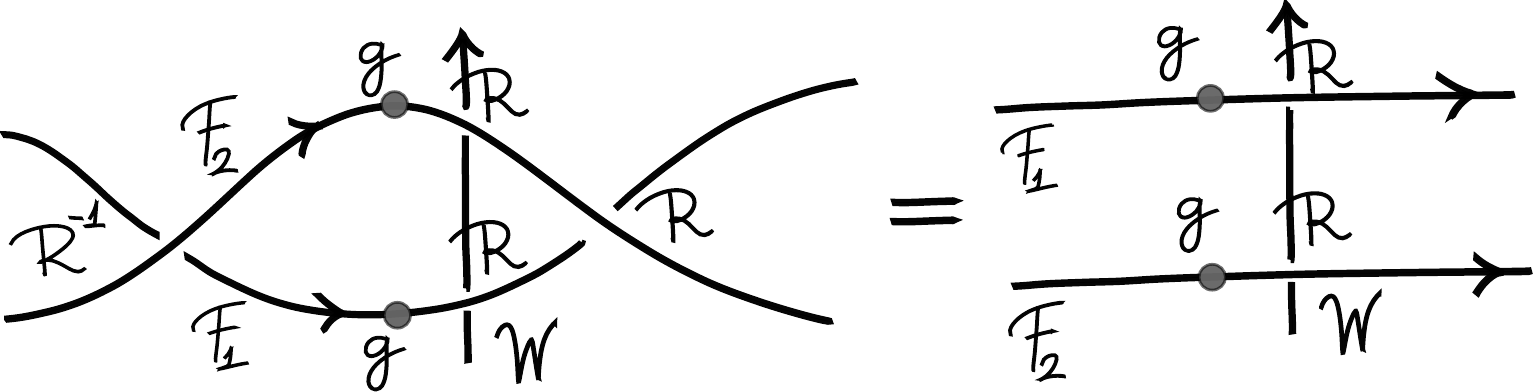}
 \caption{{}From the YB equation and $\left[g\otimes g, R\right]=0$ we
 deduce that $R_{F_2,F_1}$ conjugates
$g_{F_2} R_{F_2,W} g_{F_1} R_{F_1,W}$ to the product in the opposite
order. Taking the trace over $F_2 \otimes F_1$ gives
\eqref{e_Baxter}. }
  \label{f_Baxter}
\end{figure}

\subsection{}

Assuming for simplicity that 
$\fG$ is 
connected,
%
% \footnote{In this paper, we don't deal with torsion in $H^2(X,\Z)$, 
% but one could imagine that it might show up as 
% the component group of $\fG$} 
%
a natural parameter set for Baxter subalgebras 
is a maximal torus 
$$
\fH\subset \fG \big/ \textup{Centralizer}(\bY) \,. 
$$ 
It may be compactified to $\overline{\fH}\supset \fH$ 
by considering limits of 
Baxter subalgebras as $g$ degenerates. To connect with 
quantum cohomology, we need a map 
\begin{equation}
\fH \to H^2(X,\C) \big/ 2\pi i \, H^2(X,\Z)\,, 
\label{fHtoH2}
\end{equation}
that extends to 
$$
\overline{\fH} \to \textup{K\"ahler moduli space of $X$} \,. 
$$

\subsection{}\label{s_theta_dis}

There is a small, but essential detail in this identification, 
namely a shift of origin, 
$$
\fH \owns 1 \mapsto \pi i \bkap_X \in H^2(X,\C) \big/ 2\pi i \, H^2(X,\Z)
$$
for a certain class 
$$
\bkap_X \in H^2(X,\Z/2)
$$
that we call the canonical theta characteristic. 

When $X=T^*Y$ then 
$\bkap_X$ is the pull-back of the canonical class $K_Y$ to $X$. 
Nakajima varieties are cotangent bundles only in sense of stacks, but
still $\bkap_X$ is well-defined, see Section \ref{s_theta_char}. 

\subsection{}
It is very convenient to incorporate the shift 
\begin{equation}
q^\beta \mapsto (-1)^{(\beta,\bkap)} q^\beta
\label{modif*}
\end{equation}
into the definition of the quantum product. We call it 
the \emph{modified} quantum product. 

With this modification, we can use the map 
\begin{equation}
H_2(X,\Z) \owns q^\beta \mapsto e^{( \beta, \, \cdot\, )}
\in \fH^\wedge \,,
\label{H_2weight}
\end{equation}
dual to \eqref{fHtoH2}, 
to identify operators $T_F(g,u)$ with operators of quantum 
multiplication. 
Note that a trace over an auxiliary space is an 
element in the group algebra $\C[\fH^\wedge]$, or 
its completion if the auxiliary space is 
infinite-dimensional.

\subsection{}

To turn this into a practical description of the
 quantum product, one 
needs an $R$-matrix construction of the Yangian $\bY_Q$.

The main geometric idea is simple and 
uses the embedding 
\begin{equation}
\bigsqcup_{\bv_1+\bv_2 = \bw} 
\cM_{\theta,\zeta}(\bv_1,\bw_1) \times 
\cM_{\theta,\zeta}(\bv_2,\bw_2) 
\hookrightarrow 
\cM_{\theta,\zeta}(\bv_1+\bv_2,\bw) 
\label{MtoM}
\end{equation}
as a fixed point set of a $\C^\times$-action. 
This embedding is, of course, well-known and played a central 
role in the work of M.~Varagnolo and  E.~Vasserot \cite{Vass_tens,
Vass_tens2}, H.~Nakajima \cite{Nak_tens}, and A.~Malkin \cite{Mal}.
See in particular the paper \cite{Nak_tens2} for further 
developments in this direction, closely related to our 
construction.

\subsection{}

Suppose a torus $\bA$ acts on a holomorphic symplectic variety $X$ 
preserving the symplectic form. Then under fairly general 
hypotheses listed in Chapter \ref{s_envelopes}, one can define a 
collection of maps, called \emph{stable envelopes}, 
$$
\Stab_\fC: H^\hd_{\bG_\bA}(X^\bA) \to H^\hd_{\bG_\bA}(X) 
$$
parameterized by certain chambers $\fC$ in $\Lie(\bA)$. 
Here $\bG_\bA$ denotes the centralizer of $\bA$ in $\bG$. 
Stable envelopes enjoy a number of remarkable 
geometric properties, see Chapters \ref{s_envelopes} and 
\ref{s_properties}. 

For $\bA=\C^\times$ with fixed points \eqref{MtoM}, 
there are just two chambers $\pm \fC$ and one defines 
$$
R(u) = \Stab_{-\fC}^{-1} \circ \Stab_{\fC} 
$$
where $u\in \C = \Lie \bA$ is the equivariant parameter for $\bA$. 
The Yang-Baxter equation and other expected properties of $R$-matrices
follow easily from general properties of stable 
envelopes. Thus, we have $R$-matrices \eqref{Rmatrdef} for 
$$
\{F_i\} = \{H(\bw)\}_{\bw\in \N^I} \,. 
$$
See Chapter \ref{s_Yang} for a precise definition of the 
corresponding Yangian $\bY_Q$ and Chapter \ref{s_Further} 
for further discussion of its properties.

\subsection{}
Our $R$-matrices have the form 
$$
R(u) = 1 + \frac{\hbar}{u}\, \br  + O(u^{-2}) \,,
$$
where $\hbar\in H^2_\bG(\pt)$ is the weight of the symplectic
form and 
$$
\br \in S^2 \fg_Q\,,
$$
is an invariant tensor for a certain Lie 
algebra $\fg_Q$ which contains the Kac-Moody Lie algebra 
associated to the quiver $Q$. In particular, the action 
of $\fg_Q$ on $H(\bw)$ generalizes the construction of Nakajima
\cite{Nak94,Nak98}. 
The action of $\fg_Q$ commutes with $R$-matrices. 

If $Q$ is a quiver of finite type then, modulo
center, $\fg_Q$ is 
the corresponding Kac-Moody Lie algebra, but in general it is 
larger. For example, it may not be finitely 
generated like $\fg_Q \cong \glh$ for the quiver with one 
vertex and one loop. We expect the assignment 
$$
Q \mapsto \fg_Q
$$
to behave well with respect to the natural operations on 
quivers. In particular, the results of Section \ref{s_covers} 
relate $\fg_{Q/\Gamma}$ and $\fg_Q^\Gamma$, where 
$$
Q \to Q'=Q/\Gamma
$$
is a covering of quivers 
corresponding to $\Gamma\subset \pi_1(Q')$. An example of 
this is the well-known relation between $\glh$ and infinite 
Toeplitz matrices.

\subsection{}

A maximal torus $\fhb_Q\subset \fg_Q$ is identified with 
\begin{equation}
\fhb_Q=\fh \oplus \fz\,, \quad \fh,\fz \cong \C^I\,, 
\end{equation}
where $\fh$ and $\fz$ act on $H^\hd_\bG(\Nak)$ by multiplication 
by linear functions of $\bv$ and $\bw$, respectively. Note 
that $\fz$ is central in $\fg_Q$ and $\bY_Q$.

\subsection{}

The Lie algebra $\fg_Q$ acts on $H(\bw)$ by correspondences of the 
following shape. Let $0\ne \alpha \in \N^I$ be a dimension vector
and choose $\bw_0 \in \N^I$ so that $\bw_0 \cdot \alpha \ne 0$. 
For example, one can take $\bw_0 = \delta_i$ for $i\in \supp 
\alpha$. 

For all $\bv,\bw$, there is a canonical 
Lagrangian cycle 
$$
\br_{\bv,\bw,\alpha,\bw_0} \subset \cM(\bv+\alpha,\bw) \times 
\cM(\bv,\bw) \times \cM(\alpha,\bw_0) \,.
$$
One can view this cycle as a correspondence between the second 
and the first factor in which the third factor is a parameter. 
This gives a map 
\begin{equation}
H^\hd_\bG(\cM(\alpha,\bw_0)) \to \left(\fg_Q\right)_\alpha \,,
\label{Htog}
\end{equation}
which is surjective unless $\alpha<0$, 
see Proposition \ref{p_maps_inj}. Here
\begin{equation}
\fg_Q = \fhb_Q \oplus \bigoplus_{\alpha} \left(\fg_Q\right)_\alpha
\label{rootfg}
\end{equation}
is the root decomposition of $\fg_Q$, that is, the decomposition 
into the eigenspaces of the adjoint action of $\fh$. 
Reading the same correspondence $\br_{\bv,\bw,\alpha,\bw_0}$ 
in the opposite direction produces operators 
in $ \left(\fg_Q\right)_{-\alpha}$.

\subsection{}\label{s_surj} 
{}From the construction of $X=\Nak$ as a quotient by 
the action of $GL(\bv)=\prod_{i\in I} GL(\bv_i)$, one has tautological bundles
$\cV_i$ on $X$ of ranks $\bv_i$ for $i\in I$. 
The corresponding map 
$$
\Z^I \to \Pic(X)\cong H^2(X,\Z)
$$
given by $\det(\cV_i)$, $i\in I$, is expected to be 
surjective for all $\bv$ and an isomorphism for $\bv$
sufficiently large (see Section \ref{s_update} below). Dually, we have 
$$
H_2(X,\Z) \hookrightarrow \fH^\wedge 
$$
where $\fH\cong (\C^\times)^I$ is the torus with the Lie algebra
$\fh$. Since this matches \eqref{H_2weight}, we can state the following 
precise version of the Nekrasov-Shatashvili 
principle:  

\begin{Conjecture}\label{con1} The Baxter subalgebras of $\bY_Q$
corresponding to $g\in\fH$ are the algebras of modified
equivariant quantum multiplication for Nakajima 
varieties. 
\end{Conjecture}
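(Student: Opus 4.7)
The plan is to match two maximal commutative subalgebras of $\End H(\bw)$: the modified quantum multiplication algebra and the Baxter subalgebra $\bQ_g$ attached to $g \in \fH$ under the identification \eqref{H_2weight}. Both families are parametrized by the same data (after the shift by $\pi i \bkap_X$ from Section \ref{s_theta_dis}), both are flat over the base, and both specialize at $q=0$ (respectively $g=1$) to classical cup product, since in that limit only the leading piece of $T_F(g,u)$ survives, producing just cohomology classes of universal bundles. So the question is to match the deformation away from the classical point.

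First, I would treat the \emph{divisor case}. The strategy is to compute, for $D \in H^2(X,\C)$ identified with an element of $\fh$ via the tautological classes $\det(\cV_i)$, the operator of modified quantum multiplication by $D$ and match it with the first $u^{-1}$-coefficient of an appropriate $T_F(g,u)$ with $g = e^D$. The geometric input is the embedding \eqref{MtoM}: three-point invariants with an insertion $D$ and two free insertions localize, in a suitable degeneration with an anti-diagonal $\C^\times$-action on the target, to contributions coming from fixed loci of the form $\cM(\bv_1,\bw_1) \times \cM(\bv_2,\bw_2)$. Through stable envelopes, these contributions produce precisely the residues at $u = 0$ of $R_{F,W}(u) - 1$, acting in the auxiliary channel that records the splitting $\bw = \bw_1 + \bw_2$ and weighted by $e^{(\beta, \cdot)}$. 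Summing over splittings and using the sign twist \eqref{modif*} (which absorbs the theta-characteristic discrepancy) exhibits the divisor operator as $\hbar \cdot \br$-type terms, i.e.\ as the leading $u^{-1}$-coefficient of a specific Baxter generator.

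Second, I would bootstrap from divisors to the full algebra. The operators $D *$ for $D \in H^2$ deform the classical divisor subalgebra into the Dubrovin connection
\begin{equation}
\nabla_D = \partial_D - D *,
\end{equation}
and the algebra of quantum multiplication is generated by these together with the classical subring by standard WDVV/flatness arguments. On the Yangian side, the Baxter subalgebra $\bQ_g$ is maximal commutative and contains the corresponding trigonometric Casimir operators; the trigonometric Casimir connection \emph{is} the derivation structure on $\bQ_g$ as $g$ varies in $\fH$. Identifying $\nabla$ with this connection —- which is exactly the main intermediate result announced in the introduction —- upgrades the divisor identification to an identification of the full algebras, because parallel transport of the generating set (divisors) inside one maximal commutative subalgebra determines the deformation uniquely.

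The main obstacle is the generation step: showing that the divisor operators, together with the centralizer of the $\bY_Q$-action, actually generate the modified quantum multiplication algebra. For $\cM(r,n)$ this is handled in the second half of the paper via the $\cW(\gl(r))$-action and the explicit Baxter description, but for a general quiver $Q$ one expects the quantum cohomology to be larger than the divisor-generated subring, and one must rely on the compatibility of $R$-matrix-generated commutativity with flatness of the Dubrovin connection. A secondary obstacle is checking maximality: one must verify that the Baxter subalgebras $\bQ_g$ have the same rank over $\C[\fH]$ as $\dim H(\bw)$, so that the inclusion of the quantum algebra into $\bQ_g$, established on generators, is in fact an equality rather than a proper inclusion.
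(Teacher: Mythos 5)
The statement you are trying to prove is labeled \emph{Conjecture} in the paper, and the paper does not prove it in full generality; it only proves two substantially weaker statements. Theorem \ref{t_Bax} (equivalently Theorem \ref{t_c1*}) identifies the degree-$2$ part of the Baxter subalgebra with modified quantum multiplication by \emph{tautological} divisors, and Theorem \ref{t_gen_div} shows generation by the divisor only for the Jordan quiver, i.e.\ for $\cM(r,n)$. Your proposal tracks the paper's proven part reasonably well and then runs into exactly the open issue, but you present that issue as a surmountable technical obstacle rather than as the reason the statement remains a conjecture.

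Concretely: your ``divisor case'' step is the right idea and roughly matches the paper's route — via the minuscule shift operators and Theorem \ref{t_qKZ}, one shows the operator $(-1)^{(\zeta,\bkap_X)}q^\zeta R_\sigma$ commutes with quantum multiplication, and combining this with the coproduct formula (Theorem \ref{t_cl_divisor}) and the primitivity/Steinberg argument (Proposition \ref{commwithh}) uniquely pins down $\bQ(\lambda)$. Two corrections, though. First, the degree-$2$ Baxter generators are extracted from the $u^{-2}$-coefficient, i.e.\ $\bE_{F_0}(g\,u^2) = \frac1\hbar\left[\frac1{u^2}\right]\tr_{F_0}(g\otimes 1)R_{F_0(u),W}$ as in \eqref{baxter_E} and \eqref{Bax_Cas}; the $u^{-1}$-coefficient gives $\fg_Q$ and has cohomological degree $0$. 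Second, the identification is only with divisors in $H^2_\bG(\cM)_\textup{taut}$ — whether this is all of $H^2$ is itself a separate expectation, not proved.

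The real gap is your ``bootstrap'' step, and it cannot be closed by the flatness argument you sketch. Knowing that the Baxter subalgebra contains the divisor operators, and knowing that both sides carry compatible flat connections (Dubrovin on one side, trigonometric Casimir/qKZ on the other), gives you an inclusion of the divisor-generated quantum subring into the Baxter subalgebra, not an equality of the full modified quantum multiplication algebra with the Baxter subalgebra. Parallel transport preserves the divisor-generated subalgebra; it does not manufacture generators outside it. To get equality you need precisely the generation statement (quantum cohomology generated by tautological divisors, equivalently simple joint spectrum of the operators in \eqref{c_1*}), which the paper explicitly poses as an open Question and only answers for the Jordan quiver via the free-field/W-algebra analysis in Part II. Your ``secondary obstacle'' (rank of $\bQ_g$ equals $\dim H(\bw)$) is also genuinely open in general. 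So the honest conclusion is: your outline reconstructs the proved partial results, but the conjecture itself is not proved by the paper, and your proposal does not supply the missing generation argument.
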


\section{Quantum multiplication by divisors}

\subsection{}

Conjecture \ref{con1} may be approached in two steps, the first 
one being the identification of operators of quantum 
multiplication by divisors, that is, 
elements of $H^2(\cM)$. 

The Yangian 
$\bY_Q$ has a grading which after doubling 
corresponds to cohomological degree.
In this paper, we prove the following 

\begin{Theorem}\label{t_Bax} 
The operators of cohomological degree $2$ in the Baxter subalgebra 
are 
the operators of modified quantum multiplication by elements of 
$H^2_\bG(\cM)_\textup{taut}$
\end{Theorem}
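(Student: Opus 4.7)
The plan is to compute both families of operators---the degree-$2$ elements of the Baxter subalgebra and the modified quantum multiplication by tautological divisors---using the geometric tools developed in the paper, and then identify them.

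For the Baxter side, I would expand the generating series $T_F(g,u)=\tr_F\bigl((g\otimes 1)R_{F,W}(u)\bigr)$ in powers of $u^{-1}$ and isolate the cohomological degree-$2$ piece, which (with $u$ and $\hbar$ both of degree $2$) comes from the coefficient of $\hbar/u$. Using the classical $r$-matrix $\br\in S^2\fg_Q$ and its decomposition along the root space decomposition \eqref{rootfg} of $\fg_Q$, together with the structure of $F$ as a direct sum of $H(\bw')$'s and the weighting by $g\in\fH$, I would resum the surviving weight-preserving contributions. Under the identification \eqref{H_2weight} of $\fH^\wedge$ with $H_2(X,\Z)$ and the theta twist \eqref{modif*} absorbing the shift described in Section \ref{s_theta_dis}, this resummation yields an operator of the schematic form
\begin{equation*}
D_g\cup(-)\,+\,\hbar\sum_{\alpha>0}\frac{q^\alpha}{1-q^\alpha}\,(D_g,\alpha)\,f_{-\alpha}e_\alpha,
\end{equation*}
with $D_g\in H^2_\bG(\cM)_\textup{taut}$ depending linearly on $\log g\in\fh\cong\C^I$ and $e_\alpha\in(\fg_Q)_\alpha$, $f_{-\alpha}\in(\fg_Q)_{-\alpha}$ the root vectors defined by the Lagrangian correspondences \eqref{Htog}. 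The assignment $g\mapsto D_g$ is surjective onto $H^2_\bG(\cM)_\textup{taut}$ because the $c_1(\cV_i)$ span the target and correspond to the standard basis of $\fh\cong\C^I$.

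For the quantum cohomology side, given $D\in H^2_\bG(\cM)_\textup{taut}$, the divisor equation reduces the 3-point GW invariants of \eqref{def*} with a divisor insertion to 2-point invariants, which I would compute by $\bC^\times$-equivariant virtual localization on the moduli of stable maps into $\cM$ using the torus that scales the domain $\bP^1$. The localization concentrates the computation on broken rational curves, whose components are forced by the symplectic torus $\bA$ to lie in the fixed locus \eqref{MtoM}. A simple component in class $\alpha$ contributes, by a deformation-theoretic calculation, a matrix coefficient of $f_{-\alpha}e_\alpha$ built from the same Lagrangian correspondences \eqref{Htog} that define the root spaces of $\fg_Q$. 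Summing over $n$-fold covers produces the geometric series $\sum_{n\ge 1}q^{n\alpha}=q^\alpha/(1-q^\alpha)$, and the prefactor $(D,\alpha)$ comes directly from the divisor equation. Term-by-term comparison with the Baxter side then completes the identification.

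The principal obstacle is the geometric computation on the quantum side: one must show that the virtual contribution of a broken rational curve in class $n\alpha$ equals, after multiple-cover resummation, the matrix coefficient of $f_{-\alpha}e_\alpha$ defined algebraically via \eqref{Htog}. This requires a careful analysis of the virtual fundamental class on moduli of stable maps into $\cM$, in which the properness of the map to the affine quotient is essential for well-defined counts and the weight $\hbar$ of the symplectic form accounts for the overall $\hbar$ prefactor of the quantum corrections. The remaining steps---the algebraic expansion of $T_F(g,u)$, the surjectivity of $g\mapsto D_g$, and the theta twist---are more routine once this geometric input is in place.
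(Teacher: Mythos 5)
Your Baxter-side expansion (isolating the $\hbar/u$ coefficient, using the classical $\br$-matrix and resumming over roots via the trace) matches the paper's Section \ref{s_Baxter} calculation, in particular Lemma \ref{l_tr_e_e} and formula \eqref{Bax_Cas}, and the $g\mapsto D_g$ surjectivity is indeed routine. The gap is on the quantum side, and it is not a technical point that can be deferred --- it is the main idea of the proof.

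You propose to compute the purely quantum term by $\bA$-equivariant localization on moduli of $2$-pointed stable maps and to match the broken-curve contributions, after multiple-cover resummation, with matrix coefficients of $e_\alpha f_{-\alpha}$. There is no path from this computation to those matrix coefficients, because the root vectors $e_\alpha$ are \emph{defined} algebraically from the $R$-matrix, which is itself built from stable envelopes. Localization only tells you that the reduced evaluation pushforward is a Steinberg correspondence and that it is compatible with $\Stab_\fC$ on the nose (this is Theorem \ref{t_reduc}, whose proof does use broken curves, but proves an intertwining statement, not a formula). What it cannot tell you is that a single off-diagonal block in class $\alpha$ is literally $\pm\, e_\alpha\otimes e_{-\alpha}$, nor that $n$-fold covers contribute $q^{n\alpha}$ each with unit coefficient so that the sum telescopes to $q^\alpha/(1-q^\alpha)$. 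You flag this as the ``principal obstacle,'' but no amount of careful virtual localization will bridge it, because the comparison lives in a different algebra.

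The missing ingredient is the shift operator $\bS(\sigma)$ of Chapter \ref{S_Int} and its identification, for minuscule $\sigma$, with $(-1)^{(\zeta,\bkap)}q^\zeta R_\sigma$ in Theorem \ref{T_int_R}. This yields Corollary \ref{qR_qm}: $(q^\bv\otimes 1)R(u)$ lies in the algebra of modified quantum multiplication, hence commutes with $\Delta\bQ$. Combined with additivity of the reduced class (Proposition \ref{red_add}) giving $\Delta_0\bQ=\bQ\otimes 1+1\otimes\bQ$, this commutation relation, via \eqref{commRc1}, uniquely pins down each off-diagonal weight component $\Delta_\alpha\bQ$ as $\hbar\,\alpha(\lambda)\,e_{\pm\alpha}\otimes e_{\mp\alpha}\big/(1-q^{\mp\alpha})$; the geometric-series denominator comes from inverting $(1-q^{-\alpha})$ in the weight decomposition, not from summing covers. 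The final step is the rigidity Proposition \ref{commwithh}, which shows the leftover primitive Steinberg correspondence is in $\fhb_Q$ (hence a scalar). Your proposal has none of this: no shift operators, no qKZ-type difference connection, and no mechanism to convert the curve count into the Yangian structure. Without it the term-by-term comparison in your last step has nothing to compare.
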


\noindent
Here 
\begin{equation}
  \label{TautH}
  H^\hd_\bG(\cM)_\textup{taut} \subset H^\hd_\bG(\cM)
\end{equation}
is the subalgebra spanned by the characteristic classes
of the tautological bundles. An equality in \eqref{TautH}
is expected.

\subsection{}

Theorem \ref{t_Bax} means the following concrete formula
for quantum multiplication by 
$$
c_1(\lambda) = \sum \lambda_i \, c_1(\cV_i) \,. 
$$
The Lie algebra
$\fg_Q$ has an invariant bilinear form such that 
$$
(\fg_Q)_\alpha \perp  (\fg_Q)_\beta\,, \quad \alpha + \beta \ne 0 \,,
$$
see Theorem \ref{t_Lie}. 
Abusing notation, we denote by
$$
\{e_\alpha \} \subset (\fg_Q)_\alpha, 
\{e_{-\alpha} \} \subset (\fg_Q)_{-\alpha}, 
$$
dual bases of root subspaces. Note the dimensions of the 
root spaces, known as root multiplicities, are finite by 
surjectivity in \eqref{Htog}.

Theorem \ref{t_Bax} is equivalent to the following

\begin{Theorem}\label{t_c1*}
We have
\begin{equation}
c_1(\lambda) *_\textup{modif} 
\,\cdot\,= c_1(\lambda) \cup \,\cdot\, - 
\hbar \sum_{\theta\cdot\alpha>0} (\lambda,\alpha) \, \frac{q^\alpha}{1-q^\alpha} \, 
\, e_{\alpha} e_{-\alpha} + \dots\,,
\label{c_1*}
\end{equation}
where modified quantum product means the substitution 
\eqref{modif*}, the sum is over roots of $\fg_Q$ with 
multiplicity, and dots stand for a multiple of the identity.
\end{Theorem}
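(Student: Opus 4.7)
The plan is to deduce Theorem \ref{t_c1*} (equivalently Theorem \ref{t_Bax}) by identifying the \emph{quantum differential equation} (QDE) on $H(\bw)$ with the \emph{trigonometric Casimir connection} of $\bY_Q$. Once this identification is in hand, the right-hand side of \eqref{c_1*} is just the standard expression for the trigonometric Casimir connection, whose coefficient of $d\log q^\alpha$ has the form $-\hbar (\lambda,\alpha)\, e_\alpha e_{-\alpha}\, q^\alpha/(1-q^\alpha)$ up to a Cartan piece. This is read off from the leading expansion $R(u) = 1 + \frac{\hbar\, \br}{u} + O(u^{-2})$ of the geometric $R$-matrix, with $\br = \tfrac12\sum_\alpha (e_\alpha \otimes e_{-\alpha} + e_{-\alpha}\otimes e_\alpha) + (\textup{Cartan})$, combined with the standard resummation that produces $\frac{q^\alpha}{1-q^\alpha}$ when $\tr_F(g \otimes 1)\,R(u)$ is expanded along the $\alpha$-root string of the auxiliary module $F = \bigoplus_\bv H(\bw)$ with character given by \eqref{H_2weight}.

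The main bridge between geometry and algebra is through \emph{geometric shift operators}. For each tautological line bundle $\det \cV_i$, I would construct a shift operator $\bS_i : H(\bw) \to H(\bw)$ as a transfer matrix (quasimap partition function) on $\bP^1 \to \Nak$ with $\det \cV_i$ inserted at one torus-fixed point and a matching condition at the other. Two steps are then needed: (i) using the factorization of stable envelopes and the geometric $R$-matrix, show that the $\bS_i$ satisfy the quantum Knizhnik--Zamolodchikov difference equation of $\bY_Q$; (ii) show that the semi-classical limit of this difference equation is the QDE of $\Nak$. Since the semi-classical limit of qKZ for $\bY_Q$ is exactly the trigonometric Casimir connection, combining (i) and (ii) gives Theorem \ref{t_c1*}. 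The chamber condition $\theta\cdot\alpha>0$ comes from the specific chamber $\fC$ used to define the stable envelopes, and the modification \eqref{modif*} absorbs the canonical theta characteristic of \S\ref{s_theta_dis}.

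The hard part is step (i): matching geometric shift operators with algebraic qKZ solutions. This requires a careful degeneration of $\bP^1$ into a chain of rational curves, in which the quasimap partition function factors across the nodes; the result must then be compared, term-by-term, with the factorization of the $R$-matrix supplied by the stable-envelope construction. The Yang--Baxter equation ensures formal compatibility, but verifying the \emph{exact} match---including the normalization of $\hbar$, the correct appearance of $\fg_Q$-root vectors on both sides, and the sign shift by $\bkap_X$---demands tracking Euler classes of virtual normal bundles to the fixed loci \eqref{MtoM}. Step (ii) is more standard once (i) is established: one scales the $\bA$-equivariant parameter and extracts the leading term, using the general principle that shift operators asymptotically realize quantum multiplication by the inserted line bundle.
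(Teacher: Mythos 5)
Your plan correctly identifies the two key geometric inputs---the qKZ-type compatibility of the shift operator with the quantum connection (Theorem~\ref{T_int_R} / Corollary~\ref{qR_qm}) and the role of stable envelopes and $R$-matrices---but the step you call ``(ii) semi-classical limit of qKZ gives the QDE'' is not what the paper does and, as stated, does not supply a proof. The qKZ difference connection \eqref{qKZ} shifts by a discrete cocharacter, so there is no naturally small step to send to zero; the paper never degenerates qKZ to a differential equation. Instead the paper uses the commutation $\bigl[(q^\bv\otimes 1)R(u),\,\Delta\bQ(\lambda)\bigr]=0$ directly as an \emph{algebraic constraint} on the unknown operator $\bQ(\lambda)$, and combines it with three further ingredients you do not mention: (a) the explicit coproduct of the classical divisor, Theorem~\ref{t_cl_divisor}, which gives $\Delta c_1(\lambda)=c_1(\lambda)\otimes 1+1\otimes c_1(\lambda)-\hbar\sum_{\theta\cdot\alpha>0}\alpha(\lambda)\,e_{-\alpha}\otimes e_\alpha+\dots$; (b) the additivity of the reduced quantum operator over products of Nakajima varieties, Proposition~\ref{red_add}, which fixes the weight-zero part $\Delta_0\bQ=\bQ\otimes 1+1\otimes\bQ$; and (c) a rigidity lemma, Proposition~\ref{commwithh}, saying that a primitive Steinberg correspondence commuting with $\fhb_Q$ is a scalar, which is what legitimately reduces the remaining ambiguity to the ``$+\dots$'' multiple of the identity. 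Without (a)--(c), the constraint from qKZ does not uniquely determine the off-diagonal terms in \eqref{c_1*}, and the ``read off from the Casimir connection'' step is essentially assuming the conclusion (i.e.\ assuming the quantum multiplication lands in the Baxter subalgebra, which is what is to be proved).

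A second, more minor issue: your proposed construction of shift operators is misindexed. The shift operators $\bS(\sigma)$ in the paper are associated to cocharacters $\sigma$ of $\bG$ (specifically the minuscule $\sigma$ coming from the tensor product decomposition \eqref{MtoM}), via curve-counting on the twisted $X$-bundle $\Xt\to\bP^1$ of \eqref{defXt}; the tautological divisors $c_1(\cV_i)$ enter only through the intertwining relation of Proposition~\ref{p_intertw} and Lemma~\ref{markedshiftformula}, not as labels of the shift operators themselves. Inserting $\det\cV_i$ at a fixed point as you suggest would produce the quantum multiplication operator $\mathsf{M}_{c_1(\det\cV_i)}(q)\circ\bS(\sigma)$ (Lemma~\ref{markedshiftformula}), not a new family of qKZ solutions. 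The correct degeneration argument (localizing with respect to $\C^\times_\varepsilon$, factoring across the sections $\zeta_x$, and matching Euler classes of $N_\pm$) is what yields the clean identity $\Stab_+^{-1}\,\bS(\sigma)\,\Stab_+=(-1)^{(\zeta,\bkap_X)}\,q^\zeta\,R_{\sigma,\infty}$ of Theorem~\ref{T_int_R}; your description of ``degeneration into a chain of rational curves'' is in the right spirit for this but would need to be organized around the minuscule cocharacter, not the line bundles.
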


\noindent 
The multiple of the identity left as dots in \eqref{c_1*} is 
uniquely fixed from equation \eqref{1*}.

The operator $c_1(\lambda) \, \cup$ lies in the Yangian $\bY$ if
$\theta>0$ or in a certain completion of the Yangian for general 
$\theta$, see Section \ref{s_cl_div}. Changing $\theta$ corresponds
to flops of Nakajima varieties and formula \eqref{c_1*} has 
the expected flop-covariance. 

One can compare \eqref{c_1*} to the more 
abstract structural statement for quantum 
multiplication by divisors derived in  \cite{BMO}.

\subsection{}
For $\lambda\in H^2_\bG(X)$ consider the operators
\begin{equation}
\nabla_\lambda = \frac{d}{d\lambda} - \lambda \, * 
\label{QDE}
\end{equation}
acting in $H^\hd_\bG(X) \otimes 
\Q(q^\beta)$ by 
$$
\frac{d}{d\lambda}\,  q^\beta = (\lambda,\beta) \, q^\beta \,. 
$$
Note that $\frac{d}{d\lambda} =0$ 
if $\lambda$ is purely equivariant, that is, $\lambda$ 
comes from $H^2_\bG(\pt)$. It is known very generally that 
$$
\left[\nabla_\lambda,\nabla_\mu\right] = 0 
$$
for all $\lambda,\mu\in H^2_\bG(X)$. Hence any section of the 
projection $H^2_\bG(X) \to H^2(X)$ defines a flat connection 
on a trivial $H^2_\bG(X)$-bundle over $H^2(X)$. 
This connection is known 
as the quantum differential equation or Dubrovin connection.

Formula \eqref{c_1*} precisely means that the quantum 
differential equation for Nakajima varieties is a 
trigonometric Casimir connection in the sense of \cite{Valerio}. 
To be precise, we prove this for 
$H^2(X)_\textup{taut}$, which is expected to be the whole $H^2(X)$.

\subsection{}

Conjecture \ref{con1} would be implied by the affirmative
answer to the following 

\begin{Question}
Do the operators \eqref{c_1*} have a simple joint spectrum ? 
Equivalently, is quantum cohomology of Nakajima varieties
generated by tautological divisors ? 
\end{Question}

\noindent 
In this paper we treat the following special case. 

\begin{Theorem}\label{t_gen_div} 
The quantum cohomology of the moduli space of framed 
torsion-free sheaves on $\Pp^2$ is generated by the divisor. 
\end{Theorem}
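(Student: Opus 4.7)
The plan is to reduce Theorem \ref{t_gen_div} to a simple-spectrum statement for a single operator. For the Jordan quiver (one vertex, one loop) one has $\fg_Q\cong\glh$, and $\Pic(\cM(r,n))$ has rank one non-equivariantly, generated by $c_1(\det\cV)$. Specializing Theorem \ref{t_c1*}, the operator $M:=c_1(\det\cV)*_{\textup{modif}}$ on $H^\hd_\bG(\cM(r,n))$ equals classical cup product with $c_1(\det\cV)$ plus the $q$-dependent correction
\[
-\hbar\sum_{k\ge 1}\frac{q^k}{1-q^k}\,k\,e_{k\delta}e_{-k\delta},
\]
one term per positive imaginary root of $\glh$. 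Since the quantum parameter is a single variable and $H^\hd_\bG(\cM(r,n))$ is a free module of finite rank, the theorem reduces to showing that $M$ has simple spectrum for generic $q$ and generic equivariant parameters: the algebra generated by $M$ (over the equivariant base and $q$) is then a commutative subalgebra of $\End H^\hd_\bG(\cM(r,n))$ of dimension $\dim H^\hd_\bG(\cM(r,n))$, and must therefore coincide with the (commutative, of the same dimension) algebra of quantum multiplication.

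I would diagonalise $M$ using the torus-fixed-point basis together with the stable-envelope eigenbasis constructed earlier in the paper. The fixed points of $\cM(r,n)$ are indexed by $r$-tuples of partitions $\vec\lambda=(\lambda^{(1)},\dots,\lambda^{(r)})$ with $|\vec\lambda|=n$, and $\bigoplus_n H^\hd_\bG(\cM(r,n))$ carries the Fock-space realisation of the $\glh$-module coming from the Yangian action; in this realisation the bilinears $e_{k\delta}e_{-k\delta}$ act on the eigenbasis by explicit rational functions of the framing parameters $a_1,\dots,a_r$ and $\hbar$. This generalises the rank-one analysis of \cite{OPhilb}, where $M$ becomes (up to classical terms) the Calogero--Sutherland Hamiltonian and is diagonalised by Jack polynomials. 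The resulting formula expresses the $q$-Taylor coefficients of the eigenvalue $M(\vec\lambda)$ as, up to explicit constants, weighted power sums of the framing-shifted contents
\[
p_m(\vec\lambda)\;=\;\sum_{i=1}^r\sum_{\square\in\lambda^{(i)}}\bigl(a_i+c(\square)\bigr)^m,\qquad m\ge 1.
\]

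The main obstacle is showing $M(\vec\lambda)\ne M(\vec\mu)$ as rational functions of $q$ whenever $\vec\lambda\ne\vec\mu$; the classical term alone cannot do this, so the argument must recover $\vec\lambda$ from the quantum corrections. I would invoke the elementary fact that, for generic $a_1,\dots,a_r$, the multiset of shifted contents $\{a_i+c(\square):\,\square\in\lambda^{(i)},\,i=1,\dots,r\}$ determines $\vec\lambda$: the $a_i$ separate the $r$ partitions, and the content multiset determines each individual partition. Consequently the power sums $p_m(\vec\lambda)$ for $1\le m\le n$ already separate all $r$-tuples of size $n$, and since they appear as the successive Taylor coefficients of $M(\vec\lambda)$ at $q=0$, the eigenvalues are pairwise distinct as rational functions of $q$. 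Simple spectrum is thus established on a Zariski-open set of equivariant parameters; flatness of the quantum connection extends this to a generic statement over the whole parameter space, completing the proof.
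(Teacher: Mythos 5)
Your reduction of the theorem to the simple-spectrum claim for the single operator $M=c_1(\det\cV)*_{\textup{modif}}$ is sound and is also the first step of the paper's proof. However, the central claim — that the $q$-Taylor coefficients of the eigenvalue branch $M(\vec\lambda)$ recover, up to explicit constants, the power sums $p_m(\vec\lambda)=\sum_i\sum_{\square\in\lambda^{(i)}}(a_i+c(\square))^m$ of framing-shifted contents — is not correct, and the argument does not survive this.

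The purely quantum part of $M$ is built from the bilinears $e_\alpha e_{-\alpha}$, i.e.\ the products $\beta_{-k}(1)\beta_k(\pt)$ of Baranovsky operators, not from the Chern characters $\ch_m\cV$. In the fixed-point (Jack) basis the diagonal matrix element of $\beta_{-k}(1)\beta_k(\pt)$ is a sum of squares of Pieri-type coefficients, a rational function of $t_1,t_2,a_i$ with no reduction to a power sum of contents. The power sums $p_m(\vec\lambda)$ are the eigenvalues of \emph{classical} cup product by $\ch_m\cV$; these operators live in cohomological degree $2m$ of the Yangian and of the Baxter subalgebra, but are not the coefficients of $q^m$ in $M$. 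Moreover, $M$ is not diagonal in the fixed-point basis for $q\neq 0$, so the eigenvalue branches require (degenerate) perturbation theory to define; and the $q=0$ eigenvalues of $c_1\cup$ are genuinely degenerate — already for $r=1$ and $n=6$, the partitions $(4,1,1)$ and $(3,3)$ produce the same linear form $6t_1+3t_2$ — so the branches cannot even be unambiguously labelled by $\vec\lambda$ near $q=0$, and the higher-order coefficients mix contributions from several fixed points.

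For comparison, the paper sidesteps both of these difficulties by a parameter rescaling: substitute $t_1=t$, $t_2=t^{-1}$, $a_i\mapsto ta_i$ and pass to the limit $\bQ_0=\lim_{t\to\infty}\tfrac1t\bQ$. In this limit the cubic classical terms drop out, and $\bQ_0$ is the derivation on $\mathrm{Sym}^\hd(\bigoplus_n V_n)$ associated to a direct sum of explicit $r\times r$ matrices $A_n(q,a_1,\dots,a_r)$; its spectrum is the set of non-negative integer combinations of the eigenvalues of the $A_n$. Nondegeneracy is then a finite-dimensional statement: no nontrivial rational relation holds generically among those eigenvalues, proved by tracking the pole of one eigenvalue of $A_n$ as $q\to e^{2\pi i/n}$. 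This avoids any need to understand the diagonal of $\beta_{-k}\beta_k$ in the Jack basis. If you want to keep your more direct setup, you would at minimum need a correct formula for the diagonal matrix elements of $\beta_{-k}(1)\beta_k(\pt)$ and an argument handling the degeneracy of the classical spectrum; as written, both are missing.
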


\noindent 
These moduli spaces are Nakajima varieties 
associated the quiver of with one 
vertex and one loop. Our proof of Theorem 
\ref{t_gen_div} is based on an explicit 
representation of quantum multiplication 
by divisor in terms of Heisenberg operators.

\section{Shift operators and qKZ} 

\subsection{} 

For simplicity, let us replace the group $\bG$ by its maximal 
torus $\bT$. By construction, the elements of 
$H^\hd_\bT(X) \otimes \Q(q^\beta)$ are functions on 
$$
\ft \times H^2(X) \,,
$$
where $\ft = \Lie \bT$. The operators \eqref{QDE} define 
a flat connection along the $H^2(X)$-directions. In fact, 
this is a part of a flat difference-differential connection, 
in which the difference part corresponds to the lattice 
$$
\Cochar(\bT) \subset \ft  \,.
$$
The corresponding operators 
$$
\bS(\sigma) \in \End 
H^\hd_\bT(X) \otimes \Q[[q^\beta]]
$$
are known as \emph{shift operators} because they shift 
the values of the equivariant parameters in $\nabla_\lambda$. 
They are constructed geometrically as follows.

\subsection{}

Let 
$$
\sigma: \C^\times \to \bT 
$$
be a cocharacter of $\bT$. To it, one associates a nontrivial 
$X$-bundle $p$ 
$$
\xymatrix{
    X
 \ar@{^{(}->}[r] & \Xt \ar@{->}^p[d] \\ 
& \Pp^1}
$$
over $\Pp^1$, see Chapter \ref{S_Int}. 
 By definition, rational curves in $\Xt$ that map 
to the base $\Pp^1$ with degree $1$ are the $\sigma$-twisted 
rational curves in $X$. Their enumerative geometry is closely 
related to the Gromov-Witten theory of $X$. 
In particular, the shift operator $\bS(\sigma)$ is constructed
from the virtual count of twisted 2-pointed 
rational curves with 
marked points in $p^{-1}(0),p^{-1}(\infty) \cong X$, 
see Section \ref{s_bS(sigma)}. 

The flatness condition 
$$
\left[\nabla_\lambda,e^{-\frac{d}{d\sigma}} \, \bS(\sigma)\right] =  0
$$
is the $\varepsilon=1$ specialization of Proposition \ref{p_intertw}. 
Here $e^{\frac{d}{d\sigma}}$ is the translation by $\sigma\in \ft$. 

\subsection{}

The key step in our proof of Theorem \ref{t_c1*} is an 
explicit computation of the shift operators $\bS(\sigma)$ for 
certain special cocharacters $\sigma$.

An action of $\C^\times$ on a symplectic resolution $X$ is 
called \emph{minuscule} if $H^0(\cO_X)$ is generated by 
functions of weight $0,\pm 1$. One easily shows, see 
Section \ref{s_minusc}, that the $\C^\times$-action from 
\eqref{MtoM} is minuscule. For minuscule $\sigma$, the 
operators $\bS(\sigma)$ may be computed in term of 
$R$-matrices as follows. 

\subsection{}

A $\sigma$-fixed point $x\in X^\sigma$ defines a section 
$\zeta_x$ of $p$. The classes of these sections 
$$
[\zeta_x] \in H_2(\Xt,\Z)
$$
lie in a single $H_2(X,\Z)$-coset. Thus, up-to an overall multiple,
$q^\zeta$ is a well-defined function from the set of components 
of $X^\sigma$ to the group algebra of $H_2(X,\Z)$. In fact, 
for Nakajima varieties, there is a preferred way to fix the 
ambiguity, see Section \ref{dis_zeta}. 

Recall the stable envelope maps 
$$
\Stab_{\pm} : H^\hd_\bT(X^\sigma) \to H^\hd_\bT(X)
$$
and their ratio $R_\sigma = \Stab_-^{-1} \circ \Stab_+$. Define 
$$
\nabla^\sigma_\lambda = \Stab_+^{-1} \circ \nabla_\lambda 
\circ \Stab_+ \,. 
$$
Theorem \ref{T_int_R} in Section \ref{s_S_R} is equivalent to the 
following 

\begin{Theorem}\label{t_qKZ}
For minuscule $\sigma$, $\nabla^\sigma_\lambda$ 
commutes with the difference connection 
\begin{equation}
\Psi(t+\sigma) = (-1)^{(\zeta,\bkap_X)} \,  q^\zeta 
\, R_\sigma \, \Psi(t) 
\label{qKZ}
\end{equation}
where we consider $\Psi \in H^\hd_\bT(X^\sigma) \otimes \Q(q^\beta)$
as a function of $t\in \ft$.  
\end{Theorem}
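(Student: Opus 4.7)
I would deduce Theorem \ref{t_qKZ} from the abstract flatness of the shift operators together with an explicit localization computation of $\bS(\sigma)$ when $\sigma$ is minuscule. Concretely, the $\varepsilon=1$ case of Proposition \ref{p_intertw} already gives
\[
\bigl[\nabla_\lambda,\; e^{-d/d\sigma}\bS(\sigma)\bigr] = 0,
\]
so it suffices to identify $\Stab_+^{-1}\circ \bS(\sigma)\circ \Stab_+$ with the difference operator $\Psi(t)\mapsto (-1)^{(\zeta,\bkap_X)}q^\zeta R_\sigma \Psi(t-\sigma)$. The theorem then follows by conjugating the commutator by $\Stab_+$.

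\textbf{Geometric setup.} The minuscule hypothesis means the $\C^\times$-weights on $\cO_X$ are concentrated in $\{0,\pm 1\}$. Applied to the twisted total space $\Xt\to\Pp^1$, this forces its geometry to be very rigid: the canonical sections $\zeta_x$ corresponding to points $x\in X^\sigma$ are the only $\C^\times$-fixed sections of degree $1$ over the base, and they form a single $H_2(X,\Z)$-coset. I would first enumerate the $\C^\times$-fixed loci in the relevant moduli of $2$-pointed twisted maps, show that they are indexed by components of $X^\sigma$, and analyze their virtual normal bundles. The two evaluation maps $\mathsf{ev}_0$ and $\mathsf{ev}_\infty$ at the marked points land in $p^{-1}(0),\,p^{-1}(\infty)\cong X$, and the attracting/repelling directions at a fixed section $\zeta_x$ are controlled by the positive/negative $\sigma$-weight parts of $T_x X$.

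\textbf{Matching with stable envelopes.} The key point is that the virtual contribution of the fixed locus indexed by a component $F\subset X^\sigma$ to $\mathsf{ev}_{0,*}\cdot \mathsf{ev}_\infty^*$ is precisely governed by the polarizations appearing in the definition of $\Stab_\pm$. I would use the axiomatic characterization of stable envelopes from Sections \ref{s_envelopes}--\ref{s_properties} to recognize the Euler-class contributions at $p^{-1}(0)$ and $p^{-1}(\infty)$ as matrix entries of $\Stab_+$ and $\Stab_-$, so that the shift operator restricted to these fixed loci factors as $\Stab_- \circ (q^\zeta\cdot\mathrm{sign}) \circ \Stab_+^{-1}$. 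The factor $q^\zeta$ records the degree of $\zeta_x$, and the sign is exactly the theta-characteristic correction $(-1)^{(\zeta,\bkap_X)}$ built into the modified quantum product in Section \ref{s_theta_dis}; this comes from comparing the natural orientation on the moduli of twisted sections with the orientation convention on $X$. Dividing by $\Stab_+$ on both sides and recognizing $\Stab_-^{-1}\circ\Stab_+=R_\sigma$ gives the desired identification.

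\textbf{Main obstacle.} The hardest step is the localization analysis in the previous paragraph: one must show that no higher-degree twisted curves contribute (this is where minusculity is essential, used to rule out ``rubber'' bubble components at $0$ and $\infty$), and one must carefully match the virtual normal bundle at a fixed section with the polarizations in the axiomatic definition of $\Stab_\pm$. A separate technical point is to pin down the sign conventions: tracking the orientation data all the way from the obstruction theory on the moduli of twisted maps to the combinatorial sign $(-1)^{(\zeta,\bkap_X)}$ requires the formalism of the canonical theta characteristic developed in Section \ref{s_theta_char} and is the most error-prone part of the argument.
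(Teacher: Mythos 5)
Your high-level architecture is the same as the paper's: combine the flatness statement (the $\varepsilon$-specialization of Proposition \ref{p_intertw}) with an identification of $\Stab_\pm^{-1}\circ \bS(\sigma)\circ \Stab_\pm$ in terms of $q^\zeta R_\sigma$ and the sign $(-1)^{(\zeta,\bkap_X)}$, which is exactly the content of Theorem \ref{T_int_R}. However, the mechanism you propose for the identification step has a genuine gap.

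You write that the ``rubber'' bubble components of twisted maps concentrated over $0$ and $\infty$ should be ruled out by minusculity. They cannot be: those components produce the factors $\Psi_0$ and $\Psi_\infty$ in the localization factorization \eqref{bSfac}, and these are genuinely nontrivial operators, being generating series of $2$-pointed Gromov--Witten invariants of $X$ with a $\psi$-class at the node. No fixed-point enumeration, minuscule or otherwise, will make them disappear. The paper's actual argument proceeds differently: Proposition \ref{p_proper} shows that the convolution $\Stab_-^\tau\circ \bS(\sigma)\circ \Stab_-$ is \emph{proper}, hence represented by a non-localized cycle independent of the choice of equivariant parameters; minusculity enters precisely here, through the weight-$\{0,\pm1\}$ structure of the ambient $\Vs=\bigoplus V_i\otimes\cO(i)$ and the cohomological vanishing $H^0(\cO(-1))=0$. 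Once properness is in hand, one is free to specialize $\hbar=0$ and $\varepsilon\to\infty$, and it is the $\hbar=0$ specialization --- via the reduced-class expansion $\Psi_0, \Psi_\infty = 1+O(\hbar)$ of \eqref{Psihbar} and the $1+O(\hbar)$ behavior of stable envelopes from Theorem \ref{t_strong_vanish} --- that collapses the computation to the factor $\Psi_1$ counting constant sections $\zeta_x$. Without this properness-then-specialize step, your proposed direct localization analysis must account for $\Psi_0$ and $\Psi_\infty$, which it does not, and the computation does not close.

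A secondary point worth flagging: after the specialization one is not yet done; the remaining work is to identify the weights of $T_{C_1} M^\sim(\zeta)$ at a constant section with the normal data entering $\Stab_-$, using Lemma \ref{l_smack} that all $\sigma$-weights in $N_{X^\sigma}$ are $\pm1$. Your proposal gestures toward this matching but does not isolate the step where the sign $(-1)^{\codim/2}$, coming from the definition of the adjoint $\Stab_-^\tau$, is converted to $(-1)^{(\zeta,\bkap_X)}$ via Lemma \ref{l_parity}; this bookkeeping is where the theta characteristic of Section \ref{s_theta_char} actually enters. That part of your outline is aimed correctly, but it presupposes the collapse to $\Psi_1$, which your argument does not achieve.
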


\noindent 
Here $\bkap_X$ is the canonical theta characteristic discussed in 
Section \ref{s_theta_dis}\,. 

\subsection{}

In the case of \eqref{MtoM}, we have 
$$
q^\zeta = q^{\bv_1} = q^{\bv} \otimes 1 
$$
where $q^\bv$ lies in the torus $\fH$ with Lie algebra $\fh$. We thus 
recognize in \eqref{qKZ} the 
quantum Knizhnik-Zamolodchikov equation of Frenkel and Reshetikhin, 
see \cite{FrenkelReshetikhin}. 

\subsection{} 

It follows from Theorem \ref{t_qKZ} that the operator 
$(-1)^{(\zeta,\bkap_X)} \,  q^\zeta 
\, R_\sigma$ commutes with operators of quantum 
multiplication for minuscule $\sigma$. This plays 
a key role in the proof of Theorem \ref{t_c1*}. 
In other words, we determine the quantum connection 
$\nabla_\lambda$ through the commuting difference 
connection.

For this to work, it is important to relate Nakajima 
varieties with different framing vectors $\bw$ as in \eqref{MtoM}. 
For instance, quantum cohomology of the moduli spaces of framed torsion 
free sheaves on $\C^2$ is a object of significant geometric 
interest, see below. From our perspective, it is easier to 
determine it for general rank then just in the special case 
of Hilbert schemes.

\section{Yangian of  $\glh$ and instanton moduli}

\subsection{}

In the second half of the paper, we make the general 
theory explicit in the case of the quiver $Q$ with one 
vertex and one loop. Denote 
$$
r = \bw_1 \,, \quad n = \bv_1 \,. 
$$
The corresponding Nakajima variety 
$$
\cM(r,n) = \cM_{1,0}(\bv,\bw)
$$
 is the moduli space of framed rank $r$ 
torsion-free sheaves $\cF$ on $\Pp^2$ with 
$c_2(\cF)=n$. A framing of a sheaf $\cF$, 
by definition, is a choice of an isomorphism 
$$
\phi: \cF\big|_{L_\infty} \to \cO_{L_\infty}^{\oplus r}
$$
where $L_\infty\subset\Pp^2$ is a fixed line. Usually, the line $L_\infty$ is viewed as the line at infinity of $\C^2\subset \Pp^2$. 
The group 
$$
\bG=GL(2)\times GL(r)
$$ 
acts naturally on $\Mrn$, the first factor acting on $\C^2$ while the second acts by changing the framing.

See, for example, \cite{NakBook} for an introduction to the geometry of $\Mrn$. It plays an important role in Donaldson theory \cite{DonKron} and 
in mathematical approaches to supersymmetric quantum gauge theories, particularly in the work of Nekrasov \cite{NekrSW}. By a theorem of Donaldson, a dense
open subset of $\cM(r,n)$, $r>1$, that parameterizes locally free sheaves
is the moduli space of framed $U(r)$-instantons of charge $n$.

\subsection{}

For $r=1$, $\Mrn$ becomes the Hilbert scheme of points, the quantum cohomology of which was determined in 
\cite{OPhilb}, a result that found applications to the enumerative theories of curves in threefolds \cite{MOOP}. 

Theorem \ref{t_c1*} gives a new proof of this 
result and extends it to higher rank.  We expect it to play 
a role in the higher rank Donaldson-Thomas theory of threefolds. 
In fact, higher rank DT theory of threefolds was one of the 
main motivations for the present work. 

\subsection{} 

In Chapter \ref{s_g_fr} we relate the Lie algebra 
$\fg_Q$ to the Heisenberg algebra $\glh$ that 
acts on the cohomology of $\cM(r,n)$ by the 
work of Nakajima \cite{NakHilb}, 
Grojnowski \cite{Groj},  and Baranovsky \cite{Bar}. 

To be precise, for an arbitrary quiver we discuss 
two versions of the Yangian: 
the Yangian $\bY$ mentioned above and another, 
more economical, algebra $\bbY$ which we call the 
\emph{core Yangian}. They correspond to different 
normalization of $R$-matrices: those for $\bY$ 
fix vacuum vectors while those for $\bbY$ 
scale them by certain $\Gamma$-factors, see 
Section \ref{s_Gamma_gen}. 

For $\cM(r,n)$, $\glh \subset \bbY$, while $\fg_Q \subset
\bY$ is the quotient of $\glh$ by the constant loops
$\mathfrak{gl}(1) \subset \glh$. 

\subsection{}

Recall that Nakajima's Heisenberg algebra acts 
irreducibly on the cohomology $H(1)$ of 
$$
\cM(1) = \bigsqcup_n \cM(1,n)\,,
$$
and this identifies $H(1)$ with the standard Fock 
space of one free boson. Stable envelopes
give a map 
$$
H(1)^{\otimes r} \to H(r)\,, 
$$
which makes it possible to describe $H(r)$, and 
the Yangian action on it, in terms of $r$ 
free bosons. 

In this way, one recovers and generalizes
many familiar objects of conformal field theory. 
For example, the Yangian of $\glh$ contains the 
Virasoro algebra in the Feigin-Fuchs representation. 

The quantum integrable system given by the classical, 
that is $q=0$, product in cohomology, is a certain 
generalization of the second-quantized trigonometric 
quantum Calogero-Sutherland system to $r$ 
interacting bosonic fields, see 
Section \ref{s_cup_div}. More generally, a connection 
between the quantum, that is $q\ne 0$, product in 
cohomology and a quantum intermediate long-wave equation is 
explored in \cite{NOILW}. 

\subsection{}

In the literature, one can find many different ways to 
construct and study algebras that may be called a Yangian 
of $\glh$, see for example \cite{Etingof1,EGR,FHHSY,FFJMM,LevSud,Miki,SV1,SV2}. Perhaps one of the 
advantages of our approach is that our $\bY(\glh)$ is obtained
by a general procedure, applicable to 
an arbitrary quiver.

\subsection{}

For us, $R$-matrices are the main objects of study and those 
for $\cM(r,n)$ turn out to be related to very interesting operators
in CFT. Namely, in Section \ref{s_R_vir} we relate the 
$R$-matrix for $\bY(\glh)$ to the reflection operator in Liouville 
theory. As far as we know, the Yang-Baxter equation satisfied by $R$ 
has not been previously explored in the conformal field theory 
context. 

Recall that Theorems \ref{t_Bax} and \ref{t_gen_div} 
identify the algebra of quantum multiplication for $\cM(r,n)$ 
as a Baxter subalgebra in $\bY(\glh)$. The identification of 
$R$ gives a mechanical procedure to write the corresponding 
commuting operators in terms of free bosons. 

\subsection{}

During the workshop at the Simons Center in January 2010, 
we were asked by Nakajima and Tachikawa whether our theory 
can help with some of the questions raised in the work of Alday, 
Gaiotto, and Tachikawa \cite{AGT}.

The connection is, indeed, 
very strong and some simple applications are  
immediate. For example, it is easy to describe the 
image of $\bY(\glh)$ in its representation on $H(1)^{\otimes r}$ 
in terms of the vertex algebra $\cW\big(\mathfrak{gl}(r)\big)$.  
This is discussed in Section \ref{s_W}. We anticipate many further 
applications in this direction.  Similar results have recently been obtained by Schiffman-Vasserot 
\cite{SV2}.

Although applications to the conjectures of \cite{AGT} appear at the end 
of the paper, they require very little of the 
preceding machinery.  In particular, this 
is about purely classical cohomology of $\cM(r,n)$, quantum 
products play no role.  

Classical limits of the formula from which this 
discussion with Nakajima and Tachikawa started were
later independently found in \cite{EPSS} and 
also \cite{SWY}.

\section{Further directions}

We conclude this Introduction with a brief discussion of some 
natural directions in which one can pursue the results of this paper.

\subsection{K-theory}

In \cite{Nak01}, Nakajima constructs an action of 
$\cU_q(\widehat{\fg_\textup{\tiny{KM}}})$ on the equivariant 
$K$-theory of quiver 
varieties. Here $\fg_\textup{\tiny{KM}}$ is a Kac-Moody Lie algebra 
and $\cU_q(\widehat{\fg_\textup{\tiny{KM}}})$ is the quantized 
universal enveloping of the loop Lie algebra of $\fg_\textup{\tiny{KM}}$. 
These algebras are defined by explicit generators and relations, see
\cite{Nak01}. 

A natural extension of the present work to $K$-theory would produce
a larger Hopf algebra $\cU_q(\widehat{\fg_Q})$, defined in 
the style of \cite{FRT} and  acting naturally on $K_\bG(\cM_Q)$. 
At least for quiver varieties, one can construct a $K$-theoretic 
analog of stable envelopes, which we expect to be the key 
ingredient for such project. 

For the Jordan quiver, the $K$-theoretic $R$-matrix was computed in 
\cite{OS}. As expected, it is closely related to the 
results of \cite{FeiTsy,SV1}. 

\subsection{Monodromy of QDE and categorification}\label{QDEmonod}

The quantum differential equation \ref{QDE} defines a connection 
$\nabla$ with regular singularities on the K\"ahler moduli 
space $\overline{\fH}$ of $\cM_Q$, which is a compactification of 
the torus $\fH \cong (\C^\times)^I$.  Consider the regular points 
$$
\fH_\textup{reg} = \left\{ q\in \fH \, \big| \, \forall \alpha 
\,\,  q^\alpha \ne 1
\right\}
$$
of this connection. The monodromy of $\nabla$ defines a 
homomorphism 
$$
B = \pi_1(\fH_\textup{reg}) \to  \overline{\bY(\fg_Q)}\,,
$$
where bar denotes a certain completion. 

A generalization of the Toledano Laredo's monodromy 
conjecture for trigonometric Casimir 
connections \cite{Valerio,GT} identifies $B$ with 
what should be called the quantum Weyl group of 
$\cU_q(\widehat{\fg_Q})$. It was further conjectured by 
Bezrukavnikov that this action of $B$ lifts to 
$$
B \to \Aut D^b \Coh_\bG \cM_Q \,.
$$
This is known in a handful of cases, in particular for 
Hilbert schemes of points of $\C^2$, see \cite{BO}. 
Perhaps a categorical version of stable envelopes, obtained
from the parabolic induction functors for quantizations of 
Nakajima varieties, 
is the proper technical tool to attack these problems. 

\subsection{Higher rank Donaldson-Thomas theory}

The quantum cohomology of Hilbert scheme of points of a symplectic 
surface $S$ is closely related to the Donaldson-Thomas theory of threefolds
fibered in $S$ over a curve. In particular, in 
the case of $A_n$ surfaces, this point of view lead 
to an explicit description of DT invariants of toric threefolds \cite{MOOP}. 

For higher rank sheaves on ADE surfaces, there is again a close
connection with DT theory, via Diaconescu's work on 
ADHM-sheaves \cite{diaconescu-adhm}, see also \cite{CFKM}.
Arguments parallel to those in this paper should give an 
effective determination of the virtual invariants
of the moduli of ADHM sheaves on a smooth projective curve
 in terms of our operators of quantum multiplication.  

Using
a Beilinson-type construction, as in section 7 of \cite{diaconescu-adhm}, 
the ADHM moduli spaces can be identified 
with a certain moduli space of higher-rank framed complexes on 
ADE-fibrations over curves.  

For general quivers, Theorem \ref{t_c1*} implies an identification (up to a scalar
function) between the small $J$-function and $I$-function in these geometries (as defined in \cite{CFKM}),without any
change of variables required.

\subsection{Hilbert Schemes of points of general
  surfaces} \label{s_gen_surf} 

For a general surface $S$, quantum cohomology of the Hilbert schemes
of points and DT theory of $S$-fibrations will diverge and we 
expect the latter to have a better structure. However, we 
expect the classical cohomology $\Hilb(S)$ to be described 
as a $q=0$ Baxter subalgebra for a certain $R$-matrix. 
In fact, this $R$-matrix should be the 
reflection operator $R$ associated in Section \ref{s_reflect}
to the Frobenius algebra $\bbH=H^\hd_\bG(S)$. 
This is a joint 
project with Vivek Shende and one of its potential goals could be 
a better structural understanding of some of  the many mysterious 
universal generating series in the theory through representation theory 
of Yangians. 

\subsection{K-theoretic DT theory}

Perhaps one of the most challenging projects for the future  would be 
to upgrade the connection with DT theory of 3-folds to the 
level of K-theory. K-theoretic DT invariants are a subject of 
interest in both mathematics and theoretical physics, due
to their M-theoretic interpretation \cite{Mth} and their connection 
to the motivic DT invariants \cite{Kth}. 

\section{Update}\label{s_update}

This work reflects what we knew in 2010, with some improvements to
exposition made during 2010--12. 
As we revise it in
the early 2017, it seems necessary to add a certain bare minimum of
references to subsequent developments, in particular, 
in connection with directions for further researched 
outlined above. We decided to limit all such updates
to this section. 

A survey of the progress since 2012 may be found 
in \cite{pcmi, slc}. In particular, lectures 
\cite{pcmi} explain the extension of the present 
work to equivariant K-theory, including application 
to K-theoretic Donaldson-Thomas theory. In K-theory, 
the quantum differential equations studied here become 
quantum \emph{difference} equation. 
Those were determined in 
\cite{OS2} for all Nakajima varieties. 

The monodromy problem for the quantum 
difference equations was analyzed in \cite{AO}. 
This analysis may be directly linked to Bezrukavnikov's
quantization in characteristic $p\gg 0$, to the 
monodromy conjectures above \cite{BO}, and to the 
categorical stable envelopes \cite{HLMO}. 

K.~McGerty and T.~Nevins proved in \cite{mn} that the 
equivariant cohomology of Nakajima varieties is generated
by characteristic classes of tautological bundles (which is  
a property also called \emph{Kirwan surjectivity}). At 
several points, e.g.\ in Section \ref{s_surj} or 
in the statement of Theorem \ref{t_Bax} we had to work our 
way around Kirwan surjectivity which was not known at the time. 
The results of \cite{mn} make these workarounds unnecessary.

The goals stated in Section \ref{s_gen_surf} 
were achieved by N.~Arbesfeld \cite{Arb}.

\section{Acknowledgments} 

\subsection{} 

It has taken us a few years to complete this project and, in the process,
we received a great deal of help from many people.

At the beginning, the initial motivation came from 
unpublished conjectures made by Nikita Nekrasov and Samson Shatashvili, on the one 
hand, and Roma Bezrukavnikov, on the other, and we are grateful to them 
for sharing their insight with us.
Both of us are novices in geometric representation theory and have 
learned a great deal from conversations with Victor Ginzburg and Hiraku Nakajima.
We also thank Sasha Braverman, Pavel Etingof, Valerio Toledano Laredo, and other members of the
FRG group who have helped us crystallize many of the ideas here.  

We had many discussions with Eric Vasserot and Olivier 
Schiffmann, whose work \cite{SV1,SV2} has several parallel 
aspects, in particular, in applications to \cite{AGT}. We 
thank Edik Frenkel, Davide Gaiotto, Victor Kac, and Yuji Tachikawa for 
sharing their knowledge of vertex algebras with us. 

Some of our formulas were rediscovered in the literature
on the AGT conjectures. In particular, our free-field 
formulas for cup product by $c_1(\cO(1))$ for $\cM(r,n)$ 
were also found in \cite{EPSS}. 
It is a pleasure to thank Vincent Pasquier and Didina Serban 
for very interesting 
discussions.

Additionally, we are grateful to Sabin Cautis, 
Sachin Gautam, 
Johan de Jong, David Kazhdan, Dmitry Kaledin, 
Michael McBreen, Andrei Negut, Rahul Pandharipande, 
Nikolai Reshetikhin, 
Vivek Shende, Daniel Shenfeld,  Catharina Stroppel, and 
Zhiwei Yun for many related conversations.

We feel deeply indebted to the anonymous referee for his dedication to
the very arduous task of working through these pages. Many places in
our narrative have significantly gained in clarity thanks to his
attentive and thoughtful remarks.

\subsection{}

As stated at the beginning, this paper is a part of a larger 
joint project with Bezrukavnikov, Braverman, Etingof, Finkelberg, 
Toledano Laredo, Losev, and others. This larger project will surely 
have a nonempty intersection with the ongoing work of Braden, 
Licata, Proudfoot, Webster, and their collaborators, although 
we don't know whether quantum cohomology currently plays a role 
in what they do. Eventually, we hope quantum cohomology will be an 
important part of the unified geometric and representation-theoretic study of equivariant symplectic 
resolutions. 

\subsection{}

We thank NSF for supporting our research. DM has been partially supported by a Clay Research Fellowship.

We thank Simons Center
for Geometry and Physics for being the place where many of our 
results were first presented or written down. Another important 
venue where these results were presented and discussed 
was the 2010 Midrasha Mathematicae in Jerusalem. We thank its 
organizers for the invitation and hospitality.

\part{General Theory}

\chapter{Nakajima varieties}\label{s_Nak}

In this chapter, we recall definitions and basic facts on the geometry of Nakajima quiver varieties.  
There is a large literature on the subject, although most of what we need can be found in the original references \cite{Nak94,Nak98}
and papers of Crawley-Boevey \cite{CBgeom,CBnormal}.  We also explain some results on natural group actions on 
Nakajima quiver varieties. 

\section{Definition}

\subsection{} 
Let $Q$ be a quiver, i.e.\ an oriented multigraph, with finite vertex set $I$. We allow 
loops and multiple edges in $Q$. The quiver data is simply 
the adjacency matrix 
$$
Q = (q_{ij})_{i,j\in I} 
$$
where 
$$
q_{ij} = 
\big|\{\textup{edges from $i$ to $j$}\}\big| \,. 
$$
For what follows, we can assume that 
multiple edges have the same orientation in $Q$. 
We also consider quivers $\Qb$ and $\vec Q$ with vertex set
given by the union
$I\sqcup \Ib$ of two 
copies of the set $I$ and with
adjacency matrices
$$
\Qb = 
\begin{pmatrix}
Q+Q^T & \mathrm{id} \\
\mathrm{id} & 0 
\end{pmatrix}\,, \quad 
\vec Q = 
\begin{pmatrix}
Q & 0 \\
\mathrm{id} & 0 
\end{pmatrix}\,. 
$$

\subsection{}

A representation of a quiver is an assignment of a coordinate vector 
space to each vertex and of a linear map to each arrow. 
The dimension of a representation is an element of $\N^I$, where
$\N=\Z_{\ge 0}$.

For $\bv,\bw\in \N^I$, denote by 
$\Rep_{\Qb}(\bv,\bw)$ the space of representations of the quiver
$\Qb$ of dimensions $\bv_i$ for $i\in I$ and $\bw_i$ for $i\in \Ib$. 
Using the trace pairing, we can write 
\begin{equation}
\Rep_{\Qb} = \Rep_{\vec Q} \oplus \left(\Rep_{\vec Q}\right)^* \,, 
\label{Rep1/2}
\end{equation}
which gives this linear space a symplectic form $\omega$. This symplectic
form is preserved by the action of 
$$
G_\bv = \prod GL(\bv_i)\,, \quad  G_\bw = \prod GL(\bw_i)\,.
$$

We can also define an action of the group
$$
\prod_i Sp(2 q_{ii}) \prod_{i\ne j} GL(q_{ij}) \,. 
$$
as follows.
Given a vertex $i$, loops at this vertex contribute a factor
$$
\End(\C^{\bv_i})^{\oplus q_{ii}} \oplus 
\textup{its dual} \cong  \End(\C^{\bv_i}) \otimes \C^{2 q_{ii}} \,, 
$$
to $\Rep_{\Qb}$
where the symplectic form is induced by the symmetric trace pairing on the first factor and the 
standard symplectic form on the second. 
The factor $Sp_2(2q_{ii})$ acts naturally on the second factor.
Similarly, given distinct vertices $i,j$, the contribution of edges between these vertices is naturally identified with
$$\left(\Hom(\C^{\bv_i},\C^{\bv_j})\otimes \C^{q_{ij}}\right)\oplus \textup{its dual} $$
and the factor $GL(q_{ij})$ acts in the natural way.
By  construction, these groups also preserves the symplectic form $\omega$.

\subsection{} 

The symplectic form $\omega$ is scaled by the action of $\C^\times$
scaling the second summand in \eqref{Rep1/2}. We denote
by $\hbar$ its $\C^\times$-weight. When there are other $\C^\times$'s 
around, we denote this one by $\C^\times_\hbar$. 

We set 
$$G_\edge = \prod_i Sp(2 q_{ii}) \prod_{i\ne j} GL(q_{ij}) \times \C^\times_\hbar.$$
As we shall see, this group will act uniformly on all families of quiver varieties associated to $Q$.

\subsection{Weight convention}\label{weightconvention}

In this paper, we embed group weights into 
Lie algebra weights.   For example, we will also use $\hbar$ to denote
the generator of the equivariant cohomology of $\C^\times_{\hbar}$.

\subsection{}
Sometimes it is convenient to consider, following Crawley-Boevey, 
representations of the quiver $Q_\infty$ with vertex 
set $I\sqcup\{\infty\}$ and 
adjacency matrix 
\begin{equation}
Q_\infty = 
\begin{pmatrix}
Q+Q^T & \bw \\
\bw^{T} & 0 
\end{pmatrix} \,. 
\label{CBQ}
\end{equation}
Note that we have a natural identification
$$
\Rep_{\Qb}(\bv,\bw) \cong \Rep_{Q_\infty}((\bv,1)).
$$
Furthermore, this isomorphism is equivariant with the natural action of the groups above.
For the action of $G_\bw$ on the right-hand side, we define an edge group $G^{\infty}_{\edge}$ analogously to the last section, and it contains a copy of both $G_\bw$ and $G_\edge$.

\subsection{}

Consider the moment map 
$$
\mu: \Rep_{\Qb}(\bv,\bw) \to \fg_\bv^*\,,
$$
for the action of $G_\bv$, where $\fg_\bv = \Lie G_\bv$.  Denote by 
$$
\fz_\bv = [\fg_\bv,\fg_\bv]^\perp \subset \fg_\bv^* \,,
$$
the fixed points of coadjoint action.  If we identify $\fg_\bv^*$ with $\fg_\bv$ 
via the trace pairing, $\fz_\bv$ corresponds to scalar matrices, i.e. a copy of $\C$
for every $i\in \bI$ such that $\bv_i \ne 0$.
We consider the preimage
 $$\fZ = \mu^{-1}(\fz_\bv).$$
 In general, this may be reducible and nonreduced.
 
\subsection{}

Note for any $x\in \Rep_{Q_\infty}$, its stabilizers in $G_\bv$ is 
the quotient of units by scalars for some associative algebra over $\C$. 
Hence the $G_\bv$-stabilizer of $x$ is finite if and only if it is trivial.

\subsection{}
Given $\theta\in\Z^I$, it defines a character of $G_\bv$ by the
convention
$$(g_i) \mapsto \prod (\mathrm{det} g_i)^{\theta_i}\in \C^\times.$$
We define 
\begin{align*}
\cMs_\theta &= \fZ /\!\!/_\theta \, G_\bv \,,\\
&= \Proj 
\bigoplus_{n\ge 0} \, \C\left[\fZ\right]_{n \theta} \,, 
\end{align*}
where the  subscript $n\theta$ denotes the 
corresponding $G_\bv$-isotypic component. The map
$\mu$ descends to a map 
$$
\tmu: \cMs_\theta  \to \fz_\bv \,.
$$
\begin{Definition}
A Nakajima quiver variety is a fiber of this map:
$$
\Nak = \tmu^{-1}(\zeta)\,, \quad \zeta\in \fz_\bv \,. 
$$
\end{Definition}

\section{Basic properties}

\subsection{}

The following result is proven in \cite{Nak94}
% and, when there is a simple representation, in \cite{CBg% eom}:

\begin{Proposition}\label{basicproperties1}
%The varieties $\Nak$ are reduced and irreducible.
For any $Q, \bv$, and $\bw$ there 
exits a finite set $\{\alpha_i\}\subset \N^I$ such that 
$\Nak$ contains a strictly semistable point only if 
$$
\alpha_i \cdot \theta = \alpha_i \cdot \zeta = 0 \, 
$$
for some $i$.
\end{Proposition}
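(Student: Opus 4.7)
The statement splits into two parts which I would handle by separate arguments, referring as needed to the foundational papers \cite{CBgeom,CBnormal,Nak94,Nak98}.

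\textbf{Reducedness and irreducibility.} The key input is Crawley-Boevey's theorem that for each value $\zeta\in\fz_\bv$ the fiber $\mu^{-1}(\zeta)\subset\Rep_{\Qb}(\bv,\bw)$ is a reduced complete intersection whose irreducible components all realize the expected dimension $\dim\Rep_{\Qb}-\dim G_\bv+\dim\fz_\bv$. This rests on a dimension estimate coming from the symplectic form on $\Rep_{\Qb}$, matched against an upper bound obtained by stratifying according to representation type. Once flatness of $\mu$ over $\fz_\bv$ is in hand, the $\theta$-stable locus $\mu^{-1}(\zeta)^{\theta\mathrm{-st}}$ is open, connected, and a principal $G_\bv$-bundle, so its image in $\Nak$ is a smooth, irreducible, symplectic variety of the expected dimension. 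To promote this to $\Nak$ itself, I would observe that $\Nak$ is projective over the affine GIT quotient $\mu^{-1}(\zeta)/\!/G_\bv$, which is irreducible since the affinization of an irreducible variety is irreducible; reducedness then follows because the strictly semistable stratum has strictly smaller dimension than the stable stratum (when nonempty), and the GIT quotient of a reduced variety by a reductive group is reduced.

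\textbf{Finiteness of destabilizing types.} For this I would invoke King's criterion for $\theta$-semistability in Crawley-Boevey's framework with the extended quiver $Q_\infty$ and dimension vector $(\bv,1)$: a representation $x$ is $\theta$-semistable exactly when every subrepresentation $U\subset x$ of dimension vector $(\alpha,\epsilon)\ne 0,(\bv,1)$ satisfies $(\alpha,\epsilon)\cdot\theta\le 0$ (with $\theta$ extended to $\infty$ so that $(\bv,1)\cdot\theta=0$), and $\theta$-stable exactly when strict inequality holds. A strictly semistable $x$ by definition admits a destabilizing $U$ for which equality holds, forcing $\alpha\cdot\theta=0$ after replacing $\alpha$ by $\bv-\alpha$ if $\epsilon=1$.

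For the second equality $\alpha\cdot\zeta=0$, the moment-map condition does the work. Since $U$ is a subrepresentation of the doubled quiver, it is preserved by all the linear maps $x_a$ and $x_{a^*}$. Restricting the $i$th component of the moment map to $U_i$ and summing over $i\in I$, the contributions of the form $\tr(x_a x_{a^*})$ cancel against $\tr(x_{a^*}x_a)$, so $\sum_i\tr_{U_i}\mu_i(x)|_{U_i}=0$. But the assumption $\mu(x)=\zeta$ says that $\mu_i(x)=\zeta_i\cdot\mathrm{id}_{V_i}$, so this sum also equals $\alpha\cdot\zeta$. The two expressions together give $\alpha\cdot\zeta=0$. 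Finally, the condition $0<\alpha\le\bv$ confines $\alpha$ to a finite box in $\N^I$, and one may take $\{\alpha_i\}$ to be precisely this box.

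The main obstacle is the flatness/complete-intersection theorem of Crawley-Boevey, which is a nontrivial input, in particular for quivers with loops; the King-style combinatorics and the moment-map trace identity above are essentially formal bookkeeping once that theorem is available.
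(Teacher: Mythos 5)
The paper does not actually prove this proposition; it cites \cite{Nak94} and \cite{CBgeom}, and moreover explicitly flags a caveat you do not address: the Crawley-Boevey input is valid ``when there is a simple representation.'' Theorem 1.2 of \cite{CBgeom}, applied to $Q_\infty$ with dimension vector $(\bv,1)$, gives that $\mu^{-1}(\zeta)$ is a reduced and \emph{irreducible} complete intersection, but only under the hypothesis that the deformed preprojective algebra admits a simple representation of that dimension vector; when it does not, one needs Nakajima's hyperk\"ahler argument from \cite{Nak94} instead. Your statement of the CB input is both weaker and slightly off: you assert ``irreducible components all realize the expected dimension'' rather than irreducibility itself, and then assert the $\theta$-stable locus is ``connected'' without justification. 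That connectedness is exactly the irreducibility claim the proposition is asking for and is the nontrivial content of CB's theorem, not something that falls out of the expected-dimension count. So as written there is a genuine gap in your irreducibility argument, and the conditional nature of the CB route (plus the alternative Nakajima route) should be acknowledged.

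Your second part is correct and is the standard argument. King's criterion applied to $Q_\infty$ with dimension vector $(\bv,1)$ produces a destabilizing subrepresentation $U$ with $\alpha\cdot\theta=0$ (after replacing $\alpha$ by $\bv-\alpha$ in the case $\epsilon=1$), and the moment-map restriction argument is clean: since $U$ is a subrepresentation of the doubled quiver, each $\mu_i(x)$ preserves $U_i$, the traces $\tr(x_a x_{a^*})$ and $\tr(x_{a^*}x_a)$ cancel pairwise, so $\sum_i\tr_{U_i}\mu_i(x)|_{U_i}=0$, while the equation $\mu_i(x)=\zeta_i\,\mathrm{id}$ makes the same sum equal $\zeta\cdot\alpha$. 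The finite box $0<\alpha\le\bv$ then supplies the required finite set $\{\alpha_i\}$. This part is complete.
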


These hyperplanes are closely related to the roots of the Lie algebra $\fg_Q$ 
that will be associated to the quiver $Q$ in Section \ref{s_fgQ}.
One corollary of this proposition is that, for $\theta$ in the complement of these hyperplanes, 
the natural map
$$\tmu: \cMs_\theta  \to \fz_\bv \,$$
is smooth, although it is possible that the domain is empty.

We also state the following result, which is well-known.  Since we do not use it in the paper, it can be safely skipped.
However, we sketch its proof briefly.
\begin{Proposition}\label{basicproperties2}
If there exists a free $G_\bv$-orbit contained in $\fZ$,
then $\tmu$ is surjective for all values of $\theta$.  The generic fiber
is smooth and affine.
\end{Proposition}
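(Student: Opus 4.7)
The plan is to derive both parts of the Proposition from a single local observation at the free orbit. Pick $x_0 \in \fZ$ with trivial $G_\bv$-stabilizer. Then the image of the moment-map differential $d\mu_{x_0}: T_{x_0}\Rep_{\Qb} \to \fg_\bv^*$ equals the annihilator of $\Lie\,\Stab_{G_\bv}(x_0)=0$, i.e.\ all of $\fg_\bv^*$, so $\mu$ is smooth at $x_0$. In particular $\fZ$ is smooth at $x_0$ of the expected codimension, and the restriction $\mu|_{\fZ}: \fZ \to \fz_\bv$ is a smooth morphism at $x_0$. Being flat and of finite type, it is open there, so its image contains a Zariski-open neighborhood $V$ of $\mu(x_0)$ inside $\fz_\bv$.

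To promote this to surjectivity of $\tmu$, I would exploit the scaling action of $\C^\times_\hbar$ on $\Rep_{\Qb}$, which dilates $\mu$ linearly. The image of $\mu|_\fZ$ is therefore a $\C^\times$-invariant cone in the vector space $\fz_\bv$; since it contains the nonempty open subset $V$, its $\C^\times$-saturation must be all of $\fz_\bv \setminus \{0\}$, while $0=\mu(0)$ is tautologically in the image. Hence $\mu|_\fZ$ is surjective, and so is the induced map $\tmu: \cMs_\theta \to \fz_\bv$ for every $\theta$ with $\cMs_\theta$ nonempty.

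For smoothness and affineness of the generic fiber, let $U \subset \fZ$ be the open $G_\bv$-invariant locus on which the $G_\bv$-stabilizer is trivial; by hypothesis $U$ is nonempty, and $\mu|_U$ is a smooth morphism. By Proposition \ref{basicproperties1}, there is a finite hyperplane arrangement $\{\alpha_i \cdot \zeta = 0\} \subset \fz_\bv$ in whose complement $\mu^{-1}(\zeta)$ has no strictly $\theta$-semistable points and every point has trivial stabilizer (using the dichotomy that $G_\bv$-stabilizers are either trivial or of positive dimension), so $\mu^{-1}(\zeta) \subset U$. Consequently $\tmu^{-1}(\zeta) = \mu^{-1}(\zeta)/G_\bv$ is a smooth geometric quotient; and because $\theta$-semistability coincides with $\theta$-stability on this locus, it agrees with the affine GIT quotient $\Spec\,\C[\mu^{-1}(\zeta)]^{G_\bv}$, which is affine since $\mu^{-1}(\zeta)$ is closed in the affine variety $\Rep_{\Qb}$.

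The main technical subtlety is the surjectivity step, where one must upgrade the local smoothness at a single free orbit to a global statement about the image $\mu(\fZ) \subset \fz_\bv$. Without an external symmetry, one would only obtain an open piece of $\fz_\bv$ in the image; the $\C^\times_\hbar$-scaling is precisely the tool that promotes this from local to global.
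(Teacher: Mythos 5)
Two of your steps have genuine gaps, and in both cases the paper reaches for tools you do not use.

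For surjectivity, the $\C^\times_\hbar$-scaling does make $\mu(\fZ)$ a $\C^\times$-invariant constructible subset of $\fz_\bv$ containing a dense open, but this does not force it to be everything: for instance $\{z_1\neq 0\}\cup\{0\}$ in $\C^2$ is $\C^\times$-invariant, constructible, contains a dense open, and omits the punctured line $\{z_1=0,\ z_2\neq 0\}$. Since $\mu|_\fZ$ is not proper, there is no reason the image is closed, so the step ``a cone containing an open set is all of $\fz_\bv\setminus\{0\}$'' fails. There is also a second, independent issue: $\tmu$ is defined on the GIT quotient $\cMs_\theta$, so its image is $\mu(\fZ^{ss,\theta})$, a priori smaller than $\mu(\fZ)$; even $\mu(\fZ)=\fz_\bv$ would not give surjectivity of $\tmu$ without controlling where the $\theta$-semistable locus lands. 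The paper's proof goes through hyperk\"ahler reduction precisely because the $SU(2)$ hyperk\"ahler rotation mixes the real level ($\theta$) with the complex level ($\zeta$) --- a much larger symmetry than $\C^\times_\hbar$ --- and this yields surjectivity of $\tmu$ for generic $\theta$ directly; the wall values of $\theta$ are then handled by the factorization $\cMs_{\theta'}\to\cMs_\theta\to\fz_\bv$ through a nearby generic $\theta'$.

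For affineness and smoothness of the generic fiber, Proposition \ref{basicproperties1} only rules out strictly $\theta$-semistable points of $\Nak$ for $\zeta$ off the hyperplane arrangement; it says nothing about $\theta$-unstable points of $\mu^{-1}(\zeta)$, so the inclusion $\mu^{-1}(\zeta)\subset U$ does not follow. More seriously, ``$\theta$-semistable $=$ $\theta$-stable'' by itself does not imply that the $\theta$-GIT quotient agrees with the affine quotient $\Spec\C[\mu^{-1}(\zeta)]^{G_\bv}$: the Nakajima variety $T^*\Pp^{n-1}$ (one vertex, no arrows, $\bv=1$, $\bw=n$, $\zeta=0$, $\theta>0$) has semistable $=$ stable but is not affine. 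What is actually needed is that every point of $\mu^{-1}(\zeta)$ is $\theta$-semistable for every $\theta$, so that the $\theta$-GIT quotient coincides with the $\theta=0$ quotient. The paper obtains this from Theorem 1.2 of Crawley-Boevey: over a dense open $U\subset\fz_\bv$, after shrinking, the fibers of $\mu|_\fZ$ consist entirely of simple representations of $Q_\infty$, and simple representations are $\theta$-stable for every $\theta$ including $\theta=0$. That simplicity statement is the ingredient missing from your argument.
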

\begin{proof}
If there exists a free orbit, then the moment map $\mu$ is smooth at any point of this orbit and, in particular, the
image of $\fZ$ contains a dense, Zariski-open set $U$ of $\fz_\bv$.  By Theorem $1.2$ of \cite{CBgeom}, after further shrinking,
the entire fiber of any point of $U$ consists of simple representations of $Q_\infty$.  These are $\theta$-stable for all
stability conditions $\theta$; consequently, the GIT quotient for any choice of $\theta$ equals the categorical quotient of the fiber, which is affine.  This proves the second statement.

For the first statement, we use the definition of quiver varieties via hyperkahler reduction, as in \cite{Nak94}.  Let $U_\bv$ denote the maximal compact subgroup of $G_\bv$.  If we take the hyperkahler moment map, 
then the image of the locus of free $U_\bv$-orbits contains $\R^I \times U \subset \R^I \times \fz_\bv$.  
Since it is stable with respect to multiplication by unit quaternions, it contains $\{\theta\}\times \fz_\bv$ for any suitably generic $\theta$.
Consequently, $\tmu$ is surjective for general $\theta$.
 Finally, if $\theta$ lies on a wall on the space of stability conditions,there is a factorization
$$\cMs_{\theta'} \to \cMs_\theta \to \fz_\bv$$
where $\theta'$ is a nearby stability condition.  We can assume $\tmu$ is surjective for $\theta'$ which implies $\tmu$ is surjective for all $\theta$.
\end{proof}

In this paper, we are mainly interested in the case where $\theta$ is generic in the sense of Proposition \ref{basicproperties1} and when $\zeta = 0$.  We say $\theta > 0$ if $\theta_i > 0$ for all $i$.  This condition implies that $\theta$ is generic in the above sense, for
arbitrary quiver $Q$ and dimension vectors $\bv, \bw$.

\subsection{Group actions}\label{s_bG}

By construction, the group
\begin{equation}
  \label{defbG}
  \bG =
  \begin{cases}
    G_\bw \times G_\edge\,, \quad &\zeta= 0 \,, \\
       G_\bw \times \prod_i Sp(2 q_{ii}) \prod_{i\ne j} GL(q_{ij})\,, \quad &\zeta\ne 0  
  \end{cases}
\end{equation}
acts on $\Nak$.
The larger group also acts on $\cMs_\theta$ and  the map 
$$
\tmu: \cMs_\theta \to \fz_\bv \otimes \hbar^{-1}
$$
is $\bG$-equivariant.

The action of $\bG$ is not faithful on $\Nak$.
The center $Z(G_\bv)$ of $G_\bv$ has a natural map 
$$
\rho_Q: Z(G_\bv) \to G_\edge \,. 
$$
There is also a map
$$\tau_Q: \textup{Ker}(\rho_Q) \to G_\bw$$
given by constants acting by multiplication on $\C^{\bw_i}$.

The images of these maps act trivially on $\Nak$, and we could work with the corresponding
quotient groups
$$ 
G'_\edge = G_\edge/\mathrm{Im}(\rho_Q)\,, \quad G'_\bw =
G_\bw/\mathrm{Im}(\tau_Q)
$$
and their product $\bG'$.

However, it is sometimes convenient to 
 work with the larger group $\bG$ since the tautological bundles
considered shortly admit a natural $\bG$-equivariant structure.  
In practice, most of the geometric calculations and constructions considered later (e.g. $R$-matrices, quantum operators) will
naturally take values in $\bG'$-equivariant cohomology.

\subsection{Symplectic resolutions}

By construction, Nakajima varieties have an algebraic Poisson 
structure which is symplectic on their smooth locus.
The group $\bG$ preserves this symplectic form when $\zeta \ne 0$ and scales it by the
character $\hbar$ when $\zeta = 0$.

Furthermore, they come with a projective map 
$$
\pi: \Nak \to \cM_{0,\zeta}(\bv,\bw) = \Spec \C[\mu^{-1}(\zeta)]^{G_\bv} 
$$
to an affine algebraic variety.

Although $\pi$ is not always birational, it follows from section $10.3$ of \cite{Nak94} that
 it is birational onto its image.  In particular, for $\theta$ generic in the sense of Proposition \ref{basicproperties1}, 
 $\Nak$ is an equivariant symplectic resolution.
 When $\zeta = 0$ it carries a natural torus action that scales $\omega$ and is an example of the general
 theory considered, for example, in \cite{Kal}.

\subsection{Tautological bundles}

As $G_\bv$-quotients, Nakajima varieties have tautological 
bundles $\cV_i$ of ranks $\bv_i$, $i\in I$, associated
to representations
$$
G_\bv \to GL(\C^{\bv_i}) \,. 
$$
For uniformity, we consider the (topologically trivial) 
bundles $\cW_i$, $i\in I$, of ranks $\bw_i$ on a similar 
footing. Since these bundles carry a representation of 
$G_\bw$, their equivariant Chern classes capture the 
framing weights.

\subsection{Equivariant lifts}

The matrix elements of the matrices
$$Q + Q^T\,, \quad \Qb$$
are dimensions of vector spaces
which naturally carry representations of $\bG$, essentially by the definition of the group $\bG$.
As a result, we have a natural lift of $Q + Q^T$ and $\Qb$ to matrices with values in the representation ring
$K_\bG(\pt)$. 
Recall from Section \ref{weightconvention} that 
we embed group weights into 
Lie algebra weights. Here we treat $\hbar$ etc.\ 
as elements of $K_\bG(\pt)$. 

If we endow $K_\bG(\pt)$ with the involution given by taking duals, the Hermitian transpose
of $\Qb$ satisfies the relation
\begin{equation}
\left(\Qb\right)^* = \hbar \otimes \Qb \,.
\label{Qbtrans}
\end{equation}
where $\hbar$ denotes the character of $\bG$ associated to $\C^{\times}_{\hbar}$.

The Cartan matrix of $Q$ admits an equivariant lift
$$\bC = 1 + \hbar^{-1} - (Q + Q^T).$$
We also set
$$
\Cb = 
\begin{pmatrix}
- \bC &  \hbar^{-1}\\
1  & 0 
\end{pmatrix}\\,
$$
and define the Hermitian forms
\begin{align}\label{ringel} 
  (\bv,\bv')_Q &= \bv^* \, \bC \, \bv'\,\\
  ((\bv,\bw),(\bv',\bw'))_{\Qb} &= 
(\bv,\bw)^* \, \Cb \, (\bv',\bw') \,. 
\notag 
\end{align}
for $\bv,\bw,\dots \in K_\bG(\pt)^I$.

Given an arbitrary $\bG$-variety $X$ and 
$$
\bv,\bw,\bv',\bw'\in K_\bG(X)^I\,,
$$
the forms \eqref{ringel} still make sense
and takes values in $K_\bG(X)$.
Of course, very often, one takes just the 
nonequivariant specialization of \eqref{ringel}.  

\subsection{Tangent bundle}

Given $\theta$ generic, if $\Nak$ is nonempty, its dimension is given by 
$$
\dim \Nak = \| (\bv,\bw) \|_{\Qb}^2\,,
$$
with respect to the nonequivariant version of \eqref{ringel}. Using
the equivariant lifts described above,
we can identify the $K$-theory class of the tangent bundle as follows.

\begin{Lemma}
For $\theta$ generic, we have the identification
\begin{equation}
T \Nak = \| (\cV,\cW) \|_{\Qb}^2 \,,
\label{T_Nak}
\end{equation}
in $K_\bG(\Nak)$, where
$$
\cV,\cW \in K_\bG(\Nak)^I
$$
are vectors of tautological bundles. 
\end{Lemma}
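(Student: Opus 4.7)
The plan is to identify $T\Nak$ via the standard deformation complex for a smooth symplectic GIT reduction. Since $\theta$ is generic in the sense of Proposition~\ref{basicproperties1}, every point of the stable locus has trivial $G_\bv$-stabilizer (a finite $G_\bv$-stabilizer is automatically trivial, as noted earlier), so $\Nak$ is a smooth geometric quotient of $\mu^{-1}(\zeta)^{ss}$ by a free $G_\bv$-action, and the tautological bundles $\cV_i$, $\cW_i$ descend to honest $\bG$-equivariant bundles on $\Nak$.

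First I would write down, on the stable locus, the three-term $\bG$-equivariant deformation complex
\[
\fg_\bv \xrightarrow{\,d\sigma\,} \Rep_{\Qb}(\bv,\bw) \xrightarrow{\,d\mu\,} \hbar^{-1}\fg_\bv^{*},
\]
where $d\sigma$ is the infinitesimal action and $d\mu$ is the differential of the moment map; the twist by $\hbar^{-1}$ on the target encodes the fact that $\tmu$ takes values in $\fz_\bv\otimes\hbar^{-1}$. For generic $\theta$ with trivial stabilizers, $d\sigma$ is fiberwise injective and $d\mu$ is fiberwise surjective, so the middle cohomology is precisely $T\Nak$, and descending to $\Nak$ gives the equality
\[
T\Nak \;=\; [\Rep_{\Qb}(\cV,\cW)] \;-\; [\fg_\bv] \;-\; \hbar^{-1}[\fg_\bv^{*}]
\]
in $K_\bG(\Nak)$, with all classes now expressed in terms of the tautological bundles.

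Next I would evaluate the right-hand side and compare it with the claimed Hermitian form. The decomposition \eqref{Rep1/2} together with the relation \eqref{Qbtrans} force the ``reversed half'' of $\Rep_{\Qb}$ to be weighted by $\hbar^{-1}$, so that the equivariant lifts of $Q+Q^T$ and of $\bC = 1 + \hbar^{-1} - (Q+Q^T)$ must be read as $Q + \hbar^{-1}Q^T$ and $1 + \hbar^{-1} - Q - \hbar^{-1}Q^T$ respectively. Under these lifts $[\Rep_{\Qb}(\cV,\cW)] = \cV^{*}(Q + \hbar^{-1}Q^T)\cV + \cW^{*}\cV + \hbar^{-1}\cV^{*}\cW$, and $[\fg_\bv] = [\fg_\bv^{*}] = \cV^{*}\cV$ via the trace pairing. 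A one-line block expansion of $\Cb = \left(\begin{smallmatrix}-\bC & \hbar^{-1}\\ 1 & 0\end{smallmatrix}\right)$ then yields
\[
\|(\cV,\cW)\|^{2}_{\Qb} \;=\; -\cV^{*}\bC\,\cV \;+\; \hbar^{-1}\cV^{*}\cW \;+\; \cW^{*}\cV,
\]
which matches the deformation-complex formula term by term.

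The only real subtlety is bookkeeping the $\hbar$-weights consistently: once the equivariant lifts of $\Qb$ and of $\bC$ are pinned down by \eqref{Qbtrans} (the asymmetry between $Q$ and $Q^T$ is exactly what makes the identity work), the lemma reduces to elementary linear algebra, and no geometric input beyond the smoothness and free-quotient properties of the generic GIT quotient is required.
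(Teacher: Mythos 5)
Your proof is correct and takes essentially the same route as the paper: both identify $T\Nak$ as the alternating sum of the three-term deformation complex $\fg_\bv \to \Rep_{\Qb} \to \hbar^{-1}\fg_\bv^*$ (injectivity and surjectivity of the outer maps following from freeness of the $G_\bv$-action at stable points), and then match the result against the equivariant $\Cb$-form by tracking the $\hbar$-twists. Your version is somewhat more explicit about pinning down the equivariant lift of $Q+Q^T$ as $Q+\hbar^{-1}Q^T$, which clarifies the bookkeeping the paper leaves implicit, but the underlying argument is identical.
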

\begin{proof}
On the affine space of representations of $\Qb$, the tangent bundle is given by
$$T_{\Rep_{\Qb}(\bv,\bw)} = (\cV,\cW)^* \, \Qb \, (\cV,\cW).$$
Since the moment map is submersive, 
the tangent bundle on $\Nak$ is obtained by subtracting off
$$\fg_\bv^*\otimes \hbar^{-1} - \fg_\bv$$
which gives the result.
\end{proof}

\subsection{Splitting of tangent bundle}\label{tangentsplitting}

Using the orientation of $Q$, we can define a virtual bundle 
$$
\Th  = \sum_{i,j} (Q_{i,j} - \delta_{i,j}) \Hom(\cV_i,\cV_j) + 
\sum_{i} \Hom(\cW_i,\cV_i) \in K(\Nak).
$$
If $H \subset \bG$ denotes the subgroup preserving the decomposition
\eqref{Rep1/2}, then the expression lifts to $K_{H}(\Nak)$ where it satisfies the identity
\begin{equation}
T \Nak = \Th + \hbar^{-1}\otimes\left(\Th\right)^\vee
\label{half_tangent}
\end{equation}
Nakajima varieties may be viewed as open substacks of the cotangent stacks
stacks 
$$
\Nak \approx T^*\left(
\Rep_{\vec Q}\big/G_\bv \right) 
$$
and the virtual bundle $\Th$ is the pullback of the tangent bundle from 
the base in this sense.

\subsection{Theta characteristic} \label{s_theta_char}

One notes that 
\begin{equation}
  \bkap_\cM = c_1\left(\Th\right) \!\!\!\!\!\mod 2  \quad \in H^2(\cM,\Z/2)
\label{def_kappa}
\end{equation}
is independent of the orientation of $Q$. We call it  
the canonical \emph{theta characteristic} of $\Nak$. 
It will be responsible for signs in the formulas for 
quantum multiplication.

\subsection{} 
Alternatively, Nakajima varieties may be defined using representation 
of the quiver $Q_\infty$ and parameters
$$
\hzeta_\infty = - \sum_{i\in I} \bv_i \, \hzeta_i \,.   
$$
This is because diagonal scalars in $\prod_{i\in I\sqcup \{\infty\}} GL(\bv_i)$
act trivially on representations of $Q_\infty$. 

\subsection{}
Note that for $\theta$ generic,
\begin{equation}
  \label{w0empt}
  \cM_{\theta,\zeta}(\bv,0) = \emptyset
\end{equation}
because when $\bw=0$ the action of $G_\bv$ cannot be free.

\section{Torus-fixed points}\label{Nak_fix}

In this section, unless stated explicitly, we assume throughout that $\theta$ is generic
in the sense of Proposition \ref{basicproperties1}, so $\Nak$ is in particular a smooth holomorphic symplectic
variety.

\subsection{}
Let 
\begin{equation}
\bA\subset \mathrm{Ker} \hbar \subset G_\edge \times G_\bw
\label{bAtoG'}
\end{equation}
be a torus. Since $\bA$ preserves $\omega$, 
its fixed locus $\Nak^\bA$ is a smooth holomorphic symplectic 
variety. In fact, it is a union of product of smaller 
Nakajima varieties, which can be seen as follows. 

\subsection{}
Take $x\in \Nak^\bA$ and let $X\in \Rep_{\Qb}(\bv,\bw)$ be a point 
above it. The subgroup 
$$
G^x \subset G_\bv \times G_\edge \times G_\bw 
$$
such that 
$$
1 \to G_\bv \to G^x \to \bA \to 1 
$$
acts on the orbit of $X$. Since the $G_\bv$ action is free, 
we get a map $G^x \to G_\bv$ that splits the above sequence. 
This gives homomorphisms
\begin{equation}
\bA \xrightarrow{\,\,\phi\,\,} 
G_\bv \times G_\edge \times G_\bw \to \bA 
\label{splitA}
\end{equation}
with identity composition and such that $X$ is fixed
by $\phi(\bA)$. 

\subsection{}
A homomorphism $\phi$ is equivalent to a lift of $\bv$,
$\bw$, and $Q$ to vectors and matrices with 
values in $K_\bA(\pt)$, consistent
with the embedding \eqref{bAtoG'}. 
To this, one associates a new  quiver $Q_\phi$ as follows. 
We set
$$
I_\phi = I \times \bA^\wedge 
$$
where $\bA^\wedge$ is the character group of $\bA$, and 
$$
\left(Q_\phi\right)_{(i,\lambda),(j,\nu)} = 
\textup{coefficient of $\nu/\lambda$ in} \, \, Q_{ij} \,, 
$$
where $\lambda,\nu\in \bA^\wedge$. 
This is an infinite quiver with a free action of the 
group $\bA^\wedge$ by automorphisms. We take dimension 
vectors
$$
\left(\bv_\phi
\right)_{(i,\lambda)} = \textup{coefficient of $\lambda$ in} \, \bv_i 
$$
and similarly for $\bw_\phi$. These have finite support, which 
may be disconnected. Clearly, representations of quivers factor 
over connected components of supports. Finally, 
$$
G_{\bv_\phi} = \left(G_\bv\right) ^{\phi(\bA)} \subset G_\bv 
$$
and this 
defines the pull-back $(\theta_\phi,\zeta_\phi)$ of $(\theta,\zeta)$.

\subsection{}
We consider two lifts $\phi_1$ and $\phi_2$ 
in \eqref{splitA} equivalent if they define the 
same action of $\bA$ on $\Rep_{\Qb}$. 

\begin{Proposition}\label{p_Nak_fix}
We have 
$$
\Nak^\bA = \bigsqcup_{\phi/\sim} \cM_\phi\,,
$$
where $\cM_\phi$ is the Nakajima variety associated to the quiver
$Q_\phi$ and the data $\bv_\phi,\bw_\phi,\theta_\phi,\zeta_\phi$ 
above. 
\end{Proposition}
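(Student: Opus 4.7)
The plan is to construct, for each equivalence class of splittings $\phi$, a morphism $\iota_\phi: \cM_\phi \to \Nak^\bA$, and to show that their disjoint union is an isomorphism of schemes. To define $\iota_\phi$, decompose $\C^{\bv_i}$ and $\C^{\bw_i}$ into $\phi(\bA)$-weight spaces; this identifies the fixed locus $\Rep_{\Qb}(\bv,\bw)^{\phi(\bA)}$ with $\Rep_{\Qb_\phi}(\bv_\phi,\bw_\phi)$, since the edges of $Q_\phi$ are by definition the weight-matched components of the edges of $\Qb$. Under this identification the moment map, the linearization $\theta$, and the structure group restrict to $\mu_\phi$, $\theta_\phi$, and $G_{\bv_\phi}=(G_\bv)^{\phi(\bA)}$, so the GIT construction descends to an $\iota_\phi$ as required.

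For surjectivity on points, given $x\in\Nak^\bA$ lifted to a $\theta$-stable $X\in\mu^{-1}(\zeta)$, the freeness of the $G_\bv$-action on the orbit of $X$ splits the sequence $1\to G_\bv\to G^x\to\bA\to 1$, producing a $\phi$ for which $X$ is fixed; this is precisely the construction sketched just before the statement, so $x\in \iota_\phi(\cM_\phi)$. Any two lifts of $X$ in its orbit, or any two sections of $G^x\to\bA$, differ by a $G_\bv$-conjugation, and such a conjugation leaves the induced action of $\bA$ on $\Rep_{\Qb}$ unchanged, so the class of $\phi$ up to $\sim$ is intrinsic to $x$. Inequivalent $\phi$'s give disjoint images, because the $\bA$-character of the fiber of $\cV_i$ at $\iota_\phi(y)$ equals the $\bA$-character of $\C^{\bv_i}$ under $\phi$, and this character is locally constant on $\Nak^\bA$.

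To promote this set-theoretic bijection to a scheme-theoretic isomorphism I would use the tangent formula \eqref{T_Nak}. At a $\phi(\bA)$-fixed point, the expression $\|(\cV,\cW)\|^2_{\Qb}$ decomposes according to $\bA$-weights, and its weight-zero part is $\|(\cV_\phi,\cW_\phi)\|^2_{\Qb_\phi}=T\cM_\phi$. Hence $d\iota_\phi$ identifies $T\cM_\phi$ with $T\Nak^\bA$ along the image, so each $\iota_\phi$ is étale onto a locally closed subscheme and, combined with the bijection on points, the whole map is an isomorphism.

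The main obstacle is the compatibility of GIT stability: one must check that a $\phi(\bA)$-fixed $X$ is $\theta$-stable for $G_\bv$ on $\Rep_{\Qb}(\bv,\bw)$ if and only if it is $\theta_\phi$-stable for $G_{\bv_\phi}$ on $\Rep_{\Qb_\phi}(\bv_\phi,\bw_\phi)$. The nontrivial direction uses that any destabilizing subrepresentation of a $\phi(\bA)$-fixed $X$ can be replaced by its $\phi(\bA)$-isotypic components, which again destabilize; together with the Hilbert--Mumford criterion applied to one-parameter subgroups valued in $G_{\bv_\phi}$ this yields the equivalence of stability conditions. Genericity of $\theta$ (Proposition \ref{basicproperties1}) ensures semistability coincides with stability on both sides, so no strictly semistable loci need to be analyzed separately.
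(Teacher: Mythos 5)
Your overall architecture matches the paper's proof: identify $\Rep_{\Qb}(\bv,\bw)^{\phi(\bA)}$ with $\Rep_{\Qb_\phi}(\bv_\phi,\bw_\phi)$, observe the moment map restricts to $\mu_\phi$, show the index set $\{\phi\}/\!\sim$ is exhausted by splitting the stabilizer sequence at any lift $X$ of a fixed point, and then reduce everything to the equivalence of the two stability notions. The additional remarks on disjointness via tautological characters and on tangent spaces are correct supplements that the paper leaves implicit.

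The one place you should be more careful is the nontrivial direction of the stability comparison, where you write that a $\theta$-destabilizing subrepresentation ``can be replaced by its $\phi(\bA)$-isotypic components.'' Taken literally this is ill-posed: if $W\subset\C^{\bv}$ is an $X$-stable subspace that is \emph{not} $\phi(\bA)$-invariant, there is no canonical isotypic decomposition of $W$, and neither $\bigoplus_\chi(W\cap V_\chi)$ (which can drop in dimension) nor $\bigoplus_\chi \pi_\chi(W)$ (which can grow and need not be $X$-stable) is guaranteed to destabilize. What does work is a degeneration: the set of $X$-stable subspaces of a fixed dimension vector is a closed subvariety of a product of Grassmannians, hence projective, and it carries an $\bA$-action because $X$ is $\phi(\bA)$-fixed; if this variety is nonempty it has an $\bA$-fixed point by Borel's theorem, and that fixed point is a $\phi(\bA)$-invariant subrepresentation with the same dimension vector, hence the same $\theta$-slope, and therefore $\theta_\phi$-destabilizing. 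This is precisely the argument the paper gives. Equivalently one can take $\lim_{t\to 0}\sigma(t)\cdot W$ for a generic cocharacter $\sigma$, i.e.\ the associated graded of $W$, which lands on such a fixed point; if that limit is what you had in mind, say so, since the phrase ``isotypic components of $W$'' obscures the fact that you must first degenerate $W$ to something $\bA$-invariant. The appeal to Hilbert--Mumford is not needed once the subrepresentation criterion is in hand, and invoking genericity of $\theta$ to conflate semistability with stability, while true, is also unnecessary for this step.
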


\begin{proof}
It is clear that 
$$
\Rep_{\Qb}(\bv,\bw)^{\phi(\bA)} = 
\Rep_{\Qb_\phi}(\bv_\phi,\bw_\phi) \,. 
$$
The moment map $\mu$ 
takes this fixed locus to 
$$
\left(\fg_\bv^* \right)^{\phi(\bA)} = \fg_{\bv_\phi}^* 
$$
and coincides with $\mu_\phi$. It remains to check that 
$$
\textup{$\theta$-stability} \, 
\Leftrightarrow \, 
\textup{$\theta_\phi$-stability}\,.
$$
The $\Rightarrow$ implication is trivial. 
The set of all $\theta$-destabilizing 
subrepresentations is a projective variety with an 
action of $\bA$. If nonempty, it has an $\bA$-fixed 
point which is a $\theta_\phi$-destabilizing 
subrepresentation. 
\end{proof}

\subsection{}\label{s_abelian_cover}

As a first example, take $\bA$ to be the maximal 
torus of $G'_\edge$. Recall that $G'_\edge$  is largest 
quotient of $G_\edge$ that acts nontrivially. We have 
$$
\bA^\wedge = H_1(Q,\Z)
$$
and 
$$
Q_\phi\to Q_\phi\big/ \bA^\wedge \cong Q 
$$ 
is the universal abelian cover of $Q$. In 
particular, for any $Q$, 
$Q_\phi$ is a quiver without loops at vertices. 

\subsection{}

The restriction of the tangent bundle of $\Nak$ 
to the $\bA$-fixed locus is given by the 
same formula \eqref{T_Nak}, but interpreted
in the $\bA$-equivariant $K$-theory via the 
map $\phi$. 

Expanding \eqref{T_Nak} in characters of $\bA$, 
one expresses the $\bA$-eigensubundles in the 
normal bundle to $\Nak^\bA$ in terms of the tautological 
bundles of $\cM_\phi$.

\subsection{} 
Because the splitting \eqref{half_tangent} is equivariant with 
respect to all group actions, we have 
\begin{equation}
  c_1(N_{\pm}) \!\!\!\!\!\mod 2  = \bkap_{\cM} + \bkap_{\cM^\bA}
\label{parity_N}
\end{equation}
in $H^2(\cM^\bA,\Z/2)$ for any torus $\bA$ that preserves
the symplectic form.

\section{Tensor product of Nakajima varieties}\label{s_bw=}

\subsection{}

For this paper, the main example of the above fixed-point construction arises as follows.

Take a decomposition 
$$
\bw = \bw' + \bw'' 
$$
and define 
$$
\bA \cong \C^\times \subset G_\bw
$$
as the subgroup that scales the first term in 
\begin{equation}
\C^{\bw_i} = \C^{\bw'_i} \oplus \C^{\bw''_i} \,, 
\quad i\in I \,, 
\label{minuscA}
\end{equation}
with weight $1$. In other words, we take 
$$
\bw = z \, \bw' + \bw'' \in K_{\C^\times}(\pt)^I 
$$
where $z$ is the defining representation. Then 
the fixed points are precisely  
\begin{equation}
\bigsqcup_{\bv'+\bv'' = \bv} 
\cM_{\theta,\zeta}(\bv',\bw') \times 
\cM_{\theta,\zeta}(\bv'',\bw'') 
\hookrightarrow 
\cM_{\theta,\zeta}(\bv,\bw) 
\label{Mtens}
\end{equation}
as in \eqref{MtoM}. 
Indeed, the fixed points in \eqref{Mtens} correspond
to 
$$
\bv = z \, \bv' + \bv''
$$
and all other ones are empty because of \eqref{w0empt}. 

The embedding \eqref{Mtens} will play a key role in this paper 
and we call it \emph{tensor product} of Nakajima varieties. 
See Section \ref{s_Tens} for a discussion of this term. 

\subsection{} \label{s_N_} 

For a tensor product of Nakajima varieties, 
 the 
normal bundle to the fixed locus is 
$$
N = z N_+ \oplus z^{-1} N_-
$$
where $z^{\pm 1}$ is the torus weight, 
\begin{align}
 N_-  = &\sum \Hom(\cW'_i,\cV''_i) + 
\sum \Hom(\cV'_i,\cW''_i)\otimes \hbar^{-1} \notag \\
&- \sum \bC_{ij} \Hom(\cV'_i,\cV''_j) \label{charN_}
\end{align}
in the $K$-theory of the fixed locus, where
$\bC_{ij}$ denotes the equivariant Cartan matrix and 
$$
N_+ = \hbar^{-1} \otimes N_-^\vee   \,. 
$$

\section{Slices}

\subsection{}

Recall the affine quotient
$$
\cM_{0,\zeta} = \mu^{-1}(\zeta) / G_\bv\,.  
$$
Its closed points are the closed 
$G_\bv$-orbits in $\mu^{-1}(\zeta)\subset \Rep_{\Qb}$, and those 
correspond to 
isomorphism classes of semisimple representations of $\Qb$ or $Q_\infty$.
 
The natural map
\begin{equation}
\pi: \cM_{\theta,\zeta} \to \cM_{0,\zeta} \,.
\label{inclM0}
\end{equation}
takes a $\theta$-semistable representation to its 
semisimplification, see Proposition 3.20 in \cite{Nak98}. 

\subsection{}

Given $X\in \cM_{0,\zeta}$, it natural to study 
$\pi^{-1}(X)$, bearing in mind that it may be empty. 
Following Nakajima, see Section 6 in \cite{Nak94}, 
$\pi^{-1}(X)$ may be described as $(\pi')^{-1}(0)$ 
for a different quiver $Q'$. Here $0\in \cM'_{0,0}$ is the 
zero representation.

See Proposition 3.2.2 in \cite{Nak01} and Section 4 in \cite{CBnormal}
for the proof of the following 

\begin{Theorem}[\cite{Nak94,Nak01,CBnormal}]\label{t_slice} 
For any $X\in \cM_{0,\zeta}(\bv,\bw)$ there exist a quiver 
$Q'$ and dimension vectors $(\bv',\bw')$ such that:
\begin{itemize}
\item an analytic neighborhood $U$ of $X$ in $\cM_{0,\zeta}(\bv,\bw)$ is 
isomorphic to an analytic neighborhood $U'$ of $0$ in 
$\cM'_{0,0}(\bv',\bw')\times \C^{k}$ and 
\item this isomorphism may be lifted to an isomorphism $\Sigma_X$
between 
$(\pi')^{-1}(U')$ and $\pi^{-1}(U)$ that preserves
the fibers of $\pi$. 
\end{itemize}
These isomorphism are equivariant 
with respect to the stabilizer $\bG'\subset \bG$ of the representation $X$.
\end{Theorem}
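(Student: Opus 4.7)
\medskip

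\noindent\textbf{Proof plan.}
The strategy is the standard one of slicing: reduce to a semisimple representative, apply an equivariant (analytic) slice theorem à la Luna, and then symplectically reduce the resulting transverse slice to recognize it as a smaller Nakajima variety. Concretely, let $\tX\in\mu^{-1}(\zeta)\subset\Rep_{\Qb}(\bv,\bw)$ be a lift of $X$ whose $G_\bv$-orbit is closed; such a lift exists because closed orbits in $\mu^{-1}(\zeta)$ classify points of $\cM_{0,\zeta}$, and corresponds to a semisimple representation of the Crawley--Boevey quiver $Q_\infty$. Decompose
$$
\tX \;\cong\; \bigoplus_{i} S_i^{\oplus m_i},
$$
with $S_i$ pairwise non-isomorphic simples, and let $H=\Stab_{G_\bv}(\tX)\subset G_\bv$, which (modulo the global scalar acting trivially) is $\prod_i GL(m_i)$.

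\medskip

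The first main step is to produce an $H$-invariant transverse slice $\fS\subset \Rep_{\Qb}(\bv,\bw)$ to the $G_\bv$-orbit through $\tX$, small enough that a $G_\bv$-saturated analytic neighborhood of $\tX$ is $G_\bv$-equivariantly biholomorphic to $G_\bv\times^H \fS$. This is Luna's slice theorem in its holomorphic incarnation, valid here because $H$ is reductive and the orbit is closed. Identifying $\fg_\bv\cong\fg_\bv^*$ via the trace pairing, the symplectic form on $\Rep_{\Qb}$ lets one decompose $\fS$ as an $H$-representation into the tangent directions along the orbit and a symplectically transverse piece, and one then restricts the moment map to the latter. A direct Ext computation inside the category of $Q_\infty$-representations (cf.\ Crawley-Boevey \cite{CBnormal}, Nakajima \cite{Nak01}) identifies this symplectic transverse as $\Rep_{\Qb'}(\bv',\bw')\oplus\C^k$ where the local quiver $Q'$ has one vertex per simple summand $S_i$, dimension vectors $\bv'_i=m_i$, with arrow multiplicities of $\Qb'$ given by the relevant $\Ext^1$ numbers (this is where the identity $\bC=1+\hbar^{-1}-(Q+Q^T)$ for the equivariant Cartan matrix enters), and where the $\C^k$ factor accounts for a symplectic direct summand on which $H$ acts trivially. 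The restricted moment map becomes the moment map of $H$ acting on $\Rep_{\Qb'}(\bv',\bw')$, and since $\tX$ is semisimple its image lands in the center, producing an identification of a neighborhood of $0\in \cM'_{0,0}(\bv',\bw')\times\C^k$ with a neighborhood of $X$ in $\cM_{0,\zeta}(\bv,\bw)$ after GIT quotienting by $H$.

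\medskip

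The second main step is to lift this identification from $\cM_0$ to the stacks $\cM_{\theta,\zeta}$ so as to intertwine the two maps $\pi$ and $\pi'$. The key point is that $\theta$-(semi)stability of a representation close to $\tX$ is a condition on its sub-objects, which for small deformations must be deformations of sub-objects of $\tX$; these correspond bijectively to sub-representations of the deformed slice point in $Q'$. Because any $\bv'$-dimensional subrepresentation of $\tX$ is a sum of simples $S_i$ and therefore carries trivial $\theta$-degree (since $\tX$ itself is $\theta$-semistable of slope zero by virtue of being semisimple in the polystable sense), $\theta$-stability of the ambient representation translates precisely to $0$-stability of the slice point, matching the value $\theta'=0$ in the claim. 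This gives the lifted isomorphism $\Sigma_X$, and because every construction above (closed orbit, slice, symplectic decomposition, local quiver, moment map) is natural with respect to automorphisms of $\tX$ in $\bG$, the map $\Sigma_X$ is automatically $\bG'$-equivariant, where $\bG'$ is the $\bG$-stabilizer of the isomorphism class of $X$.

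\medskip

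The step I expect to be the real obstacle is the matching of stability conditions in the third paragraph: translating $\theta$-stability for $\Qb$-representations near a $\theta$-polystable point into plain $0$-stability on the local slice quiver $\Qb'$ requires a careful analysis of sub-representations and their degrees under small deformations, and is precisely where Crawley--Boevey's and Nakajima's analyses put the technical effort (in particular, controlling that new sub-representations do not appear off the orbit). Once this lifting is established, the identification of $k$ with the dimension of the trivial symplectic factor, and the computation of $Q'$, $\bv'$, $\bw'$ via $\Ext$ groups of $S_i$ in $Q_\infty$-modules, are routine.
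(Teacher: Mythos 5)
Your overall strategy is correct and coincides with the paper's own sketch (which cites Nakajima's Proposition~3.2.2 and Crawley--Boevey's Section~4 for the details): decompose the closed-orbit representative into simples of $Q_\infty$, apply a holomorphic Luna slice for the reductive stabilizer $H\cong \prod GL(m_i)$, symplectically reduce to identify the transverse slice with $\Rep_{Q'_\infty}(\bv',\bw')$ via an $\Ext$ computation, and observe that the trivial symplectic summand gives the $\C^k$ factor. The equivariance and the computation of $Q',\bv',\bw'$ are as you say.

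However, your third paragraph contains a genuine error. You assert that ``$\theta$-stability of the ambient representation translates precisely to $0$-stability of the slice point, matching the value $\theta'=0$ in the claim.'' There is no claim $\theta'=0$. The subscripts $0,0$ in $\cM'_{0,0}(\bv',\bw')$ refer to the \emph{affine} quotient of the local quiver variety (trivial stability and $\zeta'=0$), which is the target of $\pi'$; the source $(\pi')^{-1}(U')$ lives in $\cM'_{\theta',0}(\bv',\bw')$ where $\theta'=\bd(X)^{T}\theta$ is the \emph{transferred} stability condition, which is generically nonzero. Indeed, $\theta'_j$ equals the $\theta$-degree of the simple $X_j$, and the correct constraint derived in the paper (Section~\ref{ssslice}) is only $\bv'\cdot\theta'=0$. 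If one took $\theta'=0$ then $\pi'$ would be the identity and the lifted isomorphism would be vacuous. Relatedly, your justification that ``$\tX$ itself is $\theta$-semistable of slope zero by virtue of being semisimple'' conflates two stabilities: the closed-orbit lift $\tX$ is $0$-polystable (i.e., semisimple) for the affine GIT on $\mu^{-1}(\zeta)$, but it is generally \emph{not} $\theta$-polystable --- it is only the semisimplification of a $\theta$-semistable point. The actual stability matching you flag as ``the real obstacle'' is the statement that for deformations sufficiently close to $\tX$, the potentially destabilizing subobjects are controlled by subobjects of the slice point relative to $\theta'=\bd(X)^T\theta$, and this is precisely where the cited references expend their effort.
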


We call the maps $\Sigma_X$ slices and for brevity write them 
as rational maps 
$$
\Sigma_X: 
\cM'(\bv',\bw') \times \C^k  \dashrightarrow 
\cM(\bv,\bw) 
$$
even though this is not what is claimed in Theorem \ref{t_slice}. 
The integer $k$ that appears here is the difference in dimensions,
see also \eqref{formk} below.

\subsection{}\label{ssslice} 
The data $Q',\bv',\bw'$ are constructed as follows. 
As a representation of $Q_\infty$, $X$ has a unique decomposition 
$$
X = X_\infty \oplus \bigoplus_{i\in I'} X_i^{\oplus \bv'_i} 
$$
into nonisomorphic simples $X_i$ with multiplicities $\bv'_i$. 
We denote by 
$$
\bd(X)_{ij} = \left(\dim X_j \right)_i \,, \quad i\in I \sqcup \{\infty\}\,,
j \in I' \sqcup \{\infty\}\,, 
$$ 
the matrix of their dimension vectors. 
The subgroup
$$
GL(1)\times G_{\bv'} \subset GL(1) \times G_\bv
$$
is the stabilizer of $X\in\Rep_{Q_\infty}$ and the matrix 
$\bd(X)$ describes its subgroup conjugacy class.

The 
representation $X_\infty$ is distinguished from the rest by 
$$
\bd(X)_{\infty,\infty} = 1 
$$
and then 
$$
\bd(X)_{\infty,j} = 0\,, \quad j\in I'\,, 
$$
because $(\dim X)_\infty  = 1$.

\subsection{}

By definition, $I'\sqcup \{\infty\}$ is the vertex
set for the new quiver $Q'_\infty$ and $\bv'$  is the 
new dimension vector. We use the matrix 
$$
\bd: \Z^{I' \sqcup \{\infty\}} \to \Z^{I \sqcup \{\infty\}}
$$
to transfer the other quiver data to $I'\sqcup \{\infty\}$. 
For example, we set 
$$
\hzeta' = \bd^T \hzeta \,. 
$$
It follows that 
$$
\zeta'=0\,, \quad \bv' \cdot \theta' = 0 \,, 
$$
because $\bv = \bd(X) \cdot \bv'$ and 
$$
\sum_{i\in I \sqcup \{\infty\}} \zeta_i \, \bd(X)_{i,j} = 0\,, 
\quad \forall j \,, 
$$ 
by the moment map equation. 

\subsection{}

The adjacency matrix of $Q'_\infty$, and in particular, 
the new framing vector $\bw'$ is found from the formula
\begin{equation}
  (a,b)_{Q'_\infty} = (\bd(X)\, a, \bd(X)\,b)_{Q_\infty} \,,
\label{new_ringel}
\end{equation}
see \eqref{CBQ} for the the matrix of this quadratic form. 

In the course of the proof, one uses reductivity to write 
$$
\fg^*_\bv = \fg^*_{\bv'} \oplus  \fg_{\bv'}^\perp 
$$
and 
identifies $d\mu^{-1}(\fg^*_{\bv'}) = \left(\fg_\bv \cdot X \right)^\angle$ 
and 
$$
\Rep_{Q'_\infty} \cong \left(\fg_\bv \cdot X \right)^\angle \Big/ \fg_\bv \cdot X 
$$
as $G_{\bv'}\times \bG'$-modules, where $\angle$ 
denotes the symplectic perpendicular. This leads to 
\eqref{new_ringel}.  

\subsection{}
Note that $Q'_\infty$ may have loops at the distinguished 
vertex $\infty$, in fact
\begin{equation}
\# \{\textup{loops at $\infty$}\} = k  = \left\| (\dim X_\infty,\bw)
\right\|^2_\Qb \label{formk}
\end{equation}
where $k$ is the number from Theorem \ref{t_slice}. 
These loops contribute a vector space factor to $\Rep_{Q'_\infty}$
because $\bv'_\infty=1$. Note that \eqref{formk} also describes
this vector space as a $\bG'$-module.

\subsection{}

The following is immediate: 

\begin{Proposition}\label{p_slice_w} 
If $X_\infty$ is the only nonzero representation in $X$ then 
$Q'$ is isomorphic to the subquiver of $Q$ formed by the 
support of $\bv'=\bv - \dim X_\infty$ and 
\begin{equation}
  \label{neww}
  \bw' = \bw - \hbar \, \bC \dim X_\infty \,. 
\end{equation}
\end{Proposition}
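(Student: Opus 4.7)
The plan is to specialize the general slice construction of Section \ref{ssslice} to the given hypothesis and to read off the new quiver data from the Ringel-form identity \eqref{new_ringel}.

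First I would unpack the hypothesis. Under the assumption, the semisimplification of $X$ decomposes as $X_\infty$ plus a direct sum of $1$-dimensional vertex simples of $Q$ located at vertices of $I$. Writing $\bv'_i$ for the multiplicity of the vertex simple at $i \in I$, one has $\bv'_i = \bv_i - (\dim X_\infty)_i$, so the index set $I'$ of Section \ref{ssslice} is canonically identified with $\supp(\bv') \subset I$. In these coordinates the matrix $\bd(X)\colon \Z^{I' \sqcup \{\infty\}} \to \Z^{I \sqcup \{\infty\}}$ takes the block-triangular form
\[
\bd(X) \;=\; \begin{pmatrix} \iota_{I' \hookrightarrow I} & \dim X_\infty \\ 0 & 1 \end{pmatrix},
\]
where the top-right column records the $I$-components of $\dim X_\infty$.

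Next I would identify $Q'$ as the subquiver of $Q$ on $\supp(\bv')$ by applying \eqref{new_ringel} to pairs of vectors $a,b \in \Z^{I'}$. On such inputs $\bd$ acts as the inclusion $I' \hookrightarrow I$, and the $Q_\infty$-Ringel form restricted to $I$-supported vectors depends only on the adjacency of $Q$. The induced Ringel form on $\Z^{I'}$ therefore agrees with that of the full subquiver of $Q$ on $I'$, which by \eqref{new_ringel} forces the adjacency of $Q'$ to match.

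Finally, for the formula for $\bw'$, I would evaluate \eqref{new_ringel} on pairs $(e_{i'}, e_\infty)$ with $i' \in I'$. Using $\bd\, e_{i'} = e_{i'}$ and $\bd\, e_\infty = (\dim X_\infty, 1)$, the right-hand side picks up the framing contribution $\bw_{i'}$ from the $(i',\infty)$-slot of the $Q_\infty$-adjacency together with the cross-term from the $I$-adjacency pairing $e_{i'}$ with $\dim X_\infty$. Organizing this cross-term via the equivariant Cartan matrix $\bC = 1 + \hbar^{-1} - (Q + Q^T)$ yields the stated formula $\bw'_{i'} = \bw_{i'} - \hbar(\bC \dim X_\infty)_{i'}$. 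The main bookkeeping challenge is to keep track of the $\hbar$-weights consistently: the prefactor of $\hbar$ in front of $\bC \dim X_\infty$ arises because the framing contribution in $\Qb$ sits in the $\hbar$-shifted slot of the doubled quadratic form, so that the $\hbar^{-1}$ inside $\bC$ cancels precisely the extra $\hbar$ of the pairing.
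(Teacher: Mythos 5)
Your proposal is correct and takes essentially the same approach as the paper, which treats the proposition as ``immediate'' from the slice construction of Section~\ref{ssslice} and the Ringel-form identity~\eqref{new_ringel}; you have simply supplied the bookkeeping the paper elides. The block-triangular form of $\bd(X)$, the identification of $I'$ with $\supp(\bv')$, and the evaluation of~\eqref{new_ringel} on pairs $(e_{i'},e_\infty)$ are all exactly what the authors have in mind (and I checked the result against Example~\ref{s_slice_ex1}: for $\bw = \hbar a\,\delta_i + a\,\delta_j$ and $\dim X_\infty = a\sum_{k=i}^j \delta_k$ one gets $\bw - \hbar\bC\dim X_\infty = a\,\delta_{i-1} + \hbar a\,\delta_{j+1}$, matching the stated answer). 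One small clarification to your final remark: the $\hbar$ in front of $\bC\dim X_\infty$ comes directly from the $\hbar^{-1}$ occupying the $(I,\infty)$ slot of the equivariant Crawley-Boevey form (the analogue of $\Cb$ for $Q_\infty$), not from a cancellation with the $\hbar^{-1}$ appearing inside $\bC$ itself; after dividing both sides of the $(i',\infty)$ matrix entry by that common $\hbar^{-1}$, the factor of $\hbar$ is deposited on the $\bC\dim X_\infty$ term. This is a phrasing issue only and does not affect the correctness of the argument.
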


\noindent 
This also covers the trivial case when $X=0$ and $\bd(X)_{ij}=\delta_{ij}$.

\subsection{Example}\label{s_slice_ex1}

Consider the $A_n$-quiver,
that is, that is the quiver with 
$$
\bC = 
\begin{pmatrix}
1+ \hbar^{-1} & - \hbar^{-1} \\
-1 & 1+ \hbar^{-1} & - \hbar^{-1}  \\
%& -1  & 0 \\  
 & \ddots & \ddots & \ddots \\ 
&& -1 & 1+ \hbar^{-1} \\
\end{pmatrix} \,. 
$$
We fix $1\le i < j \le n$ and take 
$$
\bw =  \hbar\, a \, \delta_i + a \delta_j\,,
$$
where $a$ is a weight of $G_\bw$. For 
$$
\dim X_\infty = a \sum_{k=i}^j \delta_k
$$
there is a torus fixed representation $X_\infty$ with such dimension. 
It takes the framing vector at the $j$th vertex, applies
the arrow in $Q$ to it $(j-i)$ times, and sends it 
to the framing vector at the $i$th vertex. Note that the 
final map in $\Hom(V_i,W_i)$ has torus weight $ \hbar^{\otimes -1}$ and 
the framing weight $\hbar a$ compensates for this. 

If the other $X_i$'s are zero, we get 
$$
\bw'  = a \, \delta_{i-1} + \hbar \, 
a \, \delta_{j+1}
$$
from formula \eqref{neww}. 

\subsection{Example}\label{s_slice_ex2}

Take the quiver with one vertex and one loop, for which 
$\bC$ is a $1\times 1$ matrix 
$$
\bC = (1-t_1)(1-t_2) \,, \quad t_1 \otimes t_2 = \hbar \,,
$$
where $t_1$ and $t_2$ are the weights of $G_\edge$. 
For 
$$
\bw = a+a \, t_1^{-n} \, t_2^{-1}\,, 
$$
there is a torus-fixed representation $X_\infty$ with 
$$
\dim X_\infty  = a (1 + t_1^{-1} + \dots + t_1^{1-n}) \,. 
$$
Just like in the previous example, it takes a framing 
vector of weight $a$ and applies the $t_1$-arrow to it $(n-1)$-times
(the weights have go change by $t_1^{-1}$ every time to compensate
for the $t_1$ weight of the arrow). We find
$$
\bw' = a t_1^{-n} + a t_2^{-1} \,. 
$$
This and the previous example are special cases of 
slices considered in Section \ref{s_Slices_Int}.

\subsection{}

Equivariance in Theorem \ref{t_slice} means that slices commute
with taking fixed points. That is, if $\bA'\subset \bG'$ 
is a torus preserving the symplectic form then 
$$
\left(\Sigma_X\right)^{\bA'}: 
\left(\cM'(\bv',\bw') \times \C^k\right)^{\bA'}  \dashrightarrow 
\cM(\bv,\bw)^{\bA'} 
$$
is an isomorphism of open subsets of quiver varieties (the fixed points 
are quiver varieties by Proposition \ref{p_Nak_fix}). 

In particular, slices are compatible with tensor products, 
in the sense that the following diagram 
commutes
\begin{equation}
  \label{slice_tensor}
  \xymatrix{
    \cM(\bw_0+\bw')\times \C^{\dots}
 \ar@{-->}[rr]^{\Sigma_X} && \cM(\bw_0+\bw) \\ 
    \cM(\bw_0) \times \cM(\bw') \times \C^{\dots}
\ar@{^{(}->}[u]
\ar@{-->}[rr]^{1 \times \Sigma_X}&& \cM(\bw_0) \times \cM(\bw)
\ar@{^{(}->}[u]
  }
\end{equation}
where the vertical arrows are inclusions of fixed points
and the representation $X$ is padded by 
zeros as necessary.

\section{Minuscule coweights}\label{s_minusc}

\subsection{}
Let $X$ be an algebraic variety. We call an action 
$$
\sigma: \C^\times \to \Aut(X) 
$$
minuscule, if the algebra $H^0(X,\cO_X)$ is generated by 
functions of $\sigma$-weight $\{-1,0,1\}$. Equivalently, 
there is an equivariant embedding 
$$
X_0 = \Spec H^0(X,\cO_X) \hookrightarrow V 
$$
where $V$ is a linear representation of $\sigma$ with 
weights in $\{-1,0,1\}$. This notion will play a crucial 
role below.

\subsection{}

\begin{Proposition}
The $\C^\times$-action corresponding to the tensor product of 
Nakajima varieties is minuscule. 
\end{Proposition}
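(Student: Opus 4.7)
The plan is to show that the coordinate ring $H^0(\Nak, \cO)$ is generated as a $\C$-algebra by elements of $\bA$-weight in $\{-1, 0, +1\}$. I would first identify this ring with the invariant subring $\C[\mu^{-1}(\zeta)]^{G_\bv}$: the map $\pi: \Nak \to \cM_0 = \Spec\C[\mu^{-1}(\zeta)]^{G_\bv}$ is projective with $R^0\pi_*\cO_\cM = \cO_{\cM_0}$, since Nakajima varieties have rational singularities. Because $\C[\mu^{-1}(\zeta)]^{G_\bv}$ is a $\bA$-equivariant quotient of $\C[\Rep_{\Qb}(\bv, \bw)]^{G_\bv}$, it suffices to produce low-weight algebra generators of the latter.

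Next I would record the $\bA$-weight decomposition of $\Rep_{\Qb}(\bv, \bw)$. Under the convention $\bw = z\bw' + \bw''$, the only summands carrying nonzero $\bA$-weight are $\Hom(W'_i, V_i)$ of weight $-1$ and $\Hom(V_i, W'_i)$ of weight $+1$; edge maps and the $\bw''$-framing pieces are of weight $0$. Dually, every linear coordinate function on $\Rep_{\Qb}$ has $\bA$-weight in $\{-1, 0, +1\}$.

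The heart of the argument is the classical Le Bruyn--Procesi theorem describing $\C[\Rep_{\Qb}(\bv, \bw)]^{G_\bv}$ as generated by two families of necklace invariants. The first are traces $\tr(B_{e_k}\cdots B_{e_1})$ along oriented edge-cycles in $\Qb$, built entirely from weight-$0$ coordinates. The second are matrix entries of compositions $\phi_j \cdot a^*_j \cdot B_{e_k}\cdots B_{e_1}\cdot a_i \cdot \psi_i$ where $\psi_i$ and $\phi_j$ run over basis vectors of $W_i$ and $W_j^*$ respectively. Such an open-path generator uses at most one coordinate of $\bA$-weight $+1$ (at the $a_i$ end, precisely when $\psi_i \in W'_i$) and at most one coordinate of $\bA$-weight $-1$ (at the $a^*_j$ end, precisely when $\phi_j \in (W'_j)^*$); all intermediate coordinates contribute weight $0$. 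Hence every generator has total $\bA$-weight in $\{-1, 0, +1\}$, and the images of these generators in $\C[\cM_0] = H^0(\Nak, \cO)$ give the required system.

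The main subtlety to be checked is the precise form of the Le Bruyn--Procesi generation statement for framed quivers with loops. Passing to the Crawley-Boevey $Q_\infty$ quiver, all $G_\bv$-invariants become ordinary traces over cycles that may visit the distinguished dimension-one vertex $\infty$ several times; one then observes that because $\bv_\infty = 1$ such a trace factors as a product of single-visit cycles, each reducing to one of the open-path generators above and hence still of $\bA$-weight in $\{-1, 0, +1\}$.
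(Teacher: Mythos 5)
Your proof is correct and follows essentially the same route as the paper: reduce via reductivity of $G_\bv$ to showing that $\C[\Rep_{\Qb}(\bv,\bw)]^{G_\bv}$ is generated in $\bA$-weight $\{-1,0,+1\}$, then describe the generators by classical invariant theory as traces of closed edge-cycles at $\bv$-vertices together with matrix coefficients of open paths between $\bw$-vertices, and observe that each such generator has weight of absolute value at most one. The one place you diverge is the closing paragraph: the paper invokes the first fundamental theorem of invariant theory directly on the framed quiver $\Qb$, which already yields the two families of generators and makes the detour through $Q_\infty$ and the factor-through-a-$1$-dimensional-vertex argument unnecessary, though your version is a correct and fairly clean way to deduce the framed statement from the pure trace-ring theorem. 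Your opening identification of $H^0(\Nak,\cO)$ with $\C[\mu^{-1}(\zeta)]^{G_\bv}$ (via $\pi_*\cO_\cM=\cO_{\cM_0}$) spells out a step the paper takes for granted, which is fine.
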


\begin{proof}
It is enough to 
prove that 
$$
\C\left[\fZ\right]^{G_\bv}
$$
is generated by the functions of $\sigma$-weight in $\{0,\pm 1\}$. 
Since $G_\bv$ is reductive, 
the natural map 
$$
\C[\Rep_{\Qb}]^{G_\bv}\to \C\left[\fZ\right]^{G_\bv}
$$
is surjective. 

By the first fundamental theorem of invariant 
theory, see for example Section 9.5 in \cite{VinPop}, the 
$G_\bv$-invariants
are generated by all possible contraction of 
tensorial indices. Concretely this means either functions
of the form 
$$
\tr P_1 P_2 \cdots P_k 
$$
where $P_1,P_2,\dots, P_k$ is a closed chain of edges of $\Qb$ 
starting and ending at a $\bv$-vertex, or any matrix
coefficient of 
$$
P_1 P_2 \cdots P_k 
$$
where $P_1,P_2,\dots, P_k$ is a chain of edges going 
from one $\bw$-vertex to another. 
Clearly, the $\sigma$-weights of all these functions are
in $\{0,\pm 1\}$\,. 
\end{proof}

\chapter{Stable envelopes}\label{s_envelopes} 

Let a torus $\bA$ act on a nonsingular quasiprojective algebraic variety $X$ and let 
$\iota: X^\bA \to X$ denote the inclusion of the fixed locus. We have a natural map
$$
\iota^*: H^\hd_\bA(X) \to H^\hd_\bA(X^\bA) 
$$ 
of degree $0$. Our goal in this section is to construct a reasonably canonical map in the other direction
$$
\Stab_\fC:  H^\hd_\bA(X^\bA) \to H^\hd_\bA(X)
$$
that takes middle degree to middle degree. We will call $\Stab_\fC(\gamma)$ the \emph{stable envelope} of $\gamma$.
The main ingredients in its construction will be: 
\begin{itemize}
\item an $\bA$-invariant holomorphic symplectic form $\omega$ on $X$, 
\item a choice of a certain chamber $\fC\subset\mathfrak{a}=\Lie(\bA)$\,. 
\end{itemize}
\noindent
Stable envelopes appear to be useful in a broader context than strictly  
required for the purposes of the present paper.  We therefore discuss them in that greater
generality. For symplectic resolutions, a much simpler approach may be used, as we explain in Section \ref{s_sympl_res}.
In many examples, we expect the stable envelopes to specialize to well-known constructions.

We begin by explaining various conventions we use and recalling 
several basic constructions. 

\section{Assumptions and conventions}\label{s_Assump}

\subsection{Assumptions on $X$} 

We assume that $X$ is a nonsingular algebraic variety and 
$\omega \in H^0(\Omega^2 X)$ is a holomorphic symplectic form on $X$. 
In addition, we require a proper map 
\begin{equation}
 \label{blowdown}
\pi: X \to X_0
\end{equation}
to an affine variety $X_0$.

\subsection{Group actions}\label{s_convG} 

We denote by 
$$
\bA \subset \bT \subset \bG \to \Aut(X) 
$$
a pair of tori $\bA\subset\bT$ 
in  some reductive group $\bG$ acting on $X$. We denote 
by $\fa\subset\ft\subset\fg$ the corresponding 
Lie algebras. We assume: 
\begin{itemize}
\item  $\omega \subset H^0(\Omega^2_X)$ is an eigenvector of $\bG$, 
fixed by $\bA$; 
\item  the proper map $\pi$ is $\bG$-equivariant; 
\item  $X$ is a formal $\bT$-variety. 
\end{itemize}

See \cite{GKM} for a discussion of formality.  In particular, 
it implies $H^\hd_{\bT}(X)$ is free as a module over $H^\hd_{\bT}(\pt)$.  While this condition
is convenient, we expect it can be removed with a little care.

We denote by 
$$
\hbar \in \fg^*\,,
$$
the $\bG$-weight of $\omega$. By our assumption, 
$\bA$ is in the kernel of $\hbar$.

\begin{Example}\label{ex0} 
For $X=\cM(r,n)$, we take 
$$
\bG = GL(2) \times GL(r) 
$$
where the first factor acts on $\Pp^2$ keeping the line at 
infinity, while the second factor acts by changing 
the framing. We take $\bT$ to be the maximal torus of 
$\bG$ and 
$
\bA = \bT \cap GL(r) \,. 
$
The proper map $\pi$ is the map to the Uhlenbeck moduli space. 
\end{Example}

\begin{Example}\label{exquiv}
More generally,
for $X = \cM_{\theta,0}(\bv,\bw)$ with $\theta$ generic, 
we take $\bG$ as defined in section \ref{s_bG} and $\bT$  its maximal torus.
The proper map $\pi$ is the map
$$\pi: \cM_{\theta,0}(\bv,\bw) \rightarrow \cM_{0,0}(\bv,\bw).$$
Given a decomposition 
$$
\bw = \sum_{i=1}^r \bw^{(i)}\, ,
$$
we obtain a homomorphism 
$$
\bA = \{(z_1, \dots,z_r) \} \to G_\bw 
$$
given by 
$
\bw = \sum_{i=1}^r \bw^{(i)} \, z_i 
$
as in Section \ref{Nak_fix}.
\end{Example}

\subsection{Signs and adjoints}\label{s_signs_adj}

The varieties $X$ we will encounter in the paper have no odd 
cohomology, although the following discussion may be easily 
modified to include odd cohomology. 

When $X^\bT$ is proper, integration over $X$ 
$$
\gamma \mapsto \int_X \gamma \in \Q(\ft) 
$$
may be defined as an equivariant residue, making 
$H = H^\hd_\bT(X)$ a commutative Frobenius algebra over $\Q(\ft) $. 
In fact, it will prove very convenient to introduce 
the following sign twist in the Frobenius trace $\tau$
$$
\tau(\gamma) = (-1)^{\frac12 \dim X} \int_X \gamma \,. 
$$
Recall that $X$ is holomorphic symplectic, so $\dim X$ is even. 
For example, if $X=T^*Y$ and $[Y]$ is the class of the zero 
section, then 
$$
\tau\left([Y]^2\right) = \chi(Y) \,. 
$$

In this paper, we define adjoints using $\tau$. Concretely, 
this means the following. Consider a $\bT$-equivariant cycle, i.e. a
$\Q$-linear formal combination of invariant subvarieties
$$
Z  = \sum a_k Z_k \subset \prod_{i=1}^n X_i\,.
$$
Notice that we have abused notation to write a cycle as a subset of the ambient variety.

Fix a subset $S\subset \{1,\dots,n\}$. Then $Z$, viewed 
as a correspondence, defines a operator 
$$
\Theta_Z : H^\hd_\bT \left(\prod_{i\in S} X_i\right) \to 
H^\hd_\bT \left(\prod_{i\notin S} X_i\right)\otimes \Q(\ft)  \,, 
$$
see Section \ref{s_lagr_corr} for further discussion. For example, 
$Z$ could be the diagonal $\Delta \subset X \times X$ and then, 
for $S=\{1\}$, $\Theta_\Delta$ 
is the identity map. 

Using $\tau$, we may move factors $X_i$ from the source of the 
map $\Theta_Z$ to the target, and back. We call these new 
operators \emph{adjoint} to $\Theta_Z$ and denote them 
by $(\Theta_Z)^\tau$, to distinguish 
it from the ordinary permutations of factors. 
They acquire a sign $(-1)^p$, where 
$$
p= \frac12\sum_{i\in S'} \dim X_i - 
\frac12\sum_{i\in S} \dim X_i\,,
$$
and $S'$ is the source index set for the map $(\Theta_Z)^\tau$. 

For example, if $S'=\{1,2\}$ then 
$$
(\Theta_\Delta)^\tau (\gamma_1 \otimes \gamma_2) = 
(-1)^{\frac12 \dim X} \, \int_{X} \gamma_1 \cup \gamma_2 = 
\tau(\gamma_1 \cup \gamma_2) \in \Q(\ft)  \,.
$$

\section{Basic constructions}

\subsection{Chamber decomposition}

The cocharacters
$$
\sigma: \C^\times \to \bA
$$
form a lattice of rank equal to the rank of $\bA$. We denote 
$$
\mathfrak{a}_\R = \Cochar(\bA) \otimes_\Z \R \subset \mathfrak{a} \,.
$$
Each weight $\chi$ of $\bA$ defines a rational hyperplane in this vector space.

\begin{Definition}
The torus roots are the $\bA$-weights $\{\alpha_i\}$
occurring in the normal bundle to $X^\bA$.
\end{Definition}

The root hyperplanes partition $\mathfrak{a}_\R$ into finitely many (open) chambers
$$
\mathfrak{a}_\R \setminus \bigcup \alpha_i^\perp = \bigsqcup \fC_i \,. 
$$

\begin{Example}\label{ex1} 
In Example \ref{ex0}, we have 
$$
X^\bA = \bigsqcup_{n_1+\dots+n_r=n} \prod \Hilb(\C^2,n_i), 
$$
the normal weights $\alpha$ are the roots of $GL(r)$ 
$$
\mathfrak{a}\owns \diag (a_1,\dots,a_r) \mapsto a_i - a_j \,, 
$$
and the chambers $\fC$ are the usual Weyl chambers. 
\end{Example}

\begin{Example}\label{exquiv1}
Similarly, in Example \ref{exquiv}, we have
$$
\cM_{\theta,0}(\bv,\bw)^\bA = \bigsqcup_{\bv^{(1)}+\dots+\bv^{(r)}=\bv}\cM\left(\bv^{(1)},\bw^{(1)}\right) \times \cdots \times 
\cM\left(\bv^{(r)},\bw^{(r)}\right)
$$
by Proposition \ref{p_Nak_fix}
and the normal weights are again the roots of $GL(r)$.
\end{Example}

The stratification of $\mathfrak{a}_\R$ by root
hyperplanes coincides with 
the stratification by the dimensions of the fixed-point locus. In particular, 
if $\sigma$ does not lie on any hyperplane $\alpha_i^\perp$ then $X^{\sigma} = X^{\bA}$. 

\subsection{Attracting, or stable, manifolds} 

Let $\fC$ be a chamber as above. One says that 
a point $x\in X$ is $\fC$-stable if the limit 
$$
\lim_{z\to 0} \sigma(z) \cdot x \in X^{\bA}
$$
exists for one (equivalently, all) cocharacter $\sigma\in\fC$. The value of this limit 
is independent of the choice of $\sigma\in \fC$. We will denote it by $\lim_{\fC} x$. 

Given a subvariety $Y\subset X^\bA$, 
we denote by
$$
\LAttr_\fC(Y) = \left\{x\, \left| \, \textstyle{\lim_{\fC}} (x) \in Y\right.\right\} 
$$
 the set of points attracted to $Y$ by the 
cocharacters in $\fC$. We have the following: 

\begin{Lemma}\label{noncompactBB}
Let $Z$ be a connected component of $X^\bA$. Then 
$$
\textstyle{\lim_\fC}: \LAttr_\fC(Z) \to Z
$$
is an affine bundle. 
\end{Lemma}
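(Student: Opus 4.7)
The plan is to reduce to the standard Bialynicki-Birula decomposition by choosing a generic one-parameter subgroup inside $\fC$. Since $\fC$ is an open chamber in $\fa_\R = \Cochar(\bA) \otimes_\Z \R$ not meeting any torus root hyperplane, I can pick an integral cocharacter $\sigma \in \fC \cap \Cochar(\bA)$. By the characterization of the chamber stratification in terms of fixed-loci dimensions, $X^\sigma = X^\bA$, and moreover the limit $\lim_{t\to 0}\sigma(t)\cdot x$, when it exists, depends only on the chamber $\fC$ containing $\sigma$ — this is because as $\sigma$ varies within $\fC$ the limit varies continuously inside a single connected component of $X^\bA$, so it is constant on components. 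Consequently $\Leaf_\fC(Z)$ coincides with the $\sigma$-attracting set $X^+_Z := \{x : \lim_{t\to 0}\sigma(t)\cdot x \in Z\}$.

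Next I would apply Bialynicki-Birula's theorem for smooth varieties with $\C^\times$-action to $(X,\sigma)$. In its general (not necessarily projective) form, the theorem asserts that for each connected component $Z$ of $X^{\C^\times} = X^\bA$, the attracting set $X^+_Z$ is a locally closed smooth subvariety of $X$, and the retraction $\textstyle{\lim_\fC}: X^+_Z \to Z$ is a Zariski-locally trivial affine bundle whose fiber over $z\in Z$ is the positive-weight subspace of the normal bundle $(N_{Z/X})_z$ with respect to the $\sigma$-action. Local triviality is obtained by slicing: about any $z\in Z$ one finds a $\sigma$-equivariant étale neighborhood modeled on $T_zX$ (using smoothness together with reductivity of the torus action), and in that local model the retraction is literally the projection onto the zero-weight subspace.

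The main technical hurdle is the non-properness of $X$: without compactness one cannot expect every point of $X$ to admit a limit under $\sigma$, so one must justify that $X^+_Z$ is nevertheless well-behaved. Here I would use the proper map $\pi: X\to X_0$ together with the affineness of $X_0$: the $\sigma$-action descends to $X_0$ and the attracting locus $X_0^+$ is a closed affine subvariety of $X_0$ (defined by vanishing of coordinate functions of negative weight). Since $\pi$ is proper and $\bG$-equivariant, $X^+_Z$ is the preimage of the component of $X_0^\bA$ containing $\pi(Z)$, intersected with the locus mapping to the corresponding component, hence locally closed in $X$. Combined with the local étale slice argument, this gives the bundle structure globally.

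Finally, I would remark (although not needed for the bare statement) that $\bA$-invariance of $\omega$ induces a non-degenerate pairing between the positive and negative normal weight subbundles of $N_{Z/X}$, so the fiber rank equals $\tfrac12 \operatorname{codim}_X(Z)$; this symplectic duality will be used elsewhere but plays no role in the proof of the affine bundle structure itself.
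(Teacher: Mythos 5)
Your overall strategy differs from the paper's: the paper passes to a smooth $\sigma$-equivariant \emph{projective} compactification $\overline{X}$, applies the classical Bialynicki--Birula theorem there, and then argues that the restriction to $X$ loses nothing. You instead work directly inside $X$, invoking a claimed ``general (not necessarily projective) form'' of the BB theorem together with \'etale slices and the proper map $\pi:X\to X_0$. That route has a real gap.

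The crux of the lemma is not the existence of a local model near points of $Z$, but the assertion that the \emph{entire} fiber $\lim_\fC^{-1}(z)$ over $z\in Z$ is a full affine space and not merely an open subscheme of one. A priori, after compactifying one only knows that $\Leaf_\fC(Z)$ is an open subset of the affine bundle $\overline{\lim}^{-1}(Z)\to Z$, and open subsets of affine bundles are usually not affine bundles. The paper's entire proof hinges on one observation that closes this gap: since $\sigma$ acts on each fiber with strictly positive weights and the origin of the fiber is the point $z\in Z\subset X$, any nonempty closed $\sigma$-invariant subset of the fiber must contain the origin; applying this to the complement $\overline{X}\setminus X$, which is closed and $\sigma$-invariant, forces the fiber to lie entirely in $X$. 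Your write-up never makes this or an equivalent argument. Instead you delegate the difficulty to a cited ``general BB theorem for smooth non-complete varieties,'' which is precisely what needs to be proved in this setting (and whose precise hypotheses and conclusions are delicate: Drinfeld's attractor is a functor, not a subvariety; the classical non-complete statements of Bialynicki--Birula/Konarski carry side conditions). The \'etale-slice picture you sketch only controls the attraction set in a neighborhood of a point of $Z$; it does not, by itself, say that the fiber over $z$ is complete globally.

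Separately, your argument for local closedness of $\Leaf_\fC(Z)$ via $\pi:X\to X_0$ is not correct as stated. $\Leaf_\fC(Z)$ is not ``the preimage of the component of $X_0^\bA$ containing $\pi(Z)$'': $\pi$ typically contracts $Z$ to a point, and $\pi^{-1}$ of that point (or of the relevant component of $X_0^\bA$) is much larger than $\Leaf_\fC(Z)$ --- it contains $Z$ itself, all the other attracting leaves over it, and more. What one \emph{can} say is that $\pi^{-1}(X_0^{\geq 0})$ is a closed $\sigma$-invariant subset of $X$ containing all leaves (and this is how the paper uses $\pi$, in Lemma~\ref{partord}), but that is a different statement from what you wrote, and it still leaves the fiber-completeness question untouched. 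The clean fix is exactly the paper's: pass to a smooth $\sigma$-equivariant projective compactification (which exists by Sumihiro plus equivariant resolution), apply the classical BB theorem there, and then use the positive-weight observation to see that no fiber escapes $X$.
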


\begin{Remark}
Note this affine bundle is $\bT$-equivariant. 
\end{Remark}

\begin{proof}
We apply the classical Bialynicki-Birula theorem to a smooth 
$\sigma$-equivariant projective compactification $X \subset \overline{X}$. We get a diagram 
$$
\xymatrix{
\LAttr_\fC(Z) \ar@{^{(}->}[r]\ar[d]_\lim& \LAttr_\fC(\bar Z) \ar[d]_{\overline{\lim}} \\
Z \ar@{^{(}->}[r] & \bar Z \\
}
$$
of $\sigma$-equivariant maps 
in which the horizontal arrows are open dense embeddings and $\overline{\lim}$ is an affine bundle. 
Since $\sigma$ acts with positive weights on the fibers of $\overline{\lim}$, any 
nonempty closed subset of the fiber contains the origin. Therefore, $\lim$ is also an affine bundle. 
\end{proof}

\begin{Example}
In Example \ref{ex1}, take $X=\cM(2,n)$, $\fC=\{a_1 > a_2\}$, and 
$$
Z = \left\{ \cF_1 \oplus \cF_2\, \big|\,  \cF_i \in \Hilb(\C^2,n_i) \right\} \,. 
$$
Then $\LAttr_\fC(Z)$ is a vector bundle with fiber $\Ext^1(\cF_2,\cF_1(-1))$, where
$\cF_1(-1)$ means the twist by minus the line at infinity of $\Pp^2$. 
\end{Example}

\subsection{Partial order by attraction}

The choice of a chamber $\fC$ determines a partial ordering on the set 
$$
\Fix=\pi_0(X^\bA)
$$
of connected components $Z$ of the fixed locus. This is a transitive closure of the 
relation 
$$
\overline{\LAttr_\fC(Z)} \cap Z' \ne \emptyset  \Rightarrow Z 
\succeq  Z' \,. 
$$
Using a projective compactification as in proof of Lemma \ref{noncompactBB}, one sees that 
this is indeed a partial order, that is 
$$
Z \preceq Z' \, \textup{and} \, Z' 
\preceq Z   \Rightarrow Z=Z' \,.
$$

\begin{Lemma}\label{partord}
For any component $Z$ of $X^\bA$ the set 
$$
\Branch(Z) = \bigsqcup_{ Z' \preceq Z} \LAttr_\fC(Z')
$$
is closed in $X$.
\end{Lemma}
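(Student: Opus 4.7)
My plan is to reduce the statement to a single claim about individual leaves, namely that for each component $Z'$ of $X^\bA$,
$$\overline{\Leaf_\fC(Z')} \subset \bigsqcup_{Z'' \preceq Z'} \Leaf_\fC(Z'').$$
Granting this claim, I would conclude by writing
$$\overline{\Branch(Z)} \subset \bigcup_{Z' \preceq Z} \overline{\Leaf_\fC(Z')} \subset \bigcup_{Z' \preceq Z} \, \bigsqcup_{Z'' \preceq Z'} \Leaf_\fC(Z'') \subset \Branch(Z),$$
using transitivity of $\preceq$ in the last step. The fact that $X = \bigsqcup_{Z''} \Leaf_\fC(Z'')$ is a disjoint decomposition of the subset of $X$ on which $\lim_\fC$ is defined, together with a closure being a union of leaves indexed by $\preceq Z'$, would then package everything.

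To prove the claim, I would take $x \in \overline{\Leaf_\fC(Z')}$ and first establish that $\lim_\fC x$ actually exists in $X^\bA$. Fix a cocharacter $\sigma \in \fC$ and let $\phi : \C^\times \to X$ be the orbit map $z \mapsto \sigma(z)\cdot x$. The composition $\pi\circ\phi : \C^\times \to X_0$ lands in an affine variety on which $\bA$ acts. Embedding $X_0$ equivariantly in a linear representation of $\bA$, the locus of points whose $\sigma$-orbits converge as $z \to 0$ is cut out by the vanishing of all coordinate functions of negative $\sigma$-weight, hence is Zariski closed. Since every point of $\pi(\Leaf_\fC(Z'))$ lies in this locus, so does the closure, in particular $\pi(x)$. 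Thus $\pi\circ\phi$ extends to a morphism $\mathbb{A}^1 \to X_0$. Properness of $\pi$ (via the valuative criterion applied to the local ring of $\mathbb{A}^1$ at $0$) then extends $\phi$ to $\overline{\phi}:\mathbb{A}^1 \to X$, and $\overline{\phi}(0) = \lim_\fC x \in X^\bA$ is the desired limit. Letting $Z''$ be the component containing this limit gives $x \in \Leaf_\fC(Z'')$.

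It remains to check $Z'' \preceq Z'$. Here I would use that $\overline{\Leaf_\fC(Z')}$ is $\bA$-invariant, hence stable under $\sigma$, so $\lim_\fC x$ lies in $\overline{\Leaf_\fC(Z')} \cap Z''$. This intersection is therefore nonempty, which is precisely the definition that generates the relation $Z' \succeq Z''$.

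The main obstacle is justifying the existence of $\lim_\fC x$ for $x$ merely in the closure of a leaf; this is genuinely nontrivial when $X$ itself is not projective and is exactly where the standing assumption that $X$ admits a proper map to an affine variety gets used in an essential way. Everything else is bookkeeping with the partial order.
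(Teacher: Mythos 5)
Your proof is correct and follows essentially the same route as the paper's: embed $X_0$ equivariantly in a linear $\bA$-representation $V$, observe that $\pi(\overline{\Leaf_\fC(Z')})$ lands in the non-negative weight subspace $V_{\geq 0}$, and use properness of $\pi$ to conclude that $\lim_\fC x$ exists for any $x$ in the closure of a leaf. The paper compresses the bookkeeping with the partial order and leaves the valuative-criterion step implicit, but the substance is identical.
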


\noindent 
We call $\Branch(Z)$ the \emph{full attracting set} of $Z$. 

\begin{proof}
Consider the map \eqref{blowdown} and choose an $\bA$-equivariant embedding 
$$
X_0 \hookrightarrow V 
$$
into a linear representation $V$ of $\bA$. Let $V_{\geq 0} \subset V$ denote 
the span of those weight subspaces that are non-negative on $\fC$. We have 
$$
\pi\left(\overline{\LAttr_\fC(Z)}\right) \subset X_0 \cap V_{\geq 0} 
$$
for any component $Z\subset X^\bA$.

Let $x$ lie in the closure of $\LAttr_\fC(Z)$. Then $\pi(x) \in V_{\geq 0}$ 
and the limit 
$$
z' = \textstyle{\lim_\fC} x \in \overline{\LAttr_\fC(Z)} \cap X^\bA 
$$
exists by the properness of $\pi$. Denoting by $Z'\in \Fix$ the component 
that contains $z'$ we see that $Z'\preceq Z$ and so we are done. 
\end{proof}

\subsection{The ample partial order}\label{s_ample_ord} 

It will be more convenient to work with a different 
partial order on $\Fix$ which is a priori finer, that is 
$$
Z \prec Z'  \Rightarrow Z < Z'\,,
$$
but is much easier to describe. 

Let $\sigma\in \fC$ be a cocharacter 
and let $C\cong \Pp^1$ be the closure of a $\sigma$-orbit. 
The degree 
$$
(\lambda,[C]) \in \Z\,, \quad \lambda\in \Pic(X)\,,
$$
may be computed by equivariant localization in terms of 
weights of $\lambda$ at the fixed points of $C$. This number
must be positive if $\lambda$ is ample. 

We therefore choose any $\bA$-linearization of an ample 
line bundle $\lambda$ and define 
\begin{equation}
Z > Z' \, \Leftrightarrow \, 
\left(\textup{weight} \, \lambda\Big|_{Z}  - 
\textup{weight} \, \lambda\Big|_{Z'}
\right)\Big|_{\fC} > 0  \,,
\label{ampl_ord}
\end{equation}
where $\textup{weight} \, \lambda\Big|_{Z} \in \fa^*$ is the 
weight of the $\bA$-action on the fiber of $\lambda$ restricted
to fixed point component $Z$. Note that the ambiguity in the 
choice of linearization cancels out of \eqref{ampl_ord}

See also Section \ref{s_superpolytope} below 
for a related discussion.

\begin{Example}\label{ex3} 
Recall that, by construction, Nakajima varieties come with a distinguished
ample class, namely 
$$
\theta = \sum \theta_i \, c_1(\cV_i) \,. 
$$
Consider the fixed points of the tensor product action 
\begin{equation}
Z_\eta = 
\cM_{\theta,\zeta}(\eta,\bw) \times 
\cM_{\theta,\zeta}(\bv-\eta,\bw') 
\subset 
\cM_{\theta,\zeta}(\bv,\bw+\bw') 
\label{Mtens2} 
\end{equation}
as in \eqref{Mtens}. By construction, 
$$
\textup{weight}\, c_1(\cV_i) \Big|_{Z_\eta} = \eta_i \,. 
$$
Therefore 
\begin{equation}
  \label{Nak_ord}
Z_\eta > Z_{\eta'} \quad  \Leftrightarrow \quad 
\theta \cdot \eta > \theta \cdot \eta' \,. 
\end{equation}
In particular, if $\theta_i >0$ for all $i$ then $Z_\emptyset=Z_0$ is 
minimal with respect to the ample order. 
\end{Example}

\subsection{Lagrangian correspondences}\label{s_lagr_corr} 

Given a holomorphic symplectic variety $M$ with symplectic form $\omega$, recall that a subvariety $Z \subset M$ is 
isotropic if  the restriction of $\omega$ to the smooth locus of $L$ vanishes.  It is Lagrangian
if it is also middle-dimensional.  We say that a cycle is Lagrangian if each component is Lagrangian.

Let $Y$ be another holomorphic symplectic variety on which 
 group $\bG$ acts with the same weight $\hbar$ of the 
symplectic form $\omega_Y$. 
Let
$$
L \subset X \times Y 
$$
be a $\bT$-invariant Lagrangian cycle with respect to 
the form $\omega_X - \omega_Y$. Recall that we use $\subset$ to denote cycles as well as subvarieties.

If $L$ is proper over 
$X$, it defines a map 
$$
\Theta_L: H^\hd_\bT(Y) \xrightarrow{\,\,p_2^*\,\,} H^\hd_\bT(L) 
\xrightarrow{\,\,(p_1)_*\,\,} H^\hd_\bT(X)
$$
As an equivariant 
residue, $\Theta_L$ may be defined  with a weaker properness assumption:
$\bT$ has to have proper fixed points in the
fibers of the push-forward. 

See, for example, \cite{CG} for a general discussion of 
operators defined by correspondences. In particular, 
$\Theta_L$ depends only 
on the class $[L]$ of $L$ in the $\bT$-equivariant Borel-Moore
homology of $X\times Y$. Also
$$
\Theta_{L_1} \circ \Theta_{L_2} = \Theta_{[L_1] \circ [L_2]} \,. 
$$
Here the convolution $L_1 \circ L_2$ of two cycles 
is defined by 
$$
[L_1] \circ [L_2] = (p_{13})_* \,\Delta^*([L_1] \times [L_2]) 
$$
where the maps 
$$
X \times Y \times Y \times Z  \xleftarrow{\,\,\Delta\,\,}
X \times Y  \times Z \xrightarrow{\,\,p_{13}\,\,}
X \times Z 
$$
are the inclusion of the diagonal and the projection, 
respectively.  Here $\Delta^*$ denotes Gysin pullback with respect to a regular embedding.
When the map $p_{13}$ is proper on the support of
$L_1 \times_{Y} L_2$, its image is isotropic.  As a consequence, 
the convolution $[L_1] \circ [L_2]$ is the cycle class of a $\bT$-invariant 
Lagrangian cycle in $X \times Z$. 

\subsection{Steinberg correspondences}\label{s_Stein_corr} 

Let $L\subset X \times Y$ be a Lagrangian correspondence as 
above. 
\begin{Definition}
A Steinberg correspondence 
is a Lagrangian correspondence
$$L \subset X \times Y$$
as above such that there exist proper equivariant maps 
$$
X \xrightarrow{\,\,\pi_{X}\,\,} V \xleftarrow{\,\,\pi_Y\,\,} Y
$$
to an affine $\bG$-variety $V$ such that 
$$
L \subset X \times_{V} Y \,. 
$$
\end{Definition}

The following easy lemma gives a sufficient condition for 
Steinberg correspondences to be closed under convolution.

\begin{Lemma}
Given Steinberg correspondences
$$
L_1 \subset X \times_{V_1} Y\,, \quad
L_2 \subset Y \times_{V_2} Z \,,
$$
the convolution $L_1\circ L_2$ is a Steinberg correspondence 
if there exists a commutative diagram of equivariant proper maps
\begin{equation}
\label{composable}
\xymatrix{
Y \ar[d]_{\pi_{Y,2}}\ar[r]^{\pi_{Y,1}}& V_1\ar[d]\\
V_2 \ar[r] &V
\\
}
\end{equation}
with $V$ affine.
\end{Lemma}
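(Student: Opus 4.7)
The plan is to take the same affine variety $V$ (together with the composed maps $X \to V_1 \to V$ and $Z \to V_2 \to V$ from diagram \eqref{composable}) as the target realizing $L_1 \circ L_2 \subset X \times_V Z$. Both composed maps are $\bG$-equivariant and proper as composites of proper equivariant maps. So what has to be verified is that the convolution is well-defined as a Lagrangian cycle in $X \times Z$, and that its support lies in $X \times_V Z$.

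First, I would verify that $p_{13}: X \times Y \times Z \to X \times Z$ is proper on the support of $\Delta^*([L_1] \times [L_2])$, which is contained in the set-theoretic fiber product $L_1 \times_Y L_2 \subset X \times Y \times Z$. Since $\pi_{Y,1}: Y \to V_1$ is proper, its base change $X \times_{V_1} Y \to X$ is proper, and therefore so is the projection $L_1 \to X$; likewise $L_2 \to Z$ is proper. Base-changing once more, the projection $L_1 \times_Y L_2 \to L_1$ is proper, so $L_1 \times_Y L_2 \to X$ is proper; similarly $L_1 \times_Y L_2 \to Z$ is proper. The product map into $X \times Z$ factors as the closed diagonal $L_1 \times_Y L_2 \hookrightarrow (L_1 \times_Y L_2)^2$ followed by the product of two proper maps, and hence is itself proper.

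Next, I would check the fiber-product inclusion set-theoretically. Take any $(x,y,z) \in L_1 \times_Y L_2$. The Steinberg condition on $L_1$ forces $\pi_X(x) = \pi_{Y,1}(y)$ in $V_1$, and that on $L_2$ forces $\pi_{Y,2}(y) = \pi_Z(z)$ in $V_2$. Pushing both identities into $V$ via the two vertical arrows of \eqref{composable} and invoking commutativity, the images of $x$ and $z$ in $V$ coincide, proving $(x,z) \in X \times_V Z$, hence $L_1 \circ L_2 \subset X \times_V Z$.

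Finally, Lagrangian-ness of the convolution is automatic from the general discussion of Section \ref{s_lagr_corr}: once $p_{13}$ is proper on the support, the convolution of two $\bT$-invariant Lagrangian cycles is $\bT$-invariant and Lagrangian in $X \times Z$. The main (mild) obstacle in the argument is the properness verification in the first step, where one has to assemble properness along the two legs of $L_1 \times_Y L_2$ into properness of the diagonal map to $X \times Z$; the fiber-product containment then uses the commutativity of \eqref{composable} in exactly the expected way.
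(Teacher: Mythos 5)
Your proof is correct and takes essentially the same approach as the paper: use $V$ with the composed maps $X \to V_1 \to V$ and $Z \to V_2 \to V$ as the realizing affine target, and the commutativity of \eqref{composable} to derive $L_1\circ L_2 \subset X\times_V Z$. The paper's own proof is just a two-line assertion of these two facts; you have supplied the properness and set-theoretic details that it left implicit.
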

\begin{proof}
Both $X$ and $Z$ map admit proper, equivariant maps to $V$.
It is clear that the assumptions imply 
$$L_1 \circ L_2 \subset X\times_{V} Z.$$
\end{proof}

We say that two Steinberg correspondences are composable if they satisfy the sufficient condition
described above when they share a common factor.

\begin{Example}
Fix a quiver $Q$ and dimension vectors $\bv, \bv^{(i)}$ for $i = 1,\dots, n$, such that $\bv \geq \sum \bv^{(i)}$,
and similarly for $\bw, \bw^{(i)}$.
We have a proper map
$$\prod_{i=1}^{n} \cM_{\theta,\zeta}\left(\bv^{(i)},\bw^{(i)}\right) 
\rightarrow
\cM_{0,\zeta}\left(\sum \bv^{(i)}, \sum \bw^{(i)}\right)
\rightarrow \cM_{0,\zeta}(\bv, \bw)$$
where the first map is given by 
affinization and direct sum, while 
the second map is given by taking the direct sum with the zero representation.
We will only consider proper maps to affine varieties of this form or products of such maps.
As a result, if we have two such maps with the same domain, a commutative diagram of the form \eqref{composable}
always exists since the two targets can both be included into a still-larger $\cM_{0,\zeta}(\bv,\bw)$.
Therefore, the associated Steinberg correspondences will always be composable.
\end{Example}

Given a possibly disconnected variety $X$, if we have a collection
of composable Steinberg correspondences between components of $X$, we can consider the 
subalgebra of $\End H^\hd_\bT(X)$ that they span. When the context is clear, It 
will be called the \emph{Steinberg algebra} of $X$.

\section{Characterization of stable envelopes}\label{s_uniq_stable}

\subsection{Supports}

For the ease of reading formulas, we use restriction signs for the 
natural restriction maps in equivariant cohomology. Given a closed 
$\bT$-invariant subset $Y\subset X$ and a class $\gamma \in H^\hd_\bT(X)$ we say 
that $\gamma$ is supported on $Y$ if 
$$
\gamma\Big|_{H^\hd_\bT(X\setminus Y)}  = 0 \,. 
$$
Equivalently, $\supp \gamma \subset Y$ means
 that the Borel-Moore class $\gamma\cap [X]$ is pushed forward under
$Y\hookrightarrow X$.

\subsection{Polarization}\label{s_polar}

Let $Z\in \Fix$ be a component of $X^\bA$ and 
let $N_Z$ be the normal bundle to $Z$ in $X$. Any chamber $\fC$ 
gives a $\bT$-invariant decomposition
$$
N_Z = N_{+} \oplus N_{-} 
$$
into $\bA$-weights that are positive and negative on $\fC$, 
respectively. The symplectic form $\omega$ gives
\begin{equation}
\left(N_{+}\right)^\vee = N_- \otimes \hbar  \in K_{\bT}(Z) \,,
\label{dualN}
\end{equation}
where $\hbar$ denotes a trivial line bundle with 
the corresponding action of $\bT$.

Because $\hbar$ is trivial on 
$\bA$, the class 
\begin{equation}
  \bsi^2 = (-1)^{(\codim Z)/2} e(N_Z)\Big|_{H^\hd_\bA(\pt)} = 
\prod \alpha_i^2\,, 
\label{def_eps}
\end{equation}
is a perfect square. Here $\pm\alpha_i\in \fa^*$ are
the roots that occur in $N_Z$.  

\begin{Definition}
A choice of a square root $\bsi$ in \eqref{def_eps} will be 
called a polarization of $Z$ in $X$. The sign in $\pm e(N_-)$ 
agrees with polarization if $\pm e(N_-)$ 
restricts to $\bsi$ in $H^\hd_\bA(\pt)$. 
\end{Definition}

\begin{Example}\label{e_pol_T*} While 
polarization is a purely formal choice, 
geometrically natural choices save on signs. 

For example, if $X=T^*Y$ with $\bA$-action induced from 
$Y$, we can take $\bsi$ to be the product of 
nonzero $\bA$-weights in the fibers of $TX \to TY$. 

More generally, let a cocharacter 
$$
\sigma: \C^\times \to \bT
$$
be such that $(\hbar,\sigma)=-1$. This generalizes the scaling action 
of $\C^\times$ in the fibers of $T^*Y$. Then we can choose the weights 
in $\bsi$ as the $\sigma$-negative weights in the fiber
of $N_Z$ over some chosen $x\in Z^\sigma$. 
\end{Example}

\begin{Example}\label{s_pol_Nak}
We have a canonical polarization associated to Nakajima varieties as follows.
Recall from Section \ref{tangentsplitting} that we have a virtual splitting of the tangent bundle
$$T \Nak = \Th + \hbar^{-1}\otimes\left(\Th\right)^\vee.$$
Let $\epsilon$ denote the product, weighted by multiplicity, of the 
nonzero $\bA$-weights in the restriction of $(\Th)^\vee$ to some $x \in Z$.  
\end{Example}

\subsection{Degree in $\bA$}

Since $\bA$ acts trivially on $X^\bA$, we have 
$$
H^\hd_\bT(X^\bA) = H^\hd_{\bT/\bA}(X^\bA) \otimes_{\C[\ft/\fa]} \C[\ft] \,. 
$$
While there is no canonical splitting
\begin{equation}
  \label{split}
  \C[\ft] \cong \C[\ft/\fa] \otimes \C[\fa] 
\end{equation}
any such splitting leads to the same increasing filtration of $H^\hd_\bT(X^\bA)$ 
by the degree $\deg_\bA$ in $\C[\fa]$. Clearly, 
\begin{equation}
  \gr H^\hd_\bT(X^\bA) = H^\hd_{\bT/\bA}(X^\bA) \otimes \C[\fa] \,. 
\label{grH}
\end{equation}

\subsection{Characterization}

Choose a chamber $\fC\subset\fa$ and an polarization $\bsi$ of $X^\bA$.  The following theorem is the main result of this
section.
 
\begin{Theorem}\label{stablebasis}
There exists a unique map of $H_{\bT}^\hd(\pt)$-modules
$$
\Stab_{\fC,\bsi}: H_{\bT}^\hd(X^\bA) \rightarrow H_{\bT}^\hd(X)
$$ 
such that for any $Z\in \Fix$ and any $\gamma\in H^\hd_{\bT/\bA}(Z)$, the stable 
envelope $\Gamma = \Stab_{\fC,\sigma}(\gamma)$ satisfies: 
\begin{enumerate}
\item[\nui]
$\supp \Gamma \subset \Branch(Z)\,,$ 
\item[\nuii]
$\Gamma\big|_{Z} =  \pm e(N_-) \cup \gamma\,,$ according to polarization,  
\item[\nuiii]
$
\displaystyle 
\deg_\bA \Gamma \big|_{Z'} < \tfrac12 \codim Z' \,, 
$
 for any $Z' \prec  Z$\,. 
\end{enumerate}
\end{Theorem}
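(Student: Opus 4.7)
The plan is to establish uniqueness by an inductive argument on the ample partial order, then prove existence by an inductive construction that begins with an initial approximation and corrects it at lower strata.

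\emph{Uniqueness.} Let $\Gamma = \Stab(\gamma) - \Stab'(\gamma)$ be the difference of two candidate envelopes for the same class $\gamma$ on a component $Z$. Then $\Gamma$ satisfies (i) and (iii) and has $\Gamma|_Z = 0$ by subtraction in (ii). Since $X$ is formal, the restriction map $H^\hd_\bT(X) \to H^\hd_\bT(X^\bA)$ is injective modulo torsion, so it suffices to show $\Gamma|_{Z'} = 0$ for every $Z' \in \Fix$. If $Z' \not\preceq Z$ in the leaf order, this is immediate from (i). For $Z' \preceq Z$ I induct downward in the ample order, with base case $Z' = Z$ given by the above. At a step $Z' \prec Z$, assuming $\Gamma|_{Z''} = 0$ for every $Z'' \succ Z'$ with $Z'' \preceq Z$, I restrict $\Gamma$ to the open complement of these killed upper strata and then to a formal tubular neighborhood of $Z'$. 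There the support collapses onto $\Leaf_\fC(Z')$, which is an affine bundle over $Z'$ modeled on the attracting normal bundle $N_+|_{Z'}$ by Lemma \ref{noncompactBB}. The self-intersection formula then gives $\Gamma|_{Z'} \in e(N_+|_{Z'}) \cdot H^\hd_\bT(Z')$. Since $\deg_\bA e(N_+|_{Z'}) = \tfrac12 \codim Z'$, divisibility combined with the strict inequality in (iii) forces $\Gamma|_{Z'} = 0$.

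\emph{Existence.} I construct $\Stab_{\fC,\bsi}(\gamma)$ by induction on the ample order from the bottom up. For $Z$ minimal in ample order, $\Branch(Z) = \overline{\Leaf_\fC(Z)}$ is the closure of an affine bundle over $Z$, and the pushforward of $\gamma$ along the composition $Z \hookrightarrow \overline{\Leaf_\fC(Z)} \hookrightarrow X$, rescaled so that (ii) holds with the prescribed polarization sign via the duality \eqref{dualN}, satisfies (i) and (ii), while (iii) is vacuous. For general $Z$, the same pushforward produces a first approximation $\Gamma_0$ meeting (i) and (ii). The defect in (iii) lives on strata $Z' \prec Z$: for each such $Z'$ where the bound fails, the component of $\Gamma_0|_{Z'}$ of $\bA$-degree at least $\tfrac12 \codim Z'$ is divisible by $e(N_+|_{Z'})$ by the same self-intersection analysis used in uniqueness, so it equals $e(N_+|_{Z'}) \cdot \gamma'$ for a class $\gamma'$ on $Z'$. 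Subtracting $\Stab_{\fC,\bsi}(\gamma')$, already built by induction, eliminates this piece without disturbing conditions (i) or (ii) since $\Stab_{\fC,\bsi}(\gamma')$ is supported in $\Branch(Z') \subset \Branch(Z)$ and its restriction to $Z$ vanishes by the degree bound. After finitely many such corrections, the degree defect on each of the finitely many lower components is exhausted and the resulting class satisfies all three conditions.

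\emph{Main obstacle.} The crux of both halves of the proof is the divisibility statement: a class $\alpha \in H^\hd_\bT(X)$ supported on $\Branch(Z)$ whose restrictions to all $Z'' \succ Z'$ (with $Z'' \preceq Z$) vanish must have $\alpha|_{Z'}$ divisible by $e(N_+|_{Z'})$. Proving this rigorously requires care with equivariant localization on a formal neighborhood of $Z'$, an excision step to decouple $Z'$ from the closures of leaves of strata incomparable to or below $Z'$ that may pass through that neighborhood, and compatibility with the $\bA$-degree filtration \eqref{grH}. Once this divisibility is in hand, the polarization sign in (ii) and the transition between $e(N_+|_{Z'})$ and $e(N_-|_{Z'})$ via \eqref{dualN} become routine bookkeeping.
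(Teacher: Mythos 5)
Your uniqueness argument follows the same contours as the paper's (iterated restriction to the top stratum in the support, divisibility by an Euler class, then the $\bA$-degree bound forces vanishing), though you identify the wrong bundle: when a class supported on $\overline{\Leaf_\fC(Z')}$ is restricted to $Z' \subset X$, the excess bundle is $N_{\overline{\Leaf}/X}|_{Z'} = N_-$, not the fibre direction $N_+$. Since $\deg_\bA e(N_+) = \deg_\bA e(N_-) = \tfrac12\codim Z'$, your degree argument still goes through, but this confusion recurs below and there it matters.

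The existence half has a genuine gap. Your ``first approximation'' is stated as the pushforward along $Z \hookrightarrow \overline{\Leaf_\fC(Z)} \hookrightarrow X$; that pushforward restricts on $Z$ to $e(N_{Z/X})\gamma$, not $\pm e(N_-)\gamma$, and no rescaling fixes this. What is actually needed (and what the paper does via the graph of $\lim_\fC$) is $\iota_{\overline{\Leaf},*}\bigl(\lim_\fC^*\gamma\bigr)$, whose restriction to $Z$ is $e(N_-)\gamma$ by excess intersection. More seriously, the entire inductive correction scheme hinges on the claim that the top $\bA$-degree part of $\Gamma_0|_{Z'}$, for $Z' \prec Z$, is divisible by an Euler class of degree $\tfrac12\codim Z'$. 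This is \emph{not} a consequence of self-intersection bookkeeping or excision: near $Z'$, the support $\overline{\Leaf_\fC(Z)}$ is not the affine bundle $\Leaf_\fC(Z')$ but a possibly singular middle-dimensional subvariety containing closures of higher leaves, and for a generic middle-dimensional $\bA$-invariant cycle the restriction to $Z'$ can meet $Z'$ in dimension exceeding $\tfrac12\dim Z'$ and the top-degree term need not factor through any Euler class. The paper's Section \ref{s_Res} (Lagrangian residues) supplies exactly the missing input: because the correspondence is a $\bT$-invariant \emph{Lagrangian} cycle, $L \cap Z'$ is isotropic (hence at most middle-dimensional) and $\iota_{Z'}^*[L] = \bsi \cdot [\Res_{Z'}L] + (\text{lower $\bA$-degree})$, proved by degenerating to coordinate Lagrangian subspaces in a symplectic vector space. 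You never invoke the symplectic form or Lagrangian-ness, and in your ``main obstacle'' paragraph you attribute the difficulty to equivariant localization and excision — but the real obstruction is that without the Lagrangian constraint the divisibility is false, not merely hard to verify. The paper works with Lagrangian cycles on $X \times X^\bA$ throughout precisely to retain this control at every stage of the induction (Lemmas \ref{l_correct1} and \ref{l_correctL}), and also needs the proper map $\pi\colon X\to X_0$ to close $\Branch(Z)$ and establish properness of the correspondence over $X$ — a point your proposal also passes over.
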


\begin{Remark}
The chamber and the polarization are independent parameters in 
the construction of $\Stab_{\fC,\bsi}$. The former being much 
more important than the latter, we abbreviate 
$$
\Stab_{\fC}=\Stab_{\fC,\bsi} \,,
$$
once some polarization $\bsi$ has been specified. 
\end{Remark}

\begin{Remark}
We will see $\Stab_{\fC}$ is given by a Lagrangian 
correspondence on $X \times X^\bA$, and, in particular, it maps 
middle degree to middle degree. 
\end{Remark}

The existence of $\Stab_\fC$ will be proven later. 
We now prove the uniqueness a map satisfying the conditions of the theorem. 

\begin{proof}
Let $\gamma \in H_{\bT}^\hd(X)$ be supported on a union of attracting sets and satisfy 
$$
\deg_\bA \iota^* \, \gamma < \tfrac12 \codim Z \,, 
$$
for any embedding $\iota: Z \hookrightarrow X$ of a fixed component. 
We claim this forces $\gamma=0$.

Pick a total ordering on $\Fix$ refining $\prec$ and choose $Z\in\Fix$ so that
$\gamma$ is supported on $\Branch(Z)$. We can factor $\iota = f_3 f_2 f_1$, where 
$$
Z \overset{f_1}\hookrightarrow \LAttr_\fC(Z) 
\overset{f_2}\hookrightarrow \Branch(Z) \overset{f_3}\hookrightarrow X \,. 
$$
Here $f_1$ is regular and $f_2$ is open. 
The support condition on $\gamma$ means that 
$$
\gamma \cap [X] = (f_3)_*\, \alpha 
$$
for a certain Borel-Moore homology class $\alpha$. 
Standard excess intersection arguments then show 
$$
\iota^*(\gamma) \cap [Z] = e(N_{-}) \cap f_1^{*} \, f_2^{*}\, \alpha \,. 
$$
The multiplication by $e(N_{-})$ is injective on \eqref{grH} 
and 
$$
\deg_\bA e(N_{-}) = \tfrac12\codim Z \,. 
$$
Because this exceeds the degree of the right-hand side, 
$f_1^{*} \, f_2^{*}\, \alpha =0$. 
Since $f_1^{*}$ is an isomorphism, this forces $f_2^{*}\alpha$ to vanish, 
meaning that $\gamma$ is supported on a smaller union 
of strata. Arguing inductively, we see $\gamma = 0$.

Now if $\Gamma_1,\Gamma_2\in H^\hd_\bT(X)$ are two classes satisfying
\nui--\nuiii\ then their difference satisfies the hypothesis above, 
hence vanishes. 
\end{proof}

\section{Lagrangian residues}\label{s_Res}

Let $L$ be an $\bA$-invariant Lagrangian and let 
$$
\iota: Z \hookrightarrow X 
$$
be an embedding of a component of $X^\bA$. 
The form $\iota^*\omega $ is 
symplectic and so we can talk about isotropic and Lagrangian subvarieties of $Z$. 

\begin{Lemma}
$L\cap Z$ is an isotropic subvariety of $Z$. 
\end{Lemma}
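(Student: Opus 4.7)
The plan is to show that $\iota^*\omega$ vanishes on the tangent space to each smooth point of each irreducible component of $L \cap Z$. I may reduce to the case where $L$ is irreducible by treating components separately. When a smooth point $p \in L \cap Z$ happens also to be a smooth point of $L$, a direct argument suffices: $T_pL$ is then a Lagrangian subspace of $T_pX$ for $\omega_p$, and $T_p(L\cap Z) \subset T_pL$ by functoriality of tangent spaces, so $\omega$ already vanishes on $T_p(L\cap Z)$. The genuine obstacle is to handle smooth points of $L \cap Z$ that happen to be singular points of $L$, where the Zariski tangent space $T_pL$ may be too large to be automatically isotropic.

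To overcome this, I would pass to an $\bA$-equivariant resolution of singularities $\pi : \tilde L \to L$, which exists by equivariant resolution in characteristic zero. The pullback $\pi^*\omega$ is a regular $2$-form on the smooth variety $\tilde L$; since $L$ is Lagrangian and $\pi$ is an isomorphism over the smooth locus $L^{\mathrm{sm}}$, this form vanishes on the dense open set $\pi^{-1}(L^{\mathrm{sm}})$, and hence $\pi^*\omega \equiv 0$ on all of $\tilde L$. The fixed locus $\tilde L^\bA$ is smooth and closed in $\tilde L$, and clearly $\pi(\tilde L^\bA) \subseteq L \cap Z$. The reverse inclusion $\pi(\tilde L^\bA) = L \cap Z$ follows because each fiber of $\pi$ over a point of $L \cap Z$ is a projective $\bA$-variety by properness and $\bA$-equivariance, and hence contains an $\bA$-fixed point by Borel's fixed-point theorem.

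For each irreducible component $C \subseteq L \cap Z$, I would then pick an irreducible component $\tilde C \subset \tilde L^\bA$ dominating $C$. By generic smoothness in characteristic zero, $\tilde C \to C$ is smooth over a Zariski-dense open subset $U \subseteq C^{\mathrm{sm}}$. For $p \in U$ and any preimage $\tilde p \in \tilde C$, the differential $d\pi : T_{\tilde p}\tilde C \to T_pC$ is surjective, so the vanishing $\pi^*\omega = 0$ forces $\iota^*\omega$ to vanish on $T_pC$. Zariski density of $U$ in $C^{\mathrm{sm}}$ then propagates this pointwise vanishing to the entire smooth locus. The main hurdle is the combination of the surjectivity $\pi(\tilde L^\bA) = L \cap Z$ with the generic smoothness step; the rest is routine, and reduces to the Lagrangian identity $\pi^*\omega = 0$ on $\tilde L$.
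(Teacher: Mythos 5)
Your argument is correct, but it takes a genuinely different route from the paper's. The paper's proof chooses a Whitney stratification of $L$ for which $L \cap Z$ is a union of strata, picks a general point $w$ of a component $W$, and uses Whitney condition (a): a sequence of smooth points $x_k \in L^{\mathrm{sm}}$ approaching $w$ can be chosen so that $T_{x_k}L$ converges in the Grassmannian to a Lagrangian subspace containing $T_w W$, whence $T_w W$ is isotropic. This is quite short once one invokes Whitney stratifications, and, notably, it makes \emph{no} use of the torus action — it proves the more general statement that the intersection of a Lagrangian with any symplectic subvariety is isotropic. Your proof instead leans hard on the $\bA$-action: $\bA$-equivariant resolution of singularities, Borel's fixed-point theorem to get $\pi(\tilde L^\bA) \supseteq L \cap Z$, then generic smoothness in characteristic zero to transport the vanishing $\pi^*\omega \equiv 0$ down to the target. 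The machinery is heavier and less self-contained, but it substitutes for stratification theory a chain of standard algebro-geometric tools, and it has the pedagogical merit of making explicit where the Lagrangian condition enters (namely $\pi^*\omega = 0$ on all of $\tilde L$, propagated from the dense open $\pi^{-1}(L^{\mathrm{sm}})$).

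One small imprecision: $\pi(\tilde L^\bA)$ is contained in $L \cap X^\bA$, not in $L \cap Z$, since $L^\bA$ may meet components of $X^\bA$ other than $Z$. This is harmless — you should restrict attention to the components of $\tilde L^\bA$ whose image meets $Z$; since $X^\bA$ is a disjoint union of its components and the image of each irreducible component of $\tilde L^\bA$ is connected, such an image lies entirely in $Z$. The rest of the argument (the surjectivity from Borel, the choice of a component $\tilde C$ dominating $C$, generic smoothness giving a dense open $U \subset C^{\mathrm{sm}}$ where $d\pi$ is surjective, and the final density argument) is sound.
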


\begin{proof}
Let  $W$ be an irreducible component of $W$ of $L \cap Z$. 
For a general point $w \in W$, 
there exists a sequence of points $x_1, x_2, \dots$ in the smooth locus of $L$ approaching $w$
such that limit of $T_{x_k}L$ exists as $k \rightarrow \infty$ and contains the tangent space $T_{w}W$.  
This can be seen, for instance, by choosing a Whitney stratification of $L$ for which $L \cap Z$ 
is a union of strata.  Since the symplectic form on $Z$ is the restriction of the symplectic form on $X$, 
the lemma follows.  
\end{proof}

\noindent 
Now suppose an polarization $\bsi$ of $Z$ has been chosen. 

\begin{Lemma}
There is a unique Lagrangian cycle $\Res_Z L$ supported 
on $L\cap Z$ such that 
$$
\iota^*  [L] = \bsi \, [\Res_Z L] + \dots
$$
where dots stand for terms of smaller $\bA$-degree. 
\end{Lemma}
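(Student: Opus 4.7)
My plan is to identify $\Res_Z L$ explicitly as a signed $\Z$-linear combination of the Lagrangian components of $L\cap Z$, obtaining the coefficients by a local computation at generic smooth points, and then to derive both the leading-degree property and uniqueness from the bigrading on $H^\hd_\bT(Z)$ by cohomological and $\bA$-degrees.

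First I would decompose $L\cap Z = \bigcup_j W_j$ into irreducible components. By the previous lemma each $W_j$ is isotropic in $Z$, so $\dim_\C W_j \le \tfrac12 \dim_\C Z$. Call $W_j$ a \emph{top} component when equality holds, i.e.\ when it is Lagrangian in $Z$. The cycle $\Res_Z L$ will be a combination of top components only. To pin down the coefficients, pick a generic smooth point $w$ of a top component $W$ that also lies in the smooth locus of $L$. Under the splitting $T_w X = T_w Z \oplus N_{Z,w}$, the subspace $T_w L$ is $\bA$-invariant, Lagrangian in $T_w X$, and contains the Lagrangian $T_w W \subset T_w Z$; hence it splits as $T_w L = T_w W \oplus \Lambda_w$ with $\Lambda_w \subset N_{Z,w}$ an $\bA$-invariant Lagrangian subspace. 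Since $N_{Z,w} = N_+ \oplus N_-$ with $N_-$ dual to $N_+$ under $\omega$, any such $\Lambda_w$ is obtained by choosing, for each pair $\pm\alpha_i$ of opposite weights, a weight line in either $N_+$ or $N_-$. Consequently, the equivariant Euler class of $\Lambda_w$ equals $\pm\bsi$, with a sign determined by the local geometry of $L$.

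Assembling these local contributions via a Whitney stratification of $L\cap Z$ and the projection formula applied to a tubular neighborhood of each top stratum, I would obtain
\begin{equation*}
\iota^*[L] \;=\; \bsi \cdot \sum_W m_W\,[W] \;+\; R,
\end{equation*}
where $m_W\in\Z$ is the signed multiplicity coming from the local computation and $R$ collects the contributions of non-top components. A degree count shows that $R$ has strictly smaller $\bA$-degree than $\bsi$: if $W'$ satisfies $\dim_\C W' = \tfrac12 \dim_\C Z - d$ with $d\ge 1$, its Poincar\'e dual $[W']$ has cohomological degree $\dim_\C Z + 2d$ in $Z$, so the coefficient of $[W']$ in $\iota^*[L]$ sits in degree $\codim_\C Z - 2d$ of $H^\hd_\bT(\pt)$, forcing its $\bA$-polynomial degree to be $<\tfrac12\codim_\C Z = \deg_\bA \bsi$. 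Setting $[\Res_Z L] = \sum_W m_W\,[W]$ then produces the required Lagrangian cycle. Uniqueness follows from the formality assumption on $X$: $H^\hd_\bT(Z)$ is free over the polynomial ring $H^\hd_\bT(\pt)$, so multiplication by the nonzero homogeneous element $\bsi$ is injective, and any two candidate leading terms agreeing modulo lower $\bA$-degree must coincide.

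The main obstacle is the local step, specifically a parametric $\bA$-equivariant symplectic normal form (a Weinstein-type theorem) guaranteeing that the subspace $\Lambda_w \subset N_{Z,w}$ varies locally trivially as $w$ ranges over the smooth locus of $W$, so that the signed multiplicity $m_W$ is well-defined and independent of the chosen generic point. Once this is in place, everything else reduces to the degree-counting with the bigrading, which interacts cleanly with formality.
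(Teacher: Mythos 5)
Your degree-count and your uniqueness argument (freeness over $H^\hd_\bT(\pt)$ and $\bsi$ being a nonzerodivisor on the associated graded) are both fine and match what the paper does implicitly. The genuine gap is in the local computation. You \emph{assume} that a generic smooth point $w$ of a Lagrangian component $W\subset L\cap Z$ also lies in the smooth locus of $L$, and moreover that at such a point the $\bA$-invariant Lagrangian $T_wL$ meets $T_wZ$ exactly in $T_wW$, yielding the splitting $T_wL=T_wW\oplus\Lambda_w$. Neither is guaranteed. Since $\dim W=\dim L-\tfrac12\codim Z$, a positive-codimension locus such as $W$ can perfectly well be contained in the singular locus of $L$ whenever $\codim Z\ge 2$; and even when $L$ is smooth at $w$, the intersection $T_wL\cap T_wZ$ can be strictly larger than the scheme-tangent space $T_w(L\cap Z)$, so the clean product decomposition can fail. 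When either pathology occurs, your identification of the coefficient with $\pm e(\Lambda_w)=\pm\bsi$ has no meaning.

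The paper's proof circumvents exactly these issues: rather than reading off a Lagrangian subspace of $N_{Z,w}$ from the smooth tangent space of $L$, it shrinks $X$ to a neighborhood of a smooth point of $L_i$ (smooth on $L_i$, not on $L$), then degenerates to the normal cone $C_Z X\cong N_Z$ and restricts to a transverse slice through the generic point. This reduces the local multiplicity to the residue at the origin of an $\bA$-invariant \emph{conical Lagrangian subvariety} (not subspace) of $V\oplus V^\vee$; the separate linear-algebra lemma then handles singular conical Lagrangians by a further degeneration, enlarging $\bA$ to the maximal torus of $Sp(2n)\times\C^\times$ and degenerating $L$ to a union of Lagrangian coordinate planes. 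The conclusion is that the residue is an \emph{integer multiple} of $\bsi$, without any smoothness or transversality hypothesis on $L$ along $W$. Your concern about a parametric Weinstein normal form is, by the way, a red herring: once the local multiplicity is well-defined at a single generic point, the residue computation is a cohomological computation and there is nothing to glue; the real obstacle was the smoothness and transversality assumptions, which the normal-cone degeneration removes.
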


\begin{proof}
The class $\iota^*[L]$ is supported on a subvariety $L\cap Z$ of dimension at most 
$\frac{1}{2}\dim Z$.  Therefore, its $\bA$-degree can be at most 
$$
\codim_X L  - \codim_Z (L\cap Z) \le  \tfrac12 \codim_X Z\,.
$$
Assuming $L\cap Z$ is middle-dimensional, denote by $L_1,L_2, \dots $ its Lagrangian irreducible components. 
We have 
$$
\iota^*[L] = \sum [L_i]\cdot f_i + \dots 
$$
where $f_i \in H^\hd_\bA(\pt)$ is a homogeneous polynomial of degree $\tfrac12 \codim_X Z$ and 
dots stand for terms of smaller degree.

In order to calculate $f_i$, we shrink $X$ to a neighborhood of a smooth generic point of $L_i$. 
 Furthermore, we can degenerate to the normal cone of $Z$ inside $X$ and restrict to a transverse slice through a 
generic point of $L_i$.  After these simplifications, the following lemma finishes the proof. 
\end{proof}

\begin{Lemma}
Let $V= \C^{n}$ be a vector space equipped with the diagonal action of $\bA$ by characters
$\chi_1, \dots, \chi_n$.  Let $X = V \oplus V^{\vee}$ be the symplectic vector space equipped with the induced action of 
$\bA$ and suppose we have a Lagrangian $\bA$-invariant conical subvariety $L \subset X$.  
Then the residue of $[L]$ at the origin $0=Z \subset X$ is an integer multiple of $\bsi=\prod_{j=1}^n \chi_j$.
\end{Lemma}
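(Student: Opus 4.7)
\emph{Plan.} I reduce to the irreducible case and show that each irreducible component of $L$ has class equal to $\pm \bsi$. Writing $L = \sum_i m_i L_i$ with each $L_i$ an irreducible $\bA$-invariant conical Lagrangian subvariety and $m_i \in \Z_{\geq 0}$, linearity of the cohomology class reduces the problem to an irreducible $L$. Such an $L$, being an irreducible conical Lagrangian in the cotangent bundle $X = T^*V$, equals the closure $\overline{T^*_Y V}$ of the conormal variety of some irreducible closed subvariety $Y = \overline{\pi_V(L)} \subset V$, and $\bA$-invariance of $L$ forces $\bA$-invariance of $Y$. The class $[L] \in H^{2n}_\bA(X) \cong \Z[\chi_1, \dots, \chi_n]_n$ is then an integer polynomial of degree $n$ in the characters, and the task is to show it lies in $\Z \cdot \bsi$.

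The core assertion I would establish is divisibility of $[L]$ by each character $\chi_j$ individually; since $\bsi = \prod_j \chi_j$ has degree $n$ and $[L]$ has degree $n$, this would force $[L] = k\bsi$ for some $k \in \Z$. To prove divisibility by $\chi_j$, I restrict to the codimension-one subtorus $\bA_j = \ker(\chi_j) \subset \bA$ and show the vanishing $[L]|_{\bA_j} = 0$ in $H^{2n}_{\bA_j}(\pt)$. The fixed locus $X^{\bA_j}$ is (for generic characters) the two-dimensional symplectic subspace $\C e_j \oplus \C e_j^\vee$. The preceding lemma, applied with $Z = X^{\bA_j}$, gives that $L \cap X^{\bA_j}$ is isotropic in $X^{\bA_j}$ and hence has dimension at most one. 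Equivariant localization for the $\bA_j$-action on $X$, combined with an excess intersection analysis accounting for the low dimension of $L \cap X^{\bA_j}$ relative to its expected dimension $n - (2n-2) = 2-n$, should then yield vanishing of $[L]|_{\bA_j}$; since $\Z[\chi_1,\dots,\chi_n]_n$ is a free polynomial ring and divisibility holds for every $j$, we conclude $[L] \in \Z \cdot \bsi$.

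The main obstacle is making this vanishing step rigorous and independent of whether $L$ is smooth at the origin. One clean implementation passes through the equivariant K-theory class $[\cO_L] \in R(\bA)$: for an irreducible Lagrangian $L$, $\cO_L$ admits a resolution whose specialization at $q^{\chi_j} = 1$ vanishes because the Lagrangian condition forces a cancellation at that wall, and passing to the leading term recovers divisibility of $[L]$ by $\chi_j$. An alternative route is a Gr\"obner-style degeneration along a generic cocharacter of $\bA \times \C^\times_{\textup{scale}}$, deforming $L$ flatly to a cycle $L_0$ supported on a union of coordinate Lagrangians $L_S$ (each of which has class $(-1)^{n-|S|}\bsi$ by direct calculation); the delicate point here is to verify that the Lagrangian property is preserved in the limit cycle, which can be done by checking that the Poisson-bracket ideal condition defining Lagrangianness is flat. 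Either approach delivers $[L] = k\bsi$ with $k \in \Z$, completing the proof.
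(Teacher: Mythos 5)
Your alternative route via a Gr\"obner-style degeneration is the shape of the paper's argument, but both routes you sketch have the same gap: they use a torus that is too small, and neither is guaranteed to work for non-generic characters.

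\textbf{The divisibility route fails for repeated or proportional characters.} Your main argument restricts to $\bA_j=\ker(\chi_j)$ and aims to show $\chi_j\mid\iota^*[L]$ for each $j$, then deduce divisibility by $\bsi=\prod_j\chi_j$. You flag the restriction ``for generic characters'' yourself, but this is not a technicality: the lemma must hold for arbitrary $\chi_j$, and in the applications the characters genuinely do coincide (e.g.\ $\Hom(\cW_i,\cV'_i)$ contributes the framing character with multiplicity $\bw_i$). If $\chi_1=\chi_2=\chi$, the subtori $\bA_1=\bA_2$ coincide, so you only learn that $\chi\mid\iota^*[L]$ once, not that $\chi^2\mid\iota^*[L]$; and if $\bA$ has rank one, $\bA_j$ is finite and the restriction argument is vacuous. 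So the deduction ``divisibility by each $\chi_j$'' $\Rightarrow$ ``divisibility by $\bsi$'' is simply not available without further hypotheses.

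\textbf{The degeneration route uses the wrong torus.} You degenerate along a generic cocharacter of $\bA\times\C^\times_{\textup{scale}}$ and assert that the flat limit $L_0$ is supported on a union of coordinate Lagrangians. That assertion is not justified, and indeed fails whenever $\bA\times\C^\times_{\textup{scale}}$ has repeated weights on $X$ --- which happens exactly when some of the $\chi_j$ coincide. With repeated weights the limit ideal need not be monomial, so $L_0$ need not be a union of coordinate planes. The fix, which is the paper's key move, is to enlarge the torus: embed $\bA$ into $T\cong(\C^\times)^{n+1}$, the maximal torus of $Sp(2n)\times\C^\times$ (the stabilizer of the line $\C\omega$). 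This $T$ acts on $X$ with $2n$ \emph{distinct} weights, so any $T$-invariant conical subvariety is a union of coordinate subspaces; and $T$ only scales $\omega$, so a $T$-degeneration of a Lagrangian stays Lagrangian. Degenerating $L$ to a $T$-invariant limit then forces $L_0$ to be a union of Lagrangian coordinate planes, each of class $\pm\bsi$. You correctly identified the point that Lagrangianness must be preserved in the limit, but the more serious and unaddressed issue is forcing the limit to be a coordinate cycle at all; that requires passing from $\bA\times\C^\times$ to the full conformal-symplectic maximal torus.
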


\begin{proof} 
We embed $\bA$ in $T = (\C^\times)^{n+1}$, the maximal torus of $Sp(2n)\times 
\C^\times$, the stabilizer of the line $\C\omega\in \Omega^2(X)$. 
We can use $T$ to degenerate $L$ via a family of $\bA$-invariant conical subvarieties to a 
$T$-invariant conical subvariety and calculate the residue for this limit.  
Since $T$ scales $\omega$, this limit is still Lagrangian.  
On the other hand, the only such $T$-invariant subvarieties are unions of 
Lagrangian coordinate planes.  For a Lagrangian coordinate plane, it is clear that the residue is a product of 
the characters $\chi_j$ up to a sign.
\end{proof}

\begin{Lemma}\label{l_correct1}
For any $\bA$-invariant Lagrangian $L$ and any chamber $\fC$, 
there exists a Lagrangian cycle $L'$ supported
on $\Branch(Z)$ such that 
$$
\deg_\bA \iota^* ([L]-[L']) < \tfrac12 \codim Z \,. 
$$
\end{Lemma}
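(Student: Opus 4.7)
The plan is to construct $L'$ as the closure of the stable leaf of the residue cycle $\Res_Z L$ from the preceding lemma. Concretely, writing $\Res_Z L = \sum n_i [R_i]$ with $\bA$-stable Lagrangian irreducible components $R_i \subset Z$, I would set
$$
L' \;:=\; \pm \sum_i n_i\,\bigl[\overline{\Leaf_\fC(R_i)}\bigr],
$$
with the overall sign fixed by the polarization. If $L\cap Z$ has dimension strictly less than $\tfrac12 \dim Z$, then $\Res_Z L = 0$ and we simply take $L' = 0$; in this case the conclusion is already contained in the degree estimate established at the start of the preceding Residue Lemma.

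The first thing to verify is that each $\overline{\Leaf_\fC(R_i)}$ is a Lagrangian cycle on $X$ supported in $\Branch(Z)$. By Lemma \ref{noncompactBB}, $\Leaf_\fC(Z)\to Z$ is an $\bA$-equivariant affine bundle of rank $\dim N_+ = \tfrac12 \codim Z$, so $\Leaf_\fC(R_i)$ is middle-dimensional in $X$. At a point $r\in R_i$ its tangent space equals $T_r R_i \oplus (N_+)_r$, which is Lagrangian in $T_r X$, since \eqref{dualN} realizes $N_+$ and $N_-$ as dual Lagrangian summands of $N_Z$. Isotropy then propagates along the $\sigma$-flow by $\bA$-invariance of $\omega$ and persists under Zariski closure. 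Support in $\Branch(Z)$ follows from $\overline{\Leaf_\fC(R_i)}\subset \overline{\Leaf_\fC(Z)}$ together with Lemma \ref{partord}.

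Next I would compute $\iota^*[L']$ by excess intersection. The crucial observation is that $\overline{\Leaf_\fC(R_i)} \cap Z = R_i$: the boundary $\overline{\Leaf_\fC(Z)} \setminus \Leaf_\fC(Z)$ lies inside $\bigsqcup_{Z' \prec Z} \Leaf_\fC(Z')$, which is disjoint from $Z$ itself. Along $R_i$, the normal bundle of $R_i$ in $L'$ is $N_+|_{R_i}$, so the excess bundle is
$$
\iota^* N_{Z/X}\big/N_{R_i/L'} \;=\; (N_+ \oplus N_-)\big|_{R_i}\big/N_+\big|_{R_i} \;=\; N_-\big|_{R_i}.
$$
The excess intersection formula therefore gives $\iota^*[\overline{\Leaf_\fC(R_i)}] = e(N_-)\cdot [R_i]$, and after adjusting the sign to match the polarization convention we obtain $\iota^*[L'] = \bsi \cdot [\Res_Z L]$. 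By the preceding Residue Lemma, this is precisely the leading $\bA$-degree term of $\iota^*[L]$, so
$$
\deg_\bA \iota^*([L] - [L']) \;<\; \tfrac12 \codim Z.
$$

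The main obstacle will be controlling the two closure issues: first, verifying that isotropy extends from the open leaf $\Leaf_\fC(R_i)$ to the full Zariski closure (which is handled via a smooth $\sigma$-equivariant projective compactification as in the proof of Lemma \ref{noncompactBB}, together with $\bA$-invariance of $\omega$), and second, verifying that the excess intersection formula applies cleanly despite possible non-reduced or non-smooth structure of $R_i$. The second point is mitigated by the fact that the residue is only determined up to lower $\bA$-degree terms, so any non-clean contribution is absorbed into the error estimate we allow.
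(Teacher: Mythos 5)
Your construction of $L'$ as the closure of the stable leaf of $\Res_Z L$, counting multiplicities and with sign fixed by the polarization, is exactly the paper's proof. The additional verifications (Lagrangian property via Lemma \ref{noncompactBB} and \eqref{dualN}, support via Lemma \ref{partord}, and the degree estimate via excess intersection) fill in the details the paper leaves implicit; note only that $\iota^*[L'] = \pm e(N_-)\cup[\Res_Z L]$ matches $\bsi[\Res_Z L]$ in leading $\bA$-degree rather than exactly, which is all that is needed.
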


\begin{proof}
We can take $L'$ to be the closure of $\lim_{\fC}^{-1} (\pm \Res_Z L)$, 
counting multiplicity.  
\end{proof}

\begin{Lemma}\label{l_correctL} 
Let $L\subset X$ be an $\bA$-invariant Lagrangian subvariety 
supported on $\Branch(Z)$. Then there exists a unique Lagrangian cycle $L'$ such that 
$L'-L$ is supported on $\bigcup_{Z'\prec Z} \Branch(Z')$ and 
$$
\deg_\bA \, \iota_{Z'}^* [L'] < \tfrac12 \codim Z' 
$$
for any $Z'\prec Z$. 
\end{Lemma}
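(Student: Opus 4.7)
The plan is to argue by descending induction over the components $Z'\succ Z$, iterating the single-stratum correction produced in the proof of Lemma \ref{l_correct1}. First I would refine $\succ$ to a total order on $\{Z' : Z'\succ Z\}$, writing these components as $Z_1,\ldots,Z_k$ with $Z_i\prec Z_j \Rightarrow i<j$. Setting $L^{(k+1)}=L$, I would process from $Z_k$ down to $Z_1$: at step $i$, define
$$
L''_i = \overline{\textstyle\lim_\fC^{-1}\bigl(\pm\Res_{Z_i}[L^{(i+1)}]\bigr)}, \qquad L^{(i)} = L^{(i+1)} - L''_i,
$$
so that by the computation in Lemma \ref{l_correct1} one has $\deg_\bA \iota_{Z_i}^*[L^{(i)}] < \tfrac{1}{2}\codim Z_i$. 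The candidate output will be $L'=L^{(1)}$.

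The key observation making this descending induction go through is that each $L''_i$ has support contained in $\overline{\Leaf_\fC(Z_i)}\subset\Branch(Z_i)$, so $\iota_{Z_j}^*[L''_i]\neq 0$ forces $Z_j\preceq Z_i$, i.e.\ $j\le i$ in our total order. Therefore subtracting $L''_i$ affects only restrictions to components not yet processed, preserving the degree bounds established at the strata $Z_{i+1},\ldots,Z_k$ handled earlier. The same support observation gives $\supp(L'-L)\subset\bigcup_{Z'\succ Z}\Branch(Z')$, and by construction the required degree bound holds at every $Z_i$, producing the desired cycle.

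For uniqueness, I would mimic the uniqueness argument in the proof of Theorem \ref{stablebasis}. Given two candidates $L'_1$ and $L'_2$, the difference $\delta=[L'_1]-[L'_2]$ is supported on $\bigcup_{Z'\succ Z}\Branch(Z')$ and satisfies $\deg_\bA \iota_{Z'}^*\delta<\tfrac{1}{2}\codim Z'$ for every $Z'\succ Z$. Selecting a maximal component $Z'$ in the support of $\delta$ and using excess intersection against the inclusion of $\overline{\Leaf_\fC(Z')}$, one writes $\iota_{Z'}^*\delta = e(N_-)\cup\alpha$ for some Borel--Moore class $\alpha$ on $Z'$. Since multiplication by $e(N_-)$ is injective on $\gr H^\hd_\bT(Z')$ and $\deg_\bA e(N_-) = \tfrac{1}{2}\codim Z'$ already exceeds the allowed degree of $\iota_{Z'}^*\delta$, $\alpha$ must vanish, so $\delta$ is supported on a strictly smaller union of branch sets; iterating shrinks the support to the empty union.

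The main technical obstacle I expect is a careful verification that $\overline{\textstyle\lim_\fC^{-1}(\pm\Res_{Z_i}[L^{(i+1)}])}$ is genuinely a well-defined Lagrangian cycle whose support is contained in $\Branch(Z_i)$ and does not stray into components $Z_j$ with $j>i$. This should follow from closedness of $\Branch(Z_i)$ established in Lemma \ref{partord}, together with properness of the map $\pi$ used there, but the interaction of the Bialynicki-Birula closure with the Lagrangian structure—and the bookkeeping of multiplicities needed to ensure that the leading $\bA$-degree part cancels exactly—will require the bulk of the care in the write-up.
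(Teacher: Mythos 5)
Your proof is correct and matches the paper's own approach: the existence is an induction over the components $Z'\succ Z$ (processed from maximal to minimal) using Lemma~\ref{l_correct1} at each step, and the uniqueness is the support-versus-degree argument from Section~\ref{s_uniq_stable}. The technical point you flag at the end --- well-definedness and multiplicity bookkeeping for the correcting cycle --- is exactly what the Lagrangian residue construction of Section~\ref{s_Res} and the proof of Lemma~\ref{l_correct1} are designed to supply.
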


\begin{proof}
 The existence follows by induction from Lemma \ref{l_correct1}. The 
uniqueness is shown as in Section \ref{s_uniq_stable}. 
\end{proof}

In conclusion, we note that if $L$ is $\bT$-invariant, then so are
all other Lagrangians occurring in the above Lemmas.

\section{Proof of existence}

Consider the (possibly disconnected) $\bT$-variety $X \times X^{\bA}$ equipped with the antidiagonal symplectic form $(\omega,-\omega|_{X^{\bA}})$.
We construct $\Stab_{\fC}$ by exhibiting a correspondence between $X^{\bA}$ and $X$.

\begin{Proposition}\label{universalstableleaf}
There exists a $\bT$-invariant 
Lagrangian cycle $\textup{\textit{Stab}}$ $\cL_{\fC}$ on  $X \times X^{\bA}$, proper 
over $X$, 
with the following properties:
\begin{enumerate}
\item[\nui] For any $Z\in\Fix$, 
the restriction of $\cL_{\fC}$ to $X \times Z$ is supported on $\Branch(Z) \times Z\,;$
\item[\nuii]
the restriction of $[\cL_{\fC}]$ to $Z \times Z$ equals $\pm e(N_-) \cap [\Delta]$, according to polarization, where 
$\Delta$ is the diagonal$\,;$ 
\item[\nuiii] 
\label{lagrangianrestriction}for $Z' \prec Z$, the restriction of $[\cL_{\fC}]$ to $Z' \times Z$
has $\bA$-degree less than $\frac{1}{2}\codim Z'\,.$ 
\end{enumerate}
\end{Proposition}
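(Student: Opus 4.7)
The plan is to construct $\cL_\fC = \bigsqcup_{Z \in \Fix} \cL_{\fC, Z}$ componentwise, with each $\cL_{\fC, Z} \subset X \times Z$ a $\bT$-invariant Lagrangian cycle, proper over $X$, satisfying the three conditions relative to the component $Z$.

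For each fixed component $Z$, the natural candidate is $L_Z$, the closure in $X \times Z$ of the graph of the limit map $\lim_\fC: \Leaf_\fC(Z) \to Z$. By Lemma \ref{noncompactBB}, this limit is a $\bT$-equivariant affine bundle of rank $\rk N_- = \tfrac{1}{2}\codim Z$, so its graph has dimension $\dim Z + \tfrac{1}{2}\codim Z = \tfrac{1}{2}\dim(X \times Z)$, hence is middle-dimensional in $X \times Z$. The graph is isotropic for the antidiagonal form $\omega_X - \omega_Z$: the affine fiber directions of $\Leaf_\fC(Z) \to Z$ lie in $N_-$, which is a Lagrangian (in particular, isotropic) subbundle of the symplectic normal bundle $N_- \oplus N_+$ by \eqref{dualN}; any mixed pairing between a fiber vector of nonzero $\bA$-weight and a section vector of zero $\bA$-weight vanishes because $\omega_X$ is $\bA$-invariant; and along the zero section $Z \hookrightarrow \Leaf_\fC(Z)$, the pullback of $\omega_X$ matches $\omega_Z$. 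Being middle-dimensional and isotropic, $L_Z$ is Lagrangian.

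Next I would verify the three properties. Property \nui\ is immediate from Lemma \ref{partord}, since $\overline{\Leaf_\fC(Z)} \subset \Branch(Z)$. For \nuii, the scheme-theoretic intersection $L_Z \cap (Z \times Z) = \Delta_Z$, and the Lagrangian residue machinery of Section \ref{s_Res}, applied to the antidiagonal symplectic form on $X \times Z$, identifies the transverse normal directions with $N_-$ and yields $\iota^*[L_Z]\big|_{Z \times Z} = \pm e(N_-) \cap [\Delta]$, with the sign fixed by the polarization $\bsi$. For \nuiii, if $Z'$ exceeds $Z$ strictly in the ample order, then $Z' \not\preceq Z$ in the geometric order (the ample order refines $\prec$ and is antisymmetric), hence $Z' \not\subset \Branch(Z)$ by the structure of $\Branch(Z)$; therefore $(Z' \times Z) \cap \supp L_Z = \emptyset$ and the restriction vanishes, trivially satisfying the degree bound.

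The main technical obstacle is properness of $L_Z \to X$, since $\overline{\Leaf_\fC(Z)}$ is typically noncompact. To establish it, I would exploit the proper map $\pi: X \to X_0$ from Section \ref{s_Assump} together with the half-space argument from the proof of Lemma \ref{partord}: fixing an $\bA$-equivariant embedding $X_0 \hookrightarrow V$, one has $\pi(\Branch(Z)) \subset V_{\geq 0}$, and this, combined with properness of $\pi$, keeps the fibers of $L_Z \to X$ bounded. Finally, should the scheme-theoretic residue at $Z \times Z$ produce lower-order corrections obstructing the exact equality in \nuii, they can be absorbed by subtracting suitable Lagrangian cycles within $Z \times Z$ itself, where $\bA$ acts trivially and no $\bA$-degree conflict can arise; more generally, Lemma \ref{l_correctL} can be applied inside the class of Lagrangians supported on $\Branch(Z) \times Z$ without disturbing \nui\ or \nuiii.
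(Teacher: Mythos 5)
Your construction is the same as the paper's: take the closure $L_Z$ of the graph of $\lim_\fC$ for each $Z$, and prove properness via the half-space argument $\pi(\Branch(Z)) \subset V_{\ge 0}$ combined with the properness of $\pi$. Properties \nui\ and \nuii\ are handled correctly, and the properness argument is right on target.

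However, there is a genuine gap in your treatment of \nuiii. You argue that for $Z'$ strictly greater than $Z$ the support $\Branch(Z)\times Z$ misses $Z'\times Z$, so the restriction vanishes and \nuiii\ is ``trivially satisfied.'' Read that way, \nuiii\ would be vacuous, and the proof of uniqueness (where \nuiii\ is the engine of the triangularity argument) would collapse. The nontrivial content of the degree bound is on the \emph{lower} strata: $L_Z$ is supported on $\Branch(Z) = \bigsqcup_{Z''\preceq Z}\Leaf_\fC(Z'')$, so its restriction to $Z''\times Z$ for $Z''\prec Z$ is generically nonzero, and the Lagrangian residue $\Res_{Z''} L_Z$ has $\bA$-degree exactly $\tfrac12\codim Z''$, which is \emph{not} less than the threshold. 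This is precisely why the paper's proof invokes Lemma \ref{l_correctL} to ``modify it on lower strata so that to achieve \nuiii'': the naive closure of the leaf graph does not satisfy the degree bound, and one must iteratively subtract off the excess residues via the unstable leaves of the lower components. The correction step is essential, not a contingency.

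Your closing remark partially senses this but misdiagnoses the location: you suggest absorbing corrections ``within $Z\times Z$ itself'' to preserve exact equality in \nuii, but \nuii\ is already exact for the naive closure by excess intersection. The corrections live on $\bigcup_{Z''\prec Z}\Branch(Z'')\times Z$, away from the top stratum $\Leaf_\fC(Z)\times Z$, and are required to fix \nuiii. Since these corrections are new Lagrangian cycles added on lower strata, the properness argument still applies to the modified $\cL_\fC$ (each added piece is a closure of a leaf, hence still lies in $\pi^{-1}(V_{\ge 0})\times_{V_0} X^\bA$), but this has to be said explicitly, since you can no longer describe $\cL_\fC$ as a single closure.
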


\noindent 
This shows the existence of $\Stab_{\fC}$ by taking the map
$$
H_\bT^\hd(X^{\bA}) \rightarrow H_\bT^\hd(X)
$$
induced by the correspondence $\cL_{\fC}$. Properness over 
$X$ insures this map is well-defined. 

\begin{proof}

Fix some $Z$ and let $\pm L$ be the closure of the preimage of $\Delta$ under the map
$$
\LAttr_\fC Z \times Z   \to Z \times Z \,, 
$$
with sign as above. 
Then $L$ is a $\bA$-invariant Lagrangian supported on $Z \times \Branch(Z)$ which satisfies 
\nui\ and \nuii. 
Using Lemma \ref{l_correctL}, we can modify it on lower strata so that to achieve \nuiii. 
Repeating this for all $Z\in\Fix$, we obtain a Lagrangian cycle $\cL_{\fC}$ and it 
remains to check that its support is proper over $X$. 

As in the proof of Lemma \ref{partord}, choose a $\bA$-equivariant embedding 
$$
\pi: X_0 \hookrightarrow V
$$ 
into a linear representation of $\bA$ and let 
$V_0 \subset V_{\geq 0}$ be the subspaces formed by $\bA$-invariant and weights positive on $\fC$, 
respectively.  Let 
$$
\rho:  V_{\geq 0} \rightarrow V_0 
$$
be the natural projection. 
Consider the closed set $\pi^{-1}(V_{\geq 0}) \subset X$ (this is just
the union of all attracting manifolds), along with the morphism
$$
\rho\circ\pi: \pi^{-1}(V_{\geq 0}) \rightarrow V_0\,.
$$
By construction, the Lagrangian cycle $\cL_{\fC}$ lies in the fiber product
$$
 \pi^{-1}(V_{\geq 0})  \times_{V_0}  X^{\bA}\subset X \times X^\bA\,.
$$
Indeed, we construct $\cL_{\fC}$ by starting with the diagonal $\Delta \subset X^{\bA} \times X^{\bA}$
and taking attracting manifolds and closures.  The fiber product is closed with respect to both these operations.

On the other hand, the projection onto the first factor
$$
\pi^{-1}(V_{\geq 0})  \times_{V_0}  X^{\bA} \rightarrow X
$$
is proper:  since the map $\pi:  X \rightarrow V$ is proper, we can reduce the statement to the claim
that
$$
 V_{\geq 0}  \times_{V_0} V_{0} \rightarrow V
$$
is proper, which is obvious.

\end{proof}

We note the following corollary of the proof. 
It will play an essential role in proving various
properness statements later. 

\begin{Proposition}\label{p_leafX0}
Let $X_+$ denote the union of all attracting manifolds. Then 
$$
\cL_{\fC} \subset X_+ \times_{X_0} X^\bA \,. 
$$
\end{Proposition}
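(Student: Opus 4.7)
The plan is to obtain this as a direct corollary of the construction performed in the proof of Proposition~\ref{universalstableleaf}. There, $\cL_\fC$ was exhibited as a cycle contained in $\pi^{-1}(V_{\geq 0}) \times_{V_0} X^\bA$, using the $\bA$-equivariant embedding $X_0 \hookrightarrow V$, the subspace $V_{\geq 0}$ of $\fC$-nonnegative weights, and the retraction $\rho: V_{\geq 0} \to V_0$. The statement of the proposition is then an intrinsic reformulation of this containment, phrased entirely in terms of $X$, $X^\bA$, and $X_0$.

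The first step is to identify $\pi^{-1}(V_{\geq 0})$ with $X_+$. For $x \in X$, the orbit map $z \mapsto \sigma(z)\cdot x$ defines a morphism $\C^\times \to X$, and by definition $x \in X_+$ iff this extends across $z=0$. Since $\pi: X \to V$ is proper and $\bA$-equivariant, this extension exists iff its image $z \mapsto \sigma(z)\cdot\pi(x)$ in $V$ extends across $z=0$, i.e.\ iff $\pi(x) \in V_{\geq 0}$. Properness lets us lift the extension along $\pi$, yielding $X_+ = \pi^{-1}(V_{\geq 0})$.

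The second step is to promote the ambient fiber product over $V_0$ to one over $X_0$. The key identity is $\rho \circ \pi = \pi \circ \lim_\fC$ on $X_+$, which follows from the $\bA$-equivariance of $\pi$ together with the description of $\rho$ as the $\sigma$-limit projection on $V_{\geq 0}$. For any $(x,y) \in \cL_\fC$, the fiber product condition $\rho(\pi(x)) = \pi(y)$ in $V_0$, which was established in the proof of Proposition~\ref{universalstableleaf}, therefore upgrades to an equality $\pi(\lim_\fC x) = \pi(y)$ inside $X_0^\bA = X_0 \cap V_0$. This is the desired fiber product condition defining $X_+ \times_{X_0} X^\bA$, where the structure map on the first factor is $\pi\circ\lim_\fC: X_+ \to X_0$ and on the second is $\pi\big|_{X^\bA}: X^\bA \to X_0$.

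No new geometric input is required beyond what was done for Proposition~\ref{universalstableleaf}; the argument is essentially a matter of bookkeeping. The main obstacle is keeping the roles of $V$, $V_{\geq 0}$, $V_0$, $X_0$, and $X_0^\bA$ simultaneously straight in the commutative diagram of maps, so that the fiber product condition visibly lifts from $V_0$ to $X_0$ via the identity $\rho\circ\pi = \pi\circ\lim_\fC$.
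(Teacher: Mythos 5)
Your proof is correct and follows exactly the route the paper intends: Proposition~\ref{p_leafX0} is stated there as an immediate corollary of the proof of Proposition~\ref{universalstableleaf}, which already exhibits $\cL_\fC \subset \pi^{-1}(V_{\geq 0}) \times_{V_0} X^\bA$ and asserts $\pi^{-1}(V_{\geq 0}) = X_+$. You simply make explicit the two bookkeeping steps the paper leaves implicit — the valuative-criterion argument identifying $\pi^{-1}(V_{\geq 0})$ with $X_+$, and the identity $\rho\circ\pi = \pi\circ\lim_\fC$ that converts the fiber product over $V_0$ into one over $X_0$ — and your reading of which structure map $X_+ \to X_0$ is meant (namely $\pi\circ\lim_\fC$, not $\pi$ itself, which would make the statement false) is the correct one.
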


\begin{Remark}
Suppose $X = T^*Y$ where $Y$ is a smooth projective variety and assume the action of $\bA$ is induced from an action on $Y$ with isolated fixed points $\{p_k\}$.  Then a choice of chamber 
$\fC$ defines an $\bA$-invariant Bialynicki-Birula stratification of $Y$ by locally closed varieties $V_{p_{k}}$.  In this case, 
the stable envelope map $\Stab_\fC$ defines a collection of Lagrangian cycles on $X$.
These can be identified (up to a sign depending on the polarization) with the characteristic cycles of the constructible sheaves 
$(j_{k})_! \Q_{V_{p_{k}}}$ where $j_{k}$ denotes the inclusion into
$Y$. See, in particular, \cite{Su} for recent developments in this direction. 
\end{Remark}

\section{Torus restriction}

Let $\fC$ be a chamber and let 
$\fC'\subset \fC$ be a face of some dimension.  Consider 
$$
\fa' = \Span \fC' \subset \fa
$$
with associated subtorus $\bA'$.
The cone $\fC$ projects to a cone in $\fa/\fa'$ that we denote 
by $\fC/\fC'$. 

Let $\bsi$ be an polarization of $X^\bA\subset X$. We can factor 
$$
\bsi = \bsi' \, \bsi''
$$
into weights that are zero and nonzero on $\fa'$, respectively. The factors 
induce an polarization of $X^\bA\subset X^{\bA'}$ and $X^{\bA'}\subset X$, 
respectively. In the following lemma, we take these induced polarizations. 

\begin{Lemma}\label{l comm St} 
The diagram 
\begin{equation}
  \label{commSt}
  \xymatrix{
H^\hd\left(X^\bA\right) \ar[rr]^{\Stab_{\fC}} \ar[rd]_{\Stab_{\fC/\fC'}\,\,} && H^\hd(X)  \\
& H^\hd\left(X^{\bA'}\right) \ar[ru]_{\,\,\Stab_{\fC'}}\\
}
\end{equation}
is commutative. 
\end{Lemma}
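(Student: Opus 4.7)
The plan is to invoke the uniqueness part of Theorem \ref{stablebasis}: it suffices to verify that the composition
$$\Phi := \Stab_{\fC'} \circ \Stab_{\fC/\fC'}: H^\hd_\bT(X^\bA) \to H^\hd_\bT(X^{\bA'}) \to H^\hd_\bT(X)$$
satisfies the three characterizing properties \nui, \nuii, \nuiii\ with respect to the chamber $\fC$ and polarization $\bsi = \bsi'\bsi''$ on $X^\bA \subset X$. Since $\Stab_{\fC/\fC'}$ is constructed using the residual torus $\bA/\bA'$ acting on $X^{\bA'}$ (preserving $\omega|_{X^{\bA'}}$), and $(X^{\bA'})^{\bA/\bA'} = X^\bA$, the composition makes sense.

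The first step is a factorization of limits. A cocharacter $\sigma \in \fC$ lying close to $\fC'$ can be written as $\sigma = \sigma' + \epsilon \sigma''$, with $\sigma' \in \fC'$, $\sigma''$ projecting into $\fC/\fC'$, and $\epsilon > 0$ small; on weights nonzero on $\fa'$ the $\sigma'$-part dominates, while $\sigma''$ governs weights vanishing on $\fa'$. Hence the $\sigma$-limit of a point factors as first limiting under $\fC'$ into $X^{\bA'}$, then limiting under $\fC/\fC'$ into $X^\bA$. This gives $\Leaf_\fC(Z) = \Leaf_{\fC'}(\Leaf_{\fC/\fC'}(Z))$ and $\Branch_\fC(Z) = \Branch_{\fC'}(\Branch_{\fC/\fC'}(Z))$, yielding \nui\ for $\Phi$.

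For \nuii, let $\tilde Z \subset X^{\bA'}$ denote the component containing $Z$. The normal bundle splits $\bT$-equivariantly as $N_{Z\subset X} = N_{Z\subset\tilde Z} \oplus N_{\tilde Z\subset X}\big|_Z$. Weights of the first summand vanish on $\fa'$, so their $\fC$-sign equals their $\fC/\fC'$-sign; weights of the second summand are nonzero on $\fa'$, so their $\fC$-sign equals their $\fC'$-sign (since $\fC' \subset \overline{\fC}$ and such weights do not vanish on $\fC'$). Therefore $e(N_{Z\subset X,-}) = e(N_{\tilde Z\subset X,-})\big|_Z \cdot e(N_{Z\subset\tilde Z,-})$, with the sign convention matching exactly because the polarization factors as $\bsi = \bsi'\bsi''$ by construction. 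Computing
\begin{equation*}
\Phi(\gamma)\big|_Z = \Stab_{\fC'}\!\bigl(\Stab_{\fC/\fC'}(\gamma)\bigr)\big|_{\tilde Z}\big|_Z = \pm e(N_{\tilde Z\subset X,-})\big|_Z \cup \pm e(N_{Z\subset\tilde Z,-}) \cup \gamma = \pm e(N_{Z\subset X,-})\cup\gamma,
\end{equation*}
yields \nuii.

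For \nuiii, given $Z' \succeq_\fC Z$ let $\tilde Z' \subset X^{\bA'}$ be the component containing $Z'$. Restrict $\Phi(\gamma)$ to $Z'$ through $\tilde Z'$: the bound $\deg_{\fa'}\Stab_{\fC'}(\beta)|_{\tilde Z'} < \tfrac12\codim_X \tilde Z'$ and the bound $\deg_{\fa/\fa'}\Stab_{\fC/\fC'}(\gamma)|_{Z'} < \tfrac12\codim_{X^{\bA'}} Z'$ combine, after choosing a splitting $\C[\ft]\cong\C[\ft/\fa]\otimes\C[\fa/\fa']\otimes\C[\fa']$, to give $\deg_\fa \Phi(\gamma)|_{Z'} < \tfrac12\codim_X Z'$ using $\codim_X Z' = \codim_X \tilde Z' + \codim_{\tilde Z'} Z'$. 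The main obstacle is checking that the ample-order hypotheses propagate correctly: one must verify that $Z' \succeq_\fC Z$ implies $\tilde Z' \succeq_{\fC'} \tilde Z$ in $X^{\bA'}$, and that in the boundary case $\tilde Z' = \tilde Z$ one has $Z' \succeq_{\fC/\fC'} Z$ in $X^{\bA'}$. This follows from the definition \eqref{ampl_ord}: the weight of an $\bA$-linearization of an ample line bundle restricts to weights on $\fC'$ and on $\fC/\fC'$ that govern the two suborderings, and positivity on $\fC$ decomposes according to whether the weight vanishes on $\fa'$ or not. With these orderings verified, the two degree bounds are applicable and the combined bound holds, completing the verification.
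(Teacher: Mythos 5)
Your proof takes essentially the same route as the paper's: both invoke the uniqueness part of Theorem \ref{stablebasis} and verify the three characterizing properties for the composite map, the paper phrasing this via the convolution of the Lagrangian correspondences $\cL_{\fC/\fC'}$ and $\cL_{\fC'}$ while you work directly with the induced maps, which is equivalent. The paper calls most of the verification ``immediate''; your write-up supplies the details, and they match the intended argument. One small caution on your \nuiii\ step: the two inequalities you invoke cannot both be strict at the same time (the first is an equality exactly when $\tilde Z'=\tilde Z$, the second exactly when $Z'=Z$), and there is an implicit use of the fact that $\beta=\Stab_{\fC/\fC'}(\gamma)$ has $\fa'$-degree zero so the $\fa'$- and $\fa/\fa'$-degrees of $\Phi(\gamma)|_{Z'}$ actually add. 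The correct formulation — which the paper flags with ``\nuiii\ follows from the degree constraints of \emph{either} $\cL_{\fC'}$ or $\cL_{\fC/\fC'}$'' — is that at least one of the two bounds is strict whenever $Z'\ne Z$, and this suffices. Your worry about propagating the ample order is not really needed, since for the uniqueness criterion the degree bound may be verified for every $Z'$ meeting the support rather than only $Z'\succeq Z$.
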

\begin{proof}
This follows from the uniqueness of the stable envelopes.
Let $\cL_{\fC'}, \cL_{\fC/\fC'}$ be the Lagrangian correspondences constructed in Proposition \ref{universalstableleaf}, and consider their convolution
$$[\cL_{\fC,\fC'}] = [\cL_{\fC/\fC'}]\circ [\cL_{\fC'}]$$
which defines a Lagrangian cycle class in $X^{\bA} \times X$.

If we can show it satisfies the properties in Proposition \ref{universalstableleaf}, then uniqueness of $\Stab_{\fC}$ gives the result. In fact, using the definition of the chamber $\fC/\fC'$, most of the properties are immediate. For example, \nuiii\
 follows from the degree constraints of either $\cL_{\fC'}$ or $\cL_{\fC/\fC'}$.
\end{proof}

\section{Symplectic resolutions}\label{s_sympl_res}

\subsection{}

In this paper, we are mainly interested in equivariant symplectic resolutions, 
$$
X \rightarrow X_0 = \Spec H^0(\cO_X) \,,
$$ 
see  \cite{Kal} for a comprehensive discussion. For 
symplectic resolutions, stable envelopes are easier to 
construct and enjoy stronger properties.

In addition to Nakajima quiver varieties $\cM_{\theta,0}$ for $\theta$ generic,  examples
of symplectic resolutions include $T^*(G/P)$, where $P\subset G$ is a parabolic subgroup. 

\subsection{}

We begin with the universal deformation of the pair $(X,\omega)$ 
\begin{equation}\label{deform}
\xymatrix{
X \ar@{^{(}->}[r]^{\iota_0}\ar[d]& \tX \ar[d]_\phi \\
[0] \ar@{^{(}->}[r] & B \cong H^2(X,\C) \,,  \\
}
\end{equation}
in which the period map $\phi$ associates to a deformation 
$(X',\omega')$ the class of $\omega'$ in $H^2(X')=H^2(X)$. 
This universal deformation may be written down explicitly for 
Nakajima varieties and in all other examples, see  \cite{Kal} for further discussion. 

The deformation \eqref{deform} is $\bG$-equivariant, where 
$\bG$ acts on the vector space $B$ by the character $\hbar$. 
Therefore, the group 
$$
\bG_\omega = \Ker \hbar \supset \bA 
$$
acts on each fiber of $\phi$.

\subsection{}\label{s_coroot}

Suppose we are given a class 
$$
\alpha^\vee \in H_2(X,\Z)
$$
that is an effective curve class in some
fiber $(X',\omega') \ne (X,\omega)$. Then 
$$
\int_{\alpha'} \omega' = 0 
$$
and hence deformations with nonzero 
holomorphic curve classes belong to a union of hyperplanes in the base $B$. 
\begin{Definition}
A coroot hyperplane of $X$ is a hyperplane of $B$ along which the deformation of $X$ has nonzero holomorphic
curve classes. 
\end{Definition}
Over their complement 
$$
B^\circ = B \setminus \bigcup_{\textup{coroots}} {(\alpha^\vee)}^\perp
$$
the fiber of $\phi$ is affine.  It is an interesting question to 
find a geometric definition of coroots of $X$ themselves rather than just their associated hyperplanes.

\subsection{}
Consider the diagonal 
$$
\Delta^\circ \subset \tX^\circ \times_{B^\circ} \big(\tX^\circ\big)^\bA  \,,
$$
where $\tX^\circ=\phi^{-1}(B^\circ)$. Since the fibers over $B^\circ$ 
contain no holomorphic cycles, the inclusion 
\begin{equation}
\LAttr_\fC \Delta^\circ \hookrightarrow \tX^\circ \times_{B^\circ} \big(\tX^\circ\big)^\bA  
\label{family_B0}
\end{equation}
is a closed embedding and  defines a family of cycles over 
$B^\circ$. We denote by
$$
\widetilde{\cL}_\fC = \overline{\LAttr_\fC \Delta^\circ}
$$
it closure in $\tX \times_B \tX^\bA$. In particular, we can take 
the $\bA$-fixed points 
\begin{equation*}\label{fixedL}
\xymatrix{
\left(\widetilde{\cL}_\fC\right)^\bA \ar@{^{(}->}[r]
& \tX^\bA \times_B \tX^\bA \ar[d]_\phi \\
& B   \\
}
\end{equation*}

\begin{Proposition}\label{Stein_fiber} 
For any $b\in B$, the top-dimensional components of 
$$
\left(\widetilde{\cL}_\fC\right)^\bA  \, \cap \phi^{-1}(b)
$$
are Steinberg correspondences. 
\end{Proposition}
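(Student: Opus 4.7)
The strategy is to reduce to a fiberwise application of Proposition \ref{p_leafX0}, identifying the top-dimensional part of $(\widetilde{\cL}_\fC)^\bA \cap \phi^{-1}(b)$ with the $\bA$-fixed locus of the stable envelope cycle $\cL_\fC^{(b)}$ for the fiber $\tX_b = \phi^{-1}(b)$. The first step is to check that each fiber $\tX_b$ satisfies the hypotheses of Section \ref{s_Assump}: the $\bA$-action and symplectic form are inherited from the $\bG$-equivariant deformation \eqref{deform}, and the relative affinization $\tX \to \Spec_B \phi_* \cO_{\tX}$ produces a proper map $\tX_b \to (\tX_b)_0$ for every $b \in B$, with formation compatible with base change since $\tX \to B$ is a flat family of symplectic resolutions.

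Next, I would show that, up to components of lower dimension, $\widetilde{\cL}_\fC \cap \phi^{-1}(b)$ coincides with $\cL_\fC^{(b)}$. For generic $b \in B^\circ$ the fiber $\tX_b$ is already affine, so $(\tX_b)_0 = \tX_b$ and the $\bA$-fixed locus of $\Leaf_\fC(\Delta_b)$ is the diagonal itself, which is tautologically a Steinberg correspondence. For $b$ on a coroot hyperplane, the closure of the stable leaf may acquire extra components of lower dimension, but the uniqueness characterization of Theorem \ref{stablebasis} (support, restriction to each $Z \in \Fix$, and $\bA$-degree bound on strata $Z' \succeq Z$) forces the top-dimensional part of $\widetilde{\cL}_\fC \cap \phi^{-1}(b)$ to be exactly $\cL_\fC^{(b)}$. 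The key technical input is that the $\bA$-degree bounds persist under specialization from $B^\circ$ to $b$; this follows from semicontinuity combined with flatness of $\widetilde{\cL}_\fC$ over $B$, the latter being the closure of an affine-bundle family over $B^\circ$.

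Granting this identification, Proposition \ref{p_leafX0} applied to $\tX_b$ yields
\[
  \cL_\fC^{(b)} \subset (\tX_b)_+ \times_{(\tX_b)_0} (\tX_b)^\bA.
\]
Passing to $\bA$-fixed points and using $(\tX_b)_+^\bA = (\tX_b)^\bA$, together with the commutation of fixed-point formation with $\Spec$ of invariants (so that $\bigl((\tX_b)_0\bigr)^\bA$ is affine) and the restricted proper map $(\tX_b)^\bA \to \bigl((\tX_b)_0\bigr)^\bA$, one obtains
\[
  \bigl(\cL_\fC^{(b)}\bigr)^\bA \subset (\tX_b)^\bA \times_{\bigl((\tX_b)_0\bigr)^\bA} (\tX_b)^\bA,
\]
which is precisely the Steinberg condition. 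Lagrangianity of the top-dimensional components in $(\tX_b)^\bA \times (\tX_b)^\bA$ follows from the general fact used in Section \ref{s_Res} that the $\bA$-fixed locus of a Lagrangian cycle is isotropic, combined with a dimension count that forces the top-dimensional components to be middle-dimensional.

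The principal difficulty lies in the second paragraph: showing that the fiberwise specialization of $\widetilde{\cL}_\fC$ recovers $\cL_\fC^{(b)}$ in top degree over degenerate fibers. This rests on a flatness argument for the closure $\widetilde{\cL}_\fC \to B$ and on verifying that the $\bA$-degree bounds characterizing stable envelopes propagate through the family; in particular, one must rule out unwanted top-dimensional components appearing in the special fibers. Once this is in place, the rest of the argument is a mechanical combination of Proposition \ref{p_leafX0} with the compatibility of affinization and $\bA$-fixed points.
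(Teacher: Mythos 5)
Your proposal takes a different and, ultimately, circular route. You want to first identify the top-dimensional part of $\widetilde{\cL}_\fC \cap \phi^{-1}(b)$ with the stable envelope cycle $\cL_\fC^{(b)}$ of the fiber $\tX_b$, and then read off the Steinberg containment from Proposition \ref{p_leafX0} applied to $\tX_b$. But that identification is essentially the content of the theorem that immediately follows Proposition \ref{Stein_fiber} in the paper (that $\iota_0^*[\widetilde{\cL}_\fC] = [\cL_\fC]$), and the paper's proof of that theorem \emph{uses} Proposition \ref{Stein_fiber} to establish the $\bA$-degree bound (condition \nuiii). You acknowledge that checking the degree bounds persist under specialization is the ``principal difficulty,'' and indeed it is: to establish it you would need Proposition \ref{Stein_fiber} (or a result of comparable strength), which is what you are trying to prove. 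So the reduction to the fiberwise stable envelope, while natural, puts the cart before the horse. There is also a secondary problem: you invoke flatness of $\widetilde{\cL}_\fC \to B$ as a ``closure of an affine-bundle family,'' but closures are not automatically flat, and nothing in the paper supplies or requires such flatness.

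The paper's proof bypasses the fiberwise identification entirely. Instead it constructs a universal proper $\bG$-equivariant map $\widetilde{\pi}: \tX \to \widetilde{V}$ into a vector bundle over $B$, runs the attracting-set argument of Proposition \ref{universalstableleaf} over the whole family to obtain $\widetilde{\cL}_\fC \subset \widetilde{\pi}^{-1}(\widetilde{V}_{\ge 0}) \times_{\widetilde{V}_0} \tX^\bA$, and deduces $(\widetilde{\cL}_\fC)^\bA \subset \tX^\bA \times_{\widetilde{\pi}} \tX^\bA$ directly — this is the Steinberg condition, obtained without ever identifying what the fiber cycle ``is.'' For Lagrangianity the paper appeals to the known result (Kaledin, Namikawa) that $\phi$-fibers of $\tX \times_{\widetilde{\pi}} \tX$ are isotropic, so their intersection with the symplectic subvariety $\tX^\bA \times \tX^\bA$ has dimension at most middle; a top-dimensional component is then Lagrangian. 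Your substitute for that input (isotropicity of the $\bA$-fixed locus of a Lagrangian, as in Section \ref{s_Res}) would indeed give Lagrangianity of $(\cL_\fC^{(b)})^\bA$ once the identification of cycles is granted — but that identification is exactly the unavailable step.
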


\begin{proof}
All fibers of $\phi$ are symplectic resolutions and we can 
find a universal proper $\bG$-equivariant map $\widetilde{\pi}$ 
$$
\xymatrix{
\tX \ar[r]^{\widetilde{\pi}} \ar[d]_\phi
& \widetilde{V}\ar[d] \\
B \ar[r]^{\textup{id}}& B   
}
$$
into a vector bundle $\widetilde{V}$ over $B$. The torus 
$\bA$ acts trivially on $B$ and we denote 
by $\widetilde{V}_{\ge 0}$ the subbundle formed by 
$\bA$-weights that are nonnegative on $\fC$. As in the 
proof of Proposition \ref{universalstableleaf}, one 
shows 
$$
\widetilde{\cL}_\fC \subset {\widetilde{\pi}}^{-1} \left(\widetilde{V}_{\ge 0}
\right) \times_{\widetilde{V}_{0}} \tX^\bA \,. 
$$
Therefore
$$
\left(\widetilde{\cL}_\fC\right)^\bA \subset 
\tX^\bA \times_{\widetilde{\pi}} \tX^\bA \,. 
$$
On the other hand, it is known that the $\phi$-fibers of 
$$
\tX \times_{\widetilde{\pi}} \tX \subset \tX \times_B \tX 
$$
are isotropic.\footnote{
This widely known and used statement may be deduced 
from the results of Kaledin \cite{Kaledin_Poisson} and 
Namikawa \cite{Nami}. Further details may be found in the 
forthcoming lecture notes of V.~Ginzburg on the subject.}
Therefore, their intersections 
with a symplectic subvariety $\tX^\bA \times  \tX^\bA$ are at 
most Lagrangian. Their Steinberg property is clear from the above. 
\end{proof}

\begin{Remark}
This Proposition gives an abundant source of Steinberg 
correspondences, as we will see below. 
\end{Remark}

\begin{Theorem}
The correspondence $\cL_\fC$ is the specialization of 
$\widetilde{\cL}_\fC$ to the central fiber, that is 
$$
\left[\cL_{\fC}
\right] = \iota_0^*\,
\big[\,\widetilde{\cL}_\fC\,\big] 
\in H^{BM}_{\bT}(X\times X^{\bA})\,.
$$
\end{Theorem}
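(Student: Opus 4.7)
The plan is to invoke the uniqueness established in Proposition \ref{universalstableleaf} (equivalently, Theorem \ref{stablebasis}) by verifying that $\iota_0^*\bigl[\widetilde{\cL}_\fC\bigr]$ satisfies the three defining properties \nui--\nuiii\ of $\cL_\fC$.

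First I would establish the coarse structural features. The class $\iota_0^*\bigl[\widetilde{\cL}_\fC\bigr]$ is a $\bT$-invariant Lagrangian cycle on $X \times X^\bA$: Lagrangianness is a closed condition on flat families, and $\widetilde{\cL}_\fC$ is $B$-relatively Lagrangian over $B^\circ$ with respect to the globally defined symplectic form $\omega - \omega|_{\tX^\bA}$ on $\tX \times_B \tX^\bA$. Properness over $X$ follows by reproducing the argument at the end of Proposition \ref{universalstableleaf} in family: a universal $\bG$-equivariant proper affine map $\widetilde{\pi}\colon \tX \to \widetilde V$ over $B$ confines $\widetilde{\cL}_\fC$ inside $\widetilde{\pi}^{-1}(\widetilde V_{\ge 0}) \times_{\widetilde V_0} \tX^\bA$, and properness is preserved under Gysin restriction $\iota_0^*$.

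Properties \nui\ and \nuii\ follow directly from the construction $\widetilde{\cL}_\fC = \overline{\Leaf_\fC \Delta^\circ}$. For \nui, the central fiber of this closure is supported in $\bigcup_Z \overline{\Leaf_\fC(Z)} \times Z \subset \bigcup_Z \Branch(Z) \times Z$ by Lemma \ref{partord}. For \nuii, the restriction of $\widetilde{\cL}_\fC|_{B^\circ}$ to $\widetilde Z \times_{B^\circ} \widetilde Z$ is the universal diagonal equipped with the standard normal bundle contribution from the affine bundle $\Leaf_\fC \widetilde Z \to \widetilde Z$ of Lemma \ref{noncompactBB}; specialization to $b = 0$ yields $\pm e(N_-) \cap [\Delta_Z]$, with sign determined by the chosen polarization.

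The main obstacle is property \nuiii, the $\bA$-degree bound. The decisive observation is that for $Z' \ne Z$,
\[
\widetilde{\cL}_\fC \cap \bigl(\widetilde Z' \times_{B^\circ} \widetilde Z\bigr) = \emptyset,
\]
because both $\widetilde Z'$ and $\widetilde Z$ lie in $\tX^\bA$, where $\lim_\fC$ is the identity, so membership in $\Leaf_\fC \Delta^\circ$ would force $\widetilde Z' = \widetilde Z$. Consequently, $\widetilde{\cL}_\fC \cap (\widetilde Z' \times_B \widetilde Z)$ is supported over the coroot locus $B \setminus B^\circ$, a proper closed subset of $B$ of codimension at least one. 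Since $\bA$ acts trivially on $B$ (because $\bA \subset \ker \hbar$), the Gysin pullback $\iota_0^*$ through this locus decreases the dimension of the cycle by at least $\dim B$ while its class picks up only $\bA$-trivial equivariant factors coming from $T_0 B$. A dimension count using the expected intersection dimension of $\widetilde{\cL}_\fC$ with $\widetilde Z' \times_B \widetilde Z$, together with the extra codimension-one drop forced by the coroot locus, yields the strict inequality $\deg_\bA \iota_0^*[\widetilde{\cL}_\fC]|_{Z' \times Z} < \tfrac{1}{2}\codim Z'$. With all three properties verified, the uniqueness part of Theorem \ref{stablebasis} gives $[\cL_\fC] = \iota_0^*\bigl[\widetilde{\cL}_\fC\bigr]$.
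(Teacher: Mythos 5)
Your overall strategy—verify that $\iota_0^*[\widetilde{\cL}_\fC]$ satisfies the three conditions of Proposition \ref{universalstableleaf} and invoke uniqueness—matches the paper's, and your treatments of properness, \nui, and \nuii\ are essentially correct. Your key observation for \nuiii, that $\widetilde{\cL}_\fC$ meets $\widetilde Z' \times_{B^\circ} \widetilde Z$ trivially for $Z' \ne Z$ because $\lim_\fC$ is the identity on the fixed locus over $B^\circ$, is also exactly the geometric input the paper uses.

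The gap is in how you convert that emptiness into the strict degree bound. You assert that the Gysin pullback ``picks up only $\bA$-trivial equivariant factors coming from $T_0 B$'' and that a ``dimension count'' then gives $\deg_\bA < \tfrac12 \codim Z'$, but this is the claim that needs to be proved, not a consequence of generalities. Being supported over the coroot locus $B \setminus B^\circ$ tells you, via the localization exact sequence in Borel--Moore homology, that the relevant class is a pushforward from that locus; it does not by itself force the coefficient $f_i$ of a Lagrangian component to acquire an $\bA$-trivial factor, and a raw dimension count only reproduces the trivial bound $\deg_\bA f_i \le \tfrac12 \codim Z'$. What the paper actually establishes (Theorem \ref{t_strong_vanish}) is the \emph{stronger} statement that the leading off-diagonal coefficients are divisible by $\hbar$, and that divisibility is what forces the strict inequality, since $\hbar$ has $\bA$-degree $0$ but equivariant degree $1$. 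The mechanism for producing the $\hbar$ is precise: restrict the universal deformation to a general line $\ell \subset B$ through the origin, so the coroot locus in $\ell$ is just $\{0\}$; then the off-diagonal intersection in $(\tX_\ell)^\bA \times_\ell (\tX_\ell)^\bA$ is supported in the central fiber $Z' \times Z$, factoring $\iota = \iota_2 \circ \iota_1$ with $\iota_1$ the inclusion of that fiber, and $\iota_1^* \circ (\iota_1)_*$ is multiplication by the Euler class of $T_0\ell$, namely $\hbar$. You need some such concrete argument—your sketch stops one step short of it.
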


\begin{proof}
It suffices to check 
the right-hand side satisfies the conditions of Proposition \ref{universalstableleaf}.  Properness is shown exactly as in the proof of
Proposition \ref{universalstableleaf}. 
Similarly, conditions \nui\ and \nuii\
follow from construction.

To show \nuiii\ we the consider inclusion 
$$
\iota: Z' \times Z \hookrightarrow \tX \times \tX^\bA\,, \quad 
Z \ne Z'\,, 
$$
of an off-diagonal component of $X^\bA \times  X^\bA$. 
By Proposition \ref{Stein_fiber} 
$$
\iota^*\,
\big[\,\widetilde{\cL}_\fC\,\big] = \sum f_i \, [L_i] + \dots \,, \quad 
\quad f_i \in H^{\codim Z'}_\bT(\pt) \,,
$$
where $L_i$ are the Lagrangian components of the intersection 
and dots stand for terms of smaller $\bA$-degree. The required degree
bound
$$
\deg_\bA f_i < \frac12 \codim Z'
$$
follows from a much stronger claim: all $f_i$ are divisible 
by $\hbar$. We state this as a separate result. 
\end{proof} 

For any $X$, not necessarily a symplectic resolutions, we can write
\begin{equation}
[\cL_\fC]\Big|_{X^\bA \times X^\bA} = 
\pm e(N_-) \cup \Delta + \,\textup{off-diagonal}
\label{diag_off}
\end{equation}
where the second term is a class supported on 
$$
\bigsqcup_{Z_1 \prec Z_2} Z_1 \times Z_2 \,, \quad Z_i \in \Fix \,.
$$

\begin{Theorem}\label{t_strong_vanish} 
For symplectic resolutions,
$$
[\cL_\fC]\Big|_{X^\bA \times X^\bA} = 
\pm e(N_-) \cup \Delta \mod \hbar H^\hd_\bT(X^\bA \times X^\bA) \,. 
$$
\end{Theorem}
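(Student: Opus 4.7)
The plan is to deduce the stronger vanishing from the universal deformation family $\widetilde{\cL}_{\fC}$ constructed above, using the identification $[\cL_{\fC}] = \iota_{0}^{*}[\widetilde{\cL}_{\fC}]$ established in the preceding Theorem. I will analyze $[\widetilde{\cL}_{\fC}] \cap [\tX^{\bA}\times_{B}\tX^{\bA}]$ over the entire base $B$ and only at the end specialize to the central fiber.

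First I claim that over $B^{\circ}$ this restricted cycle equals $\pm e(N_{-}) \cup [\Delta_{B^{\circ}}]$, where $\Delta_{B^{\circ}}$ denotes the relative diagonal in $\tX^{\bA}\times_{B^{\circ}}\tX^{\bA}$. The set-theoretic intersection is immediate: for $b \in B^{\circ}$ the fiber $\tX|_{b}$ is affine and contains no compact curves, so a pair $(x,z)$ with $x \in \tX^{\bA}$ lies in $\Leaf_{\fC}(\Delta^{\circ})$ only when $\lim_{\fC} x = x = z$. The refined multiplicity $\pm e(N_{-})$ is the standard excess-intersection contribution: near the diagonal, $\Leaf_{\fC}(\Delta^{\circ})$ is the total space of $N_{+}$ over the fixed locus, and it meets the doubled fixed locus with excess normal bundle $N_{-}$.

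Writing $\widetilde{W} := [\widetilde{\cL}_{\fC}]\big|_{\tX^{\bA}\times_{B}\tX^{\bA}} - \pm e(N_{-}) \cup [\Delta_{B}]$, the class $\widetilde{W}$ is therefore supported over $B\setminus B^{\circ}$, a union of coroot hyperplanes $H_{i}\subset B$. The key point is that because $\bG$ acts on the vector space $B$ through the single character $\hbar$, each $H_{i}$ is $\bG$-invariant and the normal line bundle $N_{H_{i}/B}$ carries $\bG$-weight $\hbar$, so $e(N_{H_{i}/B}) = \hbar$ in $H^{2}_{\bG}(\pt)$. Decomposing $\widetilde{W}$ in equivariant Borel--Moore homology as a sum $\sum_{i} j_{i,*}\widetilde{W}_{i}$ with $j_{i}\colon \tX^{\bA}\times_{H_{i}}\tX^{\bA} \hookrightarrow \tX^{\bA}\times_{B}\tX^{\bA}$, the factorization $\iota_{0} = j_{i}\circ \iota_{0}^{H_{i}}$ together with the self-intersection formula gives
\begin{equation*}
\iota_{0}^{*}(j_{i,*}\widetilde{W}_{i}) \;=\; (\iota_{0}^{H_{i}})^{*}\, j_{i}^{*} j_{i,*} \widetilde{W}_{i} \;=\; \hbar \cdot (\iota_{0}^{H_{i}})^{*}\widetilde{W}_{i}\,,
\end{equation*}
which is divisible by $\hbar$. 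Summing over $i$ shows $\iota_{0}^{*}\widetilde{W}\in \hbar\, H^{\hd}_{\bT}(X^{\bA}\times X^{\bA})$, which is precisely the desired statement.

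The main obstacle is the cycle-level justification of the first step: one must verify that the closure operation defining $\widetilde{\cL}_{\fC}$ introduces no additional components meeting $\tX^{\bA}\times_{B^{\circ}}\tX^{\bA}$ off the diagonal, and that the excess-intersection multiplicity is indeed $\pm e(N_{-})$ with the sign dictated by the polarization. Both points should follow by running the properness argument of Proposition \ref{universalstableleaf} in families, using the universal proper equivariant map $\widetilde{\pi}\colon \tX \to \widetilde{V}$ introduced in the proof of Proposition \ref{Stein_fiber}: the containment $\widetilde{\cL}_{\fC} \subset \widetilde{\pi}^{-1}(\widetilde{V}_{\geq 0}) \times_{\widetilde{V}_{0}} \tX^{\bA}$ combined with the affineness of fibers over $B^{\circ}$ rules out any spurious off-diagonal $\bA$-fixed components and reduces the multiplicity computation to the standard one at a single fiber.
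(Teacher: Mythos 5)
Your argument is correct, and it works; but it differs from the paper's proof in how it localizes the error term over the base $B$. The paper's proof restricts at the outset to a general line $\ell\subset B$ through the origin: over $\ell$ the union of coroot hyperplanes meets $\ell$ only at $0$, so for any off-diagonal component $Z'\times Z$ the pulled-back class $\iota_2^*[\widetilde\cL]$ on $(\tX_\ell)^\bA\times_\ell(\tX_\ell)^\bA$ is supported on the central fiber, and the factor of $\hbar$ appears once, from $\iota_1^*\circ(\iota_1)_* = \text{mult.\ by }\hbar$ for the codimension-one inclusion of the central fiber in the family over $\ell$. This avoids entirely the step you need — decomposing a Borel--Moore class supported on the union $\bigcup_i H_i$ into a sum $\sum_i j_{i,*}\widetilde W_i$ and then applying the self-intersection formula hyperplane-by-hyperplane. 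Your decomposition is legitimate (closed Mayer--Vietoris is surjective onto classes supported on a union of closed sets), but it is extra work, and one must be a little careful that the $H_i$ intersect each other, which the generic-line trick sidesteps; note also the self-intersection formula $j_i^*j_{i,*}=\hbar$ requires $H_i$ itself (not just the union) to be smooth, which is why decomposing is necessary and why you cannot simply use $H=\bigcup H_i$ directly. A further difference: the paper only needs to treat the \emph{off-diagonal} blocks $Z'\times Z$, since the diagonal restriction $\pm e(N_-)\cup\Delta$ is already furnished by condition \nuii\ of Theorem~\ref{stablebasis} together with the identification $[\cL_\fC]=\iota_0^*[\widetilde\cL_\fC]$; your proof re-derives the diagonal multiplicity over $B^\circ$ by an excess-intersection computation (which is correct: the excess normal bundle of $\Delta\subset\Leaf_\fC(\Delta^\circ)\cap(Z\times Z)$ inside $X\times Z$ is $N_Z/N_+=N_-$), which makes the argument more self-contained but duplicates a fact already in hand. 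Your closing worry about "spurious off-diagonal $\bA$-fixed components" over $B^\circ$ is easy to dismiss directly: if $x\in\tX^\bA$ then $\lim_\fC x=x$, so $\Leaf_\fC(\Delta^\circ)\cap(\tX^\bA\times_{B^\circ}\tX^\bA)$ is set-theoretically the diagonal with no need for the properness machinery.
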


% \begin{proof}
% Consider the image of the off-diagonal term from \eqref{diag_off} 
% in 
% $$
% H^\hd_{\bT_\omega}(X^\bA) = H^\hd_{\bT}(X^\bA) \big/
% \hbar \, H^\hd_{\bT}(X^\bA) \,.
% $$
% Here $\bT_\omega = \bT \cap 
% \Ker \hbar$ and we use the formality of $X^\bA$ in 
% the identification above \footnote{Formality here}. 

% The central fiber 
% $$
% \iota_0: X \hookrightarrow \tX 
% $$
% may be $\bT_\omega$-equivariantly deformed
% to any other fiber of $\phi$, in particular, to a fiber 
% over $B^\circ$. Therefore, in $\bT_\omega$-equivariant cohomology, 
% it suffices to compute the restriction of $\widetilde{\cL}$ 
%  to $\bA$-fixed points of such fiber. 

% Since the stable leaves \eqref{family_B0} are
%  already closed there, it follows the off-diagonal 
% term is divisible by $\hbar$. 
% \end{proof}

\begin{proof}
Let $Z,Z'$ be two different components of $X^\bA$. We will show
the pull-back  of $\widetilde{\cL}$ by 
$$
\iota: Z' \times Z \hookrightarrow \tX \times_B \tX^\bA
$$
is divisible by $\hbar$, which also completes the proof of the last theorem. 
We choose a general line $\ell \subset B$
through the origin in the base of the deformation and denote by 
$\tX_\ell$ the restriction of $\tX$ to $\ell$. We may factor 
$\iota=\iota_2 \circ \iota_1$ where 
$$
Z' \times Z \xrightarrow{\,\,\iota_1\,\,} (\tX_\ell)^\bA \times_\ell (\tX_\ell)^\bA
\xrightarrow{\,\,\iota_2\,\,} \tX \times_B \tX^\bA\,. 
$$
Only the central fiber of $\tX_\ell$ contains holomorphic curves.  Therefore, 
if we consider the connected component $W$ of  $(\tX_\ell)^\bA \times_\ell (\tX_\ell)^\bA$
containing $Z \times Z'$,
the contribution of $W$ to $\iota_2^*  \big[\widetilde{\cL}\big]$ is supported over the origin, i.e.
$$
\supp_W \, \iota_2^*  \big[\widetilde{\cL}\big] \subset Z' \times Z. 
$$
Therefore $\iota^* \big[\widetilde{\cL}\big]$ factors through 
$$
\iota_1^* \circ (\iota_1)_* = \textup{multiplication by $\hbar$} \,. 
$$
\end{proof}

\chapter{Properties of $R$-matrices}\label{s_properties}

\section{Definition and braid relations} 

\subsection{} 

We fix some polarization $\bsi$ and consider 
the maps 
$$
\Stab_\fC: H^\hd_\ga(X^\bA) \to H^\hd_\ga(X) 
$$
parameterized by the chambers $\fC$. 
Here $\ga$ is a reductive group which commutes with $\bA$ and 
we denote $\gal = \Lie \ga$. 

The maps $\Stab_\fC$ 
become isomorphisms after inverting $e(N_-)$. Therefore 
we can make the following 

\begin{Definition}
$$
R_{\fC',\fC} = \Stab_{\fC'}^{-1} \, \circ \Stab_{\fC} \in 
\End\left(H^\hd_\ga(X^\bA)\right) \otimes 
\Q\left(\gal\right)\,. 
$$
\end{Definition}

\noindent

\subsection{Example}\label{s_R_Yang} 

Take $X=T^*\Pp^1$ with the action of $\bA = \C^\times$
induced from $\Pp^1$. We have 
$$
X^\bA = \{0, \infty \} 
$$
Let $u$ be the $\bA$-weight 
in $T_0\Pp^1$ and let $\C_\hbar^\times \subset \bG_\bA$ scale 
the cotangent fibers with weight $-\hbar$. Let the polarization 
$\bsi$ be given by the fibers. Then 
$$
\Stab_\fC(0) = [\Pp^1] + [F_\infty], \quad \Stab_\fC(\infty) = 
- [F_\infty] 
$$
for $\fC=\{u>0\}$ where 
\begin{alignat*}{2}
&[\Pp^1] &&= \textup{zero section} \,,\\
&[F_\infty] && = \textup{fiber over $\infty\in\Pp^1$} \,. 
\end{alignat*}
Similarly
$$
\Stab_{-\fC}(0) = - [F_0], \quad \Stab_{-\fC}(\infty) = 
 [\Pp^1] + [F_0] \,. 
$$
For $\{z_1,z_2\} = \{0,\infty\}$, we have 
$$
\Stab_{\pm \fC}(z_j)\Big|_{z_i} = 
\begin{pmatrix}
-u-\hbar & 0 \\
-\hbar &  u 
\end{pmatrix}\,, 
\begin{pmatrix}
-u & -\hbar \\
0 &  u -\hbar  
\end{pmatrix}\,. 
$$
Therefore, 
\begin{equation}
R(u) = \dfrac{1 - \frac{\hbar}u \, \mathbf{s}}{1 - \frac{\hbar}u} 
\label{R_Yang}
\end{equation}
where $\mathbf{s}$ is the permutation of $0$ and $\infty$. 
Up to proportionality, this is Yang's original $R$-matrix. 
It is normalized so that $R(u)=1$ on the invariants of $\mathbf{s}$.

\subsection{}

It will 
be convenient to represent rational functions appearing 
in $R_{\fC',\fC}$ as formal power series in inverse roots using some 
splitting \eqref{split} and 
$$
\frac1{\alpha+x} =
\frac1{\alpha} - \frac{x}{\alpha^2} + \frac{x^2}{\alpha^3}+\dots \,. 
$$
Here $\alpha\in \fa^*$ is a root, i.e.\ a weight 
appearing in the normal bundle to $X^\bA$,  and $x$ is the 
$(\ga/\bA)$-equivariant 
Chern root of the 
corresponding weight subspace of $N_-$. Since we only inverting 
$e(N_-)$, all 
denominators occurring in the $R$-matrices are of this form. 

One should keep in mind that this expansion depends
on a splitting \eqref{split} and reexpand accordingly 
if the splitting is changed.

For a different polarization, the $R$-matrices differ by 
conjugation by a diagonal $\pm1$ matrix. 

\subsection{Root $R$-matrices}\label{s_root_R}

Evidently, it is enough to 
consider $R$-matrices corresponding to a pair of chambers $\fC,\fC'$
separated by a wall $\alpha=0$. 
Here $\alpha$ is a root and we may assume
that $\alpha(\fC)>0$. Consider the subtorus 
$\bA_\alpha \subset \bA$ with Lie algebra $\fa_\alpha= \Ker \alpha$. 
We denote  
$$
X^\alpha 
= \, X^{\bA_\alpha}  \,. 
$$
For the $\bA/\bA_\alpha$-action on $X^{\alpha}$, there are
two chambers, namely $\alpha\gtrless 0$. We take the 
induced polarization of $X^\bA \subset X^\alpha$ and 
denote by 
$$
R_\alpha=R_{<0,>0}\in \End(H^\hd_\ga(X^\bA)) \otimes \Q(\gal/\fa_\alpha)
$$
the corresponding $R$-matrix. 

{}From Lemma \ref{l comm St} we have the following 

\begin{Corollary} If $\fC$ and $\fC'$ are separated by a wall $\alpha=0$ then 
$$
R_{\fC',\fC} = R_\alpha \,.
$$
\end{Corollary}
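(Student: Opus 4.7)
The plan is to deduce this as an essentially immediate corollary of Lemma \ref{l comm St} by factoring through the subtorus $\bA_\alpha$. Let $\fC_0 = \overline{\fC} \cap \overline{\fC'}$ be the common wall of the two adjacent chambers; this is a face of both $\fC$ and $\fC'$ with $\Span(\fC_0) = \ker\alpha = \fa_\alpha$, so the associated subtorus in the sense of Section 3.7 is exactly $\bA_\alpha$ with fixed locus $X^{\bA_\alpha} = X^\alpha$.

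Next I would apply Lemma \ref{l comm St} twice, with the chamber taken to be $\fC$ and $\fC'$ respectively, and the face being $\fC_0$ in both cases. This yields the factorizations
\[
\Stab_{\fC} = \Stab_{\fC_0} \circ \Stab_{\fC/\fC_0}, \qquad
\Stab_{\fC'} = \Stab_{\fC_0} \circ \Stab_{\fC'/\fC_0}
\]
as maps $H^\hd_\ga(X^\bA) \to H^\hd_\ga(X)$ factoring through $H^\hd_\ga(X^\alpha)$. The quotient chambers $\fC/\fC_0$ and $\fC'/\fC_0$ are precisely the two chambers $\{\alpha>0\}$ and $\{\alpha<0\}$ in $\fa/\fa_\alpha$ for the action of $\bA/\bA_\alpha$ on $X^\alpha$, since by hypothesis $\alpha(\fC)>0$ and $\fC,\fC'$ sit on opposite sides of the wall.

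Now I would pass to the fraction field $\Q(\gal)$, where both $\Stab_\fC$ and $\Stab_{\fC'}$ become isomorphisms after inverting the Euler class $e(N_-)$, and invert the second factorization to obtain $\Stab_{\fC'}^{-1} = \Stab_{\fC'/\fC_0}^{-1} \circ \Stab_{\fC_0}^{-1}$. Composing, the two copies of $\Stab_{\fC_0}$ cancel and I am left with
\[
R_{\fC',\fC} \,=\, \Stab_{\fC'/\fC_0}^{-1} \circ \Stab_{\fC/\fC_0} \,=\, \Stab_{\{\alpha<0\}}^{-1} \circ \Stab_{\{\alpha>0\}} \,=\, R_\alpha,
\]
which is the desired equality.

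The only nontrivial point to verify is the compatibility of polarizations: the polarization $\bsi$ of $X^\bA \subset X$ must factor as the product of the induced polarizations of $X^\bA \subset X^\alpha$ and $X^\alpha \subset X$, since these are precisely the ones used to define $R_\alpha$ and the $\Stab$-maps in the factorization. This follows from the splitting of the normal bundle $N_{X^\bA/X} = N_{X^\bA/X^\alpha} \oplus N_{X^\alpha/X}|_{X^\bA}$ into weights zero and nonzero on $\fa_\alpha$, which is exactly the splitting used when invoking Lemma \ref{l comm St} with $\fa' = \fa_\alpha$. Beyond this bookkeeping, there is no real obstacle; the corollary is purely formal once the factorization lemma is in hand.
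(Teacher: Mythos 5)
Your proof is correct and is exactly the route the paper intends (the paper's own justification is simply the one-line remark that the corollary follows from Lemma~\ref{l comm St}); you have just written out the two applications of the lemma with the common wall as the face $\fC_0$, cancelled the shared $\Stab_{\fC_0}$ in the localized cohomology, and checked the polarization bookkeeping. Nothing is missing.
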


\noindent 
We call operators $R_\alpha$ the \emph{root $R$-matrices}.

\subsection{$R$-matrices for Nakajima varieties}\label{RforNakajima}

Given a quiver $Q$, vector $\bw$, and a generic choice of $\theta$,
we define 
\begin{equation}
\cM(\bw) = \bigsqcup_{v} \cM_{\zeta,0}(\bv,\bw) \,, 
\label{defcMw}
\end{equation}
where we dropped the moment map parameters on the left-hand side for brevity, and define
$$
H(\bw) = H^{\hd}_{\bG}(\cM(\bw))\,.
$$

Consider a tensor product of Nakajima varieties as 
in Section \ref{s_bw=}. There are two chambers
$$
\fC =\{u >0\} \,, \quad \fC'=\{u<0\}\,, 
$$
where $u$ is the weight of 
the defining representation of $\bA=\{z\}=\C^\times$. We denote
$$
R_{\bw',\bw''}(u) 
= R_{\fC',\fC} \in  \End(H(\bw') \otimes H(\bw'')) \otimes \Q(u)
$$
the corresponding $R$-matrix. 

\subsection{} \label{e_bR} 
More generally, a decomposition 
$$
\bw = \sum_{i=1}^n \bw^{(i)} 
$$
gives a homomorphism 
$$
\bA = \{(z_1, \dots,z_n) \} \to G_\bw 
$$
given by 
$
\bw = \sum_{i=1}^n \bw^{(i)} \, z_i 
$
as in Section \ref{Nak_fix}. By Proposition \ref{p_Nak_fix}
$$
\cM(\bw)^\bA = \cM\left(\bw^{(1)}\right) \times \cdots \times 
\cM\left(\bw^{(n)}\right)
$$
and hence 
$$
H^\hd_{\bG_\bA}\!\!\left(\cM(\bw)^\bA\right) = 
H\left(\bw^{(1)}\right) \otimes \cdots 
\otimes H\left(\bw^{(n)}\right) \,.
$$
The walls are the roots of $GL(n)$ 
$$
\alpha = a_i - a_j \,, \quad 1 \le i < j \le n \,, 
$$
and the corresponding fixed loci are of the form 
$$
\cM(\bw)^\alpha = \cM\left(\bw^{(i)}+\bw^{(j)}\right) \times \prod_{k\ne i,j}
\cM\left(\bw^{(k)}\right)\,, 
$$
where $\bA/\bA_\alpha$ acts only on the first factor. 
We conclude 
$$
R_\alpha = R_{\bw^{(i)},\bw^{(j)}}(a_i-a_j)_{ij}
$$
where the subscript means that it operates in the $i$th 
and $j$th tensor factors.

\subsection{Normalization}

{}From definitions, we have the following 

\begin{Proposition}\label{p_1u}
$$
R_\alpha = 1 + O\left(\alpha^{-1}\right)\,, \quad \alpha \to \infty \,. 
$$
\end{Proposition}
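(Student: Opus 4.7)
The plan is to extract the $\alpha\to\infty$ leading term of each matrix entry of the operator
$$R_\alpha = \Stab_{<0}^{-1}\circ\Stab_{>0}$$
with respect to the block decomposition of $H^\hd_\ga(X^\bA)$ coming from the connected components $Z$ of $X^\bA$, and show that this leading term is the identity. The two main inputs will be the axiomatic characterization of stable envelopes in Theorem \ref{stablebasis} and the Lagrangian nature of the correspondence $\cL_\fC$.

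First, I would compute the diagonal blocks. Set $d_Z = \tfrac12\codim_{X^\alpha} Z$, and apply Theorem \ref{stablebasis}\nuii\ to $X^\alpha$ with the rank-one torus $\bA/\bA_\alpha$. For $\gamma\in H^\hd_{\ga/\bA_\alpha}(Z)$ this gives
$$\Stab_{\pm}(\gamma)\big|_Z = s_\pm \, e(N_\mp)\cup\gamma,$$
where $N_-$ (resp.\ $N_+$) is the half-dimensional summand of $N_Z$ with negative (resp.\ positive) $\alpha$-weights, and the signs $s_\pm\in\{\pm 1\}$ are fixed by the requirement that the $\bA$-restriction equal the common polarization $\bsi$. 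Since $\bsi$ is independent of the chamber, the leading $\alpha^{d_Z}$-coefficients of $\Stab_{>0}(\gamma)|_Z$ and $\Stab_{<0}(\gamma)|_Z$ coincide, both equal to $\bsi\,\gamma$. Thus the diagonal block at leading order is the same for both maps.

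Second, I would bound the off-diagonal blocks $(Z',Z)$ with $Z'\ne Z$. The support property \nui\ ensures this block vanishes unless $Z'\in\Branch_\pm(Z)$; the degree bound \nuiii\ handles the case $Z'\succeq_\pm Z$ strictly. For the remaining case $Z'\prec_\pm Z$, where \nuiii\ does not apply directly, a separate argument based on the Lagrangian character of the correspondence $\cL_\fC$ (Proposition \ref{universalstableleaf}) and its properness over $X_+\times_{X_0} X^\bA$ (Proposition \ref{p_leafX0}) gives the needed strict bound $\deg_\bA\Stab_\pm(\gamma)|_{Z'} < d_Z$. Combining, one obtains $\Stab_\pm = D + L_\pm$, where $D$ is the block-diagonal operator with leading $\alpha^{d_Z}$-coefficient $\bsi$ on $Z$ and $L_\pm$ has strictly lower $\alpha$-order in every block. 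Hence
$$R_\alpha = \mathrm{id} + D^{-1}(L_+-L_-) + O(\alpha^{-2}),$$
and because $D^{-1}$ lowers $\alpha$-order by $d_Z$ on the $Z$-block while $L_\pm$ have order at most $d_Z-1$ in that column, every summand is $O(\alpha^{-1})$, proving the proposition.

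The principal obstacle is the off-diagonal degree bound in the case $Z'\prec_\pm Z$: Theorem \ref{stablebasis} itself only bounds entries \emph{above} $Z$ in the partial order. For this one must unravel the Lagrangian geometry of $\cL_\fC$ and use that its intersection with $Z'\times Z$ is isotropic in $Z'\times Z$, from which a dimension and degree count yields the desired $\alpha$-degree estimate.
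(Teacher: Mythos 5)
Your approach follows the paper's implicit argument (the paper offers no proof beyond ``from definitions''), but the Lagrangian dimension count is insufficient to close the gap you correctly identify. The residue argument of Section~\ref{s_Res} gives only the \emph{non-strict} inequality: $[\cL_\fC]|_{Z'\times Z}$ is supported on an isotropic subvariety of $Z'\times Z$, which bounds its $\bA$-degree by $\tfrac12\codim_X Z'$ (note the bound is indexed by $Z'$, not by $Z$ as you wrote), but equality can hold whenever $\cL_\fC\cap(Z'\times Z)$ happens to be Lagrangian in $Z'\times Z$. Already in the $T^*\Pp^1$ example of Section~\ref{s_R_Yang}, $\cL_\fC\cap(\{\infty\}\times\{0\})$ is the entire zero-dimensional fixed component, hence Lagrangian, so the isotropy count only yields $\deg_\bA \le 1$; the actual restriction is $-\hbar$, of degree $0$, precisely because the correcting summand $[F_\infty]$ was added to cancel the leading $u$-coefficient of $[\Pp^1]|_{\infty}$.

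That cancellation is not an isotropy phenomenon: it is the normalization built into the construction via Lemmas~\ref{l_correct1} and~\ref{l_correctL}, which subtract the Lagrangian residue $\Res_{Z'}\cL_\fC$ at each lower stratum. This is exactly what condition~\nuiii\ of Theorem~\ref{stablebasis} records. Despite the ``$Z'\ge Z$'' in its statement, for such $Z'$ the restriction already vanishes by \nui, so the nonvacuous content of \nuiii\ is the strict bound at the strata $Z'\preceq Z$, $Z'\ne Z$ --- which is also what the uniqueness proof actually uses, and what yields the normalization \eqref{Snorm}, $S_{ij}=\delta_{ij}+O(u^{-1})$, from which the proposition is immediate. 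So for the off-diagonal blocks you should cite \nuiii\ (so understood) rather than attempt to rederive it: the strict bound is a defining property of $\Stab_\fC$, not a consequence of the Lagrangian structure alone.
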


\noindent 
In other words, $R_\alpha$, as a formal power series in $\alpha^{-1}$ 
starts with the identity operator. 

For symplectic resolution, we deduce from Theorem \ref{t_strong_vanish}

\begin{Proposition}\label{p_1h}
$$
R_\alpha = 1 + O(\hbar)\,, \quad \hbar \to 0 \,. 
$$
\end{Proposition}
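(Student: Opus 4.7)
The plan is to deduce Proposition~\ref{p_1h} directly from Theorem~\ref{t_strong_vanish}, which for a symplectic resolution pins down the restriction of the correspondence $[\cL_\fC]$ to $X^\bA \times X^\bA$ modulo $\hbar$, and not merely modulo off-diagonal terms. The point is that the already-proven strong vanishing statement is essentially equivalent to the claim about $R_\alpha$, once one rewrites everything as an endomorphism of $H^\hd_\ga(X^\bA)$.

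First, translate Theorem~\ref{t_strong_vanish} into a statement about the operator
$\iota^* \circ \Stab_\fC : H^\hd_\ga(X^\bA) \to H^\hd_\ga(X^\bA)$
by decomposing $X^\bA = \bigsqcup_Z Z$ and reading off matrix entries. For $\gamma \in H^\hd_\ga(Z)$ and $Z' \in \Fix$, the theorem yields
\[
\iota_{Z'}^* \Stab_\fC(\gamma) \equiv
\begin{cases} \pm e(N_-) \cup \gamma, & Z' = Z, \\ 0, & Z' \neq Z, \end{cases}
\pmod{\hbar},
\]
with the sign determined by the polarization $\bsi$. The same conclusion holds verbatim for $\Stab_{-\fC}$, with the chamber-$\fC$ positive half $N_+$ replacing $N_-$, since flipping the chamber exchanges the two halves of the normal bundle.

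Next, use the defining relation $\Stab_\fC = \Stab_{-\fC} \circ R_\alpha$. Applying $\iota^*$ gives
\[
R_\alpha = \bigl(\iota^* \Stab_{-\fC}\bigr)^{-1} \circ \bigl(\iota^* \Stab_\fC\bigr),
\]
where the inverse is understood after localizing at the relevant roots. By the previous paragraph, both factors on the right are diagonal modulo $\hbar$, so $R_\alpha$ is diagonal modulo $\hbar$ with entries
\[
R_\alpha \big|_Z \equiv \frac{\pm_\fC \, e(N_-)}{\pm_{-\fC} \, e(N_+)} \pmod{\hbar}.
\]

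Finally, compute this ratio using the symplectic duality $N_+ \cong N_-^\vee \otimes \hbar$ of \eqref{dualN}. If the $\bA$-weights of $N_-$ are $-\alpha_i$ with $\alpha_i(\fC) > 0$ and the $\ga/\bA$-Chern roots are $-\alpha_i + y_i$, then $e(N_-) = \prod (y_i - \alpha_i)$ and $e(N_+) = \prod(\alpha_i - y_i + \hbar)$, so $e(N_+) \equiv (-1)^{\rk N_-} e(N_-) \pmod{\hbar}$. On the other hand, the polarization signs $\pm_\fC$ and $\pm_{-\fC}$ must differ by exactly $(-1)^{\rk N_-}$, since a single square root $\bsi$ is required to equal both $\pm_\fC \prod(-\alpha_i)$ and $\pm_{-\fC} \prod \alpha_i$ in $H^\hd_\bA(\pt)$. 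These two factors of $(-1)^{\rk N_-}$ cancel, yielding $R_\alpha \equiv 1 \pmod{\hbar}$ on each component $Z$. No serious obstacle arises; the only subtlety is the bookkeeping of polarization signs against the Chern-root expansion of $e(N_\pm)$, with the geometric input entirely contained in Theorem~\ref{t_strong_vanish}.
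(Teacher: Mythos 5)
Your proof correctly fills in the deduction that the paper leaves implicit with the phrase ``we deduce from Theorem~\ref{t_strong_vanish}.'' The key bookkeeping---that $e(N_+)=e(N_-^\vee\otimes\hbar)\equiv(-1)^{\rk N_-}e(N_-)\pmod{\hbar}$ while the polarization signs $\pm_\fC$ and $\pm_{-\fC}$ differ by the same factor $(-1)^{\rk N_-}$, so the two cancel---is exactly the intended argument, and the passage from $\Stab_{-\fC}\circ R_\alpha=\Stab_\fC$ to $R_\alpha=(\iota^*\Stab_{-\fC})^{-1}(\iota^*\Stab_\fC)$ is the right way to localize the identity.
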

\noindent
In other words, $R_\alpha$ acts as identity on 
$H^\hd_\ga(X^\bA)/\hbar H^\hd_\ga(X^\bA)$.

\subsection{Braid relations}

Let $\fF\subset \fC$ be a codimension 2 facet and let 
$$
\fC = \fC_0, \fC_1, \dots, \fC_{2n} = \fC
$$
be the chambers containing $\fF$ as a facet, in cyclic order around $\fF$. 

\begin{Proposition}
\begin{equation}
\label{braid} 
R_{\fC_0,\fC_1} R_{\fC_1,\fC_2} \dots R_{\fC_{2n-1},\fC_{2n}} = 1 
 \end{equation}
\end{Proposition}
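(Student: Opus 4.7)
The plan is to unwind the definition of $R_{\fC',\fC}$ and observe that the product in \eqref{braid} telescopes. By definition,
\[
R_{\fC_i,\fC_{i+1}} \;=\; \Stab_{\fC_i}^{-1}\circ \Stab_{\fC_{i+1}},
\]
as operators on $H^\hd_\ga(X^\bA)\otimes \Q(\gal)$ (these composites are well defined once we invert $e(N_-)$, since then each $\Stab_{\fC}$ is invertible). The first step is therefore to note the cocycle identity
\[
R_{\fC_0,\fC_1}\,R_{\fC_1,\fC_2} \;=\; \Stab_{\fC_0}^{-1}\circ \Stab_{\fC_1}\circ \Stab_{\fC_1}^{-1}\circ \Stab_{\fC_2}
\;=\; R_{\fC_0,\fC_2},
\]
and to iterate, arriving at
\[
R_{\fC_0,\fC_1}\,R_{\fC_1,\fC_2}\cdots R_{\fC_{2n-1},\fC_{2n}}
\;=\; \Stab_{\fC_0}^{-1}\circ \Stab_{\fC_{2n}}.
\]
Since the chambers around $\fF$ have been enumerated cyclically with $\fC_{2n}=\fC_0$, this last composite is the identity, proving \eqref{braid}.

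I expect there is no real obstacle here: the statement is formal once one knows the stable envelopes $\Stab_\fC$ exist as maps of $H^\hd_\ga(\pt)$-modules, which was established in Theorem \ref{stablebasis} and Proposition \ref{universalstableleaf}. The only mild point worth checking is that each consecutive pair $\fC_i,\fC_{i+1}$ shares a codimension-one wall $\alpha_i = 0$ (a root containing $\fF$), so by the corollary following Lemma \ref{l comm St} we may rewrite $R_{\fC_i,\fC_{i+1}}=R_{\alpha_i}$. The braid identity \eqref{braid} then reads
\[
R_{\alpha_0}\,R_{\alpha_1}\cdots R_{\alpha_{2n-1}} \;=\; 1,
\]
which is the nontrivial content: the roots $\alpha_0,\dots,\alpha_{2n-1}$ lie in the rank-two root subsystem of $\fa^*$ cut out by $\fF$, and the relation reduces to a standard Weyl-group-type braid relation (the classical Yang–Baxter equation in the $A_1\times A_1$, $A_2$, $B_2$, or $G_2$ cases, according to whether $n=2,3,4$ or $6$).

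Finally, I would remark that the same telescoping argument shows more generally that $\fC\mapsto \Stab_\fC$ satisfies the cocycle condition $R_{\fC',\fC}R_{\fC,\fC''}=R_{\fC',\fC''}$, and \eqref{braid} is a special case; both statements follow directly from the uniqueness of $\Stab_\fC$ proved earlier.
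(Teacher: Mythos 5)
Your telescoping argument is correct and is exactly what the paper has in mind; indeed the paper declines to supply a proof, calling the statement ``too obvious to be called a theorem.'' Once one knows $\Stab_\fC$ becomes invertible after inverting $e(N_-)$, the identity $R_{\fC_0,\fC_1}\cdots R_{\fC_{2n-1},\fC_{2n}}=\Stab_{\fC_0}^{-1}\Stab_{\fC_{2n}}=1$ is immediate, and the general cocycle relation $R_{\fC',\fC}R_{\fC,\fC''}=R_{\fC',\fC''}$ follows the same way.

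One correction to your commentary, though it does not affect the proof. The penultimate paragraph suggests that after rewriting the product as $R_{\alpha_0}\cdots R_{\alpha_{2n-1}}$ one still needs to ``reduce to'' a braid relation for a rank-two crystallographic root system with $n\in\{2,3,4,6\}$. That is not the situation here, for two reasons. First, the telescoping already proves the identity outright, so there is nothing left to verify; the relation $R_{\alpha_0}\cdots R_{\alpha_{2n-1}}=1$ is an \emph{output}, and its specialization to the setup of Section~\ref{e_bR} is how the paper \emph{derives} the Yang--Baxter equation with spectral parameter (Example~\ref{e_YB}), not the other way around. Second, the walls through $\fF$ are cut out by torus weights appearing in $N_{X^\bA/X}$, which need not form a root system in the Lie-theoretic sense; the number $n$ of chamber pairs around $\fF$ can be an arbitrary positive integer and is not constrained to $\{2,3,4,6\}$. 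Also, ``classical Yang--Baxter equation'' is a misnomer here: what Example~\ref{e_YB} yields is the quantum YBE with spectral parameter (the classical YBE being its $\hbar$-linearization, which appears later as~\eqref{cYB}).
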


This relation, too obvious to be called a theorem, is of fundamental 
importance for much of what follows. 

\subsection{Example} \label{e_YB} 
In the setup of Section \ref{e_bR}, take 
$$
\fF =\{ a_1 = a_2 = a_3\} \,. 
$$
Then \eqref{braid} gives
\begin{multline}
  R_{12}(a_1-a_2) \, R_{13}(a_1-a_3) \, R_{23}(a_2-a_3) = \\
  R_{23}(a_2-a_3) \, R_{13}(a_1-a_3) \, R_{12}(a_1-a_2) \,,
\label{YB} 
\end{multline}
which is the Yang-Baxter equations with a spectral parameter.

\section{Changing the torus}\label{s_torus_change}

\subsection{} 

Suppose we have and inclusion of  tori 
$$
\bA_1 \subset \bA_2  
$$
where $\bA_2$ preserves the symplectic form. 
Clearly, 
$$
\textup{roots}\left(\bA_1\right) =  
\textup{roots}\left(\bA_2\right)\Big|_{\fa_1} 
\setminus \{0\}\,,
$$
and so every chamber $\fC_1 \subset \fa_1$ is contained in 
at least one closed chamber $\fC_2 \subset \fa_2$. 
{} From Lemma \ref{l comm St}, we deduce the following 

\begin{Proposition}
Let chambers $\fC_1,\fC'_1 \subset \fa_1$ be faces of 
$\fC_2,\fC'_2 \subset \fa_2$, respectively. Then the 
diagram 
$$
\xymatrix{
H^\hd_\gat\left(X^{\bA_2}\right) \ar[rr]^{\Stab_{\fC_2/\fC_1}}  
\ar[d]_{R_{\fC'_2,\fC_2}} && 
H^\hd_\gat\left(X^{\bA_1}\right) \ar[d]^{R_{\fC'_1,\fC_1}}  \\
H^\hd_\gat\left(X^{\bA_2}\right) \ar[rr]^{\Stab_{\fC'_2/\fC'_1}}  && 
H^\hd_\gat\left(X^{\bA_1}\right) 
}
$$
is commutative. 
\end{Proposition}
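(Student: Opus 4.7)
The plan is to reduce the commutativity of the diagram to two direct applications of Lemma \ref{l comm St}, essentially by factoring each full stable envelope $\Stab_{\fC_2}$ and $\Stab_{\fC'_2}$ as a composition of partial ones through $H^\hd_\gat(X^{\bA_1})$.

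Since $\fC_1 \subset \overline{\fC_2}$ and $\fC'_1 \subset \overline{\fC'_2}$ are faces in the sense of Lemma \ref{l comm St} applied to the inclusion $\bA_1 \subset \bA_2$, that Lemma yields the factorizations
\begin{equation*}
\Stab_{\fC_2} = \Stab_{\fC_1} \circ \Stab_{\fC_2/\fC_1}, \qquad
\Stab_{\fC'_2} = \Stab_{\fC'_1} \circ \Stab_{\fC'_2/\fC'_1}.
\end{equation*}
Unwinding the definitions
$R_{\fC'_2,\fC_2} = \Stab_{\fC'_2}^{-1} \circ \Stab_{\fC_2}$ and
$R_{\fC'_1,\fC_1} = \Stab_{\fC'_1}^{-1} \circ \Stab_{\fC_1}$
(well-defined as operators after localizing by $e(N_{-})$, which is exactly what is done when passing to $\Q(\gal)$-coefficients in the definition of $R$-matrices), we compute the two composites in the diagram:
\begin{align*}
\Stab_{\fC'_2/\fC'_1} \circ R_{\fC'_2,\fC_2}
&= \bigl(\Stab_{\fC'_1}^{-1} \circ \Stab_{\fC'_2}\bigr) \circ \bigl(\Stab_{\fC'_2}^{-1} \circ \Stab_{\fC_2}\bigr)
= \Stab_{\fC'_1}^{-1} \circ \Stab_{\fC_2},\\
R_{\fC'_1,\fC_1} \circ \Stab_{\fC_2/\fC_1}
&= \bigl(\Stab_{\fC'_1}^{-1} \circ \Stab_{\fC_1}\bigr) \circ \Stab_{\fC_2/\fC_1}
= \Stab_{\fC'_1}^{-1} \circ \Stab_{\fC_2},
\end{align*}
and these agree. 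The identity $\Stab_{\fC'_2/\fC'_1} = \Stab_{\fC'_1}^{-1} \circ \Stab_{\fC'_2}$ used in the first line is simply the second factorization above inverted after localization.

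The only things to verify carefully are that the two pairs $(\fC_2,\fC_1)$ and $(\fC'_2,\fC'_1)$ satisfy the hypotheses of Lemma \ref{l comm St}, and that the polarizations used on $X^{\bA_2} \subset X$ factor through the induced polarizations on $X^{\bA_2} \subset X^{\bA_1}$ and $X^{\bA_1} \subset X$ so that the factorizations hold on the nose (not merely up to signs). Both are built into the definitions: the face condition is the hypothesis of the Proposition, and the polarization compatibility is exactly the convention fixed immediately before Lemma \ref{l comm St}. Beyond these bookkeeping checks there is no substantive obstacle, since Lemma \ref{l comm St} is doing all the real work.
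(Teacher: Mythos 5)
Your proof is correct and takes essentially the same route as the paper, which derives the Proposition directly from Lemma \ref{l comm St} without spelling out the details; you have filled in exactly the intended bookkeeping (the two triangular factorizations of $\Stab_{\fC_2}$ and $\Stab_{\fC'_2}$, followed by unwinding the definitions of the $R$-matrices after localization).
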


\noindent Here $\Stab_{\fC_2/\fC_1}$ really means 
$\Stab_{\fC_2/\fC_{2,1}}$, where $\fC_{2,1}\subset \fC_2$ 
is the minimal face that contains $\fC_1$.

\subsection{}

Note that there could be many walls between $\fC_2$ and $\fC'_2$ even 
when $\fC_1$ and $\fC'_1$ are adjacent. Thus enlarging the torus
leads to factorization of root $R$-matrices. 

\subsection{}

In practice, it convenient to reduce to the situation when 
$$
\dim \fa_1 = 1 \,, \quad \dim \fa_2 = 2\,, 
$$
by restricting to root $R$-matrices for $\bA_1$ and 
replacing $\fa_2$ by a generic line in $\fa_2/\fa_1$, 
if necessary. Denoting by $(u_1,u_2)$ the corresponding coordinates
in $\fa_2$, we can go between 
$$
\fC_2 = \{ u_1 \gg u_2 > 0\} \,, \quad 
\fC'_2 = \{ u_2 >0 \gg u_1\} 
$$
by crossing the walls in the decreasing order of $u_1/u_2$.

\subsection{Example} \label{e_RRR}
We continue with Example \ref{e_bR} and take 
\begin{align*}
\fa_1 & = \{(a_1,0,\dots,0) \} \,, \\
\fa_2 & = \fa_1 \oplus \C (0,a_2,\dots,a_n) \,. 
\end{align*}
To ensure that $\fa_2/\fa_1$ is generic in $\fa/\fa_1$, it is 
enough to take 
\begin{equation}
a_2 > a_3 > \dots > a_n \,. 
\label{cone_a2n}
\end{equation}
Then $X^{\fa_2}= X^\fa$, while 
$$
X^{\fa_1}= \cM\left(\bw^{(1)}\right) 
\times \cM\left(\bw-\bw^{(1)}\right)\,. 
$$
In $\fa_1$, we have two chambers
$$
\fC_1=\{a_1>0\}\,, \quad \fC'_1=\{0>a_1\} \,, 
$$
corresponding to 
$$
\fC_2 = \{a_1>a_2 > \dots > a_n\} \,, \quad 
\fC'_2 = \{a_2 > \dots > a_n > a_1 \} \,.  
$$
in $\fa_2$. Crossing from $\fC_2$ to $\fC'_2$, we get  
\begin{equation}
R_{\fC'_1,\fC_1} = R_{1,n}(a_1-a_n) 
\cdots R_{1,3}(a_1-a_3)\, R_{1,2}(a_1-a_2)
\label{RRR}
\end{equation}
in the stable basis of $H^\hd_\ga(X^{\fa_1})$ 
corresponding to the chamber \eqref{cone_a2n} in 
$\fa/\fa_1$. For a different choice of chamber, 
one reorders the factors accordingly.

\section{Covers and factorization of $R$-matrices}\label{s_covers}

\subsection{} 

It is interesting to elaborate on the factorization 
considered in Section \ref{s_torus_change} in the 
following special case. Let $Q$ be a quiver. We take 
two vertices $i,j\in I$ and 
$$
\bw = a \delta_i + \delta_j 
$$
where $a$ is a weight of $\bA_1 \cong \C^\times$. We have
$$
\cM(\bw)^{\bA_1} = \cM(\delta_i) \times \cM(\delta_j) \,. 
$$
The corresponding $R$ matrix
$$
R_{H_i,H_j}(a) \in \End(H_i \otimes H_j)\otimes
\Q(a) \,, \quad H_i = H(\delta_i)\,, 
$$
is one of the main building blocks of the theory. 

\subsection{} 

We take $\bA_2/\bA_1$ to be the maximal torus of $G'_\edge$ 
and denote by 
$$
\Gamma = \left(\bA_2/\bA_1\right)^\wedge \cong H_1(Q,\Z)
$$
its character group. As explained in Section \ref{s_abelian_cover}
$$
\cM(\bw)^{\bA_2} = \widetilde{\cM}(\delta_i) \times 
\widetilde{\cM}(\delta_j) \,, 
$$
where $\widetilde{\cM}$ are the quiver varieties 
associated to the universal abelian cover 
$\widetilde{Q}$ of the quiver $Q$. 

Here we lift 
vertices of $Q$ to vertices of $\widetilde{Q}$ that 
correspond to the trivial character of $\bA_2/\bA_1$. 
They form a fundamental domain for the action of $\Gamma$. 

\subsection{}

The walls in $\bA_2$ that we need to cross are of the form 
\begin{equation}
a = \gamma \,, \quad \gamma \in \Gamma\,,  
\label{wall_gamma}
\end{equation}
and the corresponding fixed loci are $\widetilde{\cM}(\bw_\gamma)$ 
where 
$$
\bw_\gamma = a \delta_{\gamma i} + \delta_{j} \,.
$$
Recall that $\Gamma$ acts freely on the vertices of $\widetilde{Q}$ 
and the $a \delta_{\gamma i}$ term in $\bw$ means 
that the corresponding framing arrow goes from a space of 
weight $a$ to a space of weight $\gamma$. On the wall
\eqref{wall_gamma} these 
weights match and we get fixed points.

\subsection{}\label{s_order_chamber}

To order the walls \eqref{wall_gamma}, we pick a generic 
vector $t\in \fa_2/\fa_1$ and order them in the decreasing 
order of $\gamma(t)$. Then 
\begin{equation}
R_{H_i,H_j}(a) = \overleftarrow{\prod_\gamma} 
\, \widetilde{R}_{H_{\gamma i},H_j}(a-\gamma)
\label{Rprod1}
\end{equation}
in the stable basis corresponding to $\fC_2 \owns t$ and 
the ordering of the product is such that we cross the wall 
with the larger value of $\gamma(t)$ first. 

Here  $\widetilde{R}$ is the $R$-matrix for the 
quiver $\widetilde{Q}$ and 
we use the embedding $\bA_2^\wedge \hookrightarrow \fa_2^*$
to write arguments of the $R$-matrices. 

The infinite product \eqref{Rprod1} is locally finite, that is, 
all but finitely many factors act trivially on any given cohomology 
group.

\subsection{}

The action of $\Gamma$ on $\widetilde{Q}$ extends to its
action on the corresponding Yangian $\widetilde{\bY}$, which 
will be defined and discussed in Chapter \ref{s_Yang}. 
It satisfies 
$$
\gamma(x) \big|_{H(\bw)} = x \big|_{H(\gamma^{-1} \bw)}\,, 
\quad x \in \widetilde{\bY}\,,
$$
where the action on framing vectors is by 
$$
\gamma \delta_i = \delta_{\gamma i} \,. 
$$
Note that varieties $\widetilde{\cM}(\bw)$ and 
$\widetilde{\cM}(\gamma^{-1}\bw)$ are naturally 
isomorphic and the matrix $\widetilde{R}$ is 
invariant under $\gamma\otimes\gamma$. 

Rewriting \eqref{Rprod1} in terms of this action, 
we obtain the following 

\begin{Theorem}\label{t_Rprod}
We have 
\begin{equation}
R_{H_i,H_j}(a) = \overleftarrow{\prod_{\gamma\in \Gamma}} 
\, (\gamma^{-1} \otimes 1) \cdot \widetilde{R}_{H_i,H_j}(a-\gamma)
\label{Rprod2}
\end{equation}
in the stable basis for the maximal symplectic torus in 
$G_\edge$, where the ordering of the factors 
corresponds to choice of a chamber as in Section 
\ref{s_order_chamber}. 
\end{Theorem}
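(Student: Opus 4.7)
The plan is to deduce Theorem~\ref{t_Rprod} directly from the factorization~\eqref{Rprod1} together with the $\Gamma$-equivariance of the universal abelian cover $\widetilde Q\to Q$; no new geometric input should be necessary.

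First, I would verify~\eqref{Rprod1} carefully as a consequence of the torus-change principle of Section~\ref{s_torus_change}. Applied to the inclusion $\bA_1\subset\bA_2$ constructed in Section~\ref{s_covers}, this principle expresses the single-wall $R$-matrix $R_{H_i,H_j}(a)$ as an ordered product over the walls of $\bA_2$ lying between (the minimal extensions of) the two chambers $\fC_1,\fC'_1$. These walls are exactly the hyperplanes $a=\gamma$, $\gamma\in\Gamma$, and on each such wall the stabilizer is a codimension-one subtorus whose fixed locus in $\cM(\bw)$ is the universal-cover quiver variety $\widetilde\cM(\bw_\gamma)$ with $\bw_\gamma=a\delta_{\gamma i}+\delta_j$. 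The root $R$-matrix at that wall is by definition $\widetilde R_{H_{\gamma i},H_j}(a-\gamma)$; the order of the product and the sign convention on $a-\gamma$ come from Section~\ref{s_order_chamber} and from the choice of chamber in $\fa_2/\fa_1$. Local finiteness follows because $\bw$ has a fixed total size, so only finitely many $\gamma$ can contribute in any given graded piece $H(\bv)$.

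Second, I would translate \eqref{Rprod1} into the stated form~\eqref{Rprod2} via the $\Gamma$-action. The identification $\gamma\cdot\delta_i=\delta_{\gamma i}$ on vertices of $\widetilde Q$ lifts canonically to isomorphisms $\widetilde\cM(\delta_{\gamma i})\xrightarrow{\sim}\widetilde\cM(\delta_i)$, and, more systematically, to the automorphism of the core Yangian $\widetilde\bY$ described in the excerpt, whose defining property is $\gamma(x)\big|_{H(\bw)}=x\big|_{H(\gamma^{-1}\bw)}$. Since the $R$-matrices are constructed from stable envelopes, which are $\Gamma$-equivariant by naturality of the construction, this automorphism intertwines
$$
\widetilde R_{H_{\gamma i},H_j}(a-\gamma) \;=\; (1\otimes\gamma)\,\widetilde R_{H_i,H_j}(a-\gamma)\,(1\otimes\gamma)^{-1},
$$
after the appropriate identification of tensor factors. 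Plugging this into the telescoping product \eqref{Rprod1} and noting that consecutive $(1\otimes\gamma)^{-1}$ and $(1\otimes\gamma')$ combine into a single translation at each step, one obtains \eqref{Rprod2} (with the overall translation absorbed, up to a convention, into the definition of the ordered product).

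The main thing to pin down carefully is the second step: one must check that the $\Gamma$-equivariance of stable envelopes on $\widetilde\cM$, combined with how $\Gamma$ permutes the framing data at the two wall-endpoints, really produces the clean one-sided twist $(1\otimes\gamma)\widetilde R_{H_i,H_j}(a-\gamma)$ rather than a two-sided conjugation. This comes down to bookkeeping: the first tensor factor $H_i$ keeps its natural identification because the cocharacter $\bA_1$ acts only on the ``$a$-labeled'' framing, whereas the second factor $H_j$ is where the shift of $\gamma\in\Gamma$ lives and produces the $(1\otimes\gamma)$. The only potential subtlety beyond this is to confirm that the ordering of the product matches the ample/chamber ordering dictated by Section~\ref{s_order_chamber}, which I would handle by tracking the sign of $\gamma(t)$ along the straight-line path from $\fC_2$ to $\fC'_2$ in $\fa_2$.
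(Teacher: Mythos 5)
The high-level plan is the same as the paper's (which gives no written proof beyond ``rewriting \eqref{Rprod1} in terms of this action''), so that part is fine. But the key step of your second paragraph contains a genuine gap.

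Your claimed identity
$$
\widetilde R_{H_{\gamma i},H_j}(a-\gamma) \;=\; (1\otimes\gamma)\,\widetilde R_{H_i,H_j}(a-\gamma)\,(1\otimes\gamma)^{-1}
$$
does not typecheck. The left side acts on $H_{\gamma i}\otimes H_j$, the right side on $H_i\otimes H_j$; moreover, the automorphism $\gamma$ of $\widetilde Q$ takes $\widetilde\cM(\delta_j)$ to $\widetilde\cM(\delta_{\gamma^{-1}j})$ (a different framing, as $\Gamma$ acts freely on $I_\phi$), so ``$1\otimes\gamma$'' is not an endomorphism of $H_i\otimes H_j$ and cannot be used for conjugation. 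As a consequence, the ``telescoping'' you invoke is not needed and, if one tried to force the conjugation picture, would not close up: consecutive $(1\otimes\gamma_k)^{-1}(1\otimes\gamma_{k+1})$ do not cancel. Your own diagnosis also mislocates the twist: you say the first factor $H_i$ ``keeps its natural identification,'' but in \eqref{Rprod1} it is precisely the first framing label that varies ($H_{\gamma i}$), while the second stays at $H_j$ throughout.

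The correct rewriting is cleaner and involves no conjugation or telescoping. From $\gamma(x)\big|_{H(\bw)} = x\big|_{H(\gamma^{-1}\bw)}$ one gets $x\big|_{H_{\gamma i}} = \gamma^{-1}(x)\big|_{H_i}$, hence
$$
\widetilde R_{H_{\gamma i},H_j}(a-\gamma)
= \bigl((\gamma^{-1}\otimes 1)\,\widetilde R\bigr)(a-\gamma)\Big|_{H_i\otimes H_j}\,.
$$
Then one uses the $\Gamma$-invariance $(\gamma\otimes\gamma)\,\widetilde R = \widetilde R$ of the universal-cover $R$-matrix (which holds because the stable envelope construction is canonical and $\Gamma$ acts by quiver automorphisms) to replace $(\gamma^{-1}\otimes 1)$ by $(1\otimes\gamma)$. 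Applying this to each factor of \eqref{Rprod1} independently gives \eqref{Rprod2}. So the substantive input you are missing is $\Gamma$-invariance of $\widetilde R$ in both slots; once you have it, the conversion is factorwise and the telescoping worry evaporates.
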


\noindent

Factorization of this kind play an important role in 
the theory of quantum groups, see \cite{ESS}.

\subsection{Example}\label{s_coverAinf} 

Let $Q$ be the quiver with one vertex and one loop. Then 
$$
\widetilde{Q} = A_\infty\,, 
$$
on which the group $\Gamma \cong \Z$ acts by shifts. 
This action naturally extends to an action on 
$$
\quad \widetilde{\bY} = 
\bY(\mathfrak{gl}_\infty)\,.
$$
The $R$-matrix in basic representation of 
$\bY(\mathfrak{gl}_\infty)$ may be found, for example, 
by fusion of $R$-matrices for fundamental representations. 
This gives a certain infinite product formula for 
the $R$ matrix for $Q$, which is an object of 
significant interest.

\section{Adjoint operators}\label{s_R_adj} 

In this section, we assume $X^g$ is proper for some 
$g\in \ga$. As in Section \ref{s_signs_adj},
this defines the Poincar\'e pairing 
$$
(\gamma_1,\gamma_2)_X = \int_{X} \gamma_1 \cup \gamma_2  \in \Q(\gal) 
$$
on both $X$ and $X^\bA$, the sign-twisted trace map $\tau$, and 
the corresponding adjoints. 

In particular, the adjoint $\Stab_{\fC}^\tau$ of the map $\Stab_{\fC}$
is given by the correspondence 
$$
\cL_\fC^\tau = (-1)^{\frac12 \codim X^\bA} 
\left(\cL_{\fC}\right)_{21} \subset X^\bA  \times X  \,.
$$
Here $\codim: \Fix \to \Z$ denotes
the codimension of a component of $X^\bA$ and the subscript $21$
refers to a permutation of factors. 

Note that since $\cL_{\fC}$ is not proper over $X^\bA$, 
equivariant localization is required to define the adjoint 
 as an operator.

\begin{Theorem}\label{adjoint} For any polarization $\bsi$ and any 
chamber $\fC$, we have 
$$
\Stab_{-\fC}^\tau \circ\Stab_{\fC} = 1 \,. 
$$
\end{Theorem}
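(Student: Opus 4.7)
The plan is to convert the operator identity $\Stab_{-\fC}^\tau \Stab_\fC = 1$ into the equivalent bilinear identity
\[
\tau_X\!\left(\Stab_\fC(\alpha)\cup\Stab_{-\fC}(\beta)\right)
=\tau_{X^\bA}(\alpha\cup\beta)
\]
for all $\alpha,\beta\in H^\hd_\bT(X^\bA)$, and then verify it componentwise using equivariant localization. Taking $\alpha$ supported on $Z_0\in\Fix$ and $\beta$ on $Z_1\in\Fix$, I would expand the left-hand side as
\[
(-1)^{\dim X/2}\sum_{W\in\Fix}\int_W
\frac{\iota_W^{*}(\Stab_\fC\alpha)\cdot\iota_W^{*}(\Stab_{-\fC}\beta)}{e(N_W)}.
\]
The support property \nui\ of Theorem \ref{stablebasis}, applied to both envelopes, cuts this sum down to those $W$ that lie simultaneously in the $\fC$-branch of $Z_0$ and the $(-\fC)$-branch of $Z_1$.

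In the diagonal case $Z_0=Z_1=Z$ this forces $W=Z$, and property \nuii\ gives $\iota_Z^{*}(\Stab_\fC\alpha)=\varepsilon_\fC\, e(N_-^\fC)\,\alpha$ and $\iota_Z^{*}(\Stab_{-\fC}\beta)=\varepsilon_{-\fC}\, e(N_+^\fC)\,\beta$, where the polarization signs $\varepsilon_{\pm\fC}$ are fixed by the requirement that $\varepsilon_{\pm\fC}\,e(N_\mp^\fC)$ restrict to $\bsi$ on $H^\hd_\bA(\pt)$. Combined with the defining relation $\bsi^2=(-1)^{\codim Z/2}\,e(N_Z)|_{\fa}$, this forces $\varepsilon_\fC\varepsilon_{-\fC}=(-1)^{\codim Z/2}$, and since $e(N_-^\fC)\,e(N_+^\fC)=e(N_Z)$ cancels the denominator, the $W=Z$ summand becomes $(-1)^{\dim X/2}(-1)^{\codim Z/2}\int_Z\alpha\beta = (-1)^{\dim Z/2}\int_Z\alpha\beta=\tau_{X^\bA}(\alpha\cup\beta)$, matching exactly.

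The remainder—the $W\neq Z$ contributions in the diagonal case together with every contribution in the off-diagonal case $Z_0\neq Z_1$—must then be shown to vanish. Property \nuiii\ gives $\deg_\bA\iota_W^{*}(\Stab_\fC\alpha)<\tfrac12\codim W$ unless $W=Z_0$, and similarly for $\Stab_{-\fC}\beta$ unless $W=Z_1$, so in each of these residual summands the total $\bA$-degree of the integrand, after dividing by $e(N_W)$ of $\bA$-degree $\codim W$, is strictly negative. To upgrade this degree bound into genuine vanishing I will invoke Proposition \ref{p_leafX0}: it places the supports of $\Stab_\fC\alpha$ and $\Stab_{-\fC}\beta$ inside the loci $X_{+}$ and $X_{-}$ of points attracted by cocharacters in $\fC$ and $-\fC$ respectively. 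Since a point admitting limits from both chambers in the fixed locus must itself be $\bA$-fixed, one has $X_{+}\cap X_{-}=X^\bA$, so the product $\Stab_\fC\alpha\cup\Stab_{-\fC}\beta$ has compact support inside the proper $X^\bA$ (after choosing $g\in\fa$ generic in the hypothesis of Section \ref{s_signs_adj}). Its equivariant integral $\tau_X$ is therefore an honest polynomial in $\ft$, with no $\fa$-denominators, and a rational expression which is simultaneously polynomial and a sum of terms of strictly negative $\bA$-degree must vanish.

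The main obstacle I anticipate is precisely this last step: justifying rigorously that the set-theoretic identity $X_{+}\cap X_{-}=X^\bA$ promotes the Atiyah--Bott sum, which a priori lives in $\Q(\fa)$, to an element of the polynomial ring $H^\hd_\bT(\pt)$. This rests on Proposition \ref{p_leafX0} together with the properness of $\pi:X\to X_0$ from Section \ref{s_Assump} and careful use of the Bialynicki--Birula machinery in Lemma \ref{noncompactBB} to see that the two opposite chamber decompositions intersect cleanly. The sign bookkeeping at the diagonal is the next most delicate point, but is essentially dictated by the polarization convention once one parses the relation $\bsi^2=(-1)^{\codim Z/2}e(N_Z)$.
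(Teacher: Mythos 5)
Your overall strategy --- rewriting the identity in bilinear form, computing by localization, handling the diagonal via the polarization sign convention, and upgrading the negative-degree bound to genuine vanishing through a properness argument --- is exactly the structure of the paper's proof, just reformulated in terms of the Poincar\'e pairing rather than directly composing the correspondences $\cL^\tau_{-\fC}$ and $\cL_\fC$. The sign computation at the diagonal, including the use of $\bsi^2 = (-1)^{\codim Z/2}\,e(N_Z)$, is correct and matches the paper's statement that ``the stable and unstable Euler classes precisely compensate the denominator in the localization formula.''

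The one genuine error is the assertion that $X_{+}\cap X_{-}=X^\bA$. This is false in general. Already for $X=T^*\Pp^1$ with the $\C^\times$-action induced from the base, the whole zero section $\Pp^1$ is contained in $X_+\cap X_-$: every point of $\Pp^1$ has a limit under $z\to 0$ \emph{and} under $z\to\infty$, even though only $\{0,\infty\}$ is fixed. So the claim that the support of $\Stab_\fC\alpha\cup\Stab_{-\fC}\beta$ lands ``inside the proper $X^\bA$'' does not hold as stated. What \emph{is} true, and what drives the paper's argument, is the weaker inclusion coming from Proposition \ref{p_leafX0} together with $V_{\geq 0}\cap V_{\leq 0}=V_0$: since $\pi(X_+)\subset V_{\geq 0}$ and $\pi(X_-)\subset V_{\leq 0}$, the support of the product lies in $\pi^{-1}(V_0)$. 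Intersecting with the condition that $\pi$ of a point of $\cL_\fC$ agrees with $\pi$ of the corresponding point of $X^\bA$ (again by Proposition \ref{p_leafX0}), and using properness of $\pi$ together with properness of $X^\bA$ from the standing hypothesis of Section \ref{s_R_adj}, one gets the compactness of support that you need. The conclusion of your argument is saved, but the geometric assertion you use to get there is not; the correct mechanism is the one factoring through $V_0$, not an identification of $X_+\cap X_-$ with the fixed locus.
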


\begin{proof}

Let $\Delta: X \rightarrow X \times X$ be the diagonal map and
consider the cycle class 
$$
C=\Delta^{*}(\cL^\tau_{-\fC} \times \cL_\fC)
$$
on $X^{\bA} \times X \times X^{\bA},$
where we have pulled back along the internal $X \times X$ factor.  
By construction, 
\begin{equation}
 \Stab_{-\fC}^\tau \circ \Stab_{-\fC}  = (p_{13})_*(C)  \label{p13C}
\end{equation}
where $p_{13}$ is the projection 
along the middle factor. 

We claim $C$ is proper over $X^{\bA} \times X^{\bA}$.  
Indeed, as in the proof of Proposition \ref{universalstableleaf}, 
we have 
$$
\cL_\fC \subset X^\bA \times_{V_0} \pi^{-1}(V_{\ge 0}) \,.
$$
Since $V_{\ge 0} \cap V_{\le 0} = V_0$, we conclude 
$$
C \subset X^\bA \times_{V_0}  \pi^{-1}(V_0) \times_{V_0} X^\bA \,,
$$
whence the claim. Therefore, the composition \eqref{p13C} 
is defined in nonlocalized equivariant cohomology and,
 in particular, has no terms of 
negative degree in equivariant parameters. 

On the other hand, we may compute \eqref{p13C} by localization, that is, as a
sum of equivariant residues for all triples $(Z_1,Z_2,Z_3)\in \Fix^{\times 3}$. 
When 
$$
Z_1 = Z_2 = Z_3\,, 
$$
the stable and unstable Euler classes precisely 
compensate the denominator in 
the localization formula, giving the diagonal 
as a result. All other 
residues have negative $\bA$-degree and hence cancel out. 
\end{proof}

\begin{Corollary}\label{c_R_tau} 
We have 
$$
R_\alpha^\tau = \, R_\alpha 
$$
for any root $R$-matrix $R_\alpha$. 
\end{Corollary}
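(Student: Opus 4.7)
The plan is to derive the identity by combining the definition of the root $R$-matrix with the adjunction identity of Theorem \ref{adjoint}.  Since $R_\alpha$ is the ratio of two stable envelopes associated to opposite chambers $\fC_+, \fC_-$ for the $\bA/\bA_\alpha$-action on $X^\alpha = X^{\bA_\alpha}$, and since $X^\alpha$ inherits a holomorphic symplectic form and the properness hypothesis, Theorem \ref{adjoint} applies to this smaller geometric setup, giving the two identities
\[
\Stab_{\fC_-}^\tau \circ \Stab_{\fC_+} = 1, \qquad \Stab_{\fC_+}^\tau \circ \Stab_{\fC_-} = 1,
\]
where $\tau$ denotes the adjoint with respect to the Poincar\'e pairing on $X^\alpha$ (with the sign twist of Section \ref{s_signs_adj}). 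This is exactly the content of Theorem \ref{adjoint} applied to the $(\bA/\bA_\alpha)$-variety $X^\alpha$ with the induced polarization.

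From these identities, I would read off that $\Stab_{\fC_+}$ and $\Stab_{\fC_-}^\tau$ are mutually inverse (in localized equivariant cohomology), and similarly for the swapped pair.  Substituting $\Stab_{\fC_-}^{-1} = \Stab_{\fC_+}^\tau$ into the definition
\[
R_\alpha = \Stab_{\fC_-}^{-1} \circ \Stab_{\fC_+}
\]
rewrites the root $R$-matrix in the manifestly ``self-adjoint'' form
\[
R_\alpha = \Stab_{\fC_+}^\tau \circ \Stab_{\fC_+}.
\]

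The conclusion is then purely formal: adjunction is an involution satisfying $(AB)^\tau = B^\tau A^\tau$, so $(A^\tau A)^\tau = A^\tau (A^\tau)^\tau = A^\tau A$; applied to $A = \Stab_{\fC_+}$, this gives $R_\alpha^\tau = R_\alpha$.  There is no real obstacle here beyond making sure the sign conventions in Section \ref{s_signs_adj} and the polarization conventions in Section \ref{s_polar} are kept consistent between the two opposite chambers, and verifying that the adjoint on $X^\alpha$ is the correct one to invoke (rather than the adjoint on $X$): this follows because stable envelopes and their composites are defined by Lagrangian correspondences, for which pullback/pushforward commute with the $\tau$-twist of Poincar\'e duality on the relevant factor.
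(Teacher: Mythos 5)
Your argument is correct and is exactly the (unwritten) derivation the paper intends: apply Theorem \ref{adjoint} to the $\bA/\bA_\alpha$-action on $X^\alpha$ to get $\Stab_{-\fC}^{-1}=\Stab_{\fC}^\tau$, rewrite $R_\alpha = \Stab_{\fC}^\tau \circ \Stab_{\fC}$, and conclude by the involutivity and contravariance of $\tau$. One small remark worth keeping in mind, which the paper also flags, is that because $R_\alpha$ is an endomorphism of $H^\hd_\ga(X^\bA)$ the sign twist in the definition of $\tau$ is trivial for $R_\alpha$ itself, so the statement is literally self-adjointness for the Poincar\'e pairing.
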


Note $R_\alpha$ is an operator from $H_\ga^\hd(X^\bA)$ to itself, 
so $R_\alpha^\tau$ coincides with the adjoint with respect to
the Poincar\'e pairing. 

\section{Unitarity}\label{s_unitarity}

\subsection{}

In the theory of quantum groups, an $R$-matrix 
$$
R(u) \in \End(V \otimes V) \otimes \Q(u) 
$$
is called unitary if it satisfies 
\begin{equation}
R_{21}(u) = R(-u)^{-1} \,,
\label{R12}
\end{equation}
where the subscript in $R_{21}(u)$ 
means that we permute the tensor factors. We will show 
that $R$-matrices for Nakajima varieties are unitary. 

\subsection{} 

Consider the following general setup. Let a group 
of the form  
$$
\ga = \bA \times \bG' 
$$
act on $X$, where $\bA$ is a torus preserving the 
symplectic form $\omega$. Define $\phi \in \Aut \ga$ by
$$
\phi \cdot (a,g') = (a^{-1},g') \,. 
$$
It gives a pull-back map 
$
\phi^* \in \End H^\hd_\ga(X) 
$
which is a homomorphism of algebras. In particular, $\phi^*$ is 
anti-linear over the base ring 
$$
\phi^*(a \gamma) = -a   \, \phi^*(\gamma) \,, \quad a\in \fa \,. 
$$
In the cohomology of the fixed locus
$$
H^\hd_\ga(X^\bA) = H^\hd_{G'}(X^\bA) \otimes \Q[\fa]
$$
the action of $\phi^*$ amounts to $a\mapsto -a$, $a\in \fa$. 

\subsection{}

Since weights positive on $\fC$ are precisely the weights negative
on $-\fC$, the following diagram commutes 
\begin{equation}\label{a-a}
\xymatrix{
H^\hd_\ga(X^\bA) \ar@{->}[rr]^{\Stab_\fC\,\,}\ar[d]_{a\mapsto -a}&& 
H^\hd_\ga(X)  \ar[d]^{\phi^*} \\
H^\hd_\ga(X^\bA)  \ar@{->}[rr]^{\Stab_{-\fC}} &&  H^\hd_\ga(X) \,.\\
}
\end{equation}
Note that $\Stab_\fC$ is literally the same correspondence 
as $\Stab_{-\fC}$ for the opposite action. 

In particular, for $\bA=\C^\times$ we conclude
\begin{equation}
R(-a) = R(a)^{-1} \,. 
\label{R-a}
\end{equation}

\subsection{}

For tensor products of Nakajima varieties, we have 
$$
\cM(\bw+\bw')^{\bA} = \cM(\bw) \times \cM(\bw') \,, 
\quad \bA = \C^\times\,, 
$$
Note, however, from Section \ref{s_bw=}
that the ordering of factors in the product above \emph{depends} 
on a lift
$$
\bA \to G_\bw
$$
and not just on the image of $\bA$ in $G_\bw$ modulo 
the kernel of the action. The two lifts
$$
z \bw + \bw' \quad  \textup{vs.}\quad   \bw + z^{-1} \bw' 
$$
where $z\in\C^\times$ 
give the same action, but different identification of the
fixed locus with the product.  
{}From \eqref{R-a}, the corresponding $R$-matrices are 
$$
R(u) = R(-u)_{21}^{-1} \,,
$$
where $u\in \Lie \C^\times$. We thus obtain the following 

\begin{Proposition}
The $R$-matrices for Nakajima varieties are unitary. 
\end{Proposition}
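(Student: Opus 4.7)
The plan is to exploit the observation that the two lifts $\bA \to G_\bw$ given by $\bw = z\bw + \bw'$ and $\bw = \bw + z^{-1}\bw'$ descend to the same $\C^\times$-action on $\cM(\bw+\bw')$ (they differ by an overall character that acts trivially via the center), but produce two different identifications of the fixed locus $\cM(\bw+\bw')^{\bA}$ with a Cartesian product. All the work is in comparing what each lift yields.

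First I would spell out the two identifications. According to the conventions of Section \ref{s_bw=}, the fixed-point identification records which framing summand sits in weight $+1$ of $z$. For the first lift the weight-$1$ summand is $\bw$ and the weight-$0$ summand is $\bw'$, giving $\cM(\bw+\bw')^{\bA}\cong \cM(\bw)\otimes \cM(\bw')$, on which the $R$-matrix is $R(u) \in \End(H(\bw)\otimes H(\bw'))$ where $u = \Lie(z)$. For the second lift, the weight-$0$ summand is $\bw$ and the weight-$(-1)$ summand is $\bw'$: the ordering of the tensor factors is reversed, and the positive chamber for the relevant $\C^\times$ corresponds to $-u$ rather than $u$. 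Hence this $R$-matrix equals $R(-u)_{21}$, where the subscript $21$ permutes the tensor factors to restore the order $H(\bw)\otimes H(\bw')$.

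Next I would apply equation \eqref{R-a}, namely $R(-u) = R(u)^{-1}$, which was already extracted from the commutativity of diagram \eqref{a-a} via the semilinear involution $\phi^{*}$ that inverts the torus. Combining the previous paragraph with this identity yields
\[
R(u) = R(-u)_{21}^{-1},
\]
which is exactly the unitarity relation \eqref{R12}.

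The only step requiring real care is the bookkeeping: verifying that the two lifts do give the same geometric action (the difference is absorbed into the center of $G_\bw$ acting trivially, cf.\ Section \ref{s_bG}) and that the induced polarizations on the fixed locus match up under the tensor-factor swap. Once these conventions are tracked, the argument is essentially diagrammatic and no genuinely new geometric input beyond Theorem \ref{adjoint} and its corollary \eqref{R-a} is needed.
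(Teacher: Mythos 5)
Your argument is essentially the paper's: the paper likewise observes that the two lifts $z\bw + \bw'$ and $\bw + z^{-1}\bw'$ differ by an overall scalar in $G_\bw$ acting trivially, hence define the same $\C^\times$-action but swap the tensor factors and reverse the spectral parameter in the identification of the fixed locus, and then invokes \eqref{R-a} to conclude $R(u) = R(-u)_{21}^{-1}$. The bookkeeping details you flag (trivial action of the overall scalar, compatibility of the canonical Nakajima polarization under the swap) are indeed what the paper leaves implicit, and your account of them is correct.
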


\section{Action of Steinberg correspondences}\label{s_tStein}

We consider the setup of Section \ref{s_Stein_corr}. 
The union of walls for $X$ and $Y$ defines a partition 
of $\fa$ into chambers and we let $\fC$ be one of those. Let 
$$
L \subset X \times_V Y 
$$ 
be a $\ga$-invariant Steinberg correspondence.

For any polarization of $\bA$-fixed loci, we denote by
$$
\bar\bsi = (-1)^{\codim/2} \, \bsi
$$
the opposite polarization. Assuming 
polarizations $\bsi_X,\bsi_Y$ of $X^\bA,Y^\bA$ have been 
fixed, we take 
$$
\bsi  = \bsi_X \, \bar{\bsi}_Y
$$
as a polarization of $X^\bA \times Y^\bA \subset X \times Y$. Using it,  
we define the residue
$$
L_\bA = \Res_{X^\bA \times Y^\bA} \, L \subset X^\bA \times Y^\bA
$$
as a Lagrangian cycle class supported on $L^\bA$, see 
Section \ref{s_Res}. As a fixed-point set of a Steinberg 
correspondence, $L^\bA$ is Steinberg and hence so is $L_\bA$.

\begin{Theorem}\label{t_Stein}
The diagram
\begin{equation}\label{LbA}
\xymatrix{
H^\hd_\ga(Y^\bA) \ar@{->}[rr]^{\Stab_\fC\,\,}\ar[d]_{\Theta_{L_\bA}}&& 
H^\hd_\ga(Y) \ar[d]_{\Theta_{L}} \\
H^\hd_\ga(X^\bA)  \ar@{->}[rr]^{\Stab_\fC} &&  H^\hd_\ga(X)\\
}
\end{equation}
is commutative for every $\fC$. In particular, the Steinberg 
correspondence 
$\Theta_{L_\bA}$ intertwines the $R$-matrices of $X$ and $Y$. 
\end{Theorem}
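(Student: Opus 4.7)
The strategy is to invoke the uniqueness portion of Theorem \ref{stablebasis}. Both sides of the diagram yield operators $H^\hd_\ga(Y^\bA) \to H^\hd_\ga(X)$ defined by convolutions of Lagrangian cycles in $X \times Y^\bA$: the left composition corresponds to $[L] \circ [\cL_\fC^Y]$, the right to $[\cL_\fC^X] \circ [L_\bA]$. The plan is to fix a component $Z' \subset Y^\bA$ together with a class $\gamma \in H^\hd_\ga(Z')$, set $\Gamma = \Stab_\fC(\gamma) \in H^\hd_\ga(Y)$, and verify that $\Theta_L(\Gamma) \in H^\hd_\ga(X)$ satisfies the three defining properties \nui--\nuiii\ of the stable envelope of $\Theta_{L_\bA}(\gamma) \in H^\hd_\ga(X^\bA)$. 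By linearity and the uniqueness in Theorem \ref{stablebasis}, this forces the operator identity $\Theta_L \circ \Stab_\fC = \Stab_\fC \circ \Theta_{L_\bA}$; the intertwining of $R$-matrices then follows by taking $X=Y$.

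For condition \nui, Proposition \ref{p_leafX0} places $\supp \cL_\fC^Y$ inside $Y_+ \times_{V_0} Y^\bA$, and the Steinberg property $L \subset X \times_V Y$ then forces $\supp \Theta_L(\Gamma) \subset X_+ = \pi_X^{-1}(V_{\geq 0})$, i.e.\ inside the union of stable leaves of $X$ lying above the support of $\Theta_{L_\bA}(\gamma)$. For condition \nuii, I would restrict to a component $Z \subset X^\bA$ and compute the top $\bA$-degree contribution. By the residue construction of Section \ref{s_Res}, applied to $L$ inside $X \times Y$ with the antidiagonal symplectic form and the polarization $\bsi = \bsi_X \bar\bsi_Y$, the restriction of $[L]$ to $Z \times Z'$ splits as $\bsi \cdot [L_\bA|_{Z \times Z'}]$ plus terms of strictly smaller $\bA$-degree. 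Combined with the identity $\Gamma|_{Z'} = \pm e(N_-^Y)\, \gamma$ from property \nuii\ for $\Stab_\fC$ on $Y$, the leading term of $\Theta_L(\Gamma)|_Z$ should reduce to $\pm e(N_-^X) \cdot \Theta_{L_\bA}(\gamma)|_Z$, the factor of $e(N_-^Y)$ being absorbed into $\bar\bsi_Y$ through the square-root identity \eqref{def_eps}.

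For condition \nuiii, I would combine the degree bound $\deg_\bA \Gamma|_{Z_1'} < \tfrac12 \codim Z_1'$ for $Z_1' \succeq Z'$, which is property \nuiii\ for $\Stab_\fC$ on $Y$, with the strict $\bA$-degree drop of the non-leading residue terms of $[L]$ from Section \ref{s_Res}; each contribution to $\Theta_L(\Gamma)|_{Z_1}$ for $Z_1 \succeq Z$ in $X^\bA$ acquires a degree deficit from at least one of the two factors, giving the required strict inequality on the composite.

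The main obstacle is condition \nuii: the careful bookkeeping of polarizations and signs. The polarization of the antidiagonal fixed locus in $X \times Y$ decomposes as $\bsi = \bsi_X \bar\bsi_Y$ precisely because the symplectic form on $Y$ enters with the opposite sign, and one must verify that the product of the leading Euler class factor $e(N_-^Y)$ coming from $\Gamma|_{Z'}$ with the $\bar\bsi_Y$ half of the polarization collapses to produce exactly the single factor $\pm e(N_-^X)$ required by property \nuii\ on the $X$-side, with the correct sign dictated by $\bsi_X$. Once this normalization compatibility is settled, the remaining steps are straightforward degree counting and an appeal to uniqueness.
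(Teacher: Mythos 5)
Your strategy — verify the defining properties of stable envelopes for the composite $\Theta_L \circ \Stab_\fC$ and conclude by the uniqueness portion of Theorem \ref{stablebasis} — is a genuinely different route from the paper's. The paper instead defines $L' = \Stab_{-\fC,\bsi_X}^\tau \circ \Theta_L \circ \Stab_{\fC,\bsi_Y}$, observes via Theorem \ref{adjoint} that the diagram would commute after localization if $L' = L_\bA$, shows that the convolution defining $L'$ is proper (using $V_{\geq 0} \cap V_{\leq 0} = V_0$, as in the proof of Theorem \ref{adjoint}), and then evaluates the now non-localized cycle $L'$ by taking the equivariant parameters to infinity of $\fa$, where only the diagonal terms of $\cL^X$ and $\cL^Y$ survive and the polarizations cancel the localization denominators. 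Both arguments ultimately rest on the same $\bA$-degree comparison, but the paper's packaging avoids having to reason separately about all three axioms; it reduces everything to a single pushforward computed at one point of $\fa$.

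Your proposal leaves a nontrivial step essentially unexamined: the restriction computation needed for property \nuii. The class $\Theta_L(\Gamma)|_Z$ is a fiber integral over $Y$ against $\iota_Z^*[L]$, and $\iota_Z^*[L]$ is \emph{not} the residue $\Res_{Z\times Z'} L$ — that residue only controls the further restriction to $Z\times Y^\bA$. Extracting the leading $\bA$-degree term of this pushforward and matching it with $\pm e(N_-^X)\cup \Theta_{L_\bA}(\gamma)|_Z$ is precisely the localization computation the paper performs; you would be re-deriving the same identity under the hood, with the added complication of tracking contributions to $\Theta_L(\Gamma)|_Z$ when $\Theta_{L_\bA}(\gamma)$ is spread across several components of $X^\bA$ (restrictions $\Stab_\fC(\delta_{Z''})|_Z$ for $Z'' > Z$ can also reach degree $\tfrac12\codim Z$ and must cancel term by term). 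These contributions are not controlled by axiom \nuiii, which only bounds restrictions to $Z' \geq Z$. So the reduction to ``the polarizations collapse correctly'' understates what must actually be checked. A smaller point: the intertwining of $R$-matrices is not the $X=Y$ specialization of the diagram; it follows by writing the commutative square for both $\pm\fC$ and taking $R = \Stab_{-\fC}^{-1}\Stab_\fC$, which holds for $X \neq Y$ as well.
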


For solutions of the Yang-Baxter equation, an important 
invariant is their algebra of symmetries, that is, the 
commutant of $R(u)$ for all $u$. Theorem shows it contains
the Steinberg algebra of $X$ for our geometrically
constructed $R$-matrices.

\begin{proof}
We fix one chamber $\fC$ and define 
\begin{equation}
L' = \Stab_{-\fC,\bsi_X}^\tau \circ \, \Theta_L \circ
 \Stab_{\fC,\bsi_Y} \subset X^\bA \times Y^\bA\,. 
\label{e_LbA}
\end{equation}
By Theorem \ref{adjoint}, this makes the diagram \eqref{LbA} 
commute for one particular chamber $\fC$, after tensoring 
with $\Q(\gal)$. 

We claim the pushforward
along $X\times Y$ used in the definition of $L'$ is proper. 
This is shown as in the proof of Theorem \ref{adjoint}. Namely, 
we may assume $V$ is a linear representation of $\bA$. Let 
$$
(x_0,x,y,y_0) \in X^\bA \times X \times Y \times Y^\bA
$$
be such that 
$$
(x,x_0) \in \cL^X_{-\fC}, \quad (x,y)\in L\,, \quad (y,y_0)\in \cL^Y_\fC \,.
$$
It then follows that $x_0,x,y,y_0$ map to the same point 
of $V_0 = V^\bA$, implying the properness. 

Hence $L'$ is 
well-defined as a nonlocalized cycle class. It is 
$\ga$-invariant and Lagrangian, being a composition 
of such classes. It may be computed by equivariant localization with 
an arbitrary choice of equivariant parameters. 

In particular, we may chose the equivariant parameters 
to be at infinity of $\fa$. Taking into account the signs 
in adjoints, we have 
$$
[\cL^Y] = \bsi_Y \, [\Delta_{Y^\bA}] + \dots\,, \quad 
[\cL^X] = \bar{\bsi}_X \, [\Delta_{X^\bA}] + \dots\,, \quad 
$$
where dots stand for terms of smaller $\bA$-degree. Therefore, 
at infinity of $\fa$, only these diagonal terms contribute
and thus $L'$ is supported on $L^\bA$. By our construction,
$$
[L]\big|_{[L^\bA]} = \bsi_X \, \bar{\bsi}_Y \, \Res_{L^\bA} L + \dots \,.
$$
We see that polarizations exactly cancel the denominators in 
localization formula, thus  
$$
L' = L_\bA \,. 
$$ 
Since the original choice of $\fC$ was arbitrary, the theorem 
follows.
\end{proof}

\section{Vacuum matrix elements}
\label{s_vac}

\subsection{}

Let $Z\in\Fix$ be minimal with respect to the 
partial order defined by a chamber $\fC$.

\begin{Theorem}\label{t_vacuum} 
If $Z\in\Fix$ is minimal as above then 
$$
\left(R_{-\fC,\fC} \cdot \gamma_1,\gamma_2\right) = \int_{Z} \gamma_1 
\cup \gamma_2 \cup \frac{e(N_+ \otimes \hbar)}{e(N_+)} \,,
$$
where $N_\pm$ are the stable/unstable subbundles of the normal bundle
to $Z$ and $\gamma_i\in H^\hd_\ga(Z)$. 
\end{Theorem}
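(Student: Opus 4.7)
The plan is to reduce the assertion to the statement $\Stab^\tau_{-\fC}\circ\Stab_\fC = 1$ from Theorem \ref{adjoint} and then perform equivariant localization on $X$, using that $Z$ minimal kills all off-diagonal contributions.

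First, I would use Theorem \ref{adjoint} (applied both to $\fC$ and, symmetrically, to $-\fC$) to obtain the identities
$$\Stab_\fC^{-1} = \Stab_{-\fC}^\tau, \qquad \Stab_{-\fC}^{-1} = \Stab_\fC^\tau$$
in localized equivariant cohomology. This gives the rewriting
$$R_{-\fC,\fC}  = \Stab_{-\fC}^{-1}\circ \Stab_\fC = \Stab_\fC^\tau \circ \Stab_\fC,$$
so that
$$(R_{-\fC,\fC}\gamma_1,\gamma_2)_{X^\bA} = (\Stab_\fC \gamma_1,\, \Stab_\fC\gamma_2)_X,$$
by the very definition of the $\tau$-adjoint. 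At this point the problem has been transferred from $X^\bA$ to $X$.

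Next I would compute the right-hand side by equivariant localization along $X^\bA\hookrightarrow X$. In general this is a sum over all $Z'\in\Fix$, but the hypothesis that $Z$ is minimal collapses it to a single term: by property \nui\ of Theorem \ref{stablebasis}, $\Stab_\fC\gamma_i$ is supported on $\Branch(Z) = \bigsqcup_{Z'\preceq Z}\Leaf_\fC(Z')$, and minimality forces this to equal $\Leaf_\fC(Z)$, whose closure meets $X^\bA$ only in $Z$. Thus $\Stab_\fC\gamma_i|_{Z'}=0$ for $Z'\ne Z$, and only the $Z$-contribution survives,
$$(\Stab_\fC \gamma_1,\Stab_\fC\gamma_2)_X = \int_Z \frac{\Stab_\fC\gamma_1|_Z \cup \Stab_\fC\gamma_2|_Z}{e(N_Z)}.$$
Property \nuii\ gives $\Stab_\fC\gamma_i|_Z = \pm e(N_-)\cup \gamma_i$ with the same sign, so the signs square to $+1$, and using $e(N_Z)=e(N_+)\,e(N_-)$ the integrand collapses to $\gamma_1\cup\gamma_2\cup e(N_-)/e(N_+)$.

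Finally, I would invoke the symplectic duality \eqref{dualN}, $N_+^\vee = N_-\otimes\hbar$, i.e.\ $N_- = N_+^\vee\otimes\hbar^{-1}$. Writing $N_+$ in terms of Chern roots $\alpha_i$ gives $e(N_-) = \prod(-\alpha_i-\hbar) = (-1)^{\rk N_-}e(N_+\otimes\hbar)$, producing a sign $(-1)^{\codim Z/2}$. The main obstacle, and really the only delicate point, is verifying that this sign cancels against the sign built into the $\tau$-adjoint convention of Section \ref{s_signs_adj} (where moving a factor of $X$ to a factor of $X^\bA$ across $\tau$ contributes $(-1)^{(\dim X-\dim X^\bA)/2}=(-1)^{\codim Z/2}$). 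Once this bookkeeping is carried out, the result becomes precisely $\int_Z \gamma_1\cup\gamma_2\cup e(N_+\otimes\hbar)/e(N_+)$, proving the theorem.
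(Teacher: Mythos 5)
Your proposal is correct and follows essentially the same route as the paper's proof: both invoke Theorem~\ref{adjoint} to express $R_{-\fC,\fC}$ as $\Stab_\fC^\tau\circ\Stab_\fC$, then localize, use minimality of $Z$ so that $\Leaf_\fC(Z)$ is closed and its closure meets $X^\bA$ only in $Z$, and finally check that the sign $(-1)^{\codim Z/2}$ from the $\tau$-adjoint convention cancels the sign $(-1)^{\codim Z/2}$ arising when one rewrites $e(N_-)=(-1)^{\codim Z/2}\,e(N_+\otimes\hbar)$ using $N_+^\vee = N_-\otimes\hbar$. The only stylistic difference is that you phrase the localization directly in terms of restrictions of stable envelopes, whereas the paper phrases the same calculation in terms of the Lagrangian cycle $\cL_\fC$ and the closure of $\Leaf_\fC(\Delta_Z)$.
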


\noindent
In other words, the corresponding matrix elements of $R_{-\fC,\fC}$ equal 
the operator of classical multiplication by the class 
$$
\frac{e(N_+ \otimes \hbar)}{e(N_+)} = 
\frac{e(N_-)}{e(N_- \otimes \hbar)} \in H^\hd_\ga(Z)_{\textup{localized}}
$$

\begin{proof}
We use Theorem \ref{adjoint} and equivariant localization. By 
minimality of $Z$, the attracting set 
$$
\LAttr_\fC\left(\Delta_Z\right) \subset X \times Z
$$
is closed and hence is 
the relevant component of $\cL_\fC$. Further, $Z\times Z$ is 
the only component of $X^\bA \times Z$ that this attracting 
set intersects. The 
localization contributions give 
$$
(-1)^{\codim(Z)/2} \,\, \frac{e(N_-)^2}{e(N_Z)}  = 
\frac{e(N_+ \otimes \hbar)}{e(N_+)} \,. 
$$
\end{proof}

\subsection{}

Here $e(N_\pm)$ are equivariant Euler classes, in the sense that 
they account for the nontrivial action of $\bA$ on $e(N_\pm)$. 
Since $\bA$ acts trivially on the base $Z$, we may expand 
$e(N_\pm)$ in the characteristic classes of the same bundles
with trivial $\bA$-linearization. 

For example, if $\bA = \C^\times$ and it acts on 
$N_+$ by its defining representation then 
\begin{multline}
  \frac{e(N_-)}{e(N_- \otimes \hbar)} =
1 + \frac{\hbar}{u} \, \rk N_- + \\
+ \frac{\hbar}{u^2} 
\left(c_1(N_-) + \frac{\hbar}{2} \rk N_- (\rk N_-+1)\right)
+ O\left(\frac1{u^3}\right)\label{eN} \,, 
\end{multline}
where $u\in\fa^*$ is the weight of the defining representation.

\subsection{}\label{s_Nak_vac}

For example, consider the tensor product of Nakajima varieties
as in Example \ref{ex3} in Section \ref{s_ample_ord}. 
If $\theta>0$ then the minimal component in \eqref{Mtens2} is 
\begin{equation}
Z_\emptyset=\cM_{\theta,\zeta}(\bv,\bw) \hookrightarrow  
\cM_{\theta,\zeta}(\bv,\bw+\bw') \,,
\label{Zvac}
\end{equation}
which corresponds to 
$$
\eta= 0 
$$
in \eqref{Mtens2}. By formula \eqref{charN_}, we have
\begin{equation}
  N_-\Big|_{Z_\emptyset} = \bigoplus \cV_i^{\oplus \bw_i} \,. 
\label{N_tens}
\end{equation}
Recall that $\cM_{\theta,\zeta}(0,\bw)$ is a point.

\subsection{}

In particular, for moduli spaces of framed sheaves, this embedding 
takes the form 
$$
\cM(r'') \owns \cF \mapsto \cO^{r} \oplus \cF \in \cM(r+r'') \,. 
$$
Its normal bundle is 
$$
N_-=\Ext^1_{\Pp^2} (\cO^{r},\cF(-1)) = H^1_{\Pp^2} (\cF(-1))^{\oplus r} \,. 
$$
The bundle 
$$
\Taut = \cV_1 = H^1_{\Pp^2} (\cF(-1))
$$
is the tautological bundle on the moduli spaces of framed sheaves.

Theorem \ref{t_vacuum}, combined with 
\eqref{RRR}, gives an $R$-matrix formula for the operators 
of classical multiplication by characteristic classes of $N_-$. 
We will revisit this point below. 

\subsection{}\label{s_true_vac} 

For general $\theta$, the component \eqref{Zvac} is not 
minimal. We therefore adopt the following terminology. 

For all $\theta$, we will call $Z_\emptyset$ the 
\emph{vacuum} or the lowest weight component. We will 
call the minimal component the \emph{true vacuum} 
component. For Nakajima varieties it coincides with $Z_\emptyset$
if $\theta>0$. 

When the vacuum $Z_\emptyset$ is not the true vacuum, 
the relation between the vacuum matrix elements of 
the $R$-matrix and the operators of classical multiplication 
becomes more complicated. It will be explored in Section 
\ref{diag_R}.

\section{Classical $R$-matrices}

\subsection{}

In this section, we assume that $X$ is a symplectic resolution. 
Recall the root $R$-matrices and the subtori $\bA_\alpha$ 
introduced in Section \ref{s_root_R}. 
{}From Propositions \ref{p_1u} and \ref{p_1h}, it follows 
that 
\begin{equation}
R_\alpha = 1  + \frac{\hbar}{\alpha} \, r_\alpha + O(\alpha^{-2}) \,, 
\label{def_br}
\end{equation}
for a certain operator 
$$
r_\alpha \in \End(H^\hd_\ga(X^\bA)) \,. 
$$ 

\begin{Definition}
The operator $r_\alpha$ is called the \emph{classical} $R$-matrix.
\end{Definition}

\noindent 
 Note that $r_\alpha$  
does not depend on a choice of a splitting \eqref{split}.

\begin{Proposition}\label{class_Stein}
There is a Steinberg correspondence
$\br_\alpha\subset X^\bA \times X^\bA$ that defines
the operator $r_\alpha$. 
\end{Proposition}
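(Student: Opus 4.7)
The plan is to realize $r_\alpha$ as the leading $\hbar/\alpha$ coefficient of a Steinberg correspondence obtained from Theorem \ref{adjoint}.

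First, I would use Theorem \ref{adjoint}, applied with the roles of $\fC$ and $-\fC$ exchanged, to obtain $\Stab_\fC^\tau \circ \Stab_{-\fC} = \mathrm{id}$ in localized equivariant cohomology, which identifies $\Stab_{-\fC}^{-1} = \Stab_\fC^\tau$. Consequently
\[
R_\alpha \; = \; \Stab_{-\fC}^{-1} \circ \Stab_\fC \; = \; \Stab_\fC^\tau \circ \Stab_\fC,
\]
realizing $R_\alpha$ as the operator attached to the convolution of Lagrangian cycles $\cL_\fC^\tau \circ \cL_\fC$ on $X^\bA \times X^\bA$. By Proposition \ref{p_leafX0}, $\cL_\fC \subset X_+ \times_{X_0} X^\bA$, so this convolution is a Steinberg correspondence with respect to the common affine target $X_0$ through the middle $X$-factor.

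Next, I would decompose this convolution by $\Fix \times \Fix$. The diagonal summands $Z \times Z$ produce the leading $1$ in the expansion $R_\alpha = 1 + (\hbar/\alpha)\, r_\alpha + O(\alpha^{-2})$. For off-diagonal pairs $Z_1 \times Z_2$ with $Z_1 \ne Z_2$, Theorem \ref{t_strong_vanish} asserts that the restriction of $[\cL_\fC]$ to $Z_1 \times Z_2$ is already divisible by $\hbar$; this divisibility is inherited by the convolution, so the off-diagonal matrix elements of $R_\alpha$ lie in $\hbar \cdot H^\hd_\ga(Z_1 \times Z_2) \otimes \Q(\alpha)$. Combined with Proposition \ref{p_1u}, this establishes the $\hbar\alpha^{-1}$ structure of the off-diagonal part and identifies $\hbar\, r_\alpha$ with the residue at $\alpha = \infty$ of $R_\alpha$.

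Finally, I would define $\br_\alpha$ as the Lagrangian cycle underlying this leading coefficient. Concretely, $\br_\alpha$ is the cycle class obtained as the $\alpha^{-1}$-residue of the off-diagonal part of $[\cL_\fC^\tau \circ \cL_\fC]$, divided by $\hbar$. Its support lies in the intersections of the convolution with off-diagonal strata $Z_1 \times Z_2$, which are Lagrangian and Steinberg: by Proposition \ref{Stein_fiber}, the $\bA$-fixed loci of the deformed leaf $\widetilde{\cL}_\fC$ have Steinberg components, and these specialize at the central fiber to give the support of $\br_\alpha$, which thus inherits the Steinberg structure with respect to $X_0$.

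The main obstacle will be justifying that the residue extraction really produces a genuine Lagrangian cycle rather than a merely rational-in-$\alpha$ combination. The convolution $\cL_\fC^\tau \circ \cL_\fC$ is only strictly defined after equivariant localization on the middle $X$ (unlike the situation of Theorem \ref{adjoint}, where the bounded intersection $V_{\geq 0} \cap V_{\leq 0} = V_0$ allowed a non-localized construction; here one faces the unbounded $V_{\geq 0} \cap V_{\geq 0} = V_{\geq 0}$), and the resulting rational dependence on $\alpha$ arises through equivariant Euler classes $e(T_Z X)$ at intermediate fixed components. I would address this by computing the convolution stratum-by-stratum on $Z_1 \times Z_2$ via localization, then checking that, after taking the $1/\alpha$-residue, the contributions from intermediate $\bA$-fixed components combine into a class carried by an honest Lagrangian subvariety with a proper map to $X_0$, yielding the desired Steinberg cycle $\br_\alpha$.
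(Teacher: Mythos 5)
Your proposal follows the same overall strategy as the paper's proof and cites the same supporting results: the identity $R_\alpha = \Stab^\tau \circ \Stab$ via Theorem \ref{adjoint}, $\hbar$-divisibility of off-diagonal restrictions via Theorem \ref{t_strong_vanish}, and the Steinberg nature of the $\bA$-fixed loci of $\widetilde{\cL}_\fC$ via Proposition \ref{Stein_fiber}. You also correctly recognize that the convolution is not proper and must be defined through localization. However, there is a genuine gap at the final step, and you flag it yourself as ``the main obstacle'' without resolving it.

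The missing argument is the mechanism by which the $1/\alpha$-residue of the localized convolution yields a cycle class with $\alpha$-free coefficients, rather than a rational-in-$\alpha$ combination. Your sketch describes the desired outcome but not how to obtain it. The paper supplies the mechanism by being more precise in two places. First, it makes the one-parameter reduction explicit: it works with $\Stab_{>0}: H^\hd_\ga(X^\bA) \to H^\hd_\ga(X^\alpha)$ for the $\bA/\bA_\alpha$-action on $X^\alpha$, so the entire computation is localization with respect to a single equivariant parameter $\alpha$, and $\alpha$-degree counting becomes unambiguous. Your proposal stays on $X$ with the pair $\fC,-\fC\subset\fa$, which leaves the degree accounting over a multidimensional $\fa$. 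Second, the paper extracts more from Proposition \ref{Stein_fiber} than you do: not merely that the support is Steinberg, but that the leading $\alpha$-coefficient of $\gamma_{\textup{off-diag}}\big|_{Z\times Z'}$ is literally $\alpha^{\frac12\codim Z-1}\,[C_{Z,Z'}]$ for a Steinberg cycle $C_{Z,Z'}$, with no further $\alpha$-dependence in the leading term. With this in hand, the degree count in $\Stab^\tau_{>0}\circ\Stab_{>0}$ shows that terms quadratic in $\gamma_{\textup{off-diag}}$ carry $\hbar^2$ and strictly smaller $\alpha$-degree so cannot contribute to $r_\alpha$, while in the linear terms the diagonal factor's $\pm e(N_-)$ cancels against the localization denominator $e(N_{Z'})$ up to the scalar $\alpha^{\frac12\codim}$, leaving exactly $(\hbar/\alpha)\,[C_{Z,Z'}]$. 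This explicit cancellation, which your proposal neither states nor derives, is what upgrades the statement from ``$r_\alpha$ has Steinberg support'' to ``$r_\alpha$ is given by a Steinberg correspondence.''
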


\begin{proof} 
Let 
$$
\Stab_{>0}: H^\hd_\ga(X^\bA) \to H^\hd_\ga(X^\alpha), 
$$
the map corresponding to the chamber $\alpha > 0$. 
By Theorem \ref{adjoint}, 
$$
R_\alpha = \Stab^\tau_{>0}\circ \Stab_{>0} \,. 
$$
We compute this push-forward by 
$(\bA/\bA_\alpha)$-equivariant localization.  {}From Theorem 
\ref{t_strong_vanish}, we can write 
$$
\left[\Stab_{>0}\right]|_{X^\bA \times X^\bA} = 
\gamma_\textup{diag} + \hbar \, \gamma_\textup{off-diag}\,. 
$$
Further, by Proposition \ref{Stein_fiber},
$$
\gamma_\textup{off-diag}\Big|_{Z \times Z'} =
\alpha^{\frac12 \codim Z-1} \, [C_{Z,Z'}] + \dots
$$
for a certain Steinberg cycle $C_{Z,Z'} \subset Z \times Z'$. 
Here codimension is computed in $X^\alpha$ and dots stand 
for terms of smaller degree in $\alpha$. 

It follows that the quadratic in 
$\gamma_\textup{off-diag}$ term doesn't contribute
to $r_\alpha$, while 
terms linear in $\gamma_\textup{off-diag}$ contribute a 
Steinberg correspondence.  Same is obviously true for the 
diagonal term. 
\end{proof}

\subsection{}

Note from the proof of Proposition \ref{class_Stein} 
\begin{equation}
\br_\alpha = 
\Big(\sum_{k \in \Q_{>0}} \frac{\rk N^{[k \alpha]}}{k}\Big) \, \Delta 
+ \textup{off-diagonal} \,, 
\label{diag_br}
\end{equation}
where $N^{[k\alpha]}$ is the $\bA$-weight space of the normal 
bundle to $X^\bA$ with weight $k\alpha$. This is because 
the diagonal terms only occurs from the diagonal terms in 
the localization formula, that is, from the expansion of
$$
(-1)^{\frac12 \codim} \frac{e(N_-^\alpha)^2}
{e(N_-^\alpha) \, e(N_+^\alpha)}
= \frac{e(N_+^\alpha \otimes \hbar)}{e(N_+^\alpha)}\,,
$$
as in the proof of Theorem \ref{t_vacuum}. Here 
the codimension and the normal 
bundles are taken in $X^\alpha$.

\subsection{}\label{s_diag_r_Nak} 

In particular, for tensor product of Nakajima varieties 
the normal bundle to the fixed locus is identified in \eqref{charN_}. 
{}From \eqref{diag_br}, we can then identify the diagonal part 
of the classical $R$-matrix 
\begin{equation}
  \label{br_diag_Nak}
  \br_\textup{diag} = \sum \bw_i \otimes \bv_i + \sum \bv_i \otimes \bw_i - 
\sum \bC_{ij} \, \bv_i \otimes \bv_j \,. 
\end{equation}
Here $\bv_i$ denotes the operator of multiplication by $\bv_i\in \N$ and 
so on. 

\subsection{}

The classical $R$-matrices satisfy 
a classical version of the braid relation. Concretely, 
the terms of degree $-2$ in $a_1,a_2,a_3$ in the 
expansion of \eqref{YB} as $a_i - a_j \to \infty$ give
\begin{align}
  \left[\br_{12},\br_{13}+\br_{23}\right] & = 0  \notag \\
  \left[\br_{23},\br_{12}+\br_{13}\right] & = 0 \label{cYB1} \,,
\end{align}
which is equivalent to the equation 
\begin{equation}
  \left[\bbr_{12},\bbr_{13}\right]+
  \left[\bbr_{12},\bbr_{23}\right]+
  \left[\bbr_{13},\bbr_{23}\right] = 0\,, \quad 
\bbr_{ij} = \frac{\br_{ij}}{a_i-a_j} \,. 
\label{cYB} 
\end{equation}
This is know as  the \emph{classical} 
Yang-Baxter equation with spectral parameter, see 
e.g.\ Section 6.3 in \cite{ES}. 

For brevity, we call $\br$ and not $\bbr$, which contains
the exact same information, the classical $R$-matrix. In 
the conventional terminology \cite{ES}, $\bbr$ is known 
as the classical $R$-matrix for the Yangian.

\subsection{}\label{s_superpolytope} 

Our next goal is to show that the off-diagonal terms in 
\eqref{diag_br} are additive over the coroot hyperplanes of 
the symplectic resolution $X$. This additivity is best 
stated in the following language. 

Define a map 
$$
\bmu: \Fix \to \Pic(X)^* \otimes \fa^*
$$
as follows. Fix an $\bA$-linearization for a basis 
$D_1,D_2,\dots$ of $\Pic(X)$ modulo torsion and let
$$
\bmu(Z)(D) \in \fa^*
$$
be the character of $\bA$-action in $D\big|_Z$. 
If $D$ is ample, this is the moment map for the 
corresponding Fubini-Study symplectic $(1,1)$-form. 

A different choice of the linearization changes 
$\bmu$ by a translation. In particular the difference
$$
\bmu(Z) - \bmu(Z') \in \Pic(X)^* \otimes \fa^*
$$
is defined uniquely. If $C$ is an irreducible $\bA$-invariant 
curve joining $Z$ and $Z'$ then by localization 
\begin{equation}
\bmu(Z) - \bmu(Z') = [C] \otimes \textup{weight}(T_p C)\,,\quad p=C \cap Z \,.
\label{diff_bmu}
\end{equation}
Here $[C] \in H_2(X,\Z)$ defines an element of $\Pic(X)^*$ via
the natural pairing
$$
(C,D) = \deg D|_C \,.
$$

\subsection{}

Let $\beta \in H_2(X,\Z)$ be an effective class such that $\beta^\perp$ is a coroot hyperplane of $X$ and let 
$X_\beta$ be the general fiber over 
the coroot hyperplane $\beta^\perp$ in \eqref{deform}. 

For any root $\alpha$, $X_\beta$ has its own classical $R$-matrix
$\br_\alpha(X_\beta)$. The closure of $\br_\alpha(X_\beta)$ defines 
a Steinberg correspondence $\br_{\alpha,\beta}$ in 
the fibers of 
\begin{equation}
\label{deform_beta}
\xymatrix{
X^\bA \times X^\bA \ar@{^{(}->}[rr]\ar[d]&&
(\tX_\beta)^\bA \times (\tX_\beta)^\bA \ar[d] \\
0 \ar@{^{(}->}[rr] &&  \beta^\perp\,.  \\
}
\end{equation}
Here $\tX_\beta$ is the restriction of the universal 
deformation $\tX$ to the hyperplane $\beta^\perp$. We 
have the following 

\begin{Theorem}
Let $Z,Z'$ be two different components of $X^\bA$. If
$$
\bmu(Z)-\bmu(Z') \in \Q \, \beta \otimes \alpha 
$$
for some $\beta$ such that $\beta^\perp$ is a coroot hyperplane 
then 
$$
\br_\alpha\big|_{Z'\times Z} = \br_{\alpha,\beta}\big|_{Z'\times Z} \,. 
$$
Otherwise, $\br_\alpha\big|_{Z'\times Z}$ is empty. 
\end{Theorem}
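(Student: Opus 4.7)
The plan is to use the universal deformation $\tX \to B$ of Section \ref{s_sympl_res}. The stable envelope construction, and hence the root $R$-matrix $R_\alpha$, extends over $B$ to a family of stable envelopes on $\tX$; extracting the coefficient of $\hbar/\alpha$ in the expansion \eqref{def_br} produces a Steinberg cycle class $\widetilde{\br}_\alpha$ on $\tX^\bA \times_B \tX^\bA$ whose restriction to the central fiber $b=0$ is $\br_\alpha$ and whose restriction to a general $b \in \beta^\perp$ is the classical $R$-matrix $\br_\alpha(\tX_\beta)$.

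The first step is to localize the off-diagonal contributions of $\widetilde{\br}_\alpha$ to the coroot hyperplanes. At every $b \in B^\circ$ the fiber $X_b$ is affine, hence $X_b^\bA$ is a disjoint union of affine fixed components supporting no nonconstant $\bA$-invariant rational curves. By Proposition \ref{Stein_fiber} and the localization description leading to \eqref{diag_br}, the off-diagonal part of $\widetilde{\br}_\alpha$ arises only from $\bA$-invariant curves joining distinct components of $\tX^\bA$ fiberwise, so it must be supported over $B \setminus B^\circ$. Writing the off-diagonal part as a sum $\sum_\beta \mathcal{K}_\beta$, where $\mathcal{K}_\beta$ is supported over the coroot hyperplane $\beta^\perp$, and restricting to $b=0$ gives the corresponding decomposition of the off-diagonal part of $\br_\alpha$.

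Next I would derive the moment map condition. Suppose an irreducible component of $\mathcal{K}_\beta$ meets the cell $Z' \times Z$ with $Z \ne Z'$. At a general $b \in \beta^\perp$ this component contains an irreducible $\bA$-invariant curve $C \subset X_b$ joining $Z'$ to $Z$. By the definition of a coroot hyperplane, every effective curve class on $X_b$ is a positive rational multiple of $\beta$, so $[C]=k\beta$. On the other hand, $\br_\alpha$ arises from crossing only the single wall $\alpha=0$, which forces the $\bA$-weight of $T_p C$ at a fixed point $p \in C^\bA$ to lie in $\Z\alpha$. Formula \eqref{diff_bmu} then gives $\bmu(Z) - \bmu(Z') \in \Q\,\beta \otimes \alpha$, which is the stated condition. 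When no such $\beta$ exists, every $\mathcal{K}_\beta$ has empty restriction to $Z' \times Z$, so $\br_\alpha|_{Z'\times Z}$ is itself empty.

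When the condition does hold, the direction $\beta$ is determined uniquely up to positive scalar by $\bmu(Z)-\bmu(Z')$, so at most one $\mathcal{K}_\beta$ contributes to the cell $Z' \times Z$. By construction $\mathcal{K}_\beta$ restricts to $\br_\alpha(\tX_\beta)|_{Z'\times Z}$ at a general $b \in \beta^\perp$, and $\br_{\alpha,\beta}|_{Z'\times Z}$ is by definition the specialization to $b=0$ of the closure of precisely this restriction. Hence $\br_\alpha|_{Z'\times Z} = \br_{\alpha,\beta}|_{Z'\times Z}$. The main obstacle is verifying that the cycle-theoretic specialization of $\mathcal{K}_\beta$ along a transversal direction to $\beta^\perp$ coincides with the closure used to define $\br_{\alpha,\beta}$, with no excess multiplicity; this should reduce to the flatness of $\mathcal{K}_\beta$ over a transversal slice through $\beta^\perp$ near $Z' \times Z$, which in turn follows from its Steinberg structure together with the moment map rigidity already established.
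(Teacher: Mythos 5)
Your approach follows the paper's proof quite closely in structure: both pass to the universal deformation $\tX\to B$, both localize the off-diagonal contributions to coroot hyperplanes by using that the fibers over $B^\circ$ are affine, and both derive the moment-map condition $\bmu(Z)-\bmu(Z')\in\Q\,\beta\otimes\alpha$ by comparing the $\alpha$-direction forced by the root $R$-matrix with the $\beta$-direction forced by the generic fiber over $\beta^\perp$ via \eqref{diff_bmu}. The genuine divergence — and the gap — is in your last paragraph.

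The obstacle you identify is real, but the resolution you sketch (flatness of $\mathcal{K}_\beta$ from its Steinberg structure plus moment-map rigidity) is not correct and cannot work. When one restricts the universal Lagrangian $\widetilde{\cL}$ to $\widetilde{Z}'\times\widetilde{Z}$ — the paper's $\iota_3^*\widetilde{\cL}$ in \eqref{iota3cL} — there \emph{are} genuine extra terms supported only over the origin of $B$, i.e.\ in the image of $\iota_{2*}$; the family is not flat near $b=0$, and the moment-map constraint says nothing about such purely vertical components. Your emptiness argument (the ``otherwise'' direction) has the same gap, since a vertical component could hit $Z'\times Z$ even when no $\mathcal{K}_\beta$ dominates a coroot hyperplane with the right direction. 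The paper's actual mechanism is a degree count rather than flatness: after reducing to $\dim B=2$, the inclusion $\iota_2: Z'\times Z \hookrightarrow \widetilde{Z}'\times\widetilde{Z}$ has $\iota_2^*\circ\iota_{2*}=$ multiplication by $\hbar^2$, so the vertical terms contribute $O(\hbar^2)$ once restricted back to $Z'\times Z$ and therefore drop out of the classical $\br_\alpha$, which is by definition the $\hbar^1$ coefficient in the expansion \eqref{def_br}. It is this $\hbar$-degree bookkeeping, not flatness of the cycle, that isolates the term $\sum f_i(a)\,\iota_{1*}[L_i]$ supported on $\widetilde{Z}'_\beta\times\widetilde{Z}_\beta$ and identifies it with $\br_{\alpha,\beta}$. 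You should replace the flatness step with this degree argument; the rest of your proposal is sound.
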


\begin{proof}
We first note that for $\br_\alpha$ to be nonempty, $Z$ and $Z'$ 
must lie in the same component of $X^\alpha$. Therefore, there
must exist a chain of $\bA$-invariant rational curves 
with tangent weights proportional to $\alpha$ that joins $Z$ and $Z'$.
{}From \eqref{diff_bmu}, we conclude 
$$
\bmu(Z)-\bmu(Z') = \gamma \otimes \alpha 
$$
for some $\gamma\in H_2(X,\Q)$. 

To simplify the 
notation, we will assume that
$$
\dim B = 2 \,. 
$$
If $\dim B > 2$, we can 
pick a general 2-plane in the base $B$ of the universal 
deformation and restrict $\tX$ to it.

We denote by 
$$
\tX \times (\tX)^\bA \xleftarrow{\,\,\iota_3}
\widetilde{Z}'\times \widetilde{Z}  \xleftarrow{\,\,\iota_{2}} 
Z'\times Z
$$
the inclusion of an $\bA$-fixed component and the fiber 
over the origin $0\in B$, respectively. Recall that 
$\phi$ denotes the projection to $B$.  We claim 
\begin{equation}
\supp \, \iota_3^* \, \widetilde{\cL}
\subset 
\begin{cases}
\phi^{-1}\left(\gamma^\perp\right)\,, 
& \textup{$\gamma^\perp$ is a coroot hyperplane}\,,\\
\phi^{-1}(0)\,, & \textup{otherwise} \,, 
\end{cases}
\label{supp_iota3}
\end{equation}
where $\widetilde{\cL}$ is an in Section \ref{s_sympl_res}. 

Indeed over a general point of a divisor $\beta^\perp \subset B$, 
$\beta$ is the only effective cycle in $H_2(X)$. For the 
support to be nonempty, there must be a chain of curves of class 
$\beta$ joining $Z$ and $Z'$, whence 
$$
\bmu(Z)-\bmu(Z') = \beta \otimes \delta
$$
for some $\delta\in \fa^*$. This implies $\gamma\in \Q \beta$ and 
and $\delta\in\Q\alpha$. 

We can factor the inclusion $\iota_2$ as follows 
$$
\widetilde{Z}'\times \widetilde{Z}  \xleftarrow{\,\,\iota_{1}} 
\widetilde{Z}'_\beta\times \widetilde{Z}_\beta  \xleftarrow{\,\,\iota_{0}} 
Z'\times Z\,, 
$$
where $\widetilde{Z}_\beta$ denotes the restriction of 
$\widetilde{Z}$ to the divisor 
$\beta^\perp\subset B$. {}From \eqref{supp_iota3}, we conclude
\begin{equation}
\iota_3^* \,\widetilde{\cL}  = \sum f_i(a) \, {\iota_1}_* [L_i] + \dots \,,
\quad \deg_\bA f_i(a) = \tfrac12 \codim Z' - 1 \,,
\label{iota3cL}
\end{equation}
where 
$$
L_i \subset \widetilde{Z}'_\beta\times \widetilde{Z}_\beta
$$
are certain Steinberg correspondences and 
dots stand for classes that are either of smaller 
$\bA$-degree or in the image of ${\iota_2}_*$. 
Note that 
$$
\iota_2^* \circ {\iota_2}* = \textup{multiplication by $\hbar^2$}, 
$$
and therefore the dots in \eqref{iota3cL} do not contribute to 
classical $R$-matrices. By contrast, the leading term in 
\eqref{iota3cL} is what goes into the correspondence $\br_{\alpha,\beta}$. 
This concludes the proof. 
\end{proof}

\section{Diagonal matrix elements of $R$-matrices}\label{diag_R} 

\subsection{}

To simplify notation, we assume that $\bA\cong \C^\times$ 
and that the cocharacter $\sigma\in\fC$ gives this isomorphism. 
Let $\lambda\in \Pic(X)$ be ample and we linearize it so
that its weight is trivial on the vacuum components $Z_\emptyset$. 
We label all other components $Z_k$ of $X^\bA$ by a nonnegative
integer $k$ --- the weight of $\lambda$. 

By construction, our $R$-matrix comes with a 
a block Gauss decomposition of the form 
\begin{equation}
R = 
\begin{pmatrix}
U_{00} \\
U_{10} & U_{11} \\
U_{20} & U_{21} & U_{22} \\ 
& \ddots & \ddots & \ddots 
\end{pmatrix}^{-1} \, 
\begin{pmatrix}
S_{00} & S_{01} & S_{02} \\
 & S_{11} & S_{12} & \\
 &  & S_{22} & \\ 
&  &  & \ddots 
\end{pmatrix}\,,
\label{RGauss}
\end{equation}
where the blocks are indexed as above and 
$$
S,U: H^\hd_\ga(X^\bA) \to H^\hd_\ga(X^\bA) 
$$
is given by 
$$
S,U = \pm u^{-\codim/2} \, \Res \circ \Stab_{\pm \fC}\,,
$$
according to polarization, where 
$$
\Res: H^\hd_\ga(X) \to H^\hd_\ga(X^\bA) 
$$ 
is the restriction map. With this normalization 
\begin{equation}
S_{ij} = \delta_{ij} + O(u^{-1}) \,, \quad u\to\infty \,,
\label{Snorm}
\end{equation}
and similarly for $U_{ij}$. 

\subsection{}

Note that \eqref{RGauss} implies 
\begin{equation}
R_{00} = U_{00}^{-1} S_{00} 
\label{R00}
\end{equation}
which is the content of Theorem \ref{t_vacuum}. The proof of 
Theorem \ref{t_vacuum} shows 
\begin{equation}
  \label{USdiag}
  \begin{pmatrix}
    U_{00}^{-1} S_{00} \\
    & U_{11}^{-1} S_{11} \\
    && U_{22}^{-1} S_{22} \\
    &&& \ddots 
  \end{pmatrix}
= \frac{e(N_-)}{e(N_- \otimes \hbar)} \, \cup 
\end{equation}
as operator on $H^\hd_\ga (X^\bA)$, where $N_-$ is the unstable 
part of the normal bundle. 

\subsection{}

Similarly to \eqref{R00}, one  
computes, for example 
$$
R_{11} = U_{11}^{-1} S_{11} + R_{10} \, S_{00}^{-1} \, U_{00} \, R_{01} \,. 
$$
In general, the diagonal matrix elements $R_{kk}$ may be computed 
as follows. Define 
$$
\widetilde{U}_{ij} = U_{ii}^{-1} \, U_{ij}  
$$ 
and equate the $(k,i)$ matrix elements in 
$$
U \, R = S \,. 
$$
For $i=0,\dots,k-1$, we get the following system of block 
matrix equations 
\begin{equation}\label{tURS} 
\begin{pmatrix}
\widetilde{U}_{k0} & \dots  & \widetilde{U}_{k,k-1} 
\end{pmatrix}
\bigboxvoid 
=
-\begin{pmatrix}
R_{k,0} & \dots & R_{k,k-1}
\end{pmatrix}
\,, 
\end{equation}
where 
\begin{equation}
  \label{boxmat}
  \bigboxvoid  = 
\begin{pmatrix}
R_{00} & \dots   & R_{0,k-1} \\
\vdots &  & \vdots \\
R_{k-1,0} & \dots & R_{k-1,k-1}  
\end{pmatrix} \,,
\end{equation} 
while for $i=k$, we obtain 
$$
R_{kk} = U_{kk}^{-1} S_{kk}  - 
\begin{pmatrix}
\widetilde{U}_{k0} & \dots  & \widetilde{U}_{k,k-1} 
\end{pmatrix} 
\begin{pmatrix}
R_{0,k} \\ \vdots \\ R_{k-1,k} 
\end{pmatrix} \,. 
$$
Since
$$
\bigboxvoid  = 1+O(u^{-1})\,,
$$
the square matrix \eqref{boxmat} is invertible as a series in
$u^{-1}$.  
This proves the
following

\begin{Theorem}\label{t_Gauss} We have 
$$
R_{kk} =  \left. 
\frac{e(N_-)}{e(N_- \otimes \hbar)}\right|_{Z_k} + \dots 
$$
where dots stand for a universal noncommutative 
expression in the coefficients of the $1/u$-expansion of $R_{ij}$, $R_{ji}$,
$i<j\le k$, and of the operators 
$$
\left. 
\left(\frac{e(N_-)}{e(N_- \otimes \hbar)}\right)^{\pm 1} 
\right|_{Z_i}\,, 
\quad i < k \,. 
$$
These corrections are found from 
\begin{equation}
  \label{RtUR}
 R_{kk} = U_{kk}^{-1} S_{kk}  + \begin{pmatrix}
R_{k,0} & \dots & R_{k,k-1} 
\end{pmatrix} 
\, 
{\bigboxvoid}^{-1}  
\,
\begin{pmatrix}
R_{0,k} \\ \vdots \\ R_{k-1,k} 
\end{pmatrix} \,. 
\end{equation}
\end{Theorem}

\subsection{}

In particular, Theorem \ref{t_Gauss} gives a way to 
relate operators of classical multiplication to vacuum 
matrix elements of $R$-matrices in the case then the 
vacuum is not the true vacuum in the sense of Section 
\ref{s_true_vac}. 

\subsection{}
The relationship in Theorem \ref{t_Gauss} simplifies for 
operators of small cohomological degree because they 
appear in small coefficients of the $1/u$-expansion.  For example, 
from 
$$
R_{ij} = O(u^{-1}) \,, \quad  i\ne j\,, 
$$
we conclude the following

\begin{Proposition}\label{Gauss_u2} 
$$
U_{kk}^{-1} S_{kk}  = R_{kk} - \sum_{i<k} R_{ki} \, R_{ik}  + 
O(u^{-3}) \,. 
$$
\end{Proposition}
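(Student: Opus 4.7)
The plan is to exploit the recursions \eqref{rec_Gauss} together with the asymptotic normalizations from Proposition \ref{p_1u} and \eqref{Snorm}, simply tracking orders in $u^{-1}$.

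First I would record the input sizes. From Proposition \ref{p_1u} we have $R = 1 + O(u^{-1})$, so $R_{ii} = 1 + O(u^{-1})$ and $R_{ij} = O(u^{-1})$ whenever $i \neq j$; consequently $R_{ii}^{-1} = 1 + O(u^{-1})$ as well. The normalization \eqref{Snorm} (and its analogue for $U$) gives $U_{ii} = 1 + O(u^{-1})$ and $U_{ij} = O(u^{-1})$ for $i<j$, so $\widetilde{U}_{ki} = U_{kk}^{-1} U_{ki} = O(u^{-1})$ for all $i < k$.

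Next I would establish, by induction on $k-i$, the refined asymptotic $\widetilde{U}_{ki} = -R_{ki} + O(u^{-2})$. The base case $k = i+1$ is immediate from the second line of \eqref{rec_Gauss}, since the inner sum is empty and $-R_{ki}R_{ii}^{-1} = -R_{ki}(1+O(u^{-1})) = -R_{ki} + O(u^{-2})$, using $R_{ki} = O(u^{-1})$. For the inductive step, each term $\widetilde{U}_{kj}\, R_{ji}\, R_{ii}^{-1}$ with $i<j<k$ is a product of two off-diagonal factors each of order $O(u^{-1})$, hence is $O(u^{-2})$, while the leading term contributes $-R_{ki} + O(u^{-2})$ exactly as in the base case.

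Finally, substituting $\widetilde{U}_{ki} = -R_{ki} + O(u^{-2})$ into the first recursion in \eqref{rec_Gauss} yields
\[
U_{kk}^{-1} S_{kk} \;=\; R_{kk} + \sum_{i<k} \bigl(-R_{ki} + O(u^{-2})\bigr) R_{ik}
\;=\; R_{kk} - \sum_{i<k} R_{ki}\,R_{ik} + O(u^{-3}),
\]
where in the last step I use $R_{ik} = O(u^{-1})$ for $i<k$ to absorb the error terms. There is no genuine obstacle here; the only thing to be careful about is consistently treating the asymptotic expansions as formal power series in the inverse roots via the splitting \eqref{split}, so that the orders $O(u^{-n})$ compose multiplicatively and the manipulations above are unambiguous.
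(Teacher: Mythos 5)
Your proof is correct and takes the same route the paper does: you establish $\widetilde{U}_{ki} = -R_{ki} + O(u^{-2})$ from the recursion \eqref{rec_Gauss} together with the normalizations $R_{ii}^{-1} = 1 + O(u^{-1})$ and $R_{ij} = O(u^{-1})$ for $i\neq j$, then substitute into the first line of \eqref{rec_Gauss} and absorb the $O(u^{-2})\cdot O(u^{-1})$ error terms. The only difference is that you spell out the induction on $k-i$ that the paper leaves implicit.
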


\subsection{}

For Nakajima varieties Proposition \ref{Gauss_u2} 
means the following. Recall the Example \ref{ex3} in 
Section \ref{s_ample_ord} and suppose $\theta\not>0$.
Then 
$$
Z_\eta < Z_\emptyset\,, \quad \theta \cdot \eta < 0  \,. 
$$
Denote
$$
H(\bw)_\eta = H^\hd_\ga(\cM_{\theta,\zeta}(\eta,\bw)) \,. 
$$
Consider the matrix element $R_{\eta,0}$  
 of the 
$R$-matrix 
$$
R_{\eta,0}: H(\bw)_0 \, \otimes H(\bw')_\bv \longrightarrow 
H(\bw)_\eta \, \otimes H(\bw')_{\bv-\eta} 
$$
and the 
operator $R_{0,\eta}$ going in the opposite direction.  
Then Proposition \ref{Gauss_u2} implies 
\begin{equation}
\frac{e\left(N^\emptyset_-\right)}{e\left(N^\emptyset_- \otimes \hbar\right)} = 
R_{00} - \sum_{\theta\cdot\eta <0} R_{0,\eta} R_{\eta,0} + 
O(u^{-3}) 
\label{R_theta}
\end{equation}
where 
\begin{equation*}
  N_-^\emptyset = \bigoplus \cV_i^{\oplus \bw_i} \,. 
\end{equation*}
is the unstable normal bundle to $Z_\emptyset$, as in \eqref{N_tens}.
 
Observe that in \eqref{R_theta} the sum is effectively over $\eta\le \bv$
simply because $H(\bw')_{\bv-\eta}=0$ if $\eta\not\le \bv$. It is 
convenient that we don't have to restrict the range of summation 
explicitly. 

\section{Flops and stable envelopes}

\subsection{}

Let $X$ be a symplectic resolution and let 
\begin{equation}\label{deform2}
\xymatrix{
X \ar@{^{(}->}[r]\ar[d]& \tX \ar[d] \\
0 \ar@{^{(}->}[r] & B  \,,  \\
}
\end{equation}
be its deformation. For our present goals, it suffices
to take $B$ a generic line in the base of \eqref{deform} in 
Section \ref{s_sympl_res}. By definition, a flop of $X$ is 
another family over the same base $B$
$$
\xymatrix{
X_\fl \ar@{^{(}->}[r]\ar[d]& \tX_\fl \ar[d] \\
0 \ar@{^{(}->}[r] & B  \,,  \\
}
$$
together with an isomorphism 
$$
\xymatrix{
\tX \setminus X \ar[r]^{\widetilde{F}}\ar[d]& \tX_\fl \setminus X_\fl\ar[d] \\
B \setminus \{0\} \ar[r]^{\textup{id}} & B \setminus \{0\}   \\
}
$$
of families over the punctured base. We require 
$\widetilde{F}$ to:
\begin{enumerate}
\item[1)] be equivariant with respect to all group actions, 
\item[2)] preserve the symplectic form, 
\item[3)] induce identity on the affine quotients. 
\end{enumerate}
For symplectic resolutions, 3) implies 2) because it implies 
the graph of $\widetilde{F}$ is Lagrangian in the product of fibers.  

An example is provided by 
the natural isomorphism 
$$
\cM_{\theta,t\zeta}(\bv,\bw) \cong \cM_{\theta',t\zeta}(\bv,\bw)
$$
where $\theta,\theta'$ are arbitrary, $t\in B \setminus \{0\} = 
\C^\times$, and $\zeta$ is generic.

\subsection{}

The closure of the graph of 
$\widetilde{F}$ defines a 
cycle in $\tX \times_B \tX_\fl$, the restriction of 
which to the origin defines a $\bG$-invariant 
Steinberg correspondence 
$$
F \subset X_\fl \times X \,. 
$$
For brevity, we denote the induced map 
$$
F: H^\hd_\bG(X) \overset\sim\to H^\hd_\bG(X_\fl) 
$$
by the same letter. 
This is an isomorphism because both families are topologically 
trivial. 

\subsection{} 

For example, if $Q$ is the quiver with one vertex and 
no edges, $(\bv,\bw)=(1,n)$ then this is the classical Mukai flop
of 
$$
\cM_{\theta,0}(\bv,\bw) = 
\begin{cases}
T^*\Pp(W^\vee)\,, &\theta >0 \,,\\ 
T^*\Pp(W)\,, &\theta <0 \,, 
\end{cases}
$$
where $W \cong \C^n$ is the framing space and $\Pp(W)$ is the 
projective space of lines through the origin in $W$. 
In this case 
\begin{equation}
F = \Pp(W^\vee) \times \Pp(W)
+ T^\perp \textup{Universal hyperplane} \,,
\label{flopM}
\end{equation}
where $T^\perp$ denotes the conormal bundle and 
$\Pp(W) \subset T^*\Pp(W)$ is the zero section. 
Note this cycle is $GL(W) \times \C^\times$-invariant. 

\subsection{}
Let $\bA \subset\bG$ be a torus preserving the symplectic form. 
Any such torus acts trivially on the base $B$. 
Since a flop is an $\bA$-equivariant isomorphism over $B\setminus\{0\}$, 
we have a natural bijection
$$ 
f: \Fix(X) \overset\sim\to \Fix(X_\fl) 
$$
of components of $\bA$-fixed loci. By taking fixed points, $F$
induces a certain flop (potentially trivial) 
$$
F_i \subset Z_{\fl,f(i)} \times Z_i 
$$
of each component
of $Z_i \subset X^\bA$. 

\subsection{}

Since flop is a Steinberg correspondence, Theorem \ref{t_Stein}
implies the following square commutes for any chamber 
$\fC$ 
\begin{equation}
\xymatrix{
H^\hd_{\bG_\bA}(X^\bA) \ar[rr]^{\Stab_\fC} \ar[d]^{F_\bA}&& 
H^\hd_{\bG_\bA}(X)\ar[d]^F \\
H^\hd_{\bG_\bA}(X_\fl^\bA) \ar[rr]^{\Stab_{\fC,\fl}} && 
H^\hd_{\bG_\bA}(X_\fl) \,. 
}\label{flop_A} 
\end{equation}
Here the cycle $F_\bA$ is residue of $F$, it is a Steinberg
cycle supported on $F^\bA$ with signs determined by the 
polarizations of the fixed loci. 

\begin{Lemma}\label{l_flop} 
The correspondence $F_\bA$ is the flop of $X^\bA$, up to signs 
determined by polarization. 
\end{Lemma}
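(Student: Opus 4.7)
The approach is to realize both $F_\bA$ and the flop of $X^\bA$ as specializations of a single cycle living on the universal deformation, and then swap the order of specialization.

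First, lift $F$ to the family level: let $\widetilde{F} \subset \tX \times_B \tX_\fl$ denote the closure of the graph of the relative isomorphism $\widetilde{F}: \tX \setminus X \overset\sim\to \tX_\fl \setminus X_\fl$. Since this isomorphism is $\bG$-equivariant and preserves the relative symplectic form, $\widetilde{F}$ is a $\bG$-invariant relative Lagrangian. By construction, $F = \iota_0^*\,[\widetilde{F}]$ is its Gysin pullback to the central fiber.

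Next, take $\bA$-fixed loci at the family level. Because $\bA$ acts trivially on $B$ and preserves the symplectic forms on $\tX$ and $\tX_\fl$, the intersection $\widetilde{F}^\bA = \widetilde{F} \cap (\tX^\bA \times_B \tX_\fl^\bA)$ is well-defined. Over the punctured base $B \setminus \{0\}$, $\widetilde{F}$ is the graph of an isomorphism, so $\widetilde{F}^\bA$ is the graph of the induced $\bA$-equivariant symplectomorphism $\tX^\bA|_{B\setminus\{0\}} \overset\sim\to \tX_\fl^\bA|_{B\setminus\{0\}}$. Taking closures and restricting to $0\in B$ is, by definition, the flop correspondence of $X^\bA$ (viewed through its induced deformation over $B$), which we temporarily denote by $F_{X^\bA}$.

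Finally, compare $F_\bA$ and $F_{X^\bA}$ by interchanging the order of the two specialization operations. Residue at an $\bA$-fixed subvariety and Gysin pullback to the central fiber commute: this reduces, as in the proof of Proposition \ref{Stein_fiber}, to a local symplectic vector bundle model where $\widetilde{F}$ is flat over $B$ and meets $X^\bA \times X_\fl^\bA$ transversally. Granting this,
$$
F_\bA \;=\; \Res_{X^\bA \times X_\fl^\bA} \iota_0^*\, [\widetilde{F}] \;=\; \iota_0^*\, \Res_{\tX^\bA \times_B \tX_\fl^\bA}[\widetilde{F}] \;=\; \iota_0^*\,[\widetilde{F}^\bA] \;=\; F_{X^\bA},
$$
modulo the polarization sign $\bsi_X \bar\bsi_{X_\fl}$ coming from the convention of Section \ref{s_tStein}.

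The main obstacle will be the sign bookkeeping in the final step, together with verifying that the two residue operations genuinely commute with the chosen polarizations. Geometrically, this comes down to checking that the symplectic normal bundles of $X^\bA \subset X$ and $X_\fl^\bA \subset X_\fl$ are identified $\bA$-equivariantly under $\widetilde{F}$ at a generic point of the base; this holds because over $B \setminus \{0\}$ the map $\widetilde{F}$ is a genuine symplectomorphism, hence preserves the weight decompositions of the normal bundles to the fixed loci. Under this identification, the two polarizations are compatible, and the residue conventions match up on the nose, yielding the asserted equality up to the sign determined by polarization.
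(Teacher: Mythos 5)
Your overall strategy — lift the flop to the family $\widetilde F$ over $B$, take $\bA$-fixed loci at the family level, and interchange the specialization to the central fiber with the Lagrangian residue — is the right one and matches the spirit of the paper's argument. However, the proof has a genuine gap precisely at the "granting this" step: you appeal to a local model in which $\widetilde F$ is \emph{flat over $B$ and transverse to the fixed loci}, but this model does not cover the components of $F^\bA$ that are the source of the difficulty.

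Concretely, the residue $F_\bA$ is a cycle supported on $F\cap(X_\fl^\bA\times X^\bA)$, and this support may well have components in $Z_{\fl,j}\times Z_i$ with $j\ne f(i)$, coming from pieces of $\widetilde F$ that sit entirely over $0\in B$. At such points $\widetilde F$ is emphatically \emph{not} flat over $B$, so your local reduction is silent there, and the commutation $\Res\circ\iota_0^*=\iota_0^*\circ\Res$ is not established. This is exactly where the content lies, and the paper dispenses with it in one line: for $j\ne f(i)$ the intersection $\widetilde F\cap(\widetilde Z_{\fl,j}\times_B\widetilde Z_i)$ is supported over $0\in B$, so $\widetilde\iota^*[\widetilde F]$ lies in the image of $(\iota_0)_*$, whence $F|_{Z_{\fl,j}\times Z_i}=\iota_0^*\widetilde\iota^*[\widetilde F]$ is divisible by $\hbar$ and therefore vanishes in $\bA$-equivariant cohomology — and since residue coefficients are scalars, it contributes zero to $F_\bA$. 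The same $\hbar$-divisibility also disposes of the extra central-fiber components of $\widetilde F$ on the graph of $f$, so that only the flat-and-transverse part of $\widetilde F$ survives and your local model applies there. So the mathematical idea is sound, but you should replace the unproved commutation claim with this divisibility-by-$\hbar$ argument, which both justifies the commutation at the graph of $f$ and kills the off-graph components.
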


\begin{proof}
By construction
$$
F \Big|_{Z_{\fl,j} \times Z_i} = 0 \,, \quad j\ne f(i) \,, 
$$
in $\bA$-equivariant cohomology. Therefore $F_\bA$ vanishes outside
the graph of $f$. On the graph of $f$, the statement holds by 
definition. 
\end{proof}

\subsection{}

In the example of the Mukai flop, consider the Lagrangian 
subvarieties 
$$
\sigma_U = T^\perp \Pp(U) \subset T^*\Pp(W)
$$
corresponding to linear subspaces $U\subset W$. In 
particular, $\sigma_W$ is the zero section while 
$\sigma_0 = \emptyset$. 
{}From \eqref{flopM}, one computes 
\begin{equation}
  \label{flopM2}
 F(\sigma_U) = 
\sigma_{U^\perp} - (-1)^{\dim U} \sigma_{W^\vee} \,. 
\end{equation}
The coefficient of $\sigma_{W^\vee}$ is the sum of  
$$
\sigma_W \cdot \sigma_U = (-1)^{\dim \Pp(U)} \chi(\Pp(U)) = 
(-1)^{\dim U-1} \, \dim U 
$$
and the analogous number for a hyperplane section of $U$. 

Let $\bA\subset GL(W)$ be a maximal torus with eigenbasis 
$e_1,\dots,e_n \in W$ and the corresponding fixed points
$x_i = \Pp(\C e_i) \in \Pp(W)$. We have
$$
\Stab(x_i) = \sigma_{U_i} + \sigma_{U_{i+1}} \,, 
\quad U_i = \Span(e_i,\dots,e_n) 
$$
for some choice of chamber and polarization. We see that 
$$
F (\Stab(x_i)) = \sigma_{U_i^\perp} + \sigma_{U_{i+1}^\perp}\,,
$$
where $U_{n+1}^\perp = W^\vee$. This is the stable basis 
for $\bA$ action on $T^*\Pp(W^\vee)$ for the same chamber 
and suitable polarization. 

The induced bijection of 
fixed loci is 
$$
f(x_i) = \Pp(\C \xi_{n-i+1}) 
$$
where $\{\xi_1,\dots,x_n\}$ is the dual basis of $W^\vee$. 

\subsection{}
Different cones in the space of the stability condition $\theta$
give different flops of a given Nakajima variety. Among them 
is the maximal flop 
$$
F_\textup{max} \subset \cM_{-\theta,\zeta}(\bv,\bw) \times \cM_{\theta,\zeta}(\bv,\bw)
$$
that corresponds to the opposite cone of stability conditions. 
For an arbitrary symplectic resolution $X$, one similarly 
expects to have a flop $F_\textup{max}$ 
that takes the ample cone of $X$ to its 
opposite. 

We next observe that for any chamber $\fC$, the map 
$$
\Stab_\fC: H^\hd_{\bG_\bA}(X^\bA) \to H^\hd_{\bG_\bA}(X)
$$
is characterized by its behavior near the diagonal and the opposite 
triangularity of the supports of $\Stab_\fC$ and 
$F_\textup{max} \Stab_\fC$.

\begin{Theorem}
The map $\Stab_\fC$ is uniquely determined by the 
the conditions {\nui}, {\nuii} in Theorem \ref{stablebasis}
together with a symmetric condition for its maximal 
flop 
\begin{equation*}
  \label{flop_supp}
  \supp F_\textup{max}\circ \Stab_{\fC}(Z_i)  \subset \Branch(Z_{\fl,f(i)})
\end{equation*}
\end{Theorem}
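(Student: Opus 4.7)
The plan is to replicate the uniqueness argument of Theorem \ref{stablebasis}, but use the flop support condition in place of the degree bound \nuiii. Existence of a map satisfying \nui, \nuii, and the flop condition comes for free: applying the commutative square \eqref{flop_A} to $F=F_\textup{max}$, one has
$$F_\textup{max}\circ\Stab_\fC \;=\; \Stab_{\fC,\fl}\circ F_{\textup{max},\bA},$$
and by Lemma \ref{l_flop} the residue $F_{\textup{max},\bA}$ is (up to polarization signs) the fixed-point flop, which sends classes on $Z_i$ to classes on $Z_{\fl,f(i)}$. The composition with $\Stab_{\fC,\fl}$ then satisfies the desired support condition by \nui\ applied inside $X_\fl$.

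For uniqueness, suppose $\Gamma_1,\Gamma_2$ both satisfy the three conditions with the same $\gamma$, and set $\Delta=\Gamma_1-\Gamma_2$. By \nuii, $\Delta|_{Z_i}=0$, and the usual excess-intersection argument (exactly as in the proof of Theorem \ref{stablebasis}) then shows that $\Delta$ is supported on $\bigcup_{Z_k\prec Z_i}\Branch(Z_k)$. Choose a total order refining $\prec$, and let $Z_j$ be minimal in this total order such that $\Delta$ is supported on $\Branch(Z_j)$ together with strictly smaller strata. I aim to derive a contradiction by showing $\Delta$ restricts to zero near $Z_j$.

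The crucial input is that the bijection $f:\Fix(X)\to\Fix(X_\fl)$ is order-reversing with respect to the ample partial order of Section \ref{s_ample_ord}: because $F_\textup{max}$ by definition swaps the ample cone of $X$ with its opposite, the $\bA$-weight of any ample class at $Z_i$ gets matched with the weight of its image at $Z_{\fl,f(i)}$ with reversed sign, and the order \eqref{ampl_ord} reverses accordingly. Hence $Z_j\prec Z_i$ in $X$ implies $Z_{\fl,f(j)}\not\preceq Z_{\fl,f(i)}$ in $X_\fl$, so $Z_{\fl,f(j)}$ is not contained in $\Branch(Z_{\fl,f(i)})$. The flop support condition therefore forces $F_\textup{max}(\Delta)$ to vanish in a neighborhood of $Z_{\fl,f(j)}$. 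Computing by equivariant localization, the leading $\bA$-degree term of $F_\textup{max}(\Delta)|_{Z_{\fl,f(j)}}$ comes only from the diagonal matrix element $[F_\textup{max}]|_{Z_{\fl,f(j)}\times Z_j}$, which by Lemma \ref{l_flop} is, up to sign, the fixed-point flop $F_j$ applied to $\Delta|_{Z_j}$; contributions from strata $Z_k\prec Z_j$ enter with strictly smaller $\bA$-degree after the Euler denominators. Since $F_j$ is an isomorphism on cohomology, vanishing of this leading term forces $\Delta|_{Z_j}=0$, and then the same $e(N_-)$-injectivity argument from Theorem \ref{stablebasis} contracts the support of $\Delta$ strictly, contradicting the minimality of $Z_j$.

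The main obstacle is the bookkeeping in the last step: one must verify carefully that the ample partial order is genuinely reversed by $f$ (as opposed to merely the dynamical order, which is unaffected by the flop since the $\bA$-action and the map to $X_0$ are preserved), and one must control precisely which off-diagonal localization terms of $F_\textup{max}$ can contribute to the leading $\bA$-degree in $F_\textup{max}(\Delta)|_{Z_{\fl,f(j)}}$. Both points parallel the analysis in the proof of Theorem \ref{t_Stein}, and the same polarization conventions should make the signs come out consistently; once these are pinned down, the induction on strata closes the argument.
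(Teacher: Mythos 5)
Your overall strategy is sound and the key geometric input is the same as the paper's (the bijection $f$ reverses the ample partial order, and Lemma \ref{l_flop} identifies the Lagrangian residue of $F_{\textup{max}}$ with the fixed-point flop). However, your route differs significantly from the paper's, and there is a real gap in the degree bookkeeping that you flag but underestimate.

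The paper's argument is entirely a support/triangularity argument: \nui\ and the flop condition say that $\Stab_\fC|_{X^\bA\times X^\bA}$ and $(F_{\textup{max}}\circ\Stab_\fC)|_{X_\fl^\bA\times X^\bA}$ are triangular in opposite directions, \nuii\ fixes the diagonal of the former, and then $F_{\textup{max}}|_{X_\fl^\bA\times X^\bA} = \Stab_{\fC,\fl}|_\bA\circ F_{\textup{max},\bA}\circ(\Stab_\fC|_\bA)^{-1}$ is a Gauss (LDU) factorization of a fixed invertible matrix over $\Q(\ft)$, hence unique. No $\bA$-degree estimates are ever invoked.

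Your proof, by contrast, tries to recover a degree bound on $\Delta|_{Z_k}$ from the flop condition via equivariant localization, and here is the gap: in the localization sum
\begin{equation*}
0 = F_{\textup{max}}(\Delta)\big|_{Z_{\fl,f(j)}} = \sum_{k}\frac{[F_{\textup{max}}]\big|_{Z_{\fl,f(j)}\times Z_k}\cdot\Delta\big|_{Z_k}}{e(N_{Z_k})}\,,
\end{equation*}
the claim that "contributions from strata $Z_k\prec Z_j$ enter with strictly smaller $\bA$-degree" is exactly what you do \emph{not} know a priori. The bound $\deg_\bA\Delta|_{Z_k}\leq\tfrac12\codim Z_k$ is nothing but condition \nuiii\ of Theorem \ref{stablebasis} — the very condition being replaced in the statement at hand — so assuming it is circular. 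A hypothetical $\widetilde{\Stab}_\fC$ satisfying only \nui, \nuii\ and the flop support condition need not produce Lagrangian cycle classes, so the residue bound used in Theorem \ref{t_Stein} does not directly apply. Moreover, even if you grant this bound, your chosen $Z_j$ (the maximal stratum of $\supp\Delta$ in a total order) is not the stratum for which the diagonal term dominates: the correct quantity to maximize is the normalized defect $D_k = \deg_\bA\Delta|_{Z_k} - \tfrac12\codim Z_k$, and one should pick $Z_{j_0}$ maximizing $D_k$. With that choice, the term $k=j_0$ is the unique one of maximal degree $\tfrac12\codim Z_{\fl,f(j_0)}+D$ (the off-diagonal terms lose degree because the Lagrangian residue of $F_{\textup{max}}$ vanishes there by Lemma \ref{l_flop}, while $D_k\leq D$ for all $k$), and the vanishing of that leading term forces $D_{j_0}<D$, a contradiction, showing $\Delta=0$ directly. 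So your approach can be repaired, but it requires replacing "top stratum of the support" by "stratum maximizing the normalized degree defect," and one must not pretend the residue bound of \nuiii\ is available for free.
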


% \begin{Theorem}
% The off-diagonal cycles in
% $$
% \Stab_\fC = \overline{\LAttr_\fC(\pm \Delta)} + \dots 
% $$
% are uniquely determined by the conditions
% %
% \begin{align}
% \Stab_\fC(Z_i) & \subset \Branch(Z_i) \,, \notag\\
% F_\textup{max}\circ \Stab_{\fC}(Z_i) & \subset \Branch(Z_{\fl,f(i)}) \,. 
% \label{flop_supp} 
% \end{align}
% %
% \end{Theorem}

% \noindent Here $\pm \Delta\subset X^\bA \times X^\bA$ is the 
% class of the diagonal, up to signs determined by the polarization.  

\begin{proof}
The above support condition is satisfied by 
\eqref{flop_A} and Lemma \ref{l_flop}. Since 
a maximal flop takes an ample class to minus an ample 
class while preserving $\bA$-weights, we have 
$$
 i > j \quad \Leftrightarrow \quad f(i) < f(j)  
$$
in the ample partial ordering, for any $\fC$. Thus 
the supports of $\Stab_\fC$ and $F_\textup{max}\Stab_\fC$ 
are triangular the opposite way. Hence
\begin{equation}
F_\textup{max} \Big|_{X_\fl^\bA \times X^\bA}  
= \Stab_{\fC,\fl} \Big|_{X_\fl^\bA \times X_\fl^\bA} 
\circ F_{\textup{max},\bA} \circ 
\left(\Stab_{\fC} \Big|_{X^\bA \times X^\bA} \right)^{-1} 
\label{Gauss_flop}
\end{equation}
is a Gauss factorization, and therefore unique. 
\end{proof}

\subsection{}

We see from \eqref{Gauss_flop} that flops give a way to package 
the information about stable envelopes that is somewhat 
different from $R$-matrices. This packaging has 
several convenient features, among them:
\begin{itemize}
\item flops are given by Steinberg correspondences, a very 
economical and geometric data, 
\item the maximal flop $F_\textup{max}$ can be factored into 
a product of flops that cross a single wall in 
the space of $\theta$'s,
\item additional constraints on $F_\textup{max}$ may be deduced from a 
noncanonical isomorphism 
$$
\cM_{\theta,\zeta}(\bv,\bw) \to  \cM_{-\theta,\zeta}(\bv,\bw)
$$
that replaces all quiver data by transposed with respect to some 
chosen bilinear form. 
\end{itemize}

\chapter{Yangians} \label{s_Yang} 

\section{Tensor products}\label{s_Tens} 

\subsection{}

Let $X$ satisfy the hypotheses of Section \ref{s_Assump}. By definition, 
we say that $X$ is a tensor product and write 
$$
X = X_1 \otimes \cdots \otimes X_n
$$
if the maximal torus $\bA \subset PGL(n)$ acts on $X$ preserving 
the symplectic form so that 
\begin{itemize}
\item[(1)] $X^\bA=X_1 \times \dots \times X_n$, 
\item[(2)] the roots of $X$ are the roots $\alpha_{ij}$ of $PGL(n)$,
\item[(3)] the corresponding fixed loci are of the form 
$$
X^{\alpha_{ij}} = X_{ij} \times \prod_{k\ne i,j} X_k 
$$
\end{itemize}

\noindent 
We view this definition as provisional; perhaps a better set of 
axioms will emerge later. Note that neither existence or 
uniqueness of tensor products 
is claimed. 

If one requires $X$ to have a unique, up to multiple, holomorphic 
symplectic form, then this rules out trivial nonuniqueness 
of the form 
$$
X \mapsto X \times \textup{vector representation of $\bA$} \,. 
$$

\subsection{}\label{s_cM_tensor}

In the case of quiver varieties, recall
$\cM(\bw)$ from Section \ref{RforNakajima}.
For any decomposition 
$$
\bw = \sum_{i=1}^n \bw_i 
$$
into nonzero terms, we have 
$$
\cM(\bw) = \bigotimes \cM(\bw_i) \,, 
$$
corresponding to the decomposition 
$$
\bw = \sum z_i \, \bw_i 
$$
as in Section \ref{s_bw=}. Here 
$$
(z_1,\dots,z_n) \in (\C^\times)^n = \bA \,.
$$

\subsection{}

For $X= X_1 \otimes \cdots \otimes X_n$, the construction of 
Chapters \ref{s_envelopes} and \ref{s_properties} gives 
a set of $R$-matrices
$$
R_{ij}(a_i-a_j) \in \End(F_1\otimes\cdots\otimes F_n)
\otimes\Q(\ft)\,,
\quad F_i=H^\hd_\ga(X_i) 
$$
satisfying the Yang-Baxter equation \eqref{YB}, a familiar 
setup in quantum integrable systems. 

\subsection{} 

Given an operator
$$
R_{12}(a_1-a_2) \in \End(F_1 \otimes F_2) \,, 
$$
its matrix elements in $F_1$ are operators on $F_2$. 
Our main interest is the algebra of operators thus 
obtained for Nakajima varieties. 
This algebra is an example of a Yangian.

\section{Construction of Yangians}

\subsection{} Yangians are Hopf algebras associated to 
rational solutions of the Yang-Baxter equation. There 
are several ways to describe a Yangian. 
For us, it is the so-called RTT=TTR formalism of \cite{FRT} 
that arises naturally. We briefly recall the basics. 

For simplicity, we limit the use of the categorical 
language, even 
though many construction and properties
are best stated in the language of tensor 
categories, see for example \cite{Soi}. 

\subsection{} 

Let $\bk\supset \Q$ be a commutative ring without 
zerodivisors. We write 
$$
\otimes = \otimes_\bk\,, \quad \End = \End_\bk
$$
for brevity. Let 
$\{F_i\}$ be a collection of free $\bk$-modules and let 
$$
R_{F_i,F_j}(u) \in \End(F_i \otimes F_j)(u) 
$$
be collection of operator-valued
rational functions of $u$ satisfying 
the Yang-Baxter equations \eqref{YBe}. We assume 
the normalization 
$$
R(\infty) = 1\,. 
$$
We also fix $\hbar\in\bk$ that divides $R(u)-1$. In geometric 
applications, this will be the weight of the symplectic 
form. 

\subsection{}

To this data, one associates a Hopf algebra 
$\bY$ over $\bk$ that acts on 
\begin{equation}
F_i(u) \overset{\textup{\tiny def}}= F_i \otimes \bk[u] \,.
\label{F(u)}
\end{equation}
and more generally on 
\begin{equation}
F_{i_1}(u_1) \otimes \cdots \otimes F_{i_n}(u_n) = 
F_{i_1} \otimes \cdots \otimes F_{i_n}\otimes \bk[u_1,\dots,u_n]
\label{F(u)times}
\end{equation}
This action commutes with multiplication by the $u_i$'s, so may be 
viewed as a family of $\bY$-modules indexed by $\mathbb{A}^n_\bk$. 

\subsection{}  

While $F_i[u]$ is a more logical notation for \eqref{F(u)}, 
the use of parentheses is traditional. The variable $u$ in 
\eqref{F(u)} is called the evaluation parameter, in 
reference to the following. 

By one of their many
 definitions, Yangians are Hopf algebra deformations 
of $\cU(\fg[u])$, where $\fg$ is a Lie algebra over $\bk$ and 
$\fg[u]$ is the Lie algebra of $\fg$-valued polynomials 
in $u$. The identity map 
$$
\fg[u] \to \fg\otimes \bk[u]
$$
may be viewed as family of evaluation homomorphisms 
$\fg[u]\to \fg$ and any $\fg$-module $F$ can be made 
a $\fg[u]$-module $F(u)$ by pull-back. 

\subsection{}\label{s_compl}  
A certain care is required if $\rk F_i = \infty$ for some $F_i$.  
We will always assume a grading 
$$
F_i = \bigoplus_{\alpha\in \Z^n} \big(F_i\big)_\alpha 
$$
such that all graded pieces are $\bk$-modules of finite rank. We 
further require that $\big(F_i\big)_\alpha\ne 0$ only for $\alpha$ in 
a translate of a certain nontrivial cone, which we will assume 
to be $(\Z_{\ge 0})^n$ for simplicity. 

The $R$-matrices will 
always have grading $0$. This makes  $\bY$ a graded 
algebra and  $F_i(u)$, with the grading induced from $F_i$, 
a graded module. The coproduct
\begin{equation}
\Delta: \bY \to \bY \, \widehat\otimes \, \bY
\label{defDel}
\end{equation}
to be defined below, takes values in the following completed 
tensor product.  
By definition, 
$$
\bY \, \widehat\otimes \, \bY  = \bigoplus_\alpha 
\left(\bY \, \widehat\otimes \, \bY \right)_\alpha 
$$
while 
$$
\sum_\beta y_{\alpha-\beta} \otimes y_{\beta} \in \left(\bY \, \widehat\otimes \, \bY \right)_\alpha 
$$
if $\beta$ ranges in a translate of $(\Z_{\ge 0})^n$. Such infinite sums 
act naturally on any $F_i(u_1) \otimes F_j (u_2)$. The iterates of 
$\Delta$ make \eqref{F(u)times} tensor products of \eqref{F(u)} 
as $\bY$-modules.

\subsection{Definition} \label{s_def_Y}
We define $\bY$ as the subalgebra
\begin{equation}
\bY \subset \prod_{i_1,\dots,i_n}  \End_{\bk[u_1,\dots,u_n]}
\left(F_{i_1}(u_1) \otimes \dots \otimes F_{i_n}(u_n) \right)
\label{YinProd}
\end{equation}
generated by the following operators. Let 
\begin{equation}
W = F_{1}(u_1) \, \otimes \, \dots \otimes F_{n}(u_n) 
\label{exW}
\end{equation}
be one of the spaces in \eqref{YinProd} where, for brevity, we 
write $F_k$ in place of $F_{i_k}$ to denote some element 
of the set $\{F_i\}$. Choose an additional $F_0\in \{F_i\}$ 
called an \emph{auxiliary} space and define 
\begin{equation}
R_{F_{0}(u), W} 
= R_{F_{0},F_{n}}(u-u_n) \, \cdots \, 
R_{F_{0},F_{1}}(u-u_1) \,. 
\label{Rtrain}
\end{equation}
Let 
$$
m(u)\in F_0 \otimes F_0^\vee \otimes \bk[u]
$$
be a polynomial in $u$ with values in operators in $F_0$ of finite rank.
Here 
$$
F_0^\vee = \Hom_\bk(F_0,\bk) 
$$
is the graded dual module.

Because $m(u)$ has finite rank and $\hbar$ divides
$R-1$, 
the following operator 
\begin{equation}
\bE(m(u)) = 
- \frac{1}{\hbar}\Res_{u=\infty} \, \tr_{F_0} m(u) \, 
R_{F_0(u),W} \, \in \End(W) 
\label{def_bE}
\end{equation}
is well-defined for all $W$ in \eqref{exW}. Since it comes 
from an expansion of rational 
functions of $u-u_i$ as $u\to\infty$, it depends polynomially 
on $u_1,\dots,u_n$. Thus, it defines an element 
of the right-hand side in \eqref{YinProd}.

By definition, 
$\bY$ is the $\bk$-subalgebra in \eqref{YinProd}
 generated by $1$ and \eqref{def_bE} for 
all $F_0$ and all $m(u)$. 
In English, the Yangian $\bY$ is the algebra generated by 
\begin{align*}
  & \textup{all coefficients of the $u\to\infty$ expansion of}\\ 
  & \textup{all matrix coefficients of the operators \eqref{Rtrain} for} \\
  & \textup{all auxilliary spaces $F_0$.} 
\end{align*}
Additionally, since all nontrivial matrix elements are divisible by 
$\hbar$, we divide by $\hbar$ in \eqref{def_bE}. 

\subsection{}

The product in  \eqref{YinProd} includes
the the factor $W=\bk$ corresponding to 
$$
\{i_1,i_2,\dots,i_n\} = \emptyset \,. 
$$
This $1$-dimensional $\bY$-module is the counit of the Yangian.

\subsection{}
After inverting $\hbar$, 
\eqref{def_bE} makes sense for any rational function $m(u)$ of 
$u$, in particular, 
\begin{equation}
\bE\left(m\, u^{-k} \right) = 
\begin{cases}
\hbar^{-1} \, \tr m\,, & k=1\,, \\
0\,, & k>1 \,. 
\end{cases}
\label{Eu^{-1}}
\end{equation}
While such operators are not in $\bY$, they will play a role 
in computation of commutation relations \eqref{top_comm_E} below.

\subsection{RTT=TTR equation}
By construction, \eqref{def_bE} extends to a surjection 
\begin{equation}
\bE: \textup{Tensor algebra} \left(
\bigoplus F_i \otimes F_i^\vee \otimes \bk[u] \right) 
\twoheadrightarrow \bY\label{defE}
\end{equation}
The Yang-Baxter equation shows it factors through the 
quotient by 
\begin{multline}
 \left( m_1(u_1) \otimes m_2(u_2)\right) \cdot R_{F_1 F_2}(u_1-u_2) - \\
R_{F_1
    F_2}(u_1-u_2) \cdot (m_2(u_2) \otimes m_1(u_1)) \,, \quad m_i(u) \in
  F_i \otimes F_i^\vee \otimes \bk[u] \,.
\label{eRTT}
\end{multline}%
This is known as the RTT=TTR relation. The letter T being 
overused in this paper, we substitute it in this context 
by $\bE$. 

The quotient of the tensor algebra by \eqref{eRTT} is of 
the same size as the symmetric algebra. This is still very 
big and below we will discuss how to write further 
relations in Yangians. 

\subsection{Filtration in the Yangian}
The Yangian $\bY$ is filtered by degree in $u$, that is, by 
defining 
$$
\deg \bE(m(u)) = \deg_u m(u) 
$$
on the generators of the Yangian. We set $\deg_u 1 = 0$. 

Equation \eqref{Eu^{-1}} shows this filtration does not 
extend to the algebra generated by these more general operators. 
Therefore, one has to be careful in situations where they appear. 
 
Since scalars cancel out of the RTT=TTR equation, it 
takes the form 
\begin{equation}
\left[\bE(m(u)),\bE(m'(v))\right] = 
\hbar \, \bE\left(\left[\frac{\br_{VV}}{u-v},m(u)\otimes m'(v)
\right]\right)+\dots 
\label{REE}
\end{equation}
where $\br$ is the classical $R$-matrix 
$$
R(u) = 1 + \frac{\hbar}{u} \, \br + O(u^{-2}) 
$$
and dots in \eqref{REE} come from the $O(u^{-2})$ term above.

Note that in the right-hand side of \eqref{REE} there are terms of 
the same degree as in the left-hand side. They 
come from the expansion 
$$
\frac{1}{u-v} = \frac{1}{u} + \frac{v}{u^2} + \frac{v^2}{u^3} + \dots 
$$
and \eqref{Eu^{-1}}, giving the right-hand side of the 
following formula \eqref{top_comm_E}. 

\begin{Proposition}\label{p_commtop} We have 
  \begin{equation}
\left[\bE(m \, u^i),\bE(m' \, u^j )\right] = 
\bE\left( (\tr \otimes 1) \left[\br_{VV},m \otimes m'\right] u^{i+j}\right) +
\dots \label{top_comm_E} 
\end{equation}
where dots stand for terms of smaller degree in $u$. 
\end{Proposition}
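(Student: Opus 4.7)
The strategy is to extract the top-degree piece of the commutator from the exact RTT=TTR commutation relation \eqref{REE}, using the fact that the trace of a commutator vanishes.

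\emph{Step 1 (derivation of \eqref{REE}).} The product $\bE(m(u))\bE(m'(v))$ naturally appears as a double residue over two auxiliary copies $F_1, F_2$:
$$\bE(m(u))\bE(m'(v)) = -\tfrac{1}{\hbar^{2}}\Res_u\Res_v\,\tr_{F_1\otimes F_2}\bigl\{(m\otimes m')\,R_{F_1(u),W}R_{F_2(v),W}\bigr\}.$$
Applying the RLL=LLR identity (a consequence of the Yang-Baxter equation \eqref{YBe} for the triple $F_1, F_2, W$) to swap $R_{F_1(u),W}$ past $R_{F_2(v),W}$, combined with cyclicity of $\tr_{F_1\otimes F_2}$, yields
$$[\bE(m(u)),\bE(m'(v))] = -\tfrac{1}{\hbar^{2}}\Res_u\Res_v\,\tr_{12}\bigl\{[R_{12}(u-v),m\otimes m']\,R_{12}(u-v)^{-1}R_{F_2(v),W}R_{F_1(u),W}\bigr\}.$$
Expanding $R_{12}(u-v) = 1 + \hbar\br_{12}/(u-v) + O(\hbar^{2}/(u-v)^{2})$ produces \eqref{REE}; the tail terms drop the Yangian filtration by one for each additional $\hbar/(u-v)$ and are absorbed into the dots.

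\emph{Step 2 (trace factorization and trace-of-commutator vanishing).} Substitute $m(u) = mu^i$ and $m'(v) = m'v^j$ into the leading term
$$-\tfrac{1}{\hbar}\Res_u\Res_v\,\frac{u^iv^j}{u-v}\,\tr_{12}\bigl\{[\br_{12},m\otimes m']\,R_{F_2(v),W}R_{F_1(u),W}\bigr\}.$$
Decompose $\br_{12} = \sum_\alpha A_\alpha\otimes B_\alpha$ with $A_\alpha\in\End F_1$, $B_\alpha\in\End F_2$; since $R_{F_1(u),W}$ and $R_{F_2(v),W}$ act on disjoint auxiliary factors, the double trace factorizes as
$$\sum_\alpha\bigl[\tr_{F_1}(A_\alpha m\,R_{F_1(u),W})\tr_{F_2}(B_\alpha m'\,R_{F_2(v),W}) - \tr_{F_1}(mA_\alpha\,R_{F_1(u),W})\tr_{F_2}(m'B_\alpha\,R_{F_2(v),W})\bigr].$$
The $u,v\to\infty$ limits of the two $\tr_{F_1}$ factors both equal $\tr(A_\alpha m) = \tr(mA_\alpha)$, so the naive leading $R_{iW}\to 1$ term vanishes; this is the manifestation of $\tr_{F_1\otimes F_2}[\br_{12},m\otimes m'] = 0$.

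\emph{Step 3 (top-degree extraction).} The top degree of the commutator equals $i+j$. Expanding $(u-v)^{-1} = \sum_{k\ge0}v^k/u^{k+1}$ and computing the double residue, the dominant contribution isolates the term in which the $\tr_{F_1}$-factor collapses to its scalar constant $\tr(A_\alpha m)$ while the $\tr_{F_2}$-factor absorbs the full degree-$(i+j)$ Yangian weight. Reassembling these contributions via linearity of $\bE$ gives
$$\sum_\alpha\tr(A_\alpha m)\,\bE\bigl([B_\alpha,m']\,u^{i+j}\bigr) = \bE\bigl((\tr\otimes 1)[\br_{12},m\otimes m']\,u^{i+j}\bigr),$$
matching the claim. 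The main obstacle is the combinatorial bookkeeping in this last step: enumerating the many contributions from the Laurent expansion of $(u-v)^{-1}$ combined with the $1/u, 1/v$ expansions of $R_{F_i(\cdot),W}$, and verifying that every subleading combination either cancels by trace-of-commutator symmetry or lies in strictly lower Yangian filtration. A conceptual sanity check is that our identity is simply the Yangian lift of the classical Lie-bracket relation $[m\otimes t^i, m'\otimes t^j] = (\tr\otimes 1)[\br_{12},m\otimes m']\otimes t^{i+j}$ in the associated graded $\cU(\fg[t])$ of $\bY$.
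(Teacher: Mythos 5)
Your approach is essentially the paper's: extract the top-degree piece of \eqref{REE} and identify the single boundary term where the $u$-trace collapses to the scalar $\tr(A_\alpha m)$, giving the claimed $(\tr\otimes 1)[\br,m\otimes m']\,u^{i+j}$. Steps 1 and 2 are fine (modulo a minor slip: the tail of the $R$-matrix expansion is $O(\hbar/(u-v)^2)$, not $O(\hbar^2/(u-v)^2)$; $R=1+O(\hbar)$ and $R=1+O(1/u)$ are independent statements, so the second-order term in $1/(u-v)$ is still of order $\hbar^1$). The trace-of-commutator cancellation you highlight in Step~2 is correct but is not the load-bearing observation — it only controls a far-subleading constant.

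Where the argument is genuinely incomplete is exactly where you flag it: you say the ``main obstacle'' is verifying that every subleading combination from expanding $(u-v)^{-1}$ against $R_{F_i(\cdot),W}$ lies in lower filtration, and you leave that open. The paper closes this gap with one clean observation, which is the content of \eqref{Eu^{-1}}: $\bE(m\,u^{-1})=\hbar^{-1}\tr m$ while $\bE(m\,u^{-k})=0$ for all $k>1$. Consequently, a negative power of $u$ in the argument of $\bE$ can raise the filtration degree by exactly one, no more, and this can happen only once and only in the $u$-variable (all terms in \eqref{REE} have nonnegative degree in $v$ once one expands $1/(u-v)=\sum_{k\ge 0}v^k/u^{k+1}$). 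With this in hand there is nothing to enumerate: the displayed term in \eqref{REE} has formal $(u,v)$-degree $i+j-1$, hence lands in $\bY_{\le i+j}$ with the top piece coming precisely from the $k=i$ term, while the dots have formal degree $\le i+j-2$ and thus land in $\bY_{\le i+j-1}$ even after the single allowable boost. You should replace the ``combinatorial bookkeeping'' caveat with this degree argument; otherwise the proof does not actually establish that the unwritten terms are lower order.
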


\begin{proof}
Were it not for  \eqref{Eu^{-1}}, 
the right-hand side of \eqref{REE} would have smaller total 
degree in $u$ and $v$ than $\deg_u m(u) + \deg_v m'(v)$. 

Each application of \eqref{Eu^{-1}} brings the total degree 
up by $1$. Note, however, that it can be applied only once
and with respect to the variable $u$, because all terms in \eqref{REE}
have nonnegative degree in $v$. Therefore, the dots in \eqref{REE} have  
total degree at most $\deg_u m(u) + \deg_v m'(v)-1$ and 
can be neglected. 
\end{proof}

\subsection{}

Note the commutation relation \eqref{top_comm_E}
has the form 
$$
\left[a \, u^i, b \, u^j\right] = [a,b] \, u^{i+j} \,, \quad a,b\in \fg \,,
$$
of the commutation relations in the Lie algebra of polynomials
$\fg[u]$ with values in a Lie algebra $\fg$. 

In fact, one of our 
goals is to show that for the Yangian associated to a quiver $Q$
$$
\gr \bY \cong \cU(\fg_Q[u]) 
$$
for a certain Lie algebra $\fg_Q$. Here $\gr \bY$ denotes the 
associated graded of $\bY$  for the filtration by degree in $u$.

\subsection{Coproduct}

The set of $W$ of the form \eqref{exW} is closed with respect to tensor product.
There is a corresponding projection 
$$
\prod_W \End W \to \prod_{W, W'} \End \left(W \otimes W' \right)\,.
$$
By applying this projection to $\bE(m(u))$,
it is easy to see that
it sends $\bY$ to the image of the map
\begin{equation}\label{tensorembedding}
\bY \, \widehat\otimes \, \bY \rightarrow \prod_{W, W'} \End \left(W \otimes W' \right)\,.
\end{equation}
The completion is needed because 
matrix elements of $R_{F_0,F_1\otimes F_2}$ are infinite 
sums of products of matrix elements of $R_{F_0,F_i}$ when 
$\dim F_0 = \infty$.

This defines a natural coproduct \eqref{defDel} on $\bY$ up
to an ambiguity arising from the kernel of \eqref{tensorembedding}.  
We will prove at the end of this chapter that $\bY$ is flat over $\bk$ and that, as a corollary,
the map \eqref{tensorembedding} is injective so this ambiguity does not arise. 
In the meantime, we only discuss the coproduct as evaluated on pairs of representations.

The coproduct is not commutative and in general 
$$
F_1(u_1) \otimes F_2(u_2) \not \cong F_2(u_2) \otimes F_1(u_1)
$$
as $\bY$-modules. However, 
$$
F_1(u_1) \otimes F_2(u_2) \otimes_{\bk[u_1,u_2]} 
\bk(u_1,u_2)
\cong F_2(u_2) \otimes F_1(u_1) \otimes_{\bk[u_1,u_2]} 
\bk(u_1,u_2)
$$
with the explicit intertwiner 
$$
R^\vee = (12) \, R_{F_1,F_2}(u_1-u_2)\,.  
$$
This follows at once from the Yang-Baxter equation.

\subsection{Translation automorphism}

All spaces $W$ in \eqref{exW} have an automorphism $\ttau_c$ 
that acts by 
$$
\ttau_c(u_i) = u_i + c\,, \quad i=1,2,\dots\,, 
$$
on the variables $u_i$ and as identity on $F_i$'s. It 
preserves $\bY$  because it amounts
to a reexpansion of $R(u-c)$ in inverse powers of $u$. 
We denote the corresponding automorphism of 
the Yangian also by $\ttau_c$.

% By construction, $W(c)\cong W$ as linear space, but the 
% Yangian module structure is twisted by the automorphism $\ttau_c$. 

\subsection{}\label{s_Yang_Nak} 
In the rest of this chapter, we specialize to the case 
of Nakajima varieties, see Section \ref{s_cM_tensor}. 
We fix a quiver $Q$ and set 
\begin{align}
  \bk &= H_{G_\edge}^\hd(\pt,\Q) \,, \notag \\
  F_i &= H_{G_\edge}^\hd (\cM(\delta_i),\Q) \,. 
\label{set_F_i} 
\end{align}
Here $\bw=\delta_i$ is the delta-function at some $i\in\Ib$. 
Note that in this case $G'_\bw =1$. The tensor product
construction will identify 
$$
H_{G_\bA}^\hd (\cM(\bw)^\bA) = \bigotimes_{i\in I} F_i(u_{i1}) \otimes 
\cdots \otimes 
F_i(u_{i\bw_i}) 
$$
where $\bA \subset G_\bw$ is a maximal torus and 
$$
\begin{pmatrix}
  u_{i1} \\
 & u_{i2} \\
 && \ddots \\
 &&& u_{i\bw_i} 
\end{pmatrix}
\in \mathfrak{gl}(W_i) \,, \quad i\in I \,, 
$$
are the equivariant parameters for the group $G_\bw$. 

The collection \eqref{set_F_i} can be enlarged by allowing 
arbitrary dimension vectors $\bw$ in place of 
$\delta_i$. This does not change the Yangian $\bY$ 
because, 
as we will see, $\bY$ already injects into the endomorphisms
of tensor products of $F_i(u_{ik})$.

\section{The Lie algebra $\fg_Q$} \label{s_fgQ} 

\subsection{}

Let $\fg_Q\subset \bY$ be the span of operator $\bE(m_0)$, where 
$m_0$ is constant polynomial in $u$. In other words, 
$\fg_Q$ is spanned by the matrix elements of the 
classical $R$-matrix $\br$. Formula 
\eqref{top_comm_E} shows $\fg_Q$ is a Lie algebra.
The following is clear

\begin{Proposition}
  All elements of $\xi\in \fg_Q$ are primitive, that is, 
$$
\Delta \xi = \xi \otimes 1 + 1 \otimes \xi \,,
$$
when evaluated on pairs of representations. In particular,
\begin{alignat}{2}
 \big[&\Delta \xi,&R \big] &= 0\,, \notag \\ 
\big[&\Delta \xi,& \br\big] &= 0\,,  
 \label{inv_tens_r}
\end{alignat}
that is, $\fg_Q$ commutes with $R$-matrices. 
\end{Proposition}

\noindent 
We expect that $\fg_Q$ is the Lie algebra of primitive 
elements of $\bY$.  

\subsection{}

As defined, $\fg_Q$ is a Lie algebra over $\bk$. We expect 
a natural isomorphism 
$$
\fg_Q = \left(\fg_Q\right)_\Q \otimes_\Q \bk
$$
for a certain Lie algebra over $\Q$. We think the required
$\Q$-structure may be constructed using the Decomposition 
Theorem.

\subsection{}

The identity  
$$
R(u)^{-1} = R(-u)_{12}
$$
from Section \ref{s_unitarity} implies the symmetry of $\br$, that is, 
$$
\br_{W,W'} = \br_{W',W}\,. 
$$
after identifying $W \otimes W'$ and $W' \otimes W$.

% The  classical 
% Yang-Baxter equation \eqref{cYB} reads
% $$
% \left[\br_{12}+\br_{13},\br_{23}\right] = 
% \left[\br_{12},\br_{13}\right] \,. 
% $$
% The left-hand side here is symmetric under the permutation $(23)$, 
% while the right-hand side is antisymmetric. Therefore, 
% %
% \begin{equation}
% \left[\br_{12}+\br_{13},\br_{23}\right]  =0 \,.
% \label{CYBs}
% \end{equation}
% %
% Taking matrix coefficients in the first tensor factor, we obtain 
% \eqref{inv_tens_r}. 

\subsection{}

It follows from formula \eqref{br_diag_Nak} that 
$$
\fhb_Q \subset \fg_Q
$$
where $\fhb_Q$ acts by linear functions of $\bv$ and $\bw$. 
Linear functions can be taken with $\bk$-coefficients or 
$\Q$-coefficients, and this defines   $\fhb_Q$ as $\bk$-submodule with 
a canonical $\Q$-submodule. All structures in $\fhb_Q$ 
are defined over $\Q$.

Recall the quadratic forms \eqref{ringel} with 
values in $K_\bG(\pt)$. Here we evaluate them
at $1\in \bG$, in other words, we use the 
nonequivariant Cartan matrix. 
The inverse of the nondegenerate form 
$(\,\cdot\,,\,\cdot\,)_{\Qb}$ from \eqref{ringel} defines
a bilinear form $(\,\cdot\,,\,\cdot\,)_{\fhb}$ on $\fhb_Q$. 
{}From \eqref{br_diag_Nak} we conclude 
$$
\br = \sum_{i\in I \sqcup \Ib} h_i \otimes h^i + \dots
$$
where 
$$
(h_i,h^j)_{\fhb} = \delta_{ij}
$$
and dots stand for
off-diagonal elements. Note that, with our conventions, 
$$
\dim \fhb_Q = 2 |I | \,. 
$$
While this looks unusual from the perpective of 
finite-dimensional Lie
theory (in which Cartan matrices are nondegenerate), 
this is very convenient and has been used before e.g.\
in \cite{FeiZel}. 

By construction, off-diagonal elements 
have a nonzero commutator with $\fhb_Q$ acting in one of the 
tensor factors. We deduce the following

\begin{Proposition}\label{hmaxcomm}
$\fhb_Q$ is a maximal commutative subalgebra of $\fg_Q$. 
\end{Proposition}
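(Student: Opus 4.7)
The plan is to show the stronger statement that the centralizer of $\fhb_Q$ in $\fg_Q$ equals $\fhb_Q$ itself; since $\fhb_Q$ is abelian (its generators are operators of multiplication by linear functions of $\bv,\bw$, which trivially commute), this implies the claimed maximality.

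First I would set up a weight decomposition $\fg_Q = \fhb_Q \oplus \bigoplus_{\alpha \neq 0} (\fg_Q)_\alpha$ with respect to the adjoint action of $\fh \subset \fhb_Q$. The subalgebra $\fhb_Q$ acts semisimply on every module $F_{i_1}(u_1) \otimes \cdots \otimes F_{i_n}(u_n)$, since its elements act by scalars determined by $\bv$ and $\bw$ on each summand. Primitivity of elements of $\fhb_Q$ combined with \eqref{inv_tens_r} applied to $\xi \in \fhb_Q$ yields $[\Delta\xi,\br]=0$, so $\br$ is invariant under the diagonal $\fhb_Q$-action. This forces a decomposition $\br = \br_0 + \sum_{\alpha \neq 0} \br_\alpha$ with $\br_\alpha \in (\fg_Q)_\alpha \otimes (\fg_Q)_{-\alpha}$, and the formula $\br = \sum h_i \otimes h^i + \cdots$ noted immediately above the Proposition identifies $\br_0$ with the explicit diagonal sum lying in $\fhb_Q \otimes \fhb_Q$.

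Next, every generator $\bE(m_0) \in \fg_Q$ is a matrix coefficient of $\br$ contracted against some $m_0 \in F_0 \otimes F_0^\vee$. Choosing $m_0$ of pure $\fhb_Q$-weight, the contribution of $\br_\alpha$ yields an operator of weight $-\alpha$, whose commutator with $\fhb_Q$ is nonzero unless $\alpha = 0$. Therefore every element of $\fg_Q$ commuting with $\fhb_Q$ must arise from matrix coefficients of $\br_0$, and these lie manifestly in $\fhb_Q$. Taking $\bk$-linear combinations preserves membership in the centralizer, so the centralizer of $\fhb_Q$ in $\fg_Q$ is contained in $\fhb_Q$, and the reverse inclusion is automatic.

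The main obstacle is the first step, specifically verifying cleanly that the ``off-diagonal'' terms in $\br$ contribute only to weight spaces $(\fg_Q)_\alpha$ with $\alpha \neq 0$. This is exactly what the remark ``off-diagonal elements have a nonzero commutator with $\fhb_Q$'' preceding the Proposition asserts, but the rigorous version requires uniqueness of the weight decomposition of $\br$. Uniqueness holds because $\fhb_Q$ acts diagonalizably on the graded pieces of each $F_i$ with finite-dimensional eigenspaces, so the adjoint action on $\fg_Q \otimes \fg_Q$ is completely reducible and the zero-weight projection of $\br$ is unambiguously the diagonal $\sum h_i \otimes h^i$ prescribed by \eqref{br_diag_Nak}.
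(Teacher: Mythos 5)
Your proposal is correct and follows essentially the same route as the paper's (very terse) argument: decompose $\br$ into $\fhb_Q$-weight components using $[\Delta\xi,\br]=0$, identify the weight-zero piece with $\sum h_i\otimes h^i$ via \eqref{br_diag_Nak}, observe that contracting the nonzero-weight pieces against $F_0\otimes F_0^\vee$ produces operators of nonzero weight, and conclude the centralizer of $\fhb_Q$ in $\fg_Q$ is $\fhb_Q$. The only cosmetic issue is that your opening sentence already writes the weight decomposition with $\fhb_Q$ as its zero-weight piece, which is the statement being proved rather than an a priori fact; but the body of the argument correctly treats the identification of the zero-weight subspace with $\fhb_Q$ as the thing to be established.
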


\subsection{}
For brevity, we write $\fhb = \fhb_Q$, $\fg=\fg_Q$. 
By Proposition \ref{hmaxcomm}, we can write 
\begin{equation}
  \fg = \fhb \oplus \bigoplus_{\eta\ne 0} \fg_\eta
\label{roots_g}
\end{equation}
where $\eta\in \Z^I$ and 
$\fg_\eta$ is spanned by $\xi$ such that
$$
\xi: H^\hd_\bG(\cM(\bw,\bv)) \to H^\hd_\bG(\cM(\bw,\bv+\eta)) \,. 
$$
The vectors $\eta$ such that $\fg_\eta \ne 0$ are called 
the roots of $\fg$. Clearly
\begin{equation}
  \label{[geta]}
  \left[ \fg_\alpha, \fg_\beta \right]
\subset \fg_{\alpha+\beta} \,. 
\end{equation}
We call a root $\eta$ positive if $\eta \in \N^I$. 

% \subsubsection{}

% It follows from Corollary \ref{c_R_tau} in Section \ref{s_R_adj} that
% $$
% \br^\tau = \br \,.
% $$
% However, in general, $\fg_\eta^\tau \ne \fg_{-\eta}$. 
% This is  because taking matrix elements 
% treats the source and the target 
% of a map
% $$
% H^\hd_\bG(X) \to H^\hd_\bG(Y)
% $$
% asymmetrically: it takes an element of $H^*_\bG(X)$ and 
% an element of $H^*_\bG(Y)^\vee$ to define it. After localization, 
% equivariant Poincar\'e pairing gives an isomorphism
% $$
% H^\hd_\bG(Y) \otimes H^\hd_\bG(\pt)_\textup{loc} \overset\sim\to
% H^\hd_\bG(Y)^\vee \otimes H^\hd_\bG(\pt)_\textup{loc} \,,
% $$
% that allows to define matrix elements in a symmetric way. 
% Therefore
% %
% \begin{equation}
%   \fg_\eta^\tau \otimes \bk_\textup{loc}= 
% \fg_{-\eta}\otimes\bk_\textup{loc} \,.
% \label{root_transp}
% \end{equation}
% % 

\subsection{} \label{s_h_eta} 

The decomposition \eqref{roots_g} parallels the 
root decomposition for Kac-Moody Lie algebras. As for a Kac-Moody Lie algebra, we define the coroot 
$$
h_\eta = \Cb \, \eta \in \fhb
$$
for every root $\eta$. These satisfy
\begin{equation}
  (\alpha,\beta)_\Qb = \alpha(h_\beta) = (h_\alpha,h_\beta)_{\fhb} \,. 
\label{prop_h_beta}
\end{equation}
\begin{Proposition}\label{p_comm_pair}
Let $\eta$ be a root and consider the commutator map 
$$
\fg_{\eta} \otimes \fg_{-\eta} \to \fhb \,.
$$
Its image is $\bk \, h_\eta$ and this gives an embedding 
$$
\fg_\eta \hookrightarrow \fg_{-\eta}^\vee = \Hom(\fg_{-\eta},\bk) \,. 
$$
\end{Proposition}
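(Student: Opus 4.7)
The plan is to extract everything from the invariance $[\Delta\xi,\br]=0$ of \eqref{inv_tens_r} combined with the explicit Cartan part of $\br$ given by \eqref{br_diag_Nak}. Decompose
\[
\br = \br_0 + \sum_{\alpha\ne 0} \br_\alpha, \qquad \br_\alpha\in \fg_\alpha\otimes \fg_{-\alpha},
\]
where $\br_0=\sum_i h_i\otimes h^i$ is the canonical element on $\fhb$ associated to dual bases under $(\,\cdot\,,\,\cdot\,)_{\fhb}$. By the symmetry $\br=\br_{21}$ noted in Section \ref{s_unitarity} (the classical limit of unitarity), I may pick a tensor decomposition $\br_\eta=\sum_a u_a\otimes v_a$ with $u_a\in\fg_\eta$ linearly independent, and then $\br_{-\eta}=\sum_a v_a\otimes u_a$ automatically.

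Next I fix $x\in \fg_\eta$ and expand $[x\otimes 1+1\otimes x,\br]=0$, projecting onto the two weight components $\fg_\eta\otimes \fhb$ and $\fhb\otimes \fg_\eta$. A direct weight check shows only $\br_0,\br_\eta,\br_{-\eta}$ can contribute. Using $[x,h]=-\eta(h)\,x$ for $h\in \fhb$ together with the identity $\sum_i \eta(h_i)\,h^i = h_\eta$, which is immediate from \eqref{prop_h_beta} and the definition of $h_\eta$, the contribution of $\br_0$ computes to $-x\otimes h_\eta - h_\eta\otimes x$. Cancellation against the $\br_{\pm\eta}$-contributions then yields the two key identities
\begin{equation}\label{keyid}
\sum_a u_a\otimes [x,v_a] \;=\; x\otimes h_\eta, \qquad
\sum_a [x,v_a]\otimes u_a \;=\; h_\eta\otimes x.
\end{equation}

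From \eqref{keyid} I read off two consequences at once. First, since \eqref{keyid} holds for every $x\in \fg_\eta$ and its right-hand side lies in $\fg_\eta\otimes \bk h_\eta$, the linear independence of $\{u_a\}$ forces $\fg_\eta\subseteq \Span\{u_a\}$, so $\{u_a\}$ is a basis of $\fg_\eta$; symmetric reasoning with $-\eta$ and $\br_{-\eta}$ shows $\{v_a\}$ is a basis of $\fg_{-\eta}$. Second, expanding $x=\sum c_a u_a$ in \eqref{keyid} gives $[x,v_a]=c_a h_\eta\in \bk h_\eta$, and then bilinearity yields $[x,y]\in \bk h_\eta$ for every $y\in \fg_{-\eta}$, proving the image statement. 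The same computation makes $\{u_a\},\{v_a\}$ dual bases under the pairing $\langle x,y\rangle\,h_\eta := [x,y]$, so the pairing is perfect and produces the claimed embedding $\fg_\eta\hookrightarrow \fg_{-\eta}^\vee$.

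The main obstacle will be the step ``$\{u_a\}$ spans $\fg_\eta$'': without it one only obtains the weaker containment $[\Span\{u_a\},\fg_{-\eta}]\subseteq \bk h_\eta$ and no non-degeneracy. The argument just sketched leverages \eqref{keyid} for \emph{every} $x\in \fg_\eta$, which in turn rests on $\br\in \fg\otimes\fg$ and the surjectivity built into the construction of $\fg_Q$ from matrix elements of $\br$ in Section \ref{s_fgQ}. A secondary subtlety, which I would handle by a short case-check, is the degenerate situation $h_\eta=0$: there $\bk h_\eta=0$ and the identities \eqref{keyid} degenerate, and one must verify that the statement as phrased in the proposition remains consistent (in particular that $\fg_\eta$ is understood accordingly).
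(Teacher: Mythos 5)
Your proof follows the same route as the paper: project $[\Delta\xi,\br]=0$ onto the $(\eta,0)$-weight space to obtain \eqref{xicommr} (your \eqref{keyid}), and then extract both conclusions from it. That part is correct and is exactly the paper's key step.

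The genuine gap is in the sentence deducing ``$\fg_\eta\subseteq\Span\{u_a\}$'' from the linear independence of $\{u_a\}$ and the identity $\sum_a u_a\otimes[x,v_a]=x\otimes h_\eta$. Over the base ring $\bk$ (a polynomial ring, not a field) this implication fails: take $\bk=\Q[t]$, $\fg_\eta=\bk$, $u_1=t$, $h_\eta=t$; for $x=1$ the identity $u_1\otimes[x,v_1]=x\otimes h_\eta$ holds with $[1,v_1]=1$, yet $1\notin t\bk=\Span\{u_1\}$. Concretely, $x\otimes h_\eta\in\Span\{u_a\}\otimes\fhb$ forces $x\in\Span\{u_a\}$ only when $\Span\{u_a\}$ is a \emph{saturated} (pure) submodule of the free module $\fg_\eta$, which linear independence alone does not give; and for roots $\eta$ that are non-primitive multiples, $h_\eta=\Cb\eta$ is likewise non-primitive so one cannot even project onto a ``$h_\eta$-coordinate'' to rescue the argument. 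The ``symmetric reasoning'' for $\{v_a\}$ compounds the problem: once $\{u_a\}$ is chosen to be a basis, there is no freedom to also arrange $\{v_a\}$ to be linearly independent.

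The ingredient you flag in your final paragraph is the right one, but it must be used as an \emph{input}, not re-derived from \eqref{keyid}: $\fg_Q$ is, by its very definition in Section \ref{s_fgQ}, spanned by matrix elements of $\br$, so every $\xi'\in\fg_{-\eta}$ has the form $\sum_i(m_i\otimes\mathrm{id})\br_{\eta,-\eta}$ for finitely many functionals $m_i$. Feeding this into \eqref{xicommr} immediately gives $[\xi,\xi']=\sum_i m_i(\xi)\,h_\eta\in\bk h_\eta$, which is the first claim; and the injectivity is cleaner still, needing no basis: if $[\xi,\fg_{-\eta}]=0$ then in particular $[\xi,v_a]=0$ for the second factors of $\br_{\eta,-\eta}$, so $\xi\otimes h_\eta=0$ by \eqref{xicommr}, hence $\xi=0$ since $\fhb$ is free and $h_\eta\ne 0$. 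Note that your attempt to obtain a \emph{perfect} pairing here goes beyond what the Proposition asserts; the paper deliberately proves only the embedding at this stage and defers the isomorphism $\fg_\eta\cong\fg_{-\eta}^\vee$ to the Corollaries following the projectors $\bP_\eta$ in Section \ref{s_bPeta}. Finally, the ``secondary subtlety'' about $h_\eta=0$ does not arise: $h_\eta=\Cb\,\eta$ has $\fz$-component equal to $\eta$ itself, so $h_\eta\ne 0$ for every nonzero root.
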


\noindent 
Later we will see that, in fact, this gives
an isomorphism $\fg_\eta \cong \fg_{-\eta}^\vee$. 

\begin{proof} 
Take $\xi \in \fg_\eta$ and consider the $(\eta,0)$-weight 
space in \eqref{inv_tens_r}. One of the terms is
$$
\left[\xi\otimes 1,\sum h_i \otimes h^i\right] = 
- \xi \otimes \sum_i h_i(\eta) \, h^i  = - \xi \otimes h_\eta \,. 
$$
We conclude
\begin{equation}
\big[1 \otimes \xi, \br_{\eta,-\eta} \big] = \xi \otimes h_\eta \,,
\label{xicommr}
\end{equation}
where $\br_{\eta,-\eta}$ denotes the corresponding weight component. 
Both claims follows from this. 
\end{proof}

\subsection{}
By construction, $\fg$ comes with modules $F_\bw$ containing 
vectors $\vacv{\bw}$ of lowest weight, that is, 
\begin{equation}
\fg_\eta \vacv{\bw} = 0 \,, \quad \eta \not> 0 \,.
\label{ann_vac}
\end{equation}
Recall that $\eta>0$ means $\eta\in\N^I$. Also 
$$
h  \vacv{\bw}  =  \bw(h) \vacv{\bw} \,, \quad  h \in \fhb 
$$
and $\vacv{\bw}$ is the unique, up to multiple, vector of 
weight $\bw$. We denote by $F_\bw(\eta)\subset F_\bw$ the 
subspace of weight $\bw+\eta$. The $\fg$-action gives 
maps 
\begin{equation}
  \label{fga}
  \fg_\eta \to F_\bw(\eta)\,, \quad \fg_{-\eta} \to F_\bw(\eta)^\vee 
\end{equation}
that take $\xi\in \fg_\eta$ to $\xi \vacv{\bw}$ and dually for 
$\fg_{-\eta}$. 

\begin{Proposition}\label{p_maps_inj} 
If $\eta\not<0$ and $\bw(h_\eta)\ne 0$ then the maps 
\eqref{fga} are injective. 
\end{Proposition}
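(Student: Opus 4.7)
The plan is to reduce both injectivity statements to Proposition \ref{p_comm_pair} by exploiting that the lowest-weight condition \eqref{ann_vac} annihilates \emph{both} $\fg_\eta$ for $\eta \not > 0$ and, in particular, annihilates $\fg_{-\eta}$ under the hypothesis $\eta \not < 0$. The numerical hypothesis $\bw(h_\eta) \neq 0$ will enter as a nondegeneracy condition that lets us pass from vanishing on $\vacv{\bw}$ to vanishing of the commutator bracket itself.

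For the first map, I would take $\xi \in \fg_\eta$ with $\xi \vacv{\bw} = 0$ and pair it against an arbitrary $\zeta \in \fg_{-\eta}$. Since $-\eta \not > 0$, equation \eqref{ann_vac} gives $\zeta \vacv{\bw} = 0$, so the bracket acts as
\[
[\zeta,\xi]\vacv{\bw} \;=\; \zeta\,(\xi\vacv{\bw}) \;-\; \xi\,(\zeta\vacv{\bw}) \;=\; 0.
\]
Proposition \ref{p_comm_pair} tells us that $[\zeta,\xi] = c(\zeta)\, h_\eta$ for some scalar $c(\zeta)\in\bk$, so the displayed equation becomes $c(\zeta)\, \bw(h_\eta)\, \vacv{\bw} = 0$. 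Since $\bw(h_\eta) \ne 0$ and $\vacv{\bw}$ generates a free rank-one submodule of $F_\bw$, we get $c(\zeta) = 0$ for all $\zeta \in \fg_{-\eta}$. By the embedding $\fg_\eta \hookrightarrow \fg_{-\eta}^\vee$ from Proposition \ref{p_comm_pair} (whose defining pairing is exactly the coefficient of $h_\eta$ in the commutator), this forces $\xi = 0$.

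The second map is treated symmetrically. The natural pairing underlying $\fg_{-\eta} \to F_\bw(\eta)^\vee$ sends $\zeta \in \fg_{-\eta}$ to the functional $v \mapsto $ (coefficient of $\vacv{\bw}$ in $\zeta v$), where $\zeta v \in F_\bw(0) = \bk\vacv{\bw}$. Suppose $\zeta$ is in the kernel, i.e.\ $\zeta v = 0$ for every $v \in F_\bw(\eta)$. Taking $v = \xi\vacv{\bw}$ for arbitrary $\xi \in \fg_\eta$ and using again $\zeta\vacv{\bw}=0$, I get $[\zeta,\xi]\vacv{\bw}=0$; the same Proposition \ref{p_comm_pair} argument as above then yields $[\zeta,\xi]=0$ for every $\xi\in\fg_\eta$, and the commutator embedding (in its symmetric form $\fg_{-\eta}\hookrightarrow \fg_\eta^\vee$, which follows from the same invariance calculation \eqref{xicommr} applied with $\xi\in\fg_{-\eta}$) gives $\zeta=0$.

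The main potential obstacle is making sure that Proposition \ref{p_comm_pair} is available in the symmetric form for both maps --- that is, that the pairing $\fg_\eta \otimes \fg_{-\eta} \to \bk$ induced by $[\cdot,\cdot]\big|_{\bk h_\eta}$ is nondegenerate on each side, not just one. This is essentially automatic by rerunning the weight-$(-\eta,0)$ component of the invariance identity \eqref{inv_tens_r}, so no genuinely new input is required; aside from this bookkeeping, the argument is entirely structural.
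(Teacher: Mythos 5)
Your proof is correct and follows the same route as the paper: use $\eqref{ann_vac}$ for $\xi'\in\fg_{-\eta}$ (available since $-\eta\not>0$) to reduce $\xi'\xi\vacv{\bw}$ to $[\xi',\xi]\vacv{\bw}=\bw([\xi',\xi])\vacv{\bw}$, then invoke Proposition \ref{p_comm_pair} together with $\bw(h_\eta)\ne 0$ to conclude. The paper states this in compressed form and leaves the unwinding to the reader; you supply the two injectivity arguments explicitly, including the symmetric use of Proposition \ref{p_comm_pair} for the root $-\eta$, which is indeed automatic.
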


\begin{proof}
Take $\xi\in \fg_\eta$ and $\xi' \in \fg_{-\eta}$. Then 
$$
\xi'\, \xi \vacv{\bw} = \left[\xi',\xi\right] \vacv{\bw} 
= \bw\left(\left[\xi',\xi\right]\right) \vacv{\bw} 
$$
where the step in the middle follows from \eqref{ann_vac}. 
Now the claim follows from Proposition \ref{p_comm_pair}. 
\end{proof}

\begin{Corollary}
All roots spaces are $\bk$-modules of finite rank. 
\end{Corollary}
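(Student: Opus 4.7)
The plan is to reduce the Corollary to the preceding Proposition, handling roots of either sign via the duality embedding of Proposition \ref{p_comm_pair}. For any nonzero root $\eta \in \Z^I$, at least one of $\eta$ or $-\eta$ has a strictly positive component, hence satisfies $\eta \not < 0$ in the paper's convention. Since Proposition \ref{p_comm_pair} provides an embedding $\fg_\eta \hookrightarrow \fg_{-\eta}^\vee$, finite rank of $\fg_{-\eta}$ over the Noetherian domain $\bk$ automatically implies finite rank of $\fg_\eta$. So I would reduce without loss of generality to the case where $\eta$ itself satisfies $\eta \not < 0$.

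Next I would seek a framing vector $\bw \in \fhb^* \otimes \bk$ for which $\bw(h_\eta) \ne 0$ in $\bk$. By the nondegeneracy of the form $(\,\cdot\,,\,\cdot\,)_\fhb$ inherited from $(\,\cdot\,,\,\cdot\,)_\Qb$, the element $h_\eta = \Cb \, \eta$ is nonzero, so $\{\bw : \bw(h_\eta) = 0\}$ cuts out a proper linear subvariety of $\fhb^* \otimes \bk$. Since $\bw$ is allowed to carry arbitrary $G_\bw$-equivariant parameters living in $\bk$, this vanishing locus is easy to avoid: one may choose a dimension vector $\bw \in \N^I$ large enough that $\cM(\eta,\bw) \ne \emptyset$ and then deform by generic equivariant parameters, making $\bw(h_\eta)$ a nonzero polynomial in those parameters.

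With such $\bw$ in hand, the preceding Proposition furnishes an injection
$$\fg_\eta \hookrightarrow F_\bw(\eta),$$
and by the standing grading hypothesis of Section \ref{s_compl} the target $F_\bw(\eta)$ is a $\bk$-module of finite rank. Since $\bk$ is a Noetherian integral domain, every submodule of a finitely generated $\bk$-module is itself finitely generated; therefore $\fg_\eta$ is of finite rank over $\bk$, as required.

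The main obstacle is really the middle step: verifying that a suitable $\bw$ exists such that simultaneously the injectivity hypothesis $\bw(h_\eta) \ne 0$ holds and $F_\bw(\eta)$ is genuinely of finite rank. The first condition is a Zariski-open (hence generic) condition on the equivariant parameters, while the second is guaranteed by the finiteness of $\bG$-equivariant cohomology of $\cM(\eta,\bw)$, which ultimately traces back to the properness of $\pi : \cM(\eta,\bw) \to \cM_0(\eta,\bw)$ onto its affine image and the formality assumption of Section \ref{s_convG}. Once these are granted, the Noetherian argument delivers the Corollary cleanly.
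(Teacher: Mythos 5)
Your argument is correct and follows the paper's intended route: the preceding Proposition injects $\fg_\eta$ into $F_\bw(\eta)$ (and $\fg_{-\eta}$ into its dual), and $F_\bw(\eta)$ has finite rank by the grading assumption of Section \ref{s_compl}. The middle step is overwrought, though: no equivariant deformation, no detour through Proposition \ref{p_comm_pair}, and no nonemptiness of $\cM(\eta,\bw)$ are needed, since $\bw(h_\eta) = \bw\cdot\eta$ and the paper's standard choice $\bw = \delta_i$ for $i\in\supp\eta$ (used, e.g., near \eqref{Htog}) already gives $\bw(h_\eta)=\eta_i\ne 0$ on the nose, after which the Noetherian argument finishes immediately.
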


\begin{Corollary}
All roots are either positive or negative.  
\end{Corollary}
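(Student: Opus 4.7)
The plan is to argue by contradiction. Suppose $\eta$ is a root which is neither positive nor negative, so $\eta \in \Z^I$ has both a strictly positive and a strictly negative coordinate. In particular $\eta \notin \N^I$, so the weight-$(\bw+\eta)$ subspace $F_\bw(\eta) \subset F_\bw$ vanishes for every framing vector $\bw$: indeed $F_\bw = \bigoplus_{\bv \in \N^I} H^\hd_\bG(\cM(\bv,\bw))$, and the weight-$(\bw+\eta)$ subspace would have to come from $\cM(\eta,\bw)$, which is undefined since $\eta$ has a negative entry.

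I then apply the preceding Proposition, whose hypothesis $\eta \not< 0$ is satisfied by assumption. It asserts that the map $\fg_\eta \to F_\bw(\eta)$ defined by $\xi \mapsto \xi \vacv{\bw}$ is injective for any $\bw \in \N^I$ with $\bw(h_\eta) \ne 0$. Combined with the vanishing $F_\bw(\eta) = 0$ just observed, any such $\bw$ forces $\fg_\eta = 0$, contradicting that $\eta$ is a root.

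The remaining step is to exhibit some $\bw \in \N^I$ with $\bw(h_\eta) \ne 0$. The vacuum $\vacv{\bw}$ lies in $H^\hd_\bG(\cM(0,\bw))$, so under the decomposition $\fhb_Q = \fh \oplus \fz$ from Section \ref{s_h_eta} the scalar $\bw(h_\eta)$ depends only on the $\fz$-component of $h_\eta = \Cb\,\eta$. Inspecting the block form of $\Cb$, with $\eta$ embedded as $(\eta,0)$ in the $(\bv,\bw)$-lattice, the $\fz$-component of $h_\eta$ is $\eta$ itself, which is nonzero. Therefore $\bw \mapsto \bw(h_\eta)$ is a nonzero linear functional on $\N^I$, and any $\bw = \delta_i$ with $\eta_i \ne 0$ does the job. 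I do not expect any serious obstacle: once the preceding Proposition is in hand, the argument reduces to a weight-space vanishing statement, with only the routine bookkeeping of the block structure of $\Cb$ needed to check the condition $h_\eta \ne 0$.
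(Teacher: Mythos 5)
Your argument is correct and is precisely the intended deduction from the preceding Proposition: for $\eta$ with both a positive and a negative entry, $\eta \not< 0$ and $F_\bw(\eta) = H^\hd_\bG(\cM(\eta,\bw)) = 0$, and your computation showing $h_\eta$ has $\fz$-component $\eta$ (so that $\bw(h_\eta) = \sum_i \bw_i\eta_i$) correctly produces a $\bw$ satisfying the nondegeneracy hypothesis. The paper states this Corollary without proof, and your proof fills in exactly the expected bookkeeping.
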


\subsection{}\label{s_bPeta} 

The $(\eta,-\eta)$-weight component of $\br$ defines a map 
$$
F_\bw(0) \otimes F_\bw(\eta) \to F_\bw(\eta) \otimes F_\bw(0) \,. 
$$
Since $F_\bw(0) \cong \bk$, this gives an operator 
$$
\bP_\eta: F_\bw(\eta) \to F_\bw(\eta) \,.
$$

\begin{Proposition} 
  \begin{equation}
    \label{Psq}
      \bP_\eta^2 = - \bw(h_\eta) \, \bP_\eta \,. 
  \end{equation}
\end{Proposition}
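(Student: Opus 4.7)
The plan is a direct computation that exploits the pairing structure of $\br_{\eta,-\eta}$. Fix a $\bk$-basis $\{e_\alpha\}$ of the finite-rank module $\fg_\eta$, and let $\{f_\alpha\}\subset\fg_{-\eta}$ be the unique elements determined by $\br_{\eta,-\eta} = \sum_\alpha e_\alpha \otimes f_\alpha$. The first step is to sharpen Proposition~\ref{p_comm_pair} to the precise identity
\[
[e_\beta, f_\alpha] = \delta_{\alpha\beta}\, h_\eta
\]
in $\fhb$. This follows from \eqref{xicommr} by substituting $\xi = e_\beta$ to obtain $\sum_\alpha e_\alpha \otimes [e_\beta, f_\alpha] = e_\beta \otimes h_\eta$ in $\fg_\eta \otimes \fhb$; pairing the second tensor factor with an arbitrary $\lambda\in\fhb^*$ and using the linear independence of $\{e_\alpha\}$ gives $\langle\lambda,[e_\beta,f_\alpha]\rangle = \delta_{\alpha\beta}\lambda(h_\eta)$, which upon varying $\lambda$ yields the displayed identity (whether or not $h_\eta$ vanishes).

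With this in hand, I unwind the definition of $\bP_\eta$. Using the identification $F_\bw(0) = \bk\vacv{\bw}$, write $f_\alpha w = T_\alpha(w)\vacv{\bw}$ for $w\in F_\bw(\eta)$, so that $\bP_\eta(w) = \sum_\alpha T_\alpha(w)\, e_\alpha \vacv{\bw}$ by construction. To compute $T_\beta(\bP_\eta(w))$, and hence $\bP_\eta^2(w)$, observe
\[
f_\beta \, e_\alpha \vacv{\bw} = [f_\beta, e_\alpha]\vacv{\bw} + e_\alpha f_\beta \vacv{\bw} = -\delta_{\alpha\beta}\,\bw(h_\eta)\,\vacv{\bw},
\]
where the term $e_\alpha f_\beta \vacv{\bw}$ vanishes by the lowest-weight property \eqref{ann_vac} (since $-\eta$ is a negative root), and the commutator is evaluated by the identity derived above. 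Hence $T_\beta(\bP_\eta(w)) = -\bw(h_\eta)\,T_\beta(w)$, and summing over $\beta$ produces $\bP_\eta^2(w) = -\bw(h_\eta)\bP_\eta(w)$, which is \eqref{Psq}.

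No significant obstacle is visible; once \eqref{xicommr} is brought to bear, the whole argument is a few lines of bookkeeping. The only mildly delicate point is the uniform extraction of the commutation relation $[e_\beta, f_\alpha] = \delta_{\alpha\beta}h_\eta$ from \eqref{xicommr}: in the degenerate case $h_\eta = 0$ the right-hand side of \eqref{xicommr} vanishes identically, but the pairing-with-$\fhb^*$ argument nonetheless pins down $[e_\beta, f_\alpha] = 0 = \delta_{\alpha\beta}h_\eta$, so no separate case analysis is required in the subsequent calculation.
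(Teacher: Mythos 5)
Your proof is correct and reaches the same conclusion by a closely related but more explicit route. The paper's one-line argument applies the classical Yang--Baxter identity \eqref{CYBs}, namely $[\br_{12}+\br_{13},\br_{23}]=0$, directly to the weight component of the triple tensor product $F_\bw(0)\otimes F_\bw(0)\otimes F_\bw(\eta) \to F_\bw(\eta)\otimes F_\bw(0)\otimes F_\bw(0)$, which upon unwinding the weight bookkeeping yields $\bP_\eta^2 + \bw(h_\eta)\bP_\eta=0$. You instead isolate the $\mathfrak{sl}_2$-like commutation relation $[e_\beta,f_\alpha]=\delta_{\alpha\beta}h_\eta$ from \eqref{xicommr} (which is itself the $(\eta,0)$-weight piece of $[\Delta\xi,\br]=0$, derived from the same \eqref{CYBs}) and then perform an explicit operator computation on $F_\bw(0)\otimes F_\bw(\eta)$. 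Both arguments are of comparable depth; yours has the merit of making the quasi-$\mathfrak{sl}_2$ structure transparent, while the paper's avoids any choice of basis.

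One small point to tighten: at this stage of the paper, $\fg_\eta$ is known to be a $\bk$-module of finite rank (from the corollary preceding Section \ref{s_bPeta}), but projectivity and freeness of the root spaces are established only afterward --- the Corollary ``$\fg_{\pm\eta}$ are dual projective modules'' and the remark after Theorem \ref{t_Lie} that they are free both rely on the Proposition you are proving. So ``fix a $\bk$-basis of $\fg_\eta$'' is not quite available. This is easily repaired: since $\bk$ is a polynomial ring (hence an integral domain), $F_\bw(\eta)$ is free over $\bk$, and the identity $\bP_\eta^2 = -\bw(h_\eta)\bP_\eta$ is $\bk$-linear, it suffices to verify it after tensoring with the fraction field $K=\mathrm{Frac}(\bk)$, where $\fg_\eta\otimes_\bk K$ is a finite-dimensional $K$-vector space and your basis argument goes through verbatim. (The assertion that $\br_{\eta,-\eta}\in\fg_\eta\otimes\fg_{-\eta}$, which is also needed to write $\br_{\eta,-\eta}=\sum_\alpha e_\alpha\otimes f_\alpha$ with $f_\alpha\in\fg_{-\eta}$, follows from the symmetry $\br_{W,W'}=\br_{W',W}$ and the definition of $\fg_Q$ as the span of matrix elements of $\br$; this is used implicitly in \eqref{xicommr} and is worth a sentence.)
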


\begin{proof}
  Follows from considering the map 
$$
F_\bw(0) \otimes F_\bw(0) \otimes F_\bw(\eta) 
\to F_\bw(\eta) \otimes F_\bw(0) \otimes  F_\bw(0)
$$
given by \eqref{cYB1}. 
\end{proof}

\begin{Proposition}
If $\eta>0$ and $\bw(h_\eta)\ne 0$ then image 
of \eqref{fga} is the image of $\bP_\eta$ and 
$\bP_\eta^\vee$, respectively. 
\end{Proposition}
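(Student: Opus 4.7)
The plan is to explicitly identify $\bP_\eta$ in terms of dual bases of $\fg_\eta$ and $\fg_{-\eta}$, so that both the inclusion and the surjectivity onto the image of \eqref{fga} become a direct computation.

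First, choose a basis $\{e_{\eta,i}\}$ of $\fg_\eta$ and $\{e_{-\eta,k}\}$ of $\fg_{-\eta}$, and define the commutator pairing $B_{jk}\in\bk$ by $[e_{\eta,j},e_{-\eta,k}]=B_{jk}h_\eta$, which is well-defined by Proposition \ref{p_comm_pair}. Write the weight component as
\[
\br_{\eta,-\eta} = \sum_{i,k} c_{ik}\, e_{\eta,i}\otimes e_{-\eta,k}
\]
for some matrix $c=(c_{ik})$. Feeding this expansion into the identity $[1\otimes \xi,\br_{\eta,-\eta}] = \xi\otimes h_\eta$ established in \eqref{xicommr} and setting $\xi=e_{\eta,j}$ yields the relation $\sum_k c_{ik}B_{jk}=\delta_{ij}$; thus $c$ and $B$ are mutually inverse (up to transpose). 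This is the only nontrivial algebraic input.

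Next I would evaluate $\bP_\eta$ directly. For $x\in F_\bw(\eta)$, each vector $e_{-\eta,k}x$ has weight $\bw$, so by uniqueness of the lowest-weight line there is a functional $\mu_k\in F_\bw(\eta)^\vee$ with $e_{-\eta,k}x = \mu_k(x)\vacv{\bw}$. The map $\fg_{-\eta}\to F_\bw(\eta)^\vee$ of \eqref{fga} is precisely $e_{-\eta,k}\mapsto \mu_k$. Applying $\br_{\eta,-\eta}$ to $\vacv{\bw}\otimes x$ and reading off the coefficient in $F_\bw(\eta)\otimes F_\bw(0)$ gives
\[
\bP_\eta(x) = \sum_{i,k} c_{ik}\,\mu_k(x)\, e_{\eta,i}\vacv{\bw}.
\]
In particular $\bP_\eta$ takes values in the image of $\fg_\eta\to F_\bw(\eta)$, and likewise $\bP_\eta^\vee$ takes values in the image of $\fg_{-\eta}\to F_\bw(\eta)^\vee$ (it is a linear combination of the $\mu_k$).

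Finally, I would establish the reverse inclusions by computing $\bP_\eta$ on the image. Using $\fg_{-\eta}\vacv{\bw}=0$ one gets
\[
\mu_k(e_{\eta,j}\vacv{\bw})\vacv{\bw} = e_{-\eta,k}e_{\eta,j}\vacv{\bw} = [e_{-\eta,k},e_{\eta,j}]\vacv{\bw} = -B_{jk}\,\bw(h_\eta)\vacv{\bw},
\]
and substituting this into the formula for $\bP_\eta$ together with $\sum_k c_{ik}B_{jk}=\delta_{ij}$ gives $\bP_\eta(e_{\eta,j}\vacv{\bw}) = -\bw(h_\eta)\,e_{\eta,j}\vacv{\bw}$. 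An identical computation shows $\bP_\eta^\vee(\mu_l) = -\bw(h_\eta)\,\mu_l$. Since $\bw(h_\eta)\neq 0$, $\bP_\eta$ restricts to an automorphism of the image of $\fg_\eta\to F_\bw(\eta)$ (and symmetrically for $\bP_\eta^\vee$), giving the opposite inclusions and completing the proof. This also provides an independent sanity check against $\bP_\eta^2 = -\bw(h_\eta)\bP_\eta$ from \eqref{Psq}.

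The only conceptual step is step two — extracting the duality $c\cdot B^T = I$ from \eqref{xicommr}; the remaining manipulations are routine bookkeeping with weight decompositions. No obstacle is anticipated, as all required identities are already in hand from the preceding results.
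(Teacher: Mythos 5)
Your proposal is built on exactly the same mechanism as the paper's one-line proof, namely applying \eqref{xicommr} to $\vacv{\bw}\otimes\vacv{\bw}$; your steps 2--4 are just a coordinatized version of that computation, and your step 3 supplies the other inclusion (image of $\bP_\eta$ contained in image of $\fg_\eta$) which the paper leaves implicit in the definition of $\bP_\eta$.

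There is, however, one genuine gap: from $\sum_k c_{ik}B_{jk}=\delta_{ij}$ (that is, $cB^T=I$) you assert that ``$c$ and $B$ are mutually inverse (up to transpose).'' That is not automatic for rectangular matrices --- a priori $\fg_\eta$ and $\fg_{-\eta}$ could have different ranks, in which case $cB^T=I$ only gives a one-sided inverse. Step 4 for $\bP_\eta$ uses only $cB^T=I$ and is fine, but the ``identical computation'' for $\bP_\eta^\vee$ uses $\sum_i c_{ik}B_{il}=\delta_{kl}$ (that is, $c^TB=I$), which you did not derive. To close the gap, run the symmetric argument: expand $\br_{-\eta,\eta}$ (which equals $\br_{\eta,-\eta}$ after permutation of tensor factors by the symmetry $\br_{W,W'}=\br_{W',W}$ from Section \ref{s_unitarity}) and apply the analogue of \eqref{xicommr} with $\xi\in\fg_{-\eta}$; this produces $c^TB=I$ directly. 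Alternatively, Proposition \ref{p_comm_pair} applied to the roots $\eta$ and $-\eta$ gives embeddings $\fg_\eta\hookrightarrow\fg_{-\eta}^\vee$ and $\fg_{-\eta}\hookrightarrow\fg_\eta^\vee$, hence equal ranks, after which $cB^T=I$ for square matrices forces $c^TB=I$ as well. Either way the fix is short, but as written the proof of the $\bP_\eta^\vee$ half does not follow.
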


\noindent Here $\bP_\eta^\vee$ denotes the transpose map
between the dual modules. 

\begin{proof}
Apply both sides of \eqref{xicommr} to $\vacv{\bw}\otimes\vacv{\bw}$. 
\end{proof}

\begin{Corollary}
The root subspaces $\fg_{\pm \eta}$ are dual projective modules
over $\bk$. The classical $\br$-matrix 
$$
\br_{\eta,-\eta} \in \fg_{\eta} \otimes \fg_{-\eta} 
$$
is the canonical element of this tensor product.  
\end{Corollary}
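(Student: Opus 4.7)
The plan is to deduce both assertions from the preceding proposition on the idempotent behavior of $\bP_\eta$, using the framing vector $\bw$ as a probe. First, I would fix a framing vector $\bw$ with $\bw(h_\eta)\ne 0$; such a $\bw$ exists because $h_\eta \in \fhb_Q$ and $\bw$ can be taken to have nonzero pairing with any prescribed element of $\fhb_Q$. From the identity $\bP_\eta^2 = -\bw(h_\eta)\,\bP_\eta$, the operator $-\bP_\eta/\bw(h_\eta)$ is a genuine idempotent on $F_\bw(\eta)$, so its image $\mathrm{Im}(\bP_\eta)$ is a direct $\bk$-summand of the free (and in each graded degree finitely generated) module $F_\bw(\eta)$, hence projective of finite rank.

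Next, invoking the preceding proposition, the map $\fg_\eta\to F_\bw(\eta)$, $\xi\mapsto\xi\vacv{\bw}$, is injective and has image equal to $\mathrm{Im}(\bP_\eta)$; thus $\fg_\eta$ is isomorphic, as a $\bk$-module, to $\mathrm{Im}(\bP_\eta)$ and is therefore projective. The same argument with $\bP_\eta^\vee$ acting on $F_\bw(\eta)^\vee$ shows that $\fg_{-\eta}$ is projective and identifies it with $\mathrm{Im}(\bP_\eta^\vee)$. The natural evaluation pairing between $\mathrm{Im}(\bP_\eta)$ and $\mathrm{Im}(\bP_\eta^\vee)$, inherited from the perfect pairing $F_\bw(\eta)\otimes F_\bw(\eta)^\vee\to\bk$, is perfect (both being projective summands of the same perfectly paired modules), so under these identifications $\fg_\eta$ and $\fg_{-\eta}$ become dual projective modules.

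The remaining point is to show that this duality coincides, up to a universal scalar, with the commutator pairing of Proposition \ref{p_comm_pair}. For $\xi\in\fg_\eta$ and $\xi'\in\fg_{-\eta}$, a direct computation using $\fg_\eta\vacv{\bw}=0$ gives
$$
\xi'\,(\xi\vacv{\bw}) = [\xi',\xi]\vacv{\bw} = -\,c_\xi(\xi')\,\bw(h_\eta)\,\vacv{\bw},
$$
where $[\xi,\xi']=c_\xi(\xi')\,h_\eta$. Thus the two pairings agree up to the nonzero scalar $-\bw(h_\eta)$, proving that the embedding $\fg_\eta\hookrightarrow\fg_{-\eta}^\vee$ of Proposition \ref{p_comm_pair} is an isomorphism of projective modules.

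For the canonical element assertion, I unwind equation \eqref{xicommr}. Writing $\br_{\eta,-\eta}=\sum_a e_a\otimes f_a$ with $e_a\in\fg_\eta$, $f_a\in\fg_{-\eta}$, and using $[\xi,f_a]=c_\xi(f_a)\,h_\eta$, the relation $[1\otimes\xi,\br_{\eta,-\eta}]=\xi\otimes h_\eta$ becomes $\sum_a c_\xi(f_a)\,e_a = \xi$ for every $\xi\in\fg_\eta$. This says precisely that evaluating $\br_{\eta,-\eta}$ on the functional $c_\xi\in\fg_{-\eta}^\vee\cong\fg_\eta$ returns $\xi$, i.e.\ that $\br_{\eta,-\eta}$ represents the identity endomorphism of $\fg_{-\eta}^\vee$ under the isomorphism $\fg_\eta\otimes\fg_{-\eta}\cong\End_\bk(\fg_{-\eta})$; this is exactly the canonical element. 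The main subtlety is keeping track of which pairing (commutator vs.\ Frobenius-like $\bk$-duality) one is using and the scalar relating them; once that bookkeeping is fixed, the argument is essentially formal.
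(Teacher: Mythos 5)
The paper gives no explicit proof of this Corollary, leaving it as an immediate consequence of the two propositions preceding it; your reconstruction fills in precisely that gap and takes the approach the paper clearly intends. You correctly use: (a) that $-\bP_\eta/\bw(h_\eta)$ is idempotent so $\mathrm{Im}(\bP_\eta)$ is a direct summand of the free module $F_\bw(\eta)$, hence finitely generated projective; (b) the preceding proposition's identification $\fg_{\pm\eta}\cong\mathrm{Im}(\bP_\eta),\mathrm{Im}(\bP_\eta^\vee)$; (c) the commutation relation \eqref{xicommr} to pin down $\br_{\eta,-\eta}$ as the canonical element. The argument is sound.

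Two small points worth flagging. First, a typo: you invoke ``$\fg_\eta\vacv{\bw}=0$,'' but for $\eta>0$ it is $\fg_{-\eta}$ that annihilates the vacuum (condition \eqref{ann_vac} says $\fg_\mu\vacv{\bw}=0$ for $\mu\not>0$); the displayed computation uses the correct vanishing $\xi'\vacv{\bw}=0$, so this is only a slip in the prose. Second, when you cancel $h_\eta$ to pass from $\bigl(\sum_a c_\xi(f_a)e_a\bigr)\otimes h_\eta = \xi\otimes h_\eta$ to $\sum_a c_\xi(f_a)e_a = \xi$, you are implicitly using that $h_\eta\ne 0$ in the free $\bk$-module $\fhb$ and that $\fg_\eta$ is $\bk$-torsion-free (which follows from projectivity over the domain $\bk$); it would be cleaner to say so, since this is exactly the step that makes the canonical-element conclusion precise. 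Similarly, the match of the evaluation pairing and the commutator pairing is off by the scalar $-\bw(h_\eta)$, and you should observe that this scalar is a unit of $\bk$ for a suitable choice of $\bw$ so that perfection transfers.
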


\begin{Corollary}
The commutator
pairing from Proposition \ref{p_comm_pair} is perfect.  
\end{Corollary}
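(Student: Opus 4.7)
My plan is to identify the commutator pairing $B \colon \fg_\eta \otimes \fg_{-\eta} \to \bk$, defined by $[\xi, \xi'] = B(\xi, \xi')\, h_\eta$, with the duality already established in the preceding Corollary, where $\br_{\eta,-\eta} \in \fg_\eta \otimes \fg_{-\eta}$ is declared to be the canonical element of a pairing of dual finite projective $\bk$-modules. Let $\Phi \colon \fg_\eta \xrightarrow{\sim} \fg_{-\eta}^\vee$ be the associated isomorphism. Once I show that $B$ induces the same map $\Phi$, perfectness will be immediate.

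First I would invoke the identity \eqref{xicommr} from Proposition \ref{p_comm_pair}. Choosing, locally on $\Spec \bk$, a basis $\{e_i\}$ of $\fg_\eta$ with $\Phi$-dual basis $\{e^i\}$ of $\fg_{-\eta}$, so that $\br_{\eta,-\eta} = \sum_i e_i \otimes e^i$, the commutator expands as
\[
[1 \otimes \xi,\, \br_{\eta,-\eta}] \;=\; \sum_i e_i \otimes [\xi, e^i] \;=\; \sum_i B(\xi, e^i)\, e_i \otimes h_\eta,
\]
where in the last equality I use Proposition \ref{p_comm_pair} to write $[\xi, e^i] = B(\xi, e^i)\, h_\eta$. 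By \eqref{xicommr}, this equals $\xi \otimes h_\eta$. Matching coefficients of $h_\eta$ in the second tensor factor gives $\xi = \sum_i B(\xi, e^i)\, e_i$, which is precisely the statement that $\{e^i\}$ is also the $B$-dual basis to $\{e_i\}$. Hence $\xi \mapsto B(\xi, \cdot)$ coincides with $\Phi$, so is an isomorphism; by finite projectivity and reflexivity, the pairing is then perfect from both sides.

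The main obstacle is the local trivialization step: over a general coefficient ring $\bk$ the root spaces need not be globally free. The cleanest way around this is to phrase the argument invariantly, reading \eqref{xicommr} as an identity in $\fg_\eta \otimes \fhb$ that directly says the composition $\fg_\eta \xrightarrow{\Phi} \fg_{-\eta}^\vee \xrightarrow{B^\vee} \fg_\eta$ is the identity, with no choice of basis needed and valid over arbitrary $\bk$. The only other implicit assumption is that the expression $\br_{\eta,-\eta} = \sum_i e_i \otimes e^i$ is a finite sum, which is guaranteed by the earlier Corollary on finite rank of the root spaces.
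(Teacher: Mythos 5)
Your proof is correct and uses essentially the same mechanism the paper relies on: the identity \eqref{xicommr} combined with the preceding Corollary's identification of $\br_{\eta,-\eta}$ as the canonical element of the duality between $\fg_\eta$ and $\fg_{-\eta}$, which forces the commutator pairing $B$ to coincide with the duality pairing. The paper arrives at the same point via the vacuum-vector computation (applying \eqref{xicommr} to $\vacv{\bw}\otimes\vacv{\bw}$ and using the injectivity of $\fg_\eta \to F_\bw(\eta)$ for $\bw(h_\eta)\ne 0$), whereas you bypass the modules $F_\bw$ entirely and work directly with the tensor identity; both are valid, and your invariant reformulation correctly handles the case where the projective root spaces are not globally free.
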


\subsection{}

We summarize the preceding discussion in the following 

\begin{Theorem}\label{t_Lie} 
All roots of $\fg_Q$ are either positive or negative. 
All roots spaces are 
projective $\bk$-modules of finite rank. 
The Lie algebra $\fg_Q$ has an invariant 
bilinear form $(\,\cdot\,,\,\cdot\,)_{\fg}$ 
such that $\br$ is the corresponding invariant tensor. 
With respect to this form, $\fg_{-\eta} = \fg_{\eta}^\vee$. 
\end{Theorem}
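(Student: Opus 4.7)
The plan is to assemble Theorem \ref{t_Lie} from pieces already in place, with the only genuinely new content being the invariant bilinear form. The two claims that every nonzero root is positive or negative and that every root space is a finite-rank projective $\bk$-module are the Corollaries recorded in the paragraph preceding Section \ref{s_bPeta} and in the Corollaries at the end of Section \ref{s_bPeta}, so these require only citation.

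For the form, I would define $(\,\cdot\,,\,\cdot\,)_{\fg}$ piece by piece using the decomposition \eqref{roots_g}. On $\fhb$ I take the form $(\,\cdot\,,\,\cdot\,)_{\fhb}$ already introduced; this satisfies $(h_\alpha, h_\beta)_{\fhb} = \alpha(h_\beta)$ by \eqref{prop_h_beta}. I then declare $\fg_\alpha \perp \fg_\beta$ whenever $\alpha+\beta \ne 0$, and on $\fg_\eta \otimes \fg_{-\eta}$ I use the commutator pairing of Proposition \ref{p_comm_pair}: by its Corollary at the end of Section \ref{s_bPeta}, this pairing is perfect, so it identifies $\fg_{-\eta}$ with $\fg_\eta^\vee$. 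Normalize it so that if $[x,y] = c\, h_\eta$ then $(x,y)_\fg = c$; equivalently, use the perfect pairing coming from the isomorphism $\bk h_\eta \cong \bk$ given by the coefficient of $h_\eta$.

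Invariance is verified on triples of weight-homogeneous elements. Cases in which at least one factor lies in $\fhb$ reduce immediately to the root grading $[h,x] = \alpha(h)\, x$ for $x \in \fg_\alpha$ together with the identity $(h_\alpha, h_\beta)_{\fhb} = \alpha(h_\beta)$. The substantive case is a triple $(x,y,z) \in \fg_\alpha \times \fg_\beta \times \fg_\gamma$ with $\alpha+\beta+\gamma = 0$ and none of the weights zero; here I would derive $([x,y],z)_\fg = (x,[y,z])_\fg$ by extracting the appropriate weight component of the classical Yang--Baxter equation \eqref{CYBs} for $\br \in \fg \otimes \fg$, evaluated in a triple tensor product of representations and then contracted against $\vacv{\bw}$ in one factor to convert the $\br$-brackets into Lie brackets via \eqref{xicommr}. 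The main obstacle is precisely this last bookkeeping step: the identity \eqref{CYBs} is a statement about $\br$, whereas one needs a statement about iterated brackets of arbitrary root vectors, and so one must show that the commutator projections from $\br_{\alpha,-\alpha}$ faithfully detect the Lie-theoretic structure. That $\br$ itself is the invariant tensor for the resulting form then follows by assembly: the Cartan component is $\sum h_i \otimes h^i$ by \eqref{br_diag_Nak} and is dual to $(\,\cdot\,,\,\cdot\,)_{\fhb}$ by construction, while $\br_{\eta,-\eta}$ is the canonical element of $\fg_\eta \otimes \fg_{-\eta}$ by the last Corollary of Section \ref{s_bPeta}, which is exactly the dual-basis tensor for our pairing.
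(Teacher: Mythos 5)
Your assembly of the theorem from the preceding corollaries is correct, and your construction of the form (declaring $\fg_\alpha\perp\fg_\beta$ for $\alpha+\beta\ne 0$, using $(\,\cdot\,,\,\cdot\,)_{\fhb}$ on the Cartan and the normalized commutator pairing off-diagonal) is the right one, matching the paper's implicit argument. But the ``main obstacle'' you flag is not actually there: you propose to go back to the classical Yang--Baxter equation \eqref{CYBs} and contract against vacuum vectors, which amounts to re-deriving something the paper has already established as the Proposition immediately preceding \eqref{inv_tens_r}, namely that $[\Delta\xi,\br]=0$ for all $\xi\in\fg_Q$. That proposition says precisely that $\br$ is an ad-invariant element of $\fg_Q\otimes\fg_Q$. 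Combined with symmetry of $\br$ (from unitarity, $R(u)^{-1}=R(-u)_{12}$) and nondegeneracy (the commutator pairing of Proposition \ref{p_comm_pair} is perfect), this gives invariance of the associated bilinear form by a one-line general fact: if $\br=\sum a_i\otimes b_i$ is a symmetric nondegenerate element with $[\xi\otimes 1+1\otimes\xi,\br]=0$, then pairing both sides against $z\otimes x$ yields $([x,y],z)=(x,[y,z])$ directly, with no case analysis and no further weight bookkeeping. So the step you worry about is already closed; you just need to cite $[\Delta\xi,\br]=0$ rather than re-proving it.
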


\noindent 
Since for Nakajima varieties $\bk$ is a polynomial ring, 
the modules $\fg_\eta$ are free. 
Consequently, we can choose 
bases $\{e_{\alpha}^{(i)}\}$ of the root spaces so that
$$
(e_\alpha^{(i)},e_{\beta}^{(j)})_\fg = \delta_{\alpha,-\beta}\cdot\delta_{i,j}\,.
$$
Correspondingly, we write 
\begin{equation}
  \br = \sum h_i \otimes h^i + \sum_{\alpha\ne 0} 
\sum_i 
e_\alpha^{(i)} \otimes e_{-\alpha}^{(i)}\,. 
\label{formula_r}
\end{equation}
One should bear in mind, however, that it is the invariant tensor 
$\br$ that is canonically defined, while choosing bases of root spaces
is a matter of convenience.

\subsection{}

For future use, we record here the following easy lemma:
\begin{Lemma}\label{quadraticsteinberg}
For each root $\alpha \ne 0$, the quadratic operator
$$\sum_{i} e_{\alpha}^{(i)} e_{-\alpha}^{(i)}$$
acts via a Steinberg correspondence on
each $F_{\bw}(\bv)$.
\end{Lemma}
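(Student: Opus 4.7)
The plan is to exhibit the operator as a finite $\bk$-linear combination of convolutions of Steinberg correspondences, and then to invoke closure of the Steinberg algebra under convolution (Section \ref{s_Stein_corr}) and under finite $\bk$-linear combinations with common ambient space.

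First, I would verify that each basic operator $e_{\pm\alpha}^{(i)} \colon F_\bw(\bv) \to F_\bw(\bv \pm \alpha)$ is individually realized by a Steinberg correspondence on $\cM(\bv,\bw) \times \cM(\bv \pm \alpha,\bw)$. By Proposition \ref{class_Stein}, the classical $R$-matrix on a tensor product $X = \cM(\bw_0) \otimes \cM(\bw)$ is represented by a Steinberg correspondence $\br_\alpha \subset X^\bA \times X^\bA$ on the $\bA$-fixed locus, and its $(\alpha,-\alpha)$-weight component, which by Theorem \ref{t_Lie} equals $\sum_i e_\alpha^{(i)} \otimes e_{-\alpha}^{(i)}$, inherits the Steinberg property. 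Picking $\bw_0$ with $\bw_0 \cdot \alpha \ne 0$ makes the maps \eqref{fga} injective, so the individual operators $e_{\pm\alpha}^{(i)}$ can be recovered by taking matrix coefficients of $\br_{\alpha,-\alpha}$ against a basis of $F_{\bw_0}(\alpha)$ and its dual. This slicing by cohomology classes in the $\cM(\bw_0)$ factors preserves the Steinberg support condition, because the affinization maps for the tensor product factor compatibly through those of $\cM(\bw)$.

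Second, the composition $e_\alpha^{(i)} \circ e_{-\alpha}^{(i)} \colon F_\bw(\bv) \to F_\bw(\bv - \alpha) \to F_\bw(\bv)$ is the convolution of two Steinberg correspondences between Nakajima varieties of the same framing $\bw$. Composability in the sense of \eqref{composable} is automatic in this setting, since any pair of affinization maps from Nakajima varieties fit into a common diagram over a still-larger $\cM_{0,0}(\bv',\bw')$ via affinization combined with the direct-sum-with-zero construction described in the example following Lemma \ref{s_Stein_corr}. Hence each product $e_\alpha^{(i)} e_{-\alpha}^{(i)}$ is a Steinberg correspondence on $\cM(\bv,\bw) \times \cM(\bv,\bw)$.

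Third, by Theorem \ref{t_Lie} the root spaces $\fg_{\pm \alpha}$ are finitely generated projective $\bk$-modules, so the sum has only finitely many nonzero terms. Any finite $\bk$-linear combination of Steinberg cycle classes with common source, target, and affine base is tautologically a Steinberg cycle class, being a formal linear combination of Lagrangian cycles in the same fiber product. Therefore $\sum_i e_\alpha^{(i)} e_{-\alpha}^{(i)}$ is represented by a single Steinberg correspondence on $\cM(\bv,\bw) \times \cM(\bv,\bw)$, as required. The main obstacle is the first step: verifying that extracting an individual basis operator from the Steinberg tensor $\br_{\alpha,-\alpha}$ by auxiliary matrix coefficients preserves the Steinberg property. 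This is a bookkeeping exercise on affinization morphisms --- everything downstream is formal manipulation inside the Steinberg algebra --- but it is where the concrete geometry has to be checked with care.
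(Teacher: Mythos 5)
Your proposal has a genuine gap at exactly the point you flag as a ``bookkeeping exercise'': there is no reason why the individual operators $e_{\pm\alpha}^{(i)}$ should be given by Steinberg correspondences, and in general they are not. The issue is that $\br_{\alpha,-\alpha}$ is a single Lagrangian cycle class in the big product $\bigl(\cM(\bw_0)\times\cM(\bw)\bigr)^{\times 2}$; it does not come with a canonical decomposition $\sum_i L_i\boxtimes L_i'$ into cycles concentrated in separate factors. Extracting $e_\alpha^{(i)}$ requires pairing against a chosen basis vector of $F_{\bw_0}(\alpha)$, which is the cohomology of $\cM(\bw_0,\alpha)$ --- a genuinely positive-dimensional variety when $\alpha\ne 0$. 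Capping a cycle class with a non-scalar cohomology class and pushing forward does not preserve the Lagrangian/fiber-product support conditions that define a Steinberg correspondence, so this is not a routine check but precisely where the strategy breaks.

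The paper sidesteps this entirely. First it reduces to $\alpha > 0$ using that $[e_\alpha, e_{-\alpha}]$ is a scalar. Then, choosing $\bw_0$ with $\bw_0(h_\alpha)\ne 0$, it considers the \emph{composition} $\br_{-\alpha,\alpha}\circ\br_{\alpha,-\alpha}$ acting on $F_{\bw_0}(0)\otimes F_\bw(\bv)$. This composition is a convolution of two Steinberg correspondences (composable, as you correctly note from the example in Section \ref{s_Stein_corr}), hence Steinberg; and the crucial point is that the auxiliary factor in this component is $\cM(\bw_0,0)=\pt$, so restricting to it involves no capping with a nontrivial class at all --- it is literally a component of the fixed locus with a point as one factor. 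Using $[e_\alpha^{(j)}, e_{-\alpha}^{(i)}]=\delta_{ij}h_\alpha$ and that $\vacv{\bw_0}$ is annihilated by $\fg_{-\alpha}$ when $\alpha>0$, the first tensor factor evaluates to the scalar $-\bw_0(h_\alpha)\delta_{ij}$, and the operator on $F_\bw(\bv)$ is $-\bw_0(h_\alpha)\sum_i e_\alpha^{(i)} e_{-\alpha}^{(i)}$. Dividing by the nonzero scalar gives the result. So the fix is: do not try to isolate individual basis operators; instead compose the canonical Steinberg tensors and exploit the one-dimensionality of $F_{\bw_0}(0)$.
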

\begin{proof}
Since $[e_{\alpha}, e_{-\alpha}]$ acts via a scalar, it suffices to prove this for $\alpha > 0$.
Choose $\bw_0$ such that
$h_\alpha(\bw_0) \ne 0$.  
Up to a nonzero scalar, the claim then follows from considering the action 
of the composition of Steinberg operators
$$\br_{-\alpha, \alpha} \circ \br_{\alpha,-\alpha}$$
on 
$F_{\bw_0}(0) \otimes F_{\bw}(\bv)$. 
\end{proof}

\subsection{}

We note that the projector $\bP_\eta$ has a direct geometric 
meaning for Nakajima variety. It is given by a Steinberg 
correspondence 
$$
\bP_\eta \subset \cM(\bw,\eta) \times \cM(\bw,\eta) 
$$
supported on 
$$
\Stab \Big( \cM(\bw,\eta) \times \cM(\bw,0)\Big) \cap 
\cM(\bw,0) \times \cM(\bw,\eta)
$$
viewed as $\bA$-fixed loci in $\cM(2\bw,\eta)$.

\section{Operators of classical multiplication}

\subsection{}

In the Yangian $\bY$, we have the operators 
\begin{equation}
  \bE\left(\vacv{\bw} \! \vacd{\bw}  u^k\right) \,, \quad \bw\in \Z^I\,, 
\quad k=1,2,3,\dots \,, 
\label{bEvac}
\end{equation}
where 
$$
\vacv{\bw} \! \vacd{\bw} \in \End H^\hd_\bG(\cM(\bw))
$$
is the orthogonal projector onto the vacuum. 
Recall from Figure \ref{f_Baxter} that 
for any $g$ such that 
$$
\left[g \otimes g,R(u) \right] = 0
$$
the operators 
$$
\tr_{F_0} (g \otimes 1) R_{F_0,W}(u) \in \End(W) \otimes \Q(u) 
$$
commute for all $W$ and all values of $u$ as a 
consequence of the Yang-Baxter equation. In particular, 
for $g = \vacv{\bw} \! \vacd{\bw}$ this shows the 
operators \eqref{bEvac} commute.

\subsection{}

If $\theta>0$, the vector $\vacv{\bw}$ is the true vacuum 
in the sense of Section \ref{s_Nak_vac}. This implies that
the operators \eqref{bEvac} are operators 
of cup product by 
certain characteristic classes of the virtual bundle 
$$
(1-\hbar) \otimes N_- = (1-\hbar) \otimes \sum \bw_i \, \cV_i 
$$
where $N_-$ is the negative part of the normal bundle to 
the embedding 
$$
\cM(\bw'') \hookrightarrow \cM(\bw + \bw'') \,. 
$$
In particular, this gives another reason why these operators
commute.

It is also 
clear that the operators \eqref{bEvac} generate all characteristic
classes of $\cV_i$ in the case $\theta > 0$.

\subsection{}\label{gen_theta} 

For general $\theta$, the relation between the operators \eqref{bEvac}
and the operators of classical multiplication 
may determined along the lines of Theorem \ref{t_Gauss}. Since the 
general expression in Theorem \ref{t_Gauss} is rather complicated
and requires working in a certain completion of the Yangian, 
we will not do it here. 

For the operators of classical multiplication by divisors, which is 
what we need for the proof of the main result of the paper, the 
case of general $\theta$ will be considered in Section \ref{s_class_div}. 

\subsection{}\label{s_disc_cW}

In Proposition \ref{p_cW} below we will see 
the Yangian also contains the operators
of multiplication by characteristic classes of the bundles 
$\cW_i$. 

These bundles are trivial but carry nontrivial group action, so 
this gives
$$
\varprojlim_{\bw} H^\hd_{G_\bw}(\pt) \hookrightarrow \textup{center} (\bY) \,. 
$$

\subsection{}

We call the subalgebra 
\begin{equation}
  \label{HomNak}
\textsf{Classical} \subset \bY 
\subset  \prod_{\bv,\bw} \End H^\hd_\bG(\Nak) 
\,. 
\end{equation}
generated by the characteristic classes of $\{\cV_i,\cW_i\}$ 
the \emph{algebra of classical multiplication}. 
Recall we assume that $\theta>0$, otherwise a certain completion 
of the Yangian is required.

As already discussed, the algebra of classical 
multiplication is expected\footnote{This has now been 
established in \cite{mn}.}
to surject onto all operators of cup product in each factor 
of \eqref{HomNak}. The following weaker statement will be sufficient for
our purposes. Recall that $\ft$ denotes the Lie algebra
of a maximal torus in $\bG$. 

\begin{Proposition}\label{p_span_H} 
After tensoring with $\Q(\ft)$, the algebra of classical 
multiplication surjects onto all operators of cup products in each factor
of \eqref{HomNak}. 
\end{Proposition}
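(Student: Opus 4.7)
The plan is to reduce, via equivariant localization, to a separation-of-points statement on the fixed locus of a maximal torus $\bT \subset \bG$. Since the operators of cup product on $H^\hd_\bG(\Nak)$ form a commutative algebra canonically isomorphic to $H^\hd_\bG(\Nak)$ itself, the proposition is equivalent to the assertion that the image of $\textsf{Classical}$ generates $H^\hd_\bG(\Nak) \otimes_{H^\hd_\bG(\pt)} \Q(\ft)$ as a $\Q(\ft)$-algebra. Equivariant localization provides an algebra isomorphism
$$
\iota^* : H^\hd_\bG(\Nak) \otimes_{H^\hd_\bG(\pt)} \Q(\ft) \;\xrightarrow{\,\sim\,}\; H^\hd_\bG\!\left(\Nak^{\bT}\right) \otimes_{H^\hd_\bG(\pt)} \Q(\ft),
$$
so it suffices to control the image of $\textsf{Classical}$ after restriction to $\Nak^{\bT}$.

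First I would verify that $\Nak^{\bT}$ is zero-dimensional. This follows by iterating Proposition \ref{p_Nak_fix}: the maximal torus of $G_\bw$ decomposes $\bw$ into a sum of simple characters, splitting $\Nak^{\bT}$ into products of smaller Nakajima varieties as in Section \ref{s_bw=}; the maximal torus of $G_\edge$ (including the $Sp_2$ factor attached to each loop) further passes, via the construction of Section \ref{s_abelian_cover}, to quiver varieties for the universal abelian cover $\widetilde{Q}$, which is loopless. Repeating the process on the loopless covers reduces to pieces with framing supported at a single vertex and single character, where the torus action exhausts all symmetries and the fixed points are isolated. The termination is controlled by \eqref{w0empt}, which empties any piece with vanishing framing. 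The resulting enumeration parameterizes $\Nak^{\bT}$ by a finite combinatorial set; each fixed point $x$ is labeled by collections of one-dimensional $\bT$-weight subspaces comprising the tautological fibers $\cV_i|_x$.

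Next I would show that the tautological classes separate the fixed points. At each $x \in \Nak^{\bT}$, the class $\cV_i|_x \in K_\bT(\pt)$ is the sum of $\bT$-characters dictated by the combinatorial label of $x$; distinct labels produce distinct multisets of characters in some $\cV_i$, and via Newton's identities these multisets are recovered from the equivariant Chern classes $c_k(\cV_i)|_x$. The classes $c_k(\cW_i)$ additionally restore the pure $H^\hd_{G_\bw}(\pt)$ part of the base ring (cf.\ Section \ref{s_disc_cW}). A Lagrange interpolation in $\Q(\ft)$, applied to these separating values, realizes the idempotent $e_x \in \bigoplus_{x'} \Q(\ft)$ supported at each $x$ as an element of the subalgebra generated by the restrictions of tautological classes. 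Together with the $\Q(\ft)$-action this exhibits the full direct sum, proving the surjectivity.

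I expect the main obstacle to be the uniform verification, across arbitrary quivers, of zero-dimensionality of $\Nak^{\bT}$ together with the separation property, especially in the loop case where one must track tautological bundles through the abelian cover. Since the cover acts freely on the loop characters and the induced $\bT$-weights pulled back to $\widetilde{Q}$ are pairwise distinct, the combinatorial labels attached to different fixed points remain genuinely distinct as multisets of $\bT$-weights, and the separation argument goes through without further input.
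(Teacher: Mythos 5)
Your approach is genuinely different from the paper's, and it has two unproven steps on which everything hinges. The paper proves this via the $\C^\times_t$-scaling action: since $\C^\times_t$ acts on $\cM_0$ with positive weights, $\Nak^{\C^\times_t}$ is compact, and then the resolution of the structure sheaf of the diagonal in $\Nak\times\Nak$ by tautological bundles (a theorem of Nakajima in \cite{Nak98}) shows, by a standard K\"unneth/push-forward argument on the compact fixed locus, that the localized cohomology is spanned by tautological classes. This argument is indifferent to whether the fixed locus is zero-dimensional.

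You instead localize to $\Nak^{\bT}$ and assert (i) that $\Nak^{\bT}$ is zero-dimensional and (ii) that tautological classes separate its points, so that Lagrange interpolation finishes the job. Neither claim is established. For (i), the reduction you sketch---decompose the framing torus, pass to the universal abelian cover, ``repeat the process''---bottoms out once the abelian cover is reached: the cover quiver $\widetilde Q$ is loopless, so there is no further cover, and the torus in $G'_\edge$ has been spent. At that point you are asserting that every Nakajima variety for a loopless quiver with framing $\delta_i$ has zero-dimensional $\bT$-fixed locus. This is far from obvious for an arbitrary (in particular wild) loopless quiver with multi-edges; it is true in well-known cases (ADE, Jordan, Hilbert schemes) but you give no argument, and nothing in the paper's ambient assumptions forces it. For (ii), even granting isolated fixed points, you must show distinct fixed points have distinct restrictions of some $c_k(\cV_i)$ in $\Q(\ft)$; the claim that ``distinct labels produce distinct multisets of characters'' is again stated but not shown, and it is exactly the kind of combinatorial assertion that needs verification quiver by quiver. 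The paper's route sidesteps both issues---it never needs isolated fixed points or separation, only compactness of $\Nak^{\C^\times_t}$ plus the diagonal resolution---and works uniformly for all quivers. If you want to pursue your strategy, you would need to either prove (i) and (ii) in full generality, or else apply the diagonal-resolution argument fiberwise over positive-dimensional fixed components.
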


\begin{proof}
There is a $\C^\times$ action on $\Nak$ that scales all quiver 
data by the same scalar. After tensoring with $\Q(\ft)$, 
 we may replace 
the cohomology of $\Nak$ by the cohomology of $\Nak^{\C^\times}$. 
The structure sheaf of the 
$$
\textup{Diagonal} \subset \Nak \times \Nak 
$$
may be resolved by tautotological bundles $\cV_i$, see \cite{Nak98}. Since 
$\Nak^{\C^\times}$ is compact, it shows that its cohomology 
is spanned by characteristic classes of tautological bundles. 
\end{proof}

\section{The structure of the Yangian}\label{s_Y_str} 

\subsection{}

In this section we assume $\theta>0$ for simplicity. 
Our goal here is the following 

\begin{Theorem}\label{t_grY} 
The Yangian is generated by the Lie algebra $\fg_Q$ and 
the operators of classical multiplication. We have 
$$
\gr \bY \cong \cU(\fg_Q[u])
$$
with respect to the filtration by degree in $u$. 
\end{Theorem}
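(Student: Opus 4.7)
Our goal is to construct a filtered algebra isomorphism $\phi: \cU(\fg_Q[u]) \overset{\sim}{\to} \gr \bY$ and then derive the generation statement as a corollary. The definition of $\phi$ is natural: by Section \ref{s_fgQ}, each $\xi \in \fg_Q$ has a lift $\bE(m_\xi) = \xi$ with $m_\xi \in \End(F_0)$ a constant polynomial. Set
\[
\phi(\xi \otimes u^k) := [\bE(m_\xi \cdot u^k)] \in \gr^k \bY.
\]
Proposition \ref{p_commtop} states precisely that $[\phi(\xi u^i), \phi(\eta u^j)] = \phi([\xi,\eta] u^{i+j})$ in $\gr \bY$, so $\phi$ extends (via PBW ordering of monomials) to a filtration-preserving algebra homomorphism from $\cU(\fg_Q[u])$.

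Surjectivity is direct: since $\bY$ is generated by the operators $\bE(m u^k)$ for $m \in \End(F_0)$ and $k \geq 0$, it suffices to check each such generator lies in the image of $\phi$. Expanding $m$ in a basis of $\End(F_0) = F_0 \otimes F_0^\vee$ dual to the decomposition \eqref{formula_r} of the classical $\br$-matrix writes $\bE(m u^k)$ as a combination of $\bE(m_{\xi} u^k)$-type terms plus trace contributions matching classical multiplication operators; all of these lie in the image of $\phi$ modulo $\gr^{<k}$.

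The main obstacle is injectivity of $\phi$. The strategy is to use faithful families of representations: by its very definition, $\bY$ embeds in $\prod_W \End(W)$ as $W$ ranges over the tensor products \eqref{exW}, and modulo the filtration the $\bY$-action on such a $W$ factors through the natural action of $\cU(\fg_Q[u])$ on $W$ viewed as a tensor product of evaluation modules (where $\xi u^k$ acts on the $i$th factor as $u_i^k \cdot \xi|_{F_i}$ and the global action is built from the coproduct). A hypothetical nonzero kernel element of $\phi$ would then annihilate all such tensor products, contradicting the faithfulness of $\cU(\fg_Q[u])$ on a sufficiently rich family of evaluation-type tensor products. Equivalently, one may exhibit a PBW-type basis of $\bY$ by choosing ordered lifts of a PBW basis of $\cU(\fg_Q[u])$: the spanning property follows from surjectivity and repeated application of \eqref{top_comm_E}, while linear independence follows from the same representation-theoretic faithfulness. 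This step parallels the classical PBW theorem for Drinfeld's Yangian.

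Given the isomorphism $\gr \bY \cong \cU(\fg_Q[u])$, the generation statement reduces to lifting a generating set of $\cU(\fg_Q[u])$ to $\bY$ using only $\fg_Q$ and classical multiplication. The degree-$0$ part $\cU(\fg_Q)$ is generated by $\fg_Q$ itself. For degree $1$, classical multiplication by the divisor classes $c_1(\cV_i)$ provides Yangian operators of $u$-degree exactly $1$ whose symbols span a copy of $\fhb_Q \otimes u$ inside $\gr^1 \bY$; commuting these with the root vectors of $\fg_Q$ and using that $\fhb_Q$ pairs nontrivially with the root spaces via \eqref{prop_h_beta} produces all of $\fg_Q \otimes u$ modulo lower degree. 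Iterated brackets $[\fg_Q \otimes u, \fg_Q \otimes u^k] = \fg_Q \otimes u^{k+1}$ in $\gr \bY$ then produce all of $\fg_Q[u]$, and by induction on filtration degree the subalgebra of $\bY$ generated by $\fg_Q$ and classical multiplication fills out all of $\bY$.
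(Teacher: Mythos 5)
Your overall architecture matches the paper's: define a map $\cU(\fg_Q[u]) \to \gr\bY$ via $\bE$, verify the homomorphism property from \eqref{top_comm_E}, prove surjectivity, then injectivity via faithfulness on tensor products. There are, however, two genuine gaps.

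First, surjectivity is not ``direct.'' The map $\bE$ from $\bigoplus F_i \otimes F_i^\vee$ onto $\fg_Q$ has an enormous kernel, and the statement you actually need is: if $\bE(m)=0$, then $\bE(m\,u^k)$ lies in $\bY_{<k}$, i.e.\ vanishes in $\gr^k\bY$. This is precisely Proposition~\ref{p_gen_grY}, whose proof is the hard part of the theorem. It is not true that the degree-$k$ ``trace contributions'' you describe ``lie in $\gr^{<k}$''; rather, for $m$ of $\fhb$-weight $0$ the operator $\bE(m\,u^k)$ produces exactly the degree-$k$ classes $\ch_k\cV_i$ and $\ch_k\cW_i$ plus terms of genuinely lower degree, and this requires the Gauss decomposition machinery (Theorem~\ref{t_Gauss}) applied to diagonal matrix elements of the $R$-matrix, together with the expansion of $e(N_-)/e(N_-\otimes\hbar)$ and the identification \eqref{charN_} of $N_-$ via tautological bundles. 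Your phrase ``expanding $m$ in a basis'' does not engage with this at all; the degree count that rules out higher-order contributions from off-diagonal blocks of the $R$-matrix and from subleading characteristic classes is the substance of the argument, and the induction on $k$ in the paper's proposition cannot be bypassed.

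Second, your generation argument does not work as stated. The step ``iterated brackets $[\fg_Q\otimes u,\fg_Q\otimes u^k]=\fg_Q\otimes u^{k+1}$'' requires $[\fg_Q,\fg_Q]=\fg_Q$, which is false: $\fhb_Q$ contains a central subalgebra $\fz$ (acting by linear functions of $\bw$), and $\fz\otimes u^k$ cannot be reached by commutators. Even the remaining $\fh$-part is spanned by $h_\alpha=\Cb\,\alpha$ only if the Cartan matrix is invertible, which fails nonequivariantly for the Jordan quiver. The paper closes this gap by allowing classical multiplication operators $\ch_k\cV_i,\ch_k\cW_i$ of \emph{all} degrees $k$ (these realize $\fhb_Q\otimes u^k$ directly, by the same Proposition~\ref{p_gen_grY}), and then obtains the remaining root-space components of $\fg_Q\otimes u^k$ by the ``trading'' identity \eqref{trade_u}, which moves the power of $u$ from $m$ to a Cartan generator $h$ whenever $m$ has nonzero $\fhb$-weight. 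You should replace the iterated-brackets step with this argument.

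Your injectivity step via faithfulness of $\cU(\fg)$ on sums of tensor powers of a faithful representation, together with the large-$v$ degeneration \eqref{large_ev}, is essentially the paper's argument and is correct.
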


In the course of the proof, it will be convenient to choose 
a splitting of 
$$
\bE :  \bigoplus F_{i} \otimes F_{i}^\vee \to \fg_Q \to 0
$$
which exists because $\fg_Q$ is a projective $\bk$-module. We 
will write $\xi= \bE(\xi)$ using such splitting. 
A concrete splitting may be constructed using the projectors
$\bP_\eta$ from Section \ref{s_bPeta}.

\subsection{}

\begin{Proposition}\label{p_gen_grY} 
If $\bE(m)=0$ then  
$$
\bE(m \, u^k) \in \bY_{<k}  
$$
with where $\bY_{<k}\subset \bY$ is the corresponding 
filtration subspace.   
\end{Proposition}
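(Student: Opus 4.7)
The proof strategy is to compute the leading $u$-degree part of $\bE(m_0 u^k)$ in $\bY_{\leq k}/\bY_{<k}$ and identify it with a multiple of $\bE(m_0) \in \fg_Q$. Once this is done, the hypothesis $\bE(m_0) = 0$ will immediately force the leading symbol to vanish, placing $\bE(m_0 u^k)$ in $\bY_{<k}$.

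The concrete computation will expand the defining formula
\[
\bE(m_0 u^k) = -\hbar^{-1}\Res_{u=\infty} u^k \,\tr_{F_0}\Bigl(m_0 \prod_{i=n}^{1} R_{F_0, F_i}(u-u_i)\Bigr)
\]
using $R_{F_0, F_i}(v) = 1 + \hbar \br_{0i}/v + (\text{higher order in } \hbar \text{ or in } v^{-1})$. A straightforward residue calculation shows that the coefficient of $u^{-k-1}$ in a product $\prod_j (u-u_{i_j})^{-\ell_j}$ is a polynomial of degree $k+1-\sum_j \ell_j$ in the $u_{i_j}$'s. Combined with the $\hbar^{r-1}$ prefactor arising when $r$ nontrivial pole contributions are selected, any term with $r \geq 2$ or with a single pole of order $\ell_1 \geq 2$ produces an operator of $u$-degree at most $k-1$ on the test module $W = F_1(u_1) \otimes \cdots \otimes F_n(u_n)$. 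The unique surviving top-degree contribution is $\sum_{i} u_i^k \tr_{F_0}(m_0 \br_{0i})$, whose $i$-th-factor action on $W$ is precisely that of $\bE(m_0) \in \fg_Q$ via the coproduct.

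The main technical point, and the principal obstacle, will be to argue that the subleading remainder actually lies in the algebraic filtration $\bY_{<k}$ (spanned by products of generators of total degree less than $k$), rather than merely having lower $u$-degree as an operator on each individual $W$. I expect to handle this by induction on $k$: multi-factor cross terms of the form $\tr_{F_0}(m_0 \br_{0i}\br_{0j}) \cdot p(u_i, u_j)$ with $\deg p \leq k-1$ arise naturally as coproduct contributions of products of lower-degree generators, by iterating the coproduct formula for $\bE$ and using Proposition \ref{p_commtop}; higher-pole single-factor contributions reduce similarly via commutators and the translation automorphism $\ttau_c$. Organizing these identifications compatibly across all $W$ simultaneously, using the universal definition of $\bY \subset \prod_W \End W$, will then identify the entire remainder with an element of $\bY_{<k}$ and complete the proof.
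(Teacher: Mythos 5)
The residue bookkeeping in your first paragraph is correct as far as it goes, and your observation that the leading contribution in the $u_i$'s is $\sum_i u_i^k\,\tr_{F_0}(m_0\br_{0i})=\sum_i u_i^k\,\bE(m_0)_i$ is accurate. You also correctly identify the obstacle: lower $u_i$-degree of an operator on each $W$ does not automatically place it in $\bY_{<k}$, because the filtration on $\bY$ is by $u$-degree of the defining polynomials on the \emph{generators}, not by polynomial degree in the equivariant parameters of the action.

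The gap is in the fix you propose for the single-factor higher-pole contributions, and it is exactly where the paper invokes Theorem~\ref{t_Gauss}. Write $R_{F_0,F}(v)=1+\sum_{\ell\ge 1}R_\ell v^{-\ell}$. The $\ell$-th residue $r_\ell=\hbar^{-1}R_\ell$ carries filtration degree $\ell-1$: by Theorems~\ref{t_vacuum} and~\ref{t_Gauss}, its diagonal matrix elements are (up to lower order and quadratic off-diagonal corrections) the operators of classical multiplication by $\ch_{\ell-1}N_-$, which have cohomological degree $2(\ell-1)$. So the $\ell=k+1$ contribution to $\bE(m_0u^k)$ restricted to $F(u_1)$ appears with $u_1$-degree zero but filtration degree $k$; your degree count declares it subleading when it is not. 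Once the $\fhb$-weight of $m_0$ is zero this term cannot be dislodged by commutators ($\bE(m_0u^k)$ is central for $\fhb$) nor by $\ttau_c$ (which preserves the filtration), so neither of the two mechanisms you cite can push it into $\bY_{<k}$. The paper instead separates by $\fhb$-weight $\mu$: for $\mu\ne 0$ the commutator trick with $\bE(h)$ and Proposition~\ref{p_commtop} works as you intuit; for $\mu=0$ it uses Theorem~\ref{t_Gauss} to exhibit the degree-$k$ part of $\bE(mu^k)$ explicitly as $k!\bigl(\sum_i(m,\bw_i)\ch_k\cV_i+\sum_i(m,\bv_i)\ch_k\cW_i-\sum_{ij}\bC_{ij}(m,\bv_i)\ch_k\cV_j\bigr)+\dots$, whose coefficients are exactly the data determining $\bE(m)\in\fhb$ and therefore vanish when $\bE(m)=0$. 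You would need to reproduce this identification — which is the real content of the proposition — and it is not a consequence of the residue calculus alone.
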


\begin{proof}
Since $k=0$ this is a tautology, we take $k>0$. 

The map $\bE$ is $\fhb$-equivariant and we can 
assume that $m$ is an eigenvector of $\fhb$ of weight $\mu$. 
If $\mu\ne 0$ then 
\begin{equation}
\mu(h) \, \bE(m u^k) = 
\left[ \bE(h), \bE(m u^k) \right] = 
\left[ \bE(h u^k), \bE(m) \right] + \dots = \dots 
\label{trade_u}
\end{equation}
where the step in the middle is based on \eqref{top_comm_E}. 

If $\mu=0$ then $\bE(m u^k)$ is a linear combination of diagonal 
matrix elements of the $R$-matrix.  Theorem \ref{t_Gauss}
expresses diagonal matrix elements of the $R$-matrix
in terms of the off-diagonal ones and characteristic
classes of $N_-$.

All terms involving off-diagonal matrix elements in 
Theorem \ref{t_Gauss} have 
degree $<k$. This is because they are at least quadratic
the entries of the $R$-matrix and there is a degree shift 
from the expansion 
$$
R(u) = 1 + \sum_{n\ge 0} \frac{R_n}{u^{n+1}} 
$$
to the filtration in the Yangian: matrix coefficients of 
$R_n$ belong to $\bY_{\le n}$. 

Now consider the characteristic classes of $N_-$. We have 
$$
\frac{e(N_-)}{e(N_- \otimes \hbar)} = 1+ \hbar \sum_{n\ge 0} 
\frac{n! \ch_n N_- + \dots}{u^{n+1}}\,,
$$
where dots stand for characteristic classes of degree $< n$.
In particular, applying this to \eqref{charN_}, we get 
\begin{multline*}
  \frac{1}{k!} \bE(m u^k) = \sum_i (m, \bw_i) \, \ch_k \cV_i \, + 
\sum_i (m, \bv_i) \, \ch_k \cW_i \\ 
- \sum_{i,j} \bC_{i,j} (m,\bv_i) \, \ch_k \cV_j + \dots 
\end{multline*}
where the pairing with $\bv_i,\bw_i \in \fhb$ is the trace pairing 
and dots stand for elements in $\bY_{< k}$. Note that 
by induction all characteristic classes of $\cV_i$ and $\cW_i$ of 
degree $<k$ are in $\bY_{< k}$. 

If $\bE(m)=0$ then $(m, \bw_i)= (m, \bv_i)=0$ and this 
concludes the proof. 
\end{proof}

The following is a corollary of the proof. 

\begin{Proposition}\label{p_cW} 
If $\theta>0$, all characteristic classes of $\cV_i$ and $\cW_i$
lie in $\bY$ and this inclusion preserves degree. 
\end{Proposition}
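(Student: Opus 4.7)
The plan is to deduce Proposition \ref{p_cW} from the formula derived inside the proof of Proposition \ref{p_gen_grY}, namely
\begin{equation*}
  \frac{1}{k!} \bE(m\, u^k) = \sum_i \Big((m,\bw_i) - \sum_j \bC_{ji}(m,\bv_j)\Big) \, \ch_k \cV_i + \sum_i (m,\bv_i) \, \ch_k \cW_i + \dots
\end{equation*}
where dots stand for elements of $\bY_{<k}$. This was the formula used in the $\mu=0$ case under the constraint $\bE(m)=0$; the point here is that, without that constraint, the same identity expresses the mixed sum of $\ch_k \cV_i$ and $\ch_k \cW_i$ as an explicit element of $\bY_k$ modulo $\bY_{<k}$.

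The argument then proceeds by induction on $k$. For $k=0$ the classes $\ch_0 \cV_i = \bv_i$ and $\ch_0 \cW_i = \bw_i$ are constants in $\bk\subset \bY$. For the inductive step, assume $\ch_j \cV_i, \ch_j \cW_i \in \bY_j$ for all $j<k$; then all characteristic classes of degree less than $k$ lie in $\bY_{<k}$, absorbing the dots on the right-hand side. First I would vary $m$ through $\fhb = \fh \oplus \fz$: since $\{\bv_i\}_i \subset \fh$ and $\{\bw_j\}_j \subset \fz$ together form a basis of $\fhb$ with respect to which the inner product on $\fhb$ is nondegenerate, the linear map
\[
m \,\longmapsto\, \Big(\,(m,\bv_i),\, (m,\bw_j)\,\Big)_{i,j \in I}
\]
is an isomorphism. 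Consequently, the system of equations expressing $\frac{1}{k!}\bE(m u^k) \bmod \bY_{<k}$ in terms of $\ch_k\cV_i$ and $\ch_k\cW_i$ has an invertible coefficient matrix, and one can solve for each $\ch_k \cV_i$ and $\ch_k \cW_i$ as an explicit $\bk$-linear combination of the $\bE(m u^k)$ modulo $\bY_{<k}$. This puts each of these classes in $\bY_k$, establishing both membership in $\bY$ and preservation of degree (recalling that the Yangian filtration degree $k$ corresponds to cohomological degree $2k = \deg \ch_k$).

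Finally, since any characteristic class of $\cV_i$ or $\cW_i$ is a polynomial in the Chern characters $\ch_k \cV_i$, $\ch_k \cW_i$, and the preceding step places all of these generators in $\bY$ with the correct degree, the full subalgebra of classical multiplication is contained in $\bY$ with the inclusion preserving the filtration. The main (and only) nontrivial point is the invertibility of the matrix of pairings $(m,\bv_i), (m,\bw_j)$, which is immediate from the structure of $\fhb$; the rest is bookkeeping for the induction.
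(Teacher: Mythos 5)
Your overall strategy --- deduce the result by induction on $k$ from the displayed formula inside the proof of Proposition \ref{p_gen_grY}, solving the resulting linear system for the Chern characters --- is the paper's approach, and the base case and the closing remarks about characteristic classes being polynomials in Chern characters are fine. The problem is the justification of the crucial step.

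You write ``vary $m$ through $\fhb$'' and invoke nondegeneracy of the bilinear form on $\fhb$ to conclude that $m\mapsto\big((m,\bv_i),(m,\bw_j)\big)$ is an isomorphism. This conflates two different things. In Section \ref{s_def_Y}, $m$ is a \emph{finite-rank} (and, for the $\mu=0$ case of that proof, weight-zero) element of $\bigoplus F_i\otimes F_i^\vee$; the operators $\bv_i,\bw_i\in\fhb$, acting on an auxiliary space $F_0$, are not finite rank, so $m$ cannot range over $\fhb$. And the pairing $(m,\bv_i)$ in that formula is the trace pairing $\tr_{F_0}(m\,\bv_i)$, not the invariant form $(\,\cdot\,,\,\cdot\,)_\fhb$ --- the paper says so explicitly right after the display. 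So the nondegeneracy of $(\,\cdot\,,\,\cdot\,)_\fhb$ is irrelevant to the map you actually need, and ``isomorphism'' is wrong in any case since the space of admissible $m$'s is infinite-dimensional. The correct reason the coefficients can be made arbitrary is formula \eqref{br_diag_Nak}: one has $\bE(m)\big|_\fhb=\sum_j\big[(m,\bw_j)-\sum_i\bC_{ij}(m,\bv_i)\big]\bv_j+\sum_i(m,\bv_i)\,\bw_i$, so the trace-pairings are determined by, and determine via an invertible triangular matrix, the $\fhb$-component of $\bE(m)$; since $\bE$ surjects onto $\fg_Q$ by definition and $\fhb$ is the weight-zero piece in \eqref{roots_g}, the tuples $\big((m,\bv_i),(m,\bw_j)\big)$ fill out $\bk^{2|I|}$ and the induction closes. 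Concretely, $m=\vacv{\delta_j}\!\vacd{\delta_j}$ gives $(m,\bv_i)=0$, $(m,\bw_i)=\delta_{ij}$, isolating $\ch_k\cV_j$; a projector onto the $\bv=\delta_i$ graded piece then gives $(m,\bv_i)\ne 0$ and yields $\ch_k\cW_i$ after subtracting the already-obtained $\ch_k\cV$ terms.
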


The case of general $\theta$ may be treated using Theorem 
\ref{t_Gauss}. In this case, a certain completion of the 
Yangian is required.

\subsection{Proof of Theorem \ref{t_grY}} \label{s_pr_tgrY} 

By Proposition \ref{p_gen_grY} and 
\eqref{top_comm_E}, the operators $\bE(\xi u^i)$ for $\xi\in \fg_Q$
generate the Yangian and 
satisfy the relations in $\fg_Q[u]$ modulo lower degree terms. 
This gives a surjective map 
$$
\cU(\fg_Q[u]) \to \gr \bY \to 0  \,. 
$$
Its injectivity may be seen as follows. For any faithful 
representation of a Lie algebra 
$$
0 \to \fg \to \End(F) 
$$
the corresponding representation of the universal 
enveloping algebra in tensor powers of $F$ 
$$
0 \to \cU \fg \to \bigoplus \End\left(F^{\otimes n}\right) 
$$
is injective. Since the Yangian is defined as a subalgebra 
of endomorphisms of tensor products, it remains to check 
that the map 
$$
\fg_Q[u] \to \gr \bY 
$$
is injective, which is elementary. In fact, 
\begin{equation}
\bE(\xi u^i) \big|_{F(v)} = v^i \, \left(\xi\big|_{F}\right) + O(v^{i-1}) \,, 
\quad v\to \infty
\label{large_ev}
\end{equation}
where $v$ is the evaluation parameter for the representation 
$F(v)$ and we identify all $F(v)$ with $F=F(0)$ as linear spaces. 
Equation \eqref{large_ev} means that the Yangian degenerates into 
the loop algebra when all evaluation parameters are very large. 

The last claim of the Theorem, the fact the operators of classical 
multiplication and $\fg_Q$ generate the Yangian follows from 
\eqref{trade_u}.

\subsection{}

As a consequence of the above result, we see that 
$\gr \bY$ and thus $\bY$ are flat
as $\bk$-modules.
It follows that the map \eqref{tensorembedding} is injective.
Indeed, using flatness, it suffices to prove injectivity after tensoring with the fraction field $K$ of $\bk$ (which we denote by subscript for brevity).
We then have inclusions 
$$\bY_K \otimes \bY_K \rightarrow \prod_{W} \End W_K \otimes \prod_{W'} \End W'_K \rightarrow
\prod_{W,W'} \End(W_K) \otimes \End(W'_K).$$
Injectivity after completion then follows from this case by decomposing the kernel into bi-graded pieces.

As a corollary, 
the coproduct
$$\Delta: \bY \to \bY \, \widehat\otimes \, \bY$$
is well-defined.

\chapter{Further properties of the Yangian}\label{s_Further} 

\section{The core Yangian} 

\subsection{}
In this section we assume $\theta > 0$. 
By Proposition \ref{p_cW}, the Yangian 
$\bY$ contains all characteristic classes $\ch_k(\cW_i)$ of the bundles
$\cW_i$. Since $\cW_i$ are trivial, $\ch_k(\cW_i)$
add little geometric value and it may be desirable to have a smaller 
algebra $\bbY$ that does not contain them. The goal of this 
section is to define such \emph{core Yangian} 
$$
\bbY \subset \bY \otimes \bk \left[\bdel^{-1}\right]\,,
$$
where $\bdel\subset\bk$ is a certain equivariant constant 
that depends on the equivariant Cartan matrix $\bC$ of the 
quiver. In particular, if the nonequivariant Cartan matrix 
is invertible then $\bdel^{-1}\in \bk$ and $\bbY \subset \bY$. 

\subsection{}

Recall from Theorem \ref{t_Gauss} and 
from the proof of Proposition \ref{p_gen_grY} 
and that 
the characteristic classes of $\cV_i$ and $\cW_i$ come 
from the operator of cup product by 
$$
\frac{e(N_-)}{e(N_- \otimes \hbar)} \in H^\hd_{\ga}(\cM(\bw) 
\times \cM(\bw'))
$$
that appears in the diagonal matrix elements of the $R$-matrices. 
Here 
\begin{align}
 N_-  = &\sum \Hom(\cW_i,\cV'_i) + 
\sum \Hom(\cV_i,\cW'_i)\otimes \hbar^{-1} \notag \\
&- \sum \bC_{ij} \Hom(\cV_i,\cV'_j) \label{charN2_}
\end{align}
in the negative part of the normal bundle to $\cM(\bw) \times 
\cM(\bw')$ inside $\cM(\bw+\bw')$ and $\bC$ is the equivariant 
Cartan matrix.

\subsection{}

The basic idea for defining $\bbY$ is the following. 
Complete the square in \eqref{charN2_} as follows 
\begin{equation}
 N_-  = 
- \sum \bC_{ij} \Hom\left(\cVt_i,\cVt'_j\right)
 \label{charN3_}
+ \sum \left(\bC^{-1}\right)_{ij} \Hom\left(\cW_i,\cW'_j\right)
\otimes \hbar^{-1} 
\end{equation}
where 
$$
\bC^{-1} \in \Mat(|I|,K_\ga(\pt)_\textup{localized}) 
$$
is the inverse of the equivariant Cartan matrix and  
\begin{equation}
\cVt = \cV - \hbar^{-1} \otimes \bC^{-1} \cW \label{defcVt}
\end{equation}
as vectors in $K_{\ga}(\cM(\bw)\times \cM(\bw'))^I \otimes 
 K_\ga(\pt)_\textup{localized}$. In particular, the Chern 
character 
$$
\ch \cVt = \ch \cV - e^{-\hbar} \, (\ch \bC)^{-1} \cdot \ch \cW 
$$
is defined if $\bC$ is 
invertible\footnote{Recall from section \ref{weightconvention} that we embed group weights into 
Lie algebra weights. While convenient, this could be confusing, 
especially in the context of Chern character. For example, 
by this rule, $\ch \hbar = e^{\hbar}$.}. 
However, it may contain terms of negative cohomological 
degree if the nonequivariant Cartan matrix is not invertible, see
below. 

The main feature of \eqref{charN3_} is that its second term 
is a purely equivariant object and so its Euler class may 
be taken out as an overall factor from the $R$-matrix.
The diagonal matrix elements of 
the new $R$-matrix generate only $\ch_k \cVt$. 
This smaller algebra will be the 
desired core Yangian $\bbY$. 

We now proceed with the realization of the this plan. 

\subsection{}
Let $\bG$ be a complex reductive group and $f\in \C(\bG)$ 
a rational function on $\bG$. We define 
$$
\ch_k f  \in \C(\Lie \bG)
$$
by the series expansion 
$$
\sum_{k} x^k \, \ch_k f (\xi) =  f(\exp(x \xi))\,, 
\quad \xi \in \Lie \bG \,, \quad x \in \C \,. 
$$  
This has negative terms if $f$ is not regular at $1\in \bG$.

Functoriality of $\ch_k f$ 
with respect to homomorphisms 
$\phi: \bG' \to \bG$ may fail if $\phi(\Lie \bG')$ lands in
the pole divisor of the Chern character. Because of this, 
we work in $\bG$-equivariant 
$K$-theory and cohomology for some fixed group $\bG$ 
if the nonequivariant 
Cartan matrix is not invertible.

For the rest of 
this section, we fix a group $\bG$ such that 
$$
\ga \supset \bG \supset \C^\times_t  \,,
$$
where  $\C^\times_t$ is the group that scales all quiver 
data by the same number $t\in \C^\times$.

\subsection{}

\begin{Lemma}\label{l_ch-2}
The matrix $\bC$ is invertible in localized $\bG$-equivariant 
$K$-theory and 
$$
\ch_k \bC^{-1} = 0 \,, \quad k < -2 \,. 
$$
\end{Lemma}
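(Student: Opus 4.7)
The lemma has two statements: invertibility and a pole bound. For invertibility, my plan is to show $\det\bC$ is a non-zerodivisor in $K_\bG(\pt)$, so that $\bC$ becomes invertible after localizing at this element. Since $\bG\supset\C^\times_t$, it suffices to check the restriction of $\det\bC$ to $\C^\times_t$. Along this subgroup $\hbar\mapsto t^2$ and each edge weight becomes $t$, so $\bC|_{\C^\times_t}=(1+t^{-2})I-t\,(Q+Q^T)_0$ with $(Q+Q^T)_0\in\Mat(|I|,\Z)$ the integer matrix of edge multiplicities. Its determinant is a Laurent polynomial in $t$ with leading behavior $\pm t^{-2|I|}$ as $t\to 0$, hence nonzero.

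For the pole bound, I need to show the Laurent expansion of $\bC^{-1}(\exp x\xi)$ at $x=0$ is supported in degrees $\ge -2$ for every $\xi\in\Lie\bG$. My plan is to reduce the matrix problem to a scalar one by locally diagonalizing $\bC$ near $1\in\bG$, which is possible because $(Q+Q^T)$ is symmetric. The eigenvalues of $\bC$ are the analytic functions
$$
f_i(g)=1+\hbar^{-1}(g)-\mu_i(g)
$$
on a neighborhood of the identity, where $\mu_i$ is the corresponding eigenvalue of the symmetric matrix $(Q+Q^T)(g)$. The pole order of $\bC^{-1}$ at $1$ then equals $\max_i\mathrm{ord}_1 f_i$, so it suffices to prove $\mathrm{ord}_1 f_i\le 2$ for each $i$.

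For any $i$ with $f_i(1)=0$ (equivalently $\mu_i(1)=2$), expand $f_i(\exp x\xi)=x\,\alpha_i(\xi)+x^2\,\beta_i(\xi)+O(x^3)$. Using $\hbar^{-1}(\exp x\xi)=\exp(-x\lambda_\hbar(\xi))$ together with the first- and second-order perturbation formulas for an eigenvalue of a real-symmetric matrix, both $\alpha_i$ and $\beta_i$ become explicit polynomials in $\xi$. The crucial observation is that $\beta_i(\xi)$ contains the summand $\tfrac12\lambda_\hbar(\xi)^2$ coming from the $\hbar^{-1}$ factor, together with a definite-sign contribution from the second-order perturbation of $\mu_i$.

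The main obstacle is showing that $\alpha_i(\xi)=0$ forces $\beta_i(\xi)\ne 0$ for every nonzero $\xi$. I anticipate a two-case analysis: when $\lambda_\hbar(\xi)\ne 0$ the $\tfrac12\lambda_\hbar(\xi)^2$ summand in $\beta_i$ gives non-vanishing directly; when $\lambda_\hbar(\xi)=0$, i.e.\ $\xi\in\Lie(\bG\cap G_\omega)$, the constraint $\alpha_i(\xi)=0$ reduces to $\langle v_i,d(Q+Q^T)|_1(\xi)v_i\rangle=0$, and one must exploit the explicit edge-by-edge decomposition of $d(Q+Q^T)|_1$ and combine it with the off-diagonal second-order perturbation terms to conclude $\beta_i(\xi)\ne 0$. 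This second case is the sole place where the specific form $\bC=1+\hbar^{-1}-(Q+Q^T)$ is essential, and it is what forces the bound $-2$ in the statement rather than the bound $-1$ that holds in generic directions.
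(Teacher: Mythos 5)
Your invertibility argument is the same as the paper's: restrict to $\C^\times_t$ and observe the determinant is a nonzero Laurent polynomial. (Note, though, that the paper's convention gives $\bC|_{\C^\times_t}=1+t^2-t\,(Q+Q^T)_0$ rather than your $1+t^{-2}-t\,(Q+Q^T)_0$; both are nonzero, but the sign of the exponent matters for the pole computation below.)

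The pole bound is where your approach diverges and where the gaps lie. First, ``locally diagonalizing $\bC$ near $1\in\bG$'' is not available: a symmetric family of matrices in several parameters does not in general admit analytic eigenvalues near a point of degeneracy (already $\bigl(\begin{smallmatrix}x&y\\y&-x\end{smallmatrix}\bigr)$ fails). Rellich's theorem gives analyticity only along one--parameter arcs, and only for a Hermitian family; the equivariant lift $(Q+Q^T)(g)$ is not Hermitian away from $g=1$, which is precisely why the paper passes to $e^{\hbar/2}\ch\bC$ on the compact real form. You cannot simply cite the symmetry of the integer matrix $(Q+Q^T)(1)$. Second, and more seriously, the statement you set out to prove --- that $\alpha_i(\xi)=0$ forces $\beta_i(\xi)\ne0$ for every nonzero $\xi$ --- is stronger than the lemma and is not what your intermediate claims deliver. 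The Rayleigh--Schr\"odinger second--order term $\sum_{j\ne i}|\langle v_j,M_1 v_i\rangle|^2/(\mu_i-\mu_j)$ contains contributions of \emph{both} signs (eigenvalues $\mu_j$ of $Q+Q^T$ can lie on either side of $2$), so there is no ``definite-sign contribution,'' and the summand $\tfrac12\lambda_\hbar(\xi)^2$ in $\beta_i$ can be exactly cancelled by the remaining terms; already for the Jordan quiver, $\ch\bC(x\xi)=(1-e^{x\xi_1})(1-e^{x\xi_2})$ has $\beta(\xi)=\xi_1\xi_2$, which vanishes on a hyperplane where $\lambda_\hbar(\xi)=\xi_1+\xi_2\ne0$. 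The second-order formulas also presuppose $\mu_i=2$ is a simple eigenvalue of $(Q+Q^T)(1)$, which need not hold. Finally, your Case 2 is only a plan, not an argument. The paper sidesteps all of this: it makes the computation only along $\C^\times_t$, where each eigenvalue of $\bC$ is the explicit scalar $1+t^2-t\mu_i=(1-t)^2$ when $\mu_i=2$, so the pole order is visibly $\le 2$; it then extends to general $\bG$ not by any pointwise perturbation estimate, but by observing that the vanishing orders of eigenvalues of a Hermitian analytic family are governed by the characteristic polynomial coefficients and are semicontinuous in the arc, so the bound established at $\C^\times_t$ propagates to generic arcs. Your proposal would need to reproduce that semicontinuity step, or else close the case analysis in a way that respects the generic-$\xi$ nature of the claim; as written it does neither.
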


\begin{proof}
For the first claim, it suffices 
to consider the case $\bG = \C^\times_t$. Then  
$$
\bC = 1 + t^2 - t \, (Q+Q^T)  
$$
where $Q$ is the nonequivariant adjacency matrix of the quiver $Q$. 
Clearly, this is invertible. As a real symmetric matrix,
$Q+Q^T$ is semisimple. This implies $\bC^{-1}$ has poles of order 
$\le 2$ for $\bG = \C^\times_t$. 

For general $\bG$, the matrix $e^{\hbar/2} \ch \bC$ is Hermitian 
when the equivariant parameters lie in  the Lie algebra
$$
\mathsf{g}_\textup{c} = 
\Lie \bG_\textup{compact} = \{\xi,\xi^* = - \xi\} 
$$
of the compact real form of $\bG$. Therefore, its eigenvectors 
and eigenvalues are analytic along any real-analytic arc through 
the origin in $\mathsf{g}_\textup{c}$. In particular, the orders
of the poles of $\left(\ch \bC\right)^{-1}$ along any arc 
are the orders of vanishing of 
the eigenvalues of $\ch \bC$ along the same arc.
The latter are determined by the 
coefficients of the characteristic polynomial, and, therefore, 
semicontinuous as a function of the arc. Since they are $\le 2$ 
for $\bG = \C^\times_t$, the Lemma follows. 

\end{proof}

% To see the importance of Hermitian condition take the matrix 
% with eigenvalues (x,2x) in the basis (x+1,1),(1,1). Then the 
% eigenvalues vanish with order 1 while the matrix elements of 
% the inverse matrix have poles of order 2 

\subsection{}

We define $\bdel$ as the lowest degree term in
the expansion 
$$
\det \ch \bC = \bdel + \dots  \,. 
$$
By construction 
$$
\ch_k {\bC^{-1}} \in \Mat(|I|,H^\hd_\bG(\pt)[\bdel^{-1}]) \,. 
$$
for all $k$. 

\subsection{}

Let $Q_d$ be the quiver with the adjacency matrix $Q+Q^T$, in 
other words, 
\begin{equation}
Q_d = \Qb \setminus \{ \textup{framing vertices} \} \,. \label{defQd} 
\end{equation}
Let $\Path(Q_d)$ denote the path algebra of $Q_d$ and let 
$$
\Pi(Q_d) = \Path(Q_d) \left/ \left( \sum\nolimits_{a\in Q_d} [a,a^*]\right)
\right. 
$$
 denote the preprojective algebra of $Q_d$. Here $a^*$ is the arrow 
in $Q_d$ opposite to an arrow $a\in Q$. 

The group $G_\edge$ acts naturally on $\Path(Q_d)$ and $\Pi(Q_d)$, 
this action is \emph{dual} to the defining action of $G_\edge$ on 
representations of these algebras. In other words, the action of
$G_\edge$ on the generators of $\Path(Q_d)$ is recorded in the matrix 
$\overline{\bC}$. In particular, the natural grading on $\Path(Q_d)$, 
in which every arrow has degree $1$ is given by minus the weight of 
the $\C^\times_t$-action. All these weight spaces are finite-dimensional. 

By construction, $\Path(Q_d)$ has orthogonal idempotents $e_i$, $i\in I$, 
namely paths of zero length that start and end at a vertex $i$. We 
set
$$
\Path(Q_d)_{ij} = e_i \Path(Q_d)\, e_j \,,
$$
and similarly for $\Pi(Q_d)$. 
It is known, see for example
\cite{MOV,EG}, that
\begin{equation} \label{charPath} 
  \textup{character} \,\, 
 \Pi(Q_d)_{ij} = \big(\,\overline{\bC}^{-1}\,\big)_{ji}  =
\hbar^{-1} \otimes \left(\bC^{-1}\right)_{ij} \,,
\end{equation}
provided $Q$ is not a quiver of ADE type. We recall that by our 
convention $\bC_{ji}$ records edges going from $j$ to $i$. 

Formula \eqref{charPath} provides the following geometric interpretation 
of the $K$-theory classes \eqref{defcVt}. 

\subsection{}

Recall that 
we assume $\theta>0$. This means that the
natural map of bundles over $\cM_{\theta,0}(\bv,\bw)$ 
$$
\bigoplus_{j\in I} \Path(Q_d)_{ij} \otimes \cW_j  \to \cV_i 
$$
is surjective for all $i\in I$. Choose a $\bG$-invariant 
linear map (not algebra homomorphism) 
$$
s: \Pi(Q_d) \hookrightarrow \Path(Q_d)
$$
splitting the canonical surjection in the other direction. 
The moment map equations for $\cM_{\theta,0}(\bv,\bw)$
equal the relations in $\Pi(Q_d)$ modulo terms in 
the image of $\cW_j$. Therefore 
\begin{equation}
\bigoplus_{j\in I} 
s\left(\Pi(Q_d)\right)_{ij} \otimes \cW_j  \to \cV_i \to 0 \,,
\label{PiWV}
\end{equation}
is still surjective. 

The grading by $\C^\times_t$ makes the 
class of $\Pi(Q_d)_{ij}$ well-defined in 
completed $\bG$-equivariant $K$-theory. 
{}From \eqref{charPath}, we have the following

\begin{Proposition}
If $Q$ is not of ADE type, $\theta>0$, and $\bG$ contains
$\C^\times_t$, 
then the $\bG$-equivariant $K$-class of $\cVt$ is minus the kernel 
in \eqref{PiWV}.  
\end{Proposition}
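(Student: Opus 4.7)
The plan is to read off the identity $\cVt_i = -[K_i]$ directly from the presentation \eqref{PiWV} and the character formula \eqref{charPath}, working in completed $\bG$-equivariant $K$-theory. Since $\theta>0$, the map in \eqref{PiWV} is genuinely surjective on $\cM_{\theta,0}(\bv,\bw)$, so its kernel $K_i$ defines a class and additivity of Euler characteristic gives
\begin{equation*}
[K_i] + [\cV_i] \; = \; \sum_{j\in I} [s(\Pi(Q_d))_{ij}] \cdot [\cW_j].
\end{equation*}
Because the splitting $s$ was chosen $\bG$-equivariantly, it preserves all the grading data and $[s(\Pi(Q_d))_{ij}]$ coincides with $[\Pi(Q_d)_{ij}]$ as a class in the completed representation ring.

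Next I would substitute the character identity \eqref{charPath}, which holds because $Q$ is not of ADE type: $[\Pi(Q_d)_{ij}] = \hbar^{-1}\otimes (\bC^{-1})_{ij}$. Inserting this and using the definition \eqref{defcVt} of $\cVt$ yields
\begin{equation*}
[K_i] \; = \; \hbar^{-1}\otimes \sum_{j\in I} (\bC^{-1})_{ij}\, [\cW_j] - [\cV_i] \; = \; -[\cVt_i],
\end{equation*}
which is the claim.

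The main point requiring care is that all of this takes place in a completion of $\bG$-equivariant $K$-theory, so we need each ingredient to make sense there and the equality to be meaningful between well-defined elements. This is where the hypothesis $\C^\times_t\subset \bG$ is essential: it endows $\Pi(Q_d)_{ij}$ with a bounded-below grading whose weight spaces are finite-dimensional, so the infinite sum defining $[\Pi(Q_d)_{ij}]$ converges in the completion. On the matrix side, Lemma~\ref{l_ch-2} ensures $\bC$ is invertible after inverting $\bdel$, with $\ch_k \bC^{-1} = 0$ for $k<-2$, so on the level of Chern characters only finitely many negative-degree terms appear in each cohomological degree and both sides of the identity lie in the same graded piece. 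Once these compatibilities are recorded, the proof reduces to the two-line computation above; the non-ADE hypothesis is used only to invoke the uniform formula \eqref{charPath}.
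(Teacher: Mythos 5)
Your proof is correct and takes the approach the paper clearly intends: the Proposition is stated immediately after the surjection \eqref{PiWV} and the character formula \eqref{charPath}, and it is meant to follow by exactly the short-exact-sequence computation you give, with the $\C^\times_t$ hypothesis supplying the grading that makes the infinite-dimensional classes converge in completed equivariant $K$-theory. The points you flag about the $\bG$-equivariance of the splitting $s$ and the non-ADE hypothesis being used only through \eqref{charPath} are exactly the right ones.
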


There should be a more general statement valid for all quivers
and all stability conditions. 

% In this direction, it was pointed 
% out to us by P.~Etingof that the matrix elements of $\bC^{-1}$ 
% do not have positive coefficients if $Q$ is of ADE type. 

\subsection{Example}

Let $Q$ be the quiver with one vertex and one loop, that is 
the quiver with the adjacency matrix $Q=(1)$. Then 
$$
\Pi(Q_d) = \C\langle x, y \rangle / (xy - yx) = \C[x,y] \,.
$$
The variety 
$$
\cM_{1,0}(n,1) = \Hilb_n(\C^2) 
$$
is the Hilbert scheme of point of $\C^2$, that is, the moduli space
of ideals $I\subset \C[x,y]$ of codimension $n$. The tautological 
sequence 
$$
0 \to I \to \C[x,y] \to \C[x,y]/I \to 0 
$$
is precisely the sequence 
$$
0 \to \Ker \to \Pi(Q_d) \to \cV \to 0 \,. 
$$

\subsection{}\label{s_Gamma_gen} 

We defined the $K$-classes that appear in \eqref{charN3_}
and their Chern characters. We now consider the operator 
\begin{equation}
\frac{e(N_-)}{e(N_- \otimes \hbar)}
= \frac{c(N_-^\vee,u)}{c(N_-^\vee\otimes \hbar^{-1},u)}
\label{EtC}
\end{equation}%
where 
\begin{equation}
c(L,u) = u^{\rk L} + c_1(L) \, u^{\rk L -1} + \dots 
\label{chernpoly}
\end{equation}
is the Chern polynomial and the bundle arguments of the Chern 
polynomials  in \eqref{EtC} 
are taken with the trivial action of $u$. 

By definition, we set 
\begin{equation}
\log c(L,u) = \sum_k  \ch_k L \, \ln^{(k)} u \,, 
\quad \ln^{(k)} u = \left(\tfrac{d}{du}\right)^{k} \ln u 
\label{defcL}
\end{equation}
for any $K$-theory class $L$ whose Chern character is defined. 
Here $\ln^{(-1)} u  = u(\ln u -1)$ etc. 
This generalizes \eqref{chernpoly} and is the usual 
$\zeta$-regularization 
of infinite products given by $\Gamma$-functions, see 
for example \cite{Ruij,Soule}. 

In particular, this defines $e(\tN_-)\big/e(\tN_- \otimes \hbar)$ for 
\begin{equation*}
 \tN_-  = 
- \sum \bC_{ij} \Hom\left(\cVt_i,\cVt'_j\right) \,. 
 \label{chartN}
\end{equation*}
In fact, we will only need it for 
\begin{equation}
  \label{tN0}
\tN_- \big|_{\bv=\bv'=0} = - 
\hbar^{-1}\otimes 
\sum \left(\bC^{-1}\right)_{ij} \Hom\left(\cW_i,\cW'_j\right) \,.
\end{equation}
We set 
$$
\Gamma(\bw,\bw') = \left. \frac{e(\tN_-)}{e(\tN_- \otimes \hbar)} 
\, 
\right|_{\bv=\bv'=0} 
$$
and define the new matrix $\tR$ as a scalar multiple of the 
old $R$-matrix 
\begin{equation}
\tR = \Gamma(\bw,\bw') \, R  \,. 
\label{Rh_gen}
\end{equation}
Tautologically, it also satisfies the Yang-Baxter equation. 

The old $R$-matrix was normalized to act by $1$ on the
vacuum vector, while the new matrix $\tR$ acts by a certain 
multivariate $\Gamma$-function. An example 
of $\Gamma(\bw,\bw')$ is given in Section \ref{s_Gamma_M(r,n)}
below. The appearance of 
$\Gamma$-functions in normalization of $R$-matrices
is a well-known phenomenon in the theory of quantum groups, 
see for example \cite{KhT}. 
Here we have yet another angle from which it can be seen.

\subsection{}\label{gamma_sing} 

We modify the definitions of Section \ref{s_def_Y} as follows. 
For $W$ as in \eqref{exW}, define 
\begin{equation*}
\tR_{F_{0}(u), W} 
= \tR_{F_{0},F_{n}}(u-u_n) \, \cdots \, 
\tR_{F_{0},F_{1}}(u-u_1) \,. 
\label{rRtrain}
\end{equation*}
We can write
$$
\tR_{F_{0}(u), W} = e^{\hbar \gamma_\sing} \, \tR_{F_{0}(u), W, \reg}
$$
where $\tR_{F_{0}(u), W, \reg}$ has a $1/u$-expansion 
and $\hbar \gamma_\sing$ is the singular part of the 
$u\to\infty$ expansion of $\log \tR_{F_{0}(u), W}$. 
In particular, $\gamma_\sing$ is a scalar operator.

In fact, Lemma \ref{l_ch-2} implies 
$$
\ch_k \tN\otimes (1-\hbar) = 0 \,, \quad k < -1 \,. 
$$
Therefore 
\begin{equation}
\gamma_\sing = 
\bc_{-2}\,  \ln^{(-1)} u  + \bc_{-1} \, \ln u 
\label{lnGamma}
\end{equation}
for certain scalar operators 
$$
\bc_{-2}, \bc_{-1} \in \bk[\bdel^{-1}][u_1,\dots,u_n] 
$$
of equivariant degree $-2$ and $-1$, respectively. The 
dependence on $u_i$ comes from 
$$
\ln^{(-1)}(u-u_i) = \ln^{(-1)}(u)- u_i \ln u + \dots\,, 
$$
and is at most linear.

\begin{Definition}
The core Yangian 
$$
\bbY \subset \bY \otimes \bk[\bdel^{-1}] 
$$
is the algebra generated by the matrix coefficients of 
$\bc_{-2}$, $\bc_{-1}$, and all coefficients of the $1/u$ 
expansion of $\tR_{F_{0}(u), W, \reg}$. Inside $\bbY$ we 
have a Lie algebra 
$$
\fg'_Q \subset \bbY
$$
generated by $\bc_{-2}$, $\bc_{-1}$, and the $u^{-1}$
coefficient of $\tR_{F_{0}(u), W, \reg}$. 
\end{Definition}

\noindent
Arguing as in Section \ref{s_Y_str} we obtain the following 

\begin{Theorem}\label{t_coreY} 
The core Yangian $\bbY$ is generated by $\fg'_Q$ 
and the operators of cup product 
by $\ch_k \cVt_i$ for $k\ge 1$ and $i\in I$. 
\end{Theorem}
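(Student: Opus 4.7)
The plan is to mimic the proof of Theorem \ref{t_grY} essentially verbatim, with the single substantive change being that the decomposition \eqref{charN3_} replaces the decomposition \eqref{charN2_} throughout. Concretely, I would first establish an analog of Proposition \ref{p_gen_grY}: if $\bE(m) \in \fg'_Q$ vanishes, then the higher degree generator $\bE(m u^k)$ lies in strictly lower filtration. The argument splits by $\fhb$-weight exactly as in \eqref{trade_u}: nonzero weight components can be traded, via the top-degree commutation relation from Proposition \ref{p_commtop}, for products of an $\fhb$-element at degree $k$ and a weight-carrying element at degree $0$. For the zero-weight component, one uses the analog of Theorem \ref{t_Gauss} to express diagonal matrix elements of $\tR$ in terms of off-diagonal entries (which pick up degree shifts from the $1/u$-expansion and hence land in $\bbY_{<k}$) together with the multiplicative operator $e(\tN_-)/e(\tN_-\otimes\hbar)$ expanded in $\ch_j \cVt_i$.

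The crucial point, visible from \eqref{charN3_}, is that after completing the square and absorbing the purely equivariant cross term into the normalization factor $\Gamma(\bw,\bw')$, the surviving nontrivial bundle contribution involves only the classes $\cVt_i$. Thus the degree-$k$ piece of $\log \left(e(\tN_-)/e(\tN_-\otimes\hbar)\right)$ is, modulo lower-filtration terms, a linear combination
$$
\sum_{i} (m,\bv_i - \hbar^{-1}\bC^{-1}\bw_i)\, \ch_k \cVt_i
$$
(plus the scalar contributions from \eqref{lnGamma}, which account for the $\bc_{-1}, \bc_{-2}$ generators). When $\bE(m)=0$ this coefficient vector vanishes, giving the desired statement. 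Granting this, the same surjectivity/injectivity machinery as in Section \ref{s_pr_tgrY} yields a surjection $\cU(\fg'_Q[u]) \twoheadrightarrow \gr \bbY$ whose injectivity follows from the loop-algebra degeneration \eqref{large_ev} applied in the modified setting (the $\Gamma$-normalization does not affect the large-evaluation-parameter limit).

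Having set this up, the theorem itself follows by running the identity \eqref{trade_u} in reverse: any generator $\bE(m u^k)$ of $\bbY$ is congruent, modulo $\bbY_{<k}$, to a product of one element of $\fg'_Q$ (placed at degree $k$) and one purely classical factor (placed at degree $0$). Descending induction on the filtration then expresses everything in terms of $\fg'_Q$ together with the zero-filtration piece, which by construction is generated by the $\ch_k \cVt_i$ for $k\ge 1$ and by $\bc_{-1}, \bc_{-2}$ (but these last two already lie in $\fg'_Q$).

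The main obstacle I anticipate is bookkeeping around the singular part $\gamma_\sing$ of \eqref{lnGamma}. Because $\ln^{(-1)} u$ and $\ln u$ are not $1/u$-expansions, they sit outside the natural filtration by degree in $u$, and one must check that the rearrangement in \eqref{trade_u} does not inadvertently create spurious singular contributions that escape the inductive control. The resolution should be that $\bc_{-2}$ and $\bc_{-1}$ are \emph{central} scalar operators (they are polynomials in equivariant parameters and the $u_i$), so they commute through everything and can be collected separately; this is what justifies incorporating them into $\fg'_Q$ as extra central generators rather than as obstructions to the filtration argument. Once that is verified, the rest of the proof is routine extraction from the arguments already used for $\bY$.
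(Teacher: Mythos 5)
Your proposal is correct and matches the paper's intended argument exactly. The paper's ``proof'' of Theorem~\ref{t_coreY} is the single phrase ``Arguing as in Section~\ref{s_Y_str} we obtain the following,'' so the expected content is precisely what you lay out: rerun Proposition~\ref{p_gen_grY} and the proof of Theorem~\ref{t_grY} with the completed-square decomposition \eqref{charN3_} in place of \eqref{charN2_}, so that the degree-$k$ piece of the diagonal matrix elements of $\tR$ is a linear combination of $\ch_k\cVt_i$ rather than $\ch_k\cV_i$ and $\ch_k\cW_i$, and then invert \eqref{trade_u} to descend the filtration. You supply more detail than the paper does, and in particular you correctly identify and resolve the one genuine subtlety the paper leaves implicit: the singular terms $\bc_{-1},\bc_{-2}$ from \eqref{lnGamma} are scalar (hence central) operators that are absorbed into $\fg'_Q$ by its very definition, so they do not obstruct the filtration-descent induction.
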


\section{Slices and intertwiners} \label{s_Slices_Int} 

\subsection{}

Consider the following setup. It will not be the most general, but 
will suffice for our purposes and will illustrate the 
general ideas. Consider $H^\hd_\bT(\cM(\bw))$, where 
$\bT \subset G_\bw \times G_\edge$ is a torus and 
$$
\bw = a_i \, \delta_i + a_j \, \delta_j \,. 
$$
Here $\delta_i$ and $\delta_j$ are delta functions at some 
vertices $i,j\in I$ and $a_i,a_j$ are weights of $\bT$. 

As explained in Section \ref{s_minusc}, 
the first fundamental theorem of 
invariant theory gives an embedding of $\cM_0(\bw)$ into a 
particular vector representation $V$ of $\bT$. 
The weights of 
this representation correspond to closed paths in 
\eqref{defQd} 
as well as paths that start and end at vertices in $\{i,j\}$.

\subsection{} 

Let $P$ be a path of the form 
$$
j \xrightarrow{P_1} \bullet \xrightarrow{P_2} \bullet 
\xrightarrow{\,\,} \cdots  \xrightarrow{\,\,} 
\bullet \xrightarrow{\,\,} i 
$$ 
where  dots represent 
vertices of $Q$ and $P_i$ are arrows from $\Qb$.  The 
weight of the corresponding $G_\bv$-invariant function 
$f_P\in \C[\cM_0(\bw)]$ 
is computed as follows 
$$
w_P = - \textup{weight} \, f_P = a_i - a_j  + \sum t_k
$$
where $t_k$ is the weight of the arrow $P_k$. We 
\emph{assume}
that $\bT$ is such that 
\begin{equation}
w_P \ne - \textup{weight} \, f_{P'}
\label{unique_w}
\end{equation}
for any other generator $f_{P'}$ of $\C[\cM_0(\bw)]$. This 
assumption is satisfied in examples from Sections \ref{s_slice_ex1} 
and 
\ref{s_slice_ex2}. 

Denote $\bT' = \Ker w_P$ and let $x_P \in \cM_0(\bw)^{\bT'}$
be the unique, up to multiple, nonzero fixed representation. 
By construction, $\bT$ scales $x_P$ with weight $w_P$. 
By our assumption 
\begin{equation}
  \label{line_x_p}
  \cM_0(\bw)^{\bT'} = \C x_P\,, 
\end{equation}
where  $\C x_P$ is 
the line through $x_P$.

\subsection{}

Let $\Sigma_P$ denote the slice at $x_P$
\begin{equation}
\Sigma_P : \cM(\bv',\bw') \times U \dasharrow
\cM(\bv,\bw) \,,\label{slice_at_x_p}
\end{equation}
where 
\begin{equation}\label{bvx_P}
\bv' = \bv - \dim x_P \,, \quad \bw' = \bw - \hbar \otimes  \bC \, \dim x_P  
\end{equation}
by Proposition \ref{p_slice_w} and 
$$
U \cong \C^{\dim \cM(\bv,\bw) - \dim \cM(\bv',\bw')}
$$
is a vector space factor with the $\bT'$-character given by \eqref{formk}. 
In particular, restricting to the origin in $U$ 
we obtain a map 
$$
\Sigma_{P,0} : \cM(\bv',\bw') \dasharrow
\cM(\bv,\bw)  
$$
which is regular in the neighborhood of the central fiber 
of $\cM(\bv',\bw')$ and hence defines a map
$$
\Sigma_{P,0}^*: H^\hd_{\bT'}(\cM(\bv,\bw)) \to 
H^\hd_{\bT'}(\cM(\bv',\bw'))  \,.
$$

\begin{Proposition}
The map $\Sigma_{P,0}^*$ is a $\bbY$-intertwiner. 
\end{Proposition}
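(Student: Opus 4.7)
The plan is to apply Theorem \ref{t_coreY}, which identifies a set of generators of $\bbY$: the Lie algebra $\fg'_Q$ (matrix coefficients of the normalized $R$-matrix $\tR$) together with the cup product operators $\ch_k \cVt_i$ for $k\ge 1$ and $i\in I$. It therefore suffices to check that $\Sigma_{P,0}^*$ intertwines each of these two families of generators.

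For the cup product generators, the key observation is that the virtual class $\cVt_i = \cV_i - \hbar^{-1}\sum_j (\bC^{-1})_{ij}\,\cW_j$ is preserved under the slice. From \eqref{bvx_P}, the rank of $\cV_i$ drops by $(\dim x_P)_i$ while the framing shifts by $\bw \mapsto \bw - \hbar\,\bC\,\dim x_P$; contracting this second change against $\hbar^{-1}\bC^{-1}$ yields exactly $(\dim x_P)_i$, so the two contributions cancel and $\Sigma_{P,0}^*\cVt_i = \cVt_i$ in equivariant $K$-theory. Consequently $\Sigma_{P,0}^*$ commutes with cup product by $\ch_k\cVt_i$. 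The same cancellation shows that the scalar prefactor $\Gamma(\bw_0,\bw)$ from \eqref{Rh_gen} matches $\Gamma(\bw_0,\bw')$ under the slice, so the normalization of $\tR$ on the two sides is compatible.

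For $\fg'_Q$, I would exploit the compatibility of slices with tensor products \eqref{slice_tensor}. Padding the representation $x_P$ by zero on an auxiliary framing $\bw_0$, one obtains a commutative square relating the embedding $\cM(\bw_0)\times\cM(\bw')\hookrightarrow\cM(\bw_0+\bw')$ to $\cM(\bw_0)\times\cM(\bw)\hookrightarrow\cM(\bw_0+\bw)$, and this identification is equivariant for the tensor product torus $\bA=\C^\times$. The essential step is then to show that, under this slice, the stable envelope $\Stab_\fC$ on $\cM(\bw_0+\bw)$ pulls back to the stable envelope $\Stab_\fC$ on $\cM(\bw_0+\bw')$. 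By the uniqueness in Theorem \ref{stablebasis}, this reduces to checking the three defining properties on the pullback: the support condition and the ample triangularity follow from $\bG'$-equivariance of the slice and its compatibility with the map to the affine quotient $\cM_0$; the diagonal restriction by $\pm e(N_-)$ matches by the normal bundle identification \eqref{charN_} combined with the $\cVt$-invariance above; the $\bA$-degree bound transfers directly from the target. Granting this, the matrix coefficients of $\tR_{\bw_0,\bw}(u)$ that generate $\fg'_Q$ automatically intertwine with those of $\tR_{\bw_0,\bw'}(u)$, as required.

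The main obstacle is formulating this stable-envelope pullback rigorously, since a slice is only an analytic open immersion rather than an algebraic map. I would circumvent this by working with the universal Lagrangian cycle $\cL_\fC$ of Proposition \ref{universalstableleaf} and constructing it inductively via Lemmas \ref{l_correct1} and \ref{l_correctL} inside a neighborhood of the central fiber of $\cM(\bv',\bw')$, where $\Sigma_{P,0}$ restricts to a genuine analytic isomorphism onto its image. Each step of that inductive construction is purely local near the central fiber and involves only closures and residue corrections along $\bA$-invariant strata, so it commutes with pullback along the $\bG'$-equivariant analytic slice. The extra $U$-factor present in the full slice $\Sigma_P$ is harmless: it contributes a trivial equivariant normal direction to every relevant normal bundle on the target side, and restricting to its central fiber cancels this contribution against the corresponding factor on the ambient side.
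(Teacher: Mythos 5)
Your proposal correctly identifies the two ingredients --- the $\cVt$-invariance calculation (which is exactly the paper's last line, and your arithmetic from \eqref{bvx_P} is right) and the compatibility of the slice with the tensor-product embeddings \eqref{slice_tensor}. The divergence is in how you close the argument for the $R$-matrix part, and that is where your route becomes substantially heavier than the paper's.

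The paper observes that the graph of $\Sigma_{P,0}$ is a (local) Steinberg correspondence: it is Lagrangian (the graph of a symplectomorphism) and, by Theorem \ref{t_slice}, compatible with the maps to the affine quotient. Once this is granted, Theorem \ref{t_Stein} immediately delivers the commutative square relating $\Sigma_{P,0}^*$ to the stable envelopes on both sides, with the fixed-point residue of the slice correspondence identified by the equivariance in \eqref{slice_tensor}. In one stroke this intertwines the $R$-matrices up to the scalar contribution of $U$, which the $\tR$-normalization absorbs by the same $\cVt$-identity you prove. What you instead propose --- verifying the three axioms of Theorem \ref{stablebasis} on the pulled-back stable envelope --- is essentially a re-proof of Theorem \ref{t_Stein} in this special case. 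That is not wrong in principle, but your sketch leaves the crucial parts vague: the $\bA$-degree bound \nuiii\ does not "transfer directly from the target" (the inductive closure-and-residue construction of $\cL_\fC$ in Lemmas \ref{l_correct1} and \ref{l_correctL} involves global operations, and arguing they commute with an analytic-only open immersion requires the properness input from Proposition \ref{p_leafX0}, which is exactly what the proof of Theorem \ref{t_Stein} supplies). Similarly, the $U$-factor does \emph{not} contribute a "trivial equivariant normal direction"; $\bT'$ acts on $U$ with nontrivial weights (Lemma \ref{l_Uweight} only asserts a zero weight exists). The correct statement, and the paper's, is that $U$ contributes an overall \emph{scalar} multiple to the $R$-matrix. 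Finally, decomposing via Theorem \ref{t_coreY} into cup products plus $\fg'_Q$ saves nothing: $\fg'_Q$ is itself defined by matrix elements of $\tR$, so proving the intertwining on $\fg'_Q$ generators already requires showing $\Sigma_{P,0}^*$ intertwines $\tR$, which is the paper's direct statement. I would recommend replacing your stable-envelope-axiom verification by a citation of Theorem \ref{t_Stein} applied to the Steinberg correspondence given by the graph of the slice, retaining your $\cVt$-computation for the normalization.
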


\begin{proof}
Since slice is a Steinberg correspondence, the bottom arrow 
in the diagram \eqref{slice_tensor} intertwines the $R$-matrices
on both sides. The vector space $U$ contributes a scalar 
factor to the $R$-matrix, therefore $\Sigma_{P,0}^*$ intertwines
$R$-matrices, up to a multiple. To see that it intertwines $\tR$-matrices, 
it suffices to note that 
$$
\cVt' = \cV' - \hbar^{-1} \otimes \bC^{-1} \, \cW' = 
\cV - \hbar^{-1} \otimes \bC^{-1} \, \cW = \cVt 
$$
from \eqref{bvx_P} \,. 
\end{proof}

\subsection{}

Let $\tT$ be a torus in $G_{\bw'} \times G_\edge$
that contains $\bT'$ and a maximal torus $\bA' \subset G_{\bw'}$. 
For any chamber $\fC \subset \Lie \bA'$, 
we have a map 
$$
\Stab_\fC: \bigotimes H^\hd_{\tT}  (\cM(\delta_i))^{\otimes \bw'_i} 
\to H^\hd_{\tT}  (\cM(\bw'))
$$
which  becomes an isomorphism after 
tensoring with $\Q(\Lie \tT)$ and intertwines 
the action of both full and core Yangians. 
The order of tensor factors here is determined by the 
chamber $\fC$, see Section \ref{e_bR} . 

We denote $\bK = \Q(\Lie \bT')$
and denote by $a'_{kl}$ the $\bT'$-weights in 
$\bw' = \sum a'_{kl} \, \delta_k$. 

\begin{Proposition}\label{p_res_stab}
For any $\fC$, the map $\Stab_\fC$ restricts to isomorphism 
$$
\bigotimes_{k,l} F_k(a'_{kl}) \otimes \bK  
\,\,\xrightarrow{\,\sim\,} \,\, H^\hd_{\bT'}(\cM(\bw')) \otimes \bK
$$
of Yangian modules, where $F_k$ are as in \eqref{set_F_i}. 
\end{Proposition}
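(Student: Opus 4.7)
The plan is to obtain the desired map and its inverse by specializing the stable envelope for the larger torus $\tT$ and then verifying that no denominators obstruct the specialization.

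First I would apply the general construction from Section \ref{e_bR} to the $\bA'$-action on $\cM(\bw')$. By Proposition \ref{p_Nak_fix} the fixed locus decomposes as
$$
\cM(\bw')^{\bA'} = \prod_{k,l} \cM(\delta_k),
$$
and the resulting $\tT$-equivariant stable envelope
$$
\Stab_\fC:\ H^\hd_{\tT}\!\left(\cM(\bw')^{\bA'}\right) \longrightarrow H^\hd_{\tT}(\cM(\bw'))
$$
becomes, after tensoring with $\Q(\Lie\tT)$, an isomorphism of modules over $\bY$ (this is the standard tensor-product identification used throughout Section \ref{s_Yang}, together with Theorem \ref{t_Stein} applied to the Steinberg correspondence $\cL_\fC$). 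Under the restriction $\bT'\hookrightarrow\tT$ dual to the inclusion $\bT'\subset\tT$, the $\bA'$-equivariant parameter $a_{kl}$ of the $(k,l)$-factor pulls back to the $\bT'$-weight $a'_{kl}$, and the source identifies with $\bigotimes_{k,l} F_k(a'_{kl})$ as an abstract Yangian module. Thus the map in the proposition is obtained simply by base change of scalars from $H^\hd_{\tT}(\pt)$ to $H^\hd_{\bT'}(\pt)$, so it is automatically a Yangian homomorphism.

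The remaining task is to show that after inverting only $\bK=\Q(\Lie\bT')$ the map is an isomorphism. Both sides are free modules over $H^\hd_{\bT'}(\pt)$, and their ranks agree because $\cM(\bw')^{\tT}=\cM(\bw')^{\bA'}$ (as follows from condition \eqref{unique_w}, which implies that no further degenerate fixed components appear when restricting $\tT$ to $\bT'$ on $\cM(\bw')^{\bA'}$, since the $\bT'$-weights separating the $(k,l)$ factors remain the $\bA'$-roots $a'_{kl}-a'_{k'l'}$). Thus it suffices to check that the determinant of the map is a nonzero element of $\bK$.

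For this last point I would invoke Theorem \ref{t_strong_vanish}: the restriction of $[\cL_\fC]$ to $\cM(\bw')^{\bA'}\times \cM(\bw')^{\bA'}$ equals $\pm e(N_-)\cup\Delta$ modulo $\hbar$, which exhibits the map as block upper-triangular with respect to the ample partial order, with diagonal blocks given by cup product with the Euler classes $\pm e(N_-|_{Z})$ for each component $Z$ of $\cM(\bw')^{\bA'}$. Each such Euler class is, up to invertible factors, a product of normal weights of the form $a'_{kl}-a'_{k'l'}$ together with the tangent weights of the factors $\cM(\delta_k)$. The hypothesis \eqref{unique_w}, combined with the explicit form of $\bw'$ in \eqref{bvx_P}, guarantees that each such factor is nonzero as an element of $\bK$; hence the determinant is a unit in $\bK$ modulo $\hbar$, and therefore a unit in $\bK[[\hbar]]$, in particular nonzero. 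The main obstacle in this plan is precisely the verification of this nonvanishing of the diagonal Euler classes after the specialization $a_{kl}\mapsto a'_{kl}$; everything else is a formal consequence of the already-established tensor-product and intertwining properties of $\Stab_\fC$.
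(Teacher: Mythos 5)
The paper's own proof is much shorter and proceeds through a different mechanism that you do not attempt. By Theorem \ref{adjoint}, the candidate inverse is $\Stab^\tau_{-\fC}$, which always satisfies $\Stab^\tau_{-\fC}\circ\Stab_\fC = 1$; since the ranks match, this is automatically a two-sided inverse. The only thing to verify is that $\Stab^\tau_{-\fC}$ is defined over $\bK$ at all. As an adjoint it involves a pushforward along the noncompact variety $\cM(\bw')$, so it is only defined as an equivariant residue, which requires $\cM(\bw')^{\bT'}$ to be proper. That properness is exactly what Lemma \ref{l_Uweight} supplies: the slice identifies a neighborhood of $x_P$ in $\cM_0(\bw)$ with a neighborhood of the origin in $\cM_0(\bw')\times U$, and since $\cM_0(\bw)^{\bT'}=\C x_P$ is one-dimensional by \eqref{unique_w}, a dimension count forces $\cM_0(\bw')^{\bT'}$ to be a single point; hence $\cM(\bw')^{\bT'}$ lies in a single fiber of the proper map $\pi$ and is compact. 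You do not use $\Stab^\tau_{-\fC}$ or this properness argument at all.

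Your alternative — compose with the restriction to $\cM(\bw')^{\bA'}$, show the triangular map has invertible diagonal, and conclude by a rank count — is sound in outline but has a genuine gap at the decisive step. You need every $\bA'$-root appearing in $N_-$ to restrict to a nonzero character of $\bT'$ so that the diagonal Euler classes $\pm e(N_-|_Z)$ become invertible in $H^\hd_{\bT'}(Z)\otimes\bK$. You assert this follows from \eqref{unique_w} together with \eqref{bvx_P}, and you flag it as the main obstacle, but no argument is given. Condition \eqref{unique_w} constrains the $\bT$-weights of generators of $\C[\cM_0(\bw)]$; how it controls all pairwise differences $a'_{kl}-a'_{k'l'}$ for the \emph{new} quiver variety $\cM(\bw')$ is precisely what would need a proof. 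This nonvanishing is moreover a strictly stronger hypothesis than the properness the paper uses: a proper $\bT'$-fixed locus suffices for the residue defining $\Stab^\tau_{-\fC}$ and does not require $\cM(\bw')^{\bT'}$ to coincide with $\cM(\bw')^{\bA'}$. A few smaller slips in your write-up: the rank comparison is automatic (both sides have $\bK$-rank $\dim H^\hd(\cM(\bw'))$ by localization for the $\bA'$-action) and does not need the claim ``$\cM(\bw')^{\tT}=\cM(\bw')^{\bA'}$''; the triangular shape with diagonal $\pm e(N_-)\cup\Delta$ is part of the defining properties of $\Stab_\fC$ (Theorem \ref{stablebasis}, parts {\nui}--{\nuii}) rather than Theorem \ref{t_strong_vanish}, which controls off-diagonal terms mod $\hbar$ and is not needed here; and since $\hbar$ is already an element of $\bK=\Q(\Lie\bT')$, the phrase ``a unit in $\bK$ modulo $\hbar$, therefore a unit in $\bK[[\hbar]]$'' is not the correct framing — what you want is simply that the scalar part of the determinant is a nonzero element of the field $\bK$.
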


\noindent 
Here the evaluation parameters $a'_{kl}$ are as in Section \ref{s_Yang_Nak}
and the order of tensor factors as before. Note, in particular, 
the Proposition implies the tensor product on the left gives
isomorphic Yangian modules for any ordering of tensor factors. 

We begin with the following

\begin{Lemma}\label{l_Uweight} 
The torus $\bT'$ has a zero weight in $U$ and, therefore, 
a unique fixed point in $\cM_0(\bw')$. 
\end{Lemma}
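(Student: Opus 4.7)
The plan is to use the slice $\Sigma_P$ from Theorem \ref{t_slice} to transfer the $\bT'$-fixed affine line through $x_P$ in $\cM_0(\bv,\bw)$ over to $\cM_{0,0}(\bv',\bw')\times U$, and then to argue that this line is forced to lie in the $U$-factor.

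First, the uniqueness assumption \eqref{unique_w} guarantees that $f_P$ is the only generator of $\C[\cM_0(\bv,\bw)]$ carrying the $\bT$-weight $-w_P$, and, in the generic regime of $\bT$ implicit in this subsection, no other generator carries a nonzero rational multiple of $w_P$ as its weight. Since $\bT'=\Ker w_P$, this forces the $\bT'$-invariant subring $\C[\cM_0(\bv,\bw)]^{\bT'}$ to be generated by $f_P$ alone, so that
$$\cM_0(\bv,\bw)^{\bT'} = \overline{\bT\cdot x_P}\cong \mathbb{A}^1$$
is a single affine line on which $\bT/\bT'$ acts by the character $w_P$. Because $\Sigma_P$ is $\bT$-equivariant and defines an analytic isomorphism between a neighborhood of $0\in \cM_{0,0}(\bv',\bw')\times U$ and a neighborhood of $x_P\in\cM_0(\bv,\bw)$ with $0\mapsto x_P$, this affine line pulls back to a $\bT'$-fixed smooth curve germ through the origin of the product, on which $\bT/\bT'$ acts with weight $w_P$.

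Next I argue that this curve lies entirely in the $U$-direction. By Proposition \ref{p_slice_w}, the new framing is $\bw'=\bw-\hbar\bC\dim X_\infty$, and the invariants on $\cM_{0,0}(\bv',\bw')$ are generated (by the first fundamental theorem of invariant theory applied to the new quiver data) by cycles in $\Qb'$ and by paths between framing vertices of $\bw'$. A direct weight comparison --- for which the examples of Sections \ref{s_slice_ex1} and \ref{s_slice_ex2} serve as prototypes of the general case --- shows that none of these new invariants can carry $\bT$-weight $w_P$: the path $P$ has been absorbed into the slice construction at $x_P$ and no longer contributes a $G_{\bv'}$-invariant of this weight. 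Consequently no curve germ through the origin of $\cM_{0,0}(\bv',\bw')$ can support a $\bT/\bT'$-action of weight $w_P$, so the curve from the previous paragraph must lie in $U$, giving a nonzero $\bT'$-fixed vector in $U$ and hence $0$ as a $\bT'$-weight.

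The uniqueness of the $\bT'$-fixed point in $\cM_0(\bw')$ follows from the same weight analysis applied uniformly: no $G_{\bv''}$-invariant function on $\Rep_{\Qb'}(\bv'',\bw')$ carries a nonzero $\bT$-weight proportional to $w_P$, for any dimension vector $\bv''$. Hence every weight of $\bT$ on the positive-degree part of $\C[\cM_{0,0}(\bv'',\bw')]$ restricts to a nonzero character of $\bT'$, and $\cM_{0,0}(\bv'',\bw')^{\bT'}$ collapses to the origin (the zero representation). Assembling over $\bv''$, the common origin is the unique $\bT'$-fixed point in $\cM_0(\bw')$.

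The principal obstacle is the weight comparison in the third paragraph: making rigorous the claim that no $G_{\bv'}$-invariant built from the new framing $\bw'$ reproduces the weight $w_P$ (and more generally carries a weight proportional to it). The geometric picture is clear --- the path $P$ has been contracted in passing to the slice quiver $\Qb'$ --- but turning this into a clean weight-counting argument requires careful bookkeeping via the identification from Section \ref{ssslice} of $\Rep_{Q'_\infty}$ as a symplectic reduction at $x_P$, together with the shift $\bw'=\bw-\hbar\bC\dim X_\infty$.
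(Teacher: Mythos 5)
Your geometric idea — transfer the $\bT'$-fixed line $\C x_P$ through the slice and argue it lands in the $U$-factor — is sound, but there are two issues. First, $\Sigma_P$ is \emph{not} $\bT$-equivariant: Theorem \ref{t_slice} gives equivariance only with respect to the stabilizer $\bG'$ of $x_P$, and since $\bT$ scales $x_P$ by the nontrivial character $w_P$, only $\bT' = \Ker w_P$ lies in $\bG'$ (indeed, $\bT$-equivariance would force $t\cdot x_P = t\cdot \Sigma_P(0) = \Sigma_P(0) = x_P$, contradicting $w_P\neq 0$). So no $\bT/\bT'$-action is carried over to the slice chart, and your ``$\bT$-weight $w_P$'' comparison on invariants of $\cM'_{0,0}(\bv',\bw')$ must be recast purely as a statement about $\bT'$-weights — as you yourself flag. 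Second, even so recast, the argument is not complete: the curve $\Sigma_P^{-1}(\C x_P)$ is only known to lie in the product $\cM'_{0,0}(\bv',\bw')^{\bT'}\times U^{\bT'}$, and without actually carrying out the weight bookkeeping on the new quiver you have not excluded the possibility that it runs along the first factor. That is the genuine gap.

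Both problems disappear if you specialize first to $\bv = \dim x_P$, i.e.\ $\bv' = 0$. Then there are no nontrivial arrows away from the $\infty$-vertex, so $\cM'_{0,0}(0,\bw')$ is a single point and the slice germ \emph{is} $U$; the $\bT'$-equivariant local isomorphism $U \dashrightarrow \cM_0(\dim x_P,\bw)$ near $0\mapsto x_P$ matches the one-dimensional germ of $\C x_P$ at $x_P$ with a one-dimensional germ of $U^{\bT'}$, and since $U^{\bT'}$ is a linear subspace it is precisely one-dimensional. This gives the zero weight with no weight comparison whatsoever. For uniqueness, take $\bv$ arbitrary: the slice identifies the germ of $\cM'_{0,0}(\bv',\bw')^{\bT'}\times U^{\bT'}$ at the origin with that same one-dimensional germ, and since $U^{\bT'}$ already contributes one dimension, $\cM'_{0,0}(\bv',\bw')^{\bT'}$ is zero-dimensional near the origin. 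Being the intersection of the cone $\cM'_{0,0}(\bv',\bw')$ with a linear subspace of its invariant-theoretic embedding, this fixed locus is itself conical, hence equals the origin. No weight computation on $\Qb'$ is needed at any stage.
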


\noindent 
The second claim here follows from the first because 
of \eqref{line_x_p}. 

\begin{proof}[Proof of Proposition \ref{p_res_stab}]
By Theorem \ref{adjoint}, the inverse map is given by 
$\Stab_{-\fC}^\tau$. The lemma shows $\cM(\bw')^{\bT'}$ 
is proper, therefore $\Stab_{-\fC}^\tau$ is 
well-defined in localized $\bT'$-equivariant cohomology. 
\end{proof}

\subsection{}

Now for $\cM(\bw)$ we want to do the same: first enlarge $\bT$ 
to include a maximal torus $\bA \cong (\C^\times)^2\subset G_\bw$ and then 
restrict to $\bT'$-equivariant cohomology. For $\bA$, there 
are only two chambers $\fC_{>}$ and $\fC_{<}$, 
corresponding to $a_i \gtrless a_j$. 
Let 
$$
 F_i(a_i) \otimes F_j(a_j)  
\xrightarrow{\Stab_{>}}  H^\hd_{\bT'}(\cM(\bw))
\xleftarrow{\Stab_{<}}  F_j(a_j) \otimes F_i(a_i) 
$$
be the corresponding maps. 

\begin{Proposition}
The map $\Stab_{>}$ becomes an isomorphism after tensoring with $\bK$. 
\end{Proposition}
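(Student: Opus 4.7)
The strategy is to imitate the proof of Proposition \ref{p_res_stab}. By Theorem \ref{adjoint} applied to the chamber $\fC_{>}$ inside $\Lie \bA$, we have
$$
\Stab_{\fC_{<}}^{\tau} \circ \Stab_{\fC_{>}} = 1
$$
in $\bA$-localized equivariant cohomology. The substantive content of the proposition is to upgrade this identity to an isomorphism over the smaller field $\bK = \Q(\Lie \bT')$; that is, one needs $\Stab_{\fC_{<}}^{\tau}$ to be well-defined on $H^\hd_{\bT'}(\cM(\bw))$ via equivariant localization along $\bT'$-fixed points, rather than $\bA$-fixed points. As in Section \ref{s_R_adj}, this requires that the $\bT'$-fixed locus be proper.

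The key step is therefore to show $\cM(\bw)^{\bT'}$ is proper. Since the affinization $\pi: \cM(\bw) \to \cM_0(\bw)$ is proper and $\bT'$-equivariant, it suffices to establish properness of $\cM_0(\bw)^{\bT'}$. I will use the embedding $\cM_0(\bw) \hookrightarrow V$ whose coordinates are the generating invariants $f_{P'}$ of Section \ref{s_Slices_Int} and exploit the uniqueness assumption \eqref{unique_w}: a point of $V^{\bT'}$ has $f_{P'} = 0$ whenever the weight of $f_{P'}$ is non-zero on $\bT'$, and by \eqref{unique_w} essentially only the coordinate $f_P$ survives this restriction. Combined with the slice $\Sigma_P$ and Lemma \ref{l_Uweight} (applied to the associated $\cM_0(\bw')$), this pins $\cM_0(\bw)^{\bT'}$ down to the single $G_\bv$-orbit through $x_P$, which is closed (hence proper) in $\cM_0(\bw)$.

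Once properness is in hand, the argument of Theorem \ref{adjoint} goes through verbatim in $\bT'$-equivariant cohomology, giving $\Stab_{\fC_{<}}^\tau$ as a $\bK$-defined left inverse to $\Stab_{\fC_{>}}$; surjectivity then follows either by the symmetric argument applied to $\fC_{<}$ (producing a right inverse on the appropriate side) or by a rank argument, since both source and target are free $\bK$-modules whose $\bK$-localization agrees with the $\Q(\Lie \tT)$-isomorphism already established. The main obstacle is precisely the properness of $\cM_0(\bw)^{\bT'}$: one must carefully rule out extraneous positive-dimensional $\bT'$-fixed components in $V$ arising from loop invariants whose $\bT'$-weights happen to vanish, which is where \eqref{unique_w} (together with a generic choice of $\bT$ in $G_\edge$) does the essential work.
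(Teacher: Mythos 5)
The overall strategy you outline---apply Theorem \ref{adjoint} to get a formal left inverse $\Stab_{<}^\tau$, then show this adjoint is actually well-defined over $\bK = \Q(\Lie\bT')$---is the right starting point and matches the paper's route. But your reduction of the problem to properness of $\cM(\bw)^{\bT'}$ is where the argument goes wrong, and it goes wrong in a way that the paper explicitly flags.

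The claim "$\cM(\bw)^{\bT'}$ is proper'' is false. The affinization $\pi$ maps $\cM(\bw)^{\bT'}$ properly onto $\cM_0(\bw)^{\bT'}$, and the latter is literally $\C\,x_P$, an affine line; so $\cM(\bw)^{\bT'}$ cannot be proper. Your closing step---that the hypotheses pin $\cM_0(\bw)^{\bT'}$ down to "the single $G_\bv$-orbit through $x_P$, which is closed (hence proper)''---conflates \emph{closed in $\cM_0(\bw)$} with \emph{proper}. The line $\C\,x_P$ is a closed subvariety of the affine variety $\cM_0(\bw)$, but it is nowhere near proper; the torus $\bT/\bT'$ acts on it with weight $w_P \neq 0$. (Compare Lemma \ref{l_Uweight}, which gets a genuinely proper answer---a point---only for $\cM_0(\bw')$, not for $\cM_0(\bw)$.) Indeed the paper says right after this proposition that "the analogous statement for $\Stab_{<}$ fails since the push-forward along $\C\,x_P$ is not defined in $\bT'$-equivariant cohomology,'' which is exactly an acknowledgment that naive properness of the $\bT'$-fixed locus is not available. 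This also invalidates your suggested route to surjectivity via "the symmetric argument applied to $\fC_{<}$'': that symmetric statement is not true.

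The actual mechanism is different and more delicate: one does not need $\cM(\bw)^{\bT'}$ to be proper, only that the $\bT'$-fixed part of the \emph{support of the composition cycle} $\cL_{\fC_<}^\tau \circ \cL_{\fC_>}$ be proper over $X^\bA \times X^\bA$. By Proposition \ref{p_leafX0} (as in the proof of Theorem \ref{adjoint}), this support sits inside $\pi^{-1}(V_{\ge 0}) \cap \pi^{-1}(V_{\le 0}) = \pi^{-1}(V_0)$, where $V_{\gtrless 0}$ are with respect to the $\bA$-action. The decisive observation, which replaces your properness claim, is a weight argument: $\C\,x_P$ carries the $\bA$-weight $w_P$, and $w_P$ is \emph{negative} on $\fC_{<}$, so $\C\,x_P \cap V_{\ge 0}^{\fC_<} = \{0\}$. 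Hence $\cM_0(\bw)^{\bT'} \cap V_0 = \{0\}$, and the $\bT'$-fixed part of the support lands in $X^\bA \times \pi^{-1}(0) \times X^\bA$, which is proper over the outer factors because $\pi$ is proper. That is what makes $\Stab_{<}^\tau$ well-defined over $\bK$. Your use of \eqref{unique_w} does correctly pin down $\cM_0(\bw)^{\bT'}$ to the line $\C\,x_P$---that part is fine---but you then need the transversality of this line with $V_{\ge 0}^{\fC_<}$, not properness of the line itself.
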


\begin{proof}
The inverse map is given by $\Stab_{<}^\tau$. 
By construction the line \eqref{line_x_p}
has weight $w_P$ which is negative on $\fC_{<}$ and therefore 
transverse to the images of attracting manifolds. Thus $\Stab_{<}^\tau$
is well-defined in localized $\bT'$-equivariant cohomology. 
\end{proof}

Note that the analogous statement for $\Stab_{<}$ fails since 
the push-forward along $\C \, x_P$ is not defined in 
$\bT'$-equivariant cohomology. We have, however, the 
following 

\begin{Proposition}
The operator
$$
\left(\Sigma_{P,0} \circ \Stab_{>}\right)^\tau : 
H^\hd_{\bT'}(\cM(\bw')) \otimes \bK \to F_j(a_j) \otimes F_i(a_i)
\otimes \bK 
$$
is a well-defined $\bbY$-intertwiner. 
\end{Proposition}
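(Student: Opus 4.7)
The plan is to establish the proposition in two steps: first well-definedness, then the intertwiner property. The map in question is the adjoint (with respect to the sign-twisted Frobenius trace $\tau$) of the composition
$$
F_i(a_i)\otimes F_j(a_j)\;\xrightarrow{\ \Stab_>\ }\; H^\hd_{\bT'}(\cM(\bw))\;\xrightarrow{\ \Sigma_{P,0}^{*}\ }\; H^\hd_{\bT'}(\cM(\bw'))\,.
$$
Each factor here is known to be a $\bbY$-intertwiner: $\Stab_>$ by the general theory of stable envelopes together with Theorem \ref{t_Stein} (applied to the trivial Steinberg correspondence giving the $R$-matrix symmetry), and $\Sigma_{P,0}^{*}$ by the Proposition that precedes this one, whose proof used the Steinberg nature of slices and the identity $\widetilde{\cV}'=\widetilde{\cV}$ under \eqref{bvx_P}. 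So the composition is an intertwiner of $\bbY$-modules, and it remains only to control the adjoint.

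For well-definedness, I would invoke Lemma \ref{l_Uweight}: the fact that $\bT'$ has zero weight in the vector-space factor $U$ of the slice means $\cM_{0}(\bw')^{\bT'}$ is a single reduced point. Since the projective map $\cM(\bw')\to\cM_{0}(\bw')$ is $\bT'$-equivariant, its $\bT'$-fixed locus $\cM(\bw')^{\bT'}$ is proper over a point, hence proper. This is exactly what is needed in order for the push-forward from $H^\hd_{\bT'}(\cM(\bw'))$ to $H^\hd_{\bT'}(\pt)$ to be defined in localized $\bT'$-equivariant cohomology, and consequently for the operator transpose with respect to $\tau$ to make sense as a map to $F_j(a_j)\otimes F_i(a_i)\otimes\bK$. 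One may phrase this directly at the level of correspondences: the composition $\Sigma_{P,0}^{*}\circ\Stab_>$ is given by a Lagrangian correspondence $L\subset F_i\times F_j\times \cM(\bw')$ which is proper over $F_i\times F_j$; after passing to $\bT'$-equivariant cohomology and localizing, properness over $\cM(\bw')$ follows because the fiber-product defining the adjoint meets $\cM(\bw')^{\bT'}$ in a proper set.

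For the intertwiner property, the idea is that the action of $\bbY$ is constructed from matrix coefficients of $R$-matrices, and by Corollary \ref{c_R_tau} these satisfy $R_{\alpha}^{\tau}=R_{\alpha}$. Combined with Theorem \ref{adjoint} (which gives $\Stab_{-\fC}^{\tau}\circ\Stab_{\fC}=1$) and Theorem \ref{t_Stein} (intertwining of Steinberg correspondences with $R$-matrices), this implies that the $\bbY$-action on any cohomology $H^\hd_{\bT'}(\cM(\bw))$ is by Steinberg operators whose $\tau$-adjoints again lie in $\bbY$ and realize the same action up to the Hopf-algebra antipode. Concretely, for any intertwiner $f: V\to W$ of $\bbY$-modules and any generator $\bE(m(u))$ of $\bbY$, applying $\tau$ to both sides of $f\circ\bE(m(u))=\bE(m(u))\circ f$ yields the analogous identity for $f^{\tau}$, using that both $V$ and $W$ are equipped with the Frobenius pairing compatible with $\bbY$ via self-adjointness of $R$-matrices. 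Hence $f^\tau$ is again an intertwiner.

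The main obstacle I foresee is the third paragraph: while it is intuitively clear that the $\tau$-adjoint of a geometric intertwiner should again intertwine, the precise formalism requires checking that the Poincar\'e pairing on $H^\hd_{\bT'}(\cM(\bw'))$ and the product pairing on $F_j\otimes F_i$ are both $\bbY$-invariant in the appropriate sense, which uses at once Corollary \ref{c_R_tau}, Theorem \ref{t_Stein}, and the factorization \eqref{RRR} expressing the tensor-product $R$-matrix as an ordered product of root $R$-matrices. Once this bookkeeping is arranged, well-definedness and intertwining combine to give the proposition.
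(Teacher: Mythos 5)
Your proposal takes essentially the same route as the paper's proof: Lemma~\ref{l_Uweight} for well-definedness and the adjoint-of-intertwiner principle for the module property. One small note on emphasis: the paper's point is that the image of $\Sigma_{P,0}$ (at $U=0$) is transverse to $\C x_P \subset \cM_0(\bw)$ --- the noncompact fixed locus responsible for the failure of $\Stab_{<}^\tau$ alone, as noted in the preceding remark --- whereas your argument highlights properness of $\cM(\bw')^{\bT'}$, which is the reasoning already used for Proposition~\ref{p_res_stab}. Both follow from the same Lemma, but the transversality in $\cM_0(\bw)$ is what cures the specific obstruction in play here.

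As for the obstacle you flag in your last paragraph, it resolves cleanly and is in fact exactly what produces the reversed ordering $F_j(a_j)\otimes F_i(a_i)$ in the target. Theorem~\ref{adjoint} applied to the chamber $\fC_{<}$ gives $\Stab_{>}^\tau = (\Stab_{<})^{-1}$, and $\Stab_{<}$ is the stable envelope carrying the $\bbY$-module structure on $H^\hd_{\bT'}(\cM(\bw)^\bA)$ associated to the \emph{reversed} ordering of tensor factors (compare Section~\ref{e_RRR}). Once the chamber is kept track of, Corollary~\ref{c_R_tau} suffices to make your third-paragraph argument precise; there is no further compatibility of pairings to establish beyond what you already cite.
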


\begin{proof}
The map is well-defined by 
 Lemma \ref{l_Uweight} since the image of $\Sigma_{P,0}$ 
is transverse to $\C x_P$. It is an intertwiner because
its transpose is.  
\end{proof}

\subsection{}

We summarize the preceding discussion as follows. Suppose 
$$
\cM_0(a_i \delta_i + a_j \delta_j)^{\bT'} = \C x_P\,,
$$
where $P$ is a path that starts at $j$ and ends at $i$. 
Define $a'_{kl}$ by the formula 
$$
\sum a'_{kl} \, \delta_k = 
a_i \delta_i + a_j \delta_j 
- \hbar \otimes  \bC \, \dim x_P\,,
$$
where $\dim x_P$ is a vector with values in $K_{\bT'}(\pt)$. 

\begin{Theorem}
The slice at $x_P$ gives rise to two $\bbY$-intertwiners: 
\begin{equation}\label{slice_inter1}
F_i(a_i) \otimes F_j(a_j) \otimes \bK \to 
\bigotimes F_k(a'_{kl}) \otimes \bK 
\end{equation}
and 
\begin{equation}
\bigotimes F_k(a'_{kl}) \otimes \bK \to F_j(a_j) \otimes F_i(a_i)
\otimes \bK \,, 
\label{slice_inter2}
\end{equation}
where the equivariant parameters are specialized to $\bT'$, 
$\bK = \Q(\Lie \bT')$, the order of the $F_k(a_{kl})$-factors 
is arbitrary in \eqref{slice_inter1} and reverse in 
\eqref{slice_inter2}. 
\end{Theorem}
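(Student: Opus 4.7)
The plan is to assemble both intertwiners by composing the three building blocks already established in the preceding propositions: the stable envelope for $\cM(\bw)$ with torus $\bA$, the pullback along the slice $\Sigma_{P,0}$, and the stable envelope for $\cM(\bw')$ with torus $\bA'$. Each is known to be a $\bbY$-intertwiner in the preceding propositions, so the theorem becomes a matter of checking that the compositions are well-defined in $\bT'$-localized equivariant cohomology and of matching the combinatorics of orderings of tensor factors with the chambers.

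First I would construct the map \eqref{slice_inter1} as the composition
\[
F_i(a_i) \otimes F_j(a_j) \otimes \bK \xrightarrow{\;\Stab_{>}\;} H^\hd_{\bT'}(\cM(\bw))\otimes \bK \xrightarrow{\;\Sigma_{P,0}^*\;} H^\hd_{\bT'}(\cM(\bw'))\otimes \bK \xrightarrow{\;\Stab_\fC^{-1}\;} \bigotimes_{k,l} F_k(a'_{kl}) \otimes \bK \,,
\]
where $\fC\subset \Lie\bA'$ is any chamber. The first arrow is an isomorphism by the proposition preceding the theorem (properness of the pushforward along $\C x_P$ follows from the transversality of $w_P$ to $\fC_{>}$). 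The middle arrow is the intertwiner of the slice established earlier. The last arrow is an isomorphism by Proposition \ref{p_res_stab}, which uses Lemma \ref{l_Uweight} guaranteeing that $\cM(\bw')^{\bT'}$ is proper so that $\Stab_{-\fC}^\tau$ exists. Since the ordering of factors $F_k(a'_{kl})$ in Proposition \ref{p_res_stab} corresponds to the choice of chamber $\fC$ and any ordering arises this way, this yields the ``arbitrary order'' clause. The Yangian equivariance of each piece is already in place.

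For the second intertwiner \eqref{slice_inter2}, I would take the composition
\[
\bigotimes_{k,l} F_k(a'_{kl}) \otimes \bK \xrightarrow{\;\Stab_\fC\;} H^\hd_{\bT'}(\cM(\bw'))\otimes \bK \xrightarrow{\;(\Sigma_{P,0}\circ \Stab_{>})^\tau\;} F_j(a_j)\otimes F_i(a_i) \otimes \bK \,.
\]
Well-definedness and the intertwiner property of the second arrow are the content of the last proposition before the theorem. The reversal of the factor ordering (from $F_i\otimes F_j$ to $F_j\otimes F_i$) is intrinsic to the adjoint $(\Sigma_{P,0}\circ \Stab_{>})^\tau$ and reflects the general mechanism of diagram \eqref{a-a}: passing to the opposite chamber exchanges $\Stab_\fC$ and $\Stab_{-\fC}^\tau$, which on tensor products of evaluation modules swaps the order of factors (up to the $R$-matrix that is absorbed into the intertwiner). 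As in the first case, the choice of $\fC$ governs the order of the $F_k(a'_{kl})$-factors.

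The only potential obstacle is the non-properness of the various pushforwards involved: both $\Sigma_{P,0}^*$ and $\Stab_{-\fC}^\tau$ are defined only in localized equivariant cohomology, and for $\Stab_{<}$ the push-forward along $\C x_P$ is actually not defined. The point is that one must verify, weight by weight, that the singularities in the respective localizations cancel along the composition. This is already done for each piece in the three propositions immediately preceding the statement: Lemma \ref{l_Uweight} handles the $\cM(\bw')$ side by showing the $\bT'$-fixed locus is a single point, while transversality of $w_P$ to $\fC_{<}$ handles the $\cM(\bw)$ side. Given these inputs, the proof amounts to composing well-defined $\bK$-linear $\bbY$-intertwiners.
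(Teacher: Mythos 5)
Your assembly is essentially the paper's: the first intertwiner is the composition $\Stab_{-\fC}^\tau \circ \Sigma_{P,0}^* \circ \Stab_{>}$ (you write the first factor as $\Stab_\fC^{-1}$, which equals $\Stab_{-\fC}^\tau$ by Theorem \ref{adjoint}), and the second is built from the two propositions immediately preceding the statement. You correctly emphasize that the well-definedness is carried entirely by those preceding propositions, so the proof reduces to composition.

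The one difference is expositional rather than substantive. The paper's whole proof consists of identifying the second map as the \emph{transpose} of the first, from which the reversal of \emph{all} factor orderings --- both the $F_i\otimes F_j \mapsto F_j\otimes F_i$ swap and the reversal of the $\bigotimes F_k(a'_{kl})$ factors --- drops out at once: the adjoint of $\Stab_{-\fC}^\tau$ is $\Stab_{-\fC}$, so the second map's $\Stab$-factor on the $\cM(\bw')$ side automatically uses the opposite chamber. You instead build the second map independently as $(\Sigma_{P,0}\circ\Stab_{>})^\tau \circ \Stab_\fC$ and note only that the $F_i, F_j$ order flips, saying the order of the $F_k(a'_{kl})$ factors is again arbitrary. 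That is weaker than the theorem's statement, which fixes the second ordering to be the \emph{reverse} of the first. To recover that clause from your route you should observe that the second map is literally the $\tau$-adjoint of the first, forcing the chamber $-\fC$ (i.e.\ $\Stab_{-\fC}$) rather than an arbitrary one; this is the content of the paper's one-line proof. The gap is minor --- each building block is correct and each composition really is a $\bbY$-intertwiner --- but as written the proposal does not prove the specific ordering relation asserted.
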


\begin{proof}
The first map is given by 
$$
\Stab_{-\fC}^\tau \circ \Sigma_{P,0}^* \circ \Stab_{>}\,,
$$
for $\fC$ matching the order of factors. The second map 
is its transpose. 
\end{proof}

\section{The dual Yangian}

\subsection{}

We define the dual Yangian $\bY^*$ as the 
algebra generated by the operators 
\begin{equation}
\bE^*(m^*(v)) = 
\Res_{v=0} \, \tr_{F_0} m^*(v) \, 
R_{W,F_0(v)} \, \in \bY^*
\label{def_bEs}\,,
\end{equation}
for all $W$ of the form 
\begin{equation}
W = \bigotimes_{i=1}^n F_{i} \otimes \bk(u_i)_\infty \,,
\label{WY*} 
\end{equation}
and
$$
m^*(v)\in F_0 \otimes F_0^\vee \otimes v^{-1}\bk[v^{-1}]\,. 
$$
Here $\bk(u)_\infty$ denotes rational functions of $u$ 
regular at $u=\infty$.

In English, $\bY^*$ is generated by matrix elements of the same 
matrices $R(u-v)$ but expanded in ascending powers 
of $v$.  In particular, the operators $\bE^*$ depend 
rationally, not polynomially, on the evaluation 
parameters $u_i$. 

Note that the operators $\bE^*\left(m_{-1} v^{-1}\right)$ already give
all matrix elements of $R(u)$ and their orbits under shift 
automorphism span $\bY^*$.

\subsection{}

There is a natural pairing between $\bY$ and $\bY^*$ 
defined as follows. Let
$$
M(u) = m_1(u_1) \otimes \cdots \otimes m_k(u_k)
$$
be an element in the domain \eqref{defE} of the map $\bE$
and, similarly, let
$$
M^*(v) = m_1^*(v_1) \otimes \cdots \otimes m^*_l(v_l)
$$
lie in the domain of $\bE^*$. Let 
$$
R(u,v) = \overrightarrow{\prod_{\substack{1 \le i \le k
\\ 1 \le j \le l}}} R(u_i - v_j)
$$
be the corresponding $R$-matrix where $(i,j)$th term 
acts in the spaces with evaluation parameters $u_i$ and 
$v_j$ and the ordering of the 
$R$-matrices is as in \eqref{Rtrain}. We define 
\begin{align}
  \Big(\bE(M(u)), \bE^*(M^*(v)) \Big) &=
\left[\frac{1}{u_1 \dots v_l}\right] \, 
 \tr_{u,v} \left(M(u) \otimes M^*(v)\right) \, R(u,v) \notag 
 \\
&= \hbar^k  
\left[\frac{1}{v_1 \dots v_l}\right] \tr_{v} \, M^*(v) \, \bE(M(u))  
\notag \\
&=  
\left[\frac{1}{u_1 \dots u_k}\right] \tr_{u} \, M(u) \, \bE^*(M^*(v))  
\label{pairYY}
\end{align}
where coefficients are taken in the 
$u_i\to \infty$, $v_j\to 0$ expansion and 
the subscripts of traces indicate tensor 
factors in which they are taken. 

\subsection{} 

As defined, \eqref{pairYY} is a pairing between 
the domains of $\bE$ and $\bE^*$. It is clear, however, 
that the kernels on both sides are exactly 
the kernels of $\bE$ and $\bE^*$. 
In other words, we have the following 

\begin{Proposition} \label{p_ker_perp}
$$
\Ker \bE = \left(\bY^*\right)^\perp \,, \quad 
\Ker \bE^* = \left(\bY \right)^\perp \,. 
$$
\end{Proposition}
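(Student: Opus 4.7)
\emph{Plan.} The result consists of two symmetric halves, $\Ker\bE = (\bY^*)^\perp$ and $\Ker\bE^* = \bY^\perp$; the second is proved by an identical argument using the third equivalent form of the pairing \eqref{pairYY} in place of the second, so I describe only the first. The inclusion $\Ker\bE \subseteq (\bY^*)^\perp$ is essentially tautological: if $\bE(M(u)) = 0$ in $\bY$, it acts as the zero operator on every space $W$ of the form \eqref{F(u)times}, and hence the second expression
$$
\big(\bE(M(u)), \bE^*(M^*(v))\big) \;=\; \hbar^k \left[\tfrac{1}{v_1 \cdots v_l}\right] \tr_v M^*(v)\, \bE(M(u))
$$
vanishes against every $\bE^*(M^*(v))$.

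For the reverse inclusion, my plan is to show that if $M(u)$ pairs trivially with every $\bE^*(M^*(v))$, then $\bE(M(u))$ acts as zero on every $W = F_{j_1}(v_1)\otimes\cdots\otimes F_{j_l}(v_l)$. The key preliminary observation is that $\bE(M(u))|_W$ depends polynomially on the evaluation parameters $v_1,\ldots,v_l$: each factor of $R_{F_0(u),W} = \prod_i R_{F_0, F_{j_i}}(u-v_i)$ expanded at $u=\infty$ produces only non-negative powers of the $v_j$'s, and $\Res_{u=\infty}$ preserves this. I can therefore write $\bE(M(u))|_W = \sum_{n\in\N^l} c_n\, v_1^{n_1}\cdots v_l^{n_l}$ with $c_n \in \End(F_{j_1}\otimes\cdots\otimes F_{j_l})$.

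To isolate each $c_n$ I plan to feed the pairing the admissible test element $M^*(v) = (m^*_1 v_1^{-n_1-1})\otimes\cdots\otimes(m^*_l v_l^{-n_l-1})$, with $m^*_i \in F_{j_i}\otimes F_{j_i}^*$ arbitrary. A direct computation of the coefficient extraction then reads
$$
\big(\bE(M(u)), \bE^*(M^*(v))\big) \;=\; \hbar^k\, \tr\big[(m^*_1\otimes\cdots\otimes m^*_l)\, c_n\big],
$$
and the assumed vanishing, combined with the freedom in the $m^*_i$ (which span $F_{j_i}\otimes F_{j_i}^*$ and thus, by trace pairing, separate points of $\End(F_{j_i})$), forces $c_n = 0$ for every multi-index $n$. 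Hence $\bE(M(u))|_W = 0$; since $\bY$ is defined as a subalgebra of $\prod_W \End W$ in \eqref{YinProd}, this gives $\bE(M(u)) = 0$ as required.

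The main technical point I foresee is verifying that $\bE(M(u))|_W$ is polynomial, not merely Laurent, in the evaluation parameters of $W$, so that the extraction $[1/(v_1\cdots v_l)]$ is faithful on all matrix coefficients; when $\rk F_i = \infty$ one also needs to justify the resulting infinite sums over basis elements of $F_{j_i}\otimes F_{j_i}^*$ using the grading and cone conditions of Section \ref{s_compl}. Beyond these bookkeeping issues, the argument is essentially formal and rests entirely on the three equivalent expressions for the pairing in \eqref{pairYY}.
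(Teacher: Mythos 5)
Your proof is correct, and since the paper offers no argument for this proposition beyond ``it is clear,'' your write-up is the natural fleshing-out of that assertion. The two key observations you isolate — that $\bE(M(u))\big|_W$ is a genuine \emph{polynomial} (not merely Laurent) in the evaluation parameters $v_1,\dots,v_l$ because the $u\to\infty$ expansion of each $R_{F_0,F_{j_i}}(u-v_i)$ has only nonnegative powers of $v_i$, and that the finite-rank test operators $m^*_i\in F_{j_i}\otimes F_{j_i}^*$ separate points of $\End(F_{j_1}\otimes\cdots\otimes F_{j_l})$ via the trace pairing — are exactly what is needed to make the coefficient extraction $\left[\tfrac{1}{v_1\cdots v_l}\right]$ faithful; and the symmetry between the second and third expressions in \eqref{pairYY} handles $\Ker\bE^* = \bY^\perp$ identically.
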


\subsection{}

By construction, \eqref{pairYY}  is a Hopf pairing, that is 
$$
(ab,c) = (a \otimes b, \Delta c) 
$$
and vice versa, where $(\,\cdot\,,\,\cdot\,)$ is extended to 
$$
\bY^{\otimes 2} \otimes \left(\bY^*\right)^{\otimes 2} \to \bk
$$
multiplicatively. Tautologically, this pairing 
stores the same information 
as the $R$-matrices.

\section{Intertwiners and relations}

\subsection{}

Let $W$ as in \eqref{WY*} be a $\bY^*$-module and let 
$$
C: W \to W'
$$
be a $\bk(u,u')$-linear map, where 
\begin{equation*}
W' = \bigotimes_{i'=1}^{n'} F_{i'} \otimes \bk(u'_{i'})_\infty \,,
\label{WY*'} 
\end{equation*}
be another $\bY^*$-module of the same form. Suppose that 
for certain values of $u$ and $u'$ the map $C$ becomes
a $\bY^*$-intertwiner, that is, 
$$
[y,C] \in \Hom(W,W') \otimes \bI
$$
for all $y \in \bY^*$ and a nontrivial ideal 
$$
\bI \subset \bk(u,u')_\infty
$$
in the local ring of the point 
$(u,u') = (\infty,\infty, \dots, \infty)$.

Note that $\bY^*$-intertwiners are  operators
that commutes with all $R$-matrices and, therefore, 
the same as $\bY$-intertwiners, up to extension of scalars. 
Intertwiners produce elements in $(\bY^*)^\perp$ and 
hence relations in $\bY$ as follows. 

\subsection{}

Let $\bI^\perp \subset \bk[u,u']$ denote the perpendicular of 
$\bI$ with respect the the residue pairing.

\begin{Proposition} For any $f\in \bI^\perp$ and any 
$$
m \in \bigotimes F_{i'}^\vee \otimes \bigotimes F_{i} \,. 
$$
we have a relation  
  \begin{equation}
    \label{relmC}
     \Res_{u'} \bE(f m C) = \Res_{u} \bE (f C m)  
  \end{equation}
in the Yangian $\bY$. 
\end{Proposition}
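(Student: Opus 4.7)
The plan is to exploit the duality of Proposition \ref{p_ker_perp}: to verify identity \eqref{relmC} in $\bY$, it suffices to show that the difference of its two sides pairs to zero with every $y \in \bY^*$ under the Hopf pairing \eqref{pairYY}. Concretely, setting
\begin{equation*}
X = \Res_{u'} \bE(fmC) - \Res_{u} \bE(fCm),
\end{equation*}
the claim reduces to $(X,y) = 0$ for all $y \in \bY^*$.

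First I would fix $y = \bE^*(M^*(v)) \in \bY^*$ and unpack $(X,y)$ using the third line of \eqref{pairYY}, which presents the pairing $(\bE(M(u)), y)$ as the coefficient of $1/(u_1 \cdots u_k)$ in the trace $\tr_u M(u)\, y$ over the auxiliary factors. The outer residues $\Res_u, \Res_{u'}$ commute past this coefficient extraction (both being residue-type operations in disjoint sets of variables), so both halves of $(X, y)$ can be packaged as a single combined coefficient-plus-residue functional applied, respectively, to the traces
\begin{equation*}
\tr\bigl(f m C\, y\bigr) \quad \text{and} \quad \tr\bigl(f C m\, y\bigr)
\end{equation*}
over the auxiliary factors attached to $W$, $W'$, and the copies of $F_0$ feeding into $y$.

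The key algebraic step is cyclicity of the trace, which gives
\begin{equation*}
\tr(fmCy) - \tr(fCmy) = \tr\bigl(f m \,[C, y]\bigr).
\end{equation*}
The intertwiner hypothesis says $[C,y] \in \Hom(W,W') \otimes \bI$; tracing out the $W, W'$ factors therefore puts the right-hand side into $\bk(u,u') \otimes \bI$ (in the completion at $(u,u') = (\infty,\dots,\infty)$). Applying the residue-in-$(u,u')$ part of the pairing against $f$ then gives zero directly from the definition of $\bI^\perp$ as the annihilator of $\bI$ under the residue pairing on $\bk[u,u']$. This shows $(X, y) = 0$ for every $y$, so $X \in (\bY^*)^\perp$, and hence $X = 0$ in $\bY$ by Proposition \ref{p_ker_perp}.

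The main obstacle I anticipate is bookkeeping: one must carefully identify which auxiliary tensor factors carry $W$, $W'$, and the $F_0$'s of $y$, and verify that the ``cyclic rearrangement'' producing $[C,y]$ is legitimate given that $\bE$ uses $R$-matrices expanded at $u=\infty$ while $\bE^*$ uses the same $R$-matrices expanded at $v=0$. Reconciling these orderings, and permuting $R$-factors through $C$ and $y$ without altering the final trace, will likely require an application of the Yang--Baxter equation \eqref{YBe}. Once the commutator identity and the residue pairing $\bI \times \bI^\perp \to 0$ are in place, the rest of the argument is a formal consequence of the definitions.
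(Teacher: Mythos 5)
Your proposal is correct and essentially reproduces the paper's own argument: the paper likewise observes that $\tr_W\, m\,C\,y - \tr_{W'}\, C\,m\,y \in \bI$ (this is exactly your cyclicity-plus-intertwiner step, producing $\tr(m[C,y])$), multiplies by $f\in\bI^\perp$, takes $\Res_u\Res_{u'}$ to get zero, and concludes via the duality $\Ker\bE = (\bY^*)^\perp$ — the step the paper compresses into ``This is equivalent to \eqref{relmC}.'' Your worry about Yang--Baxter/ordering is unnecessary here: since $m$ has finite rank, the traces are genuine and cyclicity poses no issue.
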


\noindent 
Here $\Res_u$ means taking the coefficient of $(u_1\cdots u_n)^{-1}$
in the $u_i\to\infty$ expansion. Also note that $m: W' \to W$ is 
an operator of finite rank, therefore both $mC$ and $Cm$ are 
in the domain of $\bE$. 

Note that in the product $f C$ under the 
$\bE$-sign in the left-hand side of \eqref{relmC} we should keep only the singular 
(that is, polynomial) terms in the $u_i\to \infty$ 
expansion because the residue in 
\eqref{def_bE}  vanishes for regular terms. Similarly for 
$u'_j \to \infty$ in the right-hand side of \eqref{relmC}.

\begin{proof}
For any $y\in \bY^*$ we have 
$$
\tr_{W} \, m \, C \, y  - \tr_{W'} \, C \, m \, y \in \bI 
$$
and therefore 
$$
\Res_u \, \Res_{u'} \left( \tr_{W} \, f\, m \, C \, y  - 
\tr_{W'} \, f\, C \, m \, y 
\right) = 0 \, .
$$
This is equivalent to \eqref{relmC}.
\end{proof}

\subsection{}\label{s_Q_slices} 

The whole discussion can be repeated for 
the core Yangian $\bbY$ in place of $\bY$. 
Since slices produce $\bbY$-intertwiners, 
the following question seems natural. 

\begin{Question}
Do all relations in Yangians come from slices ? 
\end{Question}

\section{Baxter subalgebras and Casimir connection}\label{s_Baxter} 

\subsection{}

Recall that $\fh \subset \fg_Q$ acts by linear 
functions of $\bv$ and let $\fH\cong(\C^\times)^I$ be the torus 
with Lie algebra $\fh$. Since 
$\fg_Q$ 
commutes with $R$-matrices, we have 
$$
\left[g \otimes g,R(u) \right] = 0
$$
for any $g\in \fH$. Recall from Section \ref{s_Baxter_sub} 
this implies the operators 
\begin{equation}
\bE_{F_0}( g \, u^k)  = 
\frac{1}{\hbar} 
\left[\frac{1}{u^{k+1}}\right] \, \tr_{F_0}  (g \otimes 1) \,  
R_{F_0(u),W} \,
\label{baxter_E}
\end{equation}
commute for all $k=0,1,\dots$ and all auxiliary spaces $F_0$
for which the trace $\tr_{F_0}$ is well defined. This 

In general, $F_0$ is not finite-dimensional and the 
trace in \eqref{baxter_E} is an infinite sum. However, it is 
well defined as a formal series in the variable 
$g\in \fH$ 
if $F_0$ satisfies the grading 
assumption from Section \ref{s_compl}. We denote by
$$
q^\bv \in \bk \fH^\wedge
$$
elements of the group $\bk$-algebra of the character group
$\fH^\wedge$. These functions of $g$ will be terms in 
our formal series. 
Introduce an algebra $\bY[[\fH^\wedge]]$ of formal series in $q^\bv$
with coefficients in $\bY$ by 
$$
\bY[[\fH^\wedge]] = 
\left\{\sum_{\bv \ge \bv_0} \mathsf{y}_{\bv} \, q^{\bv} \right\}  \,.
$$
Here $\mathsf{y}_{\bv} \in \bY$ and  
$\bv \ge \bv_0$ means $\bv-\bv_0 \in \Z_{\ge 0}^I$. We have 
\begin{equation}
\bE_{F_0}( g \, u^k) = 
\frac{1}{\hbar} 
\left[\frac{1}{u^{k+1}}\right] \sum_\bv q^\bv \, 
\tr_{(F_0)_\bv}  (g \otimes 1) \,  
R_{F_0(u),W} \,\in \bY[[\fH^\wedge]] 
\label{bax_gen}
\end{equation}
as a consequence of our grading assumption.

By definition, the subalgebra of $\bY[[\fH^\wedge]]$ generated over 
$\bk[[\fH^\wedge]]$ by the commuting 
operators \eqref{bax_gen} is called the \emph{Baxter subalgebra}. 
It is a formal family of commuting subalgebras of $\bY$. 

\subsection{}

Baxter subalgebras are graded with respect to the cohomological 
grading on the Yangian and 
$$
\deg_{\,\textup{coh}} \bE_{F_0}( g \, u^k)  = 2k  \,. 
$$
In particular, 
$$
\big(\textup{Baxter subalgebra}\big)_{\textup{coh degree $0$}} =
 \cU_\Q(\fhb)[[\fH^\wedge]] \,,
$$
where $\fhb \subset \fg_Q$ acts by linear functions of $\bv$ and $\bw$. 
Because $\bk$ has nontrivial cohomological grading, the 
universal enveloping algebra here is over 
$$
\Q = \big(\bk\big)_{\textup{coh degree $0$}} \,.
$$
Our goal now is to describe the degree $2$ part of Baxter 
subalgebra. It is spanned, over degree $0$ part, 
 by equivariant constants and $u^{-2}$ coefficients 
in \eqref{baxter_E}. 

\subsection{}

Evidently, only diagonal matrix coefficients contribute to the 
trace in \eqref{baxter_E}. 
 The $u^{-2}$-term of the diagonal matrix coefficients 
of $R$-matrices was computed in Proposition \ref{Gauss_u2}. 
The result can be stated as follows.
Let 
$$
\cM_{\theta,\zeta}(\bv,\bw) \times 
\cM_{\theta,\zeta}(\bv',\bw') \subset 
\cM_{\theta,\zeta}(\bv+\bv',\bw+ \bw')
$$
be a fixed component and let $R_{\bv,\bw,\bv',\bw'}$ be the 
corresponding diagonal block of the $R$-matrix. It 
follows from Proposition \ref{Gauss_u2} that 
\begin{equation}
\frac{1}{\hbar} \left[\frac{1}{u^2}\right] R_{\bv,\bw,\bv',\bw'}
 = (\bw - \bC \, \bv) \otimes \ch_1 \cV'
 + \hbar \sum_{\theta \cdot \alpha >0 } e_{\alpha} e_{-\alpha} 
\otimes e_{-\alpha} e_{\alpha}  + \dots \,,
\end{equation}
where $\ch_1 \cV'$ is a vector of $\ch_1 \cV'_i$, $i\in I$, 
$\bC$ is the nonequivariant Cartan matrix, and 
dots act by a scalar operator 
in $H^\hd_\bG(\cM(\bv',\bw'))$. 

\subsection{}

For $F_0$ as above define 
$$
\bchi(F_0) \in \fh[[\fH^\wedge]] 
$$
by requiring 
$$
\tr_{F_0} g \, h_\eta = \eta(\bchi(F_0)) 
$$
for all $\eta \in \fh^*$. Here $h_\eta = \Cb \eta \in \fhb$, see
Section \ref{s_h_eta}. 
Since $\tr_{F_0} g \, h_\eta$ depends linearly on $\eta$,  this 
is well defined. Clearly,  $\bchi(F_0)$ is linear in the 
$K$-theory class of $F_0$. 

\begin{Lemma}\label{l_tr_e_e}
  \begin{equation}
    \label{tr_e_e}
     \tr_{F_0} g \, e_\alpha \, e_{-\alpha} =  
-\alpha(\bchi(F_0)) \, \frac{q^\alpha}{1-q^{\alpha}} \,. 
  \end{equation}
\end{Lemma}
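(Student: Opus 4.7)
The plan is a direct computation using the Cartan commutation relation $[e_\alpha, e_{-\alpha}] = h_\alpha$, which follows from the dual basis normalization of Proposition \ref{p_comm_pair} together with equation \eqref{xicommr}, combined with cyclicity of the trace.

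First I would observe that $g \in \fH$ acts diagonally on the $\fh$-weight decomposition $F_0 = \bigoplus_\bv F_0(\bv)$, with eigenvalue $q^\bv$ on the summand of weight $\bv$. Since $e_{\pm\alpha} \in (\fg_Q)_{\pm\alpha}$ shifts weight by $\pm\alpha$, this gives the conjugation relation $g \cdot e_{\pm\alpha} = q^{\pm\alpha} \cdot e_{\pm\alpha} \cdot g$ as operators on $F_0$.

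Applying cyclicity to $\tr_{F_0}\, g\, e_{-\alpha} e_\alpha$ and pushing $g$ through one factor yields
$$\tr_{F_0}\, g\, e_{-\alpha} e_\alpha \;=\; \tr_{F_0}\, e_\alpha\, g\, e_{-\alpha} \;=\; q^{-\alpha}\, \tr_{F_0}\, g\, e_\alpha e_{-\alpha}.$$
Combined with the Cartan relation $e_\alpha e_{-\alpha} - e_{-\alpha} e_\alpha = h_\alpha$, this produces the single linear equation
$$(1 - q^{-\alpha})\, \tr_{F_0}\, g\, e_\alpha e_{-\alpha} \;=\; \tr_{F_0}\, g\, h_\alpha \;=\; \alpha(\bchi(F_0)),$$
the last equality being the definition of $\bchi(F_0)$. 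Solving and rewriting $1/(1-q^{-\alpha}) = -q^\alpha/(1-q^\alpha)$ yields the claimed identity.

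The only subtlety is to verify that the traces are well-defined as formal series in $q^\alpha$. This is immediate from the grading conventions of Section \ref{s_compl}: the operator $e_\alpha e_{-\alpha}$ preserves each finite-rank weight space $F_0(\bv)$, and weights occur in a translate of a cone, so the sum $\sum_\bv q^\bv \tr_{F_0(\bv)} e_\alpha e_{-\alpha}$ converges in $\bk[[\fH^\wedge]]$ in a manner consistent with the geometric series expansion of $q^\alpha/(1-q^\alpha)$. There is no real obstacle here; the entire proof is a two-line algebraic manipulation once the commutation relation is in hand.
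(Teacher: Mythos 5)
Your proof is correct and follows essentially the same route as the paper: cyclicity of the trace, the conjugation relation $g\,e_{\pm\alpha} = q^{\pm\alpha}\,e_{\pm\alpha}\,g$ (which the paper applies implicitly), and the commutator $[e_\alpha,e_{-\alpha}] = h_\alpha$, yielding a single linear equation in $\tr_{F_0} g\,e_\alpha e_{-\alpha}$. The only cosmetic difference is that the paper expands $\tr_{F_0} g\,e_\alpha e_{-\alpha}$ directly into $q^\alpha \tr_{F_0} g\,e_\alpha e_{-\alpha} - q^\alpha \tr_{F_0} g\,h_\alpha$ while you solve for the same unknown from the relation $\tr_{F_0} g\,e_{-\alpha}e_\alpha = q^{-\alpha}\tr_{F_0} g\,e_\alpha e_{-\alpha}$; these are the same manipulation, and your added remark about convergence of the traces as formal series is a welcome clarification the paper leaves tacit.
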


\noindent
The rational function in \eqref{tr_e_e} is to be expanded 
in one direction or another, depending on $\alpha \gtrless 0$, 
to represent an element of $\bk[[\fH^\wedge]]$.

\begin{proof}
Using 
\begin{equation}
[e_{\alpha},e_{-\alpha}] = h_\alpha \label{eaa0}
\end{equation}
we compute 
\begin{align*}
 \tr_{F_0} g \, e_\alpha \, e_{-\alpha} & = 
 \tr_{F_0} e_{-\alpha} \, g \, e_\alpha \\
& = q^\alpha \,  \tr_{F_0} g\, e_{-\alpha} \, e_\alpha\\ 
& = q^\alpha \,  \tr_{F_0} g\, e_{\alpha} \, e_{-\alpha} - 
q^\alpha \,  \tr_{F_0} g\, h_\alpha \,,
\end{align*}
whence the conclusion. 
\end{proof}

\subsection{}

{}From Lemma \ref{l_tr_e_e} we deduce the following 

\begin{Proposition} We have 
  \begin{equation}
    \bE_{F_0}(g u^2) = \bchi(F_0) \cdot \ch_1 \cV' -
\hbar \sum_{\theta\cdot\alpha >0} \alpha(\bchi(F_0)) \, 
\, \frac{q^\alpha}{1-q^{\alpha}} \, e_{-\alpha} \, e_{\alpha}  + \dots \,,
\label{Bax_Cas}
  \end{equation}
where dots stand for an element of $\cU(\fhb)[[\fH^\wedge]]$
\end{Proposition}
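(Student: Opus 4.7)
The plan is to substitute the expansion of Proposition \ref{Gauss_u2} into the defining formula \eqref{baxter_E} and extract the $u^{-2}$ coefficient termwise. The key preliminary observation is that $g \in \fH$ acts diagonally with respect to the $\bv$-grading on $F_0$, so off-diagonal matrix elements of $R_{F_0(u), W}$ in the $F_0$-index vanish upon tracing against $g \otimes 1$. One may therefore apply the block expansion of Proposition \ref{Gauss_u2} to each diagonal block $R_{\bv, \bw_0, \bv', \bw'}$ of $R_{F_0(u), W}$ and sum over $\bv$ weighted by $q^{\bv}$.

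For the leading piece $(\bw_0 - \bC\bv) \otimes \ch_1 \cV'$ at order $u^{-2}$, the $F_0$-side is the coroot-valued vector $\bw_0 - \bC \bv$, whose pairing against any $\eta \in \fh^*$ gives the action of $\eta(h_{-\bv})$ up to a central shift by $\eta(\bw_0)$. By the defining identity $\tr_{F_0} g \cdot h_\eta = \eta(\bchi(F_0))$ applied componentwise, the trace yields precisely $\bchi(F_0) \in \fh$ modulo a scalar shift that will be absorbed into the remainder. Contracting with the vector $\ch_1 \cV' = (\ch_1 \cV'_i)_{i \in I}$ then produces the first term $\bchi(F_0) \cdot \ch_1 \cV'$.

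For the off-diagonal root-space piece $\hbar \sum_{\theta\cdot\alpha > 0} e_\alpha e_{-\alpha} \otimes e_{-\alpha} e_\alpha$, taking the trace of the $F_0$-factor $e_\alpha e_{-\alpha}$ against $g$ via Lemma \ref{l_tr_e_e} directly yields $-\alpha(\bchi(F_0)) \, q^\alpha/(1-q^\alpha)$; multiplying by the $\hbar$ prefactor and retaining $e_{-\alpha} e_\alpha$ on the $W$-side delivers the claimed second term. Convergence in $\bY[[\fH^\wedge]]$ is immediate from the grading conventions of Section \ref{s_compl}: for each target weight only finitely many roots $\alpha$ contribute at a given $q$-degree.

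The main obstacle I anticipate is verifying that all remaining contributions — the unwritten "dots" in Proposition \ref{Gauss_u2} together with the scalar shifts from $\bw_0$ above — collectively land in $\cU(\fhb)[[\fH^\wedge]]$. By \eqref{formula_r} (with leading contribution \eqref{br_diag_Nak}), the non-root-space part of the classical $R$-matrix lies in $\fhb \otimes \fhb$; any further diagonal contribution at the $u^{-2}$ order therefore involves only $\fhb$-valued operators on the $W$-side, which act by linear functions of $\bv$ and $\bw$. Their traces against $g$ in $F_0$ produce coefficients in $\bk[[\fH^\wedge]]$, so that after assembly the remainder is an element of $\cU(\fhb)[[\fH^\wedge]]$ as claimed.
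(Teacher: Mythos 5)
Your proposal is correct and takes essentially the same route as the paper: trace the block expansion from Proposition \ref{Gauss_u2} against $g$ in $F_0$, use the defining relation for $\bchi(F_0)$ on the first term, and apply Lemma \ref{l_tr_e_e} to the quadratic root-space terms. One small clarification on the first term: the hedge about a ``scalar shift'' is unnecessary, since $(\bw - \bC\bv)_i$ acting on $F_0$ \emph{is} exactly $h_{e_i}$ (from $h_\eta = \Cb\eta = (-\bC\eta,\eta)$ and the symmetry of the nonequivariant Cartan matrix), so $\tr_{F_0} g\,(\bw - \bC\bv)_i = e_i(\bchi(F_0))$ on the nose, with nothing left over to absorb; the intermediate phrase ``$\eta(h_{-\bv})$'' is not really meaningful since $\bv$ is the grading variable rather than a root, but the conclusion you draw from the definition of $\bchi$ is the right one.
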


\noindent 
By Theorem \ref{t_Qlam} below this means that the degree
$2$ part of Baxter algebra is spanned by operators of quantum 
multiplication by $q$-dependent tautological divisors 
$$
\lambda = \bchi(F_0) \in \fh[[\fH^\wedge]] 
$$
and equivariant constants.

Using formula \eqref{c1div}, we can rearrange the terms in 
\eqref{Bax_Cas} as follows 
\begin{multline}
\bw \cdot \ch_1 \cV' -
\hbar \sum_{\theta\cdot\alpha >0} (\alpha,\bw) \, 
\, \frac{q^\alpha}{1-q^{\alpha}} \, e_{-\alpha} \, e_{\alpha}=
\\
\bE\left(\vacv{\bw} \! \vacd{\bw}  u^2\right) -
\hbar \sum_{\theta\cdot\alpha >0} 
(|\alpha|,\bw)  \,
\frac{q^{|\alpha|}}{1-q^{|\alpha|}} \, e_{-\alpha} e_{\alpha} + \dots \,,
\label{Bax_Cas_2}
\end{multline}
where dots stand terms from $\cU(\fhb)$ and 
$$
|\alpha| = 
\begin{cases}
\alpha\,, & \alpha >0 \,,\\
-\alpha\,, & \alpha <0 \,.
\end{cases}
$$
The second line in \eqref{Bax_Cas_2} is manifestly an element of 
$\bY[[\fH^\wedge]]$ while the sum over $\alpha$ in 
the first line 
converges in a different formal series completion 
--- the one corresponding 
to the effective cone of $\cM_{\theta,0}$.

\chapter{Quantum multiplication}
\section{Preliminaries}

We first set some notation regarding equivariant Gromov-Witten invariants.
Suppose we are given a smooth quasi-projective variety $X$ equipped with the action of a reductive group $\bG$.
For each effective curve class $\beta \in \mathrm{Eff}(X) \subset H_{2}(X,\Z)$, its associated $k$-point genus $0$ Gromov-Witten invariants are given by integrals
$$
\langle \gamma_1, \dots, \gamma_k \rangle_{0,k,\beta}^{X}
= \int_{[\overline{M}_{0,k}(X,\beta)]^{\mathrm{vir}}}
\ev^{*}\left(\gamma_1\boxtimes\dots \boxtimes \gamma_k\right).
$$
for $\gamma_i \in H^{\hd}_{\bG}(X,\Q)$.
Here, the integral is defined over the virtual fundamental class on the moduli space of $k$-pointed stable maps to $X$.
As always, if $X$ is noncompact (as in our case), the above expression can be defined via equivariant residue.  However, since the evaluation maps are proper, operators of quantum multiplication are defined without localization.

\section{Modified reduced operators}

\subsection{}

We recall some general results from \cite{BMO} for the quantum product 
for any equivariant symplectic resolution 
$$
X \rightarrow X_0 = \Spec H^0(\cO_X) \,.
$$

Due to the presence of the symplectic form $\omega$, the
moduli space of maps carries a \emph{reduced} virtual class in degree one larger than
the usual virtual dimension.
This reduced class determines the purely quantum contributions to all divisor operators via the relation
$$
\left(\gamma * \gamma_1,\gamma_2\right) = \left(\gamma \cup \gamma_1,\gamma_2\right) + \hbar
\sum_{\beta > 0} (\gamma \cdot \beta) q^\beta 
\int_{\left[\overline{M}_{0,2}(X,\beta)\right]_{\vir,\red}} 
\ev^*(\gamma_1 \boxtimes \gamma_2) \,. 
$$

Moreover, the pushforward
of the reduced virtual fundamental class under the evaluation map 
$$
\ev: \overline{M}_{0,2}(X,\beta) \to X^2\,, 
$$
is a $\Q$-linear combination of Steinberg correspondences 
of $X\times_{X_0} X$. In particular, it does not depend 
on equivariant parameters. We denote by
$$
\bQ_{2,\red} \in \End H^\hd_\bG(X)\otimes \Q[[\Eff(X)]]
$$
the purely quantum operator defined by the reduced class
$$
\left(\bQ_{2,\red} \cdot \gamma_1,\gamma_2\right) = \sum_{\beta > 0} q^\beta 
\int_{\left[\overline{M}_{0,2}(X,\beta)\right]_{\vir,\red}} 
\ev^*(\gamma_1 \boxtimes \gamma_2) \,. 
$$
This is a correspondence-valued element in the completion of the semigroup algebra of the effective cone of $X$, each
coefficient of which is a Steinberg correspondence for $X$.  

\subsection{}
Note that by \eqref{1*}
$$
\bQ_{2,\red} \cdot 1 = 0  \,. 
$$
This uniquely determines the coefficient of the diagonal in 
$\bQ_{2,\red}$ from the other terms.  It will be convenient to work modulo scalar
operator contributions to $\bQ_{2,\red}$ in this chapter; this relation allows us to fix this indeterminacy.

\subsection{}\label{hyp_t_reduc} 

Given
$$\bkap_X \in H^2(X,\Z/2),$$
we define the \emph{modified} quantum operator
$\bQ_{2,\red,\bkap}$ for $X$ by the substitution 
$$
q^\beta \mapsto (-1)^{(\kappa_X,\beta)} \, q^\beta \,, \quad \beta \in H_2(X,\Z) \,. 
$$
This is equivalent to changing the origin 
in the K\"ahler moduli space 
$H^2(X,\C)/2\pi i H^2(X,\Z)$ to $\pi i \kappa_X$.

Let a torus $\bA$ act on $X$ preserving the symplectic form 
and let $Y\subset X^\bA$ be a connected component. 
Assume we have chosen
$$
\bkap_Y \in H^2(Y,\Z/2)\,,
$$
such that
$$
c_1(N_+) \equiv \bkap_X \big|_Y + \bkap_Y \mod 2 \,
$$
where $N_+$ is the positive part of the normal bundle to $Y$ for 
some (equivalently, any) choice of the chamber $\fC\subset \fa = \Lie \bA$. 

For Nakajima varieties, the canonical theta characteristics
$\bkap$ were defined in \eqref{def_kappa} and connected
to the parity of $c_1(N_+)$ in \eqref{parity_N}.

\subsection{}

Our next goal is the following 

\begin{Theorem}\label{t_reduc} 
For $X$ and $Y$ as in Section \ref{hyp_t_reduc}, the diagram 
\begin{equation}\label{modif_stab}
\xymatrix{
H^\hd_\bT(Y) \ar@{->}[rr]^{\Stab_\fC\,\,}\ar[d]_{\bQ_{2,\red,\bkap_Y}}&& 
H^\hd_\bT(X) \ar[d]^{\bQ_{2,\red,\bkap_X}} \\
H^\hd_\bT(Y)  \ar@{<-}[rr]^{\Stab_{-\fC}^\tau} &&  H^\hd_\bT(X)\\
}
\end{equation}
is commutative for any $\fC$ and any polarization, 
after applying the map
$$\Q[[\Eff(Y)]] \rightarrow \Q[[\Eff(X)]]\,$$
to $\bQ_{2,\red,\bkap_Y}$ and working 
modulo scalar operators on $H^\hd_\bT(Y)$.
\end{Theorem}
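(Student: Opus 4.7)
The plan is to reduce the statement to Theorem \ref{t_Stein} on the intertwining property of Steinberg correspondences, using crucially that $\bQ_{2,\red}$ is itself defined by a correspondence of this type.

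First, I will recall from \cite{BMO} that for any equivariant symplectic resolution, the pushforward of the reduced virtual class under $\ev$ on $\overline{M}_{0,2}(X,\beta)$ is a $\Q$-linear combination of Steinberg correspondences, independent of equivariant parameters. Consequently one can write
\begin{equation*}
\bQ_{2,\red,\bkap_X} \;=\; \sum_{\beta>0} (-1)^{(\bkap_X,\beta)}\, q^\beta \, \Theta_{L_\beta},
\end{equation*}
where each $L_\beta \subset X \times_{X_0} X$ is a Steinberg Lagrangian cycle, and the correspondence is $\bG$-invariant. Applying Theorem \ref{t_Stein} term by term, we get
\begin{equation*}
\Stab_{-\fC}^\tau \circ \Theta_{L_\beta} \circ \Stab_\fC \;=\; \Theta_{(L_\beta)_\bA},
\end{equation*}
where $(L_\beta)_\bA = \Res_{Y \times Y} L_\beta$ is a Steinberg cycle on $Y \times Y$, with sign fixed by the chosen polarization of $Y \subset X$.

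Next, the central computation is to identify this residue with the reduced quantum operator of $Y$. For this I would carry out $\bA$-equivariant virtual localization on $\overline{M}_{0,2}(X,\beta)$. The fixed locus decomposes into strata indexed by decorated trees whose vertices are stable maps to components of $X^\bA$ glued along the normal bundle. The key observation is that for a class $\beta = \iota_*\beta'$ with $\beta' \in \Eff(Y)$ pushed forward via $Y \hookrightarrow X$, the contribution of the component consisting of stable maps entirely into $Y$ produces precisely the reduced virtual class for $Y$, with the Euler class of the normal bundle contributions cancelling against the polarization factor used to define the residue. The remaining fixed strata either support classes whose Steinberg residue vanishes in cohomology modulo scalars, or they contribute only to the diagonal (and hence scalar operator) part that we are working modulo. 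This step uses that $\bQ_{2,\red}\cdot 1 = 0$ fixes the scalar ambiguity, so any discrepancy between the two sides is absorbed into the indeterminacy we have already quotiented by.

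Finally I will verify the sign matching. The substitution $q^\beta \mapsto (-1)^{(\bkap_X,\beta)}q^\beta$ restricts to $(-1)^{(\bkap_X|_Y,\beta')}q^{\iota_*\beta'}$ on classes arising from $Y$, while the residue map in Theorem \ref{t_Stein} introduces an extra sign governed by the polarization, which by the definition of the canonical theta characteristic and the compatibility in \eqref{parity_N} is exactly $(-1)^{(c_1(N_+),\beta')}$. The hypothesis $c_1(N_+) \equiv \bkap_X|_Y + \bkap_Y \pmod 2$ then converts this combined sign into $(-1)^{(\bkap_Y,\beta')}$, matching the left-hand side's modified quantum operator. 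The main obstacle, as expected, is the virtual localization step: one must argue that the only fixed strata contributing nontrivially (modulo scalars and the pushforward $\Q[[\Eff(Y)]] \to \Q[[\Eff(X)]]$) are those coming from maps into $Y$ itself. This follows from a dimension count together with the fact that the reduced class is transverse to the moving directions, but it requires care when multiple components of $X^\bA$ or infinitesimal deformations into $N_\pm$ are involved.
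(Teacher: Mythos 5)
Your overall strategy matches the paper's: use the Steinberg structure of $\bQ_{2,\red}$ plus the intertwining property to establish properness of the convolution, compute by $\bA$-equivariant localization, reduce to maps into $Y$, and reconcile signs via the parity constraint $c_1(N_+)\equiv \bkap_X|_Y + \bkap_Y\pmod 2$. The sign bookkeeping at the end is also correct and equivalent to what the paper does (the paper phrases the same cancellation via $N_\lambda = N^\vee_{-\lambda}$ and $(\det N_+)^2/\det N = (-1)^{\frac12\rk N}$ in Lemma \ref{Lultraeq}).

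The genuine gap is in the step where you assert that the only contributing fixed strata are stable maps entirely into $Y$, attributing this to ``a dimension count together with the fact that the reduced class is transverse to the moving directions.'' This is not where the argument lives, and a dimension count alone will not close it. The paper's mechanism is quite specific and uses two ingredients that you do not invoke. First, properness of the convolution lets one compute with the equivariant parameters in $\fa$ sent to infinity \emph{and} $\hbar=0$; the point of setting $\hbar=0$ is that by Theorem \ref{t_strong_vanish}/Proposition \ref{p_1h}, stable envelopes restricted to $X^\bA\times X^\bA$ are diagonal modulo $\hbar$, which kills all contributions from fixed components of $\overline{M}_{0,2}(X,\beta)^\bA$ whose marked points do not both evaluate into $Y$. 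Second, among the components where both marked points do land in $Y$, one still has to rule out nontrivial unbroken chains passing through the ambient $X$; this is exactly Lemma \ref{Lendpoints} (an unbroken chain cannot connect two points of the same fixed component, because the $\bA$-weight of an ample line bundle is strictly monotone along the chain by Lemma \ref{L_unbroken_monotone}). The remaining components --- two marked points on a contracted bubble attached to an unbroken chain --- evaluate through the diagonal and hence contribute only scalar operators, which are quotiented out. Without the broken/unbroken curve theory and the $\hbar=0$ degeneration of stable envelopes, the localization analysis is not controlled, and the assertion that all other fixed strata ``either vanish or contribute only to the diagonal'' is unjustified.

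A secondary but related imprecision: you say ``applying Theorem \ref{t_Stein} term by term'' gives $\Stab_{-\fC}^\tau\circ\Theta_{L_\beta}\circ\Stab_\fC=\Theta_{(L_\beta)_\bA}$. Theorem \ref{t_Stein} identifies the residue as a Steinberg cycle supported on $L^\bA$, which establishes properness and independence of equivariant parameters, but it does \emph{not} by itself identify that residue with $\bQ_{2,\red}(\beta')$ on $Y$ --- that identification is the whole content of the localization calculation you need to carry out, and is exactly the step whose justification is missing.
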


\noindent 
Note that the bottom arrow in \eqref{modif_stab} 
is a priori defined only in localized equivariant 
cohomology. As a part of the proof, we will see that 
the composition of the top, right, and the bottom arrows 
in \eqref{modif_stab} is well-defined without 
localization. 

The proof of this theorem will require the discussion of 
broken and unbroken curves in equivariant localization. 
We recall 
the relevant definitions and results from \cite{OPhilb}.

\section{Broken curves}

\subsection{}

Let $f: C\to X$ be an $\bA$-fixed 
point of $\overline{M}_{0,2}(X,\beta)$
such that the 
domain $C$ is a chain of rational curves 
$$
C = C_1 \cup C_2 \cup \dots \cup C_k \,,
$$
with the two marked points $p_1,p_2$ lying on $C_1$ and $C_k$, respectively. 
 
If at every node of $C$ the $\bA$-weights
of the two branches are opposite and nonzero then we say that $f$ is an \emph{unbroken chain}. 
We say that $f$ \emph{connects} the 
points 
$$
x_0 = f(p_1), \,\, x_k=f(p_2) 
$$
of $X$ through the sequence of nodes 
$$
x_i=f(C_i \cap C_{i+1})\,, \quad i=1,\dots,k-1\,.
$$
Note that all of these points are fixed by $\bA$.

More generally, if $(C,f)$ is an $\bA$-fixed point of $\overline{M}_{0,2}(X,\beta)$,
we say that $f$ is an \emph{unbroken map} if it satisfies one of three
conditions:
\begin{enumerate}
\item $f$ arises from a map $f: C \to X^{\bA}$,
\item
$f$ is an unbroken chain, or
\item
the domain $C$ is a chain of rational curves
$$C = C_0\cup C_1\cup \cdots \cup C_k$$
such that $C_0$ is contracted by $f$, the marked points lie on $C_0$, and
the remaining components form an unbroken chain.
\end{enumerate}
Broken maps are $\bA$-fixed maps that do not satisfy one of these conditions.

In this last possibility, the contribution of these curves is block-diagonal with respect to $\bA$-fixed locus of $X$,
 i.e. scalar on each connected component $Y$, hence 
we will focus on the unbroken chains in what follows.

\subsection{}

We refer the reader to Section 3.8.3 of \cite{OPhilb} for the proof of 
the following. 

\begin{Theorem}[\cite{OPhilb}] 
Every map in a given connected component of $\overline{M}_{0,2}(X,\beta)^\bA$ is either broken 
or unbroken. Only unbroken components contribute to $\bQ_{2,\red}$ 
in $\bA$-equivariant localization. 
\end{Theorem}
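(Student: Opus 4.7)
\medskip

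\noindent\textbf{Proof proposal.} The statement has two parts: a topological claim that the broken/unbroken dichotomy is constant on connected components of $\overline{M}_{0,2}(X,\beta)^{\bA}$, and a geometric claim that broken components do not contribute to $\bQ_{2,\red}$. My plan is to handle the first by a standard combinatorial argument and concentrate the real work on the second, using the symplectic form to produce an explicit cosection that kills the contribution.

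For the first claim, I would classify the connected components of $\overline{M}_{0,2}(X,\beta)^{\bA}$ by their combinatorial type, i.e.\ a decorated graph recording the dual graph of the domain $C$, the positions of the two marked points, which components of $C$ are contracted by $f$, to which component of $X^{\bA}$ each contracted component maps, and for each non-contracted component the $\bA$-invariant curve in $X$ it covers together with the degree of the cover. Since every fixed stable map admits such a decoration and any $\bA$-equivariant deformation preserves it, the decorated graph is locally constant on $\overline{M}_{0,2}(X,\beta)^{\bA}$. The $\bA$-weights on the tangent spaces at the nodes are then determined by the decoration (up to the cover degree, which is part of the data), so the conditions defining an unbroken map depend only on the decorated graph. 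This establishes the dichotomy.

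For the second claim, the plan is to analyze the reduced virtual class by equivariant localization and show that only unbroken graphs yield a non-zero residue. Recall that the reduced obstruction theory is built from the holomorphic symplectic form $\omega$: the usual obstruction sheaf $\mathrm{Obs}$ admits a canonical map $\mathrm{Obs}\to\cO\otimes \hbar$ induced by $\omega$, and the reduced virtual class is obtained by modifying the obstruction theory along this map. When restricted to a connected component $M_{\Gamma}$ of the fixed locus indexed by a decorated graph $\Gamma$, both the virtual normal bundle and the reduced obstruction split according to the vertices and edges of $\Gamma$. At an unbroken node, the two tangent weights are opposite and non-zero, so the edge contribution to the virtual normal bundle has a non-vanishing equivariant Euler class. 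At a broken node, by definition one of the following occurs: either two contracted components meet, or two non-contracted components meet with tangent weights that are not opposite, or a contracted component meets a non-contracted one at a node whose tangent weight is zero. In each case, I would show that the smoothing parameter of that node contributes a factor whose $\bA$-weight is zero, so the edge contribution to the Euler class of the virtual normal bundle vanishes; this alone would make the localization contribution ill-defined, but after passing to the reduced class the symplectic cosection removes precisely one trivial summand, and the crux is that it does \emph{not} remove this spurious node-smoothing direction (the $\omega$-cosection is supported on the mapping part of the obstruction, not on the deformations of the domain). Therefore, even after reduction, the contribution of $M_{\Gamma}$ to $\bQ_{2,\red}$ still carries an uncancelled trivial $\bA$-weight in the denominator, and the equivariant residue is zero.

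The main obstacle is this last step: carefully identifying the symplectic cosection on the sheaf $\mathrm{Obs}$ restricted to $M_{\Gamma}$ and verifying that it lands entirely within the mapping deformations, so that the excess trivial direction at a broken node survives the reduction. This requires writing out the normalization sequence for $C\to X$ along with its obstruction theory, tracking the contribution of each edge and vertex of $\Gamma$ separately, and computing the explicit effect of pairing with $\omega$ on the node-smoothing summand. The computation should show that $\omega$ annihilates this summand (because the smoothing direction is purely on the domain side), so that reducing only shifts degree in the vertex/mapping contributions, leaving the broken-node factor of weight zero intact. Once this is established, the equivariant integral of any class against the resulting cycle vanishes by the standard residue-at-infinity argument, proving that broken components make no contribution to $\bQ_{2,\red}$.
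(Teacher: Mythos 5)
There is a genuine error in the core mechanism of your second part. You claim that at a broken node the node-smoothing parameter carries $\bA$-weight zero. This is backwards. If the two branches at a node have tangent weights $w_1$ and $w_2$, the smoothing parameter lies in $T'_p C \otimes T''_p C$ and has weight $w_1+w_2$. At an \emph{unbroken} node the weights are opposite, so $w_1+w_2 = 0$; at the generic broken node (both $w_i$ nonzero, not opposite) the smoothing weight $w_1+w_2$ is \emph{nonzero}. Only the degenerate broken cases where both branch weights vanish give a weight-zero smoothing. So the trivial summand you are trying to exhibit in the virtual normal bundle simply is not there for most broken nodes, and the argument collapses. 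The actual mechanism, which is the one used in \cite{OPhilb} and mirrored in the proof of Proposition \ref{red_add} in the present paper, is of a different nature: at a broken node the domain separates into two connected pieces, and the holomorphic symplectic form $\omega$ produces a cosection $H^1(C_i, f^*T_X) \to \C$ on \emph{each} piece separately. One therefore obtains two independent trivial quotients of the obstruction sheaf; the reduced obstruction theory removes only the diagonal combination, and the surviving anti-diagonal cosection forces the reduced virtual class of that fixed component to vanish (cosection localization / ``doubly reduced'' vanishing). Your observation that the $\omega$-cosection lives on the mapping side of the obstruction rather than on domain deformations is true, but you then use it to argue for an uncancelled zero-weight direction in the deformations, which is not what happens.

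There is also a gap in the first part. Your decorated graph --- recording the full dual graph of $C$, which components are contracted, and so on --- is \emph{not} locally constant on $\overline{M}_{0,2}(X,\beta)^\bA$. The contracted subcurves, being stable pointed genus-$0$ curves mapping to points of $X^\bA$, move in moduli spaces with nontrivial boundary, so the dual graph degenerates within a connected component of the fixed locus. What is locally constant is the configuration of the \emph{non-contracted} components together with the target $\bA$-invariant curves, the cover degrees, and which special points lie on which pieces. You would need to verify that the broken/unbroken dichotomy depends only on this coarser, genuinely locally-constant invariant; as written, your premise fails and the deduction does not follow.
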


\subsection{}

Let $f$ be an unbroken chain as before and let 
let $\cO(1)$ be a $\bA$-linearized ample line bundle on $X$. We may restrict 
it to fixed point $x_i$ to get elements of $\fa^*$. We have 
the following 

\begin{Lemma}\label{L_unbroken_monotone}
For an unbroken chain, the points 
$$
\cO(1)\Big|_{x_i} \in \fa^*\,, \quad i=0,\dots, k \,, 
$$
form a monotone sequence of distinct points of a real line.
\end{Lemma}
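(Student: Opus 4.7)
The plan is to use equivariant localization on each component $C_i$ of the chain together with the unbroken condition to propagate tangent weights from node to node, and then read off monotonicity from ampleness of $\cO(1)$.

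First I would observe that in an unbroken chain no component $C_i$ can be contracted by $f$: a contracted component would yield zero weights at both of its nodes, violating the ``nonzero'' clause of the unbroken condition. Hence each restriction $f_i = f|_{C_i}$ is a finite cover of an $\bA$-invariant rational curve $\bar{C}_i \subset X$ with $\bA$-fixed endpoints $x_{i-1}$ and $x_i$. Let $\alpha_i \in \fa^*$ be the weight of $T_{x_{i-1}} \bar{C}_i$. Since $\bar{C}_i \cong \Pp^1$, the weight at the other fixed point $x_i$ is $-\alpha_i$. In particular, the tangent to $C_i$ at the preimage of $x_i$ has weight $-\alpha_i$ under $df_i$.

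The unbroken condition at the node $n_i = C_i \cap C_{i+1}$ now says that the two tangent weights at $n_i$ are opposite and nonzero, forcing $df_{i+1}$ at $n_i$ to carry weight $+\alpha_i$. Since this weight equals the weight of $T_{x_i} \bar{C}_{i+1}$, i.e.\ $\alpha_{i+1}$, we obtain $\alpha_{i+1} = \alpha_i$ for all $i$. Thus all tangent weights coincide with a single nonzero weight $\alpha \in \fa^*$. Next I would apply equivariant localization for $\cO(1)$ on each $\bar{C}_i$ (equivalently, integrate $c_1(f_i^*\cO(1))$ over $C_i$), which gives
\begin{equation*}
\cO(1)\big|_{x_i} \;-\; \cO(1)\big|_{x_{i-1}} \;=\; -\,d_i\,\alpha,
\end{equation*}
where $d_i = \deg\bigl(f_i^*\cO(1)\bigr) > 0$ by ampleness of $\cO(1)$ and the fact that $C_i$ is not contracted. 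Summing yields $\cO(1)|_{x_i} = \cO(1)|_{x_0} - (d_1 + \cdots + d_i)\,\alpha$, so the points $\cO(1)|_{x_i}$ all lie on the real affine line $\cO(1)|_{x_0} + \R\,\alpha$ in $\fa^*$, with coefficients $0, -d_1, -(d_1+d_2), \ldots$ that are strictly decreasing; in particular they are monotone and distinct.

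The main (and only mild) obstacle is the sign/weight bookkeeping in the second step: one must be careful to distinguish the tangent weight of $\bar{C}_i$ from the weight of $df_i$ on $TC_i$, and to see that ``opposite and nonzero'' at each node is exactly what propagates the common direction $\alpha$ through the whole chain. Once this is in place, ampleness of $\cO(1)$ converts the chain into a strictly monotone arithmetic progression along the ray $\R\,\alpha$.
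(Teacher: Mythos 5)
Your approach is essentially the paper's — propagate tangent weights through the nodes using the unbroken condition, then convert ampleness plus localization into strict monotonicity — but the weight bookkeeping goes wrong at exactly the point you flagged as delicate, and the error is not cosmetic.

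You define $\alpha_i$ as the $\bA$-weight on the \emph{image} curve $T_{x_{i-1}}\bar{C}_i$, and then conclude $\alpha_{i+1}=\alpha_i$ from the unbroken condition. This is false in general. The unbroken condition, as defined in the paper, is a statement about tangent weights of the two branches of the \emph{domain} $C$ at a node. Each $f_i\colon C_i\to\bar{C}_i$ is an $\bA$-equivariant finite cover of degree $e_i\ge 1$, totally ramified over the two fixed points (indeed, Lemma \ref{l_line} and the proof of Proposition \ref{p_Qd} explicitly involve covers of degree $d>1$). If $w$ is the domain tangent weight of $C_i$ at the node, the image weight at $x_i$ along $\bar{C}_i$ is $e_i w$; so $\alpha_i = e_i w$. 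The unbroken condition forces the \emph{domain} weights on both branches to be $\pm w$, but the image weights are then $\alpha_i = e_iw$ and $\alpha_{i+1}=e_{i+1}w$, which are equal only if $e_i=e_{i+1}$. Also note that $df_{i+1}$ vanishes at the node whenever $e_{i+1}>1$, so saying a domain tangent vector ``carries weight $\alpha_i$ under $df_{i+1}$'' does not make sense.

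There is a parallel off-by-$e_i$ error in your localization identity: with $\alpha=\alpha_i$ the image weight, localization gives $c_{i-1}-c_i = \deg(\cO(1)|_{\bar{C}_i})\,\alpha_i$, while $d_i=\deg(f_i^*\cO(1))=e_i\deg(\cO(1)|_{\bar{C}_i})$, so $c_{i-1}-c_i = (d_i/e_i)\alpha_i$, not $d_i\alpha_i$. The two errors happen to cancel if you re-interpret $\alpha$ as the common domain weight $w$: then $c_{i-1}-c_i=d_i\,w$ with $d_i>0$, which is exactly the identity the paper uses. The clean fix is therefore to set $w=$ weight of $T_{p_1}C$ once and for all (as in the paper), observe that the unbroken condition propagates $w$ through the nodes of the \emph{domain}, and apply localization to $f^*\cO(1)$ on each $C_i$ to get $(c_{i-1}-c_i)/w = \deg(f_i^*\cO(1)) \in \Z_{>0}$. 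Your conclusion that the $c_i$ form a strictly monotone, distinct sequence on the line $c_0+\R w$ then follows verbatim. (The preliminary observation that no $C_i$ is contracted is true but redundant for a two-pointed chain: each end component carries one node and one marked point, and interior components carry two nodes, so stability of the map already forbids contraction.)
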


\begin{proof} We denote this sequence by $c_i$. Let $w$ denote the (nonzero) $\bA$-weight 
of $T_{p_1} C$. By the unbroken condition, the same weight occurs at all 
nodes and the weight of $T_{p_2} C$ is $-w$.  By localization, 
the terms of the sequence 
$$
\frac{c_0-c_1}{w},\dots,\frac{c_{k-1}-c_{k}}{w}
$$
are the degrees of $f^*\cO(1)$ restricted to $C_i$, hence positive 
integers. 
\end{proof} 

\subsection{} \label{s_no_loops} 

Lemma \ref{L_unbroken_monotone} is effective in ruling out 
unbroken loops. More generally, we have the 
following 

\begin{Lemma}\label{Lendpoints}
There are no $\bA$-fixed unbroken chains 
connecting two points in the same component $Y$ of $X^\bA$.
\end{Lemma}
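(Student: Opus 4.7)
The plan is to deduce Lemma \ref{Lendpoints} as an essentially immediate consequence of Lemma \ref{L_unbroken_monotone}. The entire argument hinges on the observation that the $\bA$-equivariant weight of an ample line bundle is locally constant on the fixed locus $X^{\bA}$.

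More precisely, I would begin by choosing an $\bA$-linearized ample line bundle $\cO(1)$ on $X$, which exists by the assumption \eqref{blowdown} that $X$ admits a proper map to an affine $\bG$-variety (any equivariant very ample line bundle for such a map works). Now suppose, for contradiction, that there is an $\bA$-fixed unbroken chain $f\colon C = C_1\cup\dots\cup C_k \to X$ whose endpoints $x_0,x_k$ both lie in a single connected component $Y$ of $X^{\bA}$. Since $\bA$ acts trivially on $Y$, the fiberwise $\bA$-weight of $\cO(1)|_Y$ defines a continuous function
\[
Y \to \fa^*,\quad y \mapsto \cO(1)\big|_{y},
\]
with values in the discrete set of $\bA$-characters; hence it is constant on the connected component $Y$. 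In particular $c_0 = \cO(1)|_{x_0} = \cO(1)|_{x_k} = c_k$.

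On the other hand, Lemma \ref{L_unbroken_monotone} asserts that $c_0,c_1,\dots,c_k$ is a strictly monotone sequence of distinct points on a real line inside $\fa^*$, so in particular $c_0 \ne c_k$. This contradicts the previous paragraph and completes the proof.

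There is no real obstacle here beyond invoking the existence of an $\bA$-linearized ample class together with the already-established monotonicity in Lemma \ref{L_unbroken_monotone}; the key point to articulate carefully is simply that the ``height'' function $c\colon X^{\bA}\to\fa^*$ is locally constant and hence takes the same value at $x_0$ and $x_k$ whenever they lie in a common component, which is irreconcilable with strict monotonicity along the chain.
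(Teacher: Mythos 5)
Your proof is correct and is essentially identical to the paper's: both observe that the $\bA$-weight of an ample linearized bundle is constant on a connected component $Y$ of $X^\bA$ and then invoke the strict monotonicity from Lemma~\ref{L_unbroken_monotone} to derive a contradiction.
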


\begin{proof}
The $\bA$-weight of $\cO(1)_y$ is constant for $y\in Y$, 
which contradicts the fact that points in Lemma \ref{L_unbroken_monotone}
are distinct.
\end{proof}

\section{Proof of Theorem \ref{t_reduc}}

\subsection{} 
Given $\beta \in H_{2}(X,\Z)$ and $\gamma_1,\gamma_2\in H^\hd_{\bT}(Y)$,
the statement to prove is 
\begin{multline}
  \sum_{\beta' \mapsto \beta} (-1)^{(\beta,\kappa_Y)+\frac12 \dim Y} \lang \gamma_1,\gamma_2 \rang^Y_{\beta',\red} 
=  \\ 
 (-1)^{(\beta,\kappa_X)+\frac12 \dim X} \Big\langle \Stab_\fC(\gamma_1),
\Stab_{-\fC}(\gamma_2) \Big\rangle^X_{\beta,\red} + c_{\beta}\,, 
\label{rest_t_reduc}
\end{multline}
where $c_{\beta}$ is a constant independent of the insertions $\gamma_1, \gamma_2$. 

The sign $(-1)^{\frac12 \codim_X Y}$ comes from the sign 
in the definition of the adjoint $\Stab_{-\fC}^\tau$.

\subsection{}
Recall that every coefficient of $\bQ_{2,\red,\bkap_X}$ is given by a Steinberg correspondence. 
As in the proof of Theorem \ref{t_Stein}, this implies the convolution 
$$
\Stab_{-\fC}^\tau \circ \, \bQ_{2,\red,\bkap_X} \circ \Stab_\fC \,. 
$$
is obtained by a proper push-forward. In particular, its coefficients can be determined by
any specialization of equivariant parameters.

This means we can compute the RHS of \eqref{rest_t_reduc} by 
$\bA$-equivariant localization, and study its limit after taking the equivariant 
parameters associated to $\fa$ to infinity, while setting $\hbar=0$ at
the same time.

\subsection{}

We only need to consider unbroken components of 
$\overline{M}_{0,2}(X,\beta)^\bA$ in equivariant localization. 

Since stable envelopes are proportional to fixed points modulo $\hbar$, 
setting $\hbar=0$ implies that only components where both marked points map to $Y$ will
give nonzero contribution.

If we fix a component of $\overline{M}_{0,2}(X,\beta)^\bA$ whose elements
consist of curves for which both marked points lie on a contracted component attached to an unbroken chain.
Since the evaluation map to $Y \times Y$ for this component factors through the diagonal,
the contribution of this component will give a scalar operator, so we can ignore it.
By Lemma \ref{Lendpoints}, unbroken chains do not contribute either,
so the only contributions come from stable maps which factor through $Y$.
Furthermore, since the localization contribution only depend on the equivariant normal bundle to $Y$ in $X$, we may replace $X$ by the total space $N$ of the normal bundle.

\subsection{}
For a vector bundle 
$$
p: N\to Y
$$
we have the following general 
result. Suppose $\bA$ acts on $N$ fiberwise and $N^\bA = Y$. 
We decompose 
$$
N = \bigoplus_\lambda N_\lambda
$$
according to the characters $\lambda\in \bA^\vee \subset \fa^*$. 

Given cohomology classes $\gamma_1, \dots, \gamma_k \in H^\hd(Y)$
we want to understand the asymptotic behavior of the Gromov-Witten invariant
$$\left\langle p^*(\gamma_1), \dots, p^*(\gamma_k) \right\rangle^{N}_{\beta,g}\in \Q(\fa^*)$$
defined via equivariant residue, as the variables in $\fa$ approach infinity.
Here, $g\ge 0$ is the domain genus and 
$\beta\in H_2(Y,\Z) = H_2(N,Z)$ is the degree of the map.

The residue invariant can be expressed in terms of the Gromov-Witten invariants of $Y$ by adding 
an Euler class insertion determined by $N$.  The following computation is then a standard application of
Riemann-Roch: 

\begin{Lemma}\label{Lultraeq}  We have the asymptotic behavior given by 
$$
\left\langle p^*(\gamma_1), \dots, p^*(\gamma_k) \right\rangle^{N}_{\beta,g,k}
\sim \prod_\lambda \lambda^{-(c_1(N_\lambda),\beta) - \rk N_\lambda (1-g)} \, 
\left\langle \gamma_1, \dots, \gamma_k \right\rangle^{Y}_{\beta,g,k}\,.
$$
\end{Lemma}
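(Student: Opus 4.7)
The plan is to apply virtual equivariant localization to $\overline{M}_{g,k}(N,\beta)$. Since $\bA$ acts fiberwise on $p: N \to Y$ with $N^\bA = Y$, any $\bA$-fixed stable map to $N$ factors through the zero section $Y \hookrightarrow N$, so the fixed locus of $\bA$ on $\overline{M}_{g,k}(N,\beta)$ is naturally identified with $\overline{M}_{g,k}(Y,\beta)$. Let $\pi: \cC \to \overline{M}_{g,k}(Y,\beta)$ denote the universal curve and $f: \cC \to Y$ the universal stable map. The virtual normal bundle of the fixed locus is then the $K$-theory class $R\pi_* f^* N$, and the virtual localization formula gives
$$
\left\langle p^*(\gamma_1),\dots,p^*(\gamma_k) \right\rangle^{N}_{\beta,g,k}
= \int_{[\overline{M}_{g,k}(Y,\beta)]^{\vir}} \frac{\prod_i \mathrm{ev}_i^*(\gamma_i)}{e_\bA\bigl(R\pi_* f^* N\bigr)}\,,
$$
using $p \circ f = f$ on the fixed locus to rewrite the pullbacks of the $\gamma_i$.

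Next, I would decompose $N = \bigoplus_\lambda N_\lambda$ into weight eigenbundles and note that $e_\bA$ is multiplicative, so the asymptotic analysis reduces to a single weight. For a fixed weight $\lambda$, the class $R\pi_* f^* N_\lambda$ is a genuine (virtual) bundle on $\overline{M}_{g,k}(Y,\beta)$ of virtual rank equal to
$$
\chi_\lambda \;=\; (c_1(N_\lambda),\beta) + \rk(N_\lambda)\,(1-g)\,,
$$
by Riemann--Roch applied fiberwise. The equivariant Euler class of a virtual bundle of rank $\chi_\lambda$ carrying weight $\lambda$ is a polynomial in $\lambda$ whose leading term is $\lambda^{\chi_\lambda}$, with lower-order coefficients involving Chern classes of $R\pi_* f^* N_\lambda$ and polynomial contributions from the other equivariant parameters in $\fg_\bA / \fa$. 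Thus
$$
\frac{1}{e_\bA(R\pi_* f^* N_\lambda)} \;\sim\; \lambda^{-\chi_\lambda}\cdot \bigl(1+O(\lambda^{-1})\bigr)
$$
as $\lambda \to \infty$ in $\fa^*$.

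Multiplying these contributions over all weights $\lambda$ and pulling the leading scalar factor outside the integral leaves precisely $\langle \gamma_1,\dots,\gamma_k\rangle^Y_{\beta,g,k}$, giving the claimed asymptotic
$$
\left\langle p^*(\gamma_1),\dots,p^*(\gamma_k) \right\rangle^{N}_{\beta,g,k}
\;\sim\; \prod_\lambda \lambda^{-(c_1(N_\lambda),\beta)-\rk N_\lambda (1-g)}\,
\left\langle \gamma_1,\dots,\gamma_k \right\rangle^{Y}_{\beta,g,k}\,.
$$
The main technical point to check carefully is that the Euler class in the denominator really behaves to leading order as a monomial in $\lambda$ on a fixed virtual class, rather than acquiring an unexpected vanishing contribution; this is where one must use that $R\pi_* f^* N_\lambda$ is represented globally by a two-term complex of locally free sheaves, so its equivariant Euler class has the expected top-degree term $\lambda^{\chi_\lambda}$. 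Once this is in hand, the remainder of the argument is a routine expansion.
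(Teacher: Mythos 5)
Your proof is correct and takes the same route the paper sketches: virtual equivariant localization identifies the fixed locus with $\overline{M}_{g,k}(Y,\beta)$ and produces an Euler class insertion $1/e_\bA(R\pi_* f^* N)$, and the leading asymptotics of that Euler class in the $\fa$-parameters are read off from Riemann--Roch. The paper compresses this to two sentences; you correctly fill in the detail (weight decomposition, two-term representative, and the expansion $e_\bA \sim \lambda^{\chi_\lambda}(1+O(\lambda^{-1}))$) that justifies the ``standard'' claim.
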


\subsection{}
We only need the $g=0$ case of the above lemma. Also
$$
N_\lambda = N^{\vee}_{-\lambda} 
$$
because of the symplectic form. Therefore, the prefactor in Lemma 
\ref{Lultraeq} becomes
$$
(-1)^{(c_1(N_+),\beta)} \big/ \det N  \,.
$$
Since 
$$
(\det N_+)^2 \big/ \det N = (-1)^{\frac12 \rk N} =
(-1)^{\frac12 \codim_X Y} 
$$
the equality \eqref{rest_t_reduc} follows. 

\section{Additivity}

\subsection{}

Suppose $Y$ as above factors 
$$
Y =Y_1 \times Y_2\,, 
$$
as it is the case in our main example \eqref{MtoM}. Then 
$\dim H^0(Y,\Omega^2)\ge 2$, leading to further constraints
on quantum cohomology of $Y$. 

\begin{Proposition}\label{red_add} 
$$
\bQ^{Y}_{2,\red} = \bQ^{Y_1}_{2,\red} \otimes 1 + 1 \otimes \bQ^{Y_2}_{2,\red} \,. 
$$
\end{Proposition}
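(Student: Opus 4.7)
The strategy is to split the sum in $\bQ^{Y}_{2,\red}$ by bidegree, using the Künneth decomposition $H_2(Y_1 \times Y_2) = H_2(Y_1) \oplus H_2(Y_2)$, so that every effective class writes as $\beta = \beta_1 + \beta_2$ with $\beta_i \in \Eff(Y_i)$ and $q^\beta = q^{\beta_1} q^{\beta_2}$. The computation splits into a pure case (exactly one of $\beta_1, \beta_2$ nonzero) and a mixed case (both nonzero), and the desired additivity will follow from showing that the pure contributions reproduce the two summands on the right and the mixed contributions vanish.

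For a pure class, say $\beta = \beta_1$ with $\beta_2 = 0$, a genus-zero stable map $(f_1, f_2): C \to Y$ of bidegree $(\beta_1, 0)$ necessarily has $f_2$ constant, so there is a canonical isomorphism
$$\overline{M}_{0,2}(Y, \beta_1) \cong \overline{M}_{0,2}(Y_1, \beta_1) \times Y_2$$
recording the image of $f_2$ in the second factor. On such maps $f^*\omega_2 = 0$, so the cosection used to reduce the obstruction theory by $\omega = \omega_1 + \omega_2$ coincides with the $\omega_1$-cosection, and the remaining relative obstruction theory factors as the product of the $Y_1$-theory and the trivial theory on $Y_2$. It follows that
$$\big[\overline{M}_{0,2}(Y, \beta_1)\big]^{\vir,\red} = \big[\overline{M}_{0,2}(Y_1, \beta_1)\big]^{\vir,\red} \times [Y_2],$$
whose pushforward along the evaluation map is the correspondence $\bQ^{Y_1}_{2,\red,\beta_1} \boxtimes \Delta_{Y_2}$; as an operator on $H^\hd(Y_1) \otimes H^\hd(Y_2)$ this is $\bQ^{Y_1}_{2,\red,\beta_1} \otimes 1$. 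The symmetric argument handles $\beta_1 = 0$.

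The main obstacle is the vanishing of $\big[\overline{M}_{0,2}(Y, \beta)\big]^{\vir,\red}$ for $\beta = \beta_1 + \beta_2$ with both $\beta_i > 0$. In this bidegree both $\omega_1$ and $\omega_2$ induce surjective cosections of the obstruction sheaf, so the reduced theory built from $\omega = \omega_1 + \omega_2$ admits a second, independent reduction against $\omega_2$, yielding a doubly-reduced virtual class of dimension one larger. The singly-reduced class can then be expressed as the doubly-reduced class capped with the first Chern class of an equivariantly trivial line bundle (both forms have the same $\bG$-weight $\hbar$), forcing it to vanish. An equivalent viewpoint, which I expect is cleaner to write up rigorously, uses the universal deformation $\tX \to B$ from Section \ref{s_sympl_res}: the class $\beta$ can be algebraic in a fiber of $\tX_1 \times \tX_2 \to H^2(Y_1) \oplus H^2(Y_2) \subset B$ only on a codimension-two sublocus, whereas the reduced class is supported over the codimension-one coroot hyperplane $\beta^\perp$, and deformation invariance of the reduced class along $\beta^\perp$ identifies it with its generic value, which is zero.
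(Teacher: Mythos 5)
Your proof is correct and follows the same approach as the paper: decompose $\beta$ by bidegree via the K\"unneth splitting $H_2(Y_1 \times Y_2) = H_2(Y_1) \oplus H_2(Y_2)$, observe that the pure bidegree contributions factor through products of moduli spaces to give the two tensor summands, and kill the mixed bidegree contributions by double reduction of the virtual class (the paper cites Section 3.5 of \cite{OPhilb} for this). Your elaboration of why the doubly-reduced class forces the singly-reduced one to vanish, and the alternative deformation-theoretic sketch, fill in details the paper leaves implicit but do not change the argument.
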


\begin{proof}
Let $\beta=(\beta_1,\beta_2)$ according to 
$$
H_2(Y,\Z) = H_2(Y_1,\Z) \oplus H_2(Y_2,\Z) \,. 
$$
If $\beta_1\ne 0$ and $\beta_2\ne 0$ then the virtual fundamental class may 
be \emph{doubly reduced}, meaning that the reduced obstruction theory admits a further surjection to 
a trivial rank 1 bundle.  See Section 3.5 in \cite{OPhilb}. As a result, 
the corresponding reduced class vanishes. If $\beta_1=0$ or $\beta_2=0$, then 
the curve maps to a point in one of the factors, and the above 
additivity is obvious. 
\end{proof}

\subsection{}

Notice that additivity is not the same as primitivity.

In Proposition \ref{red_add}, we are
restricting to diagonal contributions; the off-diagonal terms will still be non-zero.

\chapter{Shift operators}\label{S_Int}

\section{Definition} 

\subsection{}

For any $X$ and any cocharacter
$$
\sigma:\C^\times \to \bG \subset \Aut(X)
$$
we can associate an $X$-bundle over $\Pp^1$ as follows 
\begin{equation}
  \Xt = (\C^2 \setminus 0) \times X \big/ \C^\times_\sigma
\label{defXt}
\end{equation}
where $\C^\times_\sigma$ acts on the first factor by scaling and on the second via 
the homomorphism  $\sigma$. 
This is just the classical operation of passing from a principal $\C^\times$-bundle
over $\Pp^1$ to the associated $X$-bundle. 

\subsection{}

Since $c_1(X)=0$,  we 
have\footnote{In the present discussion, one does not 
need to assume $X$ symplectic. It suffices to assume
that the canonical bundle of $X$ is a pure character, 
which we denote by $\hbar^{\frac12\dim X}$.}
\begin{equation}
  \label{c1tX}
  c_1\big(\Xt\big) = (2+ \tfrac{\dim X}{2}(\sigma,\hbar))\cdot \left[\textup{\sl Fiber}\right] \in H^2(\Xt,\Z)
\end{equation}
where fiber refers to the natural projection 
$$
p: \Xt  \to (\C^2 \setminus 0) \big/ \C^\times = \Pp^1\,,
$$
the inner product is the standard pairing of characters with 1-parameter
subgroups.

\subsection{} 

Let $\bG^\sigma$ be the centralizer of the image of $\sigma$ in $\bG$. 
We define 
\begin{equation}
  \label{defbGt}
  \bGt = \bG^\sigma \times \C^\times_\varepsilon \subset \Aut(\Xt) 
\end{equation}
where the second factor scales the second (by convention) coordinate of $\C^2$.
This group preserves the fibers $X_0,X_\infty$ of $p$ over $0,\infty\in\Pp^1$. 
More precisely, it fixes $X_0$ point-wise, but acts nontrivially on 
$X_\infty$.

We denote by $\varepsilon$ an element of $\Lie \C^\times_\varepsilon$. This is 
a new equivariant parameter which we have for $\Xt$.

%We fix isomorphisms $X_0\cong X \cong X_\infty$ and note that different choices
%of such isomorphisms are related by a $\C^\times$-action, hence induce the 
%same map on cohomology.

\subsection{}

Any point of $X^\sigma$ gives a section of $p$ 
\begin{equation}
  \zeta_x: \Pp^1 \to \Pp^1 \times x \subset \Xt\,, \quad x \in X^\sigma \,. 
\label{sec}
\end{equation}
The homology class of this section gives an element 
$$
\zeta \in H^0(X^\sigma,\Z) \otimes H_2(\Xt,\Z) \,.
$$
More formally, for any $D\in H^2(\Xt,\Z)$ we define
\begin{equation}
  (D,\zeta) = \textup{proj}_* \, \textup{incl}^* (D) 
\in H^0(X^\sigma)\label{defzeta}
\end{equation}
where 
$$
X^\sigma \xleftarrow{\,\,\,\textup{proj}\,\,\,} X^\sigma \times \Pp^1 
\xrightarrow{\,\,\,\textup{incl}\,\,\,} \Xt 
$$
are the natural maps. 

\subsection{}

We have 
\begin{equation}\label{sectionclasses}
0 \to H_2(X,\Z) \to H_2(\Xt,\Z)
 \to H_2(\Pp^1)\cong \Z \to 0 \,.
\end{equation}
Any section $\zeta_x$ gives 
a noncanonical splitting of the above exact sequence.

In particular, the degrees $\beta\in H_2(\Xt,\Z)$ such 
that $p(\beta)=[\Pp^1]$ form a single $H_2(X,\Z)$-coset of sections. 
For $\beta$ in this coset, we consider
$$
M^{\sim}(\beta) = \ev^{-1} (X_0 \times X_\infty) \subset 
\overline{M}_{0,2}(\Xt,\beta) \,, 
$$
where $X_0,X_\infty\subset X$ are the fibers of $p$ over 
$0,\infty\in \Pp^1$.

\subsection{Example}\label{tildeP1} 

Take 
$$
X=T^*\Pp^1 = \cM_{1,0}(1,2)
$$
for the quiver 
$Q$ with one vertex and no arrows. This is the quotient of 
pairs 
$$
A =
\begin{pmatrix}
  a_1 & a_2
\end{pmatrix}
\in \Hom(\C^2,\C^1)\,,
\quad
B =
\begin{pmatrix}
  b_1 \\ b_2
\end{pmatrix}
\in \Hom(\C^1,\C^2)
$$
such that $AB=0$ and $A\ne 0$ by the action of $G_\bv = GL(1)$. Take 
$$
\sigma(z) = 
\begin{pmatrix}
z \\
& 1 
\end{pmatrix} \in G_\bw  \,.
$$
Then $X^\sigma = \{x_0,x_\infty\}$,  where 
$$
x_0 = \{a_2=0, B=0\}\,,  \quad x_\infty = \{a_1=0, B=0\}  \,. 
$$
The variety $\Xt$ is the relative 
cotangent bundle to the $\Pp^1$-bundle over $\Pp^1$ given by 
$$
\textup{Bl}_\textup{point}\,  \Pp^2 \to \Pp^1 \,. 
$$
We have 
$$
\left[\zeta_{x_0}\right] = \textup{line in $\Pp^2$} \,,
\quad 
\left[\zeta_{x_\infty}\right] = \textup{exceptional divisor}\,,
$$
and so $\left[\zeta_{x_0}\right] - \left[\zeta_{x_\infty}\right]$
is the generator $\left[\Pp^1\right]$ of $H_2(X,\Z)$.

\subsection{}\label{s_bS(sigma)}

We use the spaces $\Xt$ to define shift operators
$$\bS(\sigma):  H^\hd_{\bGt}(X_\infty) \longrightarrow H^\hd_{\bGt}(X_0) \otimes \Q[[\Eff(\Xt)]]$$
as follows.

Given $\gamma_1 \in H^\hd_{\bGt}(X_0)$ and $\gamma_2 \in H^\hd_{\bGt}(X_\infty)$,
we define the matrix element
\begin{equation}
  (\gamma_1,\bS(\sigma)\cdot \gamma_2)  = 
\sum_{p(\beta)=[\Pp^1]} q^\beta \int_{\left[M^{\sim}(\beta)\right]^{vir}} \ev^{-1}(\gamma_1
\times \gamma_2)\,.
\label{defSsig}
\end{equation}
%
%  and $q^\beta$ is a
% function on $H^2(\Xt)$ via the natural paring.
%
By definition, the matrix coefficients \eqref{defSsig} take values 
in formal power series in $q^\beta$ with coefficients 
in the localized $\bGt$-equivariant cohomology of a point, although the operator $\bS(\sigma)$ itself is integral. 
In particular, \eqref{defSsig} depends on 
$$
\varepsilon \in \Lie(\C^\times_\varepsilon) \subset \Lie(\bGt) \,.
$$
Our eventual goal will be to find $\sigma,\gamma_1,\gamma_2$ such
that the integral in \eqref{defSsig} is proper of 
correct dimension, thus independent of all equivariant 
parameters.

\section{Intertwining property}

Given $D\in H^2_{\bGt}(\Xt)$, we set 
$$
\ddD\, q^\beta = \left(\int_\beta D \right)\, q^\beta\,, \quad \beta \in H_2(\Xt) \,.
$$
Note that this is nonequivariant, that is, depends only on the class of $D$ in 
nonequivariant cohomology. 

If we consider the restriction $D_0 = D\Big|_{X_0} \in H^2_{\bGt}(X_0)$, let
$\mathsf{M}_{D_{0}}(q)$ denote the operator of quantum multiplication by $D_0$, and similarly for $X_\infty$.

\begin{Proposition}\label{p_intertw} For any $D$ as above, 
the operator \eqref{defSsig} satisfies 
  \begin{equation}
    \label{intertw}
    \varepsilon \, \ddD \, \bS(\sigma) =\mathsf{M}_{D_{0}}(q)\circ \, \bS(\sigma) -  
\bS(\sigma) \, \circ\mathsf{M}_{D_{\infty}}(q) \,.
  \end{equation}
\end{Proposition}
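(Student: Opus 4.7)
The plan is to compute one three-point Gromov--Witten integral on $\Xt$ in two ways and equate the results.

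Introduce the moduli space with one extra free marked point,
\[
M^{\sim,3}(\beta) \;=\; \ev^{-1}(X_0 \times X_\infty \times \Xt) \;\subset\; \overline{M}_{0,3}(\Xt,\beta),
\]
and, for $\gamma_1 \in H^\hd_{\bGt}(X_0)$, $\gamma_2 \in H^\hd_{\bGt}(X_\infty)$, and $D \in H^2_{\bGt}(\Xt)$, set
\[
I(\gamma_1,\gamma_2,D) \;=\; \sum_{p(\beta)=[\Pp^1]} q^\beta \int_{[M^{\sim,3}(\beta)]^{\vir}} \ev_1^*\gamma_1 \cdot \ev_2^*\gamma_2 \cdot \ev_3^*D.
\]

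First, I would evaluate $I$ via the divisor equation. The forgetful map $\pi\colon M^{\sim,3}(\beta)\to M^{\sim}(\beta)$ preserves the constraints on $\ev_1$ and $\ev_2$, so the standard equivariant divisor equation applies and yields $\pi_*(\ev_3^*D) = (\int_\beta D)\,[M^{\sim}(\beta)]^{\vir}$. Summing over $\beta$ and using $\ddD q^\beta = (\int_\beta D)\,q^\beta$ gives
\[
I(\gamma_1,\gamma_2,D) \;=\; \bigl(\gamma_1,\, \ddD\bS(\sigma)\cdot \gamma_2\bigr).
\]

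Second, I would recompute $I$ by $\C^\times_\varepsilon$-equivariant localization on $\Xt$. The insertions $\gamma_1$ and $\gamma_2$ are supported on the pointwise-fixed fibers $X_0$ and $X_\infty$, so only $\ev_3$ feels the $\varepsilon$-action; its image must lie in $\Xt^{\C^\times_\varepsilon} = X_0\sqcup X_\infty$. The equivariant normal bundles of the two fixed fibers have Euler classes $\varepsilon$ and $-\varepsilon$, producing $\pm\varepsilon^{-1}$ in the localization formula. A $\C^\times_\varepsilon$-fixed stable map in $M^{\sim,3}(\beta)$ whose third marked point lies over $X_0$ decomposes as a bubble tree in $X_0$ carrying $p_1$ and $p_3$ (with insertions $\gamma_1$ and $D|_{X_0}$) glued at one node to a $\C^\times_\varepsilon$-fixed map of $M^{\sim}(\beta')$-type carrying $p_2$ and realizing the remaining ``horizontal'' degree. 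Applying the splitting axiom at this node and summing over a dual basis identifies this contribution, in view of the definition of quantum multiplication in $X_0$, with
\[
\tfrac{1}{\varepsilon}\bigl(\gamma_1,\; \mathsf{M}_{D_{0}}(q)\circ \bS(\sigma)\cdot \gamma_2\bigr).
\]
The symmetric analysis for $p_3 \mapsto X_\infty$ produces $-\varepsilon^{-1}\bigl(\gamma_1,\, \bS(\sigma)\circ \mathsf{M}_{D_{\infty}}(q)\cdot \gamma_2\bigr)$, the sign reflecting the opposite tangent weight at $\infty\in\Pp^1$.

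Equating the two computations of $I$ and clearing $\varepsilon$ yields \eqref{intertw}. The main obstacle will be making the second step rigorous: identifying exactly the $\C^\times_\varepsilon$-fixed components of $M^{\sim,3}(\beta)$ as the node-gluings described, verifying via virtual localization that the normal contributions are precisely $\pm\varepsilon$, and confirming via the splitting axiom that the inner factor at the gluing node is literally the quantum multiplication operator on $X_0$ (resp. $X_\infty$) composed with the shift operator $\bS(\sigma)$ as defined in Section~\ref{s_bS(sigma)}. The noncompactness of $X$ will have to be handled in the same way as in the definition of the shift operator itself, via properness of the evaluation map onto the two constrained fibers.
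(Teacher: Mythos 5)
Your overall strategy --- introduce a free third marked point on $\Xt$, evaluate $D$ there, and compute the resulting invariant once via the divisor equation (giving $\ddD\,\bS(\sigma)$) and once via $\C^\times_\varepsilon$-localization (giving the right-hand side) --- is a genuine alternative to the paper's approach. The paper instead localizes $\bS(\sigma)$ first, obtaining a Givental-style factorization $\bS(\sigma)=\Psi_0\,\Psi_1\,\Psi_\infty$ with $\Psi_0$, $\Psi_\infty$ containing factors $(\varepsilon-\psi_2)^{-1}$, $(-\varepsilon-\psi_1)^{-1}$ at the gluing nodes, and then applies the divisor/string equations and the topological recursion relation \eqref{sting_divisor}--\eqref{trr} to each factor. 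Your organization is closer to what the paper does later in Section~\ref{bS_qm} (Lemma~\ref{markedshiftformula}), and it is somewhat more self-contained in that it needs only one application of the divisor equation (upstairs on $\Xt$) rather than one per factor.

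The main gap is in the localization step, where you write that the $X_0$-contribution equals $\tfrac{1}{\varepsilon}(\gamma_1,\mathsf{M}_{D_0}(q)\circ\bS(\sigma)\gamma_2)$ ``by the splitting axiom.'' This is not literally true. When you split a $\C^\times_\varepsilon$-fixed stable map at the node joining the $X_0$-bubble tree to the section component, the virtual normal bundle acquires a smoothing factor $\varepsilon-\psi$ at that node. So what you actually produce on the $X_0$-side is a $3$-pointed integral with $(\varepsilon-\psi_{\textup{node}})^{-1}$ inserted, not the plain $3$-pointed invariant defining quantum multiplication. Converting that series in $\psi$ into literal quantum multiplication composed with $\bS(\sigma)$ is exactly the content of equation \eqref{trr} in the paper and is the heart of the proof (both of Proposition~\ref{p_intertw} and of Lemma~\ref{markedshiftformula}). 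You flag ``confirming via the splitting axiom'' as the obstacle, but it is not a splitting-axiom issue --- it is a $\psi$-class reduction via the topological recursion relation. Relatedly, the $\pm\varepsilon^{-1}$ prefactor does not come directly from ``the Euler classes of the fixed fibers in the localization formula'' as you state: the normal bundle to a fixed component of the moduli space is more complicated than $N_{X_0/\Xt}$. The $\pm\varepsilon^{-1}$ instead arises when you compare $\ev_3^*D$ restricted to the fixed locus with the pushed-forward class $\ev_3^*(\iota_{0,*}D_0)$, via the self-intersection formula $\iota_0^*\iota_{0,*}=\cdot\,\varepsilon$ (and $\iota_\infty^*\iota_{\infty,*}=\cdot\,(-\varepsilon)$). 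This is a correct computation but a different mechanism than the one you describe, and spelling it out is necessary for the sign and the power of $\varepsilon$ to come out right.
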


\begin{proof}
For brevity, set $Y=X^\sigma$.

We compute $\bS(\sigma)$ by localization with respect to the 
$\C^\times_\varepsilon$-factor in \eqref{defSsig}. 
The domain of an  $\C^\times_\varepsilon$-fixed
map in $(C,f)\in M^{\sim}(\beta)$ is a union
$$
C=C_0 \cup C_1 \cup C_\infty
$$
where $f:C_0 \rightarrow X_0$ is a $\sigma$-fixed map, $f(C_\infty)\subset X_\infty$, and 
$C_1$ is of the form \eqref{sec}
$$
C_1 = \zeta_y
$$
for some point $y\in Y$.

Standard localization arguments (see e.g.\ Chapter 27 in \cite{Mirror})
give a factorization 
\begin{equation}
  \bS(\sigma) =  \Psi_0 \, \Psi_1\, \Psi_\infty \,,
\label{bSfac}
\end{equation}
with the following factors. We define 
\begin{equation}
  (\gamma_1,\Psi_0 \cdot \gamma_2)  = 
\sum_{\beta\in H_2(X_0,\Z)} q^\beta \int_{\left[\overline{M}_{0,2}(X_0,\beta)\right]^{vir}} 
\frac{\ev^*(\gamma_1
\times \gamma_2)}{\varepsilon-\psi_2}\,.
\label{defPsi0}
\end{equation}
Here $\psi_2$ 
is the cotangent line at the second marked point and the integral is computed 
in equivariant cohomology.
The unstable $\beta=0$ contributions to \eqref{defPsi0} are 
defined to give
\begin{equation}
  \Psi_0 = 1 + O(q^\beta), \quad \beta >0 \,.
\label{unstab}
\end{equation}
If we evaluate $\Psi_0$ via virtual localization, we obtain precisely the localization contribution of the
$C_0$. 

In the definition of $\Psi_\infty$, one replaces $\varepsilon-\psi_2$ by
$-\varepsilon-\psi_1$ and $X_0$ by $X_\infty$. 

Note the virtual class in \eqref{defPsi0} is the ordinary, 
nonreduced virtual fundamental class. The reduced virtual 
class gives 
\begin{equation}
  \Psi_0 = 1 + O(\hbar)\,, 
\label{Psihbar}
\end{equation}
if $X$ is holomorphic symplectic. 

The middle factor $\Psi_1$ is of the form 
$$
\Psi_1 = {\iota_0}_* \, q^{\zeta} \, \Gamma \, \iota_\infty^*
$$
where $\iota_0,\iota_\infty$ denote the inclusion of $Y$ into 
$X_0$ and $X_\infty$ respectively, 
$\Gamma$ is multiplication by a class in $H^\hd(Y)$ that absorbs 
the deformation and obstruction contributions of $C_1$.  The class 
$\zeta$ was defined in \eqref{defzeta}; $q^{\zeta}$ is a monomial which varies depending on the
connected component of $Y$. Note that by
localization 
\begin{equation}
  \label{[C_1]}
  \varepsilon \, (\zeta, D) = 
\iota_0^*(D) - \iota_\infty^*(D) \,.
\end{equation}

It is standard \cite{CoxKatz, Mirror} to abbreviate 
$$
\tau_k(\gamma_2) = \psi_2^k \, \ev^*(\gamma_2) \,. 
$$
The convention \eqref{unstab} means that
$$
\lang \gamma_1 \, \tau_{k}(\gamma_2) \rang_0 = \delta_{k+1} \, \int_{X} \gamma_1 \cup \gamma_2
$$
where angle brackets denote equivariant genus $0$ GW-invariants of $X$ and subscript refers to 
invariants of degree $\beta=0$. With this convention, the string and divisor 
equations yield
\begin{equation}
  \lang \gamma_1, \tau_k(\gamma_2), D \rang_\beta = 
 \left(\int_\beta D \right)\, \lang \gamma_1, \tau_k(\gamma_2) \rang_\beta +
\lang \gamma_1, \tau_{k-1}(\gamma_2 \cup D) \rang_\beta\,,
\label{sting_divisor}
\end{equation}
for all $k\ge 0$ and all degrees $\beta$. Similarly, the topological 
recursion relations (see e.g.\ Section 26.4 in 
\cite{Mirror}) 
read 
\begin{equation}
  \lang \gamma_1, \tau_k(\gamma_2), D \rang_\beta = 
\sum \lang \gamma_1, D, \eta\rang_{\beta'} \lang \eta^\vee \, \tau_{k-1}(\gamma_2)\rang_{\beta-\beta'} 
\label{trr}
\end{equation}
for all $k\ge 0$ and all degree splittings, where $\sum \eta \otimes \eta^\vee$ 
is the Poincar\'e dual of the diagonal in $X^2$. 

Combining \eqref{sting_divisor} with \eqref{trr} gives
  \begin{equation}
    \label{intertw1}
    \varepsilon \, \ddD \, \Psi_0 = \mathsf{M}_{D_{0}}(q)\circ \Psi_0 -  
\Psi_0 \circ \mathsf{M}_{D_{0}}(0)  
  \end{equation}
where $\mathsf{M}_{D_{0}}(0)$ denotes the operator of classical multiplication by $D_0$.
By the same 
reasoning 
  \begin{equation}
    \label{intertw2}
    \varepsilon \, \ddD \, \Psi_\infty = \mathsf{M}_{D_{\infty}}(0)\circ \Psi_\infty -  
\Psi_\infty\circ \mathsf{M}_{D_{\infty}}(q)\,.
  \end{equation}
Finally, \eqref{[C_1]} gives 
  \begin{equation}
    \label{intertw3}
    \varepsilon \, \ddD \, \Psi_1 = \mathsf{M}_{D_{0}}(0) \circ \Psi_1 -  
\Psi_1 \, \circ \mathsf{M}_{D_{\infty}}(0) \,.
  \end{equation}
The combination of \eqref{intertw1},  \eqref{intertw2}, and  \eqref{intertw3} 
completes the proof.  
\end{proof}

\section{Shift operators are quantum operators}
\label{bS_qm}

In this section, we extend Proposition \ref{p_intertw}; as a consequence, we will see that shift
operators are quantum operators after passing to the $\varepsilon =0$ limit.

Let 
$$M^{\sim}_{\bullet}(\beta) = 
 \ev_{1,2}^{-1} (X_0 \times X_\infty) \subset 
\overline{M}_{0,3}(\Xt,\beta)$$
denote the moduli space of twisted maps from last section, equipped with an extra marked point $\bullet$.

Given
$\gamma \in H^\hd_{\bGt}(X_0)$,
we define
the operator
$\bS_0(\sigma; \gamma)$
by
\begin{equation}
  (\gamma_1,\bS_0(\sigma;\gamma)\cdot \gamma_2)  = 
\sum_{p(\beta)=[\Pp^1]} q^\beta \int_{\left[M^{\sim}_{\bullet}(\beta)\right]^{vir}} \ev_{1,2}^{-1}(\gamma_1
\times \gamma_2)\cup \ev_{\bullet}^* (\iota_{0,*}\gamma)
\label{defSsig0}
\end{equation}

\begin{Lemma}\label{markedshiftformula}
We have the factorization
$$\bS_0(\sigma; \gamma) = \mathsf{M}_{\gamma}(q)\circ \bS(\sigma)$$
where $ \mathsf{M}_{\gamma}(q)$ 
denotes quantum multiplication operator for $\gamma$.
\end{Lemma}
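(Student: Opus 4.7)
The plan is to adapt the $\C^\times_\varepsilon$-equivariant virtual localization used in the proof of Proposition \ref{p_intertw} to the integral defining $\bS_0(\sigma;\gamma)$. The key observation is that $\iota_{0,*}\gamma \in H^\hd_{\bGt}(\Xt)$ is supported on the fixed component $X_0 \subset \Xt$, so after localization only those fixed loci survive on which the extra marked point $\bullet$ maps to $X_0$; on that component one has $\iota_0^*\iota_{0,*}\gamma = \varepsilon\cdot \gamma$, since $N_{X_0/\Xt}$ is a trivial line bundle of $\C^\times_\varepsilon$-weight $\varepsilon$.

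First I would recall the structure of the $\C^\times_\varepsilon$-fixed loci in $M^{\sim}_{\bullet}(\beta)$: by the same analysis as in Proposition \ref{p_intertw}, each fixed map has a domain of the form $C = C_0 \cup C_1 \cup C_\infty$ with $f(C_0)\subset X_0$, $f(C_\infty)\subset X_\infty$, and $C_1$ a chain of sections $\zeta_y$ (possibly with multiple covers). The marked points $1,2$ lie on $C_0, C_\infty$. Because any section $\zeta_y$ meets $X_0$ only at a single point, which is a node joining $C_1$ to $C_0$, the insertion $\ev_\bullet^*(\iota_{0,*}\gamma)$ localizes the extra marked point $\bullet$ onto $C_0$.

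Next, carrying out the localization as in \eqref{bSfac} yields a factorization
$$\bS_0(\sigma;\gamma) \;=\; \Psi_0(\gamma)\,\Psi_1\,\Psi_\infty,$$
where $\Psi_1, \Psi_\infty$ are exactly the factors appearing in $\bS(\sigma) = \Psi_0 \Psi_1 \Psi_\infty$, while
$$(\gamma_1,\Psi_0(\gamma)\cdot \delta) \;=\; \sum_\beta q^\beta \int_{[\overline{M}_{0,3}(X_0,\beta)]^{\vir}} \frac{\ev_1^*\gamma_1 \cup \ev_\bullet^*\gamma \cup \ev_2^*\delta}{\varepsilon - \psi_2}$$
differs from $\Psi_0$ only by the extra insertion $\gamma$ at the new marked point; the factor of $\varepsilon$ coming from $\iota_0^*\iota_{0,*}\gamma$ precisely compensates for the absence of a smoothing factor at $\bullet$. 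It then suffices to verify
$$\Psi_0(\gamma) \;=\; \mathsf{M}_\gamma(q)\circ \Psi_0,$$
which I would prove by expanding $1/(\varepsilon-\psi_2) = \sum_{k\ge 0}\varepsilon^{-k-1}\psi_2^k$ and applying the genus-zero topological recursion relation \eqref{trr} to each descendant invariant $\lang \gamma_1,\gamma, \tau_k(\delta)\rang_\beta^{X_0}$; resumming over $k$ and over splittings $\beta = \beta_1+\beta_2$ reproduces $\sum_\eta (\gamma_1,\gamma *_q \eta)(\eta^\vee,\Psi_0 \delta)$, which is the matrix coefficient of $\mathsf{M}_\gamma(q)\circ\Psi_0$.

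The main obstacle is bookkeeping rather than conceptual: tracking the unstable $\beta=0$ terms on both sides (governed by the convention \eqref{unstab}), correctly handling fixed loci where $C_0$ is contracted to a point and $\bullet$ sits at the node with $C_1$, and keeping the $\varepsilon$-powers aligned in the TRR expansion. None of these is difficult, but they are the places where care is required to match both sides on the nose rather than up to scalar.
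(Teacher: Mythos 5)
Your argument is correct and follows the same route as the paper: $\C^\times_\varepsilon$-localization exactly as in Proposition~\ref{p_intertw}, with the extra marked point forced onto $C_0$ (since $\iota_{0,*}\gamma$ is supported on $X_0$), so that $\Psi_1,\Psi_\infty$ are unchanged, and then the geometric-series expansion of $\varepsilon/(\varepsilon-\psi_2)$ combined with the topological recursion relation~\eqref{trr} resums the first factor to $\mathsf{M}_\gamma(q)\circ\Psi_0$. One bookkeeping correction: the display you wrote for $(\gamma_1,\Psi_0(\gamma)\cdot\delta)$ should carry the overall prefactor $\varepsilon$ (exactly as in the paper's $\Psi^\gamma_0$); this $\varepsilon$ is the factor $e(N_{X_0/\Xt})$ you correctly attribute to $\iota_0^*\iota_{0,*}\gamma = \varepsilon\gamma$, and without it written into the definition the intended identity $\Psi_0(\gamma)=\mathsf{M}_\gamma(q)\circ\Psi_0$ would be off by $\varepsilon^{-1}$ once you expand $1/(\varepsilon-\psi_2)$.
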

\begin{proof}

We follow the approach of Proposition \ref{p_intertw}.
If we compute $\bS_0(\sigma;\gamma)$ by localization 
with respect to the $\C^\times_\varepsilon$-factor.  As before,
this gives a factorization
\begin{equation}
  \bS_0(\sigma;\gamma) =  \Psi^{\gamma}_0 \, \Psi_1\, \Psi_\infty \,,
\label{bS0fac}
\end{equation}
where the second and third factor are as before and the first factor is defined by
\begin{equation}
  (\gamma_1,\Psi^{\gamma}_0 \cdot \gamma_2)  = 
\varepsilon\cdot \sum_{\beta\in H_2(X)} q^\beta \int_{\left[\overline{M}_{0,3}(X,\beta)\right]^{vir}} 
\frac{\ev^*(\gamma_1
\times \gamma_2\times \gamma)}{\varepsilon-\psi_2}\,.
\label{defPsi0_}
\end{equation}

When we expand this expression, the leading term
with no $\psi_2$ is simply quantum multiplication by $\gamma$.  The terms with 
positive powers of $\psi_2$ can be expanded using
the topological recursion relation of \eqref{trr} to give
quantum multiplication by $\gamma$ composed with the $\beta > 0$ contribution to
$\Psi_0$.

The result is the factorization
$$
\Psi^{\gamma}_0 = \mathsf{M}_{\gamma}(q) \circ \Psi_0.
$$
Combining with \eqref{bSfac} gives the statement of the lemma.
\end{proof}

Given $\gamma \in H^\hd_{\bGt}(X_\infty)$,
we can define the operator $\bS_\infty(\sigma;\gamma)$ in the analogous manner, and we can derive the formula
$$\bS_\infty(\sigma; \gamma) = \bS(\sigma)\circ \mathsf{M}_{\gamma}(q)$$
in the same way.

If we restrict to $\bG^\sigma$-equivariant cohomology by setting $\varepsilon = 0$,
then for $\gamma \in H^\hd_{\bGt}(X)$, we have
$\iota_{0,\ast}\gamma = \iota_{\infty,\ast}\gamma$
after this specialization.  In particular, we have
$$\bS_0(\sigma;\gamma) = \bS_\infty(\sigma;\gamma)$$
after setting $\varepsilon = 0$.

As a corollary, we see that the shift operator
$\bS(\sigma)|_{\varepsilon=0}$ 
commutes with all quantum multiplication operators.  If we fix a 
splitting of \eqref{sectionclasses}, it can thus be identified 
with quantum multiplication by
$$\bS(\sigma)\Big|_{\varepsilon=0}(1) \in H^\hd_{\bG^\sigma}(X)\otimes \Q[[\Eff(X)]].$$

\chapter{Minuscule shifts and $R$-matrices}

\section{Setup}

\subsection{}\label{s_assume}

In this chapter, we consider shift operators 
$\bS(\sigma)$ satisfying the following 
additional assumptions: 
\begin{enumerate}
\item $X$ is a symplectic resolution, 
\item $\sigma$ preserves the symplectic form $\omega$, 
\item $\sigma$ is minuscule, 
\end{enumerate}
see Section \ref{s_minusc} for a discussion of 
the last condition.

\subsection{}
We define
\begin{equation}
  \Stab_{\pm}: H^{*}(X^\sigma) \to H^{*}(X) \,.\label{stabunstab}
\end{equation}
to be the stable envelope maps
 corresponding to the two chambers $\gtrless 0$ of 
$\Lie \C^\times_\sigma$ and an arbitrary choice of polarization. 
We will see a close relation between $\bS(\sigma)$
and 
\begin{equation}
R_\sigma = \Stab_{-}^{-1} \, \Stab_+  \label{defRsigma} \,. 
\end{equation}

\subsection{}
It follows from our assumptions and \eqref{c1tX} 
that 
$$
c_1\big(\Xt\big) =  2 \left[\textup{\sl Fiber}\right]
$$
and hence that 
\begin{equation}
  \virdim M^{\sim}(\beta) = \dim X \,.
\label{virdim}
\end{equation}
for all $\beta$ in \eqref{defSsig}. This means 
$\bS(\sigma)$ has cohomological degree $0$.

\subsection{}
\begin{Lemma}\label{l_smack} 
With the assumptions of Section \ref{s_assume},
all $\sigma$-weights
in the normal bundle to $X^\sigma$ are $\pm 1$. 
\end{Lemma}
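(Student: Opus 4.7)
The plan is to deform $X$ to an affine symplectic variety, where the minuscule condition directly constrains the tangent weights. By Section~\ref{s_sympl_res}, $X$ admits a universal Poisson deformation $\phi\colon \tX \to B$ whose generic fibers $X_b$, for $b \in B^\circ$, are affine. The group $\bG$ acts on $\tX$ and its action on $B$ factors through the character $\hbar$; since $\sigma$ preserves $\omega$, its image lies in $\ker \hbar$, so $\sigma$ acts trivially on $B$ and fiberwise on $\tX$.

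First I would check that $\tX^\sigma \to B$ is smooth, so that every connected component of $\tX^\sigma$ meeting the central fiber $X$ also meets $X_b$ for some $b \in B^\circ$. Smoothness of $\tX^\sigma$ follows from smoothness of $\tX$ and the reductivity of $\sigma$; the triviality of $\sigma$ on $B$ then forces the weight-zero part of $d\phi$ at a $\sigma$-fixed point to surject onto $TB$, giving smoothness of $\tX^\sigma \to B$. Its image is therefore open in the irreducible base $B$ and meets the dense open subset $B^\circ$. Since the normal bundle $N_{\tX^\sigma/\tX}$ is $\sigma$-equivariant and locally free, the multiplicities of its weight subbundles are locally constant on $\tX^\sigma$, and hence the $\sigma$-weights of $N_{X^\sigma/X}$ agree with those of $N_{X_b^\sigma/X_b}$ for some $b \in B^\circ$.

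It then suffices to prove the lemma when $X$ is affine. In that case $X = \Spec H^0(X,\cO_X)$, and the minuscule hypothesis yields a $\sigma$-equivariant closed embedding $X \hookrightarrow V$ into a linear representation $V$ of $\sigma$ with weights in $\{-1,0,1\}$. For any $x \in X^\sigma$, the tangent space $T_x X$ then embeds into $V$ as a $\sigma$-submodule, so has weights in $\{-1,0,1\}$, forcing the nonzero weights of the normal bundle to be $\pm 1$. The main obstacle lies in the reduction: one must verify that the minuscule property is preserved under deformation, i.e.\ that $X_b$ is still minuscule for suitable $b \in B^\circ$. This reduces to the identification $H^0(X_b,\cO_{X_b}) \cong H^0(X,\cO_X)$ as $\sigma$-modules, which follows from the vanishing $R^{>0}\pi_*\cO_X = 0$ for symplectic resolutions together with $\sigma$-equivariance and flatness of $\phi$; a Nakayama-style argument then guarantees that the same weight-$\{-1,0,1\}$ generators continue to generate the deformed algebra for $b$ near $0$, and such $b$ lies in $B^\circ$ by density.
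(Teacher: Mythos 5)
Your route is genuinely different from the paper's, and while the idea of constraining $T_x X$ via an embedding into a representation with weights $\{0,\pm 1\}$ is sound, the reduction to the affine case via deformation introduces nontrivial technical burden that the paper's argument avoids entirely. The paper works directly with the proper map $\pi\colon X \to X_0 \hookrightarrow V$ furnished by the minuscule hypothesis: an orbit whose tangent weight $k$ satisfies $k\neq 0,\pm 1$ cannot map isomorphically to a line in $V$ (those have weight $\pm 1$), so it is contracted by $\pi$; properness of $\pi$ then makes its closure a complete curve connecting two distinct components of $X^\sigma$, at the far end of which the weight $-k$ (hence also $k$, by the symplectic pairing $N_+\cong N_-^\vee\otimes\hbar$) recurs in the normal bundle. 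Iterating gives a strictly increasing chain in the ample partial order of Section~\ref{s_ample_ord}, contradicting finiteness of $\pi_0(X^\sigma)$. This is a two-paragraph argument requiring only properness and the partial order; no deformation theory enters.

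The gap in your proposal is precisely the step you flag as the ``main obstacle.'' You need the minuscule property to persist at a generic point $b\in B^\circ$, and the argument you sketch for this is not complete. The identification $H^0(X_b,\cO_{X_b})\cong H^0(X,\cO_X)$ as $\sigma$-modules does not follow just from $R^{>0}\pi_*\cO_X=0$ and flatness of $\phi$: since $\phi$ is not proper, there is no cohomology-and-base-change statement to invoke directly, and the Nakayama-style lifting of generators requires the relevant module to be finitely generated over $\C[B]$ (or at least to have finitely generated weight-graded pieces). This would need an auxiliary conical $\C^\times$-action with positive weights and finite-dimensional graded pieces — structure that is present for Nakajima varieties but is not among the hypotheses of Section~\ref{s_assume}, where the lemma is stated for general symplectic resolutions. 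Without it, the claim that the same weight-$\{-1,0,1\}$ generators continue to generate over an honest Zariski neighborhood of $0$ in $B$, and that such a neighborhood meets $B^\circ$, is not established. The first two paragraphs of your reduction (smoothness of $\tX^\sigma\to B$ and local constancy of normal weight multiplicities) are fine; it is this third step that would need to be supplied with substantially more care before the affine-case argument can be applied.
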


\begin{proof}
Choose a proper map $X \to V$, where $V$ is a linear 
representation of $\sigma$ with weights in $\{0,\pm 1\}$. 
For any $x$, the $\sigma$-orbit of $x$ is either 
contracted by the map to $V$ or is mapped isomorphically to a 
line of weight $\pm 1$. 

If there is a weight $k\ne\pm1$ in the normal bundle to some 
component $Y$ of $X^\sigma$
then the corresponding normal directions are mapped to a point 
in $V$. Hence, their closure meets another component $Y'$ of $X^\sigma$, 
where same weight $k$ has to occur again. 
Using induction on $\prec$ and finiteness of 
the number of component of $X^\sigma$, we 
see that this is impossible. 
\end{proof}

\subsection{}\label{dis_zeta} 

Recall from \eqref{defzeta} that every component of $X^\sigma$
defines a curve class $H_2(\Xt,\Z)$.  If we fix a splitting of \eqref{sectionclasses}, we can project
to obtain curve classes in $H_2(X,\Z)$.  

A more convenient way of making this choice is as follows.
Choose 
a $\sigma$-linearization for a basis $\cL_1,\cL_2,\dots$ of 
$\Pic(X)$.  Given $x \in X^\sigma$, we define $\overline{\zeta_x} \in H_2(X,\Z)$
so that
$$
\int_{\overline{\zeta_x}} c_1(\cL_i)  = \deg \widetilde{\cL_i}\Big|_{\zeta_x}
$$
where $\zeta_x$ is the section \eqref{sec} and $\widetilde{\cL_i}$ 
is the lift of $\cL_i$ to $\Xt$ that uses the fixed 
linearization. A change of linearization adds a overall constant 
to $\overline{\zeta}$. 

For Nakajima varieties, the entire group of automorphisms $\bG$ 
acts naturally on all tautological bundles and their associated determinant bundles.
In this case, we can arrange for the linearization of $\Pic(X)$ to be compatible with this natural linearization.
If we know that these tautological divisor classes span $\Pic(X)$, this completely determines the linearization and
thus gives a preferred normalization of the map $\overline{\zeta}$.

\subsection{}\label{dis_sigma} 

In particular, take $\sigma$ to be the action 
corresponding to a tensor product of Nakajima varieties, that is to 
$$
\bw = z \, \bw' + \bw''\,, \quad \bv = z \, \bv' + \bv''\,,
$$
as in Section \ref{s_bw=}. Then 
$$
(\overline{\zeta},c_1(\cV_i)) = \bv'_i \,.
$$
In other words, connected components of $X^\sigma$ are 
distinguished by the value of $\bv'$ and 
\begin{equation}
\xymatrix{\overline{\zeta}\ar@{|->}[d]\ar@{|->}[r]
& H_2(\Nak,\Z)\ar[d]\\
\bv' \ar@{|->}[r] &\fh^*}
\label{zeta_bv}
\end{equation}
under the natural map on the right. 

For example, for $T^*\Pp^1$ as in Section \ref{tildeP1}, we get 
$$
q^{\overline \zeta} = 
\begin{pmatrix}
  q \\ 
& 1 
\end{pmatrix}
$$
in the basis $\{x_0,x_\infty\}$. 

\subsection{}

Recall that $\bkap$ is defined as $c_1(\Th)$ modulo $2$, where
$\Th$ is a half of tangent bundle as in \eqref{half_tangent}. 

\begin{Lemma}\label{l_parity} For minuscule $\sigma$
$$
(\zeta,\bkap) = \tfrac12 \codim X^\sigma  \mod 2 \,. 
$$
\end{Lemma}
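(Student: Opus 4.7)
The plan is to fix a point $x \in X^\sigma$ and verify the congruence for $\zeta = \zeta_x$ on the component of $X^\sigma$ containing $x$; since both sides are locally constant on $X^\sigma$, this suffices.

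The first step is to interpret $(\zeta_x, \bkap)$ as a $\sigma$-weight. Since $\zeta_x(\Pp^1) = (\C^2 \setminus 0) \times \{x\}/\C^\times_\sigma$, for any $\sigma$-equivariant line bundle $L$ on $X$ the restriction of its pullback to $\zeta_x$ is the $\C^\times_\sigma$-associated bundle built from the one-dimensional $\sigma$-representation $L_x$. If $L_x$ has $\sigma$-weight $w$, this associated bundle is $\cO_{\Pp^1}(-w)$, so
$$(\zeta_x, c_1(L)) = -\mathrm{wt}_\sigma(L_x).$$
The bundle $\det \Th$ inherits a natural $\sigma$-linearization from the explicit formula for $\Th$ in Section \ref{tangentsplitting}: the hypothesis that $\sigma$ preserves $\omega$ ensures that $\sigma$ factors through the subgroup $H \subset \bG$ preserving the polarization \eqref{Rep1/2}. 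Applying the displayed formula with $L = \det \Th$ reduces the claim to
$$\mathrm{wt}_\sigma(\det \Th|_x) \equiv \tfrac12 \codim X^\sigma \pmod 2.$$

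The second step is to compute the $\sigma$-weight structure of $\Th|_x$. Since $\sigma$ preserves $\omega$, the character $\hbar$ restricts trivially to $\sigma$, so the identity $T X = \Th + \hbar^{-1} \otimes \Th^\vee$ from \eqref{half_tangent} gives
$$T_x X = \Th|_x + (\Th|_x)^\vee$$
as virtual $\sigma$-representations. By Lemma \ref{l_smack} all $\sigma$-weights on $T_x X$ lie in $\{0, \pm 1\}$, which forces the same for $\Th|_x$. Setting $a_\pm = \rk \Th|_x^{(\pm 1)}$, the weight $\pm 1$ subspaces of $T_x X$ both have dimension $a_+ + a_-$, whence
$$\tfrac12 \codim X^\sigma = \dim N_\pm = a_+ + a_-, \qquad \mathrm{wt}_\sigma(\det \Th|_x) = a_+ - a_-.$$
Since $a_+ - a_- \equiv a_+ + a_- \pmod 2$, the two expressions agree modulo $2$, completing the argument.

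The only genuine subtlety is that the $\sigma$-linearization of $\det \Th$ used above must be compatible with the one implicit in $\bkap \in H^2(X, \Z/2)$ and with the normalization of $\overline{\zeta}$ in Section \ref{dis_zeta}; this is essentially automatic because $\bkap$ is orientation-independent (Section \ref{s_theta_char}) and $\Th$ is canonically $H$-equivariant. No step involves a serious geometric obstacle; the proof is really a bookkeeping exercise turning minusculeness and the symplectic constraint into a $\Z/2$ identity of dimensions.
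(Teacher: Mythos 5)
Your proof is correct and is essentially the same argument as the paper's. The paper phrases it as: $\widetilde{\Th}$ restricted to the section $\zeta_x$ is a sum of line bundles $\cO(k)$ with $k\in\{0,\pm1\}$, exactly $\tfrac12\codim X^\sigma$ of which are nontrivial, so the degree of $\det\widetilde{\Th}|_{\zeta_x}$ has the right parity; your version just makes the associated-bundle dictionary $(\zeta_x, c_1(L)) = -\mathrm{wt}_\sigma(L_x)$ and the bookkeeping $a_+ - a_- \equiv a_+ + a_- \pmod 2$ explicit, which amounts to the same computation.
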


\begin{proof}
Since $\sigma$ is minuscule, the $\sigma$-weights of $\Th$ when restricted to fixed loci lie in the set $\{0,\pm 1\}$.  
Moreover, the number of nonzero weights
equals $\tfrac12 \codim X^\sigma$. 
Therefore, the bundle $\widetilde{\Th}$ restricted
to $\zeta_x$ is a sum of $\cO(k)$, $k\in \{0,\pm 1\}$, and
the number of nontrivial terms in this sum equals 
$\tfrac12 \codim X^\sigma$. 
\end{proof}

\section{Properness}

\begin{Proposition}\label{p_proper} 
For $X$ and $\sigma$ be as above, the convolution 
\begin{equation}
\Stab_-^\tau \circ \, \bS(\sigma) \circ \Stab_-
\label{conv_Ssig}
\end{equation}
is proper and defines a Steinberg correspondence in 
$X^\sigma \times_{X_0} X^\sigma$.
\end{Proposition}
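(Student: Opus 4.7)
The plan is to pin down the supports of all three factors in the convolution, show that they combine to force the cycle into $X^\sigma \times_{X_0} X^\sigma$ via a properness argument, and finally verify middle-dimensionality and isotropy.

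First, I will analyze the support of $\bS(\sigma)$ using the minuscule hypothesis. Since $\sigma$ is minuscule, the affinization $X_0 = \Spec H^0(\cO_X)$ embeds $\C^\times_\sigma$-equivariantly into a representation $V = V_{-1} \oplus V_0 \oplus V_1$ with weights in $\{0,\pm 1\}$. A stable map $f: C \to \Xt$ contributing to $M^\sim(\beta)$ decomposes into a genuine section $s: \Pp^1 \to \Xt$ together with rational trees contracted by $p$ into fibers over $0$ or $\infty$. The section $s$ lifts to a $\C^\times_\sigma$-equivariant morphism $\tilde{s}: \C^2\setminus 0 \to X$, and postcomposing with $\pi$ into $V$ forces the $V_w$-component to be homogeneous of degree $w$: the $V_{-1}$-part vanishes, the $V_0$-part is constant, and the $V_1$-part is linear. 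Since any contracted tree maps under $\pi$ to a single point of the affine $Y = X_0$, these constraints apply verbatim to the full evaluation of $f$. Thus $[\bS(\sigma)]$ is supported on pairs $(x_0, x_\infty)$ with $\pi(x_0), \pi(x_\infty) \in V_{\geq 0} = V_0 \oplus V_1$ and with equal $V_0$-components.

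Second, I combine this with the stable envelope support $\cL_{-\fC} \subset \pi^{-1}(V_{\leq 0}) \times_{V_0} X^\sigma$ from Proposition \ref{p_leafX0} (and its transpose for $\Stab_-^\tau$). Intersecting the $V_{\geq 0}$ constraint from $\bS(\sigma)$ with the $V_{\leq 0}$ constraint from both copies of $\Stab_-$ forces the inner factors $x_1, x_2$ to satisfy $\pi(x_i) \in V_0$, hence $\pi(x_i) \in Y \cap V^\sigma = X_0^\sigma$. Combined with the fiber-product relations from the two stable envelopes, this yields $\pi(z_1) = \pi(x_1) = \pi(x_2) = \pi(z_2) \in X_0^\sigma$, so the support of the convolution lies inside $X^\sigma \times_{X_0} X^\sigma$.

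Third, properness of the convolution follows from this support analysis: the full intersection in $X^\sigma \times X \times X \times X^\sigma$ sits inside the fiber product $X^\sigma \times_{V_0} \pi^{-1}(V_0) \times_{V_0} \pi^{-1}(V_0) \times_{V_0} X^\sigma$, whose fiber over any $(z_1, z_2) \in X^\sigma \times_{X_0} X^\sigma$ is the compact product $\pi^{-1}(\pi(z_1))^2$ by properness of $\pi$. Thus the push-forward to $X^\sigma \times X^\sigma$ is proper and the convolution is well-defined in nonlocalized $\bGt$-equivariant cohomology, with image supported in $X^\sigma \times_{X_0} X^\sigma$.

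Finally, for the Steinberg property I will verify middle-dimensionality and isotropy. By \eqref{virdim}, $\virdim M^\sim(\beta) = \dim X$, so $[\bS(\sigma)]$ is middle-dimensional in $X \times X$; since stable envelopes are Lagrangian cycles of dimension $(\dim X + \dim X^\sigma)/2$ in $X \times X^\sigma$, standard dimension arithmetic gives the convolution dimension $\dim X^\sigma$, which is middle-dimensional in $X^\sigma \times X^\sigma$. For isotropy, $X^\sigma$ is a disjoint union of (smaller) Nakajima varieties by Proposition \ref{p_Nak_fix}, each of which is a symplectic resolution of its affinization $X_0^\sigma$; hence the fibers of $X^\sigma \to X_0^\sigma$ are isotropic (this is the Kaledin--Namikawa result invoked in the proof of Proposition \ref{Stein_fiber}), so any middle-dimensional subvariety of $X^\sigma \times_{X_0^\sigma} X^\sigma$ is automatically Lagrangian. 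Combined with properness to the affine base $X_0^\sigma$, the cycle is Steinberg.

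The main obstacle is the first step, where care is needed to include contracted tree components of stable maps in the support analysis; the minuscule property is exactly what allows one to rule out the problematic $V_{-1}$-directions, and without it the $V_{\pm 1}$-components of a twisted section need not be controlled by polynomial maps of low degree. Once the support is pinned down, the remaining steps are essentially formal consequences of properties already established for stable envelopes.
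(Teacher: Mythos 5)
Your proof is correct and follows essentially the same route as the paper's. The core step in both arguments is the observation that a twisted section, pushed by $\pi$ into the minuscule representation $V$, has each $V_w$-component given by a section of $\cO(w)$: the $V_{-1}$-part therefore vanishes and the $V_0$-part is constant, and combining this with the $\Stab_-$ constraint $\pi(x_i)\in V_{\le 0}$ forces all four $\pi$-images to coincide in $V_0=V^\sigma$. You phrase this via equivariance and homogeneity of the lift $\tilde s$, while the paper phrases it via the fiberwise map $\Xt\to\Vs$ and the fact that a section of $\cO(1)$ vanishing at two points is identically zero; these are the same calculation. The one place where you add substance is the final verification of the Steinberg/Lagrangian property, which the paper leaves implicit (``the proposition follows''): your dimension count plus the isotropy of $X^\sigma\times_{X_0^\sigma}X^\sigma$ via Kaledin--Namikawa is a clean way to fill that in, though note that the relevant input is just that $X^\sigma$ is again a symplectic resolution — the Nakajima-variety hypothesis is not needed for this step, since Section~\ref{s_assume} only assumes $X$ is a symplectic resolution. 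A very small imprecision: contracted trees need not sit only over $0$ and $\infty$, but this does not affect your argument since only the trees carrying the marked points matter and they do lie over $0$ and $\infty$.
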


\begin{proof}
Since $\sigma$ is minuscule, we have proper $\C^\times_\sigma$-equivariant maps 
$$
X \to X_0 \subset V = \bigoplus_{|i|\le 1} V_i\,,
$$
where $\C^\times_\sigma$ acts on $V_i$ with weight $i$.
Applying \eqref{defXt}, we get a proper 
map 
$$
\Xt \to \Vs = \bigoplus_{|i|\le 1} V_i \otimes \cO(i)\,,
$$
to a vector bundle $\Vs$ over $\Pp^1$. Moreover, the fiberwise
image of $\Stab_-$ is contained in the subbundle 
$$
\Vs_{\le 0} = V_0\otimes \cO \oplus V_{-1} \otimes \cO(-1) \subset \Vs \,. 
$$

Now let
\begin{equation}
(x_4,x_3,x_2,x_1)\in X^\sigma \times X \times X \times X^\sigma 
\label{x4321}
\end{equation}
be a quadruple of points in the definition of the convolution \eqref{conv_Ssig}. 
Since $\pi(x_2),\pi(x_3) \in V_{\le 0}$ and $\cO(-1)$ has no sections, we must have 
$$
\pi(x_2),\pi(x_3) \in V_0 \,. 
$$
Moreover, by Proposition \ref{p_leafX0}, we have 
$$
\pi(x_4)=\pi(x_3)\,, \quad  \pi(x_2)=\pi(x_1) \,.
$$
Since a section of $\cO(i)$ is fixed by evaluation at $i+1$ points, 
we conclude 
$$
\pi(x_4)=\pi(x_3)=\pi(x_2)=\pi(x_1)
$$
and any section of $\Xt$ connecting $x_3$ and $x_2$ maps to the 
corresponding constant section of $\Vs$. Since the fibers of this 
projection are proper, the proposition follows. 
\end{proof}

\section{Computation of $\bS(\sigma)$}\label{s_S_R}

\subsection{}

Let
$$
R_{\sigma,\infty} \in \End H^\hd_{\bGt}(X^\sigma_\infty) \otimes 
\Q(\Lie \bGt)  
$$
be the $\bGt$-equivariant $R$-matrix for the fiber $X_\infty$. 
Since $\C^\times_\varepsilon$ acts on $X_\infty$ via 
the cocharacter $\sigma^{-1}$, 
$R_{\sigma,\infty}$ is obtained from the $\bG^\sigma$-equivariant 
$R$-matrix by the substitution 
$$
\xi \mapsto \xi - \varepsilon \sigma(1) \,, \quad \xi \in \Lie \bG^\sigma
\,.
$$

\begin{Theorem}\label{T_int_R} 
For $X$ and $\sigma$ as in Section \ref{s_assume},
and $\zeta$ as in Section \ref{dis_zeta}, 
\begin{equation} \label{S R} 
  \Stab_+^{-1} \, \bS(\sigma) \, \Stab_+ = (-1)^{(\zeta,\bkap_X)} \, q^\zeta \, R_{\sigma,\infty} \,. 
\end{equation}
\end{Theorem}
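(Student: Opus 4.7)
The plan is to compute the shift operator $\bS(\sigma)$ by $\C^\times_\varepsilon$-equivariant localization on $M^\sim(\beta)$ and then identify the result with $q^\zeta R_{\sigma,\infty}$ by a rigidity argument using Proposition \ref{p_proper}. First I would invoke the fixed-point analysis from the proof of Proposition \ref{p_intertw}: the $\C^\times_\varepsilon$-fixed stable maps in $M^\sim(\beta)$ have domain a chain $C_0 \cup C_1 \cup C_\infty$, where $C_1 = \zeta_y$ is a section of $\Xt\to\Pp^1$ through some $y \in X^\sigma$ and $C_0,C_\infty$ are contracted to $X_0,X_\infty$. Virtual localization yields the factorization $\bS(\sigma) = \Psi_0\circ\Psi_1\circ\Psi_\infty$, where $\Psi_1 = \iota_{0,*}\,q^\zeta\,\Gamma\,\iota_\infty^*$ encodes the section contribution with $\Gamma \in H^\hd_{\bGt}(X^\sigma)$ determined by the deformation theory of $C_1 \subset \Xt$, and $\Psi_0,\Psi_\infty = 1 + O(\varepsilon^{-1})$ are Givental-type descendant operators.

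Next I would compute $\Gamma$ explicitly. By Lemma \ref{l_smack} the $\sigma$-weights on the normal bundle $N_{X^\sigma/X}$ are $\pm 1$, so $f^*T\Xt$ on $C_1 \cong \Pp^1$ splits as $\cO(1)\otimes N_+ \oplus \cO\otimes TX^\sigma \oplus \cO(-1)\otimes N_-$; after subtracting the automorphisms of $(C_1;0,\infty)$ and the two marked-point tangent contributions, $\Gamma$ reduces to a specific ratio of equivariant Euler classes. To perform the conjugation by $\Stab_+$, I would use Theorem \ref{adjoint} to write $\Stab_+^{-1} = \Stab_-^\tau$ and the characterization $\iota^*\Stab_\pm = \pm e(N_\mp)$ on the diagonal from Theorem \ref{stablebasis}. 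A direct computation then shows that the diagonal block of $\Stab_+^{-1}\Psi_1\Stab_+$ equals the scalar $(-1)^{(\zeta,\bkap_X)} q^\zeta$, with the sign arising from Lemma \ref{l_parity} (which matches $\tfrac{1}{2}\codim X^\sigma$ with $(\zeta,\bkap_X)\bmod 2$) combined with the chosen polarization. Since $R_{\sigma,\infty} = 1 + O(\varepsilon^{-1})$ by Proposition \ref{p_1u}, both sides agree in the limit $\varepsilon\to\infty$.

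To promote this leading-order match to the full identity I would argue by rigidity. Writing $\Stab_+ = \Stab_-\circ R_\sigma$, the LHS factors as $A \circ R_\sigma$ with $A := \Stab_-^\tau \bS(\sigma)\Stab_-$, and by Proposition \ref{p_proper} the operator $A$ is a proper Steinberg correspondence, hence regular in all equivariant parameters. The theorem is thus equivalent to $A = (-1)^{(\zeta,\bkap_X)} q^\zeta\, R_{\sigma,\infty} R_\sigma^{-1}$, and both sides live in a tightly constrained space of operators: the RHS has a specific pole structure in $\varepsilon$ determined by the $\pm 1$ normal weights and the Yang--Baxter shape of $R_{\sigma,\infty}$, while the LHS is constrained by being Steinberg. \emph{The main obstacle is identifying the Givental-type corrections $\Psi_0,\Psi_\infty$ with the off-diagonal content of $R_{\sigma,\infty} R_\sigma^{-1}$}, since a priori these are presented very differently. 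I expect this to be handled by combining the leading-order match above with the fact that both sides satisfy the same intertwining relations --- on the LHS by Proposition \ref{p_intertw}, on the RHS by the Yang--Baxter origin of $R_{\sigma,\infty}$ --- together with the uniqueness characterization of stable envelopes applied to $X_\infty$.
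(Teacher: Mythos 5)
Your proposal correctly assembles most of the ingredients the paper uses---the $\C^\times_\varepsilon$-localization giving $\bS(\sigma)=\Psi_0\Psi_1\Psi_\infty$, Lemma \ref{l_smack} to control the normal weights on $C_1$, Lemma \ref{l_parity} for the sign, Theorem \ref{adjoint} for $\Stab_+^{-1}=\Stab_-^\tau$, and Proposition \ref{p_proper} for integrality. But you stop one step short of the decisive simplification, and the ``rigidity'' argument you substitute for it is not actually a proof.

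The real force of Proposition \ref{p_proper} is that $A:=\Stab_-^\tau\circ\bS(\sigma)\circ\Stab_-$ is a \emph{Steinberg correspondence}: its coefficients against the Lagrangian components of $X^\sigma\times_{X_0}X^\sigma$ are $\Q$-numbers (with $q^\zeta$-prefactors), independent of all equivariant parameters. You invoke this only to say $A$ is ``regular,'' but its actual use is that one may compute the localization residues after \emph{any} specialization of the equivariant parameters and still read off the correct coefficients. The paper exploits this by specializing \emph{both} $\varepsilon\to\infty$ \emph{and} $\hbar=0$. Setting $\hbar=0$ is exactly the step you omit, and it is what makes the computation close: by \eqref{Psihbar}, $\Psi_0=\Psi_\infty=1$ at $\hbar=0$ (since the reduced class gives $\Psi_0=1+O(\hbar)$), so the Givental-type corrections that you identified as ``the main obstacle'' vanish outright; and by Theorem \ref{t_strong_vanish}, the stable envelopes are diagonal mod $\hbar$, so all four marked points $x_1,\dots,x_4$ in \eqref{x4321} collapse to a single point of $X^\sigma$. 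After that the only contribution is an unobstructed constant section, with tangent space $(N_1)_0\oplus(N_1)_\infty\oplus T_{x_1}X^\sigma$ precisely cancelling the stable-envelope normal directions, and the sign $(-1)^{\codim/2}$ comes from the definition of $\Stab_-^\tau$. There is no off-diagonal remainder to match.

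Without $\hbar=0$, your final paragraph does not constitute a proof. Knowing only the $\varepsilon\to\infty$ limit of both sides of $A=(-1)^{(\zeta,\bkap_X)}q^\zeta R_{\sigma,\infty}R_\sigma^{-1}$, you need a rigidity statement to conclude equality, and the one you gesture at---that both sides satisfy the difference equation of Proposition \ref{p_intertw}, plus uniqueness of stable envelopes---is not spelled out. Solutions to a $q$-difference equation are only determined up to (pseudo-)constant prefactors, so fixing the normalization at $\varepsilon\to\infty$ genuinely requires an argument about analyticity and pole structure in $\varepsilon$, and you acknowledge this is the hard part without resolving it. The paper avoids the entire issue: once you know $A$ is Steinberg (constant in $\hbar$ and $\varepsilon$), one point evaluation suffices, and $\hbar=0$ is where the calculation is trivial.
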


\begin{proof}
Using  \eqref{defRsigma} and Lemma \ref{l_parity}
we may restate this as commutativity of the following 
diagram 
$$
\xymatrix{
H^\hd_{\bGt}(X^\sigma_\infty) \ar[d]_{\Stab_-} 
\ar[rrr]^{(-1)^{\codim/2} \, q^\zeta }
&&& H^\hd_{\bGt}(X^\sigma_0) \\
H^\hd_{\bGt}(X_\infty)
\ar[rrr]^{\bS(\sigma)} &&& H^\hd_{\bGt}(X_0) \ar[u]_{\,\,\Stab^\tau_-}
}
$$
%
% $$
% \Stab_-^\tau \circ \, \bS(\sigma) \circ \Stab_- =
% (-1)^{\codim/2} \, q^\zeta 
% $$
%
where $\codim/2$ denotes the locally constant function on $X^\sigma$ given by  taking half the codimension in $X$. 
By Proposition \ref{p_proper}, we may 
compute the composition 
$$
\Stab_-^\tau \circ \, \bS(\sigma) \circ \Stab_-
$$
with any choice of 
equivariant parameters. 

We choose $\hbar=0$ and 
$\varepsilon\to\infty$, where 
$\varepsilon$ is the equivariant parameter for the $\C^\times_\varepsilon$-action 
in  \eqref{defbGt}. In particular, since $\hbar=0$, 
stable envelopes are diagonal and we must have
$$
x_4=x_3,\quad x_2=x_1\,,
$$
in \eqref{x4321} above. Also $\hbar=0$ implies 
$$
\Psi_0 = \Psi_\infty =1 
$$
by \eqref{Psihbar} above. 

Next consider the operator $\Psi_1$. It counts constant 
sections $C_1$ of $\Xt$ corresponding to 
$$
x_4=x_3=x_2=x_1 \in X^\sigma \,. 
$$
By Lemma 
\ref{l_smack}, the normal bundle to $C_1$ is 
$$
\cN = N_1(1) \oplus N_{-1}(-1) \oplus T_{x_1}X^\sigma  
$$
where $N_{\pm 1}$ are $\sigma$-eigenspaces in the normal bundle
$N$ to $X^\sigma$ in $X$ and twists are by $\cO(i)$, $i=\pm 1$. 

It follows that $C_1\in  M^{\sim}(\zeta)$ is unobstructed with tangent space
$$
T_{C_1} M^{\sim}(\zeta) = \left(N_1\right)_0 \oplus \left(N_1\right)_\infty 
\oplus T_{x_1}X^\sigma  
$$
where the subscripts $0,\infty\in \Pp^1$ denote the fibers of 
$N_+(1)$ over the respective points. We observe that these 
correspond precisely to the normal directions to $\Stab_-(x_i)$. 

In the end, all contributions to the integral cancel except for 
the sign in the definition of adjoint $\Stab_-^\tau$. This sign 
gives $(-1)^{\codim/2}$. 
\end{proof}

\subsection{}

In particular, for $\sigma$ as in Section \ref{dis_sigma}, we have
$$
X^\sigma = \coprod \cM(\bv',\bw') \times \cM(\bv'',\bw'')
$$
and 
$$
q^{\overline{\zeta}} \mapsto q^{\bv'} = q^\bv \otimes 1 \,,
$$
after restricting to functions on $\fh$
as in \eqref{zeta_bv}. Our computation of $\bS(\sigma)$ 
together with the results of Section \ref{bS_qm} imply 
the following 

\begin{Corollary}\label{qR_qm}
For tensor products of Nakajima varieties, the operator 
$$
\left(q^\bv \otimes 1\right) R(u) 
$$
belongs to the algebra of modified operators of quantum 
multiplication. 
\end{Corollary}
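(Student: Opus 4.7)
The plan is to assemble the corollary directly from Theorem~\ref{T_int_R} together with the identification of $\bS(\sigma)|_{\varepsilon=0}$ as a quantum multiplication operator established in Section~\ref{bS_qm}. No new geometric input is required; the work lies in bookkeeping signs and in tracking how the various identifications interact.

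First I would specialize Theorem~\ref{T_int_R} to the minuscule cocharacter $\sigma$ of Section~\ref{dis_sigma}, arising from a decomposition $\bw = z\bw' + \bw''$. In this case $X^\sigma = \bigsqcup_{\bv'+\bv''=\bv}\cM(\bv',\bw')\times \cM(\bv'',\bw'')$, and the discussion in Section~\ref{dis_sigma} together with the natural $\bG$-linearization of tautological line bundles identifies $q^{\overline{\zeta}}$, restricted to the component indexed by $\bv'$, with $q^{\bv'}$. Under the identification of $H^\hd_{\bG_\bA}(X^\sigma)$ with $H(\bw')\otimes H(\bw'')$ from Section~\ref{RforNakajima}, this yields $q^{\overline{\zeta}} = q^\bv\otimes 1$. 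Theorem~\ref{T_int_R} therefore reads
$$
\Stab_+^{-1}\,\bS(\sigma)\,\Stab_+ \;=\; (-1)^{(\zeta,\bkap_X)}\,(q^\bv\otimes 1)\,R_{\sigma,\infty}(u),
$$
and once $\varepsilon$ is set to $0$ the auxiliary substitution $\xi\mapsto\xi-\varepsilon\sigma(1)$ becomes trivial, so $R_{\sigma,\infty}|_{\varepsilon=0}$ is precisely the $R$-matrix $R(u)$ on $H(\bw')\otimes H(\bw'')$.

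Next I would invoke Lemma~\ref{markedshiftformula} together with the remark at the end of Section~\ref{bS_qm}: after setting $\varepsilon=0$, $\bS(\sigma)|_{\varepsilon=0}$ commutes with all quantum multiplication operators on $X$, and is itself realized as quantum multiplication by the class $\bS(\sigma)|_{\varepsilon=0}(1)$. Hence the $\Stab_+$-conjugate of $\bS(\sigma)|_{\varepsilon=0}$ lies, after localization in the base ring, in the $\Stab_+$-conjugate of the algebra of quantum multiplication operators of $X$. Combined with the displayed formula above, this proves that
$(-1)^{(\zeta,\bkap_X)}(q^\bv\otimes 1)R(u)$ belongs to this (conjugated) algebra of quantum multiplication operators.

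Finally I would absorb the sign using the modification \eqref{modif*}. Passing from the quantum product to the modified quantum product substitutes $q^\beta\mapsto(-1)^{(\beta,\bkap_X)}q^\beta$, which converts the prefactor $(-1)^{(\zeta,\bkap_X)}q^\zeta$ into $q^\zeta = q^\bv\otimes 1$. Since $R(u)$ itself is $q$-independent, it is unaffected by the substitution, and one obtains
$$
\Stab_+^{-1}\,\bS(\sigma)^{\mathrm{modif}}\big|_{\varepsilon=0}\,\Stab_+ \;=\; (q^\bv\otimes 1)\,R(u),
$$
realizing the left-hand side as (the $\Stab_+$-conjugate of) a modified quantum multiplication operator. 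The only delicate point in the argument, and the step I would check most carefully, is the compatibility of the sign $(-1)^{(\zeta,\bkap_X)}$ arising from Theorem~\ref{T_int_R} (controlled by Lemma~\ref{l_parity}) with the sign introduced by the modification $q^\beta\mapsto(-1)^{(\beta,\bkap_X)}q^\beta$; this is exactly the role that the canonical theta characteristic $\bkap_X$ was designed to play, and no further obstacle remains.
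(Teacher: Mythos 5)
Your proposal is correct and follows exactly the route the paper intends: the paper's justification is the terse remark that the computation of $\bS(\sigma)$ (Theorem~\ref{T_int_R}, with $q^{\overline{\zeta}}\mapsto q^\bv\otimes 1$ as in Section~\ref{dis_sigma}) combined with the identification of $\bS(\sigma)|_{\varepsilon=0}$ as a quantum multiplication operator (Section~\ref{bS_qm}) implies the corollary, and you have simply unpacked that sentence into its constituent steps, including the sign cancellation via \eqref{modif*} that the paper leaves implicit.
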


\chapter{Quantum multiplication by divisors}

\section{Classical multiplication by divisors}\label{s_class_div}

\subsection{}\label{s_cl_div} 

A vector $\lambda \in \fh = \C^I$ corresponds to a divisor 
\begin{equation}
c_1(\lambda) = \sum \lambda^i \, c_1(\cV_i) 
\label{def_lam}
\end{equation}
which we identify with the corresponding cup 
product operator.  Using 
\begin{equation}
\left[e_\alpha,e_{-\alpha}\right] = h_\alpha \label{eaa}
\end{equation}
we obtain from \eqref{R_theta}
\begin{equation}
c_1(\bw) = \bE\left(\vacv{\bw} \! \vacd{\bw}  u^2\right) +
\hbar \sum_{\substack{\alpha>0\\
\theta\cdot\alpha<0}} \alpha(\bw) \, e_{\alpha} e_{-\alpha} + \dots 
\label{c1div}
\end{equation}
where the sum over roots is with multiplicity, 
$$
\alpha(\bw)  = \bw(h_\alpha) = \alpha\cdot \bw\,,
$$
and dots stand for a quadratic polynomial in $\bv$, that is, 
an element of $\cU(\fh)$ of degree at most two.

\subsection{}

Since $\bE\left(\vacv{\bw} \! \vacd{\bw}  u^2\right)$ 
comes from the $1/u^2$ coefficient of the 
$R$-matrix, its coproduct will involve itself and the 
$1/u$ coefficient, that is, the classical $\br$-matrix. 
{}From Theorem \ref{t_Lie}, we compute
\begin{multline}
  \Delta  \, \bE\left(\vacv{\bw} \! \vacd{\bw}  u^2\right) 
= \bE\left(\vacv{\bw} \! \vacd{\bw}  u^2\right) \otimes 1 + 1 \otimes 
\bE\left(\vacv{\bw} \! \vacd{\bw}  u^2\right) + \dots \\
+ \hbar \sum_{\alpha,\beta} \vacd{\bw} e_{-\beta} e_{\alpha} \vacv{\bw} 
\, e_{-\alpha} \otimes e_{\beta} \,, 
\end{multline}
where dots stand for terms in $\cU(\fh)^{\otimes 2}$. Using
\eqref{eaa}, we compute
$$
\vacd{\bw} e_{-\beta} e_{\alpha} \vacv{\bw} = 
\begin{cases}
- \alpha\cdot \bw \,, & \beta = \alpha > 0 \,,\\
0 \,, & \textup{otherwise}  \,. 
\end{cases}
$$
We deduce the following 

\begin{Theorem}\label{t_cl_divisor} 
We have 
$$
\Delta c_1(\lambda) = c_1(\lambda) \otimes 1 + 1 \otimes c_1(\lambda) 
- \hbar \sum_{\theta\cdot \alpha>0} 
\alpha(\lambda) \, 
 e_{-\alpha} \otimes e_{\alpha} + \dots 
$$
where 
the sum is over roots $\alpha$ of $\fg_Q$ with 
multiplicities and dots stand for terms in $\cU(\fh)^{\otimes 2}$. 
\end{Theorem}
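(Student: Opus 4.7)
The plan is to deduce the coproduct formula by applying $\Delta$ to both sides of the previously established identity \eqref{c1div}. Both ingredients are essentially at hand: the coproduct of $\bE(\vacv{\bw}\vacd{\bw}\, u^2)$ is computed in the display preceding the theorem (up to identifying the matrix element $\vacd{\bw} e_{-\beta} e_\alpha \vacv{\bw}$), and the coproduct of the quadratic correction $e_\alpha e_{-\alpha}$ appearing in \eqref{c1div} is governed by primitivity of the root vectors $e_{\pm\alpha} \in \fg_Q$.

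First I would finish the $\bE$-computation. The matrix element $\vacd{\bw} e_{-\beta} e_\alpha \vacv{\bw}$ vanishes unless $\alpha>0$ and $\beta=\alpha$: by the lowest-weight property \eqref{ann_vac}, $e_\alpha \vacv{\bw}$ vanishes for $\alpha \not\ge 0$, and weight matching then forces $\beta = \alpha$. For $\alpha > 0$ one writes $e_{-\alpha}e_\alpha = e_\alpha e_{-\alpha} - h_\alpha$ via \eqref{eaa}; the first summand annihilates $\vacv{\bw}$ (again by lowest weight), leaving $\vacd{\bw} e_{-\alpha} e_\alpha \vacv{\bw} = -\alpha(\bw)$. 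Feeding this into the displayed coproduct gives
\[
\Delta \bE(\vacv{\bw}\vacd{\bw}\, u^2) = \bE(\vacv{\bw}\vacd{\bw}\, u^2)\otimes 1 + 1\otimes \bE(\vacv{\bw}\vacd{\bw}\, u^2) - \hbar \sum_{\alpha>0}\alpha(\bw)\, e_{-\alpha}\otimes e_\alpha
\]
modulo $\cU(\fh)^{\otimes 2}$-terms.

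Next, using primitivity $\Delta(e_{\pm\alpha}) = e_{\pm\alpha}\otimes 1 + 1\otimes e_{\pm\alpha}$, one has $\Delta(e_\alpha e_{-\alpha}) = e_\alpha e_{-\alpha}\otimes 1 + 1\otimes e_\alpha e_{-\alpha} + e_\alpha\otimes e_{-\alpha} + e_{-\alpha}\otimes e_\alpha$. Substituting these two expansions into $\Delta$ of \eqref{c1div}, positive roots with $\theta\cdot\alpha > 0$ contribute $-\hbar\alpha(\bw)\, e_{-\alpha}\otimes e_\alpha$ directly, while those with $\theta\cdot\alpha < 0$ contribute
\[
-\hbar\alpha(\bw)\, e_{-\alpha}\otimes e_\alpha + \hbar\alpha(\bw)\bigl(e_\alpha\otimes e_{-\alpha}+e_{-\alpha}\otimes e_\alpha\bigr) = \hbar\alpha(\bw)\, e_\alpha\otimes e_{-\alpha}.
\]
Reindexing the last sum via $\alpha \mapsto -\alpha$ rewrites its summand as $-\hbar\alpha(\bw)\, e_{-\alpha}\otimes e_\alpha$ with $\alpha<0$ and $\theta\cdot\alpha>0$, so the two cases unify into a single sum over all roots with $\theta\cdot\alpha > 0$, matching the claim for $\lambda = \bw$. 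Extension to general $\lambda \in \fh$ is by linearity of both sides in $\lambda$ combined with the fact that varying $\bw$ over framing data spans $\fh$.

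The main obstacle is nothing conceptually deep but rather the sign-and-range bookkeeping in combining the two sums; once the $\alpha \leftrightarrow -\alpha$ symmetry is exploited, the off-diagonal contribution of the quadratic correction in \eqref{c1div} is absorbed cleanly into a uniform sum indexed by $\theta\cdot\alpha > 0$, and the remaining $\cU(\fh)^{\otimes 2}$-valued terms (which include all coproducts of the dots in \eqref{c1div}, since $\fh$ is primitive) are what one collects in the dots of the final statement.
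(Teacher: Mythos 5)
Your proposal is correct and takes the same route as the paper: apply $\Delta$ to \eqref{c1div}, use the displayed coproduct of $\bE(\vacv{\bw}\vacd{\bw}u^2)$ together with the matrix-element computation $\vacd{\bw}e_{-\beta}e_\alpha\vacv{\bw}=-\alpha\cdot\bw\,\delta_{\alpha\beta}$ (for $\alpha>0$), and absorb the off-diagonal part of $\Delta(e_\alpha e_{-\alpha})$ by the $\alpha\leftrightarrow-\alpha$ reindexing. You merely write out the sign-and-range bookkeeping and the linearity-in-$\lambda$ step that the paper's ``we deduce the following'' leaves implicit.
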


\subsection{}

In particular, we have
\begin{align}
R(u)\, \Delta c_1(\lambda) \, R(u)^{-1} &= 
\Delta^{\textup{op}} c_1(\lambda) \notag  \\ 
&= \Delta \, c_1(\lambda) + \hbar 
\sum_{\theta\cdot\alpha>0} \alpha(\lambda) 
\left(e_{-\alpha} \otimes e_{\alpha}- 
e_{\alpha} \otimes e_{-\alpha}\right) \,. 
\label{commRc1}
\end{align}
%
%Note that the dependence on $\theta$ has disappeared in 
%formula \eqref{commRc1}. 

\section{Quantum operators}

\subsection{}

We denote by $\bQ(\lambda)$ the operator of modified quantum 
multiplication by the divisor \eqref{def_lam}.
By construction 
$$
\bQ(\lambda) = c_1(\lambda) + \hbar 
\sum_{\beta} (-1)^{(\beta,\kappa)} \, q^\beta \, 
\lambda(\beta)\, \bQ_{2,\red}(\beta) \,,
$$
where $\beta\in H_2(\cM,\Z)$ is 
an effective curve class and 
$\bQ_{2,\red}(\beta)$ is the image of the corresponding 
reduced virtual class under the evaluation map. The quantum 
part of $\bQ(\lambda)$ is a linear combination of Steinberg 
correspondences. 

\subsection{}

\begin{Theorem}\label{t_Qlam}
We have 
$$
\bQ(\lambda) = c_1(\lambda) - 
\hbar \sum_{\theta\cdot 
\alpha>0} \alpha(\lambda) \, \frac{q^\alpha}{1-q^\alpha} \, 
\, e_{\alpha} e_{-\alpha} + \dots 
$$
where the sum is over roots of $\fg_Q$ with 
multiplicity and dots denote a scalar operator.  
\end{Theorem}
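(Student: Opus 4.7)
\medskip

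\noindent\textbf{Proof plan for Theorem \ref{t_Qlam}.}

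The strategy is to use Corollary \ref{qR_qm} to put the quantum multiplication operator into a system of commutation relations with the $R$-matrix, and then read off the formula from the known Yangian coproduct of $c_1(\lambda)$ together with the additivity of the reduced virtual class. Fix a tensor-product decomposition $\bw = z\bw' + \bw''$ with $\bA = \C^\times$ acting on the first summand, so that $\cM(\bw)^\bA = \cM(\bw') \times \cM(\bw'')$. By Corollary \ref{qR_qm}, the operator $(q^{\bv_1}\otimes 1)\,R_{\bw',\bw''}(u)$ on $H(\bw')\otimes H(\bw'')$, pulled back via the stable envelope, lies in the commutative algebra of modified quantum multiplication on $H(\bw)$. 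In particular it commutes with the conjugate $\Stab_+^{-1}\,\bQ(\lambda)\,\Stab_+$ for every value of $u$.

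Next, I compute the right-hand side of this commutation. The classical piece $c_1(\lambda)\cup$ belongs to the algebra of classical multiplication inside $\bY_Q$, so its stable-envelope conjugate equals its Yangian coproduct, which Theorem \ref{t_cl_divisor} gives as
\[
\Delta c_1(\lambda) \;=\; c_1(\lambda)\otimes 1 + 1\otimes c_1(\lambda) \;-\;\hbar\sum_{\theta\cdot\alpha>0}\alpha(\lambda)\,e_{-\alpha}\otimes e_\alpha \;+\;\dots,
\]
with dots in $\cU(\fhb)^{\otimes 2}$. The purely quantum piece $\hbar\bQ_{2,\red,\bkap_X}$ transfers by Theorem \ref{t_reduc} to $\hbar\bQ_{2,\red,\bkap_{X^\bA}}$, and then splits additively by Proposition \ref{red_add} as $\bQ^{(1)}_{2,\red}\otimes 1 + 1\otimes\bQ^{(2)}_{2,\red}$, modulo scalars. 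Combining these,
\[
\Stab_+^{-1}\,\bQ(\lambda)\,\Stab_+ \;=\; \bQ(\lambda)\otimes 1 \;+\; 1\otimes \bQ(\lambda) \;-\; \hbar\!\sum_{\theta\cdot\alpha>0}\alpha(\lambda)\,e_{-\alpha}\otimes e_\alpha \;+\;(\text{scalars}).
\]

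Now I induct on $|\bv|$, with the case $\bv = 0$ trivial since the sum in \eqref{c_1*} is empty. Writing $\bQ(\lambda) = c_1(\lambda) + \hbar\,A_\bw(\lambda,q)$ and using the induction hypothesis to pin down $\bQ^{(i)}(\lambda)$, the commutation $[(q^{\bv_1}\otimes 1)R(u), \Stab_+^{-1}\bQ(\lambda)\Stab_+]=0$ becomes a closed equation for $A_\bw(\lambda,q)$. Expanding $R(u) = 1+\hbar\br/u+O(u^{-2})$ with $\br$ given by \eqref{formula_r}, the order $u^{-1}$ coefficient of the commutation must cancel the off-diagonal term $-\hbar\sum\alpha(\lambda)\,e_{-\alpha}\otimes e_\alpha$. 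The key algebraic point is that commuting $q^{\bv_1}$ past the tensor factor $e_\alpha\otimes e_{-\alpha}$ in $\br$ yields $q^{\bv_1}$ twisted by $q^\alpha$, so iterating the commutation as in the derivation of Lemma \ref{l_tr_e_e} generates the geometric series $\sum_{n\ge 1}q^{n\alpha} = q^\alpha/(1-q^\alpha)$. The only solution of the resulting equation with the correct classical limit at $q = 0$ is
\[
A_\bw(\lambda,q) \;=\; -\sum_{\theta\cdot\alpha>0}\alpha(\lambda)\,\frac{q^\alpha}{1-q^\alpha}\,e_\alpha e_{-\alpha} \;+\;(\text{scalar}),
\]
matching \eqref{c_1*} and, equivalently, the degree-two Baxter element \eqref{Bax_Cas_2}.

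The main obstacle I foresee is the careful bookkeeping of signs and polarizations when passing across $\Stab_+$, in particular the interaction between the theta-characteristic shift $\bkap_X$ in the definition of the modified product and the identity $\Stab_+^{-1} = \Stab_-^\tau$ from Theorem \ref{adjoint}. A secondary subtlety is that at each stage the argument only determines operators modulo scalars, so one must verify at the end that the residual ambiguity lies in $\cU(\fhb)$ (i.e.\ acts by an actual scalar on each weight space) rather than introducing non-trivial corrections to the stated formula; this can be checked by comparing the coproduct of $A_\bw(\lambda,q)$ to the one forced by the commutation relation at the next order in $u^{-1}$.
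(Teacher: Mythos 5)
You identify the right ingredients --- Corollary \ref{qR_qm}, Theorems \ref{t_cl_divisor} and \ref{t_reduc}, Proposition \ref{red_add} --- but two of your steps are heuristics rather than proofs. The paper extracts the geometric series not by a $1/u$-expansion of $R(u)$ or an iterated commutation, but from a single exact identity. Decomposing $\Delta\bQ=\sum_\alpha\Delta_\alpha\bQ$ by the $\fh\otimes 1$-weight, the commutation $[(q^\bv\otimes 1)R(u),\Delta\bQ]=0$ from Corollary \ref{qR_qm} is the statement $R(u)\,\Delta\bQ\,R(u)^{-1}=\sum_\alpha q^{-\alpha}\Delta_\alpha\bQ$; combined with the Steinberg property of the quantum part (Theorem \ref{t_Stein}) and \eqref{commRc1} for the classical part, one gets $\sum_\alpha(1-q^{-\alpha})\Delta_\alpha\bQ=\hbar\sum_{\theta\cdot\alpha>0}\alpha(\lambda)\left(e_\alpha\otimes e_{-\alpha}-e_{-\alpha}\otimes e_\alpha\right)$, and inverting $1-q^{-\alpha}$ on the weight-$\alpha$ piece yields $q^\alpha/(1-q^\alpha)$ directly. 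There is no $1/u$-expansion in the argument; the reference to Lemma \ref{l_tr_e_e} is misplaced, since that lemma's geometric series comes from the cyclic trace identity for Baxter generators, a different mechanism; and the induction on $|\bv|$ is neither used nor needed, because the single commutation equation couples all components simultaneously.

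The uniqueness step also requires an argument you do not supply. Knowing $\Delta_\alpha\bQ$ for $\alpha\ne 0$ together with $\Delta_0\bQ=\bQ\otimes1+1\otimes\bQ$ determines the coproduct of $\bQ$, but to deduce the formula for $\bQ$ itself the paper shows that the difference $\bQ_\textup{remainder}$ between the quantum part of $\bQ$ and $-\sum_{\theta\cdot\alpha>0}\alpha(\lambda)\frac{q^\alpha}{1-q^\alpha}\,e_\alpha e_{-\alpha}$ is a Steinberg correspondence by Lemma \ref{quadraticsteinberg}, is primitive, and commutes with $\fh$, and then applies Proposition \ref{commwithh} to place it in $\fhb_Q$. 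This rigidity statement --- that a primitive $\fh$-commuting family of Steinberg correspondences acts by scalars --- is the crucial missing piece; it does not follow from ``comparing coproducts at the next order in $u^{-1}$,'' and your argument does not close without it.
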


The scalar operator is fixed by the requirement that 
the purely quantum part of $\bQ(\lambda)$ annihilates 
the identity.

\begin{proof}
For brevity, we write $\bQ=\bQ(\lambda)$. 

Let $\Delta \bQ$ be the pullback of the 
operator $\bQ$ under the stable envelope map 
$$
H^\hd_\bG(\cM(\bw')) \otimes H^\hd_\bG(\cM(\bw''))
\to H^\hd_\bG(\cM(\bw'+\bw'')) \,. 
$$
We can decompose it 
$$
\Delta \bQ = \sum_\alpha \Delta_\alpha \bQ
$$
according to the 
weights of $\fh\otimes 1$. Here 
$$
\left[h\otimes 1,\Delta_\alpha \bQ\right] = 
\alpha(h) \, \Delta_\alpha \bQ \,. 
$$
In other words, $\Delta_\alpha \bQ$ increases $\bv'$ by $\alpha$. 
By Proposition \ref{red_add} and Theorem \ref{t_cl_divisor}, 
we have 
$$
\Delta_0 \bQ = \bQ \otimes 1 + 
1 \otimes \bQ \,. 
$$
By Corollary \ref{qR_qm}, 
$$
\left[ \left(q^\bv \otimes 1\right) R(u), \Delta \bQ\right] = 0 \,,
$$
which means 
$$
R(u) \, \Delta \bQ \, R(u)^{-1} = 
\sum q^{-\alpha} \, \Delta_\alpha \bQ  \,.
$$
The purely quantum part in $\bQ$ is a Steinberg correspondence, 
hence commutes with $R$-matrices. Taking into 
account the classical part, we get from \eqref{commRc1}
$$
\sum_{\alpha} (1-q^{-\alpha}) \,  \Delta_\alpha \bQ = 
\hbar  \sum_{\theta\cdot \alpha>0} \alpha(\lambda) 
\left(e_{\alpha} \otimes e_{-\alpha}- 
e_{-\alpha} \otimes e_{\alpha}\right) \,, 
$$
which uniquely determines all $\Delta_\alpha \bQ$ with $\alpha\ne 0$. 

Now consider 
$$
\bQ_\textup{remainder} = 
\sum_{\beta} (-1)^{(\beta,\kappa)} \, q^\beta \, \lambda(\beta)\,
\bQ_{2,\red}(\beta)
+ \sum_{\theta\cdot\alpha>0} \alpha(\lambda) \, \frac{q^\alpha}{1-q^\alpha} \, 
\, e_{\alpha} e_{-\alpha} \,.
$$

 By Lemma \ref{quadraticsteinberg}, this is a Steinberg correspondence.  Moreover, it commutes with 
$\fh$ and is primitive in the sense that 
$$
\Delta \bQ_\textup{remainder} = 
 \bQ_\textup{remainder} \otimes 1 + 1 \otimes  \bQ_\textup{remainder} \,. 
$$
The following Proposition finishes the proof. 
\end{proof}

\subsection{}

\begin{Proposition}\label{commwithh}
Let $\Theta$ be a family of Steinberg correspondences
$$
\Theta_{\bv,\bw} \subset \cM_{\theta,0}^{\times 2} 
$$
defined for all $\bv,\bw$. If it is primitive
$$
\Delta \Theta = \Theta \otimes 1 + 1 \otimes \Theta
$$
and commutes with $\fh$ then $\Theta \in \fhb_Q$. 
\end{Proposition}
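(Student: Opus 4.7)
The plan is to first exploit primitivity to reduce the problem to understanding the action of $\Theta$ on the fundamental modules $F_i = H^\hd_{\bG}(\cM(\delta_i))$, and then to show that a primitive Steinberg correspondence preserving $\bv$ on each such module must act as a scalar multiple of the identity with scalar a linear function of $\bv$ and $\bw$, which is precisely the action of $\fhb_Q$.

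First, I decompose $\bw = \sum_k \delta_{i_k}$ (with multiplicities encoded by repeated indices) and pass to the iterated torus-fixed-point embedding from Section \ref{s_bw=}. By Theorem \ref{t_Stein}, $\Theta$ intertwines with its residue under the stable envelope map $\bigotimes_k F_{i_k}(u_k) \to H^\hd_{\bG}(\cM(\bw))$, and this residue is itself a Steinberg correspondence on the tensor product (composable in the sense of Section \ref{s_Stein_corr}). Iterating the coproduct condition $\Delta \Theta = \Theta \otimes 1 + 1 \otimes \Theta$ via coassociativity, I obtain
\[
\Theta\big|_{F_{i_1}(u_1) \otimes \cdots \otimes F_{i_n}(u_n)} = \sum_{k=1}^n 1^{\otimes (k-1)} \otimes \Theta\big|_{F_{i_k}(u_k)} \otimes 1^{\otimes (n-k)}.
\]
Hence $\Theta$ is entirely determined by its restrictions $\Theta|_{F_i}$, each of which preserves the $\bv$-grading since $\Theta$ lives on $\cM(\bv,\bw)^{\times 2}$ with matching dimension vector.

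Second, I identify $\Theta$ as an element of $\fg_Q$. Since $\Theta$ is Steinberg, its matrix elements are polynomial in equivariant parameters of degree bounded by the cohomological degree of the cycle — in particular they are independent of any evaluation parameter $u$. Translated through the filtration of $\bY$ by degree in $u$, this means $\Theta$ lies in the degree-zero piece, which by Theorem \ref{t_grY} is exactly $\fg_Q$. Primitivity is automatic for every element of $\fg_Q$ in $\gr \bY \cong \cU(\fg_Q[u])$, and conversely a degree-zero element with primitive coproduct must lie in $\fg_Q$ itself rather than higher tensor powers in $\cU(\fg_Q[u])$. The preservation of $\bv$ then gives $\fh$-weight zero; by the root decomposition \eqref{rootfg}, the weight-zero part of $\fg_Q$ equals the Cartan $\fhb_Q$, proving $\Theta \in \fhb_Q$.

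Third, as a consistency check and to pin down the element, I compute the action of $\Theta$ on vacua: at $\bv = 0$ the variety $\cM(0,\bw)$ is a point, so $\Theta$ acts by a scalar $\theta(\bw) \in H^\hd_{\bG}(\pt)$; iterated primitivity over decompositions $\bw = \bw' + \bw''$ forces $\theta$ to be linear in $\bw$. On $\cM(\bv,\bw)$ with $\bv > 0$, the analogous additivity under decompositions $\bv = \bv' + \bv''$, combined with tensor decompositions of $\bw$, further forces linearity in $\bv$. The resulting linear function $\lambda(\bv,\bw) \in H^\hd_{\bG}(\pt)$ matches the action of the unique element of $\fhb_Q$ realizing $\Theta$.

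The main obstacle lies in the second step, specifically in justifying that a degree-zero Steinberg element of $\bY$ whose coproduct is primitive must lie in $\fg_Q$ rather than a nontrivial element of some higher tensor power in the associated graded $\cU(\fg_Q[u])$. The cleanest way forward is a weight-filtration argument: any candidate element outside $\fg_Q$ would, upon restriction to a suitable iterated tensor product of fundamentals using Step 1, produce a nontrivial multilinear residue incompatible with the additivity $\Theta|_{\bigotimes F_{i_k}} = \sum_k 1^{\otimes \cdots} \otimes \Theta|_{F_{i_k}} \otimes 1^{\otimes \cdots}$ established there. Once this obstruction is cleared, the conclusion $\Theta \in \fhb_Q$ follows immediately from the root space decomposition.
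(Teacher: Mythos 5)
Your proposal has a genuine gap at the critical step, and the obstacle you flag at the end is in fact unresolvable by the route you propose.

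The core difficulty is that there is no reason a priori for $\Theta$ to lie in the Yangian $\bY$ at all. Theorem~\ref{t_Stein} puts a Steinberg correspondence in the \emph{commutant} of the $R$-matrices, which is much larger than $\bY$; so the filtration-by-degree-in-$u$ and the isomorphism $\gr\bY\cong\cU(\fg_Q[u])$ from Theorem~\ref{t_grY} simply do not apply to $\Theta$. Even if one somehow placed $\Theta$ inside the degree-zero filtration piece, that piece is $\cU(\fg_Q)$, not $\fg_Q$, so one would additionally have to invoke that primitive elements of $\cU(\fg_Q)$ are exactly $\fg_Q$; but all of this rests on the unjustified premise $\Theta\in\bY$. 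Your Step~3 compounds the problem: it assumes $\Theta$ acts by a scalar on each $\cM(\bv,\bw)$ with $\bv>0$ and then argues the scalar is linear, but scalar action is precisely what must be proved, not assumed.

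The paper's proof sidesteps Yangian membership entirely. Since $\Theta$ preserves the $\fhb_Q$-weight decomposition, it commutes with the vacuum projector $\vacv{\bw}\!\vacd{\bw}$. Since $\Theta$ is Steinberg, $\Delta\Theta$ commutes with $R(u)$. Combining these, $\Theta$ (acting in the second tensor factor) commutes with $\tr_1\big((\vacv{\bw}\!\vacd{\bw}\otimes1)\circ R(u)\big)$, which by Section~\ref{s_vac} are the operators of classical multiplication by tautological characteristic classes. Proposition~\ref{p_span_H} then shows these span all cup product operators after tensoring with $\Q(\ft)$, and since cup product is a maximal commutative subalgebra, $\Theta$ must itself be a cup product operator. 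Having cohomological degree zero, $\Theta$ is a scalar, and primitivity forces the scalar to be linear in $\bv,\bw$. Your proof has none of this machinery; you would need to find an independent argument forcing $\Theta$ into $\bY$, which is likely harder than the proposition itself.
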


\noindent 
Recall that $\fhb_Q$ acts by multiplication by linear
function of $\bv$ and $\bw$.   Again, by $\Delta$ in the above proposition,
we mean the pullback of $\Theta$ under the stable envelope map.

\begin{proof}
By hypothesis,  $\Theta$ preserves
the decomposition
$$
H^\hd_\bG(\cM(\bw)) = \bigoplus_{\bv} H^\hd_\bG(\cM(\bw,\bv))
$$
into $\fhb_Q$-weight subspaces. In particular, 
\begin{equation}
  \left[\Theta, \vacv{\bw}\!\vacd{\bw}\right] =0 \label{xivac}
\end{equation}
where $\vacv{\bw}\!\vacd{\bw}$ is the projector onto the $\bv=0$ part. 

Since $\Theta$ is a Steinberg correspondence on each component, 
$$
[R(u),\Delta \Theta] = 0 
$$
This and \eqref{xivac} implies 
$$
\left[\tr_{1} 
\big((\vacv{\bw}\!\vacd{\bw} \otimes 1)\circ R(u)\big) , \Theta\right] =0
$$
where the trace is over the first tensor factor and 
$\Theta$ acts in the second tensor factor. 
By the 
results of Section \ref{s_vac}, this means that $\Theta_{\bv,\bw}$ commutes
with operators of classical multiplication by all characteristic
classes of the tautological bundles.  Proposition \ref{p_span_H}
 implies that $\Theta_{\bv,\bw}$ is itself an operator of classical multiplication.
Since it has cohomological degree $0$, it can only be a multiple 
of the identity. The primitivity condition forces this multiple 
to be a linear function of $\bv$ and $\bw$. 
\end{proof}

\chapter{Cotangent bundles of Grassmannians} \label{s_Grassmann}

In this chapter, we illustrate the general theory for the 
simplest possible quiver $Q$ --- that with 
one vertex and no arrows. The corresponding 
Nakajima varieties are cotangent bundles of Grassmann
varieties.  

Grassmann varieties are among the oldest objects of study 
in algebraic geometry; in particular, 
their quantum cohomology has been described by many authors
from many different angles, see e.g.\ \cite{Bertram,Buch,BKT,GRTV,Kim,Mihalcea,
Rietsch,SiebertTian}. The modest 
goal of this chapter is to help the reader align his 
favorite point of view on Grassmannians with the direction 
of this paper.

\section{Quantum cohomology}

\subsection{Setup}

For the quiver $Q$ with one vertex and no arrows, the Nakajima 
quiver data is a pair of matrices
$$
\xymatrix{
\C^n \ar@/^/[r]^A & \C^k \ar@/^/[l]^B} 
$$
where $\C^n = \C^{\bw_1}$ is the framing space and 
$k = \bv_1$. Let $X$ be the corresponding quiver variety 
$$
X = \cM_{\theta,0}(k,n) = \{(A,B), AB=0\}/\!\!/_\theta \, GL(k) \,,
$$
where 
$$
\textup{stable points} = 
\begin{cases}
\rk A = k \,, \quad &\theta > 0 \,, \\
\rk B = k \,, \quad & \theta <0 \,. 
\end{cases}
$$
In either case, $X=\varnothing$ if $k> n$. The map
$$
(A,B) \mapsto L = \begin{cases}
\Ker A\,, \quad &\theta > 0 \,, \\
\Img  B\,, \quad & \theta <0 \,, 
\end{cases}
$$
makes $X$ a vector bundle over the Grassmannian  
$$
\Grl = \begin{cases}
Gr(n-k,n)\,, \quad &\theta > 0 \,, \\
Gr(k,n)\,, \quad & \theta <0 
\end{cases}
$$
of possible $L\subset \C^n$. The fiber of this vector 
bundle is $\Hom(\C^n/L,L)$, whence 
$$
X = T^* \Grl \,. 
$$
Of course, Grassmann varieties of complementary dimension are 
isomorphic, but this isomorphism is not canonical, in 
particular not $GL(n)$-equivariant. Here we are 
interested in $\bG$-equivariant quantum cohomology 
of $X$, where 
$$
\bG = GL(n) \times \C^\times_\hbar\,.
$$
The second factor in $\bG$ scales the cotangent directions with 
weight $-\hbar$. 

\subsection{Divisors} 

The tautological bundle $\cV=\cV_1$ is identified as follows 
$$
\cV = \begin{cases}
\C^n/L\,, \quad &\theta > 0 \,, \\
L\,, \quad & \theta <0 \,, 
\end{cases}
$$
that is, $\cV$ is the universal quotient bundle for $\theta>0$ and 
the universal subbundle for $\theta<0$. The line bundle 
$$
\cO(1) = \left(\Lambda^\textup{top} \cV\right)^{\sgn \theta}
$$ 
is the very ample generator of $\Pic X$. The corresponding 
projective embedding of the Grassmannian is classically known 
as the Pl\"ucker embedding. 

It is elementary to see that $c_1(\cO(1))$ generates $H^\hd_\bG(X)$. 
Therefore quantum multiplication by this class uniquely 
determines the algebra of quantum multiplication. 

\subsection{The affine quotient}

Let 
$$
\pi: X \to X_0
$$
be the affinization of $X$. Its target $X_0$ may be described 
in terms of square-zero matrices $D$, or differentials. Let 
$$
\fD=  \left\{D\,\big|\, D^2=0 \right\} \subset \End \C^n \,. 
$$ 
denote the set of square-zero matrices. 
It is stratified by $GL(n)$-orbits
\begin{equation}
\fD_{r} = \{\rk D = r \} \,, \quad  r=0,1,2,\dots,
\left\lfloor \frac n2  
\right\rfloor \,. 
\label{D_r}
\end{equation}
The map 
$$
(A,B) \mapsto D = BA 
$$
gives 
$$
X_0 \cong 
\fD_{\le r}\,, \quad r  = \min(k,n-k) \,. 
$$
The fibers of $\pi$ 
are Grassmann varieties, namely 
$$
\pi^{-1}(D) \cong \left\{L\,\big|\, \Img D \subset L \subset \Ker D 
\right\}\,. 
$$
In particular, $\pi^{-1}(0) = \Grl$.

\subsection{The Steinberg variety}

By definition, the Steinberg variety is 
$$
\fS = X \times_{X_0} X \,. 
$$
The stratification \eqref{D_r} gives a decomposition 
into irreducible components
$$
\fS = \bigcup_d \fS_d\,,
$$
where $\fS_d$ is the closure of $X \times_{\fD_{r-d}} X$. 
In particular, 
\begin{align*}
\fS_0 & = \textup{diagonal}\,, \\
\fS_r & = \Grl \times \Grl \,. 
\end{align*}
For us, the most important stratum is $\fS_1$. 

%
% The intersection $\fS_d \cap \fS_r$ is the closure of a single 
% $GL(n)$-orbit, namely 
% $$
% \mathsf{O}_d =
% \left\{
% (L_1,L_2)\,\big| \, \dim L_1 \cap L_2 = \dim L_1 - d
% \right\} \,, 
% $$
% and moreover 
% $$
% \fS_d = \overline{T^\perp \mathsf{O}_{d}}\,, 
% $$
% which holds for the cotangent bundle of any homogeneous 
% variety, see ??? 

\subsection{Lines on $X$}

Let $\ell \in H_2(\Grl,\Z)$ be the effective generator. Curves
of class $\ell$ are lines in the Pl\"ucker embedding. 
Two 
points $L_1\ne L_2\in \Gr$ lie on a line $\ell_{L_1,L_2}$ if 
$$
\dim L_1 \cap L_2 = \dim L_1 - 1\,,
$$
in which case  
$$
\ell_{L_1,L_2} = \left\{L\,\big|\, L_1 \cap L_2 \subset L \subset 
L_1 + L_2\right\}\,. 
$$
Lines on $X$ are 
the lines in the fibers of $\pi$. Therefore $\fS_1$ 
is formed by pairs of points that lie on a line.

\subsection{Torus-fixed curves}\label{s_Gr_A}

Let $\bA\subset GL(n)$ be the diagonal torus. Since 
$X_0^\bA = \{0\}$, we have 
$$
X^\bA = \Grl^\bA\,. 
$$ 
This is a finite set formed by coordinate subspaces
$$
L_S = \bigoplus_{s\in S} \C e_{s} 
$$
where $\{e_1,\dots,e_n\}\in \C^n$ is the coordinate basis 
and $S\subset\{1,\dots,n\}$ ranges over subsets of 
cardinality $\dim L$. 

The set of reduced irreducible $\bA$-invariant
curves in $X$ is also finite, formed by lines $\ell_{S,S'}$ joining
fixed points $L_{S}$ and $L_{S'}$ with $|S \triangle  S'|=2$. 
Their tangent $\bA$-weights have the form 
$$
\pm(a_i - a_j)\,, \quad \{i,j\} = S \triangle  S' \,, 
$$
from which one concludes the following 

\begin{Lemma}\label{l_line}
The only unbroken $\bA$-fixed chains in $X$ are covers of lines
branched over fixed points. 
\end{Lemma}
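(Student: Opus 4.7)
\medskip

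\noindent\textbf{Proof plan for Lemma \ref{l_line}.}

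The plan is to analyse the possible components and nodes of an $\bA$-fixed unbroken chain $f: C = C_1 \cup \cdots \cup C_k \to X$ step by step, reducing the geometric problem to a short computation with the weights $a_1,\dots,a_n$.

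First I would show that no component $C_i$ is contracted by $f$. Since the chain has exactly two marked points, located on $C_1$ and $C_k$, every interior component carries exactly two special points (its two nodes), and the extremal components carry at most two (one node plus one marked point). A contracted rational component in a stable map must carry at least three special points, so no $C_i$ can be contracted. In particular, each $f|_{C_i}$ maps $C_i$ $\bA$-equivariantly onto a one-dimensional $\bA$-invariant subvariety of $X$. By the description of $X^\bA$ in Section \ref{s_Gr_A} and the classification of reduced irreducible $\bA$-invariant curves recalled there, each $f(C_i)$ must be one of the lines $\ell_{S_i,S'_i}$ with $|S_i \triangle S'_i|=2$.

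Next I would analyse the tangent weights at a node $p \in C_i \cap C_{i+1}$ mapping to a fixed point $L_S$. Parametrising the line $\ell_{T}$ with pair $\{p^\star, q^\star\}$ by $[s:t] \mapsto \Span(L_T, s\,e_{p^\star} + t\,e_{q^\star})$ and pulling back to the $d$-fold $\bA$-equivariant cover $\Pp^1 \to \ell_T$, $[u:v]\mapsto [u^d:v^d]$, the tangent weight at the preimage of $L_{T\cup\{p^\star\}}$ computes to $(a_{p^\star}-a_{q^\star})/d$. Applying this to $C_i$ and $C_{i+1}$, with respective pairs $\{p_i,q_i\}$, $\{p_{i+1},q_{i+1}\}$ and degrees $d_i,d_{i+1}$, the unbroken condition forces
\[
\frac{a_{p_i}-a_{q_i}}{d_i} + \frac{a_{p_{i+1}}-a_{q_{i+1}}}{d_{i+1}} = 0.
\]
Because $a_1,\dots,a_n$ are algebraically independent, this equation requires $\{p_i,q_i\}=\{p_{i+1},q_{i+1}\}$. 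At the node, the index of $S$ on each line must coincide with the ``incoming'' element, i.e.\ $p_i, p_{i+1}\in S$ and $q_i, q_{i+1}\notin S$. A direct case check — either $p_i=p_{i+1}$, forcing $1/d_i=-1/d_{i+1}$ which is impossible, or $p_i=q_{i+1}$, which conflicts with $p_i\in S$ and $q_{i+1}\notin S$ — rules out every possibility.

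It follows that an unbroken chain cannot have more than one component. For $k=1$, the map is an $\bA$-equivariant non-constant morphism $\Pp^1 \to \ell_{S,S'}$. Any such map is, in suitable $\bA$-equivariant coordinates, of the form $[u:v]\mapsto [u^d:v^d]$, hence a $d$-fold cover of $\ell_{S,S'}$ totally ramified over the two $\bA$-fixed points $L_S$ and $L_{S'}$. The main (and essentially only) obstacle is the tangent-weight case analysis above; once the weights are correctly identified, the incompatibility of the incidence condition ``$p_i\in S$, $q_{i+1}\notin S$'' with the algebraic identity dispenses with chains of length greater than one.
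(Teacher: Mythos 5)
Your argument is correct and spells out exactly the reasoning the paper leaves implicit (the paper simply states the lemma "follows" from the classification of $\bA$-invariant curves and their tangent weights). The weight analysis at a node — that the unbroken condition would force a linear relation among the independent framing weights $a_1,\dots,a_n$ which cannot hold — is the intended argument.

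One small remark on economy: your first step, ruling out contracted components via the stability condition, is true but unnecessary. A contracted component has tangent weight zero at any node it meets, so the unbroken condition ("opposite and \emph{nonzero}" weights at every node) already excludes contraction directly, without appealing to the three-special-points rule. Also, when you write that the linear relation ``requires $\{p_i,q_i\}=\{p_{i+1},q_{i+1}\}$,'' be aware this is a necessary condition which you then show is itself unsatisfiable; the wording might momentarily suggest the equal-set case survives, but your final case check correctly closes it. Both cases fail for different reasons — one by positivity of $1/d_i+1/d_{i+1}$, the other by the incidence constraint $p_i\in S$, $q_{i+1}\notin S$ — and together they give the conclusion.
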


\subsection{Quantum product by divisor}

For 
$d=1,2,\dots$ let 
$$
\bQ_d \subset H_\textup{middle}(X \times X)
$$
be the following Steinberg correspondence 
\begin{equation}
\bQ_d= d \, (-1)^{nd} \, 
\ev_* \left[\Mb_{0,2}(X,d\ell)\right]_\textup{virtual,reduced} \,. 
\label{pushvir}
\end{equation}
The sign $(-1)^{nd}$ is taken from the definition of modified 
Gromov-Witten invariants, that is, it comes from 
pairing $d\ell$ with 
$$
\kappa_X = c_1(\Grl) = n\, \cO(1) \,. 
$$
The factor of $d$ is introduced in \eqref{pushvir} so that 
\begin{equation}
\bQ_\textup{quantum}  = \hbar \, \sum_{d>0} q^{d\ell} \, \bQ_d
\label{hQd}
\end{equation}
is the modified purely quantum multiplication by $\cO(1)$.

\begin{Proposition}\label{p_Qd} For all $d>0$ we have  
  \begin{equation}
\bQ_d = \bQ_1  =  \pm \fS_1 \,. 
\label{QdQ1}
\end{equation}
\end{Proposition}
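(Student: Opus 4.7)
The plan is to compute each $\bQ_d$ via $\bA$-equivariant localization on $\Mb_{0,2}(X,d\ell)$, where $\bA\subset GL(n)$ is the diagonal torus as in Section~\ref{s_Gr_A}. Since $\bQ_d$ is known to be a Steinberg correspondence, its class is independent of equivariant parameters, so any specialization computes the full answer.

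First I would use Lemma~\ref{l_line} to classify the $\bA$-fixed components of $\Mb_{0,2}(X,d\ell)$: unbroken $\bA$-fixed maps are chains of covers of lines $\ell_{S_i,S_{i+1}}$ with $|S_i\triangle S_{i+1}|=2$, of total degree $d$, with the two marked points at the chain endpoints $L_{S_0}$ and $L_{S_k}$; broken maps contribute only to scalar (diagonal) operators by the theorem recalled from \cite{OPhilb}. The second step is to show that chains of length $k\geq 2$ contribute zero to the reduced class. The heuristic is that a non-rigid chain admits a deformation in the transverse direction provided by the holomorphic symplectic form $\omega$, so the reduction morphism strips off this trivial obstruction and forces an Euler-class factor that vanishes. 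Granting this, the support of $\bQ_d$ is confined to $\fS_0\cup\fS_1$, and for $|S\triangle S'|=2$ the only contribution at $(L_S,L_{S'})$ comes from the totally ramified degree-$d$ cover of $\ell_{S,S'}$ marked at the two ramification preimages.

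Third, I would compute this surviving contribution. The cover has automorphism group $\Z/d$, producing a factor $1/d$ that precisely cancels the $d$ inserted in \eqref{pushvir}; the sign $(-1)^{nd}$ matches the pairing $(d\ell,\kappa_X)=nd$. The normal bundle to $\ell_{S,S'}\cong\Pp^1$ in $X$ decomposes into $\bA$-weight eigenspaces of the shape $\cO(1)\oplus\cO(-1)\oplus\cO^{\oplus(\dim X-3)}$, reflecting the holomorphic symplectic structure, and a direct Euler-class computation for the reduced virtual class of multiple covers of a rigid $\Pp^1$ in a holomorphic symplectic target telescopes to a $d$-independent constant of $\pm 1$ at each such endpoint pair. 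Consequently the restriction of $\bQ_d$ to the off-diagonal part of $X^\bA\times X^\bA$ agrees with the $\bA$-localization of $\pm\fS_1$. Since a Steinberg correspondence is determined by such a fixed-point restriction up to a diagonal term, the off-diagonal parts of $\bQ_d$ and $\pm\fS_1$ coincide, and the diagonal correction is pinned down by the requirement $\bQ_d\cdot 1=0$ (which the purely quantum part of a divisor operator always satisfies), yielding $\bQ_d=\pm\fS_1$ for every $d$.

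The main obstacle is the second step: verifying both the vanishing on multi-line chains and the $d$-independence of the single-line contribution. The former requires a careful analysis of how reduction modifies the obstruction sheaf on a nodal curve in a holomorphic symplectic target, showing that each additional node generates an extra trivial factor of $\omega$ that kills the reduced class; the latter is a multiple-cover integral that must telescope once the $1/d$ automorphism factor and the $(-1)^{nd}$ sign are incorporated. A useful cross-check is that summing $\sum_{d\geq 1}q^{d\ell}\bQ_d=\pm\fS_1\cdot q^\ell/(1-q^\ell)$ reproduces exactly the shape predicted by Theorem~\ref{t_Qlam} for the single positive root $\alpha$ of $\fg_Q\cong\mathfrak{sl}_2\oplus\fhb$ in this example, with $e_\alpha e_{-\alpha}$ identified (up to sign) with $\fS_1$.
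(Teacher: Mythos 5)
Your overall strategy (equivariant localization, identification of the contributing fixed loci, Euler-class computation at the surviving components, then pinning the diagonal by $\bQ_d \cdot 1 = 0$) is the same as the paper's, and steps one and four are fine. But there is a genuine gap in the middle.

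You have misread Lemma~\ref{l_line}. It does not merely classify the single components of an unbroken chain; it asserts that \emph{the only unbroken $\bA$-fixed chains are covers of a single line} $\ell_{S,S'}$. The point, coming from Lemma~\ref{L_unbroken_monotone} together with the fact that every reduced irreducible $\bA$-invariant curve in $T^*\Grl$ is some $\ell_{S,S'}$ with tangent weight $\pm(a_i-a_j)$ for $\{i,j\}=S\triangle S'$, is that the unbroken condition forces every component of the chain to have the same $\{i,j\}$; together with the strict monotonicity of the $\cO(1)$-weights along the chain, this forces a single line. Consequently there are no multi-line chains to dispose of, and your whole ``second step'' is chasing a phantom. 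Worse, the heuristic you offer for that step---``each additional node generates an extra trivial factor of $\omega$ that kills the reduced class''---is not correct as a general principle: the reduced virtual class is obtained by splitting off exactly one trivial factor of the obstruction theory (from pairing $\omega$ with the curve class), not one per node, and for more general symplectic resolutions (Hilbert schemes, higher-rank $\cM(r,n)$) unbroken chains with several components do contribute nontrivially. What rules them out here is specific to the combinatorics of $\bA$-invariant curves in $T^*\Grl$, i.e.\ Lemma~\ref{l_line}, not any vanishing of the reduced class on nodal domains.

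The third step is also left vague where it needs to be precise. The paper does not assert a normal-bundle splitting $\cO(1)\oplus\cO(-1)\oplus\cO^{\dim X - 3}$; instead it uses the global splitting $f^*TX = \cT\oplus\cT^*$ with $\cT = f^*T\Grl$ and then invokes Lemma~\ref{l_TT*}, which says
$\Euler' H^\hd(\cT\oplus\cT^*) = \pm\,\Euler(\cT_0\oplus\cT_\infty)$
whenever $\cT$ has no zero $\bA$-weights at the fixed points. The two zero weights that appear are discarded in two different ways --- one by the $\Aut$ of the marked $\Pp^1$, the other exactly by passing to the reduced class --- and this bookkeeping is what makes the final ratio in \eqref{locQd} manifestly $d$-independent. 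Your ``telescopes to a $d$-independent constant of $\pm1$'' is the conclusion of this lemma, not an argument for it; without Lemma~\ref{l_TT*} (or an equivalent statement) the $d$-independence is an assertion, not a proof.
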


\begin{proof}
As a first step, we compute the push-forward \eqref{pushvir}
modulo terms supported on the diagonal. We do this 
by $\bA$-equivariant localization. 

Recall that 
only unbroken maps contribute to localization of 
reduced virtual classes. Suppose the marked points
of an unbroken map $f$ evaluate to distinct points of $X^\bA$. 
Then by Lemma \ref{l_line} $f$ has the form 
$$
f: \Pp^1 \xrightarrow{\quad z\mapsto z^d \quad} \ell_{S,S'} \subset \Grl\,,
$$
ramified over $L_S,L_{S'} \in \Grl^\bA$. 
In particular
$$
\Aut f = \Z / d \,,
$$
and hence $f$ contributes 
$$
-(-1)^{nd} \Euler' H^\hd (f^* TX)^{-1} \in \Q(\fa) \,, 
$$
to localization of $\bQ_d$. 
Here $\Euler'$ is the product of nonzero $\bA$-weights 
in the virtual $\bA$-module $H^\hd (f^* TX)$. 

To be precise, there 
are two zero weights in this module. One occurs in 
$H^0(f^* T\ell_{S,S'})$ and is taken out by the automorphism 
of a $2$-pointed $\Pp^1$. The other occurs in 
$H^1(f^* T^*\ell_{S,S'})$ and is taken out by passing 
to the reduced invariants. The minus sign appears 
because $T^*\ell_{S,S'}$ has weight $-\hbar$ under
the $\C^\times_\hbar$-action while 
in \eqref{hQd} we take out a factor of $\hbar$. 

Since 
$$
f^*TX = \cT \oplus \cT^*\,, \quad \cT = f^* T \Grl \,, 
$$
Lemma \ref{l_TT*} below shows 
\begin{equation}
  \label{locQd}
 \frac{\bQ_d\big|_p}{\Euler T_p \, X \times X} \, \, 
  = 
\frac{(-1)^{\dim \Grl}}
{\Euler T_p \, \Grl \times \Grl}
\end{equation}
for any off-diagonal $p\in \fS_1^\bA$, that is, for any 
$$
p = (L_{S},L_{S'}) \,, \quad |S \triangle S'| = 2 \,. 
$$
This proves \eqref{QdQ1} modulo 
a class supported on the diagonal. 

To show the contribution 
of the diagonal vanishes, it suffices to note that $\bQ_1$ 
annihilates the identity in cohomology and so does 
$\fS_1$. Indeed, the fibers of 
$\pi$ are positive-dimensional over $\fD_{r-1}$. 
\end{proof}

\begin{Lemma}\label{l_TT*} 
Let $\bA$ be a torus and let $\cT$ be an $\bA$-equivariant 
bundle on $\Pp^1$ 
without zero weights in the fibers $\cT_0,\cT_\infty$ over 
fixed points. Then 
$$
\Euler' H^\hd(\cT \oplus \cT^*)  = (-1)^{\deg \cT + \rk \cT+ \#z} 
\Euler \left(\cT_0 \oplus \cT_1\right)\,,
$$
where $\#z= \dim H^1(\cT \oplus \cT^*)^\bA$. 
\end{Lemma}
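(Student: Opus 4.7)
The plan is to diagonalize. By the $\bA$-equivariant Birkhoff--Grothendieck theorem, $\cT$ splits equivariantly as a direct sum $\cT = \bigoplus_i L_i$ of $\bA$-equivariant line bundles on $\Pp^1$, where $L_i$ has degree $d_i$ and fiber weights $w_i, w'_i$ at the two fixed points. All quantities in the statement behave well under this decomposition: $\Euler'$ applied to a sum of virtual characters is the product of the $\Euler'$ of the summands (each computed by omitting zero weights), while $\deg \cT = \sum d_i$, $\rk \cT = \sum 1$, $\Euler(\cT_0 \oplus \cT_\infty) = \prod_i w_i w'_i$, and $\#z$ is the sum of the analogous invariants $\#z_i$ for the pairs $L_i \oplus L_i^*$. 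It therefore suffices to prove the lemma for a single line bundle.

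For an equivariant line bundle $L$ on $\Pp^1$ of degree $d$ with fiber weights $w, w'$, let $t$ denote the weight of $T_0\Pp^1$, so that $w - w' = dt$. The standard formulas for the cohomology of $\cO(d)$ on $\Pp^1$ give: for $d \ge 0$, $H^0(L)$ carries the weights $w - kt$ for $k = 0, 1, \dots, d$ and $H^1(L) = 0$; for $d \le -2$, $H^1(L)$ carries the weights $w + kt$ for $k = 1, \dots, -d-1$ and $H^0(L) = 0$; and for $d = -1$ both cohomologies vanish. Applying the same to $L^*$ (which has degree $-d$ and fiber weights $-w, -w'$) and combining, one finds the virtual character
$$\chi(L \oplus L^*) = \chi^w + \chi^{w'} + \sum_{k=1}^{|d|-1} \bigl(\chi^{w - \sgn(d)kt} - \chi^{\sgn(d)kt - w}\bigr),$$
with the understanding that for $d = 0$ the first two terms become $\chi^w + \chi^{-w}$, the two contributions of $H^0(L)$ and $H^0(L^*)$.

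Each surviving matched pair contributes a factor $(w - kt)/(kt - w) = -1$ to $\Euler'$, while each pair for which $w$ equals the corresponding multiple of $t$ is a zero pair and is dropped from both numerator and denominator, reducing the exponent of $-1$ by one. Denoting by $m$ the number of such zero pairs, one obtains $\Euler'(\chi(L \oplus L^*)) = (-1)^{|d|-1-m} \, w w'$; since $m$ also equals the number of zero $\bA$-weights in $H^1(L \oplus L^*)$, taking the product over the summands $L_i$ gives a total sign of $(-1)^{\sum (|d_i|-1-m_i)}$, which has the same parity as $\deg \cT + \rk \cT + \#z$ because $|d_i| \equiv d_i \pmod 2$. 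The main technical care goes into the low-degree cases $d \in \{-1, 0, 1\}$, where $L$ or $L^*$ has vanishing cohomology, and into the consistent bookkeeping of zero weights (which always appear in matched pairs between $H^0$ and $H^1$ of $L \oplus L^*$); once these are verified, the calculation above completes the proof.
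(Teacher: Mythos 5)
Your approach — reduce to equivariant line bundles via Birkhoff--Grothendieck, write out the weights of $H^0$ and $H^1$, and count the factors of $-1$ — is correct and is the natural computation; the paper states this lemma without proof, so there is no alternative argument to compare against. You also read $\cT_1$ in the statement correctly as the obvious typo for $\cT_\infty$.

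One small inaccuracy: your displayed formula for $\chi(L\oplus L^*)$ is only right for $d\geq 0$. For $d<0$ the two endpoint terms come from $H^0(L^*)$, so they should read $\chi^{-w}+\chi^{-w'}$, and the summand should be $\chi^{-w-kt}-\chi^{w+kt}$ (the $H^0$ contribution from $L^*$ minus the $H^1$ contribution from $L$). This doesn't affect anything downstream — $\Euler'(\chi^{-w}+\chi^{-w'})=ww'$ just as for $\chi^w+\chi^{w'}$, each non-zero summand pair still contributes $-1$, and the zero pairs are matched between $H^0(L^*)$ and $H^1(L)$ exactly as you say — but the formula as written is not symmetric under $L\leftrightarrow L^*$ and so can't be the character of $L\oplus L^*$ for both signs of $d$. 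The remainder of the bookkeeping (the identity $|d_i|\equiv d_i \pmod 2$, the matching of zero pairs with $\dim H^1(\cdot)^\bA$, and the final parity count) is correct.
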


\noindent 
The sign in \eqref{QdQ1} is easily determined from \eqref{locQd}, but 
we will not need it in what follows. 

\section{The stable basis} 

\subsection{Tensor product structure} 

As usual, we define
$$
\cM(n) = \bigsqcup_{k} \cM_{\theta,0}(k,n) \,.
$$
The $\bA$-action makes $\cM(n)$ a tensor product 
$$
\cM(n) = \cM(1)^{\otimes n}\,, \quad \cM(1) = \textup{$2$ points} \,. 
$$
We write 
$$
H^\hd(\cM(1),\Q) = \Q \vacv{0} \oplus \Q \vacv{1} = \Q^2\,,
$$
where
$$
\bv  \vacv{k} = k \vacv{k} \,.
$$
Similarly, 
$$
H^\hd\left(\cM(n)^\bA\right) = \left(\Q^2\right)^{\otimes n} = 
\bigoplus_{S\subset \{1,\dots,n\}} 
\Q \vacv{S} 
$$
where we identify
$$
\textup{subsets of $\{1,\dots,n\}$}  
\leftrightarrow \{0,1\}^n 
$$
using indicator functions. In $\bG$-equivariant cohomology, 
we replace $\Q$ above with the equivariant
cohomology ring of a point.

% $$
% \delta_S = 00101101\,, \quad S=\{3,5,6,8\} \,, \quad n = 8 \,.
% $$ 

\subsection{Polarization}\label{s_TG_pol}

Recall from Example \ref{s_pol_Nak} that we have a canonical 
choice for polarization of any Nakajima variety. In the case
at hand, this gives
\begin{align}
\Stab_\fC \vacv{S} \big|_{L_S} &= \Euler 
\Hom(\cV,\C^n  \ominus \cV) \notag 
\\
&= (\pm 1)^{\dim \Grl} 
 \Euler T_{L_S} \Grl \,,
\label{polTG}
 \end{align}
depending on the sign of $\theta$. 
Here the Euler class is the product of $\bA$-weights. 

Note the two possibilities in \eqref{polTG} differ by an 
overall scalar, which means that all geometric operators
act canonically in the stable basis. 

\subsection{Classical $\br$-matrix} 

We claim 
$$
\fg_Q = \gl(2)
$$
with its natural action on $\Q^2$ and, by tensor product, on 
$H^\hd\left(\cM(n)^\bA\right)$. Indeed, the classical 
$\br$-matrix is computed as follows in terms of the 
matrix units $e_{ij} \in \gl(2)$. 

\begin{Proposition} 
\begin{equation*}
  \br = e_{00} \otimes e_{11} + 
e_{11} \otimes e_{00} - e_{01} \otimes e_{10} 
- e_{10} \otimes e_{01} \,. 
\end{equation*}
\end{Proposition}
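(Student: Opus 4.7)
The plan is to verify the formula for $\br$ on each $\fh$-weight subspace of $\Q^2 \otimes \Q^2 = H^\hd((\cM(2))^\bA)$ separately. Since $\br$ preserves the total dimension vector $\bv$ of the ambient $\cM(2) = \cM(1)^{\otimes 2}$ and each of the four terms in the proposed formula is $(\fh \oplus \fh)$-weight-preserving, it suffices to check the identification on the three sectors $\bv = 0, 1, 2$.

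On the $\bv = 0$ and $\bv = 2$ sectors, spanned respectively by $\vacv{0}\otimes \vacv{0}$ and $\vacv{1}\otimes \vacv{1}$, the ambient Nakajima variety $\cM_{1,0}(k,2)$ degenerates: it is a single point for both $k = 0$ and $k = 2$. Hence $R(u) = \mathrm{id}$ there and $\br$ vanishes. One checks at once that each of $e_{00}\otimes e_{11}$, $e_{11}\otimes e_{00}$, $e_{01}\otimes e_{10}$, $e_{10}\otimes e_{01}$ annihilates both $\vacv{0}\otimes \vacv{0}$ and $\vacv{1}\otimes \vacv{1}$, so these sectors are automatic.

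The content of the proposition therefore concentrates on the $\bv = 1$ sector, where the ambient is $\cM_{1,0}(1,2) = T^*\Pp^1$ and the $R$-matrix has already been written down in Example \ref{s_R_Yang}:
$$R(u) = \frac{1 - (\hbar/u)\,\mathbf{s}}{1 - \hbar/u} = 1 + \frac{\hbar}{u}(1 - \mathbf{s}) + O(u^{-2}),$$
with $\mathbf{s}$ the permutation of the two fixed points. Extracting the $\hbar/u$-coefficient gives $\br\big|_{\bv = 1} = 1 - \mathbf{s}$ on $\Span\{\vacv{0}\otimes \vacv{1},\,\vacv{1}\otimes \vacv{0}\}$. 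Evaluating the proposed formula with the standard convention $e_{ij}\vacv{k} = \delta_{jk}\vacv{i}$ shows the first two terms $e_{00}\otimes e_{11} + e_{11}\otimes e_{00}$ act as the identity on this subspace (yielding the $1$), while $-e_{01}\otimes e_{10} - e_{10}\otimes e_{01}$ swaps the two fixed vectors with a sign (yielding the $-\mathbf{s}$). This matches Example \ref{s_R_Yang} exactly.

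As an independent consistency check on the diagonal, I would invoke the general formula \eqref{br_diag_Nak}: for the one-vertex quiver with no arrows the non-equivariant Cartan entry is $\bC = 2$, and with $\bw_1 = \bw_2 = 1$ one computes $\bw_1\bv_2 + \bv_1\bw_2 - \bC\,\bv_1\bv_2 = 1$ at both $(\bv_1,\bv_2) = (0,1)$ and $(1,0)$, which reproduces the Cartan piece $e_{00}\otimes e_{11} + e_{11}\otimes e_{00}$. The only real bookkeeping concern is to check that the polarization convention from Section \ref{s_TG_pol} matches the one used in Example \ref{s_R_Yang}; any discrepancy would conjugate $\br$ by a $\pm 1$ diagonal matrix on $X^\bA$, but the manifest $\vacv{0}\otimes\vacv{1} \leftrightarrow \vacv{1}\otimes\vacv{0}$ symmetry of the $\bv = 1$ calculation ensures that the two fixed points receive the same sign, leaving the formula unchanged. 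I do not expect any serious obstacle beyond this verification.
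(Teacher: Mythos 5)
Your proof is correct and follows the paper's own argument: the key input is the computation of the $R$-matrix for $T^*\Pp^1$ from Example~\ref{s_R_Yang}, and the remaining sectors $\bv = 0, 2$ are trivial since the ambient variety there is a point. Your explicit verification of the trivial sectors and the cross-check against formula~\eqref{br_diag_Nak} is a helpful expansion of the paper's two-line proof, and your concern about the polarization is in fact moot: the fiber polarization of Example~\ref{s_R_Yang} coincides with the canonical polarization of Example~\ref{s_pol_Nak} for this quiver, as one sees by noting that $(\Th)^\vee$ restricted to a fixed point of $T^*\Pp^1$ has nonzero $\bA$-weight equal to the cotangent-fiber weight.
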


\begin{proof}
For $\cM(1) \otimes \cM(1)$ this was computed in 
Section \ref{s_R_Yang}. In general, it follows
by additivity of the classical $\br$-matrix. 
\end{proof}

\noindent 
Other ways to write the $\br$-matrix include 
\begin{align*}
  \br &= \bw \otimes \bw - \sum_{ij} e_{ij} \otimes e_{ji} \\
&=
 - e \otimes f - f\otimes e + \dots  \,, 
\end{align*}
where
$$
e = e_{10} \,, \quad f = e_{01}\,, \quad \bw = e_{00} + e_{11} \,,
$$
and dots stand for a diagonal operator.

\subsection{Quantum multiplication in stable basis}

Recall  the operators $\bQ_d$ are Steinberg correspondences. 
Therefore, 
by Theorem \ref{t_Stein}, 
their action in the stable basis does not  
depend on the choice of a chamber $\fC$ for $\bA$.

The following Proposition gives a direct verification of 
Theorem \ref{t_Qlam} for cotangent bundles of Grassmannians. 

\begin{Proposition} \label{p_Q_Gr} 
We have 
$$
\bQ_\textup{quantum} = \hbar \, \frac{q^\ell}{1-q^{\ell}} \, ef + \dots 
$$
where dots stand for a diagonal operator. 
\end{Proposition}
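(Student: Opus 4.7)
The plan is to combine Proposition \ref{p_Qd} with a direct $\bA$-equivariant localization computation of the Steinberg correspondence $\fS_1$ in the stable basis. By Proposition \ref{p_Qd} we have $\bQ_d = \pm\fS_1$ for every $d\ge 1$, so summing the geometric series gives
\[
\bQ_\textup{quantum} \;=\; \hbar\sum_{d\ge 1} q^{d\ell}\, \bQ_d \;=\; \pm\,\hbar\,\frac{q^\ell}{1-q^\ell}\,\fS_1\,,
\]
and the proposition reduces to identifying the action of $\fS_1$ in the stable basis, modulo a diagonal operator, with $ef$ acting on $(\Q^2)^{\otimes n}$.

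Since $\fS_1$ is a Steinberg correspondence, Theorem \ref{t_Stein} guarantees that its matrix in the stable basis is independent of the chamber $\fC$ (for the canonical polarization of Section \ref{s_TG_pol}). I would analyze its matrix elements $(\fS_1)_{S'S}$ by localization with respect to the torus $\bA$ of Section \ref{s_Gr_A}. The variety $\fS_1$ consists of pairs $(L_1,L_2)$ lying in a common fiber of $\pi$, i.e.\ sharing a codimension-one intersection. Intersecting with $X^\bA\times X^\bA$ therefore forces the coordinate subsets $S,S'$ to satisfy $|S\triangle S'|\le 2$. This immediately gives $(\fS_1)_{S'S} = 0$ whenever $|S\triangle S'|>2$, matching the off-diagonal support of $ef=\sum_{i,j}e_{(i)}f_{(j)}$ in the stable basis.

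For $S'\ne S$ with $S\triangle S' = \{i,j\}$, the point $(L_S,L_{S'})$ is a smooth isolated fixed point of $\fS_1\setminus\Delta$, because locally $\fS_1$ is the fibred product $\pi^{-1}(D)\times\pi^{-1}(D)$ for $D\in\fD_{r-1}$, which at $(L_S,L_{S'})$ looks like $T^*\Pp^1\times_{\Pp^1}T^*\Pp^1$ for the Schubert line $\ell_{S,S'}$. A direct localization calculation, in the spirit of the proof of Proposition \ref{p_Qd} and Lemma \ref{l_TT*}, computes the contribution $(\fS_1)_{S'S}$; the normal bundle to $(L_S,L_{S'})$ in $\fS_1$ factors through the tangent weights of $\ell_{S,S'}$, and these cancel precisely against the stable-envelope normalization \eqref{polTG}, leaving a unit matrix element $\pm 1$. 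This matches the coefficient of $\vacv{S'}$ in $ef\,\vacv{S} = e_{(i)}f_{(j)}\vacv{S}$, which is $1$. Diagonal contributions are absorbed into the ``$\dots$'' of the statement.

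The main obstacle is to pin down the overall sign uniformly across all pairs $(S,S')$ so that $\fS_1$ agrees with $\pm ef$ and not merely with some signed combination of $e_{(i)}f_{(j)}$'s. A cleaner route, avoiding this bookkeeping, is to invoke Theorem \ref{t_Qlam} directly: the quiver here has $\fg_Q\cong\fsl(2)$ with a unique positive root $\alpha$, and with $\lambda$ the class of $c_1(\cO(1))$ one has $\alpha(\lambda)=1$ and $e_\alpha = e$, $e_{-\alpha}=f$; the cited formula then reads
\[
\bQ_\textup{quantum} \;=\; -\,\hbar\,\frac{q^\ell}{1-q^\ell}\,ef \;+\; \dots\,,
\]
which (up to the overall sign present in \eqref{pushvir}) is the Proposition. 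The direct computation above serves as the promised independent verification of Theorem \ref{t_Qlam} in this model case.
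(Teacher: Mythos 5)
Your direct-computation route is essentially the paper's proof: both reduce to $\bQ_1 = ef + \dots$ via Proposition~\ref{p_Qd}, both compute matrix elements of $\bQ_1$ in the stable basis by $\bA$-equivariant localization, and both rely on the polarization normalization \eqref{polTG}. The paper finishes by plugging the off-diagonal localization formula \eqref{locQd} into the adjointness identity
$$
\bQ_1 \vacv{S} = \sum_{S'} (-1)^{\dim\Grl}\,\big(\Stab_{-\fC}\vacv{S'}\otimes\Stab_\fC\vacv{S},\,\bQ_1\big)\,\vacv{S'}
$$
from Theorem~\ref{adjoint}; the $(-1)^{\dim\Grl}$ factors from this identity, from \eqref{polTG}, and from \eqref{locQd} cancel in the same way for every off-diagonal $(S,S')$, which yields the uniform matrix element $1$ (not just $\pm 1$). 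That cancellation is the concrete content of the step you leave at ``$\pm 1$''; it is precisely the computation the paper records, so the ``main obstacle'' you flag is not actually open.

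The fallback route through Theorem~\ref{t_Qlam} should be dropped, for two reasons. First, the paper introduces Proposition~\ref{p_Q_Gr} explicitly as ``a direct verification of Theorem~\ref{t_Qlam}'' in this model case, so deriving it from that theorem defeats the point. Second, you record a sign discrepancy (``$-\hbar\,\frac{q^\ell}{1-q^\ell}\,ef$'') and wave at ``the overall sign present in \eqref{pushvir}''; but the actual resolution is that the normalized root vectors from \eqref{formula_r} satisfy $e_\alpha\otimes e_{-\alpha} + e_{-\alpha}\otimes e_\alpha = -e\otimes f - f\otimes e$ (compare with the explicit $\br$ computed in the Grassmannian section), so $e_\alpha e_{-\alpha} = -ef$ up to a diagonal, and the minus sign in Theorem~\ref{t_Qlam} cancels it — no appeal to \eqref{pushvir} is needed. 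If you keep the fallback as a sanity check, you should make this identification explicit rather than attribute the sign to the modified virtual class.
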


\begin{proof}
By Proposition \ref{p_Qd}, the statement to prove is 
$$
\bQ_{1} = ef + \dots \,. 
$$
Since $\dim \cM^\bA =0$, theorem \ref{adjoint} gives  
\begin{equation}
\bQ_{1,\bA} \vacv{S}
= \sum_{S'} (-1)^{\dim \Grl} 
\big(\Stab_{-\fC} \vacv{S'} \otimes \Stab_\fC \vacv{S}, 
\bQ_1 \big) \, \vacv{S'} 
\label{bQ1S}
\end{equation}%
The coefficient in \eqref{bQ1S} may be computed using 
\eqref{polTG} and \eqref{locQd} and recall that 
we can set $\hbar=0$ in this computation, which 
makes stable envelopes diagonal.  For either 
sign of $\theta$, this gives 
$$
\vacd{S'} \bQ_{1,\bA} \vacv{S} = 
\begin{cases}
0\,, \quad & |S\triangle S'|>2\,,\\
1\,, \quad & |S\triangle S'|=2\,,\\
\end{cases}
$$
proving the proposition. 
\end{proof}

\subsection{}

It is an interesting combinatorial and geometric question 
to compute the transition matrix between the stable basis 
and the fixed-point basis in $H^\hd_\bG(X)$. 

In the quantum integrable system language, the fixed-point 
basis corresponds to the eigenbasis at $q=0$, while the 
stable basis is the coordinate basis, that is, the spin 
basis of the spin chain for $X=T^*\Grl$. Thus, the question 
is equivalent to explicit diagonalization of the 
Hamiltonian at $q=0$. 

For the inhomogeneous XXX spin chain, the answer was known to 
Nekrasov and Shatashvili.  The corresponding symmetric 
functions are rational analogs of the interpolation Schur
functions. Just like Schur functions may be deformed 
to Macdonald polynomials associated to root systems of 
type $A$ and, more generally, to nonreduced 
$BC$ root systems, these rational interpolation Schur functions 
naturally lie in the family of special functions studied 
by E.~Rains in \cite{Rains}. 

In \cite{Daniel}, D.~Shenfeld shows how this identification 
is a example of the general abelianization procedure 
for stable bases.

\section{Yangian action}

\subsection{The Yangian of $\gl(2)$} 

Yangians of finite-dimensional  Lie algebras have been studied in 
great detail, see for example the exposition in 
\cite{ChariPr,ES,Molev,MNO}. 
We recall $\bY(\gl(2))$ is generated by 
countably many generators, the coefficients $\bE_{ij}^{(k)}$ 
in the generating series 
$$
\bE_{ij}(u) = \delta_{ij} + \sum_{k>0} \frac{\bE_{ij}^{(k)}}{u^k} \,,
\quad i,j\in \{1,2\} \,, 
$$
subject to the RTT=TTR relations. These relations are
written in terms of 
the matrix 
$$
\bE(u) =
\begin{pmatrix}
\bE_{11}(u) & \bE_{12}(u) \\  
\bE_{21}(u) & \bE_{22}(u) 
\end{pmatrix} \in \End \Q^2 \otimes \bY(\gl(2))[[u^{-1}]] 
$$
and have the form
\begin{equation}
R(u-v) \, \bE(u) \, \bE(v) = \bE(v) \, \bE(u) \, R(u-v)
\label{RTTgl2}  \,.
\end{equation}
The equality in \eqref{RTTgl2} is an equality in 
$$
\textup{\eqref{RTTgl2}}  
\in \End(\Q^2 \otimes \Q^2) \otimes \bY(\gl(2)) [[u^{-1},v^{-1}]] \,.
$$
The $R$-matrix in \eqref{RTTgl2} is 
$$
R(u) = \Big(1 - \dfrac{\mathbf{s}}{u}\Big) 
\Big/
\Big(1 - \dfrac{1}{u}\Big)
\in \End(\Q^2 \otimes \Q^2)[[u^{-1}]] \,,
$$
where $\mathbf{s}$ is the permutation of tensor factors. 
The scalar factor, which plays no role in \eqref{RTTgl2}, 
is chosen here so that $R(u)$ equals the $R$-matrix 
for $\cM(1) \otimes \cM(1)$ computed in \eqref{R_Yang}
for $\hbar=1$\,.

\subsection{Evaluation representation}

Consider the map $\bY(\gl(2)) \to \End \Q^2$ given by 
\begin{equation}
\bE_{ij}(u) \mapsto \Big(\delta_{ij} - 
\frac{e_{ji}}{u}\Big) \Big/
\Big(1 - \dfrac{1}{u}\Big) \,. 
\label{evrY}
\end{equation}
This takes $\bE(u)$ to $R(u)$ and is indeed a representation 
of $\bY(\gl(2))$ by the Yang-Baxter equation. We denote 
by $\Q^2(a)$ this representation precomposed with the 
translation automorphism of the Yangian. 
It is well-known, and can be seen as in Section \ref{s_pr_tgrY}, 
that 
\begin{equation}
\bigcap_{n=1}^\infty \, 
 \Ker \Q^2(a_1) \otimes \dots \otimes \Q^2(a_n) = 0  \,. 
\label{capker}
\end{equation}
Traditionally, a different representation of the Yangian, namely
$$
\bE_{ij}(u) \mapsto \delta_{ij} +
\frac{e_{ij}}{u} \,. 
$$
is called the evaluation representation. The two are 
related by a composition of automorphisms
$$
\bE(u) \mapsto \bE(-u)^T, \quad \bE(u) \mapsto f(u) \bE(u) 
$$
of $\bY(\gl(2))$, where the superscript $T$ denotes transposition
and $f(u)=1+O(u^{-1}) \in \Q[[u^{-1}]]$ is an arbitrary scalar
factor.

\subsection{Comparison of Yangians}

Let $\bY_Q$ denote the Yangian constructed in 
Chapter \ref{s_Yang}. 
This is an algebra over 
$
\bk=\Q[\hbar] \,. 
$
The Yangian $\bY_Q$ 
is graded by cohomological degree and $\hbar$ has cohomological 
degree $2$. Therefore, $\bY_Q$ is uniquely reconstructed, via the 
Rees algebra construction, from its specialization at $\hbar=1$, with
the induced filtration.  
We set $\hbar=1$ in what follows. 

\begin{Proposition}\label{p_YY2} 
$$
\bY(\gl(2)) \cong  \bY_Q 
$$
\end{Proposition}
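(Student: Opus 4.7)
My plan is to exhibit a natural homomorphism $\phi: \bY(\gl(2)) \to \bY_Q$ coming from the fact that both algebras are, essentially by definition, FRT algebras for the same $R$-matrix. Indeed, by Section~\ref{s_R_Yang} (with $\hbar=1$), the geometric $R$-matrix $R_{\cM(1),\cM(1)}(u) = (1-\mathbf{s}/u)/(1-1/u)$ coincides, under the identification $H^\hd(\cM(1),\Q) = \Q^2$, with the $R$-matrix used to define $\bY(\gl(2))$ via the relation \eqref{RTTgl2}. The standard Yangian acts on each tensor product $\Q^2(a_1) \otimes \cdots \otimes \Q^2(a_n)$ via iterated coproduct applied to the evaluation representation \eqref{evrY}; concretely, $\bE(u)$ is sent to the "train" $R_{F_0(u),F_1(a_1)\otimes\cdots\otimes F_n(a_n)}$ of $R$-matrices with auxiliary space $F_0 = \Q^2$. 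These tensor products are the \emph{same} modules on which $\bY_Q$ acts by construction, since for the one-vertex quiver the set $\{F_i\}$ reduces to $\{\Q^2\}$, so $W$ in \eqref{exW} ranges exactly over such tensor products. Packaging these actions gives a map
$$
\phi: \bY(\gl(2)) \to \prod_n \End\!\left(\Q^2(a_1) \otimes \cdots \otimes \Q^2(a_n)\right)
$$
and I will show that the image is precisely $\bY_Q$ and that $\phi$ is injective.

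For the image, I would observe that the generators $\bE_{ij}^{(k)}$ of $\bY(\gl(2))$ are sent, by definition of the evaluation representation and coproduct, to the coefficients in the $u \to \infty$ expansion of the $(i,j)$-matrix element of the train $R_{F_0(u),W}$; these are, up to a central scalar factor encoding the normalization of $R$, exactly the generators $\bE(e_{ji}\, u^{k-1})$ of $\bY_Q$ from \eqref{def_bE} with $F_0 = \Q^2$. Conversely, every generator $\bE(m(u))$ of $\bY_Q$ with $m \in M_2(\bk)[u]$ is a linear combination of such matrix-element coefficients, hence lies in the image of $\phi$. Therefore $\mathrm{Im}(\phi) = \bY_Q$, so it suffices to show that $\phi$ is injective.

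For injectivity, I would invoke \eqref{capker}: the intersection of the kernels of the representations of $\bY(\gl(2))$ on $\Q^2(a_1)\otimes\cdots\otimes\Q^2(a_n)$, taken over all $n$ and generic evaluation parameters, is zero. This is the standard fact that the evaluation tensor representations are jointly faithful, and the argument of Section~\ref{s_pr_tgrY} (showing that the Yangian degenerates into the universal enveloping algebra of the loop algebra as all evaluation parameters tend to infinity) applies verbatim here, since it depends only on the fact that the underlying Lie algebra $\gl(2) = \fg_Q$ acts faithfully on $\Q^2$. Since $\phi$ factors through the product over the very same family of modules, it is injective.

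The main obstacle I anticipate is matching normalizations precisely: the standard definition of $\bY(\gl(2))$ uses the normalized $R$-matrix with scalar $1/(1-1/u)$, and one must verify that this scalar is consistently tracked on both sides so that the map on generators is well-defined and surjective (rather than landing in a proper subalgebra, or conversely, missing some central elements). Once this bookkeeping is done, the Rees algebra construction recovers the full $\bk[\hbar]$-statement from the $\hbar=1$ specialization since both algebras are graded by cohomological degree with $\deg \hbar = 2$.
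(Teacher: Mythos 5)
Your proof is correct and takes essentially the same route as the paper: both construct the homomorphism $\bY(\gl(2)) \to \bY_Q$ from the RTT=TTR presentation (using that the geometric $R$-matrix of \eqref{R_Yang} literally equals, at $\hbar=1$, the one in \eqref{RTTgl2}), observe that this is surjective because the FRT generators of $\bY_Q$ are exactly the images of the $\bE_{ij}^{(k)}$, and then invoke \eqref{capker} for injectivity. The paper's version is a two-sentence compression of your argument; your extra care about the scalar normalization is sound but already built into how $R(u)$ was normalized in \eqref{R_Yang}.
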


\begin{proof}
Since the generators of $\bY_Q$ satisfy the RTT=TTR relation 
\eqref{eRTT}, we have a surjective
homomorphism $\bY(\gl(2)) \to \bY_Q$. 
Its injectivity follows from \eqref{capker}. 
\end{proof}

\subsection{The center of $\bY(\gl(2))$} 

For any Lie algebra $\fg$, we have
$$
\Center \cU(\fg[u]) = \cU(\Center(\fg)[u]) \,, 
$$
see e.g.\ Section 2.12 in \cite{MNO}. The center of 
$\cU(\gl(2)[u])$ deforms to the center $\bZ$ of $\bY(\gl(2))$, which 
is freely generated by the coefficients in the expansion 
$$
\qdet \bE(u) = 1 + \sum_{k>0} \qdet_k \, u^{-k} 
$$
of the quantum determinant 
$$
\qdet \bE(u) = \bE_{11}(u) \, \bE_{22}(u-1) - \bE_{21}(u) \, \bE_{12}(u-1) 
\,.
$$
The quantum determinant is group-like 
$$
\Delta \qdet \bE(u) = \qdet \bE(u) \otimes \qdet \bE(u)
$$
and
$$
\qdet \bE(u) \Big|_{\Q^2(a)} = \frac{u-a}{u-a-1} \,. 
$$
Whence the equality of ideals 
\begin{equation}
  \label{idealq}
  \Big(\qdet_k\Big)_{k>0} = \Big(\ch_k \cW \Big)_{k\ge 0} \subset \bZ \,. 
\end{equation}
\subsection{The core Yangian}

The nonequivariant Cartan matrix for $Q$ is $\bC = (2)$, which is 
invertible. Therefore 
$$
\bbY_Q \subset \bY_Q \,. 
$$
% Also 
% $$
% \rk \cVt = \bv - \bC^{-1} \,\bw = \tfrac12\, h 
% $$
% where $\bv$, $\bw$, and $h$ are the following elements of 
% $\gl(2)$
% $$
% \bv = e_{11} \,, \quad \bw = e_{00} + e_{11} \,, \quad h = 
% e_{11} - e_{00} \,. 
% $$
The classical $\br$ matrix for the core Yangian $\bbY_Q$ 
equals 
$$
\br - \bC^{-1} \, \bw \otimes \bw = - \tfrac12 \, h \otimes h - e\otimes f 
- f \otimes e \,,
$$
where 
$$
h = e_{11} - e_{00} \in \fsl(2)\,.
$$ 
This is the classical $\br$-matrix for $\fsl(2)$. This means 
the core Yangian $\bbY_Q$ is a filtered
deformation of $\cU(\fsl(2)[u])$.

\begin{Proposition}
$$
\bbY_Q \cong \bY(\fsl(2)) \,. 
$$
\end{Proposition}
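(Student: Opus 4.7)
The plan is to descend the isomorphism $\bY_Q \cong \bY(\gl(2))$ of Proposition \ref{p_YY2} to an isomorphism $\bbY_Q \cong \bY(\fsl(2))$, by matching the scalar $\Gamma$-rescaling that defines $\bbY_Q$ with the standard normalization that extracts $\bY(\fsl(2))$ from $\bY(\gl(2))$. First, I would recall the well-known factorization
$$
\bY(\gl(2)) \cong \bZ \otimes \bY(\fsl(2)),
$$
where $\bZ$ is the center generated by the coefficients of $\qdet \bE(u)$, and $\bY(\fsl(2))$ is the Hopf subalgebra cut out by imposing $\qdet \bE(u) = 1$. Concretely, for any scalar series $f(u) \in 1 + u^{-1}\Q[[u^{-1}]]$ the rescaling $\bE(u) \mapsto f(u) \bE(u)$ is an algebra automorphism of the RTT algebra, and $f$ can be uniquely chosen so that the resulting $\qdet$ becomes trivial; the corresponding image is $\bY(\fsl(2))$.

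Second, I would identify the multiplicative scalar $\Gamma(\bw,\bw')$ appearing in $\tR = \Gamma\, R$ of Section \ref{s_Gamma_gen} with precisely such a normalizing factor. For the present quiver, $\bC = 1 + \hbar^{-1}$, and the only nontrivial term of $\tN_-|_{\bv=\bv'=0}$ is $-\hbar^{-1}\bC^{-1} \cW \otimes \cW'$; its contribution to $\tR$ via \eqref{defcL} is a scalar series whose restriction to the two-dimensional auxiliary space $\Q^2(a)$ coincides, up to a shift in $u$, with $(\qdet \bE(u))^{-1}$ on evaluation modules $\Q^2(b)$, and factorizes multiplicatively over tensor products $\cM(\bw) = \cM(1)^{\otimes n}$. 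Consequently $\qdet \tR \equiv 1$, and the matrix elements of the regular part $\tR_\reg$ land in the $\bY(\fsl(2))$-factor of $\bY(\gl(2))$.

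Third, I would invoke Theorem \ref{t_coreY}: the core Yangian is generated by $\fg'_Q$ together with cup products by $\ch_k \cVt = \ch_k(\cV - \hbar^{-1}\bC^{-1}\cW)$. By \eqref{idealq}, the classes $\ch_k \cW$ generate the center $\bZ$ of $\bY_Q$; hence the twisted Chern characters $\ch_k \cVt$ are obtained from $\ch_k \cV$ by subtracting off their projections onto $\bZ$, and thus lie in (and in fact generate, together with $\fg'_Q = \fsl(2)$-loop generators) the $\bY(\fsl(2))$-factor. Combined with the second step this produces a surjective algebra homomorphism $\bY(\fsl(2)) \twoheadrightarrow \bbY_Q$.

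The main obstacle is injectivity. The plan is to mimic the argument of Proposition \ref{p_YY2}: the evaluation representations $\Q^2(a)$ descend to $\bbY_Q$-modules after the $\Gamma$-rescaling (which, by the second step, only alters the trivial central factor), and their iterated tensor products are faithful for $\bY(\fsl(2))$ by \eqref{capker} combined with triviality of the center of $\bY(\fsl(2))$. Carrying this detection argument out while tracking the scalar normalization --- and verifying that no element of $\bY(\fsl(2))$ can be absorbed into the $\Gamma$-factor --- is where the actual work lies; the preceding three steps are essentially formal bookkeeping.
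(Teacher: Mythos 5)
Your plan takes a genuinely different route from the paper. The paper's proof is short: it deforms the classical tensor-product decomposition $\cU(\gl(2)[u]) \cong \cU(\fsl(2)[u]) \otimes \cU(\fz[u])$ to obtain $\bbY_Q \otimes \bZ \cong \bY_Q$, then uses the known $\bY_Q \cong \bY(\gl(2)) \cong \bY(\fsl(2)) \otimes \bZ$, and finally quotients both sides by the ideal \eqref{idealq}. Crucially, this works at the level of ideals and associated graded algebras, never needing to pin down the exact scalar normalization. Your route, by contrast, tries to identify $\bbY_Q$ with the $\bY(\fsl(2))$-factor directly as a subalgebra of $\bY(\gl(2))$, which forces you to make several identifications exactly rather than up to rescaling.

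This creates concrete gaps. First, your step 2 asserts that the $\Gamma$-rescaling literally trivializes $\qdet$ (so $\qdet\tR \equiv 1$), but \eqref{idealq} only asserts an equality of ideals, $\big(\qdet_k\big) = \big(\ch_k\cW\big)$; it does not say the $\Gamma$-factor equals the $\qdet$-normalizing scalar. If they differ, $\bbY_Q$ and $\bY(\fsl(2))$ would only be related by a further rescaling automorphism --- still isomorphic, but your argument as written would not see this. Second, in step 3 you claim $\ch_k\cVt$ lands in the $\bY(\fsl(2))$-factor because it differs from $\ch_k\cV$ by a central element, but subtracting a central element does not project onto the $\fsl(2)$-factor: that requires knowing the decomposition explicitly, and that $\ch_k\cV$'s central component is precisely $\ch_k(\hbar^{-1}\bC^{-1}\cW)$, which again is the exact-identification issue. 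Third, you yourself flag that the injectivity argument via evaluation modules is ``where the actual work lies.'' The paper sidesteps all three points at once: once $\bbY_Q \otimes \bZ \cong \bY_Q$ is established by degeneration, the conclusion drops out by cancelling the $\bZ$-factor (quotienting by its augmentation ideal). You should either supply the exact computation matching $\Gamma$ to the $\qdet$-normalization, or switch to the deformation/filtered-algebra argument, which is both shorter and gap-free.
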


\begin{proof}
Let $\fz\cong \gl(1)$ denote the center of $\gl(2)$. By 
deformation from 
$$
\cU(\gl(2)[u]) \cong \cU(\fsl(2)[u]) \otimes \cU(\fz[u]) 
$$
we get 
$$
\bbY_Q \otimes \bZ \cong \bY_Q \cong \bY(\gl(2)) 
\cong \bY(\fsl(2)) \otimes \bZ \,. 
$$
Taking the quotient by the ideal \eqref{idealq} 
gives the desired isomorphisms. 
\end{proof}

\subsection{}

Baxter subalgebras in $\bY(\gl(2))$ appeared in 
mathematical physics as quantum integrals of 
motion of the XXX spin chain with quasi-periodic
boundary conditions. 

Proposition \ref{p_Q_Gr} 
and Section \ref{s_Baxter} shows the operator 
$$
\bQ = c_1(\cO(1)) \cup \, + \, \bQ_\textup{quantum}
$$
of modified quantum multiplication by $c_1(\cO(1))$ lies 
in the Baxter subalgebra corresponding to 
$$
g = q^\bv \in GL(2) \,.
$$
Since the operator $c_1(\cO(1)) \cup$ in 
$H^\hd_\bG(X)$ has distinct eigenvalues, the algebra
of quantum multiplication equals the Baxter subalgebra in 
$\bY(\gl(2))$. This is 
one of the most basic examples in Nekrasov-Shatashvili 
theory.

\part{Instanton moduli}

\chapter{Classical $\br$-matrix and $\glh$. }
\label{s_g_fr} 

\section{Setup} 

\subsection{Moduli of framed sheaves} 

We now specialize our general discussion to the quiver $Q$ with 
one vertex and one loop. We take $\zeta=0$, $\theta>0$ and denote 
$$
(r,n) = (\bw_1,\bv_1) \,. 
$$
The corresponding Nakajima variety $\cM(r,n)$ is the moduli space of 
framed torsion-free sheaves $\cF$ with 
$$
\rk \cF = r \,, \quad c_2(\cF) = n \,,
$$
on $\Pp^2$, see \cite{NakBook}.  Framing means a choice 
of trivialization of $\cF$ along $\Pp^2\setminus \C^2$. 
It implies $c_1(\cF)=0$. As usual, we set 
$$
\cM(r) = \bigsqcup_n \cM(r,n)\,.
$$ 
In particular, 
$$
\cM(1) = \Hilb = \bigsqcup_n \Hilb_n
$$
is the Hilbert scheme of points of $\C^2$. 

Our goal in the rest of the paper is to make the general 
theory explicit in this very important special case. 

\subsection{Uhlenbeck space}

The affine variety 
$$
\cU(r,n) = \cM_{0,0}(r,n) 
$$
is the Uhlenbeck compactification of the moduli of framed instantons. 
The canonical map 
$$
\cM(r,n) \to \cU(r,n)
$$
takes a torsion free sheaf $\cF$ to the vector bundle $\cF^{\vee \vee}$ 
together with the support of $\cF^{\vee \vee}/\cF$, counting 
multiplicity.

\subsection{Group actions}\label{s_ADHM}

Concretely,
 $\Mrn$ is  the $GL(\C^n)$-quotient of the spaces of quadruples
$$
X_1,X_2: \C^n\to \C^n\,, \quad A:\C^r\to\C^n\,, \quad B:\C^n\to \C^r 
$$
satisfying the equation 
\begin{equation}
\label{ADHM}
[X_1,X_2] + A B = 0 
\end{equation} 
and stability condition: the image of $A$ must generate $\C^n$ 
under the action of $X_1$ and $X_2$.

The framing group 
$G_\bw = GL(r)$ acts by 
the automorphisms of $\C^r$ or by changing the framing in the 
sheaf description. The group $G_\edge = GL(2)$ acts by
$$
g\cdot
\begin{pmatrix}
 X_1 \\ X_2
 \end{pmatrix}
=
\begin{pmatrix}
 g_{11} \, X_1 + g_{12} \, X_2\\ 
 g_{21} \, X_1 + g_{22} \, X_2
 \end{pmatrix}\,, \quad g\cdot A =A\,, \quad g\cdot B = \det(g) \, B \,. 
$$
in the quiver description. In the sheaf description, it acts by 
automorphisms of $\Pp^2$ preserving $\C^2$. 

We fix a maximal torus  $\bA\subset GL(r)$ with 
$$
\fa = \Lie \bA  =\diag(a_1,\dots,a_r)  
$$
and take $\bG = \bA \times GL(2)$. Note that $\bA$ is central in $\bG$. 

\subsection{Fixed loci}

By Example \ref{ex1}, we have 
$$
\cM(r)^\bA = \cM(1)^{\times r}\,, \quad \cM(1) =  
\bigsqcup_n \Hilb_n \,. 
$$
In the sheaf description, $\cM(r)^\bA$ is the locus of direct sums
$$
\cM(r)^\bA  = \left\{ \bigoplus_{i=1}^r I_i \right\}
$$
of ideals  $I_i\subset\C[x_1,x_2]=\cO_{\C^2}$. 

\subsection{Polarization} 

Our general prescription for polarizations of Nakajima varieties
gives the following for instanton moduli.

Following Example \ref{e_pol_T*}, consider the $\C^\times$-action on 
$\C^2$ that scales one coordinate axis, say the $x_2$-axis. This 
scales $\omega$ with weight $-1$. One of the components $X^{\C^\times}$ is 
the following Quot-scheme 
$$
Q_n = 
\left\{\cF\,\big|\, x_2 \cO^{\oplus r} \subset \cF \subset \cO^{\oplus r}
\right\} \subset \Mrn \,.
$$
It is middle-dimensional. Since $\omega$ pairs $\C^\times$-weight 
spaces of total weight $1$, it is Lagrangian. In the quiver description, 
it is given by 
$$
X_2 = 0\,, \quad B = 0 \,,
$$
that is, by representations of one half of the quiver $\Qb$. 

As out polarization, we take weights that are normal to $Q_n$. 
Those are easily identified, giving 
\begin{equation}
\bsi = \prod_{i=1}^{r} \prod_{j\ne i} (a_j-a_i)^{n_i} 
\label{polarM}
\end{equation}
for the component 
$$
\Hilb_{n_1} \times \cdots \times \Hilb_{n_r} \subset \cM(r)^\bA \,.
$$

\subsection{$R$-matrices} 

Our general theory produces an $R$-matrix 
$$
\bR(u) \in \End\left(H^\hd_\ga(\Hilb)^{\otimes 2}\right) 
\otimes \Q(\gal) 
$$
which solves the Yang-Baxter equation with spectral parameter
$$
u = a_1 - a_ 2 \,. 
$$
Our goal now is to identify $\bR(u)$ and the 
corresponding Yangian.  We use the boldface letter
to denote this particular $R$-matrix. It will be 
characterized in terms of the Virasoro algebra in
Chapter \ref{s_fullR}.

\subsection{} 

As a first step, in Section \ref{s_classR} we show the 
corresponding classical $\br$-matrix is the $\br$-matrix
for $\glh$, modulo zero modes.
 The action of $\glh$ on the cohomology of Hilbert 
schemes was constructed by Nakajima \cite{NakHilb} and 
Grojnowski \cite{Groj}. Its extension 
to higher rank is due to Baranovsky \cite{Bar}.

\subsection{}

In principle, $\bR(u)$ may be computed from 
the $R$-matrix of $\bY(\mathfrak{gl}(\infty))$ using the factorization 
in Theorem \ref{t_Rprod}, see \cite{Smirnov}. In particular, 
the classical $\br$-matrix is very easy to determine 
in this approach. Here we take a different route to the 
same result.

\section{Baranovsky operators}\label{s_def_Bar} 

\subsection{}

We recall from \cite{Bar} the definition of Baranovsky operators $\beta_k$.  
For $k>0$, consider the locus 
\begin{equation}
 \fB \subset \Mr{n+k}  \times \C^2 \times \Mr{n}\label{defZ} 
\end{equation} 
of triples $(\cF',x,\cF)$ such that $\cF'\subset \cF$ and $\cF/\cF'$ is a length 
$k$ sheaf supported at $x$.
We have \cite{Bar} 
$$
\dim \fB = 2rn+rk+1\,,
$$
which is the middle dimension of the product. Note that $\fB$ is $\bG$-invariant.

\subsection{}

Next, $\fB$ is a Lagrangian 
Steinberg correspondence between the first factor and 
the other two, which can be seen as follows. We embed
$$
\C^2 \owns (c_1,c_2) \mapsto ((x_1-c_1)^k,x_2-c_2) \in \Hilb_k  \,.
$$
Note 
this pulls back the translation-invariant symplectic form. Consider the maps 
$$
\xymatrix{
\Hilb_k \times \Mrn  \,\ar@{^{(}->}[r] &\cM(r+1,n+k)\ar@{->}[d] &  
\ar@{_{(}->}[l]\, \Hilb_0 \times \Mr{n+k}  \\
& \cU(r+1,n+k) \,. 
}
$$
Here the horizontal arrows are formed by taking direct sums 
and the vertical is the canonical projection to the Uhlenbeck space. 
It is clear that points on the 
correspondence $\fB$ map to the same points of $\cU(r+1,n+k)$. 

% $$
%  \fB \hookrightarrow \cM(r+1,n+k) \times_{\cU(r+1,n+k)} \cM(r+1,n+k) \,.
% $$

\subsection{}

The correspondence $\fB$ defines a map
$$
\Theta_\fB: H^\hd_\bG(\C^2) \otimes H^\hd_\bG(\Mrn) \to H^\hd_\bG(\Mr{n+k}) \,.$$
We define the operators $\beta_{-k}$, $k>0$, as the matrix
elements of $\Theta_\fB$ with respect to the $\C^2$-factor, that is  
$$
\beta_{-k}(\gamma) \cdot \eta = \Theta_\fB(\gamma \otimes \eta) \,,
\quad \gamma\in H^\hd_\bG(\C^2) \,. 
$$
% Since this is linear over $H^\hd_\bG(\pt)$, the choice of 
% $\gamma$ does not really matter. 

\subsection{}\label{s_sign_Baran}

For $k>0$, we define $\beta_{k}(\gamma)$ as the matrix 
elements of the adjoint operator
$$
\Theta_\fB^\tau
: H^\hd_\bG(\C^2) \otimes H^\hd_\bG(\Mr{n+k}) \to H^\hd_\bG(\Mr{n}) 
\otimes \bK\,,
$$
see Section \ref{s_signs_adj}. A larger base ring $\bK$ is required
because the adjoint correspondence is not proper and equivariant 
localization is needed to define it as an operator. We will see 
that 
$$
\bK = H^\hd_\bG(\pt) \left[\frac1{\det_{\C^2}}\right] 
$$
where 
$$
\textstyle{\det_{\C^2}} 
\begin{pmatrix}
t_1 \\ & t_2 
\end{pmatrix}
= t_1 t_2 \in \Q[\mathfrak{gl}(2)]
$$
is the determinant of the defining 
representation.

Also note that since 
we permute the source and target of $\Theta_\fB$, the operator 
$\Theta_\fB^\tau$ gets a sign, namely 
$$
(-1)^{rk} = (-1)^{\frac12 \dim \Mr{n+k} + \frac12 \dim \Mr{n}} \,. 
$$

\subsection{}

For $r=1$, Baranovsky operators specialize to 
the original Nakajima operators, up to 
normalization. We denote them by
$\alpha_k(\gamma)$.  
It is a theorem of Nakajima that these satisfy
\begin{equation}
\left[\alpha_k(\gamma_1),\alpha_l(\gamma_2)\right] =  k \, 
\delta_{k+l} \, \tau(\gamma_1 \cup \gamma_2) \,
\label{commNak}
\end{equation}
see \cite{NakHilb}. 
Recall from Section \ref{s_signs_adj} that $\tau$ involves 
a sign. Since $\gamma_i$ are cohomology classes on a surface, 
$$
\tau(\gamma) = - \int_{\C^2} \gamma \,, 
$$
where the integral is defined as an equivariant residue.
In particular
\begin{equation}
  \label{tau_1}
  \tau(1) = - \frac{1}{\det_{\C^2}} \,. 
\end{equation}

\subsection{}

Since $\beta_k(\gamma)$ is a Steinberg correspondence, 
there exist a Steinberg correspondence $\beta_k(\gamma)_\bA$
that makes the following diagram commute 
\begin{equation*}\label{beta_bA}
\xymatrix{
H^\hd_\bG(\cM(1))^{\otimes r}
 \ar@{->}[rr]^{\Stab_\fC\,\,}\ar[d]_{\beta_k(\gamma)_\bA}&& 
H^\hd_\bG(\cM(r)) \ar[d]^{\beta_k(\gamma)} \\
H^\hd_\bG(\cM(1))^{\otimes r}  \ar@{->}[rr]^{\Stab_\fC} &&  
H^\hd_\bG(\cM(r))\\
}
\end{equation*}
for every chamber $\fC$ and every $k<0$.  
Here we use that $\bA$ does not act on the 
$\C^2$ factor in \eqref{defZ}. By taking adjoints, the 
same holds for $k>0$ after tensoring with $\bK$.

\begin{Theorem}\label{t_Bar} We have 
$$
\beta_k(\gamma)_\bA  = \sum_{i=1}^r 1 \otimes \cdots \otimes \alpha_k(\gamma)
\otimes \dots \otimes 1 \,, 
$$
where $\alpha_k(\gamma)$ acts in the $i$th tensor factor. 
\end{Theorem}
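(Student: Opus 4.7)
My plan is to derive Theorem \ref{t_Bar} as a direct consequence of the general Steinberg intertwining result Theorem \ref{t_Stein} applied to the Baranovsky correspondence $\fB$. Since $\fB$ is $\bG$-invariant, Lagrangian, and its two natural projections both factor through the Uhlenbeck space $\cU(r+1,n+k)$ (as stated in Section \ref{s_def_Bar}), it is a Steinberg correspondence. Hence for any $\gamma \in H^\hd_\bG(\C^2)$, the operator $\beta_{-k}(\gamma)$ commutes with stable envelopes through the $\bA$-residue
$$\beta_{-k}(\gamma)_\bA = \Res_{\cM(r)^\bA \times \cM(r)^\bA}\, \Theta_\fB(\gamma \otimes -),$$
with sign prescribed by the polarization \eqref{polarM}. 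The task is then to identify this residue with $\sum_i 1 \otimes \cdots \otimes \alpha_{-k}(\gamma) \otimes \cdots \otimes 1$.

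Next I would analyze the geometry of $\fB^\bA$. A point $(\cF',x,\cF)$ of $\fB$ is $\bA$-fixed iff both sheaves split equivariantly as $\cF = \bigoplus I_i$, $\cF' = \bigoplus I'_i$, with $I'_i \subset I_i$ an inclusion of ideals in $\C[x_1,x_2]$ whose quotient has length $k_i$ and is supported at the single point $x \in \C^2$, with $\sum k_i = k$. This stratifies $\fB^\bA$ by the composition $(k_1,\dots,k_r)$ of $k$. A direct dimension count shows that a stratum with $k_i,k_j>0$ for some $i\ne j$ has codimension at least $2(\#\{i:k_i>0\}-1)$ below the middle dimension of $\cM(r)^\bA\times \C^2\times \cM(r)^\bA$, because the supports are all forced to coincide at the same $x\in \C^2$. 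Hence only the $r$ \emph{singleton} strata $k = k\delta_i$ are Lagrangian and contribute to the residue.

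The $i$th singleton stratum is canonically identified with
$$\Delta_{\Hilb_{n_1}}\times\cdots\times Z_k^{(i)} \times\cdots\times \Delta_{\Hilb_{n_r}},$$
where $Z_k^{(i)} \subset \Hilb_{n_i+k}\times\C^2\times\Hilb_{n_i}$ is the Nakajima correspondence defining $\alpha_{-k}(\gamma)$. As cycle classes (ignoring signs), this yields exactly the formula claimed; the operators $\alpha_{-k}(\gamma)$ in the $i$th tensor factor are reproduced precisely because the ADHM $\bA$-fixed description on both sides is manifestly the same.

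The subtlety---and the part I expect to be the main obstacle---is checking that all $r$ coefficients are $+1$. This requires matching the polarization \eqref{polarM} of $\cM(r)^\bA \subset \cM(r)$ on source and target: along the $i$th singleton stratum the ratio of the target and source polarizations is $\prod_{j\ne i}(a_j-a_i)^k$, which must cancel the product of $\bA$-weights in the residue normal bundle to the stratum inside $\fB$ (these normal weights come from the extra ADHM data allowed when $I_i$ grows by $k$ relative to the other $I_j$'s held fixed on the diagonal). A direct tangent space computation using the ADHM description \eqref{ADHM}, combined with the convention for polarizations from Example \ref{s_pol_Nak}, should confirm the cancellation yields $+1$ on each summand. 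The case $k>0$ then follows from the $k<0$ case by the sign rule of Section \ref{s_sign_Baran}: the adjoint carries a sign $(-1)^{rk}$ on the correspondence side while the Nakajima operators $\alpha_k(\gamma)$ are defined with the same adjoint convention factor-by-factor, so the two signs match up and once again all coefficients are $+1$.
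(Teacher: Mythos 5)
Your plan coincides with the paper's: invoke Theorem \ref{t_Stein} to reduce to the Lagrangian residue of $\fB$ on $\fB^\bA$, stratify $\fB^\bA$ by the composition $(k_1,\dots,k_r)$, rule out the strata with more than one $k_i>0$ by a dimension count, identify the $r$ singleton strata with Nakajima's rank-$1$ correspondence $Z_k$ acting in the $i$th factor, and pin down the coefficient $+1$ by comparing the polarization \eqref{polarM} with the normal weights of $\fB^\bA$ inside $\fB$. Two small remarks: the fiber-dimension deficit for a stratum with $m=\#\{i:k_i>0\}$ is $m-1$, not $2(m-1)$ (the paper's count $\dim = 2 + \sum_i\max(k_i-1,0)$ gives $k+1$ for the singleton versus $2+k-m$ in general), though this does not affect the conclusion since $m-1>0$; and the final sign/polarization match that you defer to a tangent computation that ``should confirm'' is precisely where the paper does the real work --- Lemma \ref{lem_neigh} factors a neighborhood of a reference point to reduce to $n=0$, and Lemma \ref{l_smooth} computes the explicit ADHM tangent weights $(a_1-a_i)^{\oplus k}$ there, which is what closes the argument.
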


\subsection{}

In particular, Theorem \ref{t_Bar} and 
commutation relations \eqref{commNak} imply
\begin{equation}
 \left[\beta_n(\gamma_1),\beta_m(\gamma_2)\right] = rn \, \delta_{n+m} 
\tau(\gamma_1 \cup \gamma_2)\,, \label{commBar} 
\end{equation} 
which is a theorem of Baranovsky, see \cite{Bar}.

\section{Proof of Theorem \ref{t_Bar}}

% \begin{Remark}
% For a more general equation 
% $$
% \sum_{i=1}^g [X_{2i-1},X_{2i}] + A B = 0\,, 
% $$
% the same argument proves an isomorphism with a vector bundle 
% over the product of smaller quiver varieties.  
% \end{Remark}

\subsection{}

By Theorem \ref{t_Stein}, the operator in question is given by 
a correspondence supported on $\fB^\bA$. {}From definitions
\begin{equation}
\fB^\bA = \Big\{\Big(\bigoplus_{i=1}^r I_i,x,\bigoplus_{i=1}^r J_i\Big)\Big\}
\label{dBA}
\end{equation}
where $I_i,J_i\in \cM(1)$, $I_i \subset J_i$, and 
$$
\supp J_i/I_i \subset \{x\} 
$$
for all $i$. The connected components of $\fB^\bA$ are classified 
by the second Chern classes of $I_i,J_i$, and their dimensions are
computed as follows 
$$
\dim = 2 + \sum_i \max \, (c_2(I_i)-c_2(J_i)-1,0)\,.
$$
In particular, 
$$
\fB^\bA = \bigcup_{i=1}^r \fB_1^{(i)} \cup \textup{lower dimension} \,,
$$
where $\fB_1^{(i)}$ denotes the corresponding correspondence 
for $r=1$ acting in the $i$th factor.

\subsection{}

The top dimensional components $ \fB_1^{(i)}$ 
are irreducible.  Therefore, to 
compute the Lagrangian residue of $\fB$, it suffices
to find  a smooth point $\fb$ of $\fB$ on each of them. 

By symmetry, we may assume $i=1$. In \eqref{dBA}, we take a point 
$\fb\in\fB^\bA$ such that 
$$
x = 0 \in \C^2\,, \quad I_1 = (x_1^k,x_2) \,, \quad J_1 = \cO\,,
$$
while $0\notin \supp I_i,\supp J_i$ for  $i>1$. 
Lemma \ref{lem_neigh} below gives 
a rational map 
$$
f:\Mr{k} \times \Mr{n}  \dashrightarrow \Mr{n+k} 
$$
which is an isomorphism in a neighborhood of $\fb$. Denoting 
the correspondence \eqref{defZ} by $\fB_{r,n,k}$, we have 
$$
f^*(\fB_{r,n,k}) = \fB_{r,0,k} \times \diag_{\Mrn}
$$
in a neighborhood of $\fb$. 

Note that polarizations in Theorem \ref{t_Stein} enter in
the combination $\bsi_X \, \bar\bsi_Y$. Therefore, the 
residue of the diagonal is always the diagonal and 
the computation is reduced to the case $n=0$. 

\subsection{}

The correspondence 
$$
\fB_{r,0,k} \subset \Mr{k} \times \C^2
$$
has the following quiver description:
$$
\fB_{r,0,k} = \{B=0,(X_1-x_1)^k=0,(X_2-x_2)^k=0\}\,, 
$$
where $x=(x_1,x_2)\in \C^2$. Our reference point $\fb$ on it
is given by  
$$
X_1=\begin{pmatrix}
     0 \\
     1 & 0\\ 
     0 & 1 & 0 \\ 
     & & \ddots & \ddots 
    \end{pmatrix} \,, \quad 
X_2 = 0 \,, \quad 
A = \begin{pmatrix}
     1 & 0 & 0& \dots\\
     0 & 0 & 0 \\ 
     0 & 0 & 0 \\
     \vdots & & & \ddots 
    \end{pmatrix} \,. 
$$

\begin{Lemma} \label{l_smooth} 
The variety $\fB_{r,0,k}$ is smooth at $\fb$ and its nonzero 
tangent $\bA$-weights are 
$$
(a_1 - a_i)^{\oplus n}\,, \quad  i=2,\dots,r \,. 
$$
\end{Lemma}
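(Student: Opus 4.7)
The plan is to compute $T_\fb \fB_{r,0,k}$ directly from the quiver presentation, decomposing it under the $\bA$-action. First, I would determine the canonical lift $\bA \to GL(k)$ fixing $\fb$. Stability forces this lift: at $\fb$, $A(\C^r) = \C e_1$, so $e_1 \in \C^k$ inherits weight $a_1$ from the first standard basis vector of $\C^r$; because $X_1, X_2$ are $\bA$-invariant, the iterated images $e_j = X_1^{j-1} e_1$ also carry weight $a_1$. Thus $\bA$ acts on $\C^k$ by the single character $a_1$, and in particular $\mathfrak{gl}(k)$ has $\bA$-weight $0$.

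In the ambient tangent space $T_\fb(\cM(r,k) \times \C^2)$, the nonzero $\bA$-weights therefore occur only in $\delta A$ (the $i$-th column contributes weight $a_1 - a_i$) and in $\delta B$ (weights $a_i - a_1$); the deformations $\delta X_1, \delta X_2, \delta x_1, \delta x_2$ and the gauge directions all live at weight $0$. I would then examine the linearizations of the equations cutting out $\fB_{r,0,k}$: the constraint $B = 0$ linearizes to $\delta B = 0$, killing every direction of weight $a_i - a_1$; the constraints $(X_i - x_i)^k = 0$ take values in $\mathfrak{gl}(k)$ and so are purely of weight $0$; the moment map $[X_1,X_2] + AB = 0$ is likewise weight $0$. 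Consequently, at each weight $a_1 - a_i$ with $i \geq 2$, no additional constraint or gauge reduction applies, and the full $i$-th column of $\delta A$ (of dimension $k$) survives in $T_\fb \fB_{r,0,k}$. This identifies the nonzero weights as claimed.

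For smoothness, I would exhibit a local étale model near $\fb$: the extra columns $A_2, \dots, A_r$ of $A$ parametrize an affine $(r-1)k$-dimensional transverse factor on which the defining equations and moment map impose no conditions (they are weight-$0$), while the residual weight-$0$ geometry is identified with a neighborhood of the analogous point in $\C^2 \times \fB_{1,0,k}$. The $r = 1$ slice, namely the incidence locus near the monomial ideal $(x_1^k, x_2)$ in $\Hilb_k(\C^2)$, is classically known to be smooth of the expected dimension. Combining the smooth $r = 1$ piece with the transverse affine factor produces a smooth chart for $\fB_{r,0,k}$ near $\fb$ of the expected dimension $rk + 1$.

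The main obstacle is the weight-$0$ verification, i.e.\ establishing smoothness of the $r = 1$ punctual slice at $\fb$; this is the one place where nontrivial interaction between the moment map, the nilpotency conditions $(X_i - x_i)^k = 0$, and the $GL(k)$-gauge action must be unwound. It can be handled either by constructing an explicit affine chart through $\fb$ in the ADHM description (diagonalizing $X_1$ into Jordan form and using the companion-matrix coordinates), or more conceptually by invoking the known smoothness of $\Hilb_k(\C^2)$ together with a direct check that the incidence condition $\cF' \subset \cF$ imposes the correct transverse codimension at this specific chain of ideals.
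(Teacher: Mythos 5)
Your approach is essentially the paper's: the forced lift of $\bA$ to $GL(k)$ (which makes all of $\C^k$ carry weight $a_1$ because $A(\C^r) = \C e_1$ is cyclic under the $\bA$-invariant $X_1, X_2$), the resulting weight decomposition of the quiver data (only $\delta A$ and $\delta B$ carry nonzero weights, $\delta B=0$ is imposed, the moment map and nilpotency constraints are weight $0$), and the companion-form affine chart for smoothness. The paper in fact runs the chart construction uniformly in $r$: near $\fb$, the operator $X_1 - x_1$ stays regular nilpotent, so a unique gauge transformation brings $X_1$ to companion form, forces $X_2 = P(X_1)$, and normalizes the first column of $A$; the remaining data $(x_1, P, A_2, \dots, A_r)$ exhibit the neighborhood as $\C^{rk+1}$ with the weights read off directly. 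Your split into an $(r-1)k$-dimensional transverse affine factor plus an $r=1$ slice is a repackaging of the same chart, not a savings.

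I would flag your second proposed route for the $r=1$ piece. The punctual Hilbert scheme $\Hilb_k^0(\C^2)$ has dimension $k-1$, far exceeding what a naive codimension count through the Hilbert--Chow morphism would predict (that morphism is wildly non-flat), so "supported at a single point" does not cut $\Hilb_k(\C^2)$ transversally in any expected codimension and one cannot simply cite smoothness of $\Hilb_k(\C^2)$ plus a transversality check of an "incidence condition". What is actually needed is smoothness of the curvilinear locus of $\Hilb_k^0(\C^2)$ at the specific ideal $(x_1^k, x_2)$, and the standard way to see that is precisely the regular-nilpotent normal form you list as your first option. So the chart construction is the argument that closes the proof, exactly as the paper does it; the other alternative you offer does not stand on its own as phrased.
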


\begin{proof}
In a neighborhood, the operator $X_1-x_1$ will remain a
regular nilpotent and the $(1,1)$-entry of $A$ will remain nonzero, hence 
the triple $(X_1,X_2,A)$ may be brought to the normal form 
$$
X_1=\begin{pmatrix}
     x_1 \\
     1 & x_1\\ 
     0 & 1 & x_1 \\ 
     & & \ddots & \ddots 
    \end{pmatrix} \,,\quad 
X_2 = P(X_1)\,, \quad 
A = \begin{pmatrix}
     1 & * & *& \dots\\
     0 & * & * \\ 
     0 & * & * \\
     \vdots & & & \ddots 
    \end{pmatrix}
$$
by a unique element of $GL(n)$. Here $P$ is a polynomial of degree $<n$ and 
stars stand for arbitrary numbers. Thus a neighborhood of $\bF$ in $\fB$ 
 is isomorphic to $\C^{rn+1}$. The computation of the tangent weights 
is straightforward. 
\end{proof}

\subsection{}

In particular, we see that 
$$
T_\fb \fB \big/ T_\fb \fB^\bA  \cong T_\fb Q_k \big/ T_\fb Q_k^\bA
$$
as $\bA$-modules. 

Since $\fB$ is smooth at $\fb$, its Lagrangian residue is $\pm \fB^\bA$. 
Further, the normal weights to $\fB$ agree with the 
polarization \eqref{polarM}. This finishes the proof modulo the
following lemma used above.

\subsection{}

\begin{Lemma}\label{lem_neigh} 
Suppose the eigenvalues of $X_1$ may be partitioned 
$$
\textup{Eigenvalues}(X_1) = \bigsqcup E_i
$$
into a nontrivial disjoint union. Then a neighborhood 
of $(X_1,X_2,A,0)$ in $\Mrn$ is $GL(r)$-equivariantly isomorphic to an 
open set in $\prod \Mr{|E_i|}$. 
\end{Lemma}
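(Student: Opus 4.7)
The plan is to build an explicit isomorphism via the spectral decomposition of $X_1$. On the open subset $U \subset \Mrn$ where $X_1$ has spectrum clustered in pairwise disjoint small discs $D_i$ around the subsets $E_i$, the spectral projectors
\[
\pi_i = \frac{1}{2\pi i}\oint_{\partial D_i}(z-X_1)^{-1}\,dz
\]
are polynomials in $X_1$ depending holomorphically on the quadruple $(X_1,X_2,A,B)$. Setting $V_i:=\Img\pi_i$, of dimension $|E_i|$, yields a canonical $X_1$-invariant decomposition $\C^n=\bigoplus_i V_i$ that varies holomorphically over $U$. At the base point, where $B=0$ and hence $[X_1,X_2]=0$, the subspaces $V_i$ are automatically $X_2$-invariant as well, though not in general nearby.

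The forward map sends $(X_1,X_2,A,B)\in U$ to the tuple
\[
\bigl(X_1^{(i)},\,X_2^{(i)},\,A^{(i)},\,B^{(i)}\bigr)=\bigl(X_1|_{V_i},\,\pi_i X_2|_{V_i},\,\pi_i A,\,B|_{V_i}\bigr).
\]
Projecting the ADHM equation $[X_1,X_2]+AB=0$ by $\pi_i(\,\cdot\,)|_{V_i}$ yields the ADHM equation for each piece, since $\pi_i$ commutes with $X_1$. Conversely, given tuples with pairwise disjoint spectra of the $X_1^{(i)}$, the inverse map forms $X_1=\bigoplus X_1^{(i)}$, $A=(A^{(i)})$, $B=(B^{(i)})$, and assembles $X_2$ with diagonal blocks $X_2^{(i)}$ and off-diagonal blocks $Y^{ij}$ uniquely determined by the Sylvester equations
\[
X_1^{(i)}Y^{ij}-Y^{ij}X_1^{(j)}=-A^{(i)}B^{(j)},\qquad i\neq j,
\]
which have unique solutions by the spectral disjointness. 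These two maps are mutually inverse: in one direction, the spectral decomposition of the assembled block-diagonal $X_1$ obviously returns the original summands; in the other, the off-diagonal blocks of $X_2$ in the basis $\bigoplus V_i$ must satisfy exactly the Sylvester equations above, since these are precisely the off-diagonal components of $[X_1,X_2]+AB=0$.

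Stability transfers under the decomposition: if $W\subset\C^n$ is $X_1,X_2$-invariant and contains $A(\C^r)$, then each $W_i:=W\cap V_i=\pi_i W$ is $X_1^{(i)}$-invariant (since $W$ is $X_1$-invariant), $X_2^{(i)}$-invariant (since $\pi_i X_2 W_i\subset \pi_i W=W_i$), and contains $A^{(i)}(\C^r)=\pi_i A(\C^r)$; so full stability implies stability of each piece. The reverse implication follows by a dual argument. The whole construction is manifestly $GL(r)$-equivariant, and the ambiguity in presenting the decomposition $\C^n\cong\bigoplus V_i$ is precisely the subgroup $\prod GL(|E_i|)\subset GL(n)$, so the local product structure descends to the GIT quotients. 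The main obstacle is the combined stability/GIT check: it works cleanly because $\pi_i$ is a canonical polynomial in $X_1$, making the forward map $GL(n)$-equivariant with values in the block-decomposed data and matching the $\prod GL(|E_i|)$-stabilizer on the product side exactly.
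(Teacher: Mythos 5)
Your approach reproduces the paper's proof essentially step for step: spectral projectors $\pi_i$ as polynomials in $X_1$, a forward map to the block data, Sylvester equations for the off-diagonal blocks of $X_2$ as the inverse, and the reduction from $GL(n)$ to $\prod GL(|E_i|)$.

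However, the stability check contains a logical error. What you establish --- that any $(X_1,X_2)$-invariant $W\supset A(\C^r)$ yields invariant subspaces $W_i=\pi_i W\supset A^{(i)}(\C^r)$ with $W=\bigoplus_i W_i$ --- in fact proves the implication \emph{each piece stable $\Rightarrow$ whole stable} (if each $\pi_i W=V_i$, then $W=\C^n$), which is the converse of what you state. The direction you actually need for the forward map to be well-defined is \emph{whole stable $\Rightarrow$ each piece stable}, and this does not follow by a ``dual argument'': from a piece-destabilizer $W_i\subsetneq V_i$ the naive candidate $W_i\oplus\bigoplus_{j\neq i}V_j$ fails to be $X_2$-invariant once the off-diagonal blocks $\pi_i X_2\pi_j$ are nonzero. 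The correct route is to note that at the base point $B=0$ forces $[X_1,X_2]=0$, so by the uniqueness in the Sylvester equations the off-diagonal blocks of $X_2$ vanish there; the quadruple is then an honest direct sum, and stability of a direct sum does pass to its summands (a piece-destabilizer $W_i$ gives the whole-destabilizer $W_i\oplus\bigoplus_{j\neq i}V_j$ exactly because the $V_j$ are $X_2$-invariant). One then shrinks the neighborhood using openness of the stable locus; this is what the paper's terse ``each of these blocks remains stable in a certain neighborhood'' silently invokes.
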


\noindent 
We are grateful to the referee for pointing out that this 
statment, with a different proof, is the 
\emph{factorization property} of \cite{BFG}. 

\begin{proof}
For nearby $X_1$ we can still group 
the eigenvalues according to the same partition. We denote 
$$
P_i : \C^n \to \C^n \,, \quad i = 1,\dots,\ell(\mu)
$$
the corresponding spectral projectors. 

The projectors $P_i$ are canonically defined and, 
in particular, commute with the centralizer of $X_1$ in $GL(n)$. We 
thus may assume they project onto coordinate subspaces and replace
the $GL(n)$-quotient by $\prod GL(|E_i|)$. For each $i$, the quadruple 
$$
(Z_i,W_i,A_i,B_j) = (P_i \, X_1, P_i\, X_2 \,P_i, 
P_i \, A, B \, P_i) 
$$
solves \eqref{ADHM}. Because the starting point 
$(X_1,X_2,A,0)$ is stable, each of these blocks remains 
stable in a certain neighborhood. 
Thus we get a map to $\prod \Mr{|E_i|}$. 
Clearly, it is $GL(r)$-equivariant. 

The original data $(X_1,X_2,A,B)$ may be reconstructed as follows. Since 
$\sum P_i =1$, all we need is to recover 
$$
W_{ij} = P_i X_2 P_j 
$$
for $i\ne j$. It is a solution of 
$$
Z_{i} W_{ij} -  W_{ij} Z_{j} = - A_i B_j \,, 
$$
which exists and is unique because the spectra of $Z_i$ and $Z_j$ are 
disjoint.

\end{proof}

\section{Classical $\br$-matrix}\label{s_classR}

\subsection{}

Denote $\bk = H^\hd_\bG(\pt)$ and let 
$$
\vacv{} \in H^\hd_\bG(\cM(1,0)) \cong \bk
$$
be the identity element. We abbreviate
$$
\alpha_n = \alpha_n(1) 
$$
in this section. 
It is a theorem of Nakajima \cite{NakHilb} that the map
$$
\bk\big[\alpha_{-1},\alpha_{-2},\alpha_{-3},
\dots \big] \to  H^\hd_\bG(\cM(1))
$$
given by 
$$
f \mapsto f \vacv{} \,, 
$$
is an isomorphism. We will use it to identify its source
and target.

\subsection{}

We define 
$$
\bF = \bK\big[\alpha_{-1},\alpha_{-2},\alpha_{-3}\,.
\dots \big] 
$$
The operators  $\alpha_n$, $n>0$, act on $\bF$
satisfying \eqref{commNak} and annihilating the vector 
$$
\vac = 1 = \vacv{}\,.
$$
Following the tradition in quantum field theory, $\bF$ is 
called a Fock space. The operators $\alpha_n$ generate
a Heisenberg algebra in $\End(\bF)$.

\subsection{}

Consider 
$$
\bR(u) \in \End\left(\bF \otimes \bF\right) \otimes_\bK \Q(\gal) \,.
$$
By Theorem \ref{t_Bar}, it commutes with the operators
$$
\beta_n(1)_\bA = \alpha_n\otimes 1 + 
1 \otimes \alpha_n\,. 
$$
We define
$$
\alpha_n^{\pm} = \alpha_n \otimes 1 \pm 
1 \otimes \alpha_n \,. 
$$
These satisfy
\begin{equation}
\left[\alpha_k^\varepsilon,\alpha^{\eta}_l \right] =  
2 k \tau(1) \, 
\delta_{k+l} \, \delta_{\varepsilon,\eta} 
\,,
\label{commNak2}
\end{equation}
where $\varepsilon,\eta \in \{\pm\}$ as a consequence of \eqref{commNak}. 

We see the operators $\alpha_n^{\pm}$ generate
two new commuting Heisenberg subalgebras of $\bF\otimes \bF$ 
and $\bR$ commutes with one of them. 

\subsection{}

Using the operators $\alpha_n^{\pm}$, we can write 
\begin{equation}
\bF \otimes \bF = \bF^+ \otimes \bF^- \,. 
\label{twofact}
\end{equation}
We denote by $\End^{-}$ the image of $\End(\bF^-)$ in 
$\End(\bF^{\otimes 2})$,

\begin{Lemma} \label{l_F-} 
The operator $\bR$ belongs to $\End^-$.
\end{Lemma}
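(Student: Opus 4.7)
The plan is to exploit the fact that $\bR$ commutes with a half of the doubled Heisenberg algebra, combined with irreducibility of Fock space.

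First, I would observe that for every $n \ne 0$ the Baranovsky operator $\beta_n(1)$ acts on $H^\hd_\bG(\cM(r))$ via the Lagrangian Steinberg correspondence $\fB$ introduced in Section \ref{s_def_Bar} (for $n < 0$ this is the definition; for $n > 0$ one takes the adjoint correspondence). By Theorem \ref{t_Stein}, any $\bG_\bA$-invariant Steinberg correspondence intertwines the stable envelopes for $\pm\fC$, and consequently its fixed-point restriction commutes with the root $R$-matrix $\bR = R_{-\fC,\fC}$. Specializing Theorem \ref{t_Bar} to $r=2$ identifies this fixed-point restriction with $\alpha_n^+ = \alpha_n \otimes 1 + 1 \otimes \alpha_n$. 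Thus
\begin{equation*}
\bigl[\bR(u),\, \alpha_n^+\bigr] = 0 \qquad \text{for every } n \ne 0.
\end{equation*}

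Second, under the Heisenberg algebra generated by the $\alpha_n^+$, the tensor product decomposes as $\bF \otimes \bF = \bF^+ \otimes \bF^-$ according to \eqref{twofact}, where the $\alpha_n^+$ act as $a_n \otimes 1$ with $a_n$ the generating operators of the Heisenberg action on $\bF^+$. The commutation relation \eqref{commNak2} reads
\begin{equation*}
[a_k, a_l] = 2k\, \tau(1)\, \delta_{k+l},
\end{equation*}
and since $\tau(1) = -1/\det_{\C^2}$ is invertible in the base ring $\Q(\gal)$ over which $\bR$ is defined, $\bF^+$ becomes an irreducible Fock module over this Heisenberg algebra with vacuum $\vac \otimes \vac$ (annihilated by $a_n$ for $n>0$ and freely generated by the $a_{-n}$ for $n>0$).

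Third, I would invoke the standard double commutant argument: the commutant of an irreducible action of the Heisenberg algebra on $\bF^+$ consists of scalars, and therefore the commutant of $\{a_n \otimes 1\}_{n \ne 0}$ inside $\End_{\Q(\gal)}(\bF^+ \otimes \bF^-)$ equals $1 \otimes \End(\bF^-) = \End^-$. Combined with the first step, this immediately gives $\bR \in \End^-$.

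The only place that needs a small amount of care is the irreducibility claim; once one knows the Heisenberg bracket is nondegenerate over $\Q(\gal)$ (which is automatic from $\det_{\C^2} \ne 0$), the rest is a standard Schur-type argument, and this is the main --- but essentially routine --- step.
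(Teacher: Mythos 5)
Your argument is correct and is precisely the paper's own proof, just written out in more detail: the paper says in one sentence that the operators $\alpha_n^+$ commute with $\bR$ (via Theorem \ref{t_Bar}) and act irreducibly on $\bF^+$, and you have simply unpacked the intermediate steps (Steinberg correspondence giving the commutation, nondegeneracy of the bracket over $\Q(\gal)$ giving irreducibility, and the Schur/double-commutant argument). No gaps.
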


\begin{proof}
The operators $\alpha^{+}_n$ act irreducibly on $\bF^+$
and commute with $\bR$. 
\end{proof}

\subsection{}

\begin{Lemma}\label{l_End-}
An operator in $\End^{-}$ is uniquely determined
by its matrix elements in the subspace 
$$
\vac \otimes \bF  \subset \bF^{\otimes 2} \,. 
$$
\end{Lemma}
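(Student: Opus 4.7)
The plan is to work inside the factorization \eqref{twofact} and recover, from the restriction $R|_{\vac\otimes\bF}$, the full action of the operator. Every $R\in\End^-$ has the form $R = 1_{\bF^+}\otimes T_-$ for some $T_-\in\End(\bF^-)$, so it suffices to determine $T_-$ on a basis of $\bF^-$. A short computation from \eqref{commNak2} shows $[\alpha^+_k,\alpha^-_l]=0$, so the Heisenberg subalgebras really act on separate tensor factors: in the factorization $\bF^+\otimes\bF^-$, the operator $\alpha^+_{-n}$ corresponds to $\alpha^+_{-n}\otimes 1$ and $\alpha^-_{-n}$ to $1\otimes\alpha^-_{-n}$. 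Standard Fock-space arguments (over the localization $\bK$, where $\tau(1)$ is a unit) then give bases of $\bF^{\pm}$ consisting of the monomials $\alpha^\pm_{-n_1}\cdots\alpha^\pm_{-n_k}\vac_\pm$ indexed by partitions $n_1\ge\cdots\ge n_k\ge 1$.

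Next, I would rewrite an arbitrary basis vector of $\vac\otimes\bF$ in the factorization. Using $1\otimes\alpha_{-n}=\tfrac12(\alpha^+_{-n}-\alpha^-_{-n})$ together with the commutativity of the $\alpha^+$'s with the $\alpha^-$'s, one expands
\[
\vac\otimes\alpha_{-n_1}\cdots\alpha_{-n_k}\vac
=\frac{1}{2^k}\sum_{S\subset\{1,\ldots,k\}}(-1)^{|S|}
\Bigl(\prod_{i\notin S}\alpha^+_{-n_i}\vac_+\Bigr)\otimes
\Bigl(\prod_{i\in S}\alpha^-_{-n_i}\vac_-\Bigr).
\]
Applying $R = 1\otimes T_-$ replaces the second tensor factor by its image under $T_-$. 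Now I would use the Heisenberg grading of $\bF^+$, which places $\vac_+$ alone in degree zero, to define the projection $\bF^+\otimes\bF^-\twoheadrightarrow \bk\vac_+\otimes\bF^-\cong\bF^-$. The only summand contributing to this projection is the one with $S=\{1,\ldots,k\}$, and it equals $(-1)^k 2^{-k}\,\vac_+\otimes T_-(\alpha^-_{-n_1}\cdots\alpha^-_{-n_k}\vac_-)$.

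Letting the partition $(n_1,\ldots,n_k)$ vary over all partitions, we thus read off $T_-$ on the full basis $\{\alpha^-_{-n_1}\cdots\alpha^-_{-n_k}\vac_-\}$ of $\bF^-$ from the knowledge of $R$ on $\vac\otimes\bF$, proving the lemma. I do not expect any serious obstacle: the argument is formal once one has the commutation $[\alpha^+,\alpha^-]=0$ and the PBW basis of $\bF^\pm$, both of which are essentially automatic in the $\bK$-linear Fock-space setup. The only mild delicacy is justifying that the expansion takes place over a ring in which $2\tau(1)$ is invertible — this is precisely why the definition of $\bR(u)$ already involves the localization at $\det_{\C^2}$, and no further enlargement of scalars is needed.
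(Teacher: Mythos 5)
Your setup is correct (the factorization $\bF\otimes\bF=\bF^+\otimes\bF^-$, the PBW bases, the expansion of $\vac\otimes\alpha_{-n_1}\cdots\alpha_{-n_k}\vac$), but the final step conflates two different projections and as a result proves a weaker statement than the lemma.

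The lemma asserts that the \emph{matrix elements} $\bigl(R\cdot\vac\otimes v_1,\ \vac\otimes v_2\bigr)$ with $v_1,v_2\in\bF$ determine $R\in\End^-$; this is the compression $P\,R\,P$, where $P$ is the orthogonal projection onto $\vac\otimes\bF$. Your argument instead computes the projection $\pi\colon\bF^+\otimes\bF^-\twoheadrightarrow\bk\vac_+\otimes\bF^-$ applied to $R(\vac\otimes v)$, which is a different operation. The subspace $\vac\otimes\bF$ sits ``diagonally'' inside $\bF^+\otimes\bF^-$ and is not $\bk\vac_+\otimes\bF^-$; computing $\pi(R(\vac\otimes v))$ requires pairing $R(\vac\otimes v)$ against $\vac_+\otimes\alpha^-_{-\rho}\vac_-$ for nonempty $\rho$, and these vectors do not lie in $\vac\otimes\bF$. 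In other words, what you read off is determined by the full image $R|_{\vac\otimes\bF}\colon\vac\otimes\bF\to\bF^{\otimes 2}$, not by the compression, so your argument would prove: ``$R$ is determined by its full action on $\vac\otimes\bF$.'' That is a weaker hypothesis than the one actually available in the application (Theorem \ref{t_vacuum} only provides vacuum-to-vacuum matrix elements).

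The paper closes exactly this gap with a minimality/triangularity argument. Assume $A\ne0$ and pick $(\mu,\nu)$ minimal with $\langle A\,\alpha^-_{-\mu}\vac\otimes\vac,\ \alpha^-_{-\nu}\vac\otimes\vac\rangle\ne0$. Expanding $\bigl(A\,\vac\otimes\alpha_{-\mu}\vac,\ \vac\otimes\alpha_{-\nu}\vac\bigr)$ using $1\otimes\alpha_n=\tfrac12(\alpha^+_n-\alpha^-_n)$ on both sides gives a double sum over subsets $S,T$. The $\bF^+$-pairing forces $|\mu_{S^c}|=|\nu_{T^c}|$, so every term with $(S,T)$ not both full has $|\mu_S|+|\nu_T|<|\mu|+|\nu|$ and vanishes by minimality; only the $(S,T)$-full term survives, contradicting the hypothesis that the left side is zero. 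Your proof can be repaired by replacing the naive ``read off the projection'' step with this inductive reduction, but as written the key reduction is missing.
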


\begin{proof}
Let $A\in \End^-$ and suppose that
\begin{equation}
\big( A \cdot \vac 
\otimes v_1 , \vac  \otimes v_2
\big)  = 0 
\label{zeroA}
\end{equation}
for all $v_1,v_2\in \bF$, while 
\begin{equation}
\left( A \,  \prod \alpha^{-}_{-\mu_i} \, \vac\otimes \vac, 
 \prod \alpha^{-}_{-\nu_i} \, \vac\otimes \vac
\right)  \ne  0 
\label{nonzeroA}
\end{equation}
for some partitions $\mu,\nu$. We may further assume, 
the partitions $\mu,\nu$ in \eqref{nonzeroA} are chosen 
minimal with respect to $|\mu|,|\nu|$. Then taking 
$$
v_1 = \prod \alpha_{-\mu_i}  \, \vac \,, \quad 
v_2 = \prod \alpha_{-\nu_i}  \, \vac \,, 
$$
in \eqref{zeroA} and expanding 
$$
1\otimes \alpha_n=\tfrac12(\alpha^+_n - \alpha^-_n)
$$
we get a contradiction. 
\end{proof}

\subsection{}

The subspace 
$$
\vac \otimes \bF  \subset H^\hd_\bG(\cM(2)^\bA) \otimes \bK 
$$
is a vacuum subspace in the sense of Section \ref{s_vac}. 
By Theorem \ref{t_vacuum}, the corresponding matrix element 
of $\bR(u)$ is the operator of classical 
multiplication by 
\begin{equation}
\frac{e(N_-)}{e(N_- \otimes \hbar)} = 
1+\frac{\hbar \, \rk N_-}{a_1-a_2} + \dots 
\label{expN_1}
\end{equation}
where 
$$
- u = a_2 - a_1
$$
is the $\bA$-weight of $N_-$. By formula \eqref{charN_}\,, 
$$
\rk N_- \big|_{\cM(1,0) \times \cM(1,n)} = n \,.
$$
In the sheaf interpretation, the unstable 
normal bundle $N_-$ to 
$$
\cM(1) \owns I \mapsto \cO\oplus I \in \cM(2)
$$
is the tautological bundle of the Hilbert scheme 
$$
N_- \cong  \Taut = H^0(\cO/I) \,. 
$$
Hence $\rk N_-$ is indeed the number of points.

\subsection{}

Consider the operator 
$$
L_0 = - \sum_{k>0} \alpha_{-k}(1) \, \alpha_{k}(\pt) 
$$
where 
$$
\pt = [0] = \textstyle{\det_{\C^2}} \in H^2_\bG(\C^2)
$$
is the class of the origin. Note that since $\pt$ and $1$ are
proportional, they may be distributed arbitrarily between the 
two factors. From 
$$
\left[\alpha_{k}(\pt) , \alpha_{l}(1)\right] = - k \, \delta_{k+l} \,,
$$
one has the following 

\begin{Lemma} \label{l_L0} 
$L_0$ acts by multiplication by $n$ in $H^\hd_\bG(\cM(1,n))$. 
\end{Lemma}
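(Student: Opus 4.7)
The plan is to identify $L_0$ as the standard Heisenberg energy operator on the Fock space $\bF$ and then verify directly that its spectrum on the $n$-particle subspace equals $n$.

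First I would reduce to a computation of $\tau(1 \cup \pt)$. Since $\pt = [0] = \det_{\C^2} \in H^4_\bG(\C^2)$ is the class of the origin and $\tau(\gamma) = -\int_{\C^2} \gamma$ in our sign convention (see Section \ref{s_signs_adj} and \eqref{tau_1}), equivariant integration gives $\int_{\C^2} [0] = 1$, so $\tau(1 \cup \pt) = \tau(\pt) = -1$. Applying the Nakajima commutation relation \eqref{commNak} with $\gamma_1 = 1$ reversed to $\gamma_1 = \pt$ then yields
\begin{equation*}
\left[ \alpha_k(\pt),\, \alpha_{-l}(1) \right] \;=\; -\, k\, \delta_{k,l}\,, \qquad k,l>0.
\end{equation*}

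Next I would use Nakajima's theorem, already recalled in the excerpt, which says the map $f \mapsto f \vacv{}$ identifies $\bk[\alpha_{-1}, \alpha_{-2}, \dots]$ with $H^\hd_\bG(\cM(1))$, with $H^\hd_\bG(\cM(1,n))$ being the subspace of total weight $n$ spanned by the monomials
\begin{equation*}
v_\mu \;=\; \alpha_{-\mu_1}(1)\, \alpha_{-\mu_2}(1) \cdots \vacv{}\,, \qquad |\mu| = n.
\end{equation*}
Since $\alpha_k(\pt) \vacv{} = 0$ for $k>0$, commuting $\alpha_k(\pt)$ past the creation operators in $v_\mu$ using the relation above gives $\alpha_k(\pt) v_\mu = -k \, m_k(\mu)\, v_{\mu \setminus k}$, where $m_k(\mu)$ is the multiplicity of $k$ in $\mu$ and $\mu \setminus k$ denotes removal of one part equal to $k$. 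Reapplying $\alpha_{-k}(1)$ puts the part back, producing
\begin{equation*}
\alpha_{-k}(1)\, \alpha_k(\pt)\, v_\mu \;=\; -\,k\, m_k(\mu)\, v_\mu.
\end{equation*}

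Summing over $k>0$ and using the definition of $L_0$,
\begin{equation*}
L_0\, v_\mu \;=\; -\sum_{k>0} \alpha_{-k}(1)\,\alpha_k(\pt)\, v_\mu \;=\; \sum_{k>0} k\, m_k(\mu)\, v_\mu \;=\; |\mu|\, v_\mu \;=\; n\, v_\mu\,,
\end{equation*}
which is exactly the claim. There is no genuine obstacle here: the only subtle step is getting the signs right in $\tau(\pt)$ and in the commutator, which relies on tracking the conventions of Section \ref{s_signs_adj} and the normalization in Section \ref{s_sign_Baran}; once these are fixed, the computation is the textbook verification that $L_0$ is the grading operator on a bosonic Fock space.
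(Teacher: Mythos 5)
Your proof is correct and takes essentially the same approach the paper has in mind: the paper simply records the commutation relation $[\alpha_k(\pt), \alpha_l(1)] = -k\,\delta_{k+l}$ and states that the lemma follows, which is exactly the textbook Fock-space grading computation you carry out. Your sign-tracking through $\tau(\pt) = -1$ and the conventions of Section~\ref{s_signs_adj} is also accurate.
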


\subsection{}

\begin{Theorem}\label{t_classical_r} 
 The classical $\br$-matrix for $\cM(1)\times \cM(1)$ equals
 \begin{equation}
\br =   - \sum_{n>0} \alpha^-_{-n}(1) \, \alpha^-_{n}(\pt) \,.
\label{brf}
\end{equation}
\end{Theorem}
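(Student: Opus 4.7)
The plan is to apply the uniqueness principle of Lemma \ref{l_End-}: it suffices to verify that both sides lie in $\End^-$ and agree on matrix elements in $\vac \otimes \bF$. The LHS belongs to $\End^-$ by Lemma \ref{l_F-}. For the proposed right-hand side, call it $\br'$, each $\alpha^-_n(\gamma)$ commutes with every $\alpha^+_m(\gamma')$: two of the four cross commutators vanish automatically because they act on disjoint tensor factors, while the remaining two same-factor commutators cancel by \eqref{commNak}. Hence every $\alpha^-_n(\gamma)$ preserves the decomposition \eqref{twofact} and acts trivially on the $\bF^+$-factor, so $\br' \in \End^-$.

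Next I would extract the vacuum matrix element of $\br$ from Theorem \ref{t_vacuum}, applied at the true vacuum component $Z_\emptyset = \cM(1)$ embedded in $\cM(2)$ via $\cF \mapsto \cO \oplus \cF$. On $Z_\emptyset$, the unstable normal bundle is the tautological bundle $\Taut$, of rank $n$ on $\Hilb_n$, so \eqref{eN} gives
\begin{equation*}
\frac{e(N_-)}{e(N_- \otimes \hbar)} = 1 + \frac{\hbar}{u}\, L_0 + O(u^{-2}),
\end{equation*}
where $L_0$ acts as multiplication by $n$ on $H^\hd_\bG(\cM(1,n))$ by Lemma \ref{l_L0}. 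Reading off the coefficient of $\hbar/u$ in $\bR = 1 + \tfrac{\hbar}{u}\br + O(u^{-2})$ yields
\begin{equation*}
\bigl(\br \cdot (\vac \otimes v_1),\, \vac \otimes v_2\bigr) = (L_0 v_1,\, v_2)
\end{equation*}
with respect to the sign-twisted Poincar\'e pairing on $\bF$.

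The matching computation for $\br'$ is a short Heisenberg exercise. Using $\alpha_k(\pt)\vac = 0$ for $k>0$, one has $\alpha^-_k(\pt)(\vac \otimes v_1) = -\vac \otimes \alpha_k(\pt)v_1$; applying $-\alpha^-_{-k}(1)$ and summing over $k$ gives
\begin{equation*}
\br'(\vac \otimes v_1) = \sum_{k>0}\alpha_{-k}(1)\vac \otimes \alpha_k(\pt) v_1 \;+\; \vac \otimes L_0 v_1,
\end{equation*}
where $L_0 = -\sum_{k>0}\alpha_{-k}(1)\alpha_k(\pt)$ has been used. The first sum is orthogonal to $\vac \otimes v_2$ since each $\alpha_{-k}(1)\vac$ lies in strictly positive Hilbert-scheme degree and hence pairs trivially with $\vac$; the second contributes $(L_0 v_1, v_2)$, matching the LHS. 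By Lemma \ref{l_End-} this forces $\br = \br'$.

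The principal technical hurdle will be keeping all sign conventions consistent --- those from the polarization \eqref{polarM}, from the twisted trace $\tau$ underlying Theorem \ref{t_vacuum}, and from the Baranovsky adjoint of Section \ref{s_sign_Baran} used to define $\alpha_k$ for $k>0$. Once these are reconciled, the identity is essentially a corollary of Theorem \ref{t_vacuum} combined with the Heisenberg commutation relations \eqref{commNak}.
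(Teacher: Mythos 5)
Your proof is correct and follows the same route as the paper: show both sides lie in $\End^-$ (the LHS via Lemma \ref{l_F-}, the RHS by commuting with $\alpha^+_m$), match vacuum matrix elements via Theorem \ref{t_vacuum} together with Lemma \ref{l_L0}, and invoke the uniqueness of Lemma \ref{l_End-}. The paper states this in three lines; you have merely supplied the straightforward Heisenberg bookkeeping that the paper leaves implicit.
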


\begin{proof}
This commutes with $\alpha_n^+$ and has correct vacuum matrix 
elements by Lemma \ref{l_L0}. We conclude by Lemma \ref{l_End-}. 
\end{proof}

\subsection{}

Expanding out \eqref{brf}, we get the following 
formula for the action of $\br$ on cohomology 
of 
$\cM(r_1) \times \cM(r_2)$ 
\begin{equation}
  \br = \bv \otimes \bw + 
\bw \otimes \bv + \sum_{k \ne 0} \beta_{-k}(1) \otimes \beta_k (\pt) 
\label{brMrn}
\end{equation}
where 
$$
\bw =r ,\quad  \bv = \ct
$$
act by multiplication by the 
rank and instanton charge, respectively, compare with 
\eqref{formula_r}. 

\subsection{}\label{s_g_M(r)}

We conclude 
$$
\fg_\bQ \cong \glh\otimes\bK \Big/ \textup{zero modes}\,,
$$
where zero modes (or constant loops) refer to central elements 
$\beta_0(\gamma)$. 
The brackets in this Lie 
algebra 
\begin{align}
  \left[\bv,\beta_n(\gamma)\right] &= - n \beta_n(\gamma) \,,
\notag\\ 
  [\beta_n(\gamma),\beta_m(\gamma')] &= \tau(\gamma
 \cup \gamma') 
\, 
  n \, \delta_{n+m}\, \bw \,, \label{commgl1}
\end{align}
are a special case of the relation \eqref{REE}.

\chapter{Free bosons}\label{s_free_bos}

\section{Fock spaces} 

\subsection{}

Anticipating application to algebraic surfaces other than 
$\C^2$, we will put the commutation relations \eqref{commgl1} 
in a more abstract framework, in which the insertions 
$\gamma$ take values in a general commutative
Frobenius algebra $\bbH$ over a ring $\bK$. 

To go 
back to framed sheaves on $\C^2$, one takes
\begin{equation}
\bbH = H^\hd_{\bG} (\C^2)\left[\tfrac1{\det_{\C^2}}\right] \,, \quad 
\bK = H^\hd_{\bG} (\pt)\left[\tfrac1{\det_{\C^2}}\right]
\label{exk}
\end{equation}
with the trace map 
$$
\tau: \bbH \to \bK 
$$
given by $\tau(\gamma)= - \int_{\C^2} \gamma$. We denote 
this Frobenius algebra $\bbH(\C^2)$. 

Most of the material in this section is completely standard and 
is recalled mainly for setting up the notation.

\subsection{Heisenberg algebras}

Let $\bbH$ be a free $\bK$-module 
with a nondegenerate symmetric bilinear form $(\, \cdot\,,\, \cdot \,)$.
Consider the space 
$$
\bbH[z^{\pm1}]  = \bbH \otimes \bK[z^{\pm1}] 
$$
of polynomial loops, that is, 
Laurent polynomials $f(z)$ with values in $\bbH$. 
This has a natural skew-symmetric form 
\begin{equation}
\{f,g\} = \int (df,g) \,, \quad 
\int = \oint \frac{dz}{2\pi i z} \,. 
\label{skewform}
\end{equation}
For example
$$
\{\gamma z^{n} , \eta  z^{-n} \} = n \, (\gamma,\eta) \,, \quad 
\gamma,\eta \in \bbH \,. 
$$
The form \eqref{skewform} makes
$\bbH[z^{\pm1}] \oplus \bK$ 
a Heisenberg Lie algebra. We denote by $\fHeis=\fHeis(\bbH)$
its universal enveloping algebra and denote by $\alpha_n(\gamma)\in 
\fHeis$ the image of $\gamma z^n$.

Note that $\fHeis$ has a center, generated by the identity 
and the zero modes $\alpha_0(\gamma)$, $\gamma\in \bbH$. 

\subsection{Translation automorphisms}\label{s_shift_aut}

The additive group of $\bbH$ acts 
on $\fHeis(\bbH)$ by automorphisms
$$
\ttau_\gamma \left(\alpha_n(\eta)\right) = \alpha_n(\eta) - 
\delta_{n,0} \, (\gamma,\eta) \,, \quad \gamma,\eta\in \bbH \,. 
$$
We denote
$$
\fHeis^\sim = \bK[\bbH_\textup{add}] \ltimes \fHeis \,, 
$$
where $\bK[\bbH_\textup{add}]$ denotes the group algebra 
of the additive group of $\bbH$. By definition, it is spanned
by linear combinations of $\ttau_\gamma$, $\gamma \in \bbH$. 

Introduce the corresponding Lie algebra 
elements
$$
\alsn(\gamma) =  \log \ttau_\gamma 
$$
which satisfy the relations 
$$
\left[\alpha_n(\gamma),\alsn(\eta)\right] = \delta_{n,0} \, 
(\gamma,\eta) \,.
$$

\subsection{Fields}\label{s_phi} 

The commutation relations in $\fHeis^\sim$ are best 
summarized using fields, or generating functions. Consider
\begin{equation}
\bphi(\gamma;z) = \alsn(\gamma) + \alpha_0(\gamma) \, \log z - \sum_{n\ne 0} 
\frac{\alpha_n(\gamma)}{n} \, z^{-n} 
\label{phi_field}
\end{equation}
where $z\in \C^\times$ is a variable. Then 
$$
\left[\bphi(\gamma;z) , \bphi(\eta;w)\right] =
(\gamma,\eta) \, \left(\log(z-w)_{|z|>|w|} - \log(w-z)_{|w|>|z|}\right) 
\,. 
$$
Here
$$
\log(z-w)_{|z|>|w|} = \log z - \sum_{n>0} \frac{(w/z)^n}{n} \,,
$$
is the series expansion in the region $|z|>|w|$. We will also 
consider 
\begin{equation}
\bal(\gamma;z) = \partial 
 \, \bphi(\gamma;z) = \sum_n \alpha_n(\gamma) \, z^{-n} \,, 
\label{alpha_field}
\end{equation}
where
$$
\partial = z \frac{\partial}{\partial z}  \,. 
$$
The coefficients of the fields \eqref{alpha_field}
generate $\fHeis(\bbH)$.

\subsection{Fock spaces}

The Fock representation of $\fHeis^\sim$ is generated by the 
vacuum vector $\vacv{0}$ such that 
$$
\alpha_n(\gamma) \vacv{0} = 0\,, \quad n\ge 0 \,. 
$$
We denote 
$$
\vacv{\eta} = \ttau_{-\eta}  \vacv{0} \,. 
$$
These satisfy 
$$
\alpha_0(\gamma) \vacv{\eta} = - (\gamma,\eta) \vacv{\eta} 
$$
and generate an irreducible $\fHeis$-module that we denote 
$\bF(\eta)$. We have 
$$
\bF(\eta) \cong \bF(0) \cong S^\hd\!\left(z^{-1} \bbH[z^{-1}]\right) 
\quad \textup{as vector spaces}\,,  
$$
the first isomorphism being the action of $\ttau_{\eta}$. 
The module structure of $\bF(\eta)$ varies with $\eta$, but only in how 
the center of $\fHeis$ acts. 

\subsection{Adjoints}\label{s_scalar_product} 

There is an anti-involution on $\fHeis_\zeta$ defined by 
$$
(\alpha_n(\gamma))^* = \alpha_{-n}(\gamma)\,, \quad 
\ttau_\eta^* = \ttau_{-\eta} \,,
$$
that is, 
$$
\bphi(\gamma,z)^* = - \bphi(\gamma,z^{-1}) \,. 
$$
The Fock representation has a unique inner product for 
which $\vacv{\eta}$ are orthonormal 
% $$
% \lang \zeta | \zeta' \rang 
% = \delta_{\zeta,\zeta'} 
% $$
and the anti-involution $*$ coincides with taking the adjoint 
operator. We will use this inner product to define matrix 
elements of operators.

\subsection{Normally ordered products}

Consider a product $\bal(\gamma,z) \, \bal (\eta,w)$ of two fields. 
Its matrix elements are given by convergent series in the 
region $|z|> |w|$. At $z=w$ they have a singularity. 
This is regularized by commuting all annihilation operators
to the right. In other words, one defines the normally ordered
product by 
\begin{equation}
\bal(\gamma,z) \, \bal (\eta,w) = (\gamma,\eta) \, \frac{z w}{(z-w)^2} + 
\nord \bal (\gamma,z) \, \bal (\eta,w) \nord  \,,
\label{balOPE}
\end{equation}
where the first, singular,  term is to be expanded
in the region $|z|>|w|$. The normally ordered term in \eqref{balOPE}
is regular at $z=w$, in fact 
\begin{equation}
\big(\, \nord \bal (\gamma,z) \, \bal (\eta,w) \nord \, f_1, f_2
\big) \in \bK[z^{\pm 1},w^{\pm 1}] 
\label{matrix_element_norm_product} 
\end{equation}
for all $f_1,f_2$ in the Fock space. 

By linearity, we can say that the normally ordered product 
$\,\nord \bal (\gamma,z) \, \bal (\eta,z) \nord\,$ 
takes an element $\gamma\otimes \eta \in \bbH^{\otimes 2}$ 
as an argument. 

A generalization of \eqref{balOPE}, known as Wick's theorem, explains how 
to normally order any product of 
normally ordered monomials in $\bal(\gamma_i,z_i)$. See for example \cite{BF,DMS,KacVertex}.

\subsection{Grading}

Recall we assume $(\,\cdot\,,\,\cdot\,)$ to be nondegenerate and let
$g^{-1} \in \bbH^{\otimes 2}$ be the inverse quadratic form.  Then 
$\nord \bal^2\nord (g^{-1},z)$ is a well-defined 
operator-valued Laurent series, from which we can 
extract the constant term 
$\int \nord\bal^2\nord (g^{-1},z)$. 
The following computation is 
standard 

\begin{Lemma}\label{l_gr_Fock} Let the Fock space be graded by 
$$
\deg \vacv{\eta} = (\eta,\eta)/2\,, \quad \deg \alpha_{-n} = n \,. 
$$
Then $\frac12 \displaystyle\int \nord\bal^2\nord (g^{-1},z)$ is the the grading operator. 
\end{Lemma}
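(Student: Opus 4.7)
The plan is to compute the operator $L_0 := \tfrac12 \int \nord \bal^2 \nord(g^{-1},z)$ mode by mode and then check directly that it acts diagonally in the basis obtained by applying creation operators $\alpha_{-n}(\gamma)$ to the highest weight vectors $\vacv{\eta}$, with the claimed eigenvalues.

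First, fix dual bases $\{e_i\}, \{e^i\}$ of $\bbH$, so that $g^{-1} = \sum_i e_i \otimes e^i = \sum_i e^i \otimes e_i$ by symmetry of the pairing. Expanding
\[
\nord \bal(e_i,z)\,\bal(e^i,z)\nord
= \sum_{m,n\in\Z} \nord \alpha_m(e_i)\,\alpha_n(e^i)\nord\, z^{-m-n},
\]
the integral (constant term in $z$) picks out the terms with $m+n=0$. Using the normal-ordering rule (negative modes to the left, positive modes to the right, zero modes left alone), symmetry of $g^{-1}$, and relabeling, I would arrive at the clean expression
\[
L_0 = \sum_{n>0}\sum_i \alpha_{-n}(e_i)\,\alpha_n(e^i) \;+\; \tfrac12 \sum_i \alpha_0(e_i)\,\alpha_0(e^i).
\]

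Next I would check the action on the standard monomials. For a creation operator insertion $\alpha_{-k}(\gamma)\vacv{\eta}$ with $k>0$, I use $[\alpha_n(e^i),\alpha_{-k}(\gamma)] = k\,\delta_{n,k}\,(e^i,\gamma)$ (from \eqref{commNak2}, in its abstract incarnation) together with the contraction $\sum_i (e^i,\gamma)\,e_i = \gamma$ dual to $g$. A short induction on the number of creation modes, commuting the annihilation part of the first summand of $L_0$ through, shows that the first summand contributes exactly the total creation-mode weight $\sum_j n_j$ on a monomial $\prod_j \alpha_{-n_j}(\gamma_j)\vacv{\eta}$. The zero-mode piece $\tfrac12 \sum_i \alpha_0(e_i)\alpha_0(e^i)$ commutes with all $\alpha_{-n}$ for $n>0$ and, using $\alpha_0(\gamma)\vacv{\eta} = -(\gamma,\eta)\vacv{\eta}$, evaluates to $\tfrac12(\eta,\eta)$ on $\vacv{\eta}$ by the same contraction identity. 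Summing the two contributions reproduces exactly the grading defined in the statement.

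The only step requiring care is the bookkeeping of normal ordering at $n=0$: the zero modes $\alpha_0(\gamma)$ do not commute with the translation generators $\alsn$ defining the $\vacv{\eta}$, so one has to be deliberate in checking that $\nord \alpha_0(e_i)\alpha_0(e^i)\nord = \alpha_0(e_i)\alpha_0(e^i)$ is well-defined and gives the $\tfrac12(\eta,\eta)$ eigenvalue rather than, say, $\tfrac12(\eta,\eta)+\text{const}$. This is where \eqref{matrix_element_norm_product} is invoked, ensuring that the constant term of the normally ordered bilinear is unambiguous, independent of the expansion region, and that no anomalous shift appears. Beyond that, everything is formal manipulation of the Heisenberg commutation relations.
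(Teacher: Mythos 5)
Your proof is correct and is precisely the "standard computation" that the paper alludes to without writing out (the paper states the Lemma with no proof, remarking only that it is standard and a generalization of Lemma~\ref{l_L0}).

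The mode-by-mode expansion, the use of symmetry of $g^{-1}$ to combine the $m>0$ and $m<0$ terms into $\sum_{n>0}\sum_i\alpha_{-n}(e_i)\alpha_n(e^i)$, the evaluation of the zero-mode piece via $\alpha_0(\gamma)\vacv{\eta}=-(\gamma,\eta)\vacv{\eta}$ and the dual-basis contraction $\sum_i(e^i,\eta)(e_i,\eta)=(\eta,\eta)$, and the induction on creation modes using $[\alpha_n(e^i),\alpha_{-k}(\gamma)]=n\,\delta_{nk}(e^i,\gamma)$ together with $\sum_i(e^i,\gamma)e_i=\gamma$ are all exactly right. One small remark: your final paragraph about a possible normal-ordering anomaly in $\nord\alpha_0(e_i)\alpha_0(e^i)\nord$ is over-cautious. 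The field $\bal(\gamma;z)=\sum_n\alpha_n(\gamma)z^{-n}$ does not contain $\alsn$, and the zero modes $\alpha_0(\gamma)$ all commute with one another, so the constant term of the normally ordered bilinear is unambiguously $\sum_i\alpha_0(e_i)\alpha_0(e^i)$ with no possible shift; the appeal to \eqref{matrix_element_norm_product} is not needed for this point. This is harmless but worth tidying.
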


\noindent This is a generalization of Lemma \ref{l_L0}.

\section{Insertions and coproducts}\label{s_coprod}

\subsection{}
Note that in \eqref{matrix_element_norm_product} we evaluate
both operators at the same point $z=w\in \C^\times$, but they
still take two distinct cohomology insertions $\gamma$ and $\eta$, or, 
equivalently, a element of $\gamma\otimes\eta \in \bbH^{\otimes 2}$ 
as an argument. 

To write an operator with a single cohomology insertion, we 
need a coassociative coproduct
$$
\Delta: \bbH \to \bbH^{\otimes 2}\,, 
$$
and its iterates 
$$
\bbH \owns \gamma \mapsto \gamma^{\Delta n} \in \bbH^{\otimes n} \,. 
$$
We can then construct an operator 
$$
:\! \bal^n  \!: (\gamma,z) \overset{\textup{\tiny def}}= \,\, 
:\! \bal^n  \!: (\gamma^{\Delta n},z) 
$$
which depends on a single point $z\in\C^\times$ and also 
depends linearly on a single cohomology insertion $\gamma$.

\subsection{}

For example, for 
$
\bbH = H^\hd_{\bG} (\C^2)\left[\tfrac1{\det_{\C^2}}\right] 
$
we have 
$$
1^\Delta = - 1 \otimes \pt = - \pt \otimes 1 \,. 
$$
This is because the comultiplication, as adjoint to multiplication,
gets the sign $-1 = (-1)^{\frac12\dim\C^2}$. 
Therefore, the formula \eqref{brf} can be recast in the 
following form
\begin{equation}
  \label{br_vert}
  \br = \frac12 \int \nord (\bal^-)^2 \nord \, (1) \,,
\end{equation}
modulo zero modes $\alpha_0(\gamma)$.

\subsection{} 
Because of the Frobenius algebra structure on $\bbH$, 
Wick's formula for the operators 
$\nord\bal^n\nord(\gamma)$ takes the following particularly 
nice form. 

For any symmetric Frobenius algebra, there is a
canonical central element $\se\in \bbH$ such that 
$$
m(\gamma^\Delta) = \se \gamma 
$$
for all $\gamma\in \bbH$. Here $m: \bbH^{\otimes 2} \to \bbH$ is the 
multiplication map. This is associated with gluing a handle 
in the context of 2-dimensional topological quantum field
theories, see for example \cite{Kock}. One has 
$$
\tau(\se) = \rk_{\,\bK} \bbH \,.  
$$
In particular, if $\bbH=H^*(S)$ then this is 
the Euler characteristic of $S$ (recall we assume $\bbH$ is 
commutative for simplicity). 

\begin{Lemma}\label{l_Wick} 
\begin{multline}
    \label{Wick_Frob}
\nord\bal^n\nord(\gamma_1)(z_1) \, 
\nord\bal^m\nord(\gamma_2)(z_2)  
= \\ \sum_{k=0}^{\min(n,m)} c_k(z_1,z_2) \, 
\nord \bal^{n-k} (z_1) \bal^{m-k} (z_2)\nord 
\left(\gamma_1 \gamma_2 \se^{k-1}\right)\,,
\end{multline}
where 
\begin{equation}
c_k(z_1,z_2) = \frac{(-n)_k (-m)_k}{k!} 
\, \left(\frac{z_1 z_2}{(z_1-z_2)^2}\right)^k \,.
\label{comb_fact_Wick}
\end{equation}
\end{Lemma}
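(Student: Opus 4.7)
My plan is to prove the lemma by invoking the standard Wick formula for abelian free bosons and then reducing the combinatorics of Frobenius-algebra contractions to a single clean identity.

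First, I would recall Wick's theorem for the scalar free boson field with diagonal OPE: the product of two normally ordered monomials of arity $n$ and $m$ in the field $\bal$ equals the sum, over the number $k$ of contractions and over all choices of $k$-element subsets $I\subset\{1,\dots,n\}$, $J\subset\{1,\dots,m\}$, and bijections $\sigma\colon I\to J$, of the product of the singular factors $\frac{z_1z_2}{(z_1-z_2)^2}$ coming from each contracted pair (expanded in the region $|z_1|>|z_2|$) times the normally ordered product of the remaining $n-k$ fields at $z_1$ and $m-k$ fields at $z_2$. The subtlety for us is that the fields take arguments in $\bbH$, so each contraction involves not only the scalar OPE factor but also the Frobenius pairing applied to the two tensor slots being contracted.

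Next I would isolate the central algebraic claim:
\begin{equation*}
\cC_{i_1,j_1}\cdots\cC_{i_k,j_k}\bigl(\gamma_1^{\Delta n}\otimes\gamma_2^{\Delta m}\bigr)\;=\;(\gamma_1\gamma_2\se^{k-1})^{\Delta(n+m-2k)},
\end{equation*}
where $\cC_{i,j}$ denotes contraction of the $i$-th slot in the first factor with the $j$-th slot in the second factor via $(\,\cdot\,,\,\cdot\,)=\tau(\cdot\,\cdot)$, and the right-hand side is placed in the remaining $(n-k)+(m-k)$ slots. Because $\bbH$ is commutative, the iterated coproduct $\gamma^{\Delta n}$ is symmetric in its tensor factors, so the result is independent of the choice of positions and of the pairing $\sigma$. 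Proving this identity reduces to two sub-computations: (a) for a single contraction ($k=1$), $\cC_{1,1}(\gamma_1^{\Delta n}\otimes\gamma_2^{\Delta m})=(\gamma_1\gamma_2)^{\Delta(n+m-2)}$, which follows from the defining dual-basis identity $\sum_i\tau(x e_i)e^i=x$ applied inside a single coproduct slot; and (b) for contracting two positions within a single $\delta^{\Delta N}$, one gets $(\delta\se)^{\Delta(N-2)}$, which is the defining property $\sum_i e_i e^i=\se$ of the handle element. Iterating (a) once and (b) $k-1$ times produces exactly the stated $\se^{k-1}$ factor.

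Finally I would assemble the pieces. Since the identity above is independent of $(I,J,\sigma)$, the sum over these labels contributes a combinatorial factor of
\[
\binom{n}{k}\binom{m}{k}k!\;=\;\frac{n!\,m!}{k!(n-k)!(m-k)!}\;=\;\frac{(-n)_k(-m)_k}{k!},
\]
multiplied by the $k$-th power of the scalar singular factor $\frac{z_1z_2}{(z_1-z_2)^2}$, giving precisely $c_k(z_1,z_2)$; and the remaining normally ordered expression is $\nord\bal^{n-k}(z_1)\bal^{m-k}(z_2)\nord$ evaluated on $(\gamma_1\gamma_2\se^{k-1})^{\Delta(n+m-2k)}$, which is by convention $\nord\bal^{n-k}(z_1)\bal^{m-k}(z_2)\nord(\gamma_1\gamma_2\se^{k-1})$. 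Summing over $k$ from $0$ to $\min(n,m)$ yields \eqref{Wick_Frob}.

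The main obstacle is the Frobenius bookkeeping in step two: one must correctly account for the asymmetry between the first contraction, which merges $\gamma_1$ and $\gamma_2$ across the two original coproducts into $\gamma_1\gamma_2$, and the subsequent contractions, which each act inside a single coproduct $\delta^{\Delta N}$ and thus each bring in one additional factor of $\se$. This is what produces the exponent $k-1$ rather than $k$, and it is the only delicate point in the argument; everything else is formal combinatorics and the standard scalar Wick expansion.
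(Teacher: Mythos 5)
Your argument is the same as the paper's: apply Wick's theorem for the scalar boson and reinterpret the resulting contraction maps $\bbH^{\otimes 2}\to\bbH^{\otimes(n+m-2k)}$ via the Frobenius structure; you make explicit what the paper packages as ``graphical calculus for Frobenius algebras'' (a genus-$(k-1)$ surface with two incoming and $n+m-2k$ outgoing boundaries, which is exactly your first-contraction-then-$(k-1)$-handles factorization). One small caveat: your central identity $\cC_{i_1,j_1}\cdots\cC_{i_k,j_k}\bigl(\gamma_1^{\Delta n}\otimes\gamma_2^{\Delta m}\bigr)=(\gamma_1\gamma_2\se^{k-1})^{\Delta(n+m-2k)}$ is meant for $k\ge 1$ (iterating (a) once requires at least one contraction); the $k=0$ term must be read off directly as $\gamma_1^{\Delta n}\otimes\gamma_2^{\Delta m}$, which is exactly the special-case convention the paper flags (``the surface is disconnected'').
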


\noindent 
Here $(n)_k = n(n+1)\cdots(n+k-1)$ and the combinatorial factor 
in \eqref{comb_fact_Wick} is the number of ways to form $k$ pairs 
of elements from $\{1,\dots,n\}$ and $\{1,\dots,m\}$, respectively. 

Two terms in \eqref{Wick_Frob} require a special discussion. For
$k=0$, the insertion is defined to be 
$$
\gamma_1^{\Delta n} \otimes \gamma_2^{\Delta m} \in \bbH^{\otimes(n+m)} \,.
$$
For $n=m=k$, the whole term is defined to be 
$$
c_k(z_1,z_2) \, \tau(\gamma_1 \, \gamma_2 \, \se^{k-1}) \,. 
$$

\begin{proof}
This is an exercise in matching the Wick's formula with 
the graphical calculus for Frobenius algebras, as explained, for 
example, in \cite{Kock}. The tensor operations 
$$
\bbH^{\otimes 2} \to \bbH^{\otimes(n+m-2k)}
$$
that arise from Wick's formula, are interpreted graphically as 
surface of genus $k-1$ with two incoming and $n+m-2k$ 
outgoing holes, hence equal to 
$$
\gamma_1 \otimes \gamma_2 \to 
\left(\gamma_1 \gamma_2 \se^{k-1}\right)^{\Delta(n+m-2k)} \,.
$$
Note for $k=0$, the surface is disconnected, 
whence the need to consider this case separately. The other 
special case 
$n=m=k$ is the case of  no outgoing holes. In this case, there 
is only the scalar operator left in Wick's formula. 
\end{proof}

It is straightforward to generalize this Lemma to more 
than two normally ordered monomials.

% The following shorthand will be very useful in formulas 
% for operators of multiplication by the characteristic 
% classes of $N_-$. 

% \begin{Definition}
% For $\delta\in H^\hd_\bT((\C^2)^k)$, we define
% $$
% H^\hd_\bT(\Hilb_m) \owns \gamma \mapsto 
% \alpha_{n_1} \cdots  \alpha_{n_k} (\delta) \cdot \gamma \in 
% H^\hd_\bT(\Hilb_{m+\sum n_i})
% $$
% as the composition of Nakajima correspondences applied to the 
% class 
% $$
% \delta \otimes \gamma  \in H^\hd_\bT(\C^2 \times \cdots \times 
% \C^2 \times \Hilb_m) \,. 
% $$
% \end{Definition}

% \begin{Example}
% For an arbitrary surface $S$ and any $\gamma \in H^\hd(S)$, 
% we have\footnote{Are the signs correct for odd cohomology ?} 
% $$
% \Big[ \alpha_{-n} \alpha_{-m}\alpha_{n+m}  (\Delta_3), 
% \alpha_{-n-m} (\gamma) \Big]_{\pm} = - (n+m)  \, \alpha_{-n} \alpha_{-m}
% (\Delta \gamma)\,,
% $$
% where $\Delta_3 \in S^3$ is the small diagonal, $\Delta \gamma \in H^\hd(S^2)$ 
% is the coproduct, and the commutator or anticommutator is chosen 
% depending on the parity of $\gamma$.  
% Similarly, for any $\gamma_1,\gamma_2 \in H^\hd(S)$, we have 
% $$
% \Big[ \alpha_{-n-m} \alpha_{n} \alpha_{m} (\Delta_3), 
% \alpha_{-n} \alpha_{-m} (\gamma_1 \otimes \gamma_2) \Big]_{\pm} =
% n m\, \alpha_{-n-m}(\gamma_1 \cup \gamma_2)  \,. 
% $$
% This may be mnemonically summarized as follows: commutators with joining 
% or splitting operators involve products or coproducts of the 
% cohomology labels, respectively. 
% \end{Example}

% \noindent 

\section{Virasoro algebra}

\subsection{}
For an arbitrary $\kappa \in \bbH$, define 
\begin{equation}
  \label{defvTK}
\vT(\gamma,\kappa) = \tfrac12 \nord
    \bal^2\nord (\gamma) 
 + 
    \, \partial \bal  (\gamma \kappa)  - \tfrac12 \tau(\gamma \, \kappa^2) \,. 
\end{equation}
This field generates a Virasoro-like subalgebra
of the Heisenberg algebra, known as the Feigin-Fuchs or 
background charge Virasoro algebra. The statement for an 
arbitrary $\bbH$ should also be considered known,  see for 
example the discussion in Section 5 of \cite{Lehn_Lectures}. 

\subsection{}

We denote by $\vL_n(\gamma,\kappa)$ the 
coefficients of $\vT(\gamma,\kappa)$, that is, 
$$
\vT(\gamma,\kappa) = \sum_{n\in\Z} \vL_n(\gamma,\kappa) \, z^{-n} \,. 
$$

\begin{Theorem}\label{t_vir_comm} The operators $\vL_n(\gamma,\kappa)$ satisfy 
\begin{multline}
  \left[\vL_n (\gamma_1), \vL_m(\gamma_2) \right] = \\
(n-m) \,
  \vL_{n+m}(\gamma_1 \, \gamma_2) + \tau(\gamma_1 \gamma_2 (\se-12 \kappa^2))\,
  \delta_{n+m} \frac{n^3-n}{12} \,. 
\label{comm_Virasoro} 
\end{multline}
\end{Theorem}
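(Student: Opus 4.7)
The plan is to compute the commutator $[\vL_n(\gamma_1,\kappa), \vL_m(\gamma_2,\kappa)]$ by first computing the operator product $[\vT(\gamma_1,\kappa;z_1), \vT(\gamma_2,\kappa;z_2)]$ and then extracting mode commutators via contour residues in $z_1,z_2$. First, observe that the constant $-\tfrac12\tau(\gamma\kappa^2)$ in \eqref{defvTK} contributes only to $\vL_0(\gamma,\kappa)$ and is central, so it drops out of the commutator. Write the remainder as $\vT' = T_0 + T_1$ with $T_0(\gamma;z) = \tfrac12\nord\bal^2\nord(\gamma;z)$ and $T_1(\gamma,\kappa;z) = \partial\bal(\gamma\kappa;z)$. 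The commutator then decomposes into three OPE pieces: Sugawara--Sugawara, cross terms, and background charge--background charge, each handled by Lemma \ref{l_Wick}.

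For the Sugawara bracket $[T_0(\gamma_1;z_1), T_0(\gamma_2;z_2)]$, apply Lemma \ref{l_Wick} with $n=m=2$. Only the $k=1,2$ terms are singular: the $k=1$ term gives $\tfrac{z_1z_2}{(z_1-z_2)^2}\nord\bal(z_1)\bal(z_2)\nord(\gamma_1\gamma_2)$, whose Taylor expansion around $z_1=z_2$ and subsequent antisymmetrization recover $(n-m)T_0(\gamma_1\gamma_2)$ at the level of modes; the $k=2$ term is a scalar proportional to $\tau(\gamma_1\gamma_2\se)\tfrac{(z_1z_2)^2}{(z_1-z_2)^4}$ (the $\se$ coming from the $\se^{k-1}=\se$ handle insertion in Lemma \ref{l_Wick}), which after antisymmetrization contributes $\tau(\gamma_1\gamma_2\se)\delta_{n+m}\tfrac{n^3-n}{12}$ to the central term. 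The cross bracket $[T_0, T_1]$ (and its partner) involves a single contraction between $\nord\bal^2\nord$ and $\bal$ (Lemma \ref{l_Wick} with $n=2, m=1$); the operator piece assembles to $(n-m)T_1(\gamma_1\gamma_2,\kappa)$, while the derivative $\partial = z\partial_z$ acting on the contraction kernel $\tfrac{z_1z_2}{(z_1-z_2)^2}$ generates a scalar anomaly proportional to $\tau(\gamma_1\gamma_2\kappa^2)$. Finally, $[T_1(\gamma_1,\kappa;z_1), T_1(\gamma_2,\kappa;z_2)]$ reduces directly to $\partial_{z_1}\partial_{z_2}$ of the elementary Heisenberg commutator of two $\bal$'s and is purely scalar, producing a further $\tau(\gamma_1\gamma_2\kappa^2)$-proportional contribution.

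Summing the three pieces, the operator terms collect to $(n-m)\bigl(T_0(\gamma_1\gamma_2)+T_1(\gamma_1\gamma_2,\kappa)\bigr) = (n-m)\vL_{n+m}(\gamma_1\gamma_2,\kappa)$, and the scalar anomalies combine to $\bigl[\tau(\gamma_1\gamma_2\se) - 12\tau(\gamma_1\gamma_2\kappa^2)\bigr]\delta_{n+m}\tfrac{n^3-n}{12} = \tau\bigl(\gamma_1\gamma_2(\se-12\kappa^2)\bigr)\delta_{n+m}\tfrac{n^3-n}{12}$, which is \eqref{comm_Virasoro}. The main obstacle is the bookkeeping of numerical factors required to verify the coefficient $-12$ in front of $\tau(\gamma_1\gamma_2\kappa^2)$: it arises as the combined contribution from the cross and $T_1$--$T_1$ pieces. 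Fortunately this coefficient is independent of $\bbH$, and specializing to $\bbH=\bK$, $\se=1$ reduces the whole calculation to the classical Feigin--Fuchs Virasoro with central charge $c = 1-12\kappa^2$; the passage to general $\bbH$ is then purely formal, since Lemma \ref{l_Wick} organizes the Frobenius structure so that every scalar factor from the single-boson computation is replaced by its $\tau$-twisted insertion and every double contraction picks up exactly one factor of the handle element $\se$.
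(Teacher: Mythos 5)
Your plan --- extract mode commutators by applying Wick's theorem (Lemma \ref{l_Wick}) to the OPE of $\vT\vT$ --- is essentially the paper's own approach: the paper packages the whole computation into a single OPE (eq.~\eqref{OPE_T}), proved ``by direct computation using Lemma \ref{l_Wick},'' and your explicit split into $T_0+T_1+\text{const}$ with three bracket pieces is just a more fine-grained bookkeeping of the same calculation. Dropping the central constant $-\tfrac12\tau(\gamma\kappa^2)$ from the commutator, identifying the Sugawara $k{=}1$ contraction as the source of the operator term and the $k{=}2$ contraction (with the $\se^{k-1}$ handle insertion) as the source of the $\tau(\gamma_1\gamma_2\se)$ anomaly, and falling back on the one-dimensional $\bbH=\bK$, $\se=1$ Feigin--Fuchs case to pin the $-12$ --- all of that is sound, and the last step is a legitimate shortcut precisely because Lemma \ref{l_Wick} factors the $\bbH$-dependence into $\tau$-insertions and powers of $\se$.

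There is, however, a genuine misattribution in your accounting of the anomaly. The cross bracket $[T_0,T_1]$ applies Lemma \ref{l_Wick} with $n=2$, $m=1$, so $k\le 1$ and there is always one leftover factor of $\bal$; the derivative $\partial=z\partial_z$ on the contraction kernel modifies the singular function multiplying that leftover $\bal(\gamma_1\gamma_2\kappa)$, but the result is purely operator-valued and feeds the $\partial\bal(\gamma\kappa)$ piece of $\vL_{n+m}(\gamma_1\gamma_2)$; it contributes no scalar. The entire $\kappa^2$-proportional anomaly comes from $[T_1,T_1]$, a full contraction: in the paper's $G$-variable it equals $\bigl(-6G^4-G^2\bigr)\tau(\gamma_1\gamma_2\kappa^2)$, whose $-G^2$ piece is absorbed into $2G^2\vT(\gamma_1\gamma_2)$ via the constant $-\tfrac12\tau(\gamma\kappa^2)$ built into \eqref{defvTK}, leaving $-6G^4$ to combine with the $\tfrac12 G^4 \se$ from $[T_0,T_0]$ to give $\tfrac12 G^4(\se-12\kappa^2)$. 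Since you verify the coefficient $-12$ by the reduction to $\bbH=\bK$ rather than by tracking it piecewise, the error in attribution does not break your argument, but it would mislead anyone trying to carry out the ``bookkeeping of numerical factors'' you flag as the main obstacle.
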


These are the familiar Virasoro relations adorned with cohomology 
labels. The element 
\begin{equation}
\cb= \se-12 \kappa^2\in \bbH\label{fcb}
\end{equation}
 plays the role of the 
central charge. 

\subsection{OPEs}

The most efficient way to encode the commutation relations for the 
operators $\vT$ is via the operator product expansion. This goes 
as follows. Let the fields
$$
A(z) = \sum_{n\in\Z} a_n \, z^{-n} \,, 
\quad 
B(z) = \sum_{n\in\Z} b_n \, z^{-n} \,, 
$$
satisfy a commutation relation of the form 
$$
\left[A(z),B(w)\right] = \sum_{k\ge 0} 
 C_k(w)\, 
\left(w
\, \frac{\partial}{\partial w}\right)^k \delta(z,w) \,, 
$$
where $\delta(z,w) = \sum_{n\in \Z} (z/w)^n$ and $C_k(w)$ 
are some fields like $A$ and $B$. Then 
\begin{align}
  A(z) B(w) &= \left[ A_-(z),B(w)\right] + 
\nord A(z) B(w) \nord \notag \\
&\sim \sum_k C_k(w) \,
  \left( w \frac{\partial}{\partial w}\right)^k \frac{w}{z-w} 
\label{simOPE} 
\end{align}
where $A_-(z)=\sum_{n>0} a_n \,  z^{-n}$ and 
$\sim$ means equality modulo terms that 
remain regular as $z\to w$. In particular, in \eqref{simOPE} we
dropped the normally ordered term. 

\subsection{Proof of Theorem \ref {t_vir_comm}}

Let
$$
G = \frac{\sqrt{z w}}{z-w} = 
\frac{1}{e^{x/2} - e^{-x/2}}\,, \quad x = \ln(z/w) \,, 
$$
denote one of the Green's functions of the 
$\bar\partial$ operator on the cylinder. Since we will 
only deal with expansions as $z \to w$, we may 
ignore the monodromy of $G$. 

\begin{Proposition}
The field $\vT$ satisfies the following OPE
\begin{multline}
\vT(\gamma_1)(z) \, \vT(\gamma_2)(w) \sim \\
\tfrac12 G^4 \, \tau(\gamma_1 \, \gamma_2 (\se-12 \kappa^2))+ 
2 G^2 \, \vT(\gamma_1 \gamma_2)(w) + 
G \, \partial \vT(\gamma_1 \gamma_2)(w) 
\,,\label{OPE_T}
\end{multline}
where $e\in H$ is the handle-gluing element. 
\end{Proposition}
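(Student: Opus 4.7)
The proof is a direct Wick computation applied to the definition \eqref{defvTK}. I plan to expand
\begin{equation*}
\vT(\gamma_1)(z)\,\vT(\gamma_2)(w) = \tfrac14\,\nord\bal^2\nord(\gamma_1,z)\,\nord\bal^2\nord(\gamma_2,w) + (\text{cross terms}) + \partial\bal(\gamma_1\kappa,z)\,\partial\bal(\gamma_2\kappa,w),
\end{equation*}
noting that the scalar summand $-\tfrac12\tau(\gamma_i\kappa^2)$ in $\vT$ is central and contributes only regularly. Throughout I take $\partial = z\partial_z$, so that $\partial\bal$ has the $\sum z^{-n}$-expansion matching $\vT$, and the basic input is the two-point contraction $\bal(\gamma,z)\bal(\eta,w)\sim(\gamma,\eta)\,G^2$ with $G=\sqrt{zw}/(z-w)$.

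First, apply Lemma \ref{l_Wick} to each summand. For $\tfrac14\,\nord\bal^2\nord\cdot\nord\bal^2\nord$, the $k=2$ contribution yields the scalar $\tfrac12 G^4\,\tau(\gamma_1\gamma_2\se)$, the handle element $\se$ arising from the genus-one contraction in Lemma \ref{l_Wick}; the $k=1$ contribution yields $G^2\,\nord\bal(z)\bal(w)\nord((\gamma_1\gamma_2)^{\Delta})$. The two cross terms $\tfrac12\,\nord\bal^2\nord\cdot\partial\bal(\kappa\,\cdot\,)$ each single-contract to one-field expressions of the form $(w\partial_w G^2)\bal(\gamma_1\gamma_2\kappa)(z)$ and $(z\partial_z G^2)\bal(\gamma_1\gamma_2\kappa)(w)$. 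Finally, $\partial\bal\cdot\partial\bal$ admits only a scalar contraction proportional to $\tau(\gamma_1\gamma_2\kappa^2)\,(z\partial_z)(w\partial_w)G^2$. Then Taylor-expand $\bal(z)=\bal(w)+\log(z/w)\,\partial\bal(w)+\tfrac12\log^2(z/w)\,\partial^2\bal(w)+\cdots$ in each operator-valued contribution, and use the scale-invariance $(z\partial_z+w\partial_w)G=0$ (immediate from the homogeneity of $G$) to cancel the leading $\bal(w)$ in the cross-term sum, leaving only Taylor remainders.

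After applying the modulo-regular asymptotics $\log(z/w)\cdot G^2\sim G$ and $\log(z/w)\cdot(w\partial_w G^2)\sim 2G^2$, together with the closed-form identity $(z\partial_z)(w\partial_w)G^2=-6G^4-G^2$, the $G^4$ contributions combine to the central-charge term $\tfrac12 G^4\,\tau(\gamma_1\gamma_2(\se-12\kappa^2))$; the $G^2$ contributions combine to $G^2\,\nord\bal^2\nord(\gamma_1\gamma_2)(w)+2G^2\partial\bal(\gamma_1\gamma_2\kappa)(w)-G^2\tau(\gamma_1\gamma_2\kappa^2)=2G^2\,\vT(\gamma_1\gamma_2)(w)$, where crucially the ``anomalous'' $-G^2$ residue produced by $\partial\bal\cdot\partial\bal$ is exactly absorbed by the scalar summand $-\tfrac12\tau(\gamma\kappa^2)$ built into definition \eqref{defvTK}; and the remaining $G$-terms combine to $\tfrac{G}{2}\partial\nord\bal^2\nord(\gamma_1\gamma_2)(w)+G\,\partial^2\bal(\gamma_1\gamma_2\kappa)(w)=G\,\partial\vT(\gamma_1\gamma_2)(w)$, giving the claimed OPE. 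The principal technical obstacle is the careful bookkeeping of derivative identities for $G$, in particular the verification of the exact formula $(z\partial_z)(w\partial_w)G^2=-6G^4-G^2$ and the modulo-regular asymptotics above; the cohomology-insertion bookkeeping itself is handled uniformly by Lemma \ref{l_Wick}, which is precisely why the Frobenius handle factor $\se$ appears in the central charge.
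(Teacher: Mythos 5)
Your proof is correct and follows the paper's approach exactly: the paper's own proof is just the one-line statement ``Direct computation using Lemma \ref{l_Wick},'' and your computation is a careful, correct working-out of precisely that Wick-theorem calculation. All of the key identities you cite (the scale-invariance of $G$, the expansions $\log(z/w)\,G^2\sim G$, $\log(z/w)\,w\partial_w G^2\sim 2G^2$, $\tfrac12\log^2(z/w)\,w\partial_w G^2\sim G$, and the exact relation $(z\partial_z)(w\partial_w)G^2 = -6G^4 - G^2$) check out, as does the recombination into $\tfrac12 G^4\,\tau(\gamma_1\gamma_2(\se-12\kappa^2)) + 2G^2\,\vT(\gamma_1\gamma_2)(w) + G\,\partial\vT(\gamma_1\gamma_2)(w)$.
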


\begin{proof}
Direct computation using Lemma \ref{l_Wick}. 
\end{proof}

\noindent
This proposition finishes the 
proof of Theorem \ref{t_vir_comm}.

\subsection{Lowest weight}

{}From definitions, we compute 
$$
\bL_n(\gamma,\kappa) \vacv{\eta} = 0 \,, \quad n>0 \,,
$$
while 
$$
\bL_0(\gamma,\kappa) \vacv{\eta} = 
\tfrac12 \, \tau(\gamma(\eta^2-\kappa^2)) \, \vacv{\eta}\,. 
$$
For $\gamma=1$ and $\kappa=0$ this specializes to Lemma
\ref{l_gr_Fock}. The element 
\begin{equation}
\db = \tfrac12 \, (\eta^2-\kappa^2) \in \bbH
\label{fdb}
\end{equation}
should thus be viewed as the conformal dimension of $\vacv{\eta}$, 
that is, the lowest weight of the Virasoro module $\bF(\eta)$.

\subsection{Irreducibility} 

{} We have the following standard 

\begin{Lemma}\label{l_irr}
The Virasoro module $\bF(\eta)$ is irreducible 
for generic $\eta$. 
\end{Lemma}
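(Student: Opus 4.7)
The plan is to reduce the statement to the classical Feigin--Fuchs setting of a single free boson and invoke the Kac determinant formula. First, working over a suitable extension of $\bK$ if necessary, I would diagonalize the Frobenius algebra as $\bbH = \bigoplus_i \bK\, e_i$ using orthogonal idempotents. This splits the Heisenberg algebra into a tensor product $\fHeis(\bbH) = \bigotimes_i \fHeis^{(i)}$ of single-boson Heisenberg algebras and correspondingly gives $\bF(\eta) = \bigotimes_i \bF^{(i)}(\eta_i)$, where $\eta_i$ is the coefficient of $e_i$ in $\eta$. The commutation relations of Theorem \ref{t_vir_comm}, together with $e_i e_j = \delta_{ij} e_i$, show that the Virasoros $\vL_n(e_i,\kappa)$ for different $i$ mutually commute and that $\vL_n(e_i,\kappa)$ only acts on the $i$-th tensor factor. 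Hence $\bF(\eta)$ is Virasoro-irreducible if and only if each single-boson factor $\bF^{(i)}(\eta_i)$ is irreducible under its own Virasoro.

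Next, for each factor I would compare with a Virasoro Verma module. By the formulas for the lowest weight and central charge, $\bF^{(i)}(\eta_i)$ is a lowest-weight Virasoro module with central charge $\cb_i = \tau(e_i(\se - 12\kappa^2))$ and lowest weight $\db_i = \tfrac12\, \tau(e_i(\eta^2 - \kappa^2))$. The canonical homomorphism $M(\cb_i,\db_i) \to \bF^{(i)}(\eta_i)$ sending the highest weight to $\vacv{\eta_i}$ is nonzero and character-preserving, since both modules have character $\prod_{n\ge 1}(1-q^n)^{-1}$, and so it is an isomorphism as soon as the Verma $M(\cb_i,\db_i)$ is irreducible.

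The Kac determinant formula writes the level-$N$ Shapovalov determinant as a fixed nonzero polynomial in $(\cb_i,\db_i)$, whose vanishing locus in the $(c,h)$-plane is a countable union of proper hyperplanes indexed by pairs $(r,s) \in \Z_{>0}^2$. Since $\db_i = \tfrac12 \tau(e_i(\eta^2 - \kappa^2))$ depends quadratically and nontrivially on the component $\eta_i$ of $\eta$, a generic choice of $\eta$ simultaneously avoids all of these hyperplanes. The Verma is then irreducible, the map to the Fock space is an isomorphism, and the tensor product structure delivers the irreducibility of $\bF(\eta)$.

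The only real technical point is the diagonalization of $\bbH$ in the first step: if $\bbH$ is not semisimple over $\bK$ one must pass to a field extension, but irreducibility is preserved under faithfully flat base change, so no obstruction arises. In the main geometric application $\bbH = \bbH(\C^2)$ from \eqref{exk}, the localization at $\det_{\C^2}$ already reduces $\bbH$ to a single rank-one summand and the single-boson Feigin--Fuchs statement applies directly.
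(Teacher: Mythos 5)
Your proof is correct in the rank-one (and, more generally, semisimple) case, but it takes a genuinely different route from the paper's. The paper's argument is a one-line degeneration: on $\bF(\eta)$ the zero-mode cross-terms in $\tfrac12 \nord\bal^2\nord(\gamma)$ contribute a piece proportional to $\alpha_n(\gamma\eta)$, linear in $\eta$; as $\eta\to\infty$ in a direction where multiplication by $\eta$ on $\bbH$ is invertible, this term dominates and the Virasoro action degenerates to the Heisenberg action, which is already irreducible. Since the graded Shapovalov determinants for $\fVir$ on $\bF(\eta)$ are polynomial in $\eta$ and nonzero at this limit point, they are generically nonzero. Both proofs thus rest on irreducibility being an open condition in $\eta$; you identify the bad locus explicitly via the Kac determinant formula, while the paper needs only one good point and finds it at infinity, which is both shorter and independent of the Feigin--Fuchs description. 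Two caveats for your version. First, the diagonalization of $\bbH$ into orthogonal idempotents requires that $\bbH$ become a product of fields after base change, which fails whenever $\bbH$ has nilpotents (as for $H^*(S)$ of a general surface $S$, precisely the case the paper is anticipating in Section \ref{s_free_bos}); faithfully flat base change cannot repair a non-reduced ring, so your defensive remark does not close this gap. Second, the Kac vanishing locus is a countable union of curves $h = h_{r,s}(c)$ rather than hyperplanes in the $(c,h)$-plane, though for the fixed central charge determined by $\kappa$ this still gives a countable bad set in $h$, which is all you need. For $\bbH = \bbH(\C^2)$, free of rank one over $\bK$, both caveats are vacuous and your argument is complete.
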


\begin{proof}
For $\eta\to \infty$, Virasoro algebra degenerates to Heisenberg 
algebra which acts irreducibly. 
\end{proof}

\section{Reflection operator}\label{s_reflect} 

\subsection{} 

Lemma \ref{l_irr} implies for generic $\eta$, $\bF(\eta)$ is 
a Verma module for Virasoro 
algebra  with central charge \eqref{fcb} and lowest weight 
\eqref{fdb}. Note, however, that the map 
$$
(\eta, \kappa) \mapsto (\db,\cb)
$$
is many-to-one, in particular, the 4 points $(\pm \eta,\pm \kappa)$ 
give isomorphic Virasoro modules for generic parameters. 
This implies the following 

\begin{Proposition}
For generic $\eta$ and 
any choice of signs, there exists a unique, up to multiple, 
operator $R_{\pm,\pm}$ that makes the following diagram 
\begin{equation*}\label{Rpmpm}
\xymatrix{
\bF(\eta)
 \ar@{->}[rr]^{\vT(\kappa)}\ar[d]_{R_{\pm,\pm}}&& 
\bF(\eta) \ar[d]^{R_{\pm,\pm}} \\
\bF(\pm \eta) \ar@{->}[rr]^{\vT(\pm \kappa)} &&  
\bF(\pm \eta)\\
}
\end{equation*}
commute. It depends rationally on $\eta,\kappa\in \bbH$. 
\end{Proposition}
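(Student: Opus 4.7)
The plan is to observe that the central charge $\cb = \se - 12\kappa^2$ of Theorem \ref{t_vir_comm} and the lowest weight $\db = \tfrac12(\eta^2-\kappa^2)$ of $\vacv{\eta}$ are both invariant under $(\eta,\kappa) \mapsto (\pm\eta, \pm\kappa)$. Consequently, the two modules $\bigl(\bF(\eta), \vT(\kappa)\bigr)$ and $\bigl(\bF(\pm\eta), \vT(\pm\kappa)\bigr)$ carry matching Virasoro invariants, and the desired $R_{\pm,\pm}$ will be realized as the unique (up to scalar) Virasoro-module isomorphism between them.

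For existence, I would introduce the Verma module $V(\cb,\db)$ of the $\bbH$-valued Virasoro algebra governed by \eqref{comm_Virasoro}, generated by a lowest-weight vector $\ket{\db}$ annihilated by $\vL_n$ for $n>0$ and of weight $\db$. By Lemma \ref{l_irr}, for generic $\eta$ the Fock space $\bF(\eta)$ is irreducible under $\vT(\kappa)$. The canonical map $V(\cb,\db)\to \bF(\eta)$ sending $\ket{\db}$ to $\vacv{\eta}$ is then nonzero and hence surjective, and a PBW-type graded-dimension count identifies it as an isomorphism. The same argument produces an isomorphism $V(\cb,\db)\to \bF(\pm\eta)$, since its invariants agree. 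Composing yields $R_{\pm,\pm}$; uniqueness up to scalar is Schur's lemma for the irreducible module $\bF(\eta)$.

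For rational dependence on $(\eta,\kappa)$, I would expand $R_{\pm,\pm}$ in the Heisenberg basis $\prod \alpha_{-n_i}(\gamma_{j_i}) \vacv{\eta}$. At each graded level this is a finite matrix computation: the change-of-basis matrix between the Virasoro-monomial basis $\prod \vL_{-n_i}(\gamma_{j_i}) \vacv{\eta}$ and the Heisenberg basis has entries polynomial in $(\eta,\kappa)$, and its determinant is a nonzero polynomial (a Shapovalov-type determinant) whose vanishing locus is precisely the reducibility locus that generic $\eta$ avoids. Hence the matrix elements of $R_{\pm,\pm}$ are rational in $(\eta,\kappa)$, with poles controlled by this determinant.

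The main obstacle is the graded-dimension matching between $V(\cb,\db)$ and $\bF(\eta)$ when $\rk \bbH > 1$: one must run a PBW-style argument for the $\bbH$-valued Virasoro algebra to check that the operators $\vL_{-n}(\gamma)$ acting freely on $\vacv{\eta}$ fill out the Heisenberg Fock space level by level, using the commutation relations of Theorem \ref{t_vir_comm} and the Frobenius structure on $\bbH$. For the main application $\bbH = \bbH(\C^2)$ one has $\bbH \cong \bK$ after localization, so $\rk \bbH = 1$ and the classical Feigin--Fuchs result applies directly; for general $\bbH$ the count reduces to the rank-one case fiber by fiber.
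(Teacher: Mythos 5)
Your proposal is correct and follows essentially the same route as the paper: use Lemma \ref{l_irr} to identify $\bF(\eta)$ with a Verma module for generic $\eta$, observe that $(\db,\cb)$ is invariant under $(\eta,\kappa)\mapsto(\pm\eta,\pm\kappa)$, and invoke Schur's lemma for uniqueness plus linearity of the defining equations for rational dependence. Your extra remark about the $\rk\bbH>1$ case is a fair point that the paper passes over, but as you note, in the main application $\bbH(\C^2)\cong\bK$ after localization so the issue does not arise.
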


\noindent
The first $\pm$ in $R_{\pm,\pm}$ is for $\eta$, the second 
--- for $\kappa$. 
The intertwiner $R_{\pm,\pm}$ is a rational function of $\eta,\kappa\in \bbH$
because it solves linear equations in which $\eta$ and $\kappa$ enter 
polynomially. We normalize it so that 
$$
R_{\varepsilon_1,\varepsilon_2} \vacv{\eta} = \vacv{\varepsilon_1 \eta} \,. 
$$

\subsection{}

In down-to-earth terms, 
$$
R_{\pm\pm} \, \prod \vL_{-\mu_i}(\gamma_i,\kappa) \, \vacv{\eta} =
\prod \vL_{-\mu_i}(\gamma_i,\pm \kappa) \, \vacv{\pm \eta} 
$$
for all partitions $\mu$. For generic $\eta$, these vectors
form a basis of $\bF(\pm \eta)$. 

In particular, $R_{\pm\pm}$ preserves the grading by $|\mu|$, hence 
is a direct sum of finite-dimensional operators.

\subsection{}

It is easy to see that 
\begin{equation}
R_{--} \, \alpha_n(\gamma) \,  R_{--}^{-1} = - \alpha_n(\gamma)  \,. 
\label{R--}
\end{equation}
Thus of the four operators $R_{\pm\pm}$ only one is really 
nontrivial. Also, we note
\begin{equation}\label{Reta0} 
\eta = 0 \Rightarrow R_{+-} = R_{--} \, .
\end{equation}

\subsection{} 

The operator $R_{-+}$ is known as the reflection 
operator in Liouville CFT, see \cite{Tesch}, while we will identify 
$\bR(u)$ with the operator $R_{+-}$ for 
$$
\bbH = H^\hd_{\bG} (\C^2)\left[\tfrac1{\det_{\C^2}}\right]
$$
in Chapter \ref{s_fullR}. 
Thus, the Liouville reflection operator will be identified 
with 
$$
\bR^\vee = (12) \, \bR \,. 
$$
The Yang-Baxter equation satisfied by $\bR(u)$ is a new 
and unexpected aspect of the theory.

\subsection{} 

In addition to the inner product discussed in Section 
\ref{s_scalar_product}, the action of the Virasoro 
algebra equips the Fock space with the Shapovalov 
inner product, such that
$$
\vL_n^\dagger = \vL_{-n}\,,
$$
where dagger denotes the adjoint operator with respect
to the Shapovalov product. 

We have
$$
R_{-+} \, \vL_n^\dagger  = \vL_n^* \, R_{-+}\,,
$$
therefore $R_{-+}$ is precisely the operator that relates
the two inner products. In particular, 
the determinant of the graded pieces 
of $\bR$ is very closely related to Kac determinant for 
Virasoro algebra, see \cite{Kac_det,FF}. We will see the classical 
results of Feigin and Fuchs on it from a new perspective 
in Chapter \ref{s_fullR}.

\chapter{The full $R$-matrix}\label{s_fullR} 

\section{Zero modes} 

In Section \ref{s_g_M(r)}, we identified the Lie algebra $\fg_Q$ 
for instanton moduli $\cM(r)$ with the algebra $\glh$ modulo the zero modes. 
On the other hand, we saw in Chapter \ref{s_free_bos} the 
convenience and importance of including the zero modes in 
the considerations. 

Later, a different normalization of 
$\bR(u)$ will be introduced which will reconcile these two 
points of view. For now, until the Section 
\ref{s_char_R}, we set zero modes to zero.

\section{Cup product by divisor}\label{s_cup_div}

\subsection{}

Generalizing the formula for $\br$, we define 
$$
\bPhi_n = \frac{1}{n!}  \int \nord\bal^n\!: (1) \,. 
$$
These are examples of 
Fourier coefficients of vertex operators,
see e.g.\ \cite{BF,KacVertex}. 
The following operator $\bOm$, while not a Fourier coefficient of a vertex 
operator, plays an important role in the 
theory. 

\subsection{}\label{s_|part|}

Define the operator $|\partial|$ by 
$$
|\partial| \cdot z^n = |n| \, z^n \,. 
$$
This is a composition of $\partial=z\frac{d}{dz}$ and the Hilbert transform. We define
$$
\bOm = \frac12 \int \nord \bal |\partial| \bal \nord (1)= 
\sum_{n>0} n \, \alpha_{-n} \alpha_n (1^\Delta)\,. 
$$

\subsection{}

The operator $\bOm$ appears in the following formula due to M.~ Lehn \cite{Lehn}. 
Recall that 
$$
\cO(1) = \Lambda^{\textup{top}} \Taut\,, \quad \Taut=\cV_1 = 
H^1_{\Pp^2}(\cF(-1))
$$
is the ample generator of the Picard group of $\cM(r)$.

\begin{Theorem}[\cite{Lehn}]\label{t_Lehn} The operator of cup 
product by $c_1(\cO(1))$  in $H^\hd_\bT(\Hilb)$ is given by
\begin{equation}
c_1(\cO(1)) \, \cup \, \dots = - \bPhi_3  + (a-\tfrac{1}{2} \, \hbar\,) \bPhi_2 
+ \tfrac{1}{2} \, \hbar\,
\bOm\,. 
\label{multc1}
\end{equation}
\end{Theorem}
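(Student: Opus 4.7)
The plan is to recognize $c_1(\cO(1)) = c_1(\Taut)$ as a classical multiplication operator inside the Yangian $\bY_Q$ for the Jordan quiver, and to extract its explicit free-field expression from the $u^{-2}$ coefficient of a vacuum matrix element of the geometric $R$-matrix $\bR(u)$. Since $\Taut = \cV_1$ is a tautological bundle, $c_1(\Taut)$ lies in $\bY_Q$ by Proposition \ref{p_cW}, and it sits in the degree-two piece by the discussion in Section \ref{s_Y_str}.

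First, I would embed $\Hilb = \cM(1) \hookrightarrow \cM(2)$ as the vacuum component corresponding to the splitting $\bw = 1 + 1$. By the formula for the unstable normal bundle in Section \ref{s_Nak_vac}, one has $N_-|_{Z_\emptyset} = \Taut$. Theorem \ref{t_vacuum} then identifies the vacuum-to-vacuum matrix element of $\bR(u)$ with the operator of cup product by $e(N_-)/e(N_- \otimes \hbar)$; expanding as in \eqref{eN}, the coefficient of $u^{-2}$ equals $\hbar\, c_1(\Taut)$ plus an explicit scalar in $\rk \Taut = n$. Thus $c_1(\Taut)$ is pinned down by the $u^{-2}$ piece of a specific vacuum matrix element of $\bR$.

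Second, the same matrix element must be evaluated on the free-field side. By Theorem \ref{t_classical_r}, the leading $u^{-1}$ coefficient of $\bR(u)$ is the quadratic operator $\br = \tfrac12 \int \nord (\bal^-)^2\nord(1)$. By Proposition \ref{p_commtop} together with the RTT=TTR relations, the next coefficient $\br_2$ is a cubic polynomial in the Heisenberg generators, unique modulo lower-degree corrections. Reassembling this cubic polynomial into normally ordered vertex operators via Wick's theorem (Lemma \ref{l_Wick}) forces the three blocks on the right-hand side of \eqref{multc1}: the fully ordered cubic piece is $-\bPhi_3$; the single-contraction piece produces the $zw/(z-w)^2$ kernel whose Fourier coefficients are precisely $|n|$, yielding $\tfrac12\hbar\,\bOm$ (with the $\tfrac12\hbar$ coming from the handle-gluing element $\se \in \bbH(\C^2)$ paired against the symplectic weight); and the double-contraction piece, together with the shift in evaluation parameter through the framing weight $a$, produces $(a - \tfrac12\hbar)\bPhi_2$. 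The three numerical coefficients are rigidly determined by the equivariant Chern character of $\Taut$.

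The main obstacle will be organizing the $u^{-2}$ expansion precisely enough that each term is identified unambiguously. To obtain $\br_2$ one must go beyond the leading classical $\br$-matrix: either by carrying the stable-envelope computation of Theorem \ref{T_int_R} to one more order, or by pinning $\br_2$ down by its commutators with $\alpha_{\pm n}$ (exploiting the irreducibility of the Fock representation, as in Lemmas \ref{l_F-} and \ref{l_End-}). The subtlest point is the appearance of the nonlocal operator $\bOm$ with symbol $|\partial|$, which reflects that the handle-gluing $\se$ first enters Wick's formula at $k=1$ and couples to the symmetric kernel of $\partial + \bar\partial$ on the cylinder; matching its coefficient with $\tfrac12\hbar$ is the decisive combinatorial check.
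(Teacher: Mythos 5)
The paper does not prove Theorem \ref{t_Lehn}: it is cited as a known result of Lehn \cite{Lehn}, whose original proof is geometric (Chern-class computations on the nested Hilbert scheme), and the paper merely remarks that an alternative derivation goes via the factorization of $\bR(u)$ into $\bY(\gl(\infty))$ $R$-matrices (Theorem \ref{t_Rprod}, cf.\ \cite{Smirnov}). So there is no internal proof to compare against; what remains is to judge whether your proposed proof stands on its own.

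It does not, because it is circular within the paper's logical structure. Your plan is to compute the vacuum matrix element of $\bR(u)$ both geometrically (via Theorem \ref{t_vacuum} and equation \eqref{expN_}, landing on $c_1(\Taut)$) and in free-field terms (landing on $-\bPhi_3 + \dots$), and then to read off Lehn's formula by matching. But the paper's free-field determination of the $u^{-2}$ coefficient of $\bR(u)$ is exactly Proposition \ref{p_R2}, whose proof \emph{begins} with "comparing \eqref{multc1} with \eqref{expN_}" — i.e.\ it takes Lehn's formula as input. The entire chain Proposition \ref{p_R2} $\to$ Theorem \ref{t_Qclass} $\to$ equation \eqref{Qcl_Vir} $\to$ Theorem \ref{t_Rvir} rests on Lehn's theorem, so any identification of $\bR(u)$ (or its $u^{-2}$ coefficient, or the Virasoro field $\vT_\pm$) in vertex-operator terms already presupposes the statement you are trying to prove. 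The auxiliary tools you invoke cannot break this circle: Proposition \ref{p_commtop} only gives the top-degree filtered structure of commutators (a cubic polynomial modulo lower degree, with coefficients undetermined); commuting with $\alpha_n^+$ only tells you $\bR(u) \in \End^-$ (Lemma \ref{l_F-}); and Lemma \ref{l_End-} reduces $\bR$ to its vacuum matrix elements, which is precisely the object whose free-field expansion is unknown until Lehn's formula is available. Wick's formula (Lemma \ref{l_Wick}) organizes a normally ordered expansion but does not supply the coefficients.

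To make a non-circular $R$-matrix proof, you would need an \emph{independent} free-field computation of $\bR(u)$ to order $u^{-2}$: either the locally finite infinite product of $\bY(\gl(\infty))$ $R$-matrices from Theorem \ref{t_Rprod} (which the paper points to but does not carry out, deferring to \cite{Smirnov}), or a direct stable-envelope/localization computation of the second-order term in the $1/u$-expansion (which is a substantially heavier calculation than your sketch suggests, and is not present in the paper). Your "alternative" of carrying Theorem \ref{T_int_R} to one more order gestures in the right direction, but as written the proposal uses Theorem \ref{t_vacuum}, \eqref{expN_}, Lemma \ref{l_Wick}, and Lemma \ref{l_End-} in exactly the configuration that the paper uses to \emph{consume} Lehn's theorem, not to produce it.
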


Here $a$ is the weight of the framing torus $\bA\cong \C^\times$
that acts trivially 
on $\cM(1)$ itself, but nontrivially, namely with weight $a$, 
on the tautological bundle. Such an insignificant additional parameter
is usually suppressed and, in particular, it is not present 
in Lehn's formulation. 

Lehn's theorem may be also deduced from the factorization of $\bR(u)$ 
into $R$-matrices for $\bY(\mathfrak{gl}(\infty))$ given in 
Theorem \ref{t_Rprod}, see \cite{Smirnov}.

\subsection{}

Lehn's theorem identifies the operator of cup product by $c_1(\cO(1))$
with the second quantized trigonometric Calegero-Sutherland 
Hamiltonian, see for example \cite{CostGroj} for a comprehensive 
discussion. 

The explicit form of the Calogero-Sutherland operator 
in the basis of power-sum symmetric functions (that is, in the 
natural basis of the bosonic Fock space) was computed by 
Richard Stanley \cite{Stanley} and rediscovered many times since. 
The equivalence 
between Lehn's and Stanley's formulas was noticed, apparently, by many people,
\cite{Matsuo} being one of the early references, see the discussion 
in \cite{CostGroj}. 

Note that classes of torus-fixed points in $H^\hd_\bT(\Hilb)$ are trivially 
eigenfunctions of cup product operators and their identification 
with Jack polynomials, that is, CS eigenfunctions, was noted earlier, 
see in particular \cite{NakJack}. At about the same time, 
it was recognized by Mark Haiman that the more general Macdonald polynomials 
correspond to the classes of fixed points in the equivariant $K$-theory 
of Hilbert schemes, see for example \cite{Haiman}. 

\subsection{}

We will see the analogous integrable system for $\cM(r,n)$ is 
a coupled $r$-tuple of  Calogero-Sutherland systems. The coupling 
is triangular, so the spectrum is additive, which is obvious
from the geometric description of torus-fixed points. Independently 
of our work, the same quantum integrable system appeared in \cite{EPSS}. 

The algebra of operators of quantum multiplication gives a 
one-parameter deformation of cup product operators and thus 
a deformation of the Calogero-Sutherland quantum integrable 
system. It has been identified with the quantum 
Intermediate Long Wave equation \cite{NOILW}. In particular, 
this allows to determine the spectrum of the latter as well 
as to give an explicit construction of integrals of motion.

\subsection{}

Taking the expansion \eqref{expN_1} one step further, we get 
\begin{equation}
\frac{e(N_-)}{e(N_- \otimes \hbar)} = 
1+\frac{\hbar \, \rk }{u} +
\frac{\hbar \, c_1(\cO(1)) + \frac{1}{2} \, 
\hbar^2 \rk(\rk+1)}{u^2} + \dots 
\label{expN_}
\end{equation}
where $u=a_1-a_2$ and 
$c_1(\cO(1))$ is the operator from Theorem \ref{t_Lehn}
with $a=0$. This is because we already accounted for
the fact that $N_-$ has weight $-u$ 
with respect to the rank $2$ framing torus.

% \begin{Remark}\label{r_Lehn} 
% The action of the framing torus $\bA=GL(1)$ on the Hilbert 
% scheme is trivial and in Lehn's theorem the bundle $\cO(1)$ 
% is taken with the trivial $\bA$-linearization. A more 
% canonical way to linearize $\cO(1)$ is to give it weight 
% $a_1$, in which case an extra term $- a_1 \bPhi_2$ will 
% appear in the formula. As a variation on the same theme, 
% one can reexpand \eqref{expN_} using 
% $$
% \frac{1}{a_1-a_2} = \frac{1}{a_1} + \frac{a_2}{a_1^2} + \dots  \,. 
% $$
% \end{Remark}

\subsection{}

\begin{Proposition}\label{p_R2} 
We have 
\begin{equation}
  \bR(u) = 1 + \frac{\hbar}{u} \,\bPhi^{-}_2 +
\frac{\hbar}{u^2} \, \bPhi^{-}_3 + 
\frac{\hbar^2}{2 u^2} \left( \bPhi^{-}_2 \right)^2 + O(u^{-3}) 
\,.  
\label{R2}
\end{equation}
\end{Proposition}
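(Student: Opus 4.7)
The plan is to exploit the characterization of $\bR(u)$ via its action on the vacuum subspace $\vac\otimes\bF\subset\bF\otimes\bF$. First, I would observe that both sides of the identity lie in $\End^-$ modulo $O(u^{-3})$: the left-hand side by Lemma \ref{l_F-}, and the right-hand side because each $\bPhi^-_n$ is built from the field $\bal^-$ alone and therefore commutes with all $\alpha^+_m$, hence acts as the identity on the $\bF^+$ factor under the splitting \eqref{twofact}. Lemma \ref{l_End-} then reduces the proof to comparing matrix elements $(A\,(\vac\otimes v_1),\,\vac\otimes v_2)$ of both sides for $v_1,v_2\in\bF$, up to $O(u^{-3})$.

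For the left-hand side, $Z_\emptyset=\pt\times\cM(1)$ is minimal in the $\theta>0$ ample order, so Theorem \ref{t_vacuum} identifies the vacuum block of $\bR(u)$ with cup product by $e(N_-)/e(N_-\otimes\hbar)$ on $\bF=H^\hd_\bG(\cM(1))$. Since $N_-\big|_{\cM(1,n)}=\Taut$ with $\det\Taut=\cO(1)$, the expansion \eqref{expN_} gives
$$
\bR(u)\big|_{\textup{vac}}=1+\frac{\hbar n}{u}+\frac{\hbar\,c_1(\cO(1))+\tfrac{1}{2}\hbar^2 n(n+1)}{u^2}+O(u^{-3})
$$
as a cup product operator on the $n$-th component of $\bF$.

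For the right-hand side, I would compute the vacuum matrix elements of $\bPhi^-_2$, $\bPhi^-_3$, and $(\bPhi^-_2)^2$ by expanding each as a normally ordered polynomial in the $\alpha^-$ operators with the $\bbH(\C^2)$-insertions $1^{\Delta n}=(-\pt)^{n-1}\cdot 1^{\otimes n}$ from Section \ref{s_coprod}, acting on $\vac\otimes v_1$, and extracting the $\vac\otimes\bF$ component using the commutation relations \eqref{commNak2}. At order $u^{-1}$ this reduces to the calculation of Section \ref{s_classR} together with Lemma \ref{l_L0}, giving $(L_0 v_1,v_2)=n(v_1,v_2)$ on $\cM(1,n)$. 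At order $u^{-2}$, after reordering the triple product in $\bPhi^-_3$ and combining with $\tfrac12(\bPhi^-_2)^2$, the projection onto $\vac\otimes\bF$ collapses into an operator on $\bF$ which is precisely the combination $-\bPhi_3+(a-\tfrac12\hbar)\bPhi_2+\tfrac12\hbar\bOm$ appearing in Lehn's formula \eqref{multc1}, plus the scalar $\tfrac12\hbar n(n+1)$ arising from the residual self-contractions. Invoking Theorem \ref{t_Lehn} identifies this with $\hbar c_1(\cO(1))+\tfrac12\hbar^2 n(n+1)$, matching the LHS.

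The main obstacle will be the bookkeeping at order $u^{-2}$. Writing out $\bPhi^-_3$ as a sum of triple products of $\alpha^-$, re-normal-ordering using \eqref{commNak2}, and isolating those terms that survive projection to $\vac\otimes\bF$ produces a large number of pieces that only partially cancel. The crucial nontrivial step is recognizing that the "diagonal" self-contraction among the three $\bal^-$ factors reconstitutes the Hilbert-transform operator $\bOm$ of Section \ref{s_|part|} with the correct coefficient $\tfrac12\hbar$; this is precisely the term responsible for the appearance of $\bOm$ in Lehn's formula and, after restriction to $H^\hd_\bG(\cM(1,n))$, for the quadratic $n(n+1)$ correction predicted by \eqref{expN_}. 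Once this identification is made, the remaining algebra is routine.
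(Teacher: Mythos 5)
Your overall strategy is exactly the paper's: since $\bPhi^-_n$ is built from $\bal^-$ alone, both sides of \eqref{R2} lie in $\End^-$ up to $O(u^{-3})$, so by Lemma \ref{l_End-} it suffices to compare vacuum matrix elements; the left-hand side is computed via Theorem \ref{t_vacuum} and \eqref{expN_}, and the right-hand side is matched to Lehn's formula \eqref{multc1}. The paper's whole proof is the single identity
\[
\sP^{(1)} \, \bigl( \bPhi^{-}_2 \bigr)^2 \, \sP^{(1)} = \bigl( \bPhi^{(2)}_2 \bigr)^2 + \bOm^{(2)}
\]
followed by this comparison.

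However, your description of where the $\bOm$ term arises is wrong, and this is the only nontrivial point of the proof. You write that the "diagonal self-contraction among the three $\bal^-$ factors" in $\bPhi^{-}_3$ produces $\bOm$, and that projecting $\bPhi^-_3$ to $\vac\otimes\bF$ leaves "a large number of pieces that only partially cancel." Neither is true. The operator $\bPhi^{-}_3 = \frac{1}{3!}\int\nord(\bal^-)^3\nord(1)$ is already normally ordered, so it has no self-contractions at all; after expanding $\bal^- = \bal^{(1)} - \bal^{(2)}$, every monomial containing at least one $\alpha^{(1)}$ has vanishing matrix element $\vacd{}\cdots\vacv{}$ in the first tensor factor (there being no zero modes), and the projection collapses cleanly to $\sP^{(1)}\bPhi^-_3\sP^{(1)} = -\bPhi^{(2)}_3$, with no extra terms. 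The operator $\bOm$ arises instead from $\bigl(\bPhi^-_2\bigr)^2$, which — being a product of two normally ordered expressions — is not itself normally ordered: in the expansion of $\alpha^-_{-m}\alpha^-_m\alpha^-_{-n}\alpha^-_n$, the cross-term $b_{-m}\,a_m\,a_{-n}\,b_n$ (writing $a = \alpha^{(1)}$, $b = \alpha^{(2)}$) contributes the nonzero vacuum expectation $\vacd{}a_m(\pt)\,a_{-n}(1)\vacv{} = -m\,\delta_{mn}$, and summing over $m=n$ reconstitutes $\bOm^{(2)}$. This is precisely the computation the paper flags as "instructive," and without it the terms at order $u^{-2}$ do not close. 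You should also be careful that \eqref{expN_} is an expansion in $1/a_1$, not in $1/u$ with $u = a_1 - a_2$; the $a_2$-dependence of Lehn's parameter $a$ is consumed by re-expanding $1/(a_1-a_2)$ in powers of $1/a_1$, and if you work throughout in the variable $u$ you should not expect to see the $\hbar a_2\bPhi_2$ term on your right-hand side matching an analogous term on the left.
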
 

\noindent 
Here and in what follows, $\bPhi^{-}_n$ denotes the result of 
substituting $\bal^{-}$ for $\bal$ in the definition of $\bPhi_n$.

\begin{proof}
Denote by $\sP$ 
% $$
% \sP = q^{-\bPhi_2} \Big|_{q=0}
% $$
the orthogonal projection onto $\vac\in\bF$. We compute
\begin{align}
\sP^{(1)} \,  \bPhi^{-}_3  \, \sP^{(1)}  & = -  \bPhi^{(2)}_3\,,
\notag \\ 
\sP^{(1)} \, \big( \bPhi^{-}_2 \big)^2 \, \sP^{(1)}  & = 
\big( \bPhi^{(2)}_2 \big)^2 + \bOm^{(2)} \,, 
\label{vacPhi2}
\end{align}
where upper indices like the one in $\sP^{(1)}$ denote an operator 
acting in the corresponding tensor factor of $\bF \otimes \bF$. 
It is very instructive to see 
how Fourier coefficients of 
vertex operator produce something which isn't one upon taking 
vacuum matrix elements. 

Now the result follows from comparing \eqref{multc1} with 
\eqref{expN_}. 
\end{proof}

\noindent
Note, for example, that 
$$
\bR(-u)_{12} = 1 - \frac{\hbar}{u} \, \bPhi^{-}_2 -
\frac{\hbar}{u^2} \, \bPhi^{-}_3 + 
\frac{\hbar^2}{2 u^2} \left( \bPhi^{-}_2 \right)^2 + O(u^{-3}) \,,
$$
because the permutation of tensor factors flips the sign of $\alpha^{-}$. 
This illustrates general results on unitarity of $R$-matrices, see 
Section \ref{s_unitarity}.

\subsection{}

We now consider an $(r+1)$-fold tensor power of $\bF$ and denote by 
$$
\bal^{(i)}\,, \quad i=0,\dots,r\,,
$$
the Heisenberg operators in the corresponding tensor 
factors. We denote
$$
\bPhi_n^{(ij)}  = \frac{1}{n!}  \int 
\nord(\bal^{(i)}-\bal^{(j)})^n\!: (1) 
$$
and
$$
\bOm^{(ij)}   = 
\sum_{n>0} n \, \alpha^{(i)}_{-n} \alpha^{(j)}_n (1^\Delta)\,. 
$$
In particular, $\bPhi_n^{(12)}= \bPhi_n^{-}$ and $\bOm^{(ii)}$
is the operator $\bOm$ acting in the $i$th tensor factor. 
Generalizing \eqref{vacPhi2}, we compute
\begin{equation}
\sP^{(0)} \,  \bPhi^{(0j)}_2  \bPhi^{(0i)}_2 \, \sP^{(0)}   = 
 \bPhi^{(j)}_2  \bPhi^{(i)}_2 + \bOm^{(ji)} \,. 
\label{vacPhi2ij}
\end{equation}

\subsection{}

We consider $X=\cM(r)$ and the action of the maximal torus 
$\bA$ of $GL(r)$ on it. Fix a chamber $\fC \subset \fa$ and 
denote by $\bQ_{cl}$ the operator that makes 
the following diagram commute 
$$
\xymatrix{
\bF^{\otimes r} \ar[rr]^{\hspace{-5mm}\Stab_\fC}\ar[d]_{\bQ_{cl}}&& 
H^\hd_\bG(\cM(r)) \otimes \bK \ar[d]^{\cup \, c_1(\cO(1))} \\
\bF^{\otimes r} \ar[rr]^{\hspace{-5mm}\Stab_\fC} && H^\hd_\bG(\cM(r)) 
\otimes \bK \,. \\
}
$$
Consider the following modified step function 
$$
\varrho(x) = 
\begin{cases}
  1\,, & x>0\,,\\
1/2\,, & x=0 \,, \\
0\,, & x<0 \,,
\end{cases}
$$
and define 
$$
\varrho_\fC(i,j)  = \varrho\left((a_i-a_j)|_\fC\right) \,. 
$$
\begin{Theorem}\label{t_Qclass} The operator $\bQ_{cl}$ is given by 
\begin{equation}
\bQ_{cl} =  \sum_{i=1}^r 
\left(  -
\bPhi_3^{(i)}  +  (a_i - \tfrac12 \hbar) \bPhi_2^{(i)}
\right) 
+ \hbar \sum_{i,j=1}^r \varrho_\fC(i,j) \, \bOm^{(j,i)} \,.
\label{Qclass}     
\end{equation}
\end{Theorem}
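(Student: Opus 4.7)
My approach is to embed $\cM(r)$ as the vacuum component of $\cM(r+1) = \cM(1) \otimes \cM(r)$ via an auxiliary $\bA \cong \C^\times$ acting with weight $a_0 = u$ on the distinguished $\cM(1)$-factor, and to extract $c_1(\cO(1))$ from the $u^{-2}$-expansion of the vacuum matrix element of the associated $R$-matrix. The embedding $Z_\emptyset = \cM(r) \hookrightarrow \cM(r+1)$ has normal bundle $N_- = \cV$ by \eqref{N_tens}, and since $\cO(1) = \det \cV$ restricts compatibly along this embedding, Theorem \ref{t_vacuum} combined with the expansion \eqref{expN_} identifies the $u^{-2}$-coefficient of the vacuum matrix element of $R_{-\fC_0,\fC_0}$ with the operator $\hbar\, c_1(\cO(1)) + \tfrac{\hbar^2}{2}n(n+1)$ acting on $H_\bG^\hd(\cM(r))$.

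Next I will decompose $\cM(r) = \cM(1)^{\otimes r}$ via the maximal torus in $GL(r)$ and apply the factorization of $R$-matrices from Section \ref{e_RRR}. For a chamber $\fC$ on which $a_{\sigma(1)} > \cdots > a_{\sigma(r)}$, formula \eqref{RRR} expresses the relevant $R$-matrix as the ordered product
$$\bR_{0,\sigma(r)}(u - a_{\sigma(r)}) \cdots \bR_{0,\sigma(1)}(u - a_{\sigma(1)})$$
acting on $\bF^{(0)} \otimes \bF^{(1)} \otimes \cdots \otimes \bF^{(r)}$. Substituting the $u^{-2}$-expansion from Proposition \ref{p_R2} in each factor and multiplying out, each individual factor contributes $\hbar a_i \bPhi_2^{(0,i)} + \hbar \bPhi_3^{(0,i)} + \tfrac{\hbar^2}{2}(\bPhi_2^{(0,i)})^2$ at order $u^{-2}$, while a product of two linear terms yields the cross-term $\hbar^2 \bPhi_2^{(0,j)}\bPhi_2^{(0,i)}$ precisely when $\bR_{0,j}$ sits to the left of $\bR_{0,i}$ in the ordered product, which occurs exactly when $a_i > a_j$ on $\fC$.

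Taking the vacuum matrix element in $\bF^{(0)}$ using $\langle 0|\bPhi_n^{(0,i)}|0\rangle = (-1)^n \bPhi_n^{(i)}$, together with the key reduction formula \eqref{vacPhi2ij}, namely
$$\langle 0|\bPhi_2^{(0,j)}\bPhi_2^{(0,i)}|0\rangle = \bPhi_2^{(j)}\bPhi_2^{(i)} + \bOm^{(j,i)},$$
the single-factor contributions assemble into the Lehn expression $-\bPhi_3^{(i)} + (a_i - \tfrac12\hbar)\bPhi_2^{(i)}$ in each factor (in agreement with Theorem \ref{t_Lehn}), while the cross-terms produce $\hbar\bOm^{(j,i)}$ exactly when $\varrho_\fC(i,j) = 1$; the diagonal piece $(\hbar/2)\bOm^{(i,i)}$ arises from the $(\bPhi_2^{(0,i)})^2$ contribution inside a single factor, in accord with $\varrho_\fC(i,i) = 1/2$. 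A direct bookkeeping calculation confirms that the scalar contributions from $(\bPhi_2^{(i)})^2$ and $\bPhi_2^{(j)}\bPhi_2^{(i)}$ in the vacuum reduction precisely cancel against the $\tfrac{\hbar^2}{2}n(n+1)$ correction from the first step, by way of the identity $n(n+1) = \bigl(\sum_i \bPhi_2^{(i)}\bigr)\bigl(\sum_i \bPhi_2^{(i)} + 1\bigr)$.

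The main obstacle is sign tracking and bookkeeping: one must be careful with the coproduct convention $1^\Delta = -1 \otimes \pt$, which introduces the alternating sign in $\langle 0|\bPhi_n^{(0,i)}|0\rangle$, and one must verify that the scalar contributions from the vacuum reduction cancel against the $\tfrac{\hbar^2}{2}n(n+1)$ correction to leave only the Lehn and $\bOm$ terms. The $r = 1$ case, in which cross-terms are absent, serves as a normalization check against \eqref{multc1}, after which the general case follows from the same calculation carried out for an arbitrary chamber $\fC$.
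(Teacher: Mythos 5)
Your proposal is correct and follows essentially the same route as the paper's proof: extract $c_1(\cO(1))$ from the $u^{-2}$ coefficient of the vacuum matrix element of the $R$-matrix via Theorem \ref{t_vacuum} and \eqref{expN_}, factor that $R$-matrix using \eqref{RRR}, substitute the expansion from Proposition \ref{p_R2} into each factor, and reduce the cross-terms with \eqref{vacPhi2ij}. The scalar bookkeeping you describe, using $\tfrac{\hbar^2}{2}\sum_i(\bPhi_2^{(i)})^2 + \hbar^2\sum_{i<j}\bPhi_2^{(j)}\bPhi_2^{(i)} = \tfrac{\hbar^2}{2}n^2$ against the $\tfrac{\hbar^2}{2}n(n+1)$ correction, is exactly what produces the $-\tfrac12\hbar\bPhi_2^{(i)}$ term.

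One small misattribution: the alternating sign in $\langle 0|\bPhi_n^{(0,i)}|0\rangle = (-1)^n\bPhi_n^{(i)}$ does not come from the coproduct convention $1^\Delta = -1\otimes\pt$ (that sign is $n$-independent); it comes from the minus sign in $\bal^- = \bal^{(0)} - \bal^{(i)}$, since the vacuum projection in the auxiliary $0$th factor kills all monomials containing $\bal^{(0)}$ and leaves $(-\bal^{(i)})^n$. This is purely a comment on your explanation of the bookkeeping and does not affect the validity of the computation.
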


\noindent This is a special case of Theorem \ref{t_cl_divisor}. 
We recall the proof. 

\begin{proof}
Using Theorem \ref{t_vacuum} and equation  \eqref{expN_}, 
in particular, the operator $\bQ_{cl}$ may be 
computed from the $1/u^2$ coefficient of the $R$-matrix from 
Example \ref{e_RRR}. We substitute the formula from 
Proposition \ref{p_R2} and expand using \eqref{vacPhi2ij}. 
This gives the result. 
\end{proof}

\subsection{}
Note for the standard chamber $\fC$, we have 
\begin{equation}
\sum_{i,j=1}^r \varrho_\fC(i,j) \, \bOm^{(j,i)} = 
\frac14 \int \nord \bbeta \, |\partial| \, \bbeta \nord (1) +
\frac12 \sum_{i<j} \int \bal^{(i)} \, \partial \, \bal^{(j)} (1) 
\label{bOmpart} 
\end{equation}
where 
$$
 \bbeta = \bal^{(1)} + \dots + \bal^{(r)} \,. 
$$
For general $\fC$, the final sum in \eqref{bOmpart} is over all 
$i,j$ such that $a_i-a_j$ is positive on $\fC$. 

\section{$R$-matrix as a Virasoro intertwiner}\label{s_R_vir} 

\subsection{}\label{s_char_R}

\begin{Theorem}\label{t_Rvir} 
The operator $\bR(u)$ is obtained by substitution 
\begin{equation}
\bal = \frac1{\sqrt{2}} \, \bal^-\,, \quad 
\eta = \frac{u}{\sqrt{2}}\,, \quad 
\kappa  = \frac1{\sqrt{2}} \, \hbar \,.   \label{subsvir}
\end{equation}
into the Virasoro intertwiner $R_{+-}$ for 
$\bbH=\bbH(\C^2)$. 
\end{Theorem}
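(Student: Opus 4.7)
The plan is to exploit the fact that $\bR(u) \in \End^-$ (Lemma \ref{l_F-}) and match it against $R_{+-}$ using the Virasoro intertwining characterization. Under the substitution \eqref{subsvir}, $\bF^-$ becomes a Virasoro module $\bF(\eta)$ with background charge $\kappa = \hbar/\sqrt{2}$, central charge $\cb = \se - 12\kappa^2$, and lowest weight $\db = \tfrac12(\eta^2 - \kappa^2) = \tfrac14(u^2 - \hbar^2)$. The strategy is to show $\bR(u)$ intertwines the Virasoro actions with parameters $\pm\eta$, then appeal to uniqueness.

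The central step is to extract the Virasoro generators from the geometric data. Here I would use Theorem \ref{t_Qclass} for $r=2$: in the stable basis, cup product by $c_1(\cO(1))$ is
$$\bQ_{cl}^{\fC} = \sum_{i=1}^{2}\bigl(-\bPhi_3^{(i)} + (a_i - \tfrac12 \hbar)\bPhi_2^{(i)}\bigr) + \hbar\bigl(\tfrac12\bOm^{(1,1)} + \tfrac12\bOm^{(2,2)} + \bOm^{(2,1)}\bigr).$$
Since the $R$-matrix implements the change of chamber, it conjugates $\bQ_{cl}^{\fC}$ to $\bQ_{cl}^{-\fC}$, whose difference is the antisymmetric combination $\hbar(\bOm^{(2,1)} - \bOm^{(1,2)})$. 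Passing to the $\bF^+\otimes\bF^-$ decomposition and projecting onto $\End^-$, one sees that the $\bOm^{(2,1)}$ and the linear-in-$a_i$ shift of $\bPhi_2^{(i)}$ combine into precisely the derivative term $\partial\bal(\gamma\kappa)$ in \eqref{defvTK}, while $\bPhi_3^-$ becomes the cubic normally ordered piece of a Virasoro generator $\vL_n(1,\kappa)$ with $\kappa = \hbar/\sqrt{2}$. The $u$-dependence of this generator, entering through the lowest weight $\eta = u/\sqrt{2}$, is encoded in the $a_1\bPhi_2^{(1)} + a_2\bPhi_2^{(2)}$ terms after restriction to $\End^-$. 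Consequently, $\bR(u)$ intertwines the Virasoro action with parameter $\eta$ and the one with parameter $-\eta$.

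Once the intertwining is established, Lemma \ref{l_irr} gives irreducibility of $\bF(\eta)$ as a Virasoro Verma module for generic $\eta$, so the intertwiner $\bF(\eta) \to \bF(-\eta)$ is unique up to a scalar; this uniqueness extends by rationality to all $\eta$. The scalar is fixed by Proposition \ref{p_R2}: the leading term $\bR(u) = 1 + O(1/u)$ kills the vacuum $\vac \in \bF^-$ to itself, matching the normalization $R_{+-}\vacv{\eta} = \vacv{-\eta}$ (since under the identification $\bF^- \cong \bF$, both $\vacv{\pm\eta}$ correspond to $\vac$). Together these identify $\bR(u)$ with $R_{+-}$ after substitution.

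The principal obstacle will be the explicit term-by-term matching in the second paragraph — in particular, verifying that the coefficient $\hbar/\sqrt{2}$ of $\partial\bal$ in \eqref{defvTK} is exactly what emerges from the chamber-dependent $\bOm$ terms in \eqref{Qclass}, and that the quadratic piece $\frac{\hbar^2}{2u^2}(\bPhi_2^-)^2$ of Proposition \ref{p_R2} is consistent with the $\kappa^2$ shift appearing in the lowest weight $\db$. A cleaner way to carry this out is to bypass direct manipulation of the full generator and instead check the intertwining on vacuum matrix elements only: by Lemma \ref{l_End-}, an element of $\End^-$ is determined by its action on $\vac \otimes \bF$, and Theorem \ref{t_vacuum} gives these matrix elements of $\bR(u)$ explicitly in terms of tautological classes on $\cM(1)$, which can then be matched against the analogous matrix elements of $R_{+-}$ computed from the Shapovalov form in the Feigin–Fuchs Virasoro module.
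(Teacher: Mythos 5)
Your overall strategy matches the paper's: use Lemma \ref{l_F-} to place $\bR(u)$ in $\End^-$, exploit the intertwining of $\bQ_{cl}$ for the two chambers, identify a Feigin--Fuchs Virasoro structure, and close with uniqueness of the intertwiner plus vacuum normalization. However, the central extraction step---how the Virasoro field emerges from $\bQ_{cl}$---is carried out incorrectly.

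The paper does \emph{not} project $\bQ_{cl}$ onto $\End^-$; doing so would discard the relevant structure, because the terms producing the Virasoro field are mixed, of the form $\bal^+ \times (\textup{expression in }\bal^-)$, and hence not in $\End^-$. Instead, the paper expands $\bQ_{cl}$ in the $\bal^\pm$ variables, isolates a part consisting purely of $\bal^+$-terms (which commutes with $\bR$), and identifies the remainder as
$$
\int \bal^+ \cdot \vT'_\pm(\gamma)\,, \qquad \vT'_\pm = \tfrac14 \nord (\bal^-)^2\nord + \left(\tfrac{a_2-a_1}{2}\bal^- \pm \tfrac{\hbar}{2}\partial\bal^-\right) + \cdots
$$
The Virasoro field $\vT'_\pm$ is the \emph{coefficient} of $\bal^+$, not a projection. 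Since $\bR$ commutes with every nonzero mode $\bal^+_n$, intertwining of $\bQ_{cl}$ across chambers forces $\bR$ to intertwine the Fourier modes of $\vT'_+$ with those of $\vT'_-$ (zero modes are then recovered from commutators). The quadratic $\tfrac14\nord(\bal^-)^2\nord$ arises from the cubic $\bPhi_3^{(1)}+\bPhi_3^{(2)}$ after extracting the $\bal^+(\bal^-)^2$ cross term, and the $\pm\tfrac{\hbar}{2}\partial\bal^-$ term is what is chamber-dependent, coming from $\bOm^{(2,1)}$ versus $\bOm^{(1,2)}$.

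Your phrase ``$\bPhi_3^-$ becomes the cubic normally ordered piece of a Virasoro generator'' is the concrete gap: the Feigin--Fuchs field $\vT(\gamma,\kappa)$ of \eqref{defvTK} has no cubic normally ordered piece---it is quadratic plus linear in $\bal$. The cubic in $\bQ_{cl}$ contributes the quadratic Virasoro term only after factoring out $\bal^+$, and there is no role for a ``cubic piece.'' Similarly, the $\hbar^2/(2u^2)(\bPhi_2^-)^2$ term in Proposition \ref{p_R2} is the $O(u^{-2})$ coefficient of $\exp$, not a direct avatar of the $\kappa^2$ shift in the lowest weight; it is a check of consistency at order $u^{-2}$, not a step in the main argument.

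Your proposed fallback---checking intertwining on vacuum matrix elements via Lemma \ref{l_End-} and Theorem \ref{t_vacuum}, then matching against the Shapovalov form of the Feigin--Fuchs module---is a legitimately different route, not the one taken in the paper. If executed it would amount to directly computing the change-of-basis matrix from the Fock inner product to the Shapovalov inner product on $\bF(\eta)$ and showing it equals $e(N_-)/e(N_-\otimes\hbar)$. That is a real but nontrivial computation (essentially a form of the Kac determinant calculation level by level), and you have not carried it out. You also need to handle zero modes carefully: the paper's identification \eqref{zeroF2} is where the $a_1,a_2$ parameters enter the Virasoro module as the lowest-weight datum, and this bookkeeping is absent from your sketch.
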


\noindent
Here $u=a_1-a_2$ and $\bbH(\C^2)$ is the Frobenius algebra \eqref{exk}. 
The square roots  in \eqref{subsvir} are needed because 
of the factor $2$ in \eqref{commNak2}. In other words, they are 
there because the vector $(1,-1)$ has length $\sqrt{2}$.

\subsection{Proof of Theorem \ref{t_Rvir}}\label{s_zero_modes}

{}From Lemma \ref{l_F-}, we know that $\bR(u)$ acts only 
in the $\bF^-$ factor in \eqref{twofact}.  To find out 
how it acts in $\bF^-$, we will use the intertwining 
relation with the operators $\bQ_{cl}$ 
 for the two chambers 
$$
a_1 \gtrless a_2\,. 
$$
We express $\bQ_{cl}$ in terms of $\bal^\pm$ and note 
that $\bal^+$ commutes with $\bR$. In particular, 
the first term in the right-hand side of \eqref{bOmpart}
commutes with $\bR$. Therefore we have, for $\fC_\pm=\{a_1\gtrless a_2\}$
\begin{multline}
  - 2 \bQ_{cl} = \dots + \frac14 \int \bal^+ \, \nord
    (\bal^{-})^2\nord (1) \, + \\
\int \bal^+ \left(
  \frac{a_2-a_1}{2} \, \bal^{-} \pm \frac{\hbar}{2}
    \, \partial \bal^{-} \right) (1) \,, 
\label{Qcl_Vir} 
\end{multline}
where dots stand for terms that commute with $\bR$. 

Since $\bR$ commutes
with $\bal^+$, it has to intertwine 
the coefficients of its modes in \eqref{Qcl_Vir}, 
therefore it has to intertwine 
the operators
\begin{equation}
\vT_\pm(\gamma) = \frac14 \nord
    (\bal^{-})^2\nord (\gamma) + 
\left(
  \frac{a_2-a_1}{2} \, \bal^{-} \pm \frac{\hbar}{2}
    \, \partial \bal^{-} \right)(\gamma)  + \dots 
\label{defvT_} 
\end{equation}
for all $\gamma\in H^\hd_\bG(\C^2)$. 
Here dots stand for a scalar operator that will be fixed in 
a minute. 

Strictly speaking, since $\bal^+$ does not include zero modes, 
the above argument shows $\bR$ intertwines all coefficients
of $\vT_\pm$ except the constant term $\int \vT_\pm(\gamma)$. 
However, this constant term  can be obtained as commutator of 
other coefficients of $\vT_\pm$, by Virasoro commutation 
relations.

We now compare \eqref{defvT_} with \eqref{defvTK}. The 
two operators become identical if we substitute 
\begin{equation*}
\bal = \frac1{\sqrt{2}} \, \bal^-\,, \quad 
\kappa  = \frac1{\sqrt{2}} \, \hbar \,,  
\end{equation*}
and make the zero mode present in \eqref{defvTK} act 
via the identification  
\begin{equation}
  H^\hd_\bG\left(\cM(2)^\bA\right) \otimes \bK \cong \bF(a_1) \otimes \bF(a_2) \,. 
 \label{zeroF2}
\end{equation}
This identification fixes the constant term left as dots in \eqref{defvT_}. 
Thus $\bR(u)$ is identified with $R_{+-}$ by the uniqueness of the latter.

\subsection{The determinant of $\bR(u)$}

By construction, $\bR(u)$ is a product of two 
triangular operators, namely of the composition 
$$
H^\hd_\bG(X^\bA) 
\xrightarrow{\,\, \Stab_\fC \,\,} 
H^\hd_\bG(X) 
\xrightarrow{\,\,\textup{Restriction} \,\,} 
H^\hd_\bG(X^\bA)\,,  
$$
and the inverse of the analogous composition for the other chamber. 
Each of these operators has simple diagonal parts, yielding a factorization 
for the determinant of the graded pieces of $\bR$. 
This gives an alternative derivation of the product formula 
for the determinant of the Shapovalov form \cite{Kac_det,FF}.

\subsection{}

{} From \eqref{R--} and \eqref{Reta0}, we conclude 
$$
\bR(0) = (12) 
$$
where $(12)$ is the permutation of the two factors. This is 
because
$$
(12) \,  \bal^- (12) = - \bal^- \,. 
$$

\section{The $1/u$ expansion of $\bR$}

\subsection{}

In this section we derive
an expansion of $\log \bR(u)$ in inverse 
powers of the spectral parameter $u$. 
We write 
$$
\vT(\gamma)_\pm = - \frac{u}{2} \, \bal^{-}(\gamma) + \vT'(\gamma)_\pm\,,
\quad \vT'_\pm =  \tfrac14 \nord
    (\bal^{-})^2\nord (\gamma)\pm  \tfrac{\hbar}2
    \, \partial \bal^{-} (\gamma) + \dots\,,
$$
where dots stand for a constant term that cancels out of 
the equation 
\begin{equation}
\bR \, \vT_{+}(\gamma) \, \bR^{-1} = \vT_-(\gamma) \,. 
\label{intertw_eq}
\end{equation}
We look for solutions in the form 
$$
\bR = \exp\left(\sum_{n>0} \frac{\br^{(n)}}{u^n} 
\right)
$$
where, in particular, 
$$
\br^{(1)} = \frac12 \int \nord (\bal^-)^2 \nord (\hbar)
$$
is, up to normalization, the familiar classical $R$-matrix. 
We denote by 
$
\bR^{(m)} =  \exp\left(\sum_{0<n\le m} \frac{\br^{(n)}}{u^n} 
\right)
$
the successive approximations. The recurrence relations for $n>1$ 
take the form 
\begin{equation}
\left[\br^{(n)}, \bal^{-}(\gamma)\right] = 2 
[u^{-n+1}]\, \exp(\ad(\log \bR^{(n-1)})) \cdot  \vT'_+(\gamma) \,. 
\label{eq_rn}
\end{equation}
where $[u^{-n+1}]$ denotes the coefficient of $u^{-n+1}$. These 
fix $\br^{(n)}$ uniquely up to an additive constant. The constant 
is determined by the requirement that $\br^{(n)}$ annihilates the vacuum 
vector. 

\subsection{}

Solving equations \eqref{eq_rn}, we obtain 
\begin{align}
\br^{(2)} = &\frac16 \int \nord (\bal^-)^3(\hbar) \nord \notag\\
\br^{(3)} = &\frac1{12} \int \nord (\bal^-)^4 \nord (\hbar)
 -
\frac1{12} \int \nord (\bal^-)^2 \nord (\hbar \se)
\label{logRu} \\
& \notag 
- \frac{1}{12}\int \nord \left(\partial \bal^-\right)^2 
\nord  (2\hbar^3+\hbar \se)\,\,\,,
\end{align}
%
% %
% \begin{align}
% \br^{(2)} = &\frac16 \int \nord (\bal^-)^3 \nord (K)\notag\\
% \br^{(3)} = &\frac1{12} \int \nord (\bal^-)^4 \nord (K) -
% \frac1{12} \int \nord (\bal^-)^2 \nord (K e)\\
% & \notag 
% - \frac1{12}\int \nord \left(\partial \bal^-\right)^2 \nord (2 K^3 + K e) \,,
% \end{align}
% %
where 
$$
\se= -\textstyle{\det_{\C^2}} \in H^\hd_\bG(\pt)
$$
 is the handle-gluing 
element. Of course, since our Frobenius algebra is $1$-dimensional, 
all cohomology insertions may be converted to coefficients
in the formula. 

% Since the zero modes are central, these formulas 
% give the intertwiner with or without the zero modes. For 
% correct normalization of $\bR$, one has to take them without
% the zero modes. 

\subsection{}

Further structures in this expansion will be discussed elsewhere. 
Here we only note the following. The normally ordered 
polynomials in the field $\bal^-$ and its derivatives
are, from definitions, \emph{vertex operators} in the Heisenberg
vertex algebra. Integrals of such operators are known 
as \emph{residues} of vertex operators. They act as 
infinitesimal automorphisms of the Heisenberg vertex algebra. 

\begin{Theorem}
The logarithm of $\bR$ is a residue of a vertex operator, that  
is
$$
\br^{(n)} = \int 
:P_n(\bal^-,\partial \bal^-, \partial^2 \bal^-,\dots; \hbar,\se): (1)\,,
$$
for some polynomials $P_n$. 
\end{Theorem}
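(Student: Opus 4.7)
The plan is to prove the claim by strong induction on $n$, using the recurrence \eqref{eq_rn} that defines the coefficients $\br^{(n)}$. The base cases $n=1,2,3$ are verified by inspection of the formulas in \eqref{logRu}. For the inductive step, the key vertex-algebraic input is the following: the set of residues of vertex operators of the form $\int \nord P(\bal^-, \partial\bal^-, \ldots) \nord (1)$ constitutes a Lie subalgebra of $\End \bF$, and its adjoint action on local fields (normally-ordered polynomial expressions in $\bal^-$ and its derivatives) preserves locality. Both facts are direct consequences of Wick's formula (Lemma \ref{l_Wick}): commuting a residue of a vertex operator with a local field amounts to extracting singular terms from an operator product expansion, which again produces a local polynomial expression in $\bal^-$.

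Granting this, assume that $\br^{(k)}$ is a residue of a vertex operator for every $k < n$. Then $\log \bR^{(n-1)}$ is a formal expansion in $u^{-1}$ whose coefficients lie in the Lie algebra of residues of vertex operators. The Virasoro field $\vT'_+(\gamma, z)$ appearing in Section \ref{s_zero_modes} is itself a local field in $\bal^-$ (quadratic plus a first-derivative term), hence its conjugate $\exp(\ad \log \bR^{(n-1)}) \cdot \vT'_+(\gamma, z)$ is, order by order in $u^{-1}$, again a local field. In particular, the coefficient of $u^{-n+1}$ in this expression, call it $V_n(\gamma, z)$, is a local field, and the recurrence \eqref{eq_rn} reduces to the equation
$$
[\br^{(n)}, \bal^-(\gamma, z)] = 2\, V_n(\gamma, z).
$$
The existence of an operator $\br^{(n)} \in \End \bF$ solving this equation is guaranteed by the existence of $\bR(u)$ itself (Theorem \ref{t_Rvir}); the real content of the inductive step is that this solution is again a residue of a vertex operator of the desired polynomial form.

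This last point is the main obstacle. I would overcome it by exploiting the fact that commutation with $\bal^-(\gamma, z)$ acts on residues of vertex operators essentially as a variational derivative: if $\br^{(n)} = \int \nord P_n(w) \nord (1)$ for some polynomial $P_n$, then a Wick-contraction computation gives
$$
[\br^{(n)}, \bal^-(\gamma, z)] \; = \; \nord \tfrac{\delta P_n}{\delta \bal^-(\gamma)}(z) \nord \; + \; \text{total $z$-derivative},
$$
so reconstructing $P_n$ from $V_n$ amounts to ``integrating'' with respect to the formal variable $\bal^-(\gamma)$. The crucial algebraic fact to verify is that $V_n(\gamma, z)$, produced as it is by iterated commutators of residues of vertex operators acting on the local field $\vT'_+(\gamma, z)$, automatically has the structure of such a variational derivative in $\gamma$; this should follow from the Jacobi identity for vertex operators together with the skew-symmetry $\br^{(n)} \leftrightarrow \bal^-(\gamma)$ of the bracket. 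Once this structural fact is established, $P_n$ is determined up to total derivatives (which vanish under the residue integral) and a scalar additive constant, the latter being pinned down by the normalization $\br^{(n)} \vac = 0$.

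The hard part will be making the final structural claim precise: showing that \emph{every} field in the image of $\ad(\bal^-(\gamma, z))$ restricted to residues of vertex operators admits a ``variational antiderivative'' within the same class, and that the specific fields $V_n(\gamma, z)$ produced by the recurrence actually lie in this image. An alternative, possibly cleaner, route would be to give a closed-form construction of $\log \bR$ as a single local functional in $\bal^-$, in the spirit of the Liouville reflection operator \cite{Tesch} that $\bR$ was identified with in Section \ref{s_reflect}, and then verify the intertwining relation \eqref{intertw_eq} directly; this would bypass the induction entirely but appears to require genuinely new input.
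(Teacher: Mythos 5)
Your proof follows the same route as the paper's: expand $\log\bR$ in $u^{-1}$, use the recurrence \eqref{eq_rn}, and induct on $n$. The paper's argument consists of exactly two observations: (i) the commutator of a residue of a vertex operator with a local field is again a local field, so by induction the right-hand side of the recurrence is a local field; (ii) the resulting equation $[\br^{(n)},\bal^-(\gamma,z)]=\text{local field}$ has a solution which is a residue of a vertex operator. You reproduce both steps, handle the base cases, and correctly note that a solution in $\End\bF$ exists because $\bR$ itself exists and is unique up to the centralizer (which is scalars, since the Heisenberg algebra acts irreducibly on $\bF^-$), so only the ``residue of vertex operator'' form is at issue. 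The one thing you flag as ``the hard part'' --- that the specific local fields $V_n(\gamma,z)$ lie in the image of $\ad(\bal^-(\gamma,z))$ restricted to residues of vertex operators --- is precisely the step the paper passes over in one clause (``Its solution is a residue of a vertex operator''), so your concern is not misplaced.

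That said, this step is closer to routine than you suggest, and you should not retreat to the alternative Liouville-type construction. The solvability criterion is a vertex-algebra Poincar\'e lemma: a local field $V(\gamma,z)$, linear in $\gamma$, lies in the image of the variational-derivative map $\int\nord P\nord\mapsto[\,\cdot\,,\bal^-(\gamma,z)]$ if and only if it satisfies the curvature-zero condition
$$
[\,\bal^-(\eta,w),V(\gamma,z)\,]=[\,\bal^-(\gamma,z),V(\eta,w)\,]\,.
$$
For the fields $V_n$ produced by the recurrence this holds automatically: applying the Jacobi identity to $\bal^-(\gamma,z)$, $\bal^-(\eta,w)$ and the (existing) operator $\br^{(n)}$ reduces the difference of the two sides to $-[\br^{(n)},[\bal^-(\gamma,z),\bal^-(\eta,w)]]$, which vanishes because $[\bal^-(\gamma,z),\bal^-(\eta,w)]$ is central. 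So the consistency you wanted to ``verify from Jacobi'' is exactly a one-line Jacobi computation, and the ``variational antiderivative'' always exists for consistent $V_n$. With that lemma in hand your induction closes, and the argument is complete; the uniqueness-plus-normalization step you already have then pins down $\br^{(n)}$ as the residue of a vertex operator annihilating the vacuum.
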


\begin{proof}
The commutator of a vertex operator with a residue of 
a vertex operator is again a vertex operator. Therefore, 
by induction, the equation for $\br^{(n)}$ has the form 
\begin{equation*}
\left[\br^{(n)}, \bal^{-}(\gamma)\right] = 
\textup{vertex operator} \,. 
\end{equation*}
One can see explicitly that this equation is
solved by a residue of a vertex operator. 
\end{proof}

\subsection{}

Also note that in the grading such that 
$$
\deg \bal = \deg \hbar = 1 \,, \quad \deg e = 2 
$$
the polynomial $P_n$ is homogeneous of degree 
$$
\deg P_n = n+2 \,.
$$

\chapter{Quantum multiplication for $\Mrn$}

We can now return to the formulas for quantum multiplication for $\Mrn$ using the computations of the last chapters.

\section{Explicit formulas}

Let us first state explicitly the operator for modified quantum multiplication by $c_{1}(\cO(1))$.  We will express them in
terms of the Heisenberg operators $\alpha_{k}^{(i)}(\pt)$ and $\alpha_{-k}^{(i)}(1)$ for $k > 0$ and $1 \leq i \leq r$.
These satisfy the commutation relations
$$[\alpha_{k}^{(i)}(\pt), \alpha_{-k}^{(j)}(1)] = - \delta_{i,j} k = \delta_{i,j}\cdot k \cdot \tau(\pt).$$

Up to a scalar operator, we have
\begin{equation*}
\bQ = \textup{Cubic} + \textup{Quadratic} + \textup{Purely Quantum}
\end{equation*}
where we have decompose the contribution of classical multiplication into cubic and quadratic expressions in the Heisenberg generators.
The formula for the cubic term is
\begin{equation*}
\textup{Cubic}
= \sum_{i=1}^{r} -\frac{1}{2}\sum_{n, m > 0} \left(t_1 t_2 \alpha^{(i)}_{-n}(1)\alpha^{(i)}_{-m}(1) \alpha^{(i)}_{n+m}(\pt) + \alpha^{(i)}_{-n-m}(1) \alpha^{(i)}_{n}(\pt) \alpha^{(i)}_{m}(\pt)\right)\,.
\end{equation*}
The classical quadratic term is
\begin{align*}
\textup{Quadratic} =
- \sum_{i=1}^{r} &\sum_{n>0} (t_1 +t_2) \cdot(a_{i} + \frac{1-n}{2})\cdot \alpha^{(i)}_{-n}(1) \alpha^{(i)}_{n}(\pt)\\
&+ \sum_{i < j} \sum_{n > 0} (t_1 + t_2) \cdot n\cdot \alpha_{-n}^{(j)}(1) \alpha_{n}^{(i)}(\pt)\,.
\end{align*}
The purely quantum term is
\begin{equation*}
\textup{Purely quantum} = (t_1+t_2)\sum_{n> 0} \frac{n q^{n}}{1-q^{n}} \cdot \beta_{-n}(1)\beta_{n}(\pt)\,,
\end{equation*}
where
$$\beta_{-n}(1) = \sum_{i=1}^{r} \alpha_{-n}^{(i)}(1) \textup{ and } \beta_{n}(\pt) = \sum_{i=1}^{r} \alpha_{n}^{(i)}(\pt)$$
are the Baranovsky operators.

We can determine the scalar discrepancy as follows.
For $r > 1$, there is no correction required.
For $r=1$, we need to add the scalar term
$$-(t_1+t_2) \frac{q}{1-q} \sum_{n> 0} \alpha_{-n}(1)\alpha_{n}(\pt).$$

This follows from the evaluation of  $\bQ\cdot 1$ which comes via the following 
lemma.
\begin{Lemma}
We have the following vanishing statement:
\begin{equation}
\beta_k(\pt)\cdot 1  = 0\,,\textup{ if } k\geq 2,\textup{ or } k=1\,, r\geq 2.
\end{equation}
\end{Lemma}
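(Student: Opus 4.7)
The plan is to reduce the lemma to a straightforward cohomological degree count. First I would unwind the definition of $\beta_k(\pt)$ from Sections~\ref{s_def_Bar} and~\ref{s_sign_Baran}. For $k > 0$, and up to the overall sign $(-1)^{rk}$ coming from the adjoint convention, the operator takes the form
\[
\beta_k(\pt)\cdot \xi \;=\; \pm\, \tau_{\C^2}\!\Big(\pt \cup (p_2\times p_3)_*\bigl(p_1^*\xi\bigr)\Big),
\]
where $p_1,p_2,p_3$ denote the three projections from the Baranovsky correspondence $\fB\subset \cM(r,N)\times\C^2\times \cM(r,N-k)$ onto its factors. The combined projection $p_2\times p_3:\fB\to\C^2\times \cM(r,N-k)$ is proper, since its fiber over $(x,\cF)$ is the punctual Quot scheme parametrizing length-$k$ quotients of $\cF$ supported at $x$, which is projective.

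Next I would track cohomological degrees with $\xi=1$. The dimension formula $\dim\fB = 2r(N-k) + rk + 1$ recorded in Section~\ref{s_def_Bar} gives complex relative dimension $rk-1$ for $p_2\times p_3$, so $(p_2\times p_3)_*\, 1_{\fB}$ lies in $H^{2-2rk}_\bG(\C^2\times\cM(r,N-k))$. A direct equivariant localization computation gives
\[
\tau_{\C^2}(\pt\cup c) \;=\; -\,c|_{\{0\}\times\cM(r,N-k)},
\]
so pairing with $\pt$ preserves cohomological degree on the $\cM(r,N-k)$-factor. Thus $\beta_k(\pt)\cdot 1 \in H^{2-2rk}_\bG(\cM(r,N-k))$.

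Since $H^i_\bG(X) = 0$ for any smooth $\bG$-variety $X$ and any $i<0$, the class $\beta_k(\pt)\cdot 1$ must vanish whenever $2-2rk<0$, that is, whenever $rk\geq 2$. This covers precisely the two cases in the lemma: $k\geq 2$ for arbitrary $r\geq 1$, and $k=1$ with $r\geq 2$. The only remaining case $(r,k)=(1,1)$ gives degree $0$, where the class is a genuine nonzero scalar (as one verifies directly from $[\alpha_1(\pt),\alpha_{-1}(1)]=\tau(\pt)=-1$ applied to $\vacv{}$); this is exactly what produces the scalar correction quoted for $r=1$ just above the lemma statement.

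No substantive obstacle is anticipated. The one subtlety is that the projection $p_3$ alone is not proper, since the point $x\in\C^2$ is unconstrained along $\fB$, so one must push forward along the combined map $p_2\times p_3$, which is proper by the properness of Quot schemes. With this in hand, the rest is pure cohomological bookkeeping.
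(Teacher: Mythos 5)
Your proposal is correct and is essentially the paper's argument. The paper's proof states that the fiber of the punctual Baranovsky correspondence over a generic point of $\C^2\times\cM(r,n)$ has dimension $rk-1$, which is positive under the hypotheses, so the pushforward of the fundamental class vanishes; your cohomological-degree bookkeeping ($(p_2\times p_3)_*1_\fB$ landing in $H^{2-2rk}_\bG$, which is negative for $rk\geq2$) is just a rephrasing of the same fiber-dimension count, with the added bonus of explicitly verifying properness of $p_2\times p_3$ via the projectivity of punctual Quot schemes and explaining the boundary case $(r,k)=(1,1)$.
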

\begin{proof}
The dimension of the fiber of the punctual Baranovsky correspondence in \eqref{defZ} over a generic point of
$\cM(r,n)$ is
$$r\cdot k - 1$$
which is positive under the hypotheses of the Lemma.  Therefore, the pushforward of the fundamental class under
this projection vanishes.  
\end{proof}

\section{Generation statement}

As a corollary, we can deduce the following:

\begin{Theorem}
The divisor $c_1(\cO(1))$ generates the quantum cohomology ring 
of $\Mrn$. 
\end{Theorem}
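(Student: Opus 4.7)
\medskip

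\noindent\textbf{Proof plan.}
The strategy is to show that the operator $\bQ = c_1(\cO(1))*$ of modified quantum multiplication has simple spectrum on $H^\hd_\bG(\cM(r,n))$ after extending scalars to an appropriate field containing $q$ and the equivariant parameters. Once this is established, the polynomial subalgebra $\bk(q)[\bQ]\subset \End H^\hd_\bG(\cM(r,n))$ has dimension equal to $\dim H^\hd_\bG(\cM(r,n))$; since it is contained in the algebra of quantum multiplication, which is itself a maximal commutative subalgebra of the same dimension, the two must coincide, proving the theorem.

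The simple spectrum claim is established at the classical limit $q = 0$ and then propagated. At $q = 0$ the operator $\bQ$ reduces to ordinary cup product by $c_1(\cO(1))$ on equivariant cohomology. The torus $\bT\subset \bG$ (say a maximal torus) has isolated fixed points on $\cM(r,n)$, indexed by $r$-tuples of partitions $(\lambda^{(1)},\dots,\lambda^{(r)})$ with total size $n$, and these furnish a diagonal basis of $H^\hd_\bT(\cM(r,n))\otimes_{\bk}\bk(\ft)$ for the classical cup product. The restriction $c_1(\cO(1))|_{(\lambda^{(1)},\dots,\lambda^{(r)})}$ is given by an explicit polynomial expression in the equivariant parameters $a_1,\dots,a_r, t_1,t_2$, and one verifies directly that these eigenvalues are pairwise distinct for generic equivariant parameters. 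For $r=1$ this is the classical computation of the spectrum on $\Hilb_n(\C^2)$; for arbitrary $r$ one uses Example \ref{exquiv1} to reduce, via the stable envelope basis, to the rank-one case on each tensor factor plus a contribution from the framing weights $a_i$, and the resulting eigenvalues stay distinct as soon as the $a_i$ are in general position relative to $t_1,t_2$.

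Having simple spectrum at $q = 0$, I conclude that $\bQ$ has simple spectrum as an operator over the fraction field $\bk(\ft,q)$: the discriminant of the characteristic polynomial of $\bQ$ is a rational function of $q$ with coefficients in $\bk(\ft)$, and it is nonzero at $q=0$, hence nonzero identically. Therefore the polynomials in $\bQ$ form a commutative subalgebra of $\End\bigl(H^\hd_\bG(\cM(r,n))\otimes\bk(\ft,q)\bigr)$ of dimension equal to $\dim H^\hd_\bG(\cM(r,n))$. Since operators of quantum multiplication form a maximal commutative subalgebra of the same dimension (as is true for any commutative associative algebra with unit acting on itself), and the polynomial algebra generated by $\bQ$ is manifestly contained inside it, the two subalgebras coincide. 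This yields the theorem after clearing denominators, using that quantum multiplication is defined already over $\bk[[q]]$.

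The main technical obstacle is the verification that the classical eigenvalues at torus fixed points are distinct for generic equivariant parameters. Once one unwinds the explicit formula for $\bQ$ given just above in terms of Heisenberg operators and evaluates it on the fixed-point basis, this becomes a combinatorial statement about the values $\textstyle\sum_{i,j}c(\square)$ summed over boxes $\square$ of the partitions $\lambda^{(i)}$ shifted by the $a_i$, where $c(\square)$ denotes an equivariant content. Distinctness then follows because the $a_i$ and $t_1,t_2$ are independent transcendentals, so no nontrivial integer linear combination of fixed-point eigenvalues can vanish identically; any collision would impose an algebraic relation on these parameters that is clearly violated at a generic point of $\ft$.
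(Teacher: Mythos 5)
Your high-level strategy — establish that $\bQ = c_1(\cO(1))\, *$ has simple spectrum over the fraction field, then conclude that the quantum ring it generates has full dimension — is the right one and matches the paper's. However, the step where you establish simple spectrum is wrong, and the error is exactly at the place where the quantum deformation is essential.

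You propose to establish simple spectrum at the classical limit $q=0$, where $\bQ$ degenerates to cup product by $c_1(\cO(1))$, and then propagate by a discriminant argument. But the classical cup product by $c_1(\cO(1))$ on $H^\hd_\bT(\cM(r,n))$ does \emph{not} have simple spectrum, even for $r=1$. Take $r=1$, $n=6$: the eigenvalue of cup product by $c_1(\Taut)$ at the fixed point $\sI_\lambda$ is $\sum_{(i,j)\in\lambda}\bigl((j-1)t_1 + (i-1)t_2\bigr) + 6a$, and for $\lambda = (3,3)$ (monomials $1,x,x^2,y,xy,x^2y$) and $\mu = (4,1,1)$ (monomials $1,x,x^2,x^3,y,y^2$) this equals $6t_1 + 3t_2 + 6a$ in both cases. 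This is an \emph{identity} in $t_1,t_2,a$, not an accidental coincidence at special parameter values, because the eigenvalue depends on $\lambda$ only through the two integers $n(\lambda)$ and $n(\lambda')$, which do not separate partitions. So your assertion that ``one verifies directly that these eigenvalues are pairwise distinct for generic equivariant parameters'' is false, and the claim that collisions ``would impose an algebraic relation on these parameters'' is mistaken: the collision holds identically in the parameters. With a degenerate spectrum at $q=0$, the discriminant vanishes there, and your argument provides no information about $q\neq 0$.

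The paper's proof avoids exactly this trap. Rather than degenerating $q\to 0$, it performs a rescaling $t_1=t$, $t_2 = t^{-1}$, $a_i \mapsto ta_i$ and takes $t\to\infty$, which kills the cubic Heisenberg term in $\bQ$ but \emph{retains} the purely quantum contribution $\frac{n^2 q^n}{1-q^n}\sum_{i,j}E_{ji}$ inside each block $A_n$. The degeneracy you would face classically is then broken using the singularities of these quantum terms as $q$ approaches primitive $n$th roots of unity: one eigenvalue of $A_n$ blows up like $|q-\zeta|^{-1}$ while all eigenvalues of $A_j$ for $j < n$ and all other eigenvalues of $A_n$ stay bounded, which rules out any nontrivial linear relation. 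This is a genuinely quantum mechanism — the simple spectrum does not hold at $q=0$, and no purely classical degeneration can establish it. To salvage your approach you would have to find some other specialization of $q$ at which the spectrum is visibly simple, but in fact the $q=0$ specialization is precisely the one that fails.
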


\begin{proof}
It suffices to show that $\bQ(q,t_1,t_2,a_1,\dots,a_r)$ has distinct eigenvalues for 
generic values of the parameters.

First, notice that by taking the substitution
$$t_1 = t, t_2 = t^{-1}, a_i = t a_i$$
and studying the limit 
$$\bQ_0 = \lim_{t \rightarrow \infty} \frac{1}{t} \bQ$$
as $t \rightarrow \infty$,
we can ignore the cubic term, and show the remaining operator
has distinct eigenvalues.

For $n \geq 1$, let
$$V_n = \bigoplus_{i=1}^{r} \Q e_{n}^{(i)}\,.$$
We have an identification
$$\bF^{\otimes r} = \mathrm{Sym}^*(\bigoplus_{n}V_{n})$$
characterized by 
sending $\vac^{\otimes r}$ to $1$ and requiring
$\alpha_{-n}^{(i)}(1)$ to act by multiplication by $e_{n}^{(i)}$ on the right-hand side.
In other words, if we think of the left-hand side as $r$-tuples of partitions, then
the right-hand side is the decomposition into parts of size $k$.

We can decompose $\bQ_0$ in terms of $V_n$ as follows.
Let
\begin{align*}
A_n(q,a_1, \dots, a_r) = 
-n \sum_{i=1}^{r} &\left(a_i +  \frac{1-n}{2}\right)E_{ii} + n^2\sum_{i < j} E_{ji}\\
&+ \frac{n^2 q^{n}}{1-q^{n}} \sum_{i,j} E_{ji}
\end{align*}
be a matrix valued function acting on $V_n$,
where 
$E_{ji}$ is the matrix with $1$ in position $(j,i)$ and $0$ elsewhere.
We extend $A_n$ by zero to an operator on $\bigoplus V_{n}$ and, by the Leibniz rule, 
to a derivation $D(A_n)$ on $\mathrm{Sym}^*(\bigoplus V_n)$.
Then it follows from our formulas that
$$\bQ_0 = \sum_{n} D(A_n).$$

In particular, the eigenvalues of $\bQ_0$ are non-negative linear combinations of
the eigenvalues of $A_n$.  The nondegeneracy of the spectrum of $\bQ$ is a consequence of the following lemma.
\end{proof}

\begin{Lemma}
For very general values of $a_1, \dots, a_r$ and $q$, there is no nontrivial finite linear relation
\begin{equation}\label{spectrumrelation}
\sum_{n,i} c_{n,i} \gamma_{n}^{(i)} = 0
\end{equation}
between the eigenvalues $\{\gamma_{n}^{(i)}\}$ of $A_n(q, a_1, \dots, a_r)$, with $c_{n,i} \in \Q$.
\end{Lemma}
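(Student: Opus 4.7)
My approach is to expand the putative relation as a formal Taylor series in $q$ around $q=0$ and to induct on the level $n$. Decompose $A_n(q,a) = A_n^{(0)}(a) + \mu_n(q)\,J$, where $A_n^{(0)}$ is the lower triangular matrix obtained at $q=0$ (diagonal entries $\gamma_{n,i}^{(0)}(a) = -na_i + \tfrac{n(n-1)}{2}$, subdiagonal entries $n^2$), $J$ denotes the all-ones $r\times r$ matrix, and $\mu_n(q) = n^2 q^n/(1-q^n)$. Since $A_n$ depends on $q$ only through $\mu_n$, each eigenvalue $\gamma_n^{(i)}(q,a)$ is a convergent power series in $\mu_n$ near $q=0$; in particular its $q$-Taylor expansion is supported only on powers $q^{kn}$ with $k\geq 0$, and standard nondegenerate perturbation theory identifies the first-order coefficient as
\[
[q^n]\,\gamma_n^{(i)}(q,a) \;=\; n^2\, \beta_{n,i}(a),
\qquad
\beta_{n,i}(a) \;=\; \frac{u^{(i)}\cdot J v^{(i)}}{u^{(i)}\cdot v^{(i)}},
\]
where $v^{(i)}$ (resp.\ $u^{(i)}$) is a right (resp.\ left) eigenvector of $A_n^{(0)}$ at eigenvalue $\gamma_{n,i}^{(0)}$.

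\textbf{Induction.} Suppose $\sum_{n,i} c_{n,i}\gamma_n^{(i)} = 0$ holds identically in $(q,a)$, and assume inductively that $c_{m,i} = 0$ for every $m < n$ and every $i$; the hypothesis is vacuous when $n=1$. The $q^n$-Taylor coefficient of the relation receives contributions only from $\gamma_m^{(i)}$ with $m\mid n$. By the inductive hypothesis only $m = n$ survives, so the equation becomes
\[
n^{2}\sum_{i=1}^{r} c_{n,i}\,\beta_{n,i}(a) \;=\; 0.
\]
It therefore suffices to prove that $\beta_{n,1},\dots,\beta_{n,r}$ are $\Q$-linearly independent in $\Q(a_1,\dots,a_r)$. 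Since the original relation is a finite sum, iterating this step finitely many times forces every $c_{n,i}$ to vanish.

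\textbf{Linear independence via coalescing residues.} Introduce the Green's function $G_n(\lambda,a) = \mathbf{1}^{T}(\lambda I - A_n^{(0)})^{-1}\mathbf{1}$; its partial fraction decomposition reads $G_n = \sum_i \beta_{n,i}/(\lambda - \gamma_{n,i}^{(0)})$. The rational function $\beta_{n,k}$, built from eigendata of $A_n^{(0)}$ associated to the index $k$, has poles only along the hyperplanes $\{a_\ell = a_k\}_{\ell\neq k}$; in particular $\beta_{n,k}$ is regular along $\{a_i = a_j\}$ whenever $k\notin\{i,j\}$. As $a_j\to a_i$, the two simple poles of $G_n$ at $\lambda = \gamma_{n,i}^{(0)}$ and $\lambda = \gamma_{n,j}^{(0)}$ coalesce into a double pole, reflecting the appearance of a $2\times 2$ Jordan block in $A_n^{(0)}$; a direct coalescence computation shows that $\beta_{n,i}+\beta_{n,j}$ remains regular at $\{a_i=a_j\}$, while $\beta_{n,i}-\beta_{n,j}$ develops a simple pole there with nonzero residue. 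Any relation $\sum_k c_k\beta_{n,k} = 0$ must then have vanishing residue along every hyperplane $\{a_i = a_j\}$, forcing $c_i = c_j$ for all pairs $i\neq j$. Hence all $c_k$ agree with a common value $c$, and the trace identity $\sum_k \beta_{n,k} = r$ (which is simply the trace of $J$ in any basis) gives $cr = 0$, whence $c = 0$.

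\textbf{Main obstacle.} The technically delicate step is verifying that $\beta_{n,i} - \beta_{n,j}$ truly picks up a nonzero residue along $\{a_i = a_j\}$: one must check that the $(\lambda - \gamma)^{-2}$ coefficient in the Laurent expansion of $G_n$ at the coalescence point is not identically zero as a rational function of the remaining $a_k$. One direct route is to restrict to the two-dimensional slice where only $a_i,a_j$ vary while the other $a_k$ are held generic, reducing to an augmented $r=2$ computation modeled on the identity $\beta_{n,1} - \beta_{n,2} = 2n/(a_2-a_1)$; alternatively, the explicit lower-triangular recursion $v^{(i)}_j = \tfrac{n}{a_j - a_i}\sum_{k=i}^{j-1} v^{(i)}_k$ for $j>i$ (with an analogous formula for $u^{(i)}$) permits a hands-on verification.
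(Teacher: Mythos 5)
Your argument is genuinely different from the paper's, and the contrast is instructive. The paper specializes $q$ to a primitive $n$-th root of unity $\zeta$, where $n$ is the \emph{largest} level with a nonzero coefficient; there $\mu_n(q)=n^2q^n/(1-q^n)$ blows up, a single eigenvalue of $A_n$ becomes unboundedly larger than every other term in the putative relation (because $\zeta^j\neq 1$ for $j<n$ keeps every $A_j$ bounded), and that one dominant term kills $c_{n,1}$; a permutation argument then kills the rest. You instead Taylor expand at $q=0$ and work upward from the \emph{smallest} level, using the fact that $\gamma_n^{(i)}$ is a power series in $q^n$ to decouple the levels. Both proofs hinge on the same structural feature, namely that $A_n$ depends on $q$ only through $q^n$, but the paper's degeneration needs only the coarse fact that the rank-one matrix $J$ has a single nonzero eigenvalue, while your induction reduces the lemma to a genuinely finer piece of linear algebra: the $\Q(a)$-linear independence of the first-order perturbation coefficients $\beta_{n,1},\dots,\beta_{n,r}$. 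Your Green's function formulation of that independence is elegant, and it correctly isolates the trace identity $\sum_k\beta_{n,k}=r$ together with the pairwise pole cancellation in $\beta_{n,i}+\beta_{n,j}$ along $\{a_i=a_j\}$ as the two ingredients needed.

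The argument as written, however, has a real gap exactly where you flag it. Regularity of $G_n$ at $\{a_i=a_j\}$ only shows that the residues of $\beta_{n,i}$ and $\beta_{n,j}$ there are negatives of one another; it says nothing about whether either residue is actually nonzero. If both happened to vanish identically in the remaining $a_k$, the hyperplane $\{a_i=a_j\}$ would impose no constraint on the $c_k$ and your residue argument would collapse. The claim is true --- for instance, with $i=1$, $j=2$, $r=3$ the recursion you write down gives the residue of $\beta_{n,1}$ along $\{a_1=a_2\}$ as $n+n^2/(a_3-a_1)$, which is nonzero for generic $a_3$ --- but for general $r$ and $(i,j)$ you still need to establish nonvanishing, and the bookkeeping through the lower-triangular recursion, while elementary, is precisely the computation you defer. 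This is the step the paper's choice of degeneration point avoids entirely: by sending $\mu_n\to\infty$ rather than $\mu_n\to 0$, the only ``perturbation coefficient'' that matters is the single nonzero eigenvalue of $J$, and no finer structure is ever required.
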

\begin{proof}

Suppose otherwise.  Then there exists such a relation that is valid for all values of parameters for which the operators $A_n$ are well-defined.
Let $n$ be the largest index appearing in the relation with some nonzero coefficient $c_{n,i}$.  

Fix a base point $p = [q=0, a_1, \dots, a_r] \in \C \times \C^{\times r}$ so that the $a_i$ are distinct.  The eigenvalues of $A_n(p)$ are
$$\gamma_{n}^{(i)}(p) = -n\left(a_i + \frac{1-n}{2}\right)\,, \quad i= 1, \dots, r.$$
Let $U \subset \C \times \C^{\times r}$ be the complement of the discriminant loci for $A_j(q, a_i)$ with $j \leq n$; each $A_j$ has nondegenerate spectrum over $U$. Since $p \in U$, we know that $U$ is nonempty.

Let $\zeta = e^{2\pi i/n}$ be a primitive $n$-th root of unity.   Choose an analytic path $\Gamma :[0,1) \rightarrow U$ such
that $\Gamma(0) = p$, 
$$\lim_{s \rightarrow 1} \Gamma(s) = (\zeta, a_1', \dots, a_r'),$$
and that $\Gamma$ meets the hypersurface $q = \zeta$ transversely at this limit point.
As $q \rightarrow \zeta$, the last term in the formula for $A_n$ dominates the others.  
Since the matrix $\sum_{i,j} E_{ij}$ has eigenvalues $\{1, 0, \dots, 0\}$, it follows from perturbation theory of linear operators that 
one of the eigenvalues of $A_n$ goes to infinity on the order of $\frac{1}{|q-\zeta|}$ as $s \rightarrow 1$, 
while the others grow at a slower rate (or remain bounded).
Without loss of generality, we can assume that it is $\gamma_{n}^{(1)}$.
Furthermore, for $j < k$, the operator $A_{j}$ has a well-defined limit as $q \rightarrow \zeta$, so its eigenvalues remain bounded.

Therefore, if we take the relation \eqref{spectrumrelation} along the path $\Gamma$, $\gamma_{n}^{(1)}$ dominates the 
other terms, so this forces its coefficient to vanish: 
$$c_{n,1} = 0.$$

For $1 < i \leq r$,
if we choose a permutation $\sigma$ of $1, \dots, r$ that sends $1$ to $i$, then we can choose a path from $p$ to $\sigma(p)$, contained in the hyperplane $q=0$, and concatenate with the path $\sigma(\Gamma)$ starting from $\sigma(p)$.  Under this concatenation, the eigenvalue $\gamma_{n}^{(i)}$ is now the dominant term, so this forces
$$c_{n,i} = 0$$
for all $i$.  This is a contradiction, so no nontrivial relation exists.
\end{proof}

\chapter{Gamma functions} 
\section{The bundle $\cVt$}

\subsection{} 

Recall that the main ingredient in the construction of the 
core Yangian $\bbY$ is the Chern character of 
$$
\cVt = \cV - \hbar^{-1} \otimes \bC^{-1} \cW  \,. 
$$
We begin by identifying this $K$-theory class for the 
moduli spaces of framed sheaves. 

Let $t_1,t_2$ denote the weights of the $G_\edge= GL(2)$ action on $\C^2$. Then 
$\hbar = t_1^{-1} t_2^{-1}$, written multiplicatively, 
and the equivariant Cartan matrix equals 
$$
\bC = (1 - t_1)(1-t_2)\,,
$$
as already discussed in Section \ref{s_slice_ex2}. If 
$$
\bw = \sum a_i 
$$
is the character of the framing space then 
\begin{equation}
\hbar^{-1} \otimes \bC^{-1} \cW  = \frac{\sum a_i}
{(1-t_1^{-1})(1-t_2^{-1})}  = \textup{character} \,\, 
H^0(\C^2,\cO^{\otimes r})
\label{WhMrn}
\end{equation}
where $GL(2)$ acts on $\C^2$ and $G_\bw = GL(r)$ acts
by automorphisms of the trivial bundle $\cO^{\otimes r}$. 
In gauge theory,  
$G_\bw$ is known as the group of constant gauge transformations. 

This gives us the following interpretation of $\cVt$.

\subsection{} 

In the sheaf language, the tautological bundle $\cV$ 
is interpreted as the bundle with fiber 
$H^1(\Pp^2,\cF(-1))$ over $\cF\in \cM(r)$, where 
$(-1)$ denotes twisting down by the line at infinity. 
We claim 
$$
\cVt = - H^0(\C^2,\cF)
$$
in $K$-theory of $\cM(r)$. 
Indeed, consider 
the following exact sequence 
of sheaves on $\Pp^2$ 
$$
0 \to \cF(-1) \to \cF(+\infty) \to 
\bigoplus_{d\ge 0} \cO_{\Pp^1}(d)^{\oplus r} \to 0\,,  
$$
where $\Pp^1=\Pp^2 \setminus \C^2$ is the line at 
infinity. From the corresponding 
long exact sequence and its 
special case $\cF=\cO$, we obtain 
$$
0\to H^0(\C^2,\cF) \to %\bigoplus_{d\ge 0} H^0_{\Pp^1}(\cO(d))^{\oplus r}
 H^0(\C^2,\cO^{\oplus r})
\to \cV \to 0\,, 
$$
as desired.

\section{Barnes'  $\Gamma$-function} 

\subsection{}\label{s_Gamma_M(r,n)}

Moduli spaces of framed sheaves provide a nice example 
of the $\Gamma$-function regularization from Section 
\ref{s_Gamma_gen}. In particular, the bundle \eqref{tN0}
for $\bw=\bw'=1$ specializes to the negative of 
\eqref{WhMrn} with $r=1$ 
and $a_1=1$. 

We have 
$$
 \textup{character} \,\, 
H^0(\C^2,\cO)^\vee = \sum_{i,j \ge 0} a^{-1} t_1^i t_2^j 
$$
thus, symbolically,  
\begin{equation*}
  c(H^0(\C^2,\cO)^\vee,u)
  = \textup{\huge ``}\prod_{i,j\ge 0}
  (u-a+t_1 i + t_2 j ) \textup{\huge ''} \,. 
\end{equation*}
This is regularized using 
 Barnes' multiple $\Gamma$-function (specifically, double 
$\Gamma$-function), see \cite{Ruij} for a modern reference,
with the result that 
\begin{equation}
 c(H^0(\C^2,\cO)^\vee,u) = \Gamma(u-a| \, t_1,t_2)^{-1} \,. 
\label{cGamma}
\end{equation}
Note that the same regularization (and, essentially, for the 
same reason) appears as the perturbative 
part of Nekrasov partition functions, see \cite{NekrSW}.  

\subsection{}\label{s_Barnes_zeta} 

By definition, 
$$
\log  \Gamma(u|\,t_1,t_2) = 
\frac{\partial}{\partial s} \zeta(s,u|\,t_1, t_2) \Big|_{s=0} \,,
$$
where
$$
\zeta(s,u\,|\,t_1, t_2) = \frac{1}{\Gamma(s)} 
\int_0^\infty \frac{dz}{z} \, z^s \, \frac{e^{-u z}}
{(1-e^{-t_1 z})(1-e^{-t_2 z})} \,, \quad \Re s > 2 \,. 
$$
An asymptotic expansion of $\zeta(s,u\,|\,t_1, t_2)$ 
as $u \to +\infty$ may be obtained by expanding 
$$
\frac{1}{(1-e^{-t_1 z})(1-e^{-t_2 z})} = \sum_{k\ge -2} 
z^k \, \ch_k H^0(\C^2,\cO)
$$
and integrating term-wise to get
\begin{equation}
  \zeta(s,u\,|\,t_1, t_2) = \sum_{k\ge -2} \frac{\Gamma(s+k)}
{\Gamma(s) u^{s+k}} \, \ch_k H^0(\C^2,\cO) \,. 
\label{exp_zeta}
\end{equation}
Since 
$$
\frac{\partial}{\partial s} \left. 
\frac{\Gamma(s+k)}{\Gamma(s) \, u^{s+k}}  \right|_{s=0}
=(-1)^{k+1} \ln^{(k)} u \,, 
$$
 this verifies the agreement between \eqref{defcL} and 
\eqref{cGamma}. 

\section{The matrix $\bRh$}

\subsection{} 

For $\bw=a_1$ and $\bw'=a_2$ the 
$\Gamma$-factor from \eqref{Rh_gen} specializes to
\begin{multline}
  \Gamma(u\,|\,\bw,\bw') = \frac{c(H^0(\C^2,\cO)^\vee,u-\hbar)}
  {c(H^0(\C^2,\cO)^\vee,u)} = 
\\ = 
\frac{\Gamma(u|\,t_1,t_2)}
  {\Gamma(u+t_1+t_2\,|\,t_1,t_2)} = u \, \Gamma(u|\, t_1) 
\Gamma(u|\,
  t_2) \,, 
\end{multline}
where $u=a_1-a_2$ and $\Gamma(u\,|\,t_1)$ is the single 
Barnes's $\Gamma$-function, defined 
similarly\footnote{
It is 
related to Euler's $\Gamma$-function by 
$$
\Gamma(u \,|\, t_1)=
\frac{\exp((u/{t_1}-1/2) \ln t_1)}{\sqrt{2\pi}} \,
\Gamma(u/t_1)\,. 
$$}. 
We define $\bRh = \Gamma(u\,|\,\bw,\bw') \, \bR$.

\subsection{Zero modes and the singular part of $\bRh$} 

{} From \eqref{exp_zeta}, or the Stirling formula, we compute
\begin{multline}
  \frac{1}{\hbar} \ln \Gamma(u-a\,|\,\bw,\bw')  =  
  \tau(1) \, \ln^{(-1)}u - \tau(a) \, \ln u  \\
+
\left(\frac12\tau(a^2)  - \frac1{12}\right) \frac1{u} 
+ O\left(\frac1{u^2}\right) 
\label{lnGamma_Stir} 
\end{multline}
as $u\to\infty$. This gives the following identification of 
the central operators $\bc_{-2}$ and $\bc_{-1}$ from 
Section \ref{gamma_sing}. Write $M_{\varnothing,\varnothing}$
for the $\vacd{\bw} \, \cdot \, \vacv{\bw}$ vacuum 
matrix element of an operator 
$M$ corresponding 
to $\bw =1$. Then 
$$
\left(\bc_{-2}\right)_{\varnothing,\varnothing} = \tau(1) \, r \,,
\quad 
\left(\bc_{-1}\right)_{\varnothing,\varnothing} = \beta_0(1)\,, 
$$
where $r=\bv$ is the rank and 
$$
\beta_0   = \sum_{i=1}^r 1 \otimes \cdots \otimes \alpha_0
\otimes \dots \otimes 1 
$$
is the $0$th Baranovsky operator.  Here and in what follows 
we identify 
\begin{equation}
  \label{zeroFr}
H^\hd_\bG\left(\cM(r)^\bA\right) \otimes \bK \cong 
\bF(a_1) \otimes \cdots \otimes \bF(a_r) \,,
\end{equation}
generalizing \eqref{zeroF2} to arbitrary rank. Thus the zero modes appear
in the Yangian. 

Note by construction the operators 
$\left(\bc_{-i}\right)_{\varnothing,\varnothing}$ have 
the same span as the operators $\ch_i \cVt$ for $i\in \{-2,-1\}$.

\subsection{} 

We stress that in what follows we adopt the identification 
\eqref{zeroFr} and that, for now on, all formulas
involving $\bal$ include the zero modes.

\subsection{}

Similarly, consider the vacuum-vacuum matrix element of 
the regular part $\bRh_{\reg}$ of $\bRh$, as in Section \ref{gamma_sing}. 
The new terms coming from \eqref{lnGamma_Stir} give 
\begin{equation*}
\frac{1}{\hbar}\left[ \frac1u \right]  \left(\bRh_{\reg}
\right)_{\varnothing,\varnothing}  = \sum_{i=1}^r 
 1 \otimes \cdots \otimes \widehat{L}_0
\otimes \dots \otimes 1 
\end{equation*}
where 
$$
\widehat{L}_0 = 
\tfrac12\int \nord \bal^2 \nord (1) - \tfrac1{12}\,,
$$
where we keep the zero modes, 
compare with \eqref{br_vert}. Note the familiar $\zeta(-1)=-\frac1{12}$
term.  

\subsection{}

Recall the classical $\br$-matrix \eqref{brMrn} 
and note its matrix elements gave 
$$
\beta_{n}(1), \beta_{-n}(\pt) \in \bY(\glh)\,, \quad n > 0 \,. 
$$
Since the core Yangian is an algebra over $\bk[\bdel^{-1}]$ where 
$$
\bdel = t_1 t_2\,, 
$$
we have 
$$
\beta_{-n}(1) = \bdel^{-1} \,  \beta_{-n}(\pt) \in \bbY(\glh) \,.
$$

\chapter{Core Yangian modulo $\hbar$}

\section{Semiclassical $R$-matrix} 

\subsection{}

Since $\hbar=-t_1-t_2$ does not divide $\bdel = t_1 t_2$
we may study $\bbY$ modulo $\hbar$, which leads to great 
simplifications. 

Define the semiclassical $R$-matrix $\rsc$ by 
$$
\bRh(u) = 1 + \hbar \, \rsc(u)  +  O(\hbar^2) \,. 
$$
Modulo $\hbar$, the generators of $\bbY$ are primitive and 
act by matrix 
coefficients of $\rsc$. 

The Yang-Baxter equation becomes the 
classical Yang-Baxter equation for $\rsc$. It implies the 
generators of $\bbY/\hbar \bbY$ form a Lie algebra $\gsc$ and 
$$
\bbY/\hbar \bbY \cong \cU(\gsc) \,.
$$

\subsection{}

The Lie algebra $\gsc$ may be described explicitly by its 
action in the basis of stable envelopes of
$\cM(r)^\bA$, where 
$$
\bA \subset SL(2) \times GL(r) 
$$
is a maximal torus. Since $\cM(r)^\bA$ is finite, the 
classes of $\bA$-fixed points form an eigenbasis for 
operators of classical multiplication. 

In $\bA$-equivariant 
cohomology, stable envelopes are proportional to fixed points, 
and thus diagonalize operators of classical multiplication. 
Steinberg correspondences act nicely 
in this basis by the general principles explained 
in Section \ref{s_tStein}. 

\subsection{}

The fixed points of the maximal torus of $SL(2)$ on the Hilbert 
schemes are Nakajima varieties of type $A_\infty$, see 
in particular Section \ref{s_coverAinf}. 
We will see a close connection between $\gsc$ and the corresponding 
Lie algebra $\gl(\infty)$.

\section{Stable basis for $\Hilb_n$} 

\subsection{}

The stable basis for $\cM(1)=\Hilb$ is 
identified as follows. Let 
$$
\left\{
  \begin{pmatrix}
    z \\
& z^{-1} 
  \end{pmatrix}
\right \} \subset SL(2) 
$$
be the standard maximal torus. 
To  match standard symmetric functions 
conventions, we choose the $z\to\infty$ chamber, 
that is, 
$$
\fC = \{ u <0   \} \,,
$$
where $u = \log z$. 
The other choice may be obtained by a permutation 
of coordinates. 

A subscheme of $\C^2$ has a $z\to \infty$ limit if and only if
it is  set-theoretically 
supported on the $x_2$-axis 
$$
\ell_2 = \{x_1 = 0 \} \,.
$$ 
In particular,  the stable basis must be a $\Q$-linear combination of 
the Nakajima descendents of the $x_2$-axis 
$$
p_\mu = \prod \alpha_{-\mu_i}(\ell_2) \, \vacv{} \,.
$$
The notation is chosen to agree with the traditional map of 
the equivariant cohomology of the Hilbert scheme 
to symmetric function that takes
\begin{equation}
\alpha_{-k}(\ell_2) \mapsto \textup{multiplication by $p_k$} \,. 
\label{altop}
\end{equation}

\subsection{}

Recall the sign-twisted inner product on cohomology from Section 
\ref{s_signs_adj} and transport it to symmetric functions 
using \eqref{altop}. This gives  the Jack inner product on symmetric functions 
$$
\left[ p_k^\tau, p_l \right] = \delta_{kl} \, k \, (-t_1/t_2) 
$$
with parameter $-t_1/t_2$. In \cite{Macdonald}, this parameter is denoted 
$\alpha$. 

Gram-Schmidt orthogonalization 
of monomial symmetric function $m_\lambda$ 
with respect to this inner product gives, 
by definition, the basis of Jack symmetric functions. We define
$$
\bJ_\lambda = t_2^{|\lambda|} \,\cdot \,  \textup{integral Jack polynomial
as in \cite{Macdonald}}\,. 
$$
This is normalized so that 
\begin{equation}
\bJ_\lambda = \prod_{\square\in \lambda} (t_2(l(\square)+1) - t_1 \, a(\square))
\, m_\lambda + \dots \label{defbJl}
\end{equation}
and is a polynomial in $t_1,t_2$ of degree $|\lambda|$. Here 
$$
a(\square)=\lambda_i-j\,, \quad l(\square)=\lambda'_j-i
$$ 
denote the 
arm- and leg-length of a square $\square=(i,j)$ in the diagram $\lambda$. 
Note that the product in \eqref{defbJl} is the Euler class of 
$N_+$ at the monomial ideal 
\begin{equation}
\sI_\lambda = \left(x_1^{\lambda_i} x_2^{i-1}\right)_{i=1,2,\dots} \, 
\in \Hilb \,. 
\label{defsI}
\end{equation}

\subsection{}

The following is well-known and is a a consequence of the orthogonality 
of classes of fixed points $[\,\sI_\lambda]$ in cohomology 

\begin{Proposition}[\cite{NakJack,VassHilb,LQW}] The map \eqref{altop} sends $[\,\sI_\lambda]$ to 
$\bJ_\lambda$. 
\end{Proposition}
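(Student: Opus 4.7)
The strategy is to invoke the uniqueness of Jack polynomials: $\bJ_\lambda$ is the unique element of $\Lambda_n$ that is triangular in the monomial basis $\{m_\mu\}_{\mu\le\lambda}$ under the dominance order, orthogonal to $m_\mu$ for $\mu<\lambda$ in the Jack pairing, and with prescribed leading coefficient $\prod_\square(t_2(l(\square)+1)-t_1 a(\square))$. I plan to verify each of these three properties for the symmetric function $F_\lambda$ corresponding to $[\sI_\lambda]$ under \eqref{altop}, and then conclude $F_\lambda=\bJ_\lambda$.

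The orthogonality step is the cleanest: by equivariant localization, the classes of distinct torus-fixed points of $\Hilb_n$ are orthogonal in the sign-twisted Poincar\'e pairing, and under \eqref{altop} this pairing becomes the Jack pairing with parameter $\alpha=-t_1/t_2$ (a direct comparison of the Nakajima commutator \eqref{commNak} specialized to $\gamma_1=\gamma_2=\ell_2$ with the Jack normalization $[p_k,p_k]_\tau=k\alpha$). Hence $\{F_\lambda\}$ is an orthogonal basis of $\Lambda_n\otimes\Q(t_1,t_2)$ in the Jack pairing.

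For triangularity and the leading coefficient, I would first identify the ample partial order on $\Fix(\Hilb_n)$ with the dominance order on partitions of $n$ via a brief weight computation for $\cO(1)=\det\cV$ at monomial ideals, which yields the content sum $u(n(\lambda')-n(\lambda))$ (and the well-known comparison of $n(\lambda)$ with dominance). Triangularity of $F_\lambda$ in $\{m_\mu\}$ then comes through the classical dictionary (Nakajima--Vasserot, Li--Qin--Wang) between equivariant restrictions to torus-fixed points and evaluations of symmetric functions on the monomial basis; and the leading coefficient is read off from localization of $[\sI_\lambda]$ at $\sI_\lambda$ itself, matching the Euler class $e(N_+)|_{\sI_\lambda}=\prod_\square(t_2(l(\square)+1)-t_1 a(\square))$ after the appropriate renormalization.

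Uniqueness of Jack polynomials then forces $F_\lambda=\bJ_\lambda$. The hardest step, and the place where the real content of the proposition lies, is the precise translation dictionary between the fixed-point restriction map $H^\hd_\bT(\Hilb)\to H^\hd_\bT(\sI_\lambda)$ and the monomial expansion of the corresponding symmetric function; this is the combinatorial heart of the Nakajima--Vasserot--Li--Qin--Wang result, which is cited as the ``well-known'' input underlying the Proposition.
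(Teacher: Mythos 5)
Your approach matches the paper's: the paper simply states the Proposition ``is a consequence of the orthogonality of classes of fixed points'' and cites the same references, and your proposal fleshes out precisely that observation — orthogonality of the $[\sI_\lambda]$ under the (sign-twisted) Poincar\'e pairing, matched with the Jack inner product via \eqref{commNak}, plus triangularity and the Euler-class leading coefficient from \eqref{defbJl}. You are honest that the triangularity/restriction dictionary is the cited combinatorial input, which is also what the paper outsources.

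One small caution worth flagging: you phrase the ordering step via ``the ample partial order.'' For $\Hilb_n$ with the torus scaling $\C^2$, the $\bA$-weight of $\cO(1)=\det\cV$ at $\sI_\lambda$ is $\sum_{\square}c(\square)=n(\lambda')-n(\lambda)$, which is a linear function compatible with but strictly coarser than dominance (it is a linear extension, not the dominance order itself). That is fine for the uniqueness/Gram--Schmidt argument — any total refinement of dominance works — but the statement of triangularity for Jack polynomials in \eqref{defbJl} is with respect to dominance, so you should either show triangularity in the dominance order directly (as \cite{LQW} does via the structure of restriction to fixed points), or note explicitly that a linear refinement suffices for the characterization you invoke. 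As stated, a reader could worry that orthogonality plus triangularity in a coarser total order does not pin down $\bJ_\lambda$; making the logical chain precise (total order refining dominance $\Rightarrow$ Gram--Schmidt uniqueness) closes the gap.
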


\subsection{}

Let us  polarize $\Hilb^\bA$ by the Euler class of $N_-$. We then 
have the following 

\begin{Proposition}
The map \eqref{altop} sends the stable envelope of 
$\,\sI_\lambda$  to the Schur function $s_\lambda$. 
\end{Proposition}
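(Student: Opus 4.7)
The plan is to verify that the cohomology class $\widetilde{s}_\lambda \in H^\hd_\bG(\Hilb)$ corresponding to the Schur function $s_\lambda$ under (the inverse of) \eqref{altop}, extended $\bK$-linearly, satisfies the three characterizing properties of the stable envelope from Theorem \ref{stablebasis}. By uniqueness, this will force $\Stab_\fC(\sI_\lambda) = \widetilde{s}_\lambda$ and prove the statement. The key setup is to transfer the entire problem to symmetric functions: by the previous Proposition, the class $[\sI_\mu]$ corresponds to $\bJ_\mu$, so the Poincar\'e pairing on cohomology matches the Jack inner product on the symmetric function side with parameter $\alpha = -t_1/t_2$, and the restriction $\gamma|_{\sI_\mu}$ is determined by the pairing $(\gamma,[\sI_\mu])$ via equivariant localization.

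First I would identify the ample partial order on $\Fix = \{\sI_\mu\}$ with the dominance order on partitions. This is a direct computation using the weight of $\det \cV$ at each $\sI_\mu$ together with the choice of chamber $\fC = \{u<0\}$, which selects the ``lower-in-dominance'' side. Granting this, condition \nui\ (support in $\Branch(\sI_\lambda)$) becomes the classical triangularity of $s_\lambda$ in the monomial, and hence Jack, basis with respect to dominance order: $\widetilde{s}_\lambda|_{\sI_\mu}$ must vanish for $\sI_\mu$ strictly smaller than $\sI_\lambda$, which follows from $(s_\lambda, \bJ_\mu) = 0$ outside the dominance range.

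Next I would verify the diagonal normalization \nuii\ and the degree bound \nuiii\ simultaneously. The diagonal value $(s_\lambda, \bJ_\lambda)$ under the Jack inner product is a classical computation: using the normalization \eqref{defbJl} and the orthogonality of Jack polynomials, one gets an explicit product over boxes of $\lambda$ of linear forms in $t_1, t_2$. The chosen polarization $\bsi = e(N_-)$ identifies the normalization target as the product of $\bA$-negative weights in $T_{\sI_\lambda}\Hilb$, which in the arm/leg formulas for Hilbert scheme tangent weights is exactly the same product, up to a sign that is absorbed by the polarization choice. For the off-diagonal bound, the pairing $(s_\lambda, \bJ_\mu)$ for $\mu \succeq \lambda$ has controlled $u$-degree, coming from the bounded polynomial degree of $s_\lambda$ in power sums combined with the degree of each $\alpha_{-k}(\ell_2)$ in the equivariant parameter $u$; a direct count shows the degree stays strictly below $\frac12 \codim \sI_\mu = |\mu|$, as required.

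The main obstacle will be bookkeeping: matching the sign from the polarization, the normalization convention in \eqref{defbJl}, and the sign in the adjoint conventions of Section \ref{s_signs_adj}, so that the diagonal condition holds on the nose rather than up to a scalar. I would dispose of this by checking the case $|\lambda|=1$ (and perhaps $|\lambda|=2$) by hand to fix all signs, after which the general case follows from the uniqueness in Theorem \ref{stablebasis} together with the above structural verifications.
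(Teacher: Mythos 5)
Your overall strategy is the same as the paper's: show that the class $\widetilde{s}_\lambda$ corresponding to the Schur function satisfies the three characterizing conditions of Theorem \ref{stablebasis} and invoke uniqueness. The identification of the ample order with (a refinement of) dominance, the triangularity argument for condition \nui, and the diagonal normalization via \eqref{defbJl} for condition \nuii\ are all fine and match the paper's first and third sentences.

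The gap is in your verification of condition \nuiii. You say that ``a direct count'' of degrees, using the boundedness of $s_\lambda$ as a polynomial in power sums, shows the $\bA$-degree of $\widetilde{s}_\lambda\big|_{\sI_\mu}$ is \emph{strictly} less than $\tfrac12\codim \sI_\mu = |\mu|$. But this does not follow from degree bookkeeping alone: $\widetilde{s}_\lambda$ has cohomological degree exactly $2|\lambda|=2|\mu|$, so the restriction to a fixed point is a polynomial in $u$ and $\hbar$ of total degree $|\mu|$, and a naive count only gives $\deg_u \leq |\mu|$. The strict inequality is equivalent to saying the coefficient of $u^{|\mu|}$ vanishes, i.e., the restriction is divisible by $\hbar$. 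In the symmetric-function picture, $\widetilde{s}_\lambda\big|_{\sI_\mu} = d_{\lambda\mu}\, e(T_{\sI_\mu}\Hilb)$ where $d_{\lambda\mu}$ is the coefficient of $\bJ_\mu$ in the Jack expansion of $s_\lambda$, and one needs $d_{\lambda\mu}\big|_{\hbar=0}=0$ for $\mu\ne\lambda$. This holds because setting $\hbar = t_1+t_2 = 0$ makes the Jack parameter $\alpha = -t_1/t_2 = 1$, at which the Jack polynomials $\bJ_\mu$ become (multiples of) Schur functions, and $s_\lambda$ is orthogonal to $s_\mu$ for $\mu\ne\lambda$. This is precisely the observation encapsulated in the paper's phrase ``proportional to them modulo $\hbar$,'' and without it the degree bound in \nuiii\ --- and hence the appeal to uniqueness --- does not go through. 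Once you add this observation, your argument closes and coincides in substance with the paper's.
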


\begin{proof}
Schur functions are triangular with respect to $\bJ_\lambda$ and
proportional to them modulo $\hbar$. This shows stable envelopes
are proportional to Schur functions. By \eqref{defbJl} we have
$$
\bJ_\lambda = e(N_+) \, s_\lambda + \dots\,,
$$
which fixes the normalization. 
\end{proof}

\section{Differential operators on $\C^\times$ and $\gl(\infty)$}

\subsection{} 

Let $e_a$ denote the function 
\begin{equation}
e_a(x) = e^{ax} \,. 
\label{defea}
\end{equation}
Let $\ve\in\C^\times$ be a parameter and consider 
$$
\cD_\textup{assoc} = \C \lang D,e_{\pm\ve} \rang \,, \quad 
D = \frac{d}{dx} \,.
$$
It may be identified with 
differential operators on $\C^\times$ via the map $z = e_\ve$. 
The parameter $\ve$ may be scaled away but it will be 
convenient to keep it. We denote by 
$$
\cD = \left(\cD_\textup{assoc}\right)_\textup{Lie}
$$
the same algebra viewed as a Lie algebra.

The center of $\cD$ is spanned by $1\in \cD_\textup{assoc}$ 
which we denote by $D^0$ to avoid confusion.

\subsection{}

The natural 
action of $\cD$ on $e_s \, \C[e_{\pm\ve}]$, $s\in \C$, 
 gives a family of 
embeddings 
$$
\rho_s:\cD \hookrightarrow  \gl(\infty)
$$
into the Lie algebra $\gl(\infty)$ of all infinite matrices
with finitely many nonzero diagonals. Its image is the
unipotent Jordan block of the automorphism of $\gl(\infty)$ 
that corresponds to the shift of the Dynkin diagram. 

The diagram shift automorphism is the deck transformation of 
the universal cover of the quiver with one vertex and one 
loop. From this point of view, the description of $\cD$ 
as automorphism-finite vectors in $\gl(\infty)$ is intrinsic, 
while its identification with differential operators is less so.

The Lie algebra 
$\gl(\infty)$ has a central extension
$\widehat{\gl(\infty)}$
which may be pulled back to a central extension
\begin{equation}
0 \to \C \cb \to \cDh \to \cD \to 0  \,. 
\label{centD}
\end{equation}
This extension does not depend on  $s$.  

Representation theory of $\cDh$ was studied by Kac and Radul \cite{KacRadul} 
and many others. Here we will see the simplest representations:  
those obtained from the 
half-infinite wedge representations of $\gl(\infty)$.

\subsection{}

By construction, the representation $\pi_s = \bigwedge^{\infty/2} \rho_s$
is the $\cDh$ module with basis 
\begin{equation}
\vacv{\lambda;s} = \bigwedge_{i=1}^\infty e_{(\lambda_i -i)\, \ve +s} \,,
\label{veclam}
\end{equation}
where 
$$
\lambda = \lambda_1 \ge \lambda_2 \ge \dots \ge 0 
$$
is a partition. Usual rules of linear algebra give a
well-defined answer for the action of 
the off-diagonal elements of $\cD$ in this basis. For the 
diagonal elements, it is convenient to use the $\zeta$-regularization 
$$
\textup{\huge ``}\sum_{i=1}^\infty  
((\lambda_i-i)\, \ve +s)^k \textup{\huge ''} = k! \, [x^k] \, e_s
\, 
\sum_{i=1}^\infty  e_{(\lambda_i-i)\, \ve} \,,
$$
where $e_a=e_a(x)$ as in \eqref{defea}. 
Note 
$$
\sum_{i=1}^\infty  e_{(\lambda_i-i)\ve}  = 
\frac{1}{e_{\ve}-1} + 
\sum_{i=1}^\infty  \left[e_{(\lambda_i-i)\,\ve}-e_{-i\ve}\right]
$$
where the second term is a Laurent polynomial in $e_\ve$. In 
particular, 
$$
\pi_s(D^0) = \frac{s}{\ve}-\frac12 \,. 
$$
The central extension \eqref{centD} is normalized so that 
$$
\pi_s(\cb) = 1  \,.
$$

\subsection{}

For $\sI_\lambda$ as in \eqref{defsI} we have 
$$
\ch \cVt \Big|_{\, \sI_\lambda}  = - \frac{e^{a}}{1-e^{-t_1}} 
\sum_{i=1}^\infty e^{-\lambda_i t_1 - (i-1) \,t_2} 
$$
where $a$ is the framing weight and $t_1,t_2$ are the tangent 
weights of the two coordinate axes. We see that if
$$
t_1 = - t_2 = - \ve 
$$
then the map 
\begin{equation}
F(a) \owns \Stab \left[\sI_\lambda\right] \mapsto \vacv{\lambda; a+ \ve/2} 
\label{Ftwedge}
\end{equation}
identifies 
\begin{equation}
\ch \cVt  = \frac{\cb}{\ve(e^{\ve/2}-e^{-\ve/2})} +
\frac1{e^{\ve/2}-e^{-\ve/2}} \, \exp D \,. 
\label{chexpD}
\end{equation}
Here $\exp(D)$ is a generating function for the operators 
$D^k\in \cDh$, in other words 
$$
\pi_s\left(\exp D\right) = \sum_{k\ge 0} \frac{1}{k!} \,
 \pi_s\!\left(D^k\right) \ne \exp(\pi_s(D)) \,.  
$$

% $$
% \ch \cVt \Big|_{\, \sI_\lambda}  = \frac{e^{a+t_2+t_1/2}}{e^{t_1/2}-e^{-t_1/2}} 
% \sum_{i=1}^\infty e^{-\lambda_i t_1 - (i) \,t_2} 
% $$

\subsection{}

Generalizing \eqref{chexpD}, we have

\begin{Proposition}
The identification \eqref{Ftwedge} gives 
$$
\gsc \cong \cDh \,. 
$$
\end{Proposition}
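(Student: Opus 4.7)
The plan is to construct an explicit Lie algebra isomorphism $\Phi: \cDh \to \gsc$ by specifying it on a convenient set of generators, verifying the match of commutation relations, and then using faithfulness of both sides on a common representation to deduce bijectivity. The key computational tool is already in hand: the identification \eqref{chexpD} pins down the Cartan-like part of the correspondence.

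First, I would describe the generators on each side. By Theorem~\ref{t_coreY} applied modulo $\hbar$, the Lie algebra $\gsc$ is generated by the central scalars $\bc_{-2}, \bc_{-1}$, the Baranovsky operators $\alpha_k$ extracted from the classical $\br$-matrix \eqref{brf}, and the operators of cup product by $\ch_k \cVt$ for $k \geq 1$. On the other side, $\cDh$ is generated as a Lie algebra by its central element $\cb$, the multiplication operators $e_{k\ve}$ generating the Heisenberg subalgebra $\C[e_{\pm\ve}] \subset \cD$, and the differentiation powers $D^k$. I would then define $\Phi$ on generators by sending $\cb$ to the suitably normalized rank operator (the image of $\bc_{-2}$), sending each $e_{k\ve}$ to the Nakajima operator $\alpha_k$ up to the normalization dictated by Section~\ref{s_def_Bar}, and defining $\Phi(D^k)$ by inverting the generating-function identity \eqref{chexpD}, which expresses $(e^{\ve/2}-e^{-\ve/2})^{-1} \exp D$ as $\ch \cVt - \cb/\ve^2$.

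Second, I would verify that $\Phi$ extends to a Lie algebra homomorphism. The Heisenberg subrelations are matched by \eqref{commNak}. The commutativity $[D^k, D^l]=0$ in $\cDh$ matches commutativity of cup product by $\ch_k \cVt$. The central extensions are matched by \eqref{lnGamma_Stir}: the asymptotic expansion of $\log\Gamma(u\,|\,\bw,\bw')$ gives exactly the scalar coefficients identifying $\bc_{-2}$ with $\tau(1)\,\cb$, so cohomological rigidity of the $1$-dimensional $H^2(\cD, \C)$ does the rest once one nonzero commutator involving the central charge has been matched. The remaining crucial check is that $[D^k, e_{l\ve}]$ in $\cDh$ matches $[\ch_k \cVt, \alpha_l]$ in $\gsc$; the left-hand side lies in $\cD$ itself, so it suffices to compute the Steinberg operator produced by the right-hand side on $F(a) \cong \pi_{a+\ve/2}$ and to observe that its matrix elements in the stable basis $\{\Stab[\sI_\lambda]\}$, under \eqref{Ftwedge}, agree with the matrix elements of $[D^k, e_{l\ve}]$ in the wedge basis.

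Third, bijectivity follows from comparing the two sides on the common representation $F(a) \cong \pi_{a+\ve/2}$. Both $\cDh$ and $\gsc$ act faithfully for generic parameter $a$ (faithfulness of $\pi_s$ on $\cDh$ being standard for $s \notin \ve\Z$, and faithfulness of $\gsc$ on $F(a)$ following from the Nakajima-irreducible Heisenberg embedding together with the definition of the core Yangian); since $\Phi$ intertwines the two actions by construction, its kernel acts as zero and so vanishes, and its image contains all generators of $\cDh$, so it is onto.

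The main obstacle will be the off-diagonal commutator verification in the second step, in particular showing that $[\ch_k \cVt, \alpha_l]$ acts by an operator whose matrix coefficients in the stable basis match those of $[D^k, e_{l\ve}]$ in the semi-infinite wedge basis. For $k=1$ this is, essentially, Lehn's theorem \ref{t_Lehn} combined with the Jack-function identification of the stable basis; for general $k$ the analogous geometric formula does not appear in the literature, and one must either produce it directly (by a localization computation on the punctual Hilbert scheme) or, more conceptually, first establish the weaker statement that $\Phi$ is well-defined modulo the center on a large enough subalgebra and then use the uniqueness of the central extension together with the explicit $\zeta$-regularization in Section~\ref{s_Barnes_zeta} to pin down the full identification in one shot.
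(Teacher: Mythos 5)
Your proposal takes a genuinely different route from the paper, and the difference matters: you set up an abstract map $\Phi:\cDh\to\gsc$ on generators and then try to verify it is a Lie algebra homomorphism, which leads you to flag the bracket $[D^k,e_{l\ve}]$ versus $[\ch_k\cVt,\alpha_l]$ as a hard geometric computation ``not in the literature.'' The paper sidesteps this entirely. Both $\gsc$ and $\pi_{a+\ve/2}(\cDh)$ are already realized as concrete Lie subalgebras of $\End(\bF(a)\otimes\bK)$, so one does not need to check any bracket relations at all: it suffices to show that under \eqref{Ftwedge} the two subalgebras have a common generating set. Formula \eqref{chexpD} does this for the diagonal generators $\ch_k\cVt\leftrightarrow D^k$ (this is the content of the ``$\cb/\ve^2+\exp D/(e^{\ve/2}-e^{-\ve/2})$'' identity, which you invoke only to \emph{define} $\Phi(D^k)$ but which is really the whole diagonal verification), and for the Baranovsky generators the matching $\alpha_{-k}(\ell_2)\leftrightarrow e_{\ve k}$ is just the classical boson--fermion statement that multiplication by $p_k$ on Schur functions is the shift operator on the wedge basis, made available by the preceding proposition identifying $\Stab[\sI_\lambda]$ with $s_\lambda$. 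Since $\gsc$ is generated by these operators (Theorem \ref{t_gen_c1} modulo $\hbar$) and $\cDh$ by $D^k$ and $e_{\ve k}$, the two subalgebras coincide, and the isomorphism is immediate --- faithfulness and relation-checking are both automatic once you work inside $\End(\bF)$. Your step 3 essentially recovers this point, but step 2's framing leads you to see a difficulty where the argument in fact gives it for free.
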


\begin{proof}
It remains to check that it takes
$$
\alpha_{-k}(\ell_2) \mapsto e_{\epsilon k} \in \cDh \,,
$$
which is easy. For example, mapping both sides
of \eqref{Ftwedge} to the Schur function $s_\lambda$, this 
becomes the classical rule for multiplication of Schur 
functions by power-sum functions. 
\end{proof}

\section{Pl\"ucker relations}

\subsection{} 

Let $\psi_a$ be the operator of wedge product by $e_a$
$$
\psi_a \, v = e_a \wedge v 
$$
and let $\psi_a^*$ be the adjoint operator with respect 
to inner product in which the vectors \eqref{veclam} are orthonormal. 
More canonically, the operators $\psi_a^*$ are associated
to bases of representations dual to $\rho_s$. 

\subsection{}

Consider the operator 
$$
\Omega = \sum_{a\in s+\Z \ve} \psi_a \otimes \psi^*_a 
$$
which depends only on the $\Z\ve$-coset of $s$. It 
defines a map 
$$
\Omega : \pi_s \otimes \pi_{s'} \mapsto \pi_{s+\ve} \otimes \pi_{s'-\ve} 
$$
provided
$$
s' \equiv s \! \! \mod \Z \ve \,.
$$
This map commutes with 
$\gl(\infty)$ and, hence, with $\cDh$. 

\subsection{}

Classically, $\Omega$ is used to describe the image of the 
natural embedding 
$$
GL(V) \hookrightarrow GL(\Lambda^\hd V) \,, 
$$
where $V$ a vector space, which for simplicity can be 
assumed to be finite-dimensional, see \cite{KacBook,MJD}. Matrix elements of 
$g\in GL(V)$ acting on $\Lambda^\hd V$ are the minors of $g$.

Commutation with $\Omega$ gives quadratic relations for minors 
of $g$, analogous to the better known Pl\"ucker relations 
among maximal minors of a rectangular matrix (that is, among 
the Pl\"ucker coordinates on the Grassmann variety). Here 
we use the term \emph{Pl\"ucker relations} in the broader sense.

\subsection{}

We denote by 
\begin{align*}
  \bE(\lambda,\mu; s, u) & = \vacd{\mu;s} \, \rsc(u) \vacv{\lambda;s}\\
  &=\sum_{k\ge -1} \bE(\lambda,\mu; s)_k \, \ln^{(k)}(u) \,.
\end{align*}
matrix elements of $\rsc$ in the first (by convention) tensor factor. 
Here $\bE(\lambda,\mu; s)_k \in \gsc$ and the singular 
central terms 
$$
\ch_{-2} \cVt = \frac{\cb}{\ve^2}\,, \quad 
\ch_{-1} \cVt = \frac{D^0}{\ve} 
$$
are only present if $\lambda=\mu$. By construction, 
$\bE(\lambda,\mu; s, u)$ only depend on $u+s$ in the sense 
that 
\begin{equation}
\forall t \quad \bE(\lambda,\mu; s+t, u-t)  = \bE(\lambda,\mu; s,u) \,.
\label{translt}
\end{equation}

\subsection{}
By construction, $\bE(\lambda,\mu; s)_k$ generate $\bbY/\hbar\bbY$ and 
all relations between these generators are linear. Among them are the 
 Pl\"ucker relations, which become linear 
 \begin{equation}
\left[\xi \otimes 1 + 1 \otimes \xi, \Omega\right] = 0 \,, \quad 
\xi \in \gsc \,, 
\label{linPl}
\end{equation}
at the Lie algebra level.

\begin{Proposition}\label{p_Pluck} 
Pl\"ucker relations and \eqref{translt} span all 
linear relations among matrix elements of $\rsc$. 
\end{Proposition}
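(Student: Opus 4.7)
The plan is to establish the reverse inclusion explicitly. Let $V$ denote the free $\C$-vector space on formal symbols $e(\lambda,\mu;s)_k$ indexed by partitions $\lambda,\mu$, complex numbers $s$, and integers $k\geq -1$, and let $\Phi\colon V \to \gsc$ be the evaluation map $e(\lambda,\mu;s)_k \mapsto \bE(\lambda,\mu;s)_k$. Let $K_P,K_T \subset V$ be the subspaces spanned by Plücker and translation relations respectively. One inclusion $K_P + K_T \subset \ker \Phi$ is immediate from \eqref{linPl} and \eqref{translt}, so all the work lies in proving $\ker\Phi \subset K_P + K_T$. Equivalently, it suffices to exhibit an inverse to the surjection $\bar\Phi\colon V/(K_P+K_T) \twoheadrightarrow \gsc$, which I would do by writing down a concrete $\C$-basis of $V/(K_P+K_T)$ and showing it maps bijectively to the natural basis $\{\cb,\,1,\,e_{n\ve}D^k\}$ of $\cDh = \gsc$.

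The first step is a reduction via translation. Using \eqref{translt}, every symbol is congruent modulo $K_T$ to one with $s$ in a fixed transversal (say $s\in [0,\ve)$ for each $\Z\ve$-coset), so only the difference $u+s$ matters. I would then compute $\bE(\lambda,\mu;s,u)$ explicitly on the tensor product $\pi_{s+\ve/2}\otimes \pi_{s'+\ve/2}$, using the explicit action of $\cDh$ in the half-infinite wedge basis \eqref{veclam} and the formula \eqref{chexpD}. This gives a very explicit polynomial/combinatorial description of matrix elements as finite $\Z$-linear combinations of the generators $e_{n\ve}D^k$ of $\cDh$ with coefficients depending on $\lambda,\mu,s$.

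The second step is the injectivity. Because $\rsc$ intertwines the diagonal action of $\gsc$ via $\Omega$, the commutant argument (a Howe-type duality for the dual pair $(\gsc,\Omega)$ acting on $\pi_s\otimes\pi_{s'}$) shows that any linear combination of matrix coefficients $\bE(\lambda,\mu;s)_k$ which vanishes in $\gsc$ must correspond, after evaluation on a tensor product, to an element of the commutant of diagonal $\gsc$ acting trivially on highest weights. The commutant is generated by $\Omega$, and the corresponding relations among matrix coefficients are precisely the Plücker relations \eqref{linPl}. Thus any such vanishing combination lies in $K_P$ modulo the translation identifications already quotiented out.

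The main obstacle I expect is handling the singular central terms $\ch_{-2}\cVt$ and $\ch_{-1}\cVt$, which appear only on the diagonal $\lambda=\mu$ and depend nontrivially on $s$ via the Stirling expansion \eqref{lnGamma_Stir}. These interact subtly with translation invariance: shifting $s\mapsto s+t$ changes the zero-mode contributions in a way that must exactly cancel against the $\ln^{(k)}(u-t)$ reexpansion. Verifying this cancellation term-by-term, and checking that no extra central relations are introduced beyond those already implicit in $K_T$, is the delicate point; once it is done, the Howe-duality argument of the previous paragraph handles the bulk (off-diagonal) matrix coefficients uniformly and completes the identification $V/(K_P+K_T)\cong \cDh$.
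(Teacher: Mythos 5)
Your first reduction (fix a transversal for $s$ via translation, compute $\bE(\lambda,\mu;s,u)$ explicitly in the half-infinite wedge basis) matches the paper's opening moves, and your worry about the singular central terms $\ch_{-2}\cVt,\ch_{-1}\cVt$ is a real issue that the paper does handle (they enter only on the diagonal and are absorbed by the relation \eqref{translt} together with the $\zeta$-regularization). The problem lies in your second step.

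The Howe-duality/commutant argument is not a proof as stated, and I don't believe it can be made to work without essentially reproducing the paper's concrete reduction. Two specific gaps: (a) the commutant of diagonal $\gsc$ acting on $\pi_s\otimes\pi_{s'}$ is \emph{not} generated by $\Omega$ alone — already the operators built from $\alpha_n^+$ (the diagonal Heisenberg) commute with $\rsc$ and are not in the algebra generated by $\Omega$, and more fundamentally the half-infinite-wedge setting is not a classical Howe dual pair; (b) even granting a description of the commutant, the passage from ``$\Omega$ is in the commutant'' to ``every vanishing linear combination of matrix coefficients lies in $K_P$'' is precisely the content one needs to prove — it is not a formal consequence of intertwining. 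Invoking \eqref{linPl} only tells you that Plücker relations \emph{hold}; it does not tell you they \emph{exhaust} the relations, which is the nontrivial direction.

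What the paper does instead, and what you are missing, is a constructive two-stage reduction: first use the Plücker relations (Lemmas \ref{l_Pluck} and \ref{l_Pluck2}, via the expansion of $\vacd{\cdot}\,[\xi\otimes 1 + 1\otimes\xi,\Omega]\,\vacv{\cdot}$) to force vanishing of $\bE(\lambda,\mu;s,u)$ unless $\lambda,\mu$ differ by a single box move, and to reduce all surviving matrix elements to the canonical generators $\bE_{k0}(s,u)$ and $\bE(\varnothing,\varnothing;s,u)$ together with the shift relation \eqref{shiftEkl}; second, prove linear independence of this canonical set directly from the structure of $\gsc\cong\cDh$, namely the grading by $\ad(D)$ (i.e.\ by $k-l$) and the filtration by degree in $u$ within each graded piece, whose graded quotients are one-dimensional. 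Your proposal has no substitute for this second step, and the duality language does not supply one. To repair the argument you would either need to prove a genuine double-commutant theorem for this infinite-dimensional setting (which is not in the literature in the form you need) or fall back on the explicit reduction — at which point you are doing what the paper does.
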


\noindent 
This statement is a variation on the classical 
theme. For convenience, 
we give a proof.

\subsection{}
We divide the proof of Proposition \ref{p_Pluck} into a sequence of lemmas.

\begin{Lemma}\label{l_Pluck} 
Suppose $\psi_a^*  \vacv{\lambda} \ne 0$ and 
$\vacv{\mu} \ne  \psi_b \, \psi^*_a \, \vacv{\lambda}$ for all $b$. 
Then 
$$
\vacd{\mu} \xi \vacv{\lambda} =  \vacd{\mu} 
\psi_a \, \xi \, \psi^*_a \vacv{\lambda}
$$
for all $\xi\in \gsc$. 
\end{Lemma}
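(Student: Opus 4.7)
I would establish the lemma directly, via a fermionic operator identity and case analysis, rather than through the two-tensor form of the Pl\"ucker relation. The starting point is the elementary identity
\[
\xi - \psi_a\,\xi\,\psi_a^* \;=\; \psi_a^*\psi_a\,\xi \;-\; \psi_a\,[\xi,\psi_a^*],
\]
which follows from $\{\psi_a,\psi_a^*\}=1$ combined with $\psi_a\xi\psi_a^* = \psi_a\psi_a^*\xi + \psi_a[\xi,\psi_a^*]$. For $\xi\in\gsc\cong\cDh$ the central piece of the extension acts by scalars and commutes with every $\psi_c,\psi_c^*$, so the lemma holds trivially on it. On the noncentral quotient $\cD$, the element $\xi$ comes from an operator $\hat\xi$ on $V$, acts as a derivation of $\Lambda^\bullet V$, and one has $[\xi,\psi_a^*] = -\sum_c \xi_{ac}\psi_c^*$, where $\xi_{ac}$ are the matrix entries of $\hat\xi$; for the generators $D^j$ and $e_{k\ve}$ only finitely many $\xi_{ac}$ are nonzero for each fixed $a$, so there is no convergence issue.

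Next I would take the matrix element $\vacd{\mu}(\,\cdot\,)\vacv{\lambda}$ of this identity and split according to whether $a\notin\mu$ or $a\in\mu$. In the case $a\notin\mu$, the projector $\psi_a^*\psi_a$ acts as the identity to the left on $\vacd{\mu}$ while $\vacd{\mu}\psi_a=0$, so the identity collapses to the assertion $\vacd{\mu}\xi\vacv{\lambda}=0$. This I would check generator by generator: a diagonal operator $D^j$ can contribute only when $\mu=\lambda$, which is incompatible with $a\in\lambda$ and $a\notin\mu$; and a shift operator $e_{k\ve}$ has a nonzero matrix element between $\lambda$ and a $\mu$ with $a\notin\mu$ only for $\mu=(\lambda\setminus\{a\})\cup\{a+k\ve\}$, which is precisely the form excluded by the hypothesis $\vacv{\mu}\ne\psi_b\psi_a^*\vacv{\lambda}$ upon taking $b=a+k\ve$.

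In the opposite case $a\in\mu$, we have instead $\vacd{\mu}\psi_a^*\psi_a=0$ and $\vacd{\mu}\psi_a=\pm\vacd{\mu\setminus\{a\}}$, so the identity reduces to showing
\[
\sum_c \xi_{ac}\,\vacd{\mu\setminus\{a\}}\psi_c^*\vacv{\lambda} \;=\; 0.
\]
An individual term here is nonzero only when $\mu\setminus\{a\} = \lambda\setminus\{c\}$ as sets; the constraints $a\notin\mu\setminus\{a\}$ and $a\in\lambda$ then force $c=a$, and hence $\mu=\lambda$. But $\mu=\lambda$ is exactly the instance $b=a$ of the hypothesis, since $\psi_a\psi_a^*\vacv{\lambda}=\vacv{\lambda}$ whenever $a\in\lambda$, so this term vanishes as well.

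The step I expect to require the most care is the passage from generators of $\gsc$ to all of $\gsc$: the derivation formula $[\xi,\psi_a^*]=-\sum_c\xi_{ac}\psi_c^*$ really refers to the quotient $\cD$, so one must check that the central extension genuinely drops out of both sides of the lemma (which it does, as the central elements act by scalars commuting with every fermion) and that both sides are $\bK$-linear in $\xi$, so that verifying on generators suffices.
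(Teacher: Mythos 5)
Your proof is correct, but it takes a genuinely different route from the paper.  The paper's proof is a single line: sandwich the two-tensor Pl\"ucker relation $[\xi\otimes 1 + 1\otimes\xi,\Omega]=0$ with $\vacd{\mu,\lambda}\,(1\otimes\psi_a)\,(\,\cdot\,)\,(\psi_a^*\otimes 1)\,\vacv{\lambda,\lambda}$ and expand.  Using the hypotheses, the term $[1\otimes\xi,\Omega]$ drops out wholesale (every summand carries a factor $\vacd{\mu}\psi_b\psi_a^*\vacv{\lambda}=0$), and in $[\xi\otimes 1,\Omega]$ only $b=a$ survives the pairing $\vacd{\lambda}\psi_a\psi_b^*\vacv{\lambda}$, giving the identity with no case split and no knowledge of which matrix entries $\xi_{ac}$ of $\cD$ are nonzero.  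You instead stay in a single tensor factor, start from the anticommutator identity $\xi - \psi_a\xi\psi_a^* = \psi_a^*\psi_a\xi - \psi_a[\xi,\psi_a^*]$, and then split on $a\in\fS(\mu)$ versus $a\notin\fS(\mu)$.  Both cases of your argument check out: in the first, the identity is a tautology and you prove the vanishing $\vacd{\mu}\xi\vacv{\lambda}=0$ directly from the support of the generators, relying on the hypothesis at $b=a+k\ve$; in the second, the identity reduces to a sum that dies term by term (the $c\ne a$ terms because $a\in\fS(\lambda)$, the $c=a$ term because the hypothesis at $b=a$ rules out $\mu=\lambda$).  What you gain is elementariness and no reliance on the auxiliary $\Omega$; what you give up is uniformity, since your first case amounts to re-deriving by hand the support constraint that $\Omega$-invariance encodes once and for all.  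Two small polish points: the central subspace of $\cDh$ is two-dimensional ($\cb$ and $D^0$ both act by scalars on each $\pi_s$, and your argument handles both), and "generator by generator" should read "over a spanning set" (the monomials $D^je_{k\ve}$), since $\xi\mapsto\vacd{\mu}\xi\vacv{\lambda}$ is only linear, not multiplicative — though for your purposes the support argument for $e_{k\ve}$ already covers all of $\cD_k$ with $k\ne 0$.
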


\noindent 
Note the hypothesis of the Lemma implies $\mu\ne\lambda$. 

\begin{proof}
Expand 
\begin{equation}
0= \vacd{\mu,\lambda} 1 \otimes \psi_a 
\,\, \Big[\xi \otimes 1 + 1 \otimes \xi, \Omega\Big] \,\, 
\psi^*_a \otimes 1 \, \vacv{\lambda,\lambda}   
\label{Plucc}
\end{equation}
where $\vacv{\mu,\lambda} = \vacv{\mu} \otimes \vacv{\lambda}$. 
\end{proof}

\begin{Corollary} Pl\"ucker relations imply 
 $$
 \bE(\lambda,\mu; s, u) \ne 0 \Longrightarrow \vacv{\mu} = 
\psi_b \, \psi^*_a \, \vacv{\lambda}
$$
for some $a,b\in s + \Z \ve$. 
\end{Corollary}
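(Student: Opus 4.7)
The plan is to use Lemma \ref{l_Pluck} directly, with the only real input being a careful choice of the index $a$. First, observe that the corollary's hypothesis excludes the case $\mu = \lambda$: for any $a$ in the Maya support $A_\lambda := \{(\lambda_i-i)\ve + s\}_{i\ge 1}$ of $\vacv{\lambda; s}$, one has $\psi_a \psi_a^* \vacv{\lambda} = \vacv{\lambda}$, so $\mu = \lambda$ would violate the hypothesis with $b = a$. Since $\vacv{\lambda; s}, \vacv{\mu; s}$ both live in $\pi_s$ their Maya diagrams $A_\lambda, A_\mu$ sit in the same charge sector, and $\mu \ne \lambda$ forces $|A_\lambda \setminus A_\mu| = |A_\mu \setminus A_\lambda| \ge 1$. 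In particular one can pick $a \in A_\lambda \setminus A_\mu$.

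With this $a$ the hypotheses of Lemma \ref{l_Pluck} are met: $\psi_a^* \vacv{\lambda} \ne 0$ because $a \in A_\lambda$, and the condition $\vacv{\mu} \ne \psi_b \psi_a^* \vacv{\lambda}$ for every $b$ is a consequence of the corollary's stronger "for all $a, b$" assumption. The lemma then gives, for every $\xi \in \gsc$, the identity
$$\vacd{\mu}\, \xi\, \vacv{\lambda} \,=\, \vacd{\mu}\, \psi_a\, \xi\, \psi_a^*\, \vacv{\lambda}.$$
But $a \notin A_\mu$ means $\psi_a^*\vacv{\mu} = 0$, so $\vacd{\mu} \psi_a = 0$ and the right-hand side vanishes identically. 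Hence $\vacd{\mu}\, \xi\, \vacv{\lambda} = 0$ for every $\xi \in \gsc$.

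It remains to upgrade this scalar vanishing to the vanishing of $\bE(\lambda,\mu;s,u)$ itself, which is an element of $\gsc$ acting on the second Fock factor. Decomposing (formally, coefficient by coefficient in $\ln^{(k)} u$) the element $\rsc(u) = \sum_i \xi_i(u) \otimes \eta_i(u)$ with $\xi_i(u), \eta_i(u) \in \gsc$, one has for any test vectors $\vacd{\nu}, \vacv{\rho}$
$$\vacd{\nu}\, \bE(\lambda,\mu;s,u)\, \vacv{\rho} \;=\; \sum_i \bigl(\vacd{\mu}\, \xi_i(u)\, \vacv{\lambda}\bigr)\cdot \bigl(\vacd{\nu}\, \eta_i(u)\, \vacv{\rho}\bigr),$$
and every $\xi_i$-matrix coefficient is zero by the previous paragraph. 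Since all matrix coefficients of $\bE(\lambda,\mu;s,u)$ in the second factor vanish, the operator itself is zero.

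The only step with any content is the existence of $a \in A_\lambda \setminus A_\mu$, which reduces to the Maya-diagram description of partitions in a fixed charge sector; everything else is a direct application of Lemma \ref{l_Pluck} together with a trivial annihilation statement. I do not anticipate any genuine obstacle — the corollary is really just an accounting consequence of the Pl\"ucker relation once one exploits the freedom to choose $a$ outside $A_\mu$.
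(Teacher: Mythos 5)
Your argument is correct and matches the paper's (one-sentence) proof: choose $a$ in the Maya support of $\lambda$ but not of $\mu$, verify the hypotheses of Lemma \ref{l_Pluck}, and conclude from $\vacd{\mu}\psi_a = 0$. The expansion in your third paragraph is a reasonable unpacking of why the scalar vanishing $\vacd{\mu}\,\xi\,\vacv{\lambda}=0$ for all $\xi\in\gsc$ kills the operator-valued matrix element $\bE(\lambda,\mu;s,u)$; one could state it slightly more directly by noting that each coefficient $\bE(\lambda,\mu;s)_k$ is itself such a scalar matrix element, but your version is sound.
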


\noindent 
In the language of Chapter 
\ref{s_Grassmann}, this means the corresponding points 
of the half-infinite Grassmannian must lie on a line.

\begin{proof}
Otherwise, we can find $a$ in Lemma \ref{l_Pluck} 
such that $\vacd {\mu} \psi_a  \, =  0$. 
\end{proof}

\subsection{}

Let $\lambda\ne \mu$ lie on a line, which means that 
there exists $k,l\in \Z$ such that 
$$
\{k\} = \fS(\lambda) \setminus \fS(\mu)\,,
\quad
\{l\} = \fS(\mu) \setminus \fS(\lambda)\,,
$$
where $\fS(\lambda) = \{\lambda_i-i\} \subset \Z$. 
Using Lemma \ref{l_Pluck}, we can add or remove elements 
in $\fS(\lambda) \cap \fS(\mu)$, which means there exists
$\bE_{kl}(s,u)$ such that
$$
\bE(\lambda,\mu; s, u) = \pm \bE_{kl}(s, u)
$$
with the sign determined from the action of the operators
$\psi_a^*$ in the 
basis $\vacv{\lambda}$. 
Lemma \ref{l_Pluck} further implies  
\begin{align}
  \bE_{k+1,l+1}(s, u) &= \bE_{kl}(s+\ve, u) \notag \\
& = \bE_{kl}(s, u+\ve) \,, \label{shiftEkl} 
\end{align}
where the second step is based on \eqref{translt}. 

\subsection{}

Now consider diagonal matrix elements of $\rsc$. Here we have
the following

\begin{Lemma}\label{l_Pluck2} 
Suppose $\psi_a^*  \vacv{\lambda} \ne 0$ and 
$\psi_a^*  \vacv{\mu} \ne 0$. 
Then 
\begin{equation}
\vacd{\lambda} \xi \vacv{\lambda} - 
\vacd{\lambda} \psi_a \, \xi \, \psi^*_a  \vacv{\lambda} = 
\vacd{\mu} \xi \vacv{\mu} - 
\vacd{\mu} \psi_a \, \xi \, \psi^*_a  \vacv{\mu} 
\label{difflammu}
\end{equation}
for all $\xi\in \gsc$. 
\end{Lemma}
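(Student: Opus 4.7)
The plan is to derive \eqref{difflammu} as a direct scalar consequence of the linear Pl\"ucker relation \eqref{linPl}, in close parallel with the proof of Lemma \ref{l_Pluck}. Introduce the notation $\vacv{\lambda'} := \psi_a^*\vacv{\lambda}$ and $\vacv{\mu'} := \psi_a^*\vacv{\mu}$; both vectors are nonzero by hypothesis, and they lie in the charge sector one below that of $\vacv{\lambda}$ and $\vacv{\mu}$. As usual, $\fS(\lambda') = \fS(\lambda)\setminus\{a\}$ and similarly for $\mu'$.

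The key step will be to evaluate the scalar identity
$$\vacd{\lambda,\mu'}\bigl[\xi\otimes 1 + 1\otimes\xi,\ \Omega\bigr]\vacv{\lambda',\mu} \;=\; 0$$
coming from \eqref{linPl}. When expanding $\Omega = \sum_b \psi_b\otimes\psi_b^*$, the four types of bilinear pairings that appear, namely $\vacd{\lambda}\psi_b\vacv{\lambda'}$, $\vacd{\lambda}\psi_b\xi\vacv{\lambda'}$, $\vacd{\mu'}\psi_b^*\vacv{\mu}$ and $\vacd{\mu'}\psi_b^*\xi\vacv{\mu}$, all vanish unless $b = a$, since $\fS(\lambda)\setminus\fS(\lambda') = \fS(\mu)\setminus\fS(\mu') = \{a\}$. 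So only the $b = a$ term in $\Omega$ survives sandwiching.

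Carrying out the short fermionic calculation for the four resulting summands, one finds, up to a common sign $\eta_\lambda\eta_\mu \in \{\pm 1\}$ (where $\psi_a\vacv{\lambda'} = \eta_\lambda\vacv{\lambda}$ and $\psi_a\vacv{\mu'} = \eta_\mu\vacv{\mu}$),
\begin{align*}
(\xi\otimes 1)\,\Omega &\ \longmapsto\ \vacd{\lambda}\xi\vacv{\lambda}, &
(1\otimes\xi)\,\Omega &\ \longmapsto\ \vacd{\mu'}\xi\vacv{\mu'}, \\
\Omega\,(\xi\otimes 1) &\ \longmapsto\ \vacd{\lambda'}\xi\vacv{\lambda'}, &
\Omega\,(1\otimes\xi) &\ \longmapsto\ \vacd{\mu}\xi\vacv{\mu}.
\end{align*}
Recognizing $\vacd{\lambda'}\xi\vacv{\lambda'} = \vacd{\lambda}\psi_a\xi\psi_a^*\vacv{\lambda}$ (the two factors of $\eta_\lambda^{\pm 1}$ square to one), and the analogous identity for $\mu$, then cancelling the overall sign $\eta_\lambda\eta_\mu$, the Pl\"ucker identity becomes
$$\vacd{\lambda}\xi\vacv{\lambda} + \vacd{\mu}\psi_a\xi\psi_a^*\vacv{\mu} \;=\; \vacd{\lambda}\psi_a\xi\psi_a^*\vacv{\lambda} + \vacd{\mu}\xi\vacv{\mu},$$
which rearranges to \eqref{difflammu}.

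The main obstacle is verifying that all four expansion terms really do carry the same overall fermionic sign, rather than independent ones that would spoil the derivation. This reduces to the observation that each term involves exactly one $\psi_a$-insertion on the $\lambda$-slot and one $\psi_a^*$-insertion on the $\mu$-slot (or their adjoints), so both sides of the Pl\"ucker identity pick up the same relative sign $\eta_\lambda\eta_\mu$. This is a routine, if slightly finicky, bookkeeping exercise.
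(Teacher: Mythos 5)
Your proof is correct and is essentially the same argument as the paper's: the paper expands $\vacd{\lambda,\mu}(1\otimes\psi_a)\,[\xi\otimes 1 + 1\otimes\xi,\Omega]\,(\psi_a^*\otimes 1)\vacv{\lambda,\mu}$, which is identical to your $\vacd{\lambda,\mu'}[\ldots]\vacv{\lambda',\mu}$ since $\vacd{\mu}\psi_a = \vacd{\mu'}$ and $\psi_a^*\vacv{\lambda} = \vacv{\lambda'}$. One small imprecision: the $\xi$-containing factors like $\vacd{\mu'}\psi_b^*\xi\vacv{\mu}$ need not vanish for $b\neq a$ in isolation — it is the $\xi$-free companion factor in each of the four summands that forces $b=a$ — but this does not affect the conclusion.
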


\begin{proof}
Expand 
%
%\begin{equation*}
$\displaystyle 
\vacd{\lambda,\mu} 1 \otimes \psi_a 
\,\, \Big[\xi \otimes 1 + 1 \otimes \xi, \Omega\Big] \,\, 
\psi^*_a \otimes 1 \, \vacv{\lambda,\mu}   \,. 
$
%\end{equation*}
%
\end{proof}

We denote the 
difference of the matrix elements in  \eqref{difflammu} by $\bE_{kk}(s,u)$, 
where $a=k \ve + s$. For example, 
$$
\bE_{0,0}(s,u) = \bE(\varnothing,\varnothing; s+\ve, u) - 
\bE(\varnothing,\varnothing; s, u) \,. 
$$
One can choose a different parameter $s'\in s + \Z\ve$ for $\mu$ in 
\eqref{difflammu}
which shows the relation \eqref{shiftEkl} is valid for $k=l$. 

\subsection{}

Symbolically, Lemma \ref{l_Pluck2} and \eqref{shiftEkl} shows
$$
\bE(\lambda,\lambda; s, u)  = \textup{\huge ``} \sum_{k \in \fS(\lambda)} 
\bE_{00}(s,u+k\ve) \textup{\huge ''}\,. 
$$
A better way to write this relation is the following. 

For each partition $\lambda$, define 
$$
\crn_\lambda: \Z \to \{\pm 1,0\} 
$$
as the difference of the following indicator functions 
$$
\crn_\lambda = \sum_{\textup{inner corners $\square$}} 
\delta_{c(\square)} - \sum_{\textup{outer corners $\square$}} 
\delta_{c(\square)}\,.
$$
Here $c(\square)=j-i$ is the content of the square $\square = (i,j)$. 
This may also be defined using the identity 
$$
\sum_k \crn_\lambda(k) \, t^k = (t-1) \sum t^{\lambda_i -i}  \,. 
$$

\begin{Lemma}
$$
\bE(\lambda,\lambda; s, u) = 
\sum_k \crn_\lambda(k) \,\, \bE(\varnothing,\varnothing; s, u+k\ve) \,.
$$
\end{Lemma}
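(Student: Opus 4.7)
The plan is to reduce the identity, through a sequence of single-particle moves in the semi-infinite wedge, to the shift relation for $\bE_{kk}$ established above. The central intermediate step will be a \emph{single-particle-move identity}: whenever partitions $\lambda,\mu$ (in the same charge sector) satisfy $\fS(\mu)=\bigl(\fS(\lambda)\setminus\{k_a\}\bigr)\cup\{k_b\}$ for some $k_a\in\fS(\lambda)$ and $k_b\notin\fS(\lambda)$, one should have
\[
\bE(\lambda,\lambda;s,u)-\bE(\mu,\mu;s,u)=\bE_{k_a k_a}(s,u)-\bE_{k_b k_b}(s,u).
\]

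To prove this, I will observe that the scalar $\vacd{\lambda}\psi_a\xi\psi_a^*\vacv{\lambda}$, where $a=k_a\ve+s$, entering the definition of $\bE_{k_a k_a}(s,u)$ in Lemma~\ref{l_Pluck2}, may be rewritten as $\langle\psi_a^*\vacv{\lambda},\xi\,\psi_a^*\vacv{\lambda}\rangle$ and hence depends only on the configuration $\fS(\lambda)\setminus\{k_a\}$ (any sign from $\psi_a^*$ appears twice and cancels). Since by hypothesis this configuration coincides with $\fS(\mu)\setminus\{k_b\}$, one gets $\vacd{\lambda}\psi_a\xi\psi_a^*\vacv{\lambda}=\vacd{\mu}\psi_b\xi\psi_b^*\vacv{\mu}$ with $b=k_b\ve+s$, and subtracting the two defining equations for $\bE_{k_a k_a}$ and $\bE_{k_b k_b}$ then yields the single-particle-move identity.

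Iterating, I will connect $\varnothing$ to $\lambda$ by a chain of single-particle moves, pairing the finite set $\fS(\lambda)\setminus\fS(\varnothing)$ with $\fS(\varnothing)\setminus\fS(\lambda)$ (of equal cardinality by charge conservation); every intermediate configuration is a valid partition set. Telescoping will give
\[
\bE(\lambda,\lambda;s,u)-\bE(\varnothing,\varnothing;s,u)=\sum_{k\in\fS(\lambda)\setminus\fS(\varnothing)}\bE_{kk}(s,u)-\sum_{k\in\fS(\varnothing)\setminus\fS(\lambda)}\bE_{kk}(s,u).
\]
I will then apply the shift relation $\bE_{kk}(s,u)=\bE(\varnothing,\varnothing;s,u+(k+1)\ve)-\bE(\varnothing,\varnothing;s,u+k\ve)$, which follows from $\bE_{kk}(s,u)=\bE_{00}(s,u+k\ve)$, the definition of $\bE_{00}$, and the translation invariance \eqref{translt}. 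Setting $a_k:=\bE(\varnothing,\varnothing;s,u+k\ve)$ and $f(t):=\sum_{k\in\fS(\lambda)}t^k-\sum_{k\in\fS(\varnothing)}t^k$, a genuine Laurent polynomial since only the finite symmetric difference contributes, Abel rearrangement rewrites the telescoping sum as $\sum_k f_k(a_{k+1}-a_k)=\sum_k(f_{k-1}-f_k)\,a_k$. The generating-function identity defining $\crn_\lambda$, together with $\crn_\varnothing(k)=\delta_{k,0}$, yields $(t-1)f(t)=\sum_k\crn_\lambda(k)\,t^k-1$, so $f_{k-1}-f_k=\crn_\lambda(k)-\delta_{k,0}$; adding $a_0=\bE(\varnothing,\varnothing;s,u)$ back to both sides will produce the claimed formula.

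The main point requiring care is the appearance of the formally infinite sums $\sum_{k\in\fS(\lambda)}t^k$, but since only the difference $f(t)$ enters the argument, which is itself a finite Laurent polynomial, the manipulations collapse to honest finite sums and no further regularization is needed beyond what is already built into the definition of the $\bE_{kk}$.
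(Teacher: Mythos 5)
Your proof is correct and realizes the argument the paper leaves implicit: the official proof is a one-liner citing Lemma \ref{l_Pluck2} and the shift relation \eqref{shiftEkl}, preceded by the remark that stripping particles symbolically yields the divergent sum $\sum_{k\in\fS(\lambda)}\bE_{00}(s,u+k\ve)$. Your telescoping against $\varnothing$ is precisely the regularization that turns that formal identity into the finite $\crn_\lambda$-formula, so the two arguments coincide.
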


\begin{proof}
Follows from Lemma \ref{l_Pluck2} and \eqref{shiftEkl}. 
\end{proof}

\subsection{}

\begin{proof}[Proof of Proposition \ref{p_Pluck}] 
Previous lemmas reduce the matrix elements of $\rsc$ to shifts in $u$ 
of the operators 
$\bE(\varnothing,\varnothing; s, u)$ and 
$\bE_{k0}(s,u)$, $k\ne 0$. 

The algebra $\gsc$ is graded by eigenvalues 
of the adjoint action of $D$, this is the grading by the difference 
$k-l$ of $\bE_{kl}$. Each graded piece is further filtered by the 
degree in $u$, with $1$-dimensional factors. This shows there are no 
further linear relations among the coefficients of $\bE(\varnothing,\varnothing; s, u)$ and 
$\bE_{k0}(s,u)$, $k\ne 0$.
\end{proof}

\subsection{}

The factorization of Section \ref{s_coverAinf} gives the 
following formula for the semiclassical $R$-matrix  
\begin{equation}
\rsc  = \sum_{i,j,k\in\Z} \frac{E_{ij} \otimes E_{j+k,i+k}}{u-k \ve} 
\label{Rscsum}
\end{equation}
in terms of the classical 
$R$-matrix 
$$
\br_{\gl(\infty)}  = 
\sum_{i,j\in\Z} E_{ij} \otimes E_{ji} 
$$
for $\gl(\infty)$. The operator \eqref{Rscsum} acts in 
half-infinite wedge representations of $\gl(\infty)$ via
the $\zeta$-regularization discussed in Section \ref{s_Barnes_zeta}. 
%
%the sign of \ve in \eqref{Rscsum} is fixed by 
%\eqref{shiftEkl}
%

It is instructive to retrace the steps of the above proof with 
this explicit formula.

\chapter{The Yangian of $\glh$}

\section{Generators of the core Yangian}

\subsection{}

By Theorem \ref{t_coreY}, $\bbY$ is generated by the Baranovsky 
operators $\beta_n$ and $\ch_k \cVt$ for $k=-2,-1,\dots$. 
Here, for brevity, we write $\beta_n = \beta_n(1)$.

The following theorem shows it suffices to add a 
 single operator $\ch_1 \cVt$ to the Baranovsky operators
to generate the Yangian.

\begin{Theorem}\label{t_gen_c1} 
The core Yangian $\bbY$ is generated by the Baranovsky operators
$\beta_{\pm 1}$, and the operator of cup product by 
$$
\bQh = \ch_1 \cVt 
$$
of cup product by $\ch_1$ of the bundle $\cVt = - H^0(\C^2,\cF)$\,. 
\end{Theorem}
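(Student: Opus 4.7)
The plan is a two-step reduction: first, pass to the semi-classical limit $\hbar \to 0$ where $\bbY/\hbar \bbY \cong \cU(\gsc)$ and $\gsc \cong \cDh$ by the results of the previous chapter; then perform an elementary Lie algebra computation in $\cDh$. Throughout I will use the identification \eqref{Ftwedge}--\eqref{chexpD}, under which the Baranovsky operators $\beta_{\pm 1}$ correspond to the multiplication operators $e_{\pm \ve}$ in $\cD$.

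First I would invoke Theorem \ref{t_coreY} to reduce to showing that $\beta_{\pm 1}$ and $\bQh$, together with their iterated commutators, produce the full Lie algebra $\fg'_Q$ and the operators $\ch_k \cVt$ for $k \ge 1$. The argument of Proposition \ref{p_gen_grY} (adapted to the core Yangian) shows that once a copy of the Lie algebra of primitive generators is available, the translation automorphism and the commutator relation \eqref{top_comm_E} produce all higher-$u$-degree generators modulo lower filtration, and the resulting correction terms can be cleared inductively. Hence the whole problem reduces to showing that $\beta_{\pm 1}$ and $\bQh$ generate $\gsc$ as a Lie algebra, modulo $\hbar$.

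Second, I would identify the image of $\bQh$ in $\cDh$ using the expansion \eqref{chexpD}: after multiplying by $(e^{\ve/2}-e^{-\ve/2})$, which is a unit modulo the center, $\ch \cVt$ realizes the generating function $\exp D$, and extracting the $\ch_1$ piece produces an element of $\cDh$ with nonzero component along $D$. Granted this, the generation of $\cD$ by $\{e_\ve, e_{-\ve}, D\}$ is an easy Lie-algebraic exercise: repeated brackets $(\ad e_{\ve})^k D = \ve^k e_{k\ve}$ give multiplication by every $e^{k\ve x}$, and then brackets of these with any element of the form $D^m e^{l \ve x}$ climb the full two-parameter lattice $e^{k \ve x} D^m$. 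The central extension is recovered automatically from $[e_\ve, e_{-\ve} D^n]$, so the full algebra $\cDh$ is generated. Lifting from $\cU(\cDh)$ to $\cU(\cDh[u])$, and hence to $\bbY$, proceeds exactly as in Section \ref{s_pr_tgrY}.

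The main obstacle is Step 2: one must actually verify, not just that $\bQh$ is \emph{some} non-central element of $\cDh$, but that its $D$-component is nonzero, so that a \emph{single} operator $\bQh$ together with $\beta_{\pm 1}$ suffices (rather than needing to throw in all the $\ch_k \cVt$). This amounts to computing the leading term of $\ch_1 \cVt$ against the explicit semi-classical identification \eqref{Ftwedge}--\eqref{chexpD} and checking that the resulting element of $\cDh$ lies outside the commutative subalgebra $\C[e_{\pm \ve}]\subset \cD$. Once this nonvanishing is confirmed, the Lie-algebraic bracketing and the lifting to the full Yangian are both routine.
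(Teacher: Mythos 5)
Your high-level strategy---reduce modulo $\hbar$, identify the image in $\cU(\cDh)$, and then do an elementary Lie-algebra calculation---matches the paper's (very terse) first proof of Theorem~\ref{t_gen_c1}. However, your Step~2 contains a concrete error that invalidates the argument as written.

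The claim ``$(\ad e_{\ve})^k D = \ve^k e_{k\ve}$'' is false. From $D e_{\ve} = e_{\ve}(D+\ve)$ one gets $[e_{\ve},D] = -\ve e_{\ve}$, and then $(\ad e_{\ve})^2 D = [e_{\ve}, -\ve e_{\ve}] = 0$. More to the point, the Lie algebra generated by $\{D, e_{\ve}, e_{-\ve}\}$ inside $\cD$ is only the $3$-dimensional solvable algebra spanned by those three elements (plus the central term from $[e_{\ve}, e_{-\ve}]$ in $\cDh$): $e_{\pm\ve}$ commute with each other, and $D$ just rescales them. Iterated brackets never escape this span, so this set does \emph{not} generate $\cD$. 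If $\bQh$ modulo $\hbar$ were linear in $D$, the theorem would be false.

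What actually happens is that $\bQh = \ch_1\cVt$ is \emph{cubic} in the Heisenberg fields (cf.\ formula \eqref{ch_1Tauth} and Lehn's formula \eqref{multc1}); under the identification \eqref{Ftwedge}--\eqref{chexpD} it has a nonzero component along $D^2$, not merely $D$. This is the crucial nonlinearity: for instance $[D^2, e_{\ve}] = 2\ve\, e_{\ve}D + \ve^2 e_{\ve}$, and then $[\,e_{\ve}D,\, e_{\ve}\,] = \ve\, e_{2\ve}$, so one climbs the $e_{k\ve}$-lattice, and further brackets with $D^2$ raise the $D$-degree; that is how the full two-parameter family $e_{k\ve} D^m$ is reached. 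So your ``main obstacle'' diagnosis at the end is correctly placed, but the stated resolution (``nonzero component along $D$'') is the wrong condition, and the bracket computation you offer to close the gap is wrong. You need $D^2$, not $D$, and you need the corrected bracket identities above. Once that is fixed, the reduction via Theorem~\ref{t_coreY} and the lifting via the argument of Section~\ref{s_pr_tgrY} go through as you describe. (The paper also records a second, completely different proof via the explicit localization computation of $\left[\beta_1, \frac{1}{u-\ad\bQh}\beta_{-1}\right]$, which bypasses the mod-$\hbar$ reduction entirely.)
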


\begin{proof}
Follows from the corresponding statement modulo $\hbar$. 
\end{proof}

\subsection{}

The generation statement can be made more effective 
using the the following geometric fact. Parallel 
results were proven by M.~Lehn for the cohomology of Hilbert schemes 
and by O.~Schiffmann and E.~Vasserot for the $K$-theory of $\cM(r)$. 

\begin{Proposition}\label{p_commutator} For any $k$ and $l$, 
\begin{equation}
\left[\ad(\bQh)^k \beta_1,\ad(\bQh)^l \beta_{-1}\right]
\label{commbeta1}
\end{equation}
is an operator of classical multiplication. 
\end{Proposition}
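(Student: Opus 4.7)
The plan is to prove this by induction on $k+l$, using the Jacobi identity to reduce the inductive step to the special case $k=0$, which is then handled by an explicit geometric computation on the Baranovsky correspondence.

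First, I set up the Jacobi reduction. The base case $k=l=0$ is Baranovsky's commutation relation \eqref{commBar}, which gives $[\beta_1,\beta_{-1}] = r\,\tau(1) \in \bK$, a scalar and hence classical multiplication. For the inductive step, assume the Proposition for all $(k',l')$ with $k'+l' < N$. Then each such commutator is classical multiplication, hence commutes with $\bQh$, so the Leibniz rule for the derivation $\ad(\bQh)$ gives
\begin{equation*}
0 = [\bQh,\,[\ad(\bQh)^{k'}\beta_1,\ad(\bQh)^{l'}\beta_{-1}]]
 = [\ad(\bQh)^{k'+1}\beta_1,\ad(\bQh)^{l'}\beta_{-1}] + [\ad(\bQh)^{k'}\beta_1,\ad(\bQh)^{l'+1}\beta_{-1}]\,.
\end{equation*}
Iterating this identity within the level $k+l=N$ yields
\begin{equation*}
[\ad(\bQh)^k\beta_1,\,\ad(\bQh)^l\beta_{-1}] = (-1)^k\,[\beta_1,\ad(\bQh)^N\beta_{-1}]\,,
\end{equation*}
so it suffices to show that $[\beta_1,\ad(\bQh)^N\beta_{-1}]$ is an operator of classical multiplication for every $N \ge 0$.

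Next, I analyze $\ad(\bQh)^N \beta_{-1}$ geometrically. The operator $\beta_{-1}$ is defined by the length-one Baranovsky correspondence $\fB \subset \Mr{n+1}\times\C^2\times\Mrn$, which carries a universal short exact sequence
\begin{equation*}
0 \to \cF^{\textup{univ}}_{\textup{big}} \to \cF^{\textup{univ}}_{\textup{small}} \to \cQ \to 0
\end{equation*}
on $\fB\times\Pp^2$, where ``big'' refers to the factor with the bigger instanton number and $\cQ$ is a length-one sheaf supported on the graph $\Gamma \subset \fB\times\C^2$ of $\pi_2\colon \fB\to\C^2$. Since $\cVt = -H^0(\C^2,\cF)$ in equivariant $K$-theory, this sequence gives on $\fB$:
\begin{equation*}
\cVt_{\textup{big}} - \cVt_{\textup{small}} = [H^0(\C^2,\cQ)] \in K_\bG(\fB)\,.
\end{equation*}
The key observation is that the line bundle $H^0(\C^2,\cQ)$ is equivariantly trivial on the open subset $\pi_2^{-1}(\C^2 \setminus \{0\})$, where $\bG$ acts with trivial stabilizer. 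Consequently its first Chern class has support contained in $\pi_2^{-1}(0)$ and lies in the image of $\pi_2^*\colon H^\hd_\bG(\C^2) \to H^\hd_\bG(\fB)$; writing it as $\pi_2^*\xi$ for an explicit class $\xi \in H^\hd_\bG(\C^2)$, we conclude $\bQh_{\textup{big}} - \bQh_{\textup{small}} = \pi_2^*\xi$. Iterating $\ad(\bQh)$, this yields
\begin{equation*}
\ad(\bQh)^N \beta_{-1} = \beta_{-1}(\xi^N)
\end{equation*}
in the notation of Section \ref{s_def_Bar}.

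Finally, applying Baranovsky's commutation relation \eqref{commBar} to this insertion gives
\begin{equation*}
[\beta_1,\,\beta_{-1}(\xi^N)] = [\beta_1(1),\,\beta_{-1}(\xi^N)] = r\cdot\tau(\xi^N) \in \bK\,,
\end{equation*}
a scalar operator, which closes the induction. The main obstacle I foresee is the geometric identification that $\bQh_{\textup{big}} - \bQh_{\textup{small}}$ is a pullback from $\C^2$: while the trivialization on generic fibers of $\pi_2$ gives the correct support statement for $c_1(H^0(\C^2,\cQ))$, a rigorous identification of the precise class $\xi$ will likely require resolving $\cO_\Gamma$ via the Koszul complex of the diagonal inside $\fB\times\C^2$ and carefully tracking the resulting expression in equivariant $K$-theory.
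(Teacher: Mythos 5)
Your Jacobi reduction to the case $k=0$ is valid: if every commutator at level $k'+l'<N$ is a classical multiplication operator, it commutes with $\bQh$ (since $\bQh$ is also cup product), so $\ad(\bQh)$ annihilates it and the level-$N$ commutators agree up to sign with $[\beta_1,\ad(\bQh)^N\beta_{-1}]$. This is exactly the Corollary the paper states after the Proposition (though the paper derives it \emph{from} the proof rather than using it as a reduction).

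The gap is in the core geometric step. You claim that $c_1(\cF/\cG)$ on the Baranovsky correspondence $\fB_1$ is a pullback $\pi_2^*\xi$ from $\C^2$, so that $\ad(\bQh)^N\beta_{-1}=\beta_{-1}(\xi^N)$. This is false, and the issue is not a technicality. First, $\bG$ does \emph{not} act with trivial stabilizer on $\C^2\setminus\{0\}$ (the $GL(2)$ factor has a Borel stabilizing each line through the origin, and $GL(r)$ acts trivially). More to the point, already for $r=1$, $n=1$: over $I=(x_1,x_2)\in\Hilb_1$, the torus-fixed points of $\fB_1$ have $I/\cG$ of weight $t_1$ (when $\cG=(x_1^2,x_2)$) or $t_2$ (when $\cG=(x_1,x_2^2)$); both have $x=0$, so the weight is not a function of $x\in\C^2$. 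Non-equivariantly the problem is even sharper: $H^2(\C^2)=0$ while $c_1(\cF/\cG)$ is a nonzero class on $\fB_1$ (a theta-type line bundle). So $\ad(\bQh)^N\beta_{-1}$ is genuinely not of the form $\beta_{-1}(\gamma)$ for any $\gamma\in H^\hd_\bG(\C^2)$, and the final invocation of \eqref{commBar} never gets off the ground.

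The paper's proof avoids trying to simplify $\ad(\bQh)^N\beta_{-1}$. Instead it writes both orderings of the composition $\ad(\bQh)^k\beta_1\circ\ad(\bQh)^l\beta_{-1}$ as pushforwards of $(-c_1(\cF_1/\cG))^k\,c_1(\cF_2/\cG)^l$ and $c_1(\cG'/\cF_1)^l(-c_1(\cG'/\cF_2))^k$ through the two double correspondences $\{\cG\subset\cF_1,\cF_2\}$ and $\{\cF_1,\cF_2\subset\cG'\}$. The key observation is that away from the diagonal $\cF_1=\cF_2$ these two correspondences are canonically isomorphic via $\cG=\cF_1\cap\cF_2$, $\cG'=\cF_1+\cF_2$ (inside the common double dual), and the canonical isomorphism $\cF_i/\cG\cong\cG'/\cF_{3-i}$ matches the integrands. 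Hence the commutator is supported on the diagonal, i.e.\ is an operator of classical multiplication. This simultaneously handles all $k,l$ and has no reduction to a single $\C^2$-insertion.
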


\begin{proof}
Recall that the Baranovsky operators $\beta_{\pm 1}$ are defined 
using the correspondence 
$$
\fB_1 =\{(\cG,x,\cF)\} \subset \cM(r,n+1) \times \C^2 \times \cM(n) 
$$
formed by exact sequences 
\begin{equation}
0 \to \cG \to \cF \to \cO_x \to 0 \,. 
\label{extOG}
\end{equation}
On this correspondence, we have a 
tautological line bundle $\cF/\cG$ and the 
action of $\ad(\bQh)$ introduces a factor of 
$$
c_1(\cF/\cG) =  - 
\ch_1(H^0(\C^2,\cG))+ \ch_1(H^0(\C^2,\cF))
 \in H^2_\bG(\fB_1)\,. 
$$
Therefore
\begin{multline*}
\left(\ad(\bQh)^k \beta_1(\gamma)\right) \circ 
\left(\ad(\bQh)^l \beta_{-1}(\gamma')\right)  = 
 \\
(-1)^{r}(\pi_{13})_* \left(
 (-c_1(\cF_1/\cG))^k \, c_1(\cF_2/\cG)^l \, 
\pi^*_{45}(\gamma\times \gamma') \right) 
\end{multline*}
where $\pi_{ij}$ are the projections to respective factors 
in the correspondence 
$$
\{(\cF_1,\cG,\cF_2,x_1,x_2)\} \subset \cM(r,n) \times \cM(r,n+1) 
\times \cM(r,n) \times \C^2 \times \C^2 
$$
in which $\cF_i/\cG \cong \cO_{x_i}$. The $(-1)^r$ factors comes from 
our sign conventions, see Section \ref{s_sign_Baran}.  

The product $\ad(\bQh)^l \beta_{-1} \ad(\bQh)^k \beta_1$ 
in the opposite order is, similarly, computed by pushing forward
$$
(-1)^r \, c_1(\cG'/\cF_1)^l (-c_1(\cG'/\cF_2))^k
$$
along the $\cG'$-factor in the correspondence defined by 
$$
\cG'/\cF_i = \cO_{x_{3-i}}\,,  \quad i=1,2\,. 
$$
We now note that outside of the diagonal $\cF_1 \cong \cF_2$ 
the two correspondences are canonically isomorphic, 
because necessarily 
$$
\cG' = \cF_1 + \cF_2\,, \quad \cG = \cF_1 \cap \cF_2\,,
$$
as subsheaves of the common double dual 
$\cF_1^{\vee\vee} = \cF_2^{\vee\vee}$. Clearly, 
$$
\cF_i/\cG \cong \cG'/\cF_{3-i}
$$
which identifies the integrands and shows the commutator 
\eqref{commbeta1} is supported on the diagonal $\cF_1\cong\cF_2$. 
This means it is an operator of classical multiplication. 
\end{proof}

\noindent From the proof above we have the following 

\begin{Corollary}
\begin{equation*}
\left[\ad(\bQh)^k \beta_1,\ad(\bQh)^l \beta_{-1}\right] = 
(-1)^k \left[\beta_1,\ad(\bQh)^{k+l} \beta_{-1}\right]
\end{equation*}
\end{Corollary}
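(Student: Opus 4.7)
The plan is to derive the identity purely formally from Proposition \ref{p_commutator}, together with the derivation property of $\ad(\bQh)$. The key observations are these. First, $\bQh = \ch_1 \cVt$ is itself a cup product operator, hence lies in the algebra of classical multiplication on $H^\hd_\bG(\cM(r))$. Since classical multiplication operators commute with each other (cohomology is commutative), $\ad(\bQh)$ annihilates every operator of classical multiplication. Second, Proposition \ref{p_commutator} asserts that for all nonnegative integers $a, b$, the commutator $[\ad(\bQh)^a \beta_1, \ad(\bQh)^b \beta_{-1}]$ is an operator of classical multiplication, and therefore is killed by $\ad(\bQh)$.

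First I would apply $\ad(\bQh)$ to the operator $[\ad(\bQh)^{k-1} \beta_1, \ad(\bQh)^l \beta_{-1}]$. By Proposition \ref{p_commutator} this vanishes, while by the Leibniz rule
\[
0 = \ad(\bQh)\bigl[\ad(\bQh)^{k-1} \beta_1, \ad(\bQh)^l \beta_{-1}\bigr] = \bigl[\ad(\bQh)^k \beta_1, \ad(\bQh)^l \beta_{-1}\bigr] + \bigl[\ad(\bQh)^{k-1} \beta_1, \ad(\bQh)^{l+1} \beta_{-1}\bigr],
\]
giving the one-step relation
\[
\bigl[\ad(\bQh)^k \beta_1, \ad(\bQh)^l \beta_{-1}\bigr] = -\bigl[\ad(\bQh)^{k-1} \beta_1, \ad(\bQh)^{l+1} \beta_{-1}\bigr].
\]
Iterating this identity $k$ times transfers all adjoint actions from the first slot onto the second, at the cost of the sign $(-1)^k$, producing
\[
\bigl[\ad(\bQh)^k \beta_1, \ad(\bQh)^l \beta_{-1}\bigr] = (-1)^k \bigl[\beta_1, \ad(\bQh)^{k+l} \beta_{-1}\bigr],
\]
as claimed.

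There is no serious obstacle here: the only technical point is that the induction calls on Proposition \ref{p_commutator} for every intermediate pair $(k - j, l + j)$ with $0 \le j \le k - 1$, but the Proposition supplies classicality uniformly for all such pairs. Thus the Corollary is entirely a formal consequence of the classicality established in the proof of the Proposition, as the phrase ``from the proof above'' suggests.
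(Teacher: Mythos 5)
Your proof is correct, but it takes a genuinely different route from the one the paper has in mind. The phrase ``from the proof above'' signals that the paper derives the Corollary by inspecting the explicit push-forward expressions obtained in the proof of Proposition \ref{p_commutator}: both $\left[\ad(\bQh)^k \beta_1,\ad(\bQh)^l \beta_{-1}\right]$ and $\left[\beta_1,\ad(\bQh)^{k+l}\beta_{-1}\right]$ are differences of correspondences whose integrands agree off the diagonal $\cF_1 \cong \cF_2$, and on the diagonal the integrand $(-c_1(\cF_1/\cG))^k\,c_1(\cF_2/\cG)^l$ degenerates to $(-1)^k c_1(\cF/\cG)^{k+l}$ while the integrand for $\left[\beta_1,\ad(\bQh)^{k+l}\beta_{-1}\right]$ degenerates to $c_1(\cF/\cG)^{k+l}$; the factor $(-1)^k$ is thus read off directly from the geometry. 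Your argument, by contrast, is entirely formal: you observe that $\bQh$ is itself a classical multiplication operator, so $\ad(\bQh)$ kills the (by Proposition \ref{p_commutator}) classical commutator, and the Leibniz rule then yields the one-step recursion $\left[\ad(\bQh)^k\beta_1,\ad(\bQh)^l\beta_{-1}\right] = -\left[\ad(\bQh)^{k-1}\beta_1,\ad(\bQh)^{l+1}\beta_{-1}\right]$, which you iterate. Your route needs only the \emph{statement} of the Proposition (plus the commutativity of the cup-product algebra), whereas the paper's route re-uses details internal to the Proposition's proof; on the other hand, the paper's computation makes the sign $(-1)^k$ geometrically transparent and ties it to the $(-c_1(\cF_1/\cG))^k$ factor. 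Both are valid; yours is a clean and self-contained alternative.
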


\subsection{}

The commutator in Proposition 
\ref{p_commutator} can be explicitly identified. 
We do it using equivariant localization following \cite{SV1}. 

To set up equivariant localization, we need to 
identify the the normal bundle to $\fB_1$. 
We have the following 

\begin{Proposition}
The tangent bundle to $\fB_1$ fits into an exact
sequence of the form 
\begin{multline}
  0 \to T  \fB_1 \to 
T \cM(r,n+1) \oplus T \cM(r,n) 
 \to \\
\to \Ext^1(\cG,\cF(-1)) \to \C(-\hbar) \to 0 \,,
\label{tang_Bar} 
\end{multline}
where $\C(-\hbar)$ is the trivial bundle with equivariant 
weight $-\hbar$. 
\end{Proposition}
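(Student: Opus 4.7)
The plan is to compute $T\fB_1$ at a point $(\cG,x,\cF)$ by treating $\fB_1$ as a moduli space of flags $\cG \subset \cF$ with quotient $\cO_x$, and reading the four-term sequence off long exact sequences of $\Ext$'s applied to $0\to\cG\xrightarrow{\phi}\cF\to\cO_x\to 0$ on $\Pp^2$.

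First, I would use that for framed torsion-free sheaves on $\Pp^2$ one has $T_\cF\cM(r,n+1)=\Ext^1_{\Pp^2}(\cF,\cF(-1))$ and $T_\cG\cM(r,n)=\Ext^1_{\Pp^2}(\cG,\cG(-1))$, with the corresponding $\Ext^0$ and $\Ext^2$ groups vanishing thanks to the trivialization along $L_\infty$. Standard flag deformation theory (equivalently, deformation theory of the two-term complex $[\cG\xrightarrow{\phi}\cF]$ in $D^b(\Pp^2)$, twisted by $\cO(-1)$) then identifies $T\fB_1$ as the kernel of the difference map
\begin{equation*}
\delta:\ T_\cG\cM(r,n)\oplus T_\cF\cM(r,n+1)\longrightarrow\Ext^1_{\Pp^2}(\cG,\cF(-1)),
\end{equation*}
where the two components are induced by pushforward and pullback along $\phi$. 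This gives the left half of the claimed sequence and reduces the problem to identifying $\mathrm{coker}(\delta)$.

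Second, I would compute this cokernel by combining two long exact sequences. Applying $\Hom(-,\cF(-1))$ to $0\to\cG\to\cF\to\cO_x\to 0$ and using $\Ext^2(\cF,\cF(-1))=0$ yields a surjection
\begin{equation*}
\Ext^1(\cG,\cF(-1))\twoheadrightarrow\Ext^2(\cO_x,\cF(-1)),
\end{equation*}
so the cokernel of the $T_\cF$-summand alone is $\Ext^2(\cO_x,\cF(-1))$. Applying $\Hom(\cO_x,-(-1))$ to the same short exact sequence, together with the fact that the image of the $T_\cG$-summand is precisely the image of $\Ext^2(\cO_x,\cG(-1))\to\Ext^2(\cO_x,\cF(-1))$ (a diagram chase through the octahedral diagram associated to $\cG\to\cF\to\cO_x$), shows
\begin{equation*}
\mathrm{coker}(\delta)\ \cong\ \Ext^2_{\Pp^2}(\cO_x,\cO_x(-1)),
\end{equation*}
the higher terms $\Ext^3$ vanishing for dimensional reasons on $\Pp^2$.

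Third, I would identify this cokernel equivariantly. Since $x\in\C^2$ is disjoint from $L_\infty$, the twist $\cO(-1)$ is trivial in a neighborhood of $x$, so $\Ext^2_{\Pp^2}(\cO_x,\cO_x(-1))\cong\Ext^2_{\C^2}(\cO_x,\cO_x)$, which by the Koszul resolution equals $\wedge^2 T_x\C^2$. With the convention that the coordinate $x_i$ carries weight $t_i$, the class $\wedge^2 T_x\C^2$ has weight $-(t_1+t_2)=-\hbar$, matching $\C(-\hbar)$.

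The main obstacle I expect is the middle step: cleanly showing that the combined cokernel collapses to $\Ext^2(\cO_x,\cO_x(-1))$ rather than a larger quotient. This is essentially an octahedral/diagram-chase argument, and requires organizing the two $\Ext$ long exact sequences (from $\Hom(-,\cF(-1))$ and $\Hom(\cO_x,-(-1))$) into a single commutative diagram so that the image of $T_\cG\cM$ inside $\Ext^2(\cO_x,\cF(-1))$ is visibly the image of $\Ext^2(\cO_x,\cG(-1))$. A cleaner alternative would be to compute $\R\Hom_{\Pp^2}([\cG\xrightarrow{\phi}\cF],[\cG\xrightarrow{\phi}\cF](-1))$ directly via the two-step filtration, whose associated spectral sequence collapses to the desired four-term exact sequence; but care is needed because flag deformations are governed by the two-term complex itself, not just by its quasi-isomorphism class $\cO_x$.
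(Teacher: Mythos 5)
Your proposal is correct and reaches the claimed four-term sequence, but the middle step takes a genuinely different route from the paper, and it is precisely there that you flag the main difficulty.

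You compute $\operatorname{coker}(\delta)$ in two stages: first peel off the $T_\cF$-summand using the long exact sequence of $\Hom(-,\cF(-1))$ applied to $\cG\to\cF\to\cO_x$ (landing in $\Ext^2(\cO_x,\cF(-1))$ using $\Ext^2(\cF,\cF(-1))=0$), and then claim the image of the $T_\cG$-summand inside that quotient matches the image of $\Ext^2(\cO_x,\cG(-1))\to\Ext^2(\cO_x,\cF(-1))$, finishing with $\Ext^2(\cO_x,\cO_x(-1))\cong\C(-\hbar)$ by Koszul. The diagram chase you flag does work: the relevant square commutes because boundary maps of the contravariant $\Ext$-sequence are natural in the coefficient sheaf, and surjectivity of $\Ext^1(\cG,\cG(-1))\to\Ext^2(\cO_x,\cG(-1))$ follows from the additional framed-sheaf vanishing $\Ext^2(\cF,\cG(-1))=0$. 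So your plan can be completed, at the cost of two vanishing lemmas and a commutativity verification.

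The paper sidesteps the chase entirely with a one-line geometric observation: the forgetful map $\fB_1\to\cM(r,n+1)\times\C^2$ is smooth (the fiber over $(\cF,x)$ is $\Pp(\cF|_x^\vee)$), so every first-order deformation of $\cF$ lifts to a deformation of the flag. This means the image of $\xi_\cF\mapsto\xi_\cF\phi$ already lies in the image of $\phi_*\colon\Ext^1(\cG,\cG(-1))\to\Ext^1(\cG,\cF(-1))$, so $\operatorname{im}(\delta)=\operatorname{im}(\phi_*)$ outright. Then a single long exact sequence, that of $\Hom(\cG,-(-1))$, together with the single vanishing $\Ext^2(\cG,\cG(-1))=0$, gives $\operatorname{coker}(\delta)\cong\Ext^1(\cG,\cO_x)$, and Serre duality plus the canonical extension class $\Ext^1(\cO_x,\cG)\cong\C$ identify this as $\C(-\hbar)$. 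The trade-off: your argument avoids Serre duality and reads off the weight directly from the Koszul resolution of $\cO_x$, while the paper's argument avoids the octahedral diagram chase and uses one fewer vanishing. Either version is acceptable; the paper's is noticeably shorter once you observe the geometric lifting property.
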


\noindent
In particular, $\fB_1$ is smooth, which is a special 
case of Theorem 5.7 in \cite{Nak98}. The sequence
\eqref{tang_Bar} may also be found there for more 
general Hecke correspondences among Nakajima 
varieties. 

\begin{proof}
Let 
$$
\xi = (\xi_\cG,\xi_\cF) \in \Ext^1(\cF,\cF(-1)) \oplus
  \Ext^1(\cG,\cG(-1))
$$
be a tangent vector to $\cM(r,n+1)\times \cM(r,n)$. 
A sheaf homomorphism (in our case, inclusion) 
$$
\phi: \cG \to \cF 
$$
deforms with $\xi$ to first order when the commutator 
\begin{equation}
[\xi,\phi]= \xi_\cF \, \phi - \phi \, \xi_\cG \in \Ext^1(\cG,\cF(-1))  
\end{equation}
vanishes.  Here 
$$
\Ext^i(\mathcal{A},\mathcal{B}) \otimes 
\Ext^j(\mathcal{B},\mathcal{C}) \to \Ext^{i+j} (\mathcal{A}, \mathcal{C})
$$
is the usual composition of $\Ext$ groups. Note that
$$
\rk \Ext^1(\cG,\cF(-1)) = 2rn+r\,, 
$$
while 
$$
\dim \cM(r,n+1) \times \cM(r,n) - \dim \fB_1 = 2rn+r-1 \,.
$$
In fact, the obstruction $[\xi,\phi]$ to deforming $\phi$ 
lies in the following 
corank 1 subbundle of $\Ext^1(\cG,\cF(-1))$.

For every deformation of $\cF$ there is some deformation
of $\cG\subset\cF$. This means  
the image of $\xi_\cF \mapsto \xi_\cF \, \phi$ lies in the image of 
$\xi_\cG \mapsto \phi \, \xi_\cG$ and hence the obstruction 
$[\xi,\phi]$ lies in the image of the 
first arrow in the following piece of the long exact sequence 
\begin{align*}
&\Ext^1(\cG,\cG(-1)) 
\to 
\Ext^1(\cG,\cF(-1)) 
\to 
\Ext^1(\cG,\cO_x) \to 
\\
\to & \Ext^2(\cG,\cG(-1))\,. 
\end{align*}
By Serre duality, 
$$
\Ext^2(\cG,\cG(-1)) = 0, 
$$
while 
$$
\Ext^1(\cG,\cO_x)^\vee \otimes \cO(-\hbar) = \Ext^1(\cO_x,\cG) \,. 
$$
We have
$$
\Ext^1(\cO_x,\cG)\big|_{\fB_1} \cong \C\,, 
$$
canonically trivialized by 
the class of the extension \eqref{extOG}. This gives the 
exact sequence stated. 
\end{proof}

\subsection{}

Suppose we are at a fixed point $(\cG,0,\cF)\in \fB_1$ of the torus action. 
Consider a free resolution of $\cF$ and its restriction to $\C^2$
\begin{equation}
0 \to \bigoplus \cO_{\C^2}(w_i) \to \bigoplus \cO_{\C^2}(v_i) \to 
\cF\Big|_{\C^2} \to 0 
\label{resolut_G}
\end{equation}
where $v_i,w_j \in (\Lie \bG)^*$ are the equivariant weight 
of the generators and relations. (Note that these include 
the framing weights.) 
We have 
$$
\ch \cF = \sum e^{v_i} - \sum e^{w_i} \,, 
$$
and 
$$
\ch \cG = \ch \cF - e^{v_k} (1- e^{-t_1}) (1-e^{-t_2}) 
$$
if the generator with weight $v_k$ surjects onto $\cF/\cG$. 
The characters of the $\Ext$-groups in \eqref{tang_Bar} 
are computed as follows 
$$
\ch \, \Ext^1(\cG,\cF(-1)) = 
\frac{(1-\overline{\ch \cG} \, \ch \cF)}{(1-e^{-t_1}) (1-e^{-t_2})}\,, 
$$
where 
bar denotes the dual representation, that is, $\overline{e^v} = 
e^{-v}$. 

Let $N_{(\cG,x,\cF)} \fB_1$ denote the normal bundle to 
the Baranovsky correspondence at the at the point $(\cG,x,\cF)$ 

\begin{Lemma}\label{l_N_T} We have 
  \begin{alignat*}{2}
  \ch \, N_{(\cG,x,\cF)} \fB_1 - 
 \ch \, T_{(\cG,x,\cF)} \fB_1 
&= e^{-\hbar - v_k} \, \ch \cG - e^{v_k} \, \overline{\ch \cG} 
&&- e^{-\hbar} + 1 \\
&= e^{-\hbar - v_k} \, \ch \cF - e^{v_k} \, \overline{\ch \cF}  
&&- e^{-\hbar} + 1  \,, 
\end{alignat*}
where $v_k$ is the weight of $\cG/\cF$. 
\end{Lemma}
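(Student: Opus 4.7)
The proof is a direct $K$-theoretic computation at the fixed point $(\cG,0,\cF)$, starting from the exact sequence \eqref{tang_Bar}. Reading off $[T\fB_1]$ from that sequence and combining with $T_0\C^2 = e^{t_1}+e^{t_2}$ gives the master identity
\begin{equation*}
\ch N - \ch T = 2\,\ch\Ext^1(\cG,\cF(-1)) + e^{t_1} + e^{t_2} - 2e^{-\hbar} - \ch T\cM(r,n+1) - \ch T\cM(r,n).
\end{equation*}

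Next I would apply the Nakajima tangent formula \eqref{T_Nak} at a torus-fixed point. Writing $V = \cV|_\cG$, $V' = \cV|_\cF$ and $\bC = (1-t_1)(1-t_2)$, this specializes to
\begin{equation*}
\ch T\cM(r,m) = \overline{\cW}\cV_\bullet + e^{-\hbar}\overline{\cV_\bullet}\cW - \bC\cdot\overline{\cV_\bullet}\cV_\bullet,
\end{equation*}
with the analogous off-diagonal version computing $\ch\Ext^1(\cG,\cF(-1))$ by inserting $\overline{V}$ on the left and $V'$ on the right. Since $\cF/\cG = \cO_0\otimes\C(v_k)$, in $K$-theory $V - V' = e^{v_k}$; substituting into the master identity and carefully combining yields
\begin{equation*}
\ch N - \ch T = -e^{v_k}\overline{\cW} + e^{-v_k-\hbar}\cW - \bC\bigl(e^{-v_k}V' - e^{v_k}\overline{V'} - 1\bigr) + e^{t_1} + e^{t_2} - 2e^{-\hbar}.
\end{equation*}

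The final step uses the sheaf-tautological identity $\ch\cF|_0 = \cW - \overline{\bC}\cdot V'$, which follows from $\cVt = -H^0(\C^2,\cF)$ combined with the regularization $\ch\cF|_0 = \overline{\bC}\cdot\ch H^0(\C^2,\cF)$, together with its conjugate $\overline{\ch\cF} = \overline{\cW} - \bC\cdot\overline{V'}$. Substituting these into the expression above and using the Jordan-quiver identity $\overline{\bC} = e^{-\hbar}\bC$, all $V'$-dependent terms cancel telescopically; the scalar simplification $\bC + e^{t_1} + e^{t_2} - 2e^{-\hbar} = 1 - e^{-\hbar}$ then delivers the stated formula in terms of $\ch\cF$. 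The alternative formula in terms of $\ch\cG$ follows from the same substitution with $V$ in place of $V'$ and $\cW = \ch\cG + \overline{\bC}\cdot V$; equivalently, it is a direct consequence of $\ch\cF - \ch\cG = e^{v_k}\overline{\bC}$ together with $\overline{\bC} = e^{-\hbar}\bC$, which give the cancellation $e^{-\hbar-v_k}(\ch\cF-\ch\cG) - e^{v_k}\overline{(\ch\cF-\ch\cG)} = e^{-\hbar}\bC - \overline{\bC} = 0$.

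The principal technical obstacle is precisely the cancellation of $V'$-dependent terms in the last step: at intermediate stages several $V'$-dependent contributions appear with different coefficients, and their pairwise cancellation depends essentially on the Jordan-quiver identity $\overline{\bC} = e^{-\hbar}\bC$. This cancellation is the algebraic content of the lemma, reflecting the fact that $\ch N - \ch T$ is intrinsic to the data $(\cG, \cF, v_k)$ and independent of the chosen resolution of $\cF$.
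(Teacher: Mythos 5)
Your approach is exactly the paper's (the paper's proof is the single line ``Direct computation from \eqref{tang_Bar}''), and the decomposition you set up is sound: the master identity, the tangent/$\Ext^1$ formulas in terms of $\cW, V, V'$, the relation $V-V'=e^{v_k}$, and the translation to $\ch\cF$ via $\ch\cF = \cW - \overline{\bC}\,V'$ all check out, and the final scalar simplification $\bC + e^{t_1}+e^{t_2}-2e^{-\hbar}=1-e^{-\hbar}$ is right.

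However, the ``Jordan-quiver identity'' you invoke is stated with the wrong sign: it should be $\overline{\bC}=e^{\hbar}\bC$, not $e^{-\hbar}\bC$. Indeed $\overline{\bC}/\bC = t_1^{-1}t_2^{-1}$, and in this paper's conventions for the instanton moduli (see the opening of the Gamma-functions chapter, and the value $\hbar=-t_1-t_2$ used a few sections earlier) one has $\hbar = t_1^{-1}t_2^{-1}$ multiplicatively, hence $\overline{\bC} = \hbar\,\bC$, i.e.\ $\ch\overline{\bC} = e^{\hbar}\ch\bC$. The $V'$-terms in your intermediate expression are $e^{-v_k}V'\bigl(e^{-\hbar}\overline{\bC} - \bC\bigr)$; this vanishes precisely because $e^{-\hbar}\overline{\bC}=\bC$, i.e.\ $\overline{\bC}=e^{\hbar}\bC$. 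With the identity as you wrote it, the bracket becomes $(e^{-2\hbar}-1)\bC\neq 0$ and the telescoping fails. The same issue recurs in your closing ``alternative'' computation: $e^{-\hbar-v_k}(\ch\cF-\ch\cG)-e^{v_k}\overline{(\ch\cF-\ch\cG)}$ equals $e^{-\hbar}\overline{\bC}-\bC$ (not $e^{-\hbar}\bC-\overline{\bC}$), and again vanishes by $\overline{\bC}=e^{\hbar}\bC$. So the argument is correct once the sign on $\hbar$ is fixed; as written, the stated identity contradicts the cancellation you need.

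One further cosmetic point: you route the computation through the tautological-bundle classes $V,V'$ and then translate to $\ch\cF,\ch\cG$ at the end. The paper stockpiles the sheaf-level inputs just above the lemma --- $\ch\,\Ext^1(\cG,\cF(-1)) = (1-\overline{\ch\cG}\,\ch\cF)/\ch\overline{\bC}$ and $\ch\cG = \ch\cF - e^{v_k}\ch\overline{\bC}$ --- so that its ``direct computation'' never needs $V,V'$ at all: one substitutes directly, writes $-2\overline{G}F+\overline{G}G+\overline{F}F = e^{-v_k-\hbar}\ch\overline{\bC}\,F - e^{v_k}\ch\overline{\bC}\,\overline{G}$, divides by $\ch\overline{\bC}$, and finishes with the same scalar simplification. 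Both routes are fine; the sheaf-level one is a little shorter.
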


\noindent Note that the trivial weight $1$ here cancels 
with the trivial weight that comes from the expansion 
of $e^{v_k} \, \overline{\ch \cG}$, and similarly for 
the weights $-e^{-\hbar}$. 

\begin{proof}
Direct computation from \eqref{tang_Bar} \,. 
\end{proof}

\begin{Proposition}
We have
\begin{equation}
 \left[\beta_1(\gamma_1),\frac1{u-\ad(\bQh)} \, \beta_{-1}(\gamma_2)\right]  
=\frac1{\hbar} 
\int_{\C^2} 
\left(1-\frac{c(\cF^\vee\otimes \hbar,u)}{c(\cF^\vee,u)}
\right) \gamma_1 \gamma_2 \,, 
\label{comm_bQh}
\end{equation}
where $\cF$ is the universal sheaf on $\cM(r) \times \C^2$, 
the right-hand side is viewed as operator of cup-product 
by this cohomology class in $H^\hd_\bG(\cM(r))$. 
\end{Proposition}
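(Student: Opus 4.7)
The plan is to combine Proposition~\ref{p_commutator} with equivariant localization. By that proposition, each coefficient in the $u^{-1}$-expansion of the left-hand side is an operator of classical multiplication by a class in $H^\hd_\bG(\cM(r))_{\textup{localized}}$. Since such a class is determined by its equivariant restriction to torus-fixed points, it suffices to match both sides at every torus-fixed $\cF\in\cM(r)$.

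To realize the commutator geometrically, I would expand $(u-\ad(\bQh))^{-1}=\sum_{k\ge 0}u^{-k-1}\ad(\bQh)^k$ and invoke the argument in the proof of Proposition~\ref{p_commutator}. Each product $\beta_1(\gamma_1)\circ\ad(\bQh)^k\beta_{-1}(\gamma_2)$ is a pushforward from $\fB_1\times_{\cM(r,n+1)}\fB_1$ weighted by powers of $\pm c_1(\cG'/\cF_i)$, while the opposite ordering is a pushforward from $\fB_1\times_{\cM(r,n-1)}\fB_1$. Off the diagonal $\cF_1=\cF_2$ the two correspondences agree canonically, so after summing the geometric series in $u^{-1}$ and subtracting, the commutator becomes a single pushforward supported on the diagonal, weighted by $(u+c_1(\cF/\cG))^{-1}$ on each correspondence with a relative sign.

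To identify this pushforward, I would restrict to a torus-fixed $\cF\in\cM(r)$ with free resolution \eqref{resolut_G}: addable boxes correspond to generators of weights $v_i$, and removable boxes to relations of weights $w_j$. Lemma~\ref{l_N_T} supplies the Euler class of the normal bundle to $\fB_1$ at each fixed point of the diagonal, so each localization contribution is a simple rational function of $u$, and summing over boxes gives an expression of the form
\[\sum_i\!\left(\frac{1}{u-v_i}-\frac{1}{u-v_i+\hbar}\right)-\sum_j\!\left(\frac{1}{u-w_j}-\frac{1}{u-w_j+\hbar}\right),\]
up to an overall factor of $\hbar^{-1}$. The logarithmic-derivative identity $\log c(L,u)=\sum_k\ch_k L\cdot\ln^{(k)}u$ from \eqref{defcL} identifies this, at each fixed point, with $\hbar^{-1}\bigl(1-c(\cF^\vee\otimes\hbar,u)/c(\cF^\vee,u)\bigr)$, and the integration over $\C^2$ is encoded in the evaluation at $x\in\C^2$ in the Baranovsky correspondence.

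The main obstacle will be bookkeeping: both orderings of the composition give localization sums that individually diverge, and the finite commutator appears only after subtraction. A cleaner route should bypass the box-by-box localization and instead write the excess intersection directly in terms of the virtual class $\ch N_{\fB_1}-\ch T_{\fB_1}$ supplied by Lemma~\ref{l_N_T}; the $K$-theory identity $e^{-\hbar-v_k}\ch\cF-e^{v_k}\overline{\ch\cF}-e^{-\hbar}+1$ from that lemma should assemble, after the $u$-integration and a residue calculation, directly into the right-hand side of \eqref{comm_bQh}. Matching the trivial terms $1-e^{-\hbar}$ in Lemma~\ref{l_N_T} to the leading $1$ on the right-hand side, and absorbing the factor $e^{-\hbar}$ into the $\hbar^{-1}$ prefactor, will be the most delicate step in the argument.
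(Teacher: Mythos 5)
Your overall strategy (localize at torus-fixed points, use the resolution \eqref{resolut_G}, identify the sum over generators/relations) is the same as the paper's, but the key intermediate formula you write is wrong, and this is precisely where the hard work of the proof lies. Each localization term does \emph{not} contribute a bare $1/(u-v_k)$ or $1/(u-w_k+\hbar)$: it comes multiplied by the equivariant Euler class ratio supplied by Lemma~\ref{l_N_T}, which is a nontrivial product over all other generators and relations. Concretely, the correct left-hand side at a fixed point is
\begin{multline*}
\sum_k \frac{\hbar}{u-v_k}\prod_{i\ne k}\frac{v_k-v_i+\hbar}{v_k-v_i}\prod_{i}\frac{v_k-w_i}{v_k-w_i+\hbar}\\
-\sum_k \frac{\hbar}{u-w_k+\hbar}\prod_{i\ne k}\frac{w_k-w_i-\hbar}{w_k-w_i}\prod_{i}\frac{w_k-v_i}{w_k-v_i-\hbar}\,,
\end{multline*}
and the identification with $\prod_i\frac{u-v_i+\hbar}{u-v_i}\prod_i\frac{u-w_i}{u-w_i+\hbar}-1$ is a genuine partial-fraction identity in $u$. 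Your proposed sum $\sum_i(\tfrac{1}{u-v_i}-\tfrac{1}{u-v_i+\hbar})-\sum_j(\tfrac{1}{u-w_j}-\tfrac{1}{u-w_j+\hbar})$ already fails for a single generator $v$ and no relations: it gives $\hbar/((u-v)(u-v+\hbar))$, whereas the target $\hbar^{-1}\bigl(1-\tfrac{u-v+\hbar}{u-v}\bigr)$ is $-1/(u-v)$. Your ``cleaner route'' through the $K$-theory class $\ch N_{\fB_1}-\ch T_{\fB_1}$ and ``a residue calculation'' is exactly the part that has to be unpacked, and unpacking it leads back to the partial-fraction identity, not around it.

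Two smaller points. First, invoking Proposition~\ref{p_commutator} to reduce to matching restrictions at fixed points is a legitimate structural move, though the paper's own proof just computes both orderings directly and subtracts. Second, your worry about divergences ``appearing only after subtraction'' is misplaced: at each fixed point both orderings are finite sums (over addable boxes for one order, removable boxes for the other), so there is no divergent-minus-divergent cancellation; what has to be seen is that the two \emph{weighted} finite sums combine into the stated rational function of $u$.
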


\noindent 
Note, for example, that the $1/u$ term here gives the familiar 
result 
$$
\left[\beta_1(\gamma_1),\beta_{-1}(\gamma_2)\right] = 
- \int_{\C^2} \gamma_1 \gamma_2 \, \rk \cF  = 
r \, \tau(\gamma_1 \gamma_2)  \,. 
$$
{}It is clear from Grothedieck-Riemann-Roch that the 
right-hand side of  \eqref{comm_bQh} generates the 
same algebra as $\ch_k \cVt$. 

\begin{proof}
We use equivariant localization. Let $\cF$ be a torus-fixed 
sheaf as in \eqref{resolut_G} and let 
$$
[\cF] \in H^{2\dim}_{\bG}(\cM(r,n))\,,  
$$
denote the class of this fixed point. The computation of 
$$
\left(\beta_1(\gamma_1) \circ \frac1{u-\ad(\bQh)} \, \beta_{-1}(\gamma_2) 
\cdot [\cF],[\cF] \right) \Big/ 
\left([\cF],[\cF] \right) 
$$
is given by summing $1/(u-v_i)$ over all generators of $\cF$ with a 
certain equivariant weight that accounts for the normal bundle 
to $\fB_1$ and for the tangent bundle to $\cM(r,n+1)\times \cM(r) 
\times (\C^2)^2$. This equivariant 
weight is determined from Lemma \ref{l_N_T}. 

The product in the opposite order involves summation over 
all relations $w_i$ in the resolution \eqref{resolut_G}, 
because they correspond to torus-fixed sheaves that contain 
$\cF$. The new generator has weight 
$w_i-\hbar$, therefore we sum $1/(u-w_i+\hbar)$ 
with a weight which is again computed from Lemma \ref{l_N_T}.

The resulting sum simplifies using the elementary
 identity 
\begin{multline}
  \sum_k \frac{\hbar}{u-v_k} \prod_{i\ne k} \frac{v_k-v_i+\hbar}{v_k-v_i}
  \prod_{i} \frac{v_k-w_i}{v_k-w_i+\hbar} \\ - \sum_k
  \frac{\hbar}{u-w_k+\hbar} \prod_{i\ne k}
  \frac{w_k-w_i-\hbar}{w_k-w_i} \prod_{i}
  \frac{w_k-v_i}{w_k-v_i-\hbar} =\\
\prod_i \frac{u-v_i+\hbar}{u-v_i}
\prod_i \frac{u-w_i}{u-w_i+\hbar} - 1 \,,
\end{multline}
which is proven by observing that it is 
 a partial fraction expansion in the variable $u$. 
(This identity also appears in \ \cite{SV1}.) 
Since 
$$
c(\cF^\vee,u) = \prod_i (u-v_i) \big/ \prod_i (u-w_i) 
$$
the result follows. 
\end{proof}

\begin{proof}[Another proof of Theorem \ref{t_gen_c1}] 
Follows from the above proposition and 
Theorem \ref{t_coreY}. 
\end{proof}

\section{Slices and screening operators}\label{s_Slice_screen}

\subsection{}

In Section \ref{s_Slices_Int}  
we constructed geometrically core Yangian intertwiners
from slices. In this section, we identify algebraically
the intertwiner corresponding to the slices from 
Section \ref{s_slice_ex2}. They turn out to be 
the well-known screening operators for Virasoro modules. 

By the boson-fermion correspondence, screening operators
specialize to Pl\"ucker relations modulo $\hbar$. 
Thus, by Proposition \ref{p_Pluck}, 
 they generate the relations in the core Yangian 
of $\glh$. Hence, for $\bbY(\glh)$, the answer to the question 
from Section \ref{s_Q_slices} is affirmative.

\subsection{}

We recall some basic notion, in the generality of Chapter
\ref{s_free_bos}.

A field $Y(\eta,z)=\sum_n Y_n(\eta) \, z^{-n}$ is called 
\emph{primary} 
of dimension $\lambda\in\bbH$ if it satisfies the OPE 
$$
\vT(\gamma,z) \, Y(\eta,w) \sim 
\frac{z w}{(z-w)^2} \, Y(\lambda \gamma \eta,w) + 
\frac{zw}{z-w} \, \frac{\partial}{\partial w} Y(\gamma \eta,w) \,.  
$$
Equivalently, 
$$
[\vL_n(\gamma),Y_m(\eta)] = Y_{m+n}\big((n \lambda - n- m ))
\, \gamma \eta\big)  \,.
$$
In particular, if $\lambda=1$ then the operator 
$$
Y_0(\eta) = \int Y(\eta,z) 
$$
commutes with all operators $\vL_n(\gamma)$.

\subsection{}

Define normally ordered exponential of a field $Y(\gamma,z)$ 
by 
$$
\nord \exp Y(z) \nord (\gamma) = 
\tau(\gamma)+ \nord Y(z) \nord (\gamma)+ 
\frac{1}{2}  \nord Y(z)^2 \nord (\gamma)+ \dots \,, 
$$
where terms of the form $ \nord Y(z)^n \nord (\gamma)$ 
are defined using the $n$-fold coproduct 
$\bbH\to \bbH^{\otimes n}$ as in Section \ref{s_coprod}. 

These satisfy the usual rules like 
$$
\frac{\partial}{\partial z} \,  
\nord \exp Y(z) \nord (\gamma)   = \, 
\nord 
\left(\frac{\partial}{\partial z} Y(z)\right) \, \exp Y(z) \nord (\gamma)\,. 
$$

\subsection{}

Let $\eta$ be an eigenvector of multiplication operators 
in $\bbH$. We define $\eta^\vee$ by
\begin{equation}
\gamma \eta = (\gamma, \eta^\vee) \, \eta \label{etaeig}\,,
\end{equation}
for all $\gamma\in\bbH$. Define
$$
\bV_\mu(z) = \, \nord \exp \mu \bphi^-(z) \nord (\eta)
$$
where 
$$
\bphi^-  = \bphi^{(1)} - \bphi^{(2)} 
$$
is the antiderivative of the field $\bal^-$, see 
 Section \ref{s_phi}. In particular, we have
\begin{equation}
\bal^-(\gamma,z) \, \bphi^-(\eta,w) \sim 
\frac{2 z}{z-w} \, (\gamma,\eta) + \dots 
\label{opeap}
\end{equation}

\subsection{} 

Since the operator $\bV_\mu$ involves $\alpha_{\log}$, it 
has nontrivial commutation relations with $\alpha^-_{0}$, 
namely 
$$
\left[\alpha^-_{0}(\gamma) , \bV_\mu(z) \right] = 
2 \mu (\gamma, \eta^\vee) \, \bV_\mu(z) \,. 
$$
This means 
$$
V_\mu : \bF(a_1) \otimes \bF(a_2) \to 
\bF(a_1-\mu \, \eta^\vee) \otimes \bF(a_2+ \mu \, \eta^\vee) \,. 
$$

\subsection{}

\begin{Proposition}\label{p_primar} 
If $\eta$ is an eigenvector of multiplication as in 
\eqref{etaeig} then the operator 
$$
z^{\mu^2(\se,\eta^\vee)} V_\mu(z)
$$
is primary for $\vT(z,K)$ of dimension 
$$
\lambda = \mu^2 \se - \mu K \,. 
$$
\end{Proposition}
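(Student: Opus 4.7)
The plan is to verify primarity by computing the OPE $\vT(\gamma, z, K) \cdot [z^{\mu^2(\se,\eta^\vee)} V_\mu(w)]$ directly and matching it against the primary OPE form from Section 18.3. The only inputs I need are the basic contraction \eqref{opeap}, the Frobenius Wick rule of Lemma \ref{l_Wick}, and, crucially, the eigenvector hypothesis \eqref{etaeig}.

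The first step is to compute $\bal^-(\gamma, z) V_\mu(w)$. Expand the normally-ordered exponential as $V_\mu(w) = \sum_{n\ge 0} \tfrac{\mu^n}{n!} \nord \bphi^-(w)^n\nord(\eta^{\Delta n})$. Each single contraction of $\bal^-(\gamma,z)$ with one factor $\bphi^-(w)$ produces the simple pole $\tfrac{2z}{z-w}$, while on the cohomology side $\gamma$ acts in the corresponding slot of $\eta^{\Delta n}$. By \eqref{etaeig} this action collapses to the scalar $(\gamma,\eta^\vee)$, and summing over the $n$ choices of slot gives
$$
\bal^-(\gamma,z)\, V_\mu(w) \sim \frac{2\mu z(\gamma,\eta^\vee)}{z-w}\, V_\mu(w).
$$
This is the single place where the eigenvector hypothesis enters in an essential way; without it $V_\mu$ would not be a simultaneous eigenvector of $\bal^-$ and hence not primary.

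Next I would assemble the OPE of $\vT(\gamma,z,K)$ with $V_\mu(w)$ from the three summands of \eqref{defvTK}. The term $\partial\bal^-(\gamma K,z)$ gives a pure double pole by differentiating the simple pole above, contributing $-\mu K$ to the dimension. The normally-ordered square $\tfrac12\nord(\bal^-)^2\nord(\gamma)$ contributes in two ways: single contractions with $V_\mu$, reassembled via the vertex-operator identity $\nord \bal^-(w) V_\mu(w)\nord = \tfrac{1}{\mu}\partial_w V_\mu(w)$ (up to the zero-mode correction mentioned below), produce the simple pole $\tfrac{zw}{z-w}\partial_w [(\gamma,\eta^\vee) V_\mu(w)]$, matching the required $\partial_w Y(\gamma\eta,w)$ term after using \eqref{etaeig}; while the double contraction, which by Lemma \ref{l_Wick} carries a factor of the handle-gluing element $\se$, gives a double pole with coefficient proportional to $\mu^2(\gamma\se,\eta^\vee)$. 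Matching coefficients of $\tfrac{zw}{(z-w)^2}$ then reads off the dimension $\lambda = \mu^2\se - \mu K$.

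The main obstacle, and the reason for the explicit prefactor $z^{\mu^2(\se,\eta^\vee)}$, is the logarithmic and zero-mode content of $\bphi^-$: since $\bphi^-(\eta,z)$ contains $\alsn(\eta)$ and $\alpha_0(\eta)\log z$, the field $V_\mu(z)$ has non-meromorphic $z$-dependence coming from self-contractions (equivalently, from the $\log z$ piece inside the normally-ordered exponential). A careful accounting shows these contributions shift both $\partial_w V_\mu(w)$ and the constant term in the double pole by amounts precisely compensated by the derivative of $z^{\mu^2(\se,\eta^\vee)}$. Once this prefactor is included the field becomes genuinely primary with the advertised dimension. The only real work is bookkeeping of signs, factors of $2$, and the zero-mode shifts between the conventions of \eqref{opeap}, Lemma \ref{l_Wick}, and \eqref{defvTK}; everything else is a standard free-field primary-operator computation.
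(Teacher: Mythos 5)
Your approach is the same as the paper's, which simply asserts this is a standard computation using \eqref{opeap} and Lemma \ref{l_Wick}; your outline is a reasonable filling-in of that assertion. The crucial step---using the eigenvector hypothesis \eqref{etaeig} to collapse $\bal^-(\gamma,z)\,V_\mu(w)$ to a scalar multiple of $V_\mu(w)$---is correctly identified as the key input and correctly carried out.

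Two points of imprecision are worth noting. First, when a factor of $\bal^-(\gamma,z)$ contracts with one slot of $\nord(\bphi^-)^n\nord(\eta^{\Delta n})$, the Frobenius structure of $\bbH$ first reassembles the remaining insertion as $(\gamma\eta)^{\Delta(n-1)}$, and only then does \eqref{etaeig} reduce it to $(\gamma,\eta^\vee)\,\eta^{\Delta(n-1)}$; you compress these into one step. This is fine, but worth separating because the same Frobenius calculus, in the double contraction of Lemma \ref{l_Wick}, is what inserts the handle-gluing element $\se$ and yields the $\mu^2\se$ piece of the dimension. Second, attributing the prefactor $z^{\mu^2(\se,\eta^\vee)}$ to ``self-contractions'' is loose: normal ordering by definition removes self-contractions, and the $\log z$ term of $\bphi^-$ produces the \emph{state-dependent} power $z^{\mu\alpha_0^-}$, not the fixed scalar $\mu^2(\se,\eta^\vee)$. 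That fixed scalar must come out of exactly the bookkeeping you defer (the $\se^{k-1}$ factors from Lemma \ref{l_Wick} interacting with the zero-mode commutators in the paper's $Y(z)=\sum Y_n z^{-n}$ normalization), and your plan signals the right ingredients without quite locating it. Since the paper's own proof elides these details entirely, this is not a flaw in approach, only in the level of detail you have supplied.
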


\noindent 
Here $\se\in \bbH$ is the handle-gluing element. 

\begin{proof}
This is a standard computation that uses \eqref{opeap} 
and Lemma \ref{l_Wick}. 
\end{proof}

\subsection{} 
In particular, primary of dimension $1$ can give 
rise to Virasoro intertwiners. In the case 
\begin{equation}
\bbH = H^\hd_{\bG}(\C^2)
\left[\frac1{\det_\C^2}\right]\,, \quad K=\hbar = 
-t_1-t_2\,, \quad \eta^\vee = \se = 
-t_1 t_2 \,,
\label{exH}
\end{equation}
we have 
$$
\mu^2 \se - \mu K =1 \,\, \Rightarrow 
\,\, \mu = \frac{1}{t_1}, \frac{1}{t_2}\,.
$$
For the integral $\int z^{\mu^2(\se,\eta^\vee)} V_\mu(z)$ 
to be well-defined, the integrand has to have integral 
powers of $z$. The nonintegral powers of $z$ come from the 
$\log z$ term in $\bphi^-$, namely 
$$
e^{\mu \, \log z \, \alpha^-_0} 
 (\eta)  \, \Big|_{\bF(a_1) \otimes \bF(a_2)} = 
z^{-\mu (a_1-a_2,\eta^\vee)} \, \tau(\eta) \,.
$$
For the case \eqref{exH}, this integrality constrain becomes
$$
(\mu^2 \se- \mu(a_1-a_2),\eta^\vee) = - \frac{t_2}{t_1} 
-\frac{a_1-a_2}{t_1} = - n \in \Z\,, \quad \mu = \frac{1}{t_1}\,, 
$$
and similarly for $\mu=1/t_2$. 

\subsection{}

\begin{Theorem}
For every $n\in \Z$ the screening operator 
$$
\int z^{-\frac{t_2}{t_1}} \, \bV_{\frac1{t_1}}(z): 
\bF(a_2+n t_1 - t_2) \otimes \bF(a_2) \to 
\bF(a_2+ n t_1) \otimes \bF(a_2 - t_2) 
$$
is a map of $\bY$-modules. 
\end{Theorem}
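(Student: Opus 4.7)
My plan is to verify that the screening operator intertwines the $\bY(\glh)$-action by checking commutation with a generating set. By Theorem~\ref{t_gen_c1} the core Yangian $\bbY$ is generated by the Baranovsky operators $\beta_{\pm 1}$ together with $\bQh = \ch_1\cVt$; the remaining generators of the full $\bY$ are the central classes $\ch_k \cW$ of Proposition~\ref{p_cW}, which simply follow the shift of evaluation parameters. So there are really two checks to make.

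\emph{Commutation with Baranovsky operators.} By Theorem~\ref{t_Bar}, on the tensor product $\bF(a_1)\otimes\bF(a_2)$ the operator $\beta_k$ acts as $\alpha^{+}_k = \alpha_k\otimes 1 + 1\otimes\alpha_k$, i.e.\ as a Fourier coefficient of the field $\bphi^+$. The screening field $z^{-t_2/t_1}\bV_{1/t_1}(z) = z^{-t_2/t_1}\!\nord\exp(\bphi^-(z)/t_1)\nord(\eta)$ is built entirely from $\bphi^-$, which commutes with $\bphi^+$ by \eqref{commNak2}. Hence the screening operator commutes with every $\beta_k$, and in particular with the central generators $\bc_{-1},\bc_{-2},\ch_k\cW$.

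\emph{Commutation with $\bQh$.} By Proposition~\ref{p_primar} with $\mu = 1/t_1$, the field $z^{-t_2/t_1}\bV_{1/t_1}(z)$ is Virasoro primary of dimension $1$ for the background-charge Virasoro algebra $\vT(\cdot,\kappa)$ with $\kappa = \hbar/\sqrt{2}$ acting on $\bF^-$. Consequently its residue commutes with every Fourier coefficient $\vL_n(\gamma)$ of $\vT$. Now, by the proof of Theorem~\ref{t_Rvir} (specifically \eqref{Qcl_Vir}), $\bQ_{cl}$ is expressible as a combination of operators involving $\bphi^+$ and of the Virasoro generators $\vL_n$. The $\bphi^+$-terms commute with the screening operator by the previous paragraph; the $\vL_n$-terms commute with it by primacy. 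Combining, the screening operator intertwines $\bQh$ (more precisely, the quantum-multiplication generator it is used to extract, together with the central $\ch_k\cW$, which account for the parameter shift). The required bookkeeping of shifts is consistent: the operator moves $\alpha_0^-$ by $2\mu\eta^\vee = -2t_2$, which matches the change of $a_1-a_2$ from $nt_1-t_2$ to $nt_1+t_2$ between source and target, and hence matches the change of the Virasoro lowest weight \eqref{fdb}.

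\emph{Main obstacle.} The delicate step is the second one: since $\bQ_{cl}$ mixes $\bphi^+$ and $\bphi^-$ modes, extracting a clean statement ``commutes with Virasoro generators $\Rightarrow$ commutes with $\bQh$'' requires carefully splitting \eqref{Qcl_Vir} into a part that commutes by Virasoro primacy and a part that commutes by the $\bphi^\pm$-decoupling. A more conceptual route, which I expect to be cleaner in practice, is to identify the screening operator directly with the slice intertwiner of \eqref{slice_inter1}--\eqref{slice_inter2} arising from the torus-fixed path $P$ of Example~\ref{s_slice_ex2} (whose tangent-weight count gives precisely $\bw - \bw' = (nt_1,-t_2)$ and fixes $\mu = 1/t_1$); then Theorem~\ref{t_Stein} applied to the slice produces a $\bbY$-intertwiner between the same Fock tensor products, and uniqueness of such an intertwiner (up to scalar, for generic evaluation parameters, by irreducibility in Lemma~\ref{l_irr}) identifies it with the screening operator on the nose.
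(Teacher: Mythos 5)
Your main argument matches the paper's proof exactly: reduce to the generating set $\{\beta_{\pm 1},\bQh\}$ via Theorem~\ref{t_gen_c1}, observe that the screening operator commutes with the Baranovsky operators because it is built from $\bphi^-$ while they live in the $\bphi^+$-Heisenberg, and then invoke Proposition~\ref{p_primar} to get Virasoro primacy of dimension $1$ so that the residue commutes with the Virasoro modes appearing in the decomposition \eqref{Qcl_Vir} of $\bQ_{cl}$. The ``main obstacle'' you flag is not really there: \eqref{Qcl_Vir} is already the clean split you want (terms built purely from $\bal^+$, which commute with the screening operator outright, plus $\int \bal^+\,\vT_+$, which is handled because the screening operator both commutes with $\bal^+$ and intertwines $\vT_+$), and this is precisely the reading the paper intends. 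Your alternative slice route is the conceptual motivation for the section (its heading and preamble say as much), but it is a separate discussion, not the proof given there; making it rigorous would require an additional uniqueness-of-intertwiner argument, so the direct algebraic check is the more economical route.

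One step of your argument is, however, false as stated: the claim that the screening operator ``commutes with the central generators $\bc_{-1},\bc_{-2},\ch_k\cW$.'' It is true for $\bc_{-2}$ (rank is preserved) and $\bc_{-1}$ (the sum $a_1+a_2 = 2a_2+nt_1-t_2$ is preserved), but already $\ch_2\cW = \tfrac12(a_1^2+a_2^2)$ differs between source and target: $\{a_2+nt_1-t_2,\,a_2\}$ and $\{a_2+nt_1,\,a_2-t_2\}$ have the same sum but different sums of squares (they differ by $-nt_1 t_2$). Since these classes are central, having different scalars on the two modules rules out any nonzero $\bY$-intertwiner. The operator is correctly a $\bbY$-intertwiner --- this is what Theorem~\ref{t_gen_c1}, the slice intertwiners of Section~\ref{s_Slices_Int}, and the actual steps of the proof give you --- and the paper's ``$\bY$'' in the statement should be read as ``$\bbY$.'' You inherited the paper's notation here, but your explicit attempt to extend commutation to $\ch_k\cW$ is the one place where the argument genuinely does not go through.
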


\begin{proof}
The operator clearly commutes with the Baranovsky operators 
and intertwines the Virasoro operators $\vT_+(z)$ by Proposition 
\ref{p_primar}. Formula \eqref{Qcl_Vir} expresses the operator 
of classical multiplication by divisor in terms of the 
Baranovsky operators and $\vT_+(z)$, therefore the 
screening operator intertwines it as well. 
Now Theorem \ref{t_gen_c1} finishes the proof. 
\end{proof}

% in this formula, one should check the comultiplication, that should be 
% \Delta^n(\pt) = (-1)^n*(\pt \otimes pt \otimes pt ..... ) 

% \noindent 
% In concrete terms, this operator equals 
% $$
% \mathfrak{V}_n = 
% \int z^{-n} 
% \exp\left(\frac1{t_1} \alpha^-_{\log}\right)
% \exp\left(\frac1{t_1} \sum_{n>0} \frac{\alpha^{-}_{-n}}{n} \, z^n\right)
% \exp\left(-\frac1{t_1} \sum_{n>0} \frac{\alpha^{-}_{n}}{n} \, z^{-n}\right)
% $$

\subsection{}

Note, in particular, the screening 
operators annihilates the vacuum vector for $n<0$. 
This is reflected in the poles of the $\bR(u)$ at 
$$
u= \hbar,\hbar-t_1, \hbar-2t_1\,, \dots \,. 
$$

\chapter{Yangian and vertex algebras}

\section{The operator $\bQh$}

\subsection{}

Since the operator $\bQh$ plays an important role in 
Theorem \ref{t_gen_c1}, we give a formula for it that 
modifies the formula in  Theorem \ref{t_Qclass}.

More compact formulas are obtained for Chern 
character of $\cVt \otimes \hbar^{1/2}$, where 
$\ch \hbar^{1/2} = e^{\hbar/2}$. This is the familiar
twist by the square root of the canonical bundle 
(of $\C^2$, in this case). However, only the 
overall shape of the formula will be used below, not 
the details.

We define 
$$
\bOmh = \tfrac12 \int \nord \bbeta \, |\partial| \, \bbeta \nord (1) \,,
$$
as in  Section \ref{s_|part|} and denote by 
$$
\fC_{>} =  \{a_1 > \dots > a_r\}
$$
the standard chamber for $\bA$. 
The analog of Theorem \ref{t_Qclass} is the following

 \begin{Proposition}
Under the identification 
$$
 \bF(a_1) \otimes \dots \otimes \bF(a_r) 
 \xrightarrow{\,\,\Stab_{\fC_>}\,\,}
 H^\hd_\bG(\cM(r)) \otimes \bK \,, 
 $$
as in \eqref{zeroFr}, we have 
 \begin{multline}
 \ch_1\left(\cVt\otimes \hbar^{1/2}\right) = - \sum_{i} \tfrac16 \int 
\nord\left(\bal^{(i)}\right)^3\nord(1) +
\sum_{i} \tfrac1{24} \int 
\nord \bal^{(i)} \nord(\hbar^2+2\se)\\
+ \tfrac12 \hbar   
\sum_{i<j} \int \bal^{(i)} \, \partial \, \bal^{(j)} (1) + \tfrac12 \hbar \, \bOmh \,. 
 \label{ch_1Tauth}     
 \end{multline}
 \end{Proposition}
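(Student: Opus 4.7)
The plan is to derive \eqref{ch_1Tauth} from Theorem \ref{t_Qclass}, which computes the related divisor operator $c_1(\cV) = c_1(\cO(1))$ in the same stable basis $\bF(a_1)\otimes\cdots\otimes\bF(a_r)$. The two basic identities to exploit are
\[
\cVt = \cV - \hbar^{-1} \otimes \bC^{-1} \cW, \qquad \ch_1(L \otimes \hbar^{1/2}) = \ch_1(L) + \tfrac{\hbar}{2}\rk L,
\]
which together give
\[
\ch_1(\cVt\otimes \hbar^{1/2}) = c_1(\cV) + \tfrac{\hbar}{2}\rk\cV - \ch_1\bigl((\hbar^{-1}\bC^{-1}\cW)\otimes \hbar^{1/2}\bigr).
\]

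First, I would check that the off-diagonal and cubic parts of \eqref{ch_1Tauth} already match the corresponding parts of Theorem \ref{t_Qclass} verbatim: applying \eqref{bOmpart} with $\fC=\fC_{>}$ rewrites $\hbar\sum_{i,j}\varrho_\fC(i,j)\bOm^{(j,i)}$ as $\tfrac{\hbar}{2}\bOmh + \tfrac{\hbar}{2}\sum_{i<j}\int \bal^{(i)}\partial\bal^{(j)}(1)$, and $-\bPhi_3^{(i)} = -\tfrac{1}{6}\int \nord (\bal^{(i)})^3 \nord (1)$ by definition. Hence the content of the proposition reduces to verifying that, in the stable basis,
\[
\tfrac{\hbar}{2}\rk\cV - \ch_1\bigl((\hbar^{-1}\bC^{-1}\cW)\otimes \hbar^{1/2}\bigr) = \tfrac{1}{24}\sum_i\int \nord \bal^{(i)} \nord (\hbar^2+2\se) - \sum_i (a_i - \tfrac{\hbar}{2})\bPhi_2^{(i)}.
\]

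Second, I would compute the two terms on the left independently. The instanton charge $\rk\cV$ corresponds, under the stable envelope and Theorem \ref{t_Bar}, to the sum of the $L_0$-type grading operators on the $r$ Fock factors (Lemma \ref{l_L0}); taking into account the zero modes of \eqref{zeroFr}, where $\alpha_0^{(i)}$ acts on $\bF(a_i)$ by $-a_i$, this reproduces precisely the linear $a_i$-dependent piece $\sum_i a_i \bPhi_2^{(i)}$ together with an $a_i$-independent tail. The scalar $\ch_1((\hbar^{-1}\bC^{-1}\cW)\otimes \hbar^{1/2})$ is extracted from the $K$-theoretic identification $\hbar^{-1}\bC^{-1}\cW = H^0(\C^2,\cO^{\oplus r})$ of \eqref{WhMrn} and the Stirling-type asymptotic expansion \eqref{lnGamma_Stir} of the double Barnes $\Gamma$-function; the universal coefficient $\tfrac{1}{24}$ appearing there, together with the elementary identity $\hbar^2 + 2\se = t_1^2 + t_2^2$, produces the insertion $\tfrac{1}{24}(\hbar^2+2\se)$ in \eqref{ch_1Tauth}.

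The main obstacle will be the zero-mode bookkeeping under \eqref{zeroFr}. The fields $\bal^{(i)}$ in \eqref{ch_1Tauth} carry zero modes (which absorb the $a_i$-dependence that Theorem \ref{t_Qclass} records explicitly as coefficients), whereas the Lehn-type derivation underlying Theorem \ref{t_Qclass} uses $\bal^{-}$ without zero modes. Reconciling the two conventions by normally reordering monomials via Lemma \ref{l_Wick} produces scalar corrections that must combine cleanly with the Barnes expansion above. To keep the computation manageable I would first establish the statement for $r=1$, where it reduces to Lehn's formula \eqref{multc1} supplemented by a direct Barnes computation, and then verify that both sides satisfy the same additivity in $r$ coming from iterating the tensor-product decomposition of $\cM(r)$ together with the coproduct structure of the Yangian action.
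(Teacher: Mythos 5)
Your outline follows the paper's strategy, but the first step of the reduction is wrong in a way that undermines the rest. You claim that ``the off-diagonal and cubic parts of \eqref{ch_1Tauth} already match the corresponding parts of Theorem \ref{t_Qclass} verbatim,'' citing the tautology $-\bPhi_3^{(i)}=-\tfrac16\int\nord(\bal^{(i)})^3\nord(1)$. But the two cubic terms are \emph{not} the same operator: immediately before the Proposition the paper declares that from that point on all fields $\bal^{(i)}$ include zero modes, whereas Theorem \ref{t_Qclass} is stated in the ``zero modes set to zero'' convention fixed at the start of the chapter. Since $\alpha_0^{(i)}$ acts on $\bF(a_i)$ by a scalar proportional to $a_i$, expanding $\nord(\bal^{(i)})^3\nord$ in powers of $\alpha_0^{(i)}$ produces, besides the zero-mode-free cubic, a term proportional to $a_i\,\bPhi_2^{(i)}$ (without zero modes), a term proportional to $\bPhi_1^{(i)}$, and a constant. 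Exactly the same phenomenon occurs for the linear $\int\nord\bal^{(i)}\nord$ term. It is these zero-mode contributions, combined with the $\tfrac{\hbar}{2}\rk\cV$ coming from the $\hbar^{1/2}$-twist, that absorb the explicit $(a_i-\tfrac12\hbar)\bPhi_2^{(i)}$ of Theorem \ref{t_Qclass}; this is the content of the paper's one-sentence proof (``the inclusion of zero modes and the $\otimes\hbar^{1/2}$ twist removes the $\bPhi_2$-term'').

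Because of this, the ``remaining identity'' you write down is not the right residual and is in fact false on its face: the left side contains $\tfrac{\hbar}{2}\rk\cV$, which under \eqref{zeroFr} and Lemma \ref{l_L0} is $\tfrac{\hbar}{2}\sum_i\bPhi_2^{(i)}$, while the right side carries the coefficient $-(a_i-\tfrac12\hbar)$ on $\bPhi_2^{(i)}$; these do not agree for generic $a_i$. Your last paragraph does flag ``zero-mode bookkeeping'' as the main obstacle and correctly guesses that it ``absorbs the $a_i$-dependence'' — but you characterize the corrections as scalar, when they are operator-valued ($\bPhi_2$- and $\bPhi_1$-type), and it is precisely those operator corrections that must be tracked to effect the cancellation. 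Once the zero-mode expansion is done properly, what is left is a genuine scalar, and only then does the vacuum evaluation — as the paper does, using the explicit Barnes-type computation $-\ch_1\tfrac{e^{a-\hbar/2}}{(1-e^{-t_1})(1-e^{-t_2})}=\tfrac16\tau(a^3)-\tfrac1{24}\tau((\hbar^2+2\se)a)$ — pin down the $\tfrac1{24}(\hbar^2+2\se)$ insertion. Your proposed $r=1$-then-additivity detour is consistent but unnecessary: once the zero-mode absorption is carried out, the identity holds termwise in $i$.
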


\noindent
For other chambers, one rearranges the $\sum_{i<j}$ term accordingly. 

 \begin{proof}
The inclusion of zero modes and the $\otimes \hbar^{1/2}$ 
twist 
removes the $\bPhi_2$-term from formula \eqref{Qclass}. 
 Therefore, the two sides of \eqref{ch_1Tauth} differ by a 
scalar operator that we can determine by evaluating on the vacuum 
vector. This is straightforward, using 
$$
- \ch_1 \frac{e^{a-\hbar/2}}{(1-e^{-t_1})(1-e^{-t_2})}
= \tfrac16 \tau(a^3) - \tfrac1{24} \tau((\hbar^2+2\se)a)\,,
$$ 
where $\hbar = - t_1 - t_2$, $\se = - t_1 t_2$. 
 \end{proof}

\subsection{}\label{s_vHeis} 

Let 
$$
\vHeisr \subset \End \bF(a_1) \otimes \dots \otimes \bF(a_r) 
$$
denote the algebra generated by all Fourier coefficients of 
vertex operators, that is, 
$$
\int z^n \, \nord P(\bal^{(i)}, \partial \bal^{(i)}, 
\partial^2 \bal^{(i)},\dots ) \nord \quad \in \vHeisr
$$
for any $n\in \Z$ and any normally ordered polynomial $P$ in 
the fields $\bal^{(i)}$, $i=1,\dots,r$, and their derivatives. 

\begin{Proposition}
The action of $\bbY$ on $\bF(a_1) \otimes \dots \otimes \bF(a_r)$ 
factors through a map 
\begin{equation}
\bbY \to 
\vHeisr\big[\bOmh\big] \,.
\label{bbYtovHeis}
\end{equation}
The map \eqref{bbYtovHeis} is equivariant with respect to 
the translation automorphism. 
\end{Proposition}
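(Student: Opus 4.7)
The plan is to reduce the statement to checking a finite list of generators, using Theorem \ref{t_gen_c1} as the main input. By that theorem, $\bbY$ is generated as an algebra by the Baranovsky operators $\beta_{\pm 1}$ and the cup-product operator $\bQh = \ch_1 \cVt$. Since $\vHeisr[\bOmh]$ is by construction a subalgebra of $\End\big(\bF(a_1) \otimes \cdots \otimes \bF(a_r)\big)$, it suffices to verify that each of these three generators lies in the image. The factorization of the action through $\vHeisr[\bOmh]$ will then follow automatically.

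First I would treat the Baranovsky generators. By Theorem \ref{t_Bar}, the pullback $\beta_k(\gamma)_\bA$ under the stable envelope map equals $\sum_i \alpha_k(\gamma)^{(i)}$, and each such Nakajima operator is by definition a Fourier coefficient of the generating field $\bal^{(i)}(\gamma, z)$. Hence $\beta_{\pm 1} \in \vHeisr$. Next, I would treat $\bQh$ using the explicit formula \eqref{ch_1Tauth}. The first three families of terms appearing there, namely $\int \nord (\bal^{(i)})^3 \nord(1)$, $\int \nord \bal^{(i)} \nord(\hbar^2 + 2\se)$, and $\hbar \sum_{i<j} \int \bal^{(i)}\,\partial\bal^{(j)}(1)$, are visibly Fourier coefficients of normally-ordered polynomials in the fields $\bal^{(i)}$ and their derivatives, and hence lie in $\vHeisr$; the remaining term $\tfrac12 \hbar\, \bOmh$ lies in $\vHeisr[\bOmh]$ by the very definition of the latter. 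Therefore $\bQh \in \vHeisr[\bOmh]$, and since $\vHeisr[\bOmh]$ is closed under composition, the image of $\bbY$ lies inside it.

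For the translation equivariance, I would argue as follows. The translation automorphism $\ttau_c$ on $\bbY$ is characterized by the fact that, on any module built from evaluation representations, it acts by the uniform shift $a_i \mapsto a_i - c$ of the evaluation parameters. Under the identification \eqref{zeroFr}, this shift of $a_i$'s is implemented on $\bF(a_1) \otimes \cdots \otimes \bF(a_r)$ by the diagonal translation automorphism $\prod_i \ttau_{-c}^{(i)}$ from Section \ref{s_shift_aut}, which extends to an automorphism of $\vHeisr[\bOmh]$ since both $\vHeisr$ and $\bOmh$ are built out of fields $\bal^{(i)}$ in a uniform way and the Lie algebra $\fHeis^\sim$ includes the requisite shift operators. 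Equivariance then reduces to checking on the three generators: the operators $\beta_{\pm 1}$ are manifestly independent of the $a_i$'s, and the expression \eqref{ch_1Tauth} for $\bQh$ is written uniformly in the fields $\bal^{(i)}$, with the $a_i$-dependence entering only through the zero modes that are themselves shifted in the expected way.

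The bulk of the work has already been carried out in the preceding sections; the one place where I expect some care is needed is the precise identification of the translation automorphism of $\bbY$ (defined via reexpansion of $R(u-c)$) with the diagonal shift on the Fock side, which requires unwinding the dictionary between evaluation parameters on tensor products of $\bbY$-modules and the background charges $a_i$ for the free bosons. Once this bookkeeping is done, equivariance is automatic on generators and hence on all of $\bbY$.
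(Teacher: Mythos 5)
Your proof is correct and takes essentially the same route as the paper, whose entire proof reads ``This follows at once from formula \eqref{ch_1Tauth} and Theorem \ref{t_gen_c1}''; you have simply unwound the details of the reduction to generators, the verification that $\beta_{\pm 1}$ and $\bQh$ land in $\vHeisr[\bOmh]$, and the translation-equivariance bookkeeping that the paper leaves implicit. The one small point you flag as needing care — matching the Yangian's translation automorphism $\ttau_c$ (shift of evaluation parameters) with the diagonal shift automorphism of the Heisenberg vertex algebra — is indeed where the substance of the equivariance claim lives, and it works exactly as you describe: shifting the evaluation parameter $a_i$ of $\bF(a_i)$ amounts to translating the zero mode $\alpha_0^{(i)}$, which is precisely the action of $\ttau_{-c}^{(i)}$ from Section \ref{s_shift_aut}, and the formula \eqref{ch_1Tauth} is manifestly covariant under this.
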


\noindent
Note the translation automorphism
$$
\ttau_c\left(\bal^{(i)}\right)  = \bal^{(i)} - \tau(c) 
$$
of the Heisenberg 
vertex algebra has a natural extension to $\vHeisr\big[\bOmh\big]$. 
This extension leaves $\bOmh$ invariant. 

\begin{proof}
This follows at once from formula \eqref{ch_1Tauth}  and 
Theorem \ref{t_gen_c1}. 
\end{proof}

\subsection{}

It may be curious to notice that the $\bOmh$ term disappears 
from the corresponding \emph{quantum} operator $\widehat{\bQ}$ 
upon averaging 
over all $|q|=1$ in the principal value sense.

\section{Yangian and $\cW$-algebras}\label{s_W} 

\subsection{}

Our goal in this section is to describe the image 
of the map $\eqref{bbYtovHeis}$ in terms of the so-called
$\cW$ vertex operator algebras. This provides a link 
to the ideas of Alday, Gaiotto, and Tachikawa 
\cite{AGT}, the existence of which was suggested to us 
by Nakajima and Tachikawa. 

The $\cW$-algebras first appeared
in mathematical physics as extended symmetry algebras of conformal field
theories, see for example \cite{Wsymm} for a survey. Following Feigin and 
Frenkel \cite{FeiFr1,FeiFr}, they may be described as explicit subalgebras of the 
Heisenberg vertex algebra. This is the description that we use here. 

\subsection{}

Let $\bK$ be a commutative ring and let 
$$
\bbH \cong \bK^r  
$$
be a free $\bK$-module of rank $r$ with a nondegenerate quadratic form. 
The setup is like in Chapter \ref{s_free_bos}, except neither 
product nor coproduct on $\bbH$ is required. 
One should view  $\bbH$ as a Cartan 
subalgebra of a reductive Lie algebra, with the restriction of a 
an invariant bilinear form. Here we need the form 
\begin{equation}
(x,x) =  \sum_1^r x_i^2 \,,
\label{Eucl}
\end{equation}
that corresponds to the Lie algebra $\gl(r)$. 

One defines the Heisenberg algebra $\fHeis(\bbH)$ as
in Chapter \ref{s_free_bos} and the algebra of Fourier coefficients 
of vertex operators
$$
\vHeis(\bbH)  \supset \fHeis(\bbH) 
$$
as in Section \ref{s_vHeis}. For any orthogonal decomposition 
$\bbH = \bbH_1 \oplus \bbH_2$, we have 
$$
\vHeis(\bbH) = \vHeis(\bbH_1) \, \widehat{\otimes} \, \vHeis(\bbH_2) \,,
$$
where the completion is the usual completion required to collect terms 
in a product of two series.

\subsection{}

Let 
$$
\eta=(0,\dots,1,-1,\dots,0)\,, 
$$
range over the simple positive roots of $\gl(r)$. For each 
$\eta$, consider the corresponding Heisenberg field
$$
\alpha_\eta(x) = 
\sum_{n} \left(\alpha^{(i)}_n - \alpha^{(i+1)}_n\right) \, x^{-n-1} \,.
$$ 
Here we denote the argument by $x$ to emphasize a small 
discrepancy between the conventions of Chapter  \ref{s_free_bos}
and standard CFT conventions. In Chapter
 \ref{s_free_bos}, 
the arguments of the fields were coordinates on 
$\C^\times$. Here $x$ is a 
coordinate on $\C$ and the exponents of $x$ are shifted by $1$, 
that is, by the conformal 
dimension of the field. 

Since $(\eta,\eta) = 2$, we have
$$
\alpha_\eta(x) \, \alpha_\eta(y) \sim \frac{2}{(x-y)^2}  \,, 
$$
and the field 
% $$
% T_\eta = \frac{1}{2} \, \nord \alpha_\eta^2 \nord
% + \left(\frac{1}{2} - \frac{1}{\bb^2}\right) \, \frac{\partial}{\partial x} 
% \, \alpha_\eta
% $$
$$
T_\eta = \frac{1}{4} \, \nord \alpha_\eta^2 \nord
\,+ \,\frac{\kappa}{2} \,  \frac{\partial \alpha_\eta}{\partial x} 
$$
generates a Virasoro vertex algebra which we denote by  
$$
\fVir_\eta \subset \vHeis(\bK \eta) \,. 
$$
Here $\kappa$ is a parameter that enters the definition of the 
$\cW$-algebra. To match it to conventions in 
the literature, we note that 
$$
\kappa = \frac{\beta}{\sqrt{2}} - \frac{\sqrt{2}}{\beta}
$$
in the book \cite{BF} and that the central charge of $\fVir_\eta$
equals $1-6\kappa^2$.

\subsection{}

By definition, see for example \cite{BF}, a vertex operator algebra is 
a collection of operator-valued distributions, called \emph{vertex
operators}, satisfying certain axioms. In CFT, these correspond to 
local chiral operators and, as in any mathematical formulation of QFT, 
the locality of these operators is really the key axiom. A specific 
feature of 2-dimensional conformal field theories is the presence of 
the Virasoro algebra among its chiral operators. 

While the language of vertex operators is very rich and concise, 
for our current purposes it will be sufficient to work with 
the following classical algebraic structures associated to a vertex
algebra: 
\begin{itemize}
\item[---] the \emph{associative} algebra generated by the Fourier 
coefficients of vertex operators, such as $\vHeis(\bbH)$, 
$\cW(\gl(r))$, or $\fVir_\eta$, 
\item[---] the \emph{Lie algebra} generated by the Fourier 
coefficients of vertex operators with respect to the commutator, 
which will be indicated by a subscript like $\vHeis_\tLie(\bbH)$, 
$\cW_\tLie(\gl(r))$, or $\fVir_{\eta,\tLie}$
\end{itemize}
see Chapter 4 in \cite{BF}. Clearly, the latter generates the former. 

To describe the $\cW(\gl(r))$ as a subalgebra of $\vHeis(\bbH)$, 
we will use the following characterization due to Feigin and Frenkel. 
Recall that for each $\eta$ we have 
$$
\vHeis(\bbH) = \vHeis(\bK \eta)
\, \widehat{\otimes} \, \vHeis(\eta^\perp)\,. 
$$

\begin{Theorem}[\cite{FeiFr1,FeiFr}] The algebra $\cW_\tLie(\gl(r))$ is 
the intersection 
\begin{equation}
\cW_\tLie(\gl(r)) = \bigcap_\eta \, \fVir_{\eta,\tLie} 
\, \widehat{\otimes} \, \vHeis_\tLie(\eta^\perp)\,, 
\label{cWr}
\end{equation}
where $\eta$ ranges over the simple positive roots of $\gl(r)$.
\end{Theorem}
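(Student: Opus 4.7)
The plan is to identify both sides of \eqref{cWr} with the intersection of kernels of screening operators, following the original approach of Feigin and Frenkel. For each simple root $\eta$, define a screening operator
$$
Q_\eta = \int V_\eta(x) \, dx \, : \, \vHeis(\bbH) \to \vHeis(\bbH)
$$
where $V_\eta(x) = \, \nord \exp(\beta_\kappa \, \phi_\eta(x)) \nord$ is the screening vertex operator, with $\phi_\eta$ the antiderivative of $\alpha_\eta$ and $\beta_\kappa$ chosen so that $V_\eta$ has conformal dimension $1$ with respect to the stress tensor $T_\eta$ (this fixes $\beta_\kappa$ up to the well-known $\kappa \leftrightarrow -\kappa^{-1}$ duality; either choice works for this theorem). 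By the computation of Section \ref{s_Slice_screen}, with $\bbH$ of rank one and $\se,K$ matched to $\eta$, this is an instance of Proposition \ref{p_primar}.

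The first step is to recall (or take as definition, depending on one's preferred exposition) that $\cW(\gl(r))$ equals $\bigcap_\eta \Ker Q_\eta$ inside $\vHeis(\bbH)$; this is the Feigin--Frenkel free-field realization. The second step is the key rank-one statement: for the rank-one Heisenberg algebra $\vHeis(\bK \eta)$, one has
$$
\Ker \left( Q_\eta \big|_{\vHeis(\bK\eta)} \right) = \fVir_\eta.
$$
The inclusion $\fVir_\eta \subset \Ker Q_\eta$ follows because $T_\eta$ commutes with $Q_\eta$ on the nose (as $V_\eta$ has dimension $1$, so $[T_\eta(x), V_\eta(y)]$ is a total derivative in $y$), and hence so does every Fourier coefficient of every normally ordered polynomial in $T_\eta$. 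The reverse inclusion is the nontrivial part: it is established by comparing graded characters of $\fVir_\eta$ (a Verma module for the Virasoro algebra at the given central charge) and $\Ker Q_\eta$ (computed from the BGG-type resolution of the irreducible Virasoro module by Fock modules).

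The third step is a decoupling observation: since $V_\eta(x)$ is built purely from $\alpha_\eta$ and $\phi_\eta$, it commutes with every generator of $\vHeis(\eta^\perp)$ (using $(\eta, \eta') = 0$ for $\eta'\in \eta^\perp$ in the OPE computation). Therefore $Q_\eta$ acts as $(Q_\eta|_{\vHeis(\bK\eta)}) \, \widehat\otimes \, \mathrm{id}$ on the tensor decomposition $\vHeis(\bbH) = \vHeis(\bK\eta) \, \widehat\otimes \, \vHeis(\eta^\perp)$, and by flatness of the completed tensor product along a free module,
$$
\Ker Q_\eta = \fVir_\eta \, \widehat\otimes \, \vHeis(\eta^\perp).
$$
Intersecting over all simple roots $\eta$ yields \eqref{cWr}.

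The main obstacle is the rank-one identification $\Ker Q_\eta = \fVir_\eta$ in step two. All the other steps are essentially formal manipulations with OPEs and tensor decompositions, but the equality of the kernel with precisely the Virasoro vertex subalgebra --- rather than something strictly larger --- requires genuine input about the structure of Fock modules over the Virasoro algebra at central charge $1 - 6\kappa^2$ (generic versus degenerate $\kappa$ must be handled, with the generic case giving the result by a character/BGG argument and the degenerate case following by a flatness/deformation argument in $\kappa$). This is precisely the content of the original Feigin--Frenkel theorem and is where the real work lies.
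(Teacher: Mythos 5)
Your proposal follows essentially the same route as the paper's: both arguments pass through (i) the identification of $\cW(\gl(r))$ with the joint kernel of the screening operators, (ii) the rank-one statement that in $\vHeis(\bK\eta)$ the kernel of the single screening $Q_\eta$ is $\fVir_\eta$, (iii) the tensor-decoupling step giving $\Ker Q_\eta = \fVir_\eta \,\widehat\otimes\, \vHeis(\eta^\perp)$, followed by intersecting over simple roots, and both defer the hard analytic content to Feigin--Frenkel. The only cosmetic difference is that you treat step (i) as a possible definition of $\cW(\gl(r))$, whereas the paper flags it as the most non-trivial input (since $\cW$ has prior independent definitions and matching them to the screening kernel is where the real work is), and the paper explicitly records the passage from vertex algebras to their algebras of Fourier coefficients as a separate step, which you fold in implicitly by working in $\vHeis$ throughout.
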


\noindent
The following outline of the argument was kindly provided by 
E.~Frenkel. 

\begin{proof}
The proof proceeds in 4 steps. 

First, for generic values of the parameter, the vertex 
$\cW$-algebra is equal to the intersection of the kernels of
the screening operators.
This is the most non-trivial step, proved in two ways: 
first, in Proposition 3 of \cite{FeiFr1} (this proof is reproduced in Theorem 15.4.12 of \cite{BF}) and second, in Theorem 4.6.9 of \cite{FeiFr}).

Second, in the case of $\mathfrak{sl}(2)$, 
the kernel of the screening operator is the Virasoro vertex algebra for generic
values of the parameter. This is proved in Proposition 4 of \cite{FeiFr1} (this proof is reproduced in 15.4.14 of \cite{BF}) and in Proposition
4.4.4 of \cite{FeiFr}.

Next, the kernel of the $i$-th screening operator is equal to the the tensor product of the
Virasoro vertex algebra along the $i$-th simple root and the Heisenberg vertex algebra orthogonal to the $i$-th root. The proof is given in the proof of Proposition 5 in \cite{FeiFr1} and in 15.4.15 of \cite{BF}.

Finally, the same results hold for the algebras of Fourier coefficients of vertex operators. This is proved in Proposition 2 of \cite{FeiFr1} and Theorem 4.6.11 of \cite{FeiFr}.
\end{proof}

We define 
$$
\cW(\fsl(r)) = \cW(\gl(r)) \cap \, \vHeis(\ffZ^\perp)
$$
where $\ffZ\subset \bbH \subset \gl(r)$ is the center. This implies 
$$
\cW(\gl(r)) = \vHeis(\ffZ) \, \widehat{\otimes} \, \cW(\fsl(r))\,. 
$$

\subsection{}

To compare this with our formulas, we take 
$$
\gamma = 1 \in H^\hd_\bG(\C^2)
$$
in the formula \eqref{defvT_}. Since $(1,1)_{H^\hd_\bG(\C^2)} = \tau(1)$, 
we have 
$$
\textup{our $\alpha_n(1)$}  = \sqrt{\tau(1)} \,\, \textup{standard $\alpha_n$} 
$$
where Heisenberg operators associated to the 
quadratic form \eqref{Eucl} are considered standard. Further, since 
$$
 \Delta 1 = \frac{1 \otimes 1}{\tau(1)}  
$$
we have in \eqref{defvT_}
\begin{equation}
\left[z^{-n}\right]\,  
\vT_+(1)  = \left[x^{-n-2}\right] 
T_\eta \, \Bigg|_{\textstyle \alpha_0 \mapsto \alpha_0 + 
\frac12 \kappa \eta} 
\label{vTTeta}
\end{equation}
where $\eta=(1,-1)$ is the root of $\gl(2)$ and 
\begin{equation}
\kappa  = \hbar \sqrt{\tau(1)}  = - \frac{t_1 + t_2}{\sqrt{-t_1 t_2}} \,.
\label{valkapp}
\end{equation}
The shift of zero modes 
\begin{equation}
\left(\alpha_0^{(1)},\alpha_0^{(2)}\right)  \mapsto 
\left(\alpha_0^{(1)}+\tfrac12 \, \hbar,\alpha_0^{(2)}- 
\tfrac12\, \hbar\right) 
\label{zms2}
\end{equation}
compensates for the difference between $\partial = z 
\frac{\partial}{\partial z}$ in $\vT_+$ and 
$\frac{\partial}{\partial x}$ in $T_\eta$. 

\subsection{}

Generalizing \eqref{zms2}, we incorporate the shift 
of the zero modes by 
$\kappa \rho$, where $\rho$ is the half-sum of positive
roots, in the definition of $\cW(\gl(r))$. This is an 
automorphism of the ambient Heisenberg vertex algebra.

\subsection{}

Nakajima varieties produce lowest weight Yangian modules. 
Any action of a graded algebra $A=\bigoplus A_n$ 
on a lowest weight module 
canonically extends to a certain completion $\overline{A}\supset A$. 
Neighborhoods of zero in this completions are left ideals 
generated by $\bigoplus_{n<-N}  A_n$. 

\begin{Proposition}
The action of $\bbY$ on $\bF(a_1) \otimes \dots \otimes \bF(a_r)$ 
factors through a map 
\begin{equation}
\bbY \to \overline{\vHeis(\ffZ)} \, \widehat{\otimes} \, \cW(\fsl(r))\,,
\label{YtoW}
\end{equation}
where $\overline{\vHeis(\ffZ)} \supset \vHeis(\ffZ)$ is a completion 
as above. 
\end{Proposition}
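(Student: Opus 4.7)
The plan is to combine the previous proposition, which factors the action through $\bbY \to \vHeis_r[\bOmh]$, with the Feigin--Frenkel characterization \eqref{cWr} of $\cW(\gl(r))$ as the joint kernel of the screening operators. The first step is to produce, for each simple positive root $\eta_i = (0,\dots,1,-1,\dots,0)$ of $\gl(r)$, a screening operator
$$
S_{\eta_i} \in \End\!\left(\bigotimes_j \bF(a_j)\right) \otimes \bK_{\mathrm{frac}}
$$
acting between adjacent Fock factors by the same recipe as in Section~\ref{s_Slice_screen}, i.e.\ as the integrated vertex operator $\int z^{-t_2/t_1}\,\bV_{1/t_1}(z)$ built from the field $\bal^{(i)}-\bal^{(i+1)}$. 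Exactly the argument of Section~\ref{s_Slice_screen}---primary field of dimension~$1$ for $\vT_+(1)$, commutation with Baranovsky operators, then invocation of Theorem~\ref{t_gen_c1}---shows that each $S_{\eta_i}$ commutes with the image of $\bbY$ in $\vHeis_r[\bOmh]$ (acting between suitably shifted Fock spaces). Alternatively, one obtains the $S_{\eta_i}$ geometrically from the slices considered in Example~\ref{s_slice_ex2} applied to the pair of framing vertices labelled $i,i+1$.

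Next I would forget the Yangian and analyze which elements of $\vHeis_r[\bOmh]$ can commute with all the $S_{\eta_i}$. Observe that $\bOmh = \tfrac12\int\nord\bbeta\,|\partial|\,\bbeta\nord(1)$ is built entirely from the ``trace'' combination $\bbeta=\sum_i\bal^{(i)}$, which takes values in the center $\ffZ\subset\bbH=\gl(r)$; since $|\partial|$ is not a local operator but does act on every lowest weight Fock module through a locally finite infinite sum, $\bOmh$ lies in the completion $\overline{\vHeis(\ffZ)}$ described just before the statement. Under the orthogonal decomposition $\bbH = \ffZ \oplus \ffZ^\perp$, all screening operators $S_{\eta_i}$ are built from $\ffZ^\perp$-valued fields and therefore commute with $\overline{\vHeis(\ffZ)}$ automatically. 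Consequently, any element of $\vHeis_r[\bOmh]$ commuting with every $S_{\eta_i}$ lies in
$$
\overline{\vHeis(\ffZ)} \;\widehat{\otimes}\; \bigcap_{i=1}^{r-1}\bigl(\fVir_{\eta_i}\,\widehat{\otimes}\,\vHeis(\eta_i^\perp\cap\ffZ^\perp)\bigr)
\;=\; \overline{\vHeis(\ffZ)} \;\widehat{\otimes}\; \cW(\fsl(r)),
$$
where the equality is Feigin--Frenkel applied to the $\fsl(r)$ Cartan $\ffZ^\perp$, using the values \eqref{valkapp} of $\kappa$ and the zero-mode shift \eqref{zms2} (generalized to the $\kappa\rho$ shift on $\fsl(r)$) to match the Virasoro generator \eqref{vTTeta} with the standard $T_{\eta_i}$.

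Combining the two steps yields the desired factorization of \eqref{YtoW}. The completion on the $\vHeis(\ffZ)$ factor is necessary precisely because $\bOmh$---and, by iteration, all higher commutators that produce divisor operators---involves the nonlocal operator $|\partial|$; it is well-defined on any lowest weight module, so it makes sense in the completion $\overline{\vHeis(\ffZ)}$ in the sense explained before the statement. The main obstacle is the first step: verifying carefully that the screening operator constructed for two vertices in Section~\ref{s_Slice_screen} remains a Yangian intertwiner when acting between the $i$th and $(i{+}1)$st factors of an $r$-fold tensor product (the other factors should ``pass through'' as spectators). The cleanest way is presumably to realize the $S_{\eta_i}$ geometrically, as slice maps between tensor products of Nakajima varieties differing by $\bv\mapsto\bv+\delta_1$ in two adjacent factors and shifted framing weights, and to invoke Proposition~\ref{p_res_stab} together with the slice-tensor compatibility diagram \eqref{slice_tensor} to promote the two-factor intertwining identity to the $r$-factor statement.
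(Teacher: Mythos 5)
Your approach is correct but takes a genuinely different route from the paper. The paper's proof is direct and computational: it takes the explicit expression \eqref{ch_1Tauth} for $\bQh$, extracts its $\eta$-component for each simple root $\eta$ by the decomposition carried out in the proof of Theorem \ref{t_Rvir}, identifies the resulting field with $T_\eta$ via \eqref{vTTeta}, and concludes $\bQh \in \fVir_\eta \widehat\otimes \overline{\vHeis(\eta^\perp)}$ for every $\eta$ --- whence the Feigin--Frenkel intersection formula \eqref{cWr} places $\bQh$ in the target, and Theorem \ref{t_gen_c1} (plus the observation that $\beta_{\pm 1}$ live in $\vHeis(\ffZ)$) finishes it. You instead argue via commutation with the screening operators, which uses the screening-kernel characterization of $\cW$-algebras (step 1 of the Feigin--Frenkel proof outline) rather than the derived intersection formula \eqref{cWr}. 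Your route has the virtue of not requiring a term-by-term decomposition of \eqref{ch_1Tauth}, provided the intertwining of the screening operators can be extended from two to $r$ tensor factors --- a gap you correctly flag. The slice--tensor compatibility fix you propose should work, but a cleaner algebraic argument is available: since $\Delta$ is coassociative and $S_{\eta_i}$ intertwines $\Delta(y)$ on the two adjacent factors it touches, the operator $1^{\otimes(i-1)}\otimes S_{\eta_i}\otimes 1^{\otimes(r-i-1)}$ automatically intertwines $\Delta^{(r-1)}(y)$; the remaining tensor factors pass through as spectators by coassociativity. One other small precision point: you should invoke the Fourier-coefficient version of the screening characterization (step 4 of the Feigin--Frenkel proof outline, i.e.\ Proposition~2 of \cite{FeiFr1}), since the objects you work with belong to $\vHeisr$ rather than to the Heisenberg vertex algebra itself; after splitting off the $\overline{\vHeis(\ffZ)}$ factor, which commutes with all the $S_{\eta_i}$ because $\ffZ\perp\ffZ^\perp$, this gives exactly the target $\overline{\vHeis(\ffZ)}\,\widehat\otimes\,\cW(\fsl(r))$.
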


\begin{proof}
Extract the $\eta$-component from the 
operator \eqref{ch_1Tauth} as in Section \ref{s_zero_modes}. 
Using \eqref{ch_1Tauth}  and \eqref{vTTeta}, we conclude  
$$
\bQh \in \cW_\tLie(\gl(r)) + \bK \, \bOmh \otimes 1 \,, 
$$
where the second term is written with respect to the decomposition 
$\bbH = \ffZ \oplus \ffZ^\perp$. Therefore
$$
\bQh,\beta_{\pm1} 
\in \overline{\vHeis(\ffZ)} \, \widehat{\otimes} \, \cW(\fsl(r))
$$
and Theorem \ref{t_gen_c1} completes the proof. 
\end{proof}

\subsection{}
The proof of \eqref{cWr} by Feigin and Frenkel uses a screening 
operators characterization of $\fVir_\eta$. Those can be matched
to the screening operators of Section \ref{s_Slice_screen}. 

\subsection{}

\begin{Proposition}
The map 
\begin{equation}
  \label{barYtoW}
  \overline{\bbY} \to \overline{\cW(\gl(r))} 
\end{equation}
induced by \eqref{YtoW} is surjective. 
\end{Proposition}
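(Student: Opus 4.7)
The plan is to combine the generation result for $\bbY$ (Theorem \ref{t_gen_c1} together with Propositions \ref{p_commutator} and \ref{p_span_H}) with the Feigin--Frenkel characterization \eqref{cWr} of $\cW(\gl(r))$, using the explicit free-field formula \eqref{ch_1Tauth} and its generalizations as the bridge between the two sides. Surjectivity will then follow from an associated-graded argument with respect to the filtration of $\vHeisr$ by total degree in the bosons $\bal^{(i)}$.

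First I would check that $\overline{\vHeis(\ffZ)}$ lies entirely in the image. The Baranovsky operators $\beta_n(1) = \sum_{i=1}^{r}\alpha_n^{(i)}$ are by construction the modes of the central Heisenberg field $\bbeta = \sum_i \bal^{(i)}$, and together with the translation automorphism $\ttau_c$ and the central elements $\bc_{-2},\bc_{-1}$ of Section \ref{gamma_sing} they generate $\overline{\vHeis(\ffZ)}$ in the completion. Next, by Propositions \ref{p_commutator} and \ref{p_span_H} the image contains, after localization, every operator of classical cup product by a characteristic class of the tautological sheaves on $\cM(r)$; in particular it contains $\ch_k \cVt$ for every $k \ge 1$. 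Generalizing \eqref{ch_1Tauth} to arbitrary $k$, one computes that cup product by $\ch_k\cVt$ acts as a normally ordered differential polynomial in the fields $\bal^{(i)}$ whose top-degree piece is proportional to the symmetrized power-sum $\sum_{i=1}^{r}\int \nord (\bal^{(i)})^{k+2}\nord (1)$. Combined with the central Heisenberg field $\bbeta$ and a quadratic combination extracted from the $\hbar$-coefficient of \eqref{ch_1Tauth} (which provides the Virasoro field $W_2$), these power-sum fields for $k=1,\dots,r-2$ correspond under the quantum Miura transformation to the standard generators $W_2,W_3,\dots,W_r$ of $\cW(\gl(r))$ modulo lower-degree terms and normally ordered products of lower generators.

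The proof then concludes by induction on the degree filtration: the image is a vertex subalgebra of $\overline{\vHeisr}$ closed under the mode operations inherited from the Yangian product, and by the above its top symbols contain a generating set for the associated graded of $\cW(\gl(r))\subset\vHeisr$; a standard filtered-vs-graded argument then gives surjectivity onto $\overline{\cW(\gl(r))}$. The main technical obstacle is the identification of the top symbol of cup product by $\ch_k\cVt$ for $k\ge 2$. The case $k=1$ is Theorem \ref{t_Qclass}, obtained from the diagonal vacuum matrix element of the $R$-matrix via Theorem \ref{t_vacuum} and Proposition \ref{p_R2}; higher $k$ require computing this vacuum block of $\bRh(u)$ to higher order in $u^{-1}$ (equivalently, a Lehn-type formula for $\ch_k\cVt$). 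The cleanest route appears to extract $\ch_k\cVt$ from the $u^{-k-1}$-coefficient of the expansion \eqref{R_theta}, after absorbing the normalization $\Gamma$-factor via \eqref{lnGamma_Stir}, and to match the resulting expression term-by-term with the classical quantum Miura transformation.
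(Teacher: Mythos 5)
Your proposal takes a genuinely different route from the paper. The paper's proof is a one-liner: it reduces the statement to its specialization at $\hbar = \kappa = 0$, where by the previous chapter the core Yangian degenerates to $\cU(\cDh)$ (the $\mathcal{W}_{1+\infty}$ Lie algebra at central charge $r$), and then cites the theorem of Frenkel--Kac--Radul--Wang that the image of $\mathcal{W}_{1+\infty}$ at central charge $r$ in $\bF^{\otimes r}$ is exactly $\cW(\gl(r))$ at $\kappa = 0$. Surjectivity for general $\hbar$ then follows from upper semicontinuity of the image under specialization, using flatness of $\bbY$ over $\bk$. This is a \emph{soft} deformation argument: it never needs to know what the individual generators of $\cW(\gl(r))$ look like in terms of the Yangian generators. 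Your proposal, by contrast, is a \emph{hard} direct computation: identify the top symbol of cup product by $\ch_k\cVt$ as a symmetrized power-sum field, match against the quantum Miura transform, and run an associated-graded argument. That route replaces the FKRW citation by a calculation and, if carried through, would yield strictly more information (explicit formulas for the higher $W$-generators inside $\bbY$), but it costs more.

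There is, however, a real gap, and you name it yourself: the identification of the top symbol of $\ch_k\cVt$ as $\sum_i \int \nord (\bal^{(i)})^{k+2}\nord$ plus lower terms is only established for $k=1$ (Theorem \ref{t_Qclass}). For higher $k$ you sketch an extraction from the $u^{-k-1}$-coefficient of the vacuum block of $\bRh(u)$ via \eqref{R_theta} and \eqref{lnGamma_Stir}, but this is not done, and it is not a routine step: the mixing between the $\bal^{(i)}$ and $\bal^{(j)}$ terms for $i \neq j$ coming from the off-diagonal blocks of the $R$-matrix and from the Gauss decomposition of Theorem \ref{t_Gauss} is exactly what makes Lehn-type formulas delicate at higher degree. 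There is also a subtler point you pass over quickly: you assert the image is ``a vertex subalgebra of $\overline{\vHeisr}$ closed under the mode operations inherited from the Yangian product,'' but the Yangian product is operator composition, not normally ordered product, and these do not a priori generate the same subalgebra of $\End(\bF^{\otimes r})$. One needs to argue, as in Frenkel--Kac--Radul--Wang, that the Fourier coefficients of the relevant fields generate a subalgebra closed under the operations one wants to use in the filtered-vs-graded induction. Neither difficulty is fatal to the strategy, but as written the proposal stops short of a proof at precisely the step where the work is.
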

\begin{proof}
Follows from the corresponding statement for $\hbar=\kappa=0$ 
proven by Frenkel, Kac, Radul, and Wang in \cite{FKRW}.  When 
$\hbar=0$, the nonlocal term $\bOm$ drops out and the surjectivity 
$$
\bbY/\hbar \bbY \cong \cU(\cDh) \to \cW(\gl(r))\big|_{\kappa=0} \to 0
$$
is true without completion, see \cite{FKRW}. Clearly, it implies the 
surjectivity after completion. 
\end{proof}

\subsection{}

One of the goals of \cite{AGT} is a characterization of 
interesting cohomology classes in terms of the $\cW$-action. 
For example, one can consider the vector of identities
$$
\mathbf{1} \in H^\hd_\bG(\cM(r)) 
$$
in the cohomology of each $\cM(r,n)$. In this direction, 
there is the following simple result. Define 
$$
\beta^{[k]}_n = \left(\ad \bQh\right)^k \cdot \beta_{n} \,. 
$$

\begin{Proposition}
The vector of identities $\mathbf{1}$  satisfies
$$
\beta^{[k]}_{n}(\pt) \cdot 
\mathbf{1} = 
\begin{cases}
0\,, &  k < rn-1 \\
- \mathbf{1} \,, &n=1, k=r-1\,. 
\end{cases}
$$
\end{Proposition}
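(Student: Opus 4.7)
The plan is to realise $\beta_n^{[k]}(\pt)\cdot\mathbf{1}$ as a push-forward over the punctual Baranovsky correspondence $\fB_n^{\pt}:=\fB_n\cap\{x=0\}\subset\cM(r,m+n)\times\cM(r,m)$ with an explicit Chern class inserted, and then combine a dimension count with a local computation on a generic fibre.

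Since $\bQh=\ch_1(\cVt)$ acts by cup product on each $\cM(r,m)$, the argument used in the proof of Proposition~\ref{p_commutator} gives, for any correspondence $Z\subset A\times B$ between components of $\cM(r)$, the identity $\ad(\bQh)\Theta_Z=\Theta_Z\cdot c_1(E)$ with $E:=p_B^{*}\cVt-p_A^{*}\cVt\in K(Z)$; iterating yields $(\ad\bQh)^k\Theta_Z=\Theta_Z\cdot c_1(E)^k$. Applied to $Z=\fB_n$ and cupped with the restriction forced by the $\pt$ insertion, this gives
\[\beta_n^{[k]}(\pt)\cdot\mathbf{1}\;=\;(-1)^{rn}\,\pi_{*}\,c_1(E)^k,\]
where $\pi\colon\fB_n^{\pt}\to\cM(r,m)$ is the projection onto the last factor and $(-1)^{rn}$ is the adjoint sign recorded in the construction of $\beta_n$. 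Because the framing correction $\hbar^{-1}\bC^{-1}\cW$ is the same on source and target, one has $E=-H^{0}(\C^2,\cF/\cF')$ in $K(\fB_n^{\pt})$, a virtual class of rank $-n$.

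The relative dimension of $\pi$ is $rn-1$, the same dimension count that was used earlier in the vanishing lemma for $\beta_k(\pt)\cdot 1$. Since $c_1(E)^k$ has cohomological degree $2k$, the class $\pi_{*}c_1(E)^k$ has degree $2k-2(rn-1)$, which is negative (hence zero) whenever $k<rn-1$; this yields the first assertion. In the remaining case $n=1$, $k=r-1$, the fibre of $\pi$ over a sheaf $\cF\in\cM(r,m)$ locally free at the origin is canonically the variety of length-one quotients of $\cF_0\cong\C^{r}$, i.e.\ $\Pp(\cF_0^{\vee})\cong\Pp^{r-1}$, and the universal cokernel $\cF/\cF'$ restricts there to the tautological quotient line bundle $\cO(1)$. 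Hence $c_1(E)|_{\mathrm{fibre}}=-c_1(\cO(1))$ and $\int_{\Pp^{r-1}}c_1(E)^{r-1}=(-1)^{r-1}$. Since $\pi_{*}c_1(E)^{r-1}$ lies in $H^{0}_{\bG}(\cM(r,m))$ and is therefore a scalar multiple of $\mathbf{1}$, this generic-fibre computation forces $\pi_{*}c_1(E)^{r-1}=(-1)^{r-1}\mathbf{1}$; combined with the prefactor $(-1)^{r}$ one obtains $(-1)^{2r-1}\mathbf{1}=-\mathbf{1}$.

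The main technical point requiring care is the identification of the cokernel $\cF/\cF'$ on the generic $\Pp^{r-1}$-fibre with the tautological $\cO(1)$, together with the careful accounting of the two minus signs (the adjoint sign $(-1)^{rn}$ and the sign from $E=-H^{0}(\C^2,\cF/\cF')$); both are needed to pin down the answer as $-\mathbf{1}$ rather than $+\mathbf{1}$. A subsidiary point is that the generic-fibre value really determines $\pi_{*}c_1(E)^{r-1}$ everywhere, but this follows from irreducibility of each $\cM(r,m)$ and the fact that a degree-zero equivariant class on a connected variety is a scalar.
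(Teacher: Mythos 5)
Your proposal is correct and takes essentially the same route as the paper: the vanishing for $k<rn-1$ is the same dimension count based on the generic fiber dimension $rn-1$ of the punctual Baranovsky correspondence, and the nonvanishing case $n=1$, $k=r-1$ is the same generic $\Pp^{r-1}$-fiber computation, with the two signs $(-1)^{rn}$ (adjoint) and $(-1)^{r-1}$ (from $c_1(E)$ restricting to $-c_1(\cO(1))$) combining to $-1$; the paper packages the latter sign instead via the sliding identity $\beta_1^{[r-1]}(\pt)\cdot\mathbf{1}=(-1)^{r-1}\beta_1(\pt)\bQh^{r-1}\mathbf{1}$, but the two bookkeepings are equivalent. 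One small notational point: with the standard convention $\Theta_Z\colon H(B)\to H(A)$ the projection formula gives $E=p_A^{*}\cVt-p_B^{*}\cVt$ (target minus source), the reverse of what you wrote; your stated conclusion $E=-H^0(\C^2,\cF/\cF')$ of rank $-n$ is nonetheless the correct one, so this only affects the intermediate display, not the proof.
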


\begin{proof}
The operator $\beta_{n}(\pt)$ is defined by a proper push-forward
with fibers of generic dimension $rn-1$, therefore it annihilates any 
cohomology class of degree less than $rn-1$. This proves
the first claim. 

If $n=1$ then generic
fibers are projective spaces $\Pp^{r-1}$ on which the 
generator $\ch_1 \cVt$ restricts to the hyperplane class
$c_1(\cO(1))$, up-to equivariant corrections. Therefore
\begin{multline}
\beta^{[r-1]}_{1}(\pt)   = 
(-1)^{r-1} \beta_1 \, \bQh^{r-1} \cdot \mathbf{1} = \\
- \left(\int_{\Pp^{r-1}} c_1(\cO(1))^{r-1}\right) \cdot \mathbf{1}
= - \mathbf{1}\,,
\end{multline}
where an extra $(-1)^r$ comes from the definition of $\beta_1 = 
\beta_{-1}^\tau$, see Section \ref{s_sign_Baran}. 

\end{proof}

\pagestyle{plain}

\end{document}